\documentclass{amsart}

\usepackage{fullpage}
\usepackage{amssymb}
\usepackage{amsfonts}
\usepackage{amsthm}
\usepackage{amsmath}
\usepackage{mathtools}
\usepackage{bbm}
\usepackage{bussproofs}
\usepackage{mathrsfs}
\usepackage{tikz, tikz-cd}
\usepackage[all,2cell]{xy}
\usepackage{dutchcal}
\usepackage{ stmaryrd } 
\usepackage{todonotes}
\usepackage{hyperref}
\usepackage[graphicx]{adjustbox}
\usepackage{graphicx}

\tikzset{%
	symbol/.style={%
		draw=none,
		every to/.append style={%
			edge node={node [sloped, allow upside down, auto=false]{$#1$}}}
	}
}

\usetikzlibrary{matrix,arrows}

\newtheorem{Theorem}[equation]{Theorem}

\newtheorem{proposition}[equation]{Proposition}
\newtheorem{lemma}[equation]{Lemma}
\newtheorem{corollary}[equation]{Corollary}

\newtheorem{thmx}{Theorem}
\newtheorem{propx}[thmx]{Proposition}

\newtheorem{corx}[thmx]{Corollary}

\theoremstyle{definition}
\newtheorem{example}[equation]{Example}

\newtheorem{remark}[equation]{Remark}
\newtheorem{definition}[equation]{Definition}

\DeclareMathOperator{\Ascr}{\mathscr{A}}
\DeclareMathOperator{\Bscr}{\mathscr{B}}
\DeclareMathOperator{\Cscr}{\mathscr{C}}
\DeclareMathOperator{\Dscr}{\mathscr{D}}
\DeclareMathOperator{\Escr}{\mathscr{E}}
\DeclareMathOperator{\Fscr}{\mathscr{F}}
\DeclareMathOperator{\Gscr}{\mathscr{G}}
\DeclareMathOperator{\Hscr}{\mathscr{H}}
\DeclareMathOperator{\Iscr}{\mathscr{I}}

\DeclareMathOperator{\Rscr}{\mathscr{R}}

\DeclareMathOperator{\Tscr}{\mathscr{T}}

\DeclareMathOperator{\Abb}{\mathbb{A}}

\DeclareMathOperator{\Fbb}{\mathbb{F}}

\DeclareMathOperator{\Ibb}{\mathbb{I}}

\DeclareMathOperator{\Qbb}{\mathbb{Q}}

\DeclareMathOperator{\Sbb}{\mathbb{S}}
\DeclareMathOperator{\Tbb}{\mathbb{T}}
\DeclareMathOperator{\Ubb}{\mathbb{U}}
\DeclareMathOperator{\Vbb}{\mathbb{V}}

\DeclareMathOperator{\Ccal}{\mathcal{C}}

\DeclareMathOperator{\Ocal}{\mathcal{O}}

\DeclareMathOperator{\pfrak}{\mathfrak{p}}

\DeclareFontFamily{U}{min}{}
\DeclareFontShape{U}{min}{m}{n}{<-> s*[0.95] dmjhira}{}

\DeclareMathSymbol{\mathinvertedexclamationmark}{\mathord}{operators}{'074}
\DeclareMathSymbol{\mathexclamationmark}{\mathord}{operators}{'041}
\makeatletter
\newcommand{\raisedmathinvertedexclamationmark}{%
	\mathord{\mathpalette\raised@mathinvertedexclamationmark\relax}%
}
\newcommand{\raised@mathinvertedexclamationmark}[2]{%
	\raisebox{\depth}{$\m@th#1\mathinvertedexclamationmark$}%
}
\makeatother

\newcommand{\bang}{\mathexclamationmark}
\newcommand{\gnab}{\raisedmathinvertedexclamationmark}

\DeclareMathOperator{\SManPc}{\mathbf{SMan}_{paracompact}}

\DeclareMathOperator{\qsSch}{\mathbf{qsSch}}

\DeclareMathOperator{\sym}{Sym}

\DeclareMathOperator{\fTan}{\mathfrak{Tan}}

\DeclareMathOperator{\id}{id}

\DeclareMathOperator{\Set}{\mathbf{Set}}

\DeclareMathOperator{\R}{\mathbb{R}}

\DeclareMathOperator{\Z}{\mathbb{Z}}
\DeclareMathOperator{\N}{\mathbb{N}}

\DeclareMathOperator{\Crig}{\mathbf{Crig}}
\DeclareMathOperator{\Cring}{\mathbf{Cring}}

\DeclareMathOperator{\Ab}{\mathbf{Ab}}

\DeclareMathOperator{\fCat}{\mathfrak{Cat}}

\DeclareMathOperator{\Adj}{\mathsf{Adj}}

\DeclareMathOperator{\Open}{\mathbf{Open}}

\DeclareMathOperator{\res}{res}

\DeclareMathOperator{\SMan}{\mathbf{SMan}}
\DeclareMathOperator{\op}{op}

\DeclareMathOperator{\Loc}{\mathsf{Loc}}
\DeclareMathOperator{\Locfp}{\mathsf{Loc}_{f.p.}}

\DeclareMathOperator{\MMod}{\mathcal{Mod}}
\DeclareMathOperator{\CCalg}{\mathcal{CAlg}}
\DeclareMathOperator{\und}{Und}
\DeclareMathOperator{\Spec}{Spec}
\DeclareMathOperator{\rSch}{\mathbf{redSch}}
\DeclareMathOperator{\red}{red}

\newcommand{\CAlg}[1]{\mathbf{CAlg}_{#1}}

\newcommand{\Par}[2]{\mathsf{Par}({#1},{#2})}

\newcommand{\Und}[1]{\und_{#1}}

\newcommand{\Sym}[1]{\sym_{#1}}

\newcommand{\RelSpec}[1]{\underline{\Spec}_{#1}}

\newcommand{\RelSym}[1]{\underline{\sym}_{{#1}}}

\newcommand{\RelUnd}[1]{\underline{\und}_{{#1}}}

\let\emptyset\varnothing
\let\epsilon\varepsilon

\DeclareMathOperator{\Sch}{\mathbf{Sch}}
\DeclareMathOperator{\QCoh}{\mathbf{QCoh}}
\DeclareMathOperator{\QQCoh}{\mathcal{QCoh}}
\DeclareMathOperator{\QQCAlg}{\mathcal{QCohCAlg}}
\DeclareMathOperator{\Bicat}{\mathsf{Bicat}}

\DeclareMathOperator{\cnst}{cnst}

\DeclareMathOperator{\pr}{pr}
\DeclareMathOperator{\DBun}{\mathbf{DBun}}

\let\emptyset\varnothing
\let\epsilon\varepsilon

\newcommand{\Mod}[1]{{#1}\textnormal{-}\mathbf{Mod}}
\newcommand{\Kah}[2]{\Omega_{{#1}/{#2}}}

\UseTwocells

\numberwithin{equation}{subsection}

\title{A Deep Dive Into the Tangent Category of Schemes}
\author{Geoff Vooys}
\date{\today}

\begin{document}

\begin{abstract}
In this largely expository paper we provide a deep and explicit exploration and exposition of the tangent structure on the category of schemes $\mathbf{Sch}_{/S}$ whose tangent functor $T(X) = T_{X/S}$ is the relative tangent scheme of Grothendieck described in \emph{{\'E}l{\'e}ments de G{\'e}om{\'e}trie Alg{\'e}brique} 4. In particular we provide explicit descriptions of the ways that the bifibration of quasicoherent sheaves and bifbration of quesicoherent sheaves of algebras over schemes may be built from the ways in which the bifibrations of modules and commutative algebras over commutative rings interact. We also show the ways in which these interactions give rise to an explicit description of the standard tangent structure on the category of schemes in terms of sheaves of K{\"a}hler differentials, properties of the relative spectrum functor, and more. Finally, we show that quasi-coherent sheaves can be reconstructed from their category of differential bundles by showing that for quasi-separated schemes $X$ and $Y$, there is an isomorphism $X \cong Y$ if and only if there is an equivalence of category $\DBun(X) \simeq \DBun(Y)$.
\end{abstract}

\subjclass{Primary 14-02, 18-02; Secondary 14A99, 14B10, 18F40, 18F99}
\keywords{Tangent Category, Schemes, Tangent Scheme, Differential Algebraic Geometry, Relative Spectrum, Gluing, Categorical Algebraic Geometry, Reconstruction Theorem, K{\"a}hler Differentials}

\maketitle
\tableofcontents

\section{Introduction}

As a trio of mathematical subjects, algebraic geometry, differential geometry, and category theory sit as a three-body system with porous and non-rigid boundaries between them. Ideas and techniques from any of the three areas often get abstracted, translated, and used in the remaining two subjects and then robustly re-applied to the original area in a fluid and dynamic fashion. Tangent category theory (cf.\@ Definition \ref{Defn: Section Tangent: Tangent category} for the definition) is an active area of study in modern category theory and a location where it is particularly easy and natural to see this interplay between abstraction, translation, and re-application in action. 

Part of the reason this process is so visible in tangent category theory comes from the fact that while tangent categories first were discovered by Rosick{\'y} in \cite{Rosicky} in the 1980's, they were re-discovered by Cockett and Cruttwell in \cite{GeoffRobinDiffStruct} with two primary purposes. The first purpose was to provide an abstract categorical framework with which to do differential geometric reasoning; the second purpose was to produce a unifying setting to compare and contrast the semantics of differential linear logic, differential geometry, synthetic differential geometry, and more. 

Tangent category theory has proved to be very fruitful in that it has provided a sort of structural neutral ground to compare and contrast different types of geometry which mathematicians study. For example, in \cite{GeoffRobinBundle} Cockett and Cruttwell developed a theory of what are called differential bundles (cf.\@ Definition \ref{Defn: Section Tangent: Differential Bundle} below). Differential bundles are objects in tangent categories which generalize the notion of vector bundles in differential geometry and provide a useful way in which to perform structural arguments and techniques internal to tangent categories in much the same way that one works with vector bundles and modules/quasi-coherent sheaves in the setting of smooth manifolds and schemes, respectively. In fact, in \cite[Theorem 1]{BenVectorBundles} MacAdam proved that the notion of differential bundles in the category of paracompact smooth manifolds $\SManPc$ coincides with vector bundles\footnote{In \cite{BenVectorBundles} MacAdam used a characterization that a morphism $f:X \to Y$ of smooth manifolds was a submersion if and only if the morphism $TX \to X \times_Y TY$ given by $(x,v) \mapsto (x,D[f]v)$ admits a section. However, such a condition is equivalent to having a smooth partition of unity for $f$, and in general one can only ensure such a partition exists in the case that $X$ is paracompact.} (and in particular that fibration $\DBun(\SManPc)$ corresponds to the fibration of vector bundles $\mathbf{VecBun}(\SManPc)$) while in \cite[Theorem 4.28]{GeoffJSDiffBunComAlg} Cruttwell and Lemay proved that for any $S$-scheme $X$, the category of differential bundles over $X$ are opposite-equivalent to the category of quasi-coherent sheaves over $X$. Additionally, in \cite[Theorem 3.28]{VooysInd} it is shown that if $\Cscr$ is a tangent category then so too is it $\operatorname{Ind}$-completion $\operatorname{Ind}(\Cscr)$; this was then used to describe a tangent structure on the category of formal schemes (cf.\@ \cite[Subsection 5.2]{VooysInd}) and the technology of said paper can be used to study a tangent structure on the category of infinite-dimensional smooth manifolds (with the convention that an infinite dimensional smooth manifold is precisely an ind-smooth manifold, i.e., every infinite-dimensional smooth manifold $M$ is the filtered colimit of its finite dimensional submanifolds). These structural results involving differential bundles are not only important for defining and working with the tangent categories of smooth manifolds and $S$-schemes, respectively, but also lend credence to the analogies that ``vector bundles are like modules for smooth manifolds'' and that differential bundles are to tangent categories what modules are to rings are to vector bundles of smooth manifolds. 

Tangent categories have not just been applied to vector bundle theory and to quasi-coherent sheaf theory. For instance, in \cite{DoretteMe} D.\@ Pronk and the author of this paper used tangent category theory to prove that the $2$-category $\fTan$ admits pseudolimits of pseudofunctors which factor as, for a $1$-category $\Cscr$,
\[
F:\Cscr^{\op} \to \fTan_{\operatorname{strong}} \to \fTan
\] 
and showed how to used this to construct tangent structures for use in equivariant algebraic geometry and equivariant differential geometry. More explicitly, \cite{DoretteMe} gives a definition of tangent structures which are sensibly defined for use when smooth group schemes on act on schemes and when Lie groups with finitely many connected components act  on smooth manifolds, respectively; cf.\@ \cite[Sections 5 -- 8]{DoretteMe}). In different directions, \cite{GeoffRory} Cruttwell and Luschyn-Wright defined a tangent categorical version of sector form and de Rham cohomology; in \cite{GeoffRobinConnections} Cockett and Cruttwell extended the definition of a connection to tangent categories; and in \cite{RoryConnectionsTanCAts} Luschyn-Wright significantly expanded upon  both the general study of connections, differential bundles, and the ways in which they interact. We also have seen tangent category theory applied to theoretical computer science in \cite{JonathanThesis}, \cite{GeoffJonBenACT2019}, \cite{ReverseDeriv}, and then again in \cite{GeoffJSReverse} where reverse tangent categories were defined. We have also seen in \cite{GeoffJSElias} that Cruttwell, Lemay, and Vandenberg used connections in tangent category theory to study (and provide explicit examples of) connections as they appear in algebraic geometry. In particular, they show in \cite[Theorem 8]{GeoffJSElias} that module connections of quasi-coherent sheaves on affine schemes correspond to tangent-categorical connections and then deduce in \cite[Corollary 4.11]{GeoffJSElias} that if $f:X \to S$ is a morphism of schemes and $\Fscr$ is a quasi-coherent sheaf on $X$, then to give an $S$-connection on $\Fscr$ it is necessary and sufficient to define a tangent-categorical connection on $\Fscr$.

In addition to the applications of tangent category theory to schemes (cf.\@ \cite{GeoffJSDiffBunComAlg}, \cite{GeoffJSElias}, and \cite{JSMeMapFlavoursInTanCats}), the tangent category of schemes appears as an important example of a tangent category in many tangent category papers (cf.\@ \cite{GeoffRobinDiffStruct}, \cite{GeoffRobinBundle} for the first mentions, albeit in an expository fashion, of the tangent functor on schemes\footnote{The explicit tangent structure on $\Sch$ appears only implicitly. It is given by a combination of a ring-theoretic version of \cite[Proposition 5.16, Proposition 5.17, Corollary 5.18]{GeoffRobinDiffStruct} in order to write the tangent scheme $T_{X/\Z} := [\Spec \Z[x]/(x^2),X]$ as an exponential of $X$ by the spectrum of the ring of dual numbers $\Z[x]/(x^2)$.}; cf. \cite[Example 2.iii]{GarnerEmbeddingTanCat} or \cite[Section 6]{DoretteMe}). Consequently, understanding the tangent category of schemes has been important to category theorists for quite some time. However, the development and techniques that tangent-category theorists tend to employ when working with schemes avoid directly using quasi-qoherent sheaves and Zariski descent/gluing. In addition to the importance of $\Sch_{/S}$ to category-theorists, the development of tangent-categorical techniques come with direct applications to algebraic geometry. Thus, in order to help category theorists use algebraic-geometric techniques and also to help algebraic geometers use tangent-categorical techniques, the author finds that it is important to provide a careful, explicit, and fully detailed description of the tangent category $\Sch_{/S}$, how its various structure morphisms and properties arise, how they may be used to provide new perspectives in scheme theory, and how these techniques interact with the gluing and descent-theory used by algebraic geometers.

In this expository paper we do precisely what is suggested above: we give an explicit and careful description of the tangent category of schemes, the ingredients necessary to define said tangent structure, and the recipe describing the way said tangent structure is built as the gluing of maps between affine schemes through Zariski descent. Because of the expository and careful nature of this paper, much of this paper contains various folkloric technical results which are obvious to experts in category theory, algebraic geometry, and other related disciplines. However, because we are focusing on bridging two somewhat disparate but related areas of mathematics\footnote{And in particular because category theorists need not know many of the foundational results in algebraic geometry while algebraic geometers need not know some of the categorical algebra or tangent category.}, we err on the side of providing all the details of our constructions and techniques.

\subsection{Technical and Expository Contributions: Structure of the Paper}
In the opinion of the author, perhaps the most important technical aspects of this paper are in writing down explicitly, carefully, and fully the following (sub)sections. Additionally, the following results are worth highlighting to \emph{all} practitioners of both algebraic geometry and category theory for a source of examples, potential misunderstandings to circumvent, and techniques of interest.

First, Section \ref{Section: Background Alg} simply recalls the background techniques and results from commutative algebra we use in this paper. While all the results in this section are individually well-known, they are less well-known in their fibration and pseudofunctorial guises. Of particular interest here for framing these well-known algebraic results with a fibrational lens are Proposition \ref{Prop: Section Background in Algebra: Underlying commutes strictly with restriction}, Proposition \ref{Prop: Section Background Alg: Sym and Und adjunction between CAlg and Mod fibrations over crig}, and Corollary \ref{Cor: Section Background Alg: Pseudonat for Sym}. Another important result to keep in mind for this section is the final result of Section \ref{Section: Background Alg}: the cocommutative Hopf algebra structures on the commutative $A$-algebras $\Sym{A}{M}$ assemble to a pseudonatrual transformation . While this is well-known, in order to make sure all $i$'s are dotted and $t$'s are crossed we are required to prove this and hence declare it as an explicit proposition.

\begin{propx}[{cf.\@ Proposition \ref{Prop: Section Background Alg: Strictness of Hopf for Restriction of Scalars}}]
There is a strict transformation of pseudofunctors:
\[
\begin{tikzcd}
	\Cring^{\op} \ar[rrr, bend left = 30, ""{name = U}]{}{\Mod{(-)}} \ar[rrr, bend right = 30, swap, ""{name = D}]{}{\mathbf{Hopf}^{\operatorname{cocom}}(-)} & & & \fCat
	\ar[from = U, to = D, Rightarrow, shorten <= 4pt, shorten >= 4pt]{}{\widehat{\Sym{(-)}}}
\end{tikzcd}
\]
In particular, for any map of commutative rings $f:R \to S$ and for any $S$-module $M$, the induced ring map
\[
\sigma_{M,F}:\Sym{R}(W_f(M)) \to \Sym{S}(M)
\]
is a morphism of cocommutative Hopf algebras.
\end{propx}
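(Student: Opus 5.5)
The plan is to reduce everything to the universal property of symmetric algebras: every map in sight is a ring homomorphism out of some $\Sym{R}(N)$, so it is determined by its values on the degree-one generators $N \subseteq \Sym{R}(N)$.

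First I fix the cocommutative Hopf structure on $\Sym{A}(M)$ for a commutative ring $A$ and an $A$-module $M$, as the earlier results of Section \ref{Section: Background Alg} do. The comultiplication $\Delta$, counit $\varepsilon$ and antipode $\chi$ are the unique $A$-algebra maps with
\[
\Delta(m) = m \otimes 1 + 1 \otimes m, \qquad \varepsilon(m) = 0, \qquad \chi(m) = -m
\]
for $m \in M$. Uniqueness comes from the free-forgetful adjunction $\Sym{A} \dashv \Und{A}$ of Proposition \ref{Prop: Section Background Alg: Sym and Und adjunction between CAlg and Mod fibrations over crig}. Next, for $f:R \to S$ and an $S$-module $M$, the map $\sigma_{M,f}$ is the $R$-algebra map $\Sym{R}(W_f(M)) \to W_f(\Sym{S}(M))$ adjoint to the $R$-linear inclusion of $W_f(M)$ as the degree-one part. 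Here I use that the underlying-module functor commutes strictly with restriction of scalars (Proposition \ref{Prop: Section Background in Algebra: Underlying commutes strictly with restriction}). These $\sigma$'s are the components of the pseudonatural transformation of Corollary \ref{Cor: Section Background Alg: Pseudonat for Sym}.

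The key step is to show that $\sigma_{M,f}$ is a Hopf morphism. This means checking three identities:
\[
(\sigma \otimes \sigma)\circ \Delta_R = \Delta_S \circ \sigma, \qquad \varepsilon_S \circ \sigma = f \circ \varepsilon_R, \qquad \sigma \circ \chi_R = \chi_S \circ \sigma,
\]
where $\sigma\otimes\sigma$ denotes the composite
\[
\Sym{R}(W_fM)\otimes_R \Sym{R}(W_fM) \to \Sym{S}(M)\otimes_R\Sym{S}(M) \to \Sym{S}(M)\otimes_S \Sym{S}(M).
\]
The last arrow is the canonical quotient; it is a ring map, so all the composites above are $R$-algebra maps out of $\Sym{R}(W_f M)$. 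Consequently each identity only needs to be verified on a generator $m \in W_f(M)$, and there it is immediate:
\[
(\sigma\otimes\sigma)(m\otimes 1 + 1\otimes m) = m\otimes 1 + 1 \otimes m = \Delta_S(m),
\]
and similarly $\varepsilon_S(m) = 0$ and $\chi_S(m) = -m$.

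For strictness of the transformation, I would show two things. First, the components $\widehat{\Sym{A}}$ are functorial, i.e.\ for $A$-linear $g:M\to N$ the map $\Sym{A}(g)$ is a Hopf morphism; again this is checked on generators. Second, the naturality squares relating $\widehat{\Sym{(-)}}$ with the reindexing functors along $f$ commute on the nose, as do the compatibilities with the compositors of the two pseudofunctors. For a composite $R \xrightarrow{f} S \xrightarrow{g} T$, both candidate maps
\[
\Sym{R}(W_fW_g M) \to \Sym{T}(M)
\]
(one via $\sigma_{M,g\circ f}$, the other via $\sigma_{W_gM,f}$ followed by $\sigma_{M,g}$) are ring maps that restrict to the identity on generators. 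Hence they are equal, which gives the required equality of invertible 2-cells.

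I expect the main obstacle to be bookkeeping rather than mathematics. One has to identify precisely what $\mathbf{Hopf}^{\operatorname{cocom}}(-)$ does on morphisms: restriction of scalars does not preserve Hopf algebras over the base, because $\otimes_R$ and $\otimes_S$ differ. So the correct statement of the compatibility must route through the comparison map $\otimes_R \to \otimes_S$ used above. Once that convention is fixed, every verification collapses to the generator check.
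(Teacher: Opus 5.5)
Your proof that $\sigma_{M,f}$ is a Hopf morphism uses the same method as the paper. The paper checks compatibility with comultiplication and counit by evaluating on elements $rx_m$, and you check on generators. You improve on it in two ways. First, you say why generators suffice: every composite in sight is an $R$-algebra map out of $\Sym{R}(W_fM)$, since the comparison $\Sym{S}(M)\otimes_R\Sym{S}(M)\to\Sym{S}(M)\otimes_S\Sym{S}(M)$ is a ring map. Second, you check the antipode, which the paper later invokes for the sheaf maps $S_{\Fscr}$ but does not verify. Your remark that $W_f$ does not carry $S$-Hopf algebras to $R$-Hopf algebras is also correct.

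The strictness paragraph, however, contains a step that fails. The naturality squares cannot commute on the nose, and there are no invertible $2$-cells to compare. Take $f:\Z\to\Z[i]$ and $M=\Z[i]$. Then $\Sym{\Z}(W_fM)\cong\Z[x,y]$ and $W_f(\Sym{\Z[i]}(M))\cong\Z[i][t]$, with $\sigma$ given by $x\mapsto t$ and $y\mapsto it$. This map kills $x^2+y^2$ and misses $i$. Worse, $\Z[i][t]$ admits no ring map to $\Z$, so it has no counit over $\Z$. Hence no convention makes $W_f$ a functor on cocommutative Hopf algebras for which the square commutes.

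The slip comes from identifying the $\sigma$'s with the $2$-cells of the corollary on $\sym$ and extension of scalars. They are instead the mates of those isomorphisms under the adjunctions $(-)\otimes_R S\dashv W_f$, and mates of invertible cells need not be invertible.

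What your generator checks actually establish is the following. The comparison maps $\sigma_{M,f}$ are natural in $M$. They are Hopf morphisms along $f$, with comultiplication routed through $\otimes_R\to\otimes_S$ and counit through $f$. They satisfy $\sigma_{M,\id}=\id$ and $\sigma_{M,g\circ f}=W_f(\sigma_{M,g})\circ\sigma_{W_gM,f}$ on the nose. Equivalently, $(f,\rho)\mapsto(f,\sigma_{N,f}\circ\Sym{R}(\rho))$ is a functor over $\Cring$ from $\MMod$ to the category of pairs $(R,H)$, where $H$ is a cocommutative $R$-Hopf algebra and morphisms are such Hopf maps along ring maps. This (op)lax, strictly coherent structure is the true content. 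It is also all the paper uses later, for naturality of $\nabla$, $\epsilon$ and $S$ under restriction maps. You should state ``strict'' in that sense; the paper's own phrase ``the diagram commutes'' makes the same conflation.
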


In Section \ref{Section: QCoh} we take the commutative algebra studied in Section \ref{Section: Background Alg} and extend it to the theory of quasi-coherent sheaves. Because the category $\QCoh(X)$ is less well-known to our category-theorist audience (and because the pseudouniversal description need not be the most helpful description for working explicitly with quasi-coherent sheaves), we take a very ``from scratch'' approach to working with and defining $\QCoh(X)$. In particular, in Section \ref{Subsection: QCoh Modules} we spend a great deal of time defining and working with quasi-coherent sheaves from basics, as it were. For instance, we present Example \ref{Example: EZPZ QCoh} to show the necessity of working with quasi-coherent sheaves and not the fully general sheaves of $\Ocal_X$-modules. We also give an explicit proof of a folkloric result in categorical algebraic geometry which is, in my opinion, both not at all as well-known as it should be (although it is known to many experts) and also fundamental to working with quasi-coherent sheaves from a category-theoretic lens. In particular, we prove that for any scheme $X$ and for any affine open cover $\lbrace \gamma_i:U_i \to X \; | \; i\in I\rbrace$, the category $\QCoh(X)$ arises as a pseudolimit of the categories $\QCoh(U_i)$ as they are glued over the scheme-theoretic intersections $\QCoh(U_i \times_X U_j)$. That is, the collection of all invertible $2$-cells 
\[
\begin{tikzcd}
\QCoh(X) \ar[rrr, ""{name = U}]{}{\gamma_i^{\ast}} \ar[d,swap]{}{\gamma_j^{\ast}} & & & \QCoh(U_i) \ar[d]{}{(\pi_0^{ij})^{\ast}} \\
\QCoh(U_j) \ar[rrr, swap, ""{name = D}]{}{(\pi_1^{ij})^{\ast}} & & & \QCoh(U_i \times_X U_j)
\ar[from = U, to = D, Rightarrow, shorten <= 4pt, shorten >= 4pt]{}{\cong}
\end{tikzcd}
\]
for all $i, j \in I$ in the $2$-category $\fCat$ of categories forms a psuedouniversal pseudocone (in $\fCat$).
\begin{thmx}[{cf.\@ Theorem \ref{Thm: Section Background Scheme: QCoh is a pseudolimit}}]
Let $X$ be a scheme and let $C = \lbrace \gamma_i:U_i \to X \; | \; i \in I \rbrace$ be an affine open cover of $X$. Then $\QCoh(X)$ is the pseudolimit in the $2$-category $\fCat$ of the cospans 
\[
\QCoh(U_i) \xrightarrow{(\pi_0^{ij})^{\ast}} \QCoh(U_i \times_X U_j) \xleftarrow{(\pi_1^{ij})^{\ast}} \QCoh(U_j)
\]
for all $i, j \in I$. That is, $\QCoh(X)$ is the pseudouniversal pseudocone over the cospans above when equipped with the compositor invertible $2$-cells:
\[
\begin{tikzcd}
	& \QCoh(U_j) \ar[dr]{}{(\pi_1^{ij})^{\ast}} \\
	\QCoh(X) \ar[ur]{}{\gamma_j^{\ast}} \ar[dr, swap]{}{\gamma_i^{\ast}} &	& \QCoh(U_i \times_X U_j) \\
	& \QCoh(U_i) \ar[ur, swap]{}{(\pi_0^{ij})^{\ast}}
	\ar[from = 1-2, to = 3-2, Rightarrow, shorten <= 4pt, shorten >= 4pt]{}{\cong}
\end{tikzcd}
\]
\end{thmx}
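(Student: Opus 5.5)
The plan is to construct the comparison functor from $\QCoh(X)$ into the pseudolimit explicitly, and then show that it is an equivalence of categories. Since pseudolimits in $\fCat$ are only determined up to equivalence, this suffices.

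\textbf{The pseudolimit and the comparison functor.} First I would write down the standard model $\mathscr{L}$ of the pseudolimit of the cospans. Its objects are families $(\Fscr_i)_{i\in I}$ with $\Fscr_i \in \QCoh(U_i)$, together with isomorphisms
\[
\varphi_{ij}:(\pi_0^{ij})^{\ast}\Fscr_i \xrightarrow{\cong} (\pi_1^{ij})^{\ast}\Fscr_j
\]
in $\QCoh(U_i\times_X U_j)$. Its morphisms are families $(f_i)$ that commute with the $\varphi_{ij}$. Because the diagram only involves the cospans (no triple intersections), the pseudolimit imposes no cocycle condition a priori, so this is the model to use. The pseudocone on $\QCoh(X)$ is given by the functors $\gamma_i^{\ast}$ and the compositor isomorphisms $(\pi_0^{ij})^{\ast}\gamma_i^{\ast}\cong(\gamma_i\pi_0^{ij})^{\ast}=(\gamma_j\pi_1^{ij})^{\ast}\cong(\pi_1^{ij})^{\ast}\gamma_j^{\ast}$. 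This induces a comparison functor $\Phi:\QCoh(X)\to\mathscr{L}$, $\Fscr\mapsto(\gamma_i^{\ast}\Fscr,\varphi_{ij}^{\Fscr})$. The pseudouniversal property then amounts to $\Phi$ being an equivalence.

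\textbf{Full faithfulness.} Here the $\gamma_i$ are open immersions, so $\gamma_i^{\ast}$ is restriction and $U_i\times_X U_j\cong U_i\cap U_j$. Given $f_i:\Fscr|_{U_i}\to\Gscr|_{U_i}$ agreeing on overlaps, the sheaf property of $\mathcal{H}om(\Fscr,\Gscr)$ glues them to a unique $f:\Fscr\to\Gscr$. Faithfulness follows in the same way, since the $U_i$ cover $X$.

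\textbf{Essential surjectivity.} This is the main obstacle, because the gluing lemma for sheaves needs the cocycle condition $\varphi_{ik}=\varphi_{jk}\circ\varphi_{ij}$ on triple overlaps, and the cospan-shaped pseudolimit does not hand it to us. I would try the following. Restricting to the diagonal cospan ($i=j$), the data include $\varphi_{ii}$, an automorphism of $\Fscr_i$ on $U_i\times_XU_i\cong U_i$. I would first check whether the paper's indexing forces compatibility via these diagonal terms and the identities $\pi_0^{ii}=\pi_1^{ii}$. If it does not, I would replace $\mathscr{L}$ by an equivalent model in which the descent data are normalized: use the $\varphi_{ii}$ to rigidify, replacing $\Fscr_i$ by itself with $\varphi_{ij}$ composed with $\varphi_{ii}^{-1}$ as needed. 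Failing that, I would interpret the indexing diagram as including the triple intersections, which is the standard descent-category reading, and note that this is how the statement should be understood.

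With the cocycle condition in hand, the gluing is standard. Glue the sheaves of $\Ocal_{U_i}$-modules $\Fscr_i$ along the $\varphi_{ij}$ to a sheaf of $\Ocal_X$-modules $\Fscr$ (cf.\@ Hartshorne, Ex.\ II.1.22), with isomorphisms $\Fscr|_{U_i}\cong\Fscr_i$ compatible with the $\varphi_{ij}$. Then $\Fscr$ is quasi-coherent, since quasi-coherence is local and each $\Fscr_i$ is quasi-coherent, being $\widetilde{M_i}$ on the affine $U_i$. Finally, $\Phi(\Fscr)\cong(\Fscr_i,\varphi_{ij})$ in $\mathscr{L}$, which completes the proof.
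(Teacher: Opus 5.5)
Your overall route is the paper's argument in different packaging. You compare $\QCoh(X)$ with the standard model $\mathscr{L}$ and show $\Phi$ is fully faithful by gluing morphisms and essentially surjective by gluing objects, using that quasi-coherence is local. The paper instead checks the pseudouniversal property directly: it glues the objects $F_iA$ of an arbitrary pseudocone into a functor $F:\Cscr\to\QCoh(X)$, and glues compatible $\psi_i$ into a unique $\psi$.

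You are right that the cospans alone impose no cocycle condition. The paper does not derive one either; it simply asserts that its pseudocone data ``satisfy the cocycle condition'' and then invokes the Gluing Lemma. Your first two rescues, however, cannot work. Because $\gamma_i$ is a monomorphism, $\pi_0^{ii}=\pi_1^{ii}$, so every $\Phi(\Fscr)$ has $\varphi^{\Fscr}_{ii}=\id$. In $\mathscr{L}$, by contrast, $\varphi_{ii}$ is an arbitrary automorphism, and an isomorphism $(f_i)$ in $\mathscr{L}$ only conjugates it, $\varphi_{ii}\mapsto g\,\varphi_{ii}\,g^{-1}$ with $g=(\pi_0^{ii})^{\ast}f_i=(\pi_1^{ii})^{\ast}f_i$. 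Three consequences follow:
\begin{enumerate}
\item the diagonal terms force nothing;
\item no object with $\varphi_{ii}\neq\id$ is isomorphic to a normalized one, so the ``rigidified'' category is not an equivalent model of $\mathscr{L}$;
\item normalizing the diagonal would say nothing about triple overlaps anyway.
\end{enumerate}
Concretely, take $X=\Spec\Z$ with the one-element cover $\lbrace \id_X\rbrace$. The object $(\Ocal_X,\varphi_{11}=-1)$ of $\mathscr{L}$ is not in the essential image of $\Phi$, since an isomorphism from some $\Phi(\Fscr)$ would force $2=0$ on $\Ocal_X$. So the statement, read literally as a pseudolimit of the cospans, is false. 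Even excluding $i=j$, inserting a unit $\varphi_{13}$ that is $\neq 1$ on a nonempty triple overlap, with $\varphi_{12}=\varphi_{23}=1$, gives data outside the essential image.

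So commit to your third reading. The diagram must be the truncated \v{C}ech diagram, including the categories $\QCoh(U_i\times_X U_j\times_X U_k)$ and the pullback functors along the projections to the double overlaps. Alternatively, use the poset formulation of Theorem \ref{Thm: Section QCoh: Qcoh is pseudolim over poset of affine opens}. Coherence of a pseudocone with the equal composites of projections, such as $U_{ijk}\to U_{ij}\to U_i$ and $U_{ijk}\to U_{ik}\to U_i$, then yields $\varphi_{ik}=\varphi_{jk}\circ\varphi_{ij}$ on triple overlaps. With repeated indices it also yields $\varphi_{ii}=\id$ and $\varphi_{ji}\circ\varphi_{ij}=\id$. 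With that diagram, your full-faithfulness and essential-surjectivity arguments are correct as written. They prove exactly the corrected theorem that the paper's proof actually establishes once its assumed cocycle condition is made part of the diagram.
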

We then use this perspective on quasi-coherent sheaves to additionally conclude that $\QCoh(X)$ is the pseudolimit of the categories $\QCoh(U)$ as $U$ runs through the poset $\mathbf{AffOp}(X)$ of affine open subschemes of $X$. This allows us to follow with a clean proof of the fact that $\QCoh(X)$ is a locally presentable category.
\begin{thmx}[cf. Theorem \ref{Thm: Section QCoh: Qcoh is pseudolim over poset of affine opens}]
Let $X$ be a scheme and let $\mathbf{AffOp}(X)$ be the poset of affine open subschemes of $X$, ordered by inclusion. Then there is a pseudolimit decomposition
\[
\QCoh(X) \cong \operatorname*{pslim}_{U \in \mathbf{AffOp}(X)} \QCoh(U).
\]
\end{thmx}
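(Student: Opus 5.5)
Taking the previous theorem (Theorem \ref{Thm: Section Background Scheme: QCoh is a pseudolimit}) as given, I would show that $\QCoh(X)$, together with the restriction functors $\gamma_U^{\ast}:\QCoh(X) \to \QCoh(U)$ for $U \in \mathbf{AffOp}(X)$ and the compositor isomorphisms $(\gamma_V^{U})^{\ast}\gamma_U^{\ast} \cong \gamma_V^{\ast}$ for $V \subseteq U$, is a pseudouniversal pseudocone over the pseudofunctor $\mathbf{AffOp}(X)^{\op} \to \fCat$ sending $U \mapsto \QCoh(U)$. The plan is to compare pseudocones over the poset diagram with pseudocones over the cospan diagram attached to the particular affine open cover $C = \mathbf{AffOp}(X)$ itself, i.e.\ the cover indexed by all affine opens. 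Since $X$ is covered by affine opens, this is a legitimate affine open cover, so the previous theorem applies to it.

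First I construct a comparison. A pseudocone $(\Ascr, F_U, \phi_{V\subseteq U})$ over the poset diagram yields a pseudocone over the cospans. For affines $U, U'$, the intersection $U \cap U' = U \times_X U'$ is open but not necessarily affine. I would define the required isomorphism $(\pi_0)^{\ast}F_U \cong (\pi_1)^{\ast}F_{U'}$ in $\QCoh(U\cap U')$ by gluing. On each affine $W \subseteq U \cap U'$, take the composite $\phi_{W\subseteq U'}^{-1}\circ\phi_{W\subseteq U}$. These local isomorphisms agree on smaller affines by the pseudocone coherence. Because morphisms of (quasi-coherent) sheaves glue over the affine cover of $U\cap U'$, they assemble into a unique global isomorphism.

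Conversely, from the universal property for the cover, any pseudocone over the poset diagram factors through $\QCoh(X)$ by a functor $\Ascr \to \QCoh(X)$, unique up to unique isomorphism. I then check that the resulting $2$-cells $\gamma_U^{\ast}G \cong F_U$ are compatible with the $\phi_{V\subseteq U}$. This reduces to an equality of morphisms of sheaves on affines, which is checked locally and follows from naturality of the compositors of the pseudofunctor $(-)^{\ast}$. The same argument handles modifications: a modification between pseudocones over the poset restricts to one over the cospans, so the $2$-dimensional part of pseudouniversality also follows from the previous theorem.

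The main obstacle is the coherence bookkeeping in the first step. One must ensure that the glued isomorphism on the non-affine intersection $U \cap U'$ satisfies the cocycle/pseudocone conditions demanded in Theorem \ref{Thm: Section Background Scheme: QCoh is a pseudolimit}. I expect the required triple-overlap compatibility to reduce, again by locality, to checking equalities on affines $W \subseteq U\cap U'\cap U''$. There it is exactly the pseudocone axiom for the chains $W\subseteq U$, $W \subseteq U'$, $W\subseteq U''$. Alternatively, one can restrict the poset diagram to the full subposet on a basis and argue by a cofinality/descent statement, but the direct locality argument seems cleanest.
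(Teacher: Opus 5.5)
Your proof is correct, but it is organized differently from the paper. The paper gives no separate argument: it only says the result is ``established similarly'' to Theorem~\ref{Thm: Section Background Scheme: QCoh is a pseudolimit}. That means rerunning the Gluing Lemma directly on the poset diagram: glue the $F_UA$ along the $\phi_{V\subseteq U}$ to get an object of $\QCoh(X)$, and glue the components $\psi_U$ to get $\psi$.

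You instead apply that theorem to the cover by all affine opens and reduce to it. The one genuinely new input you isolate is that $U\cap U'$ need not be affine. So the transition isomorphisms over $U\cap U'$ are not part of the poset data. You manufacture them by gluing the local isomorphisms $\phi_{W\subseteq U'}^{-1}\circ\phi_{W\subseteq U}$ over affines $W\subseteq U\cap U'$. After that, three checks all reduce to equalities on affines:
\begin{itemize}
\item the triple-overlap cocycle condition;
\item the compatibility of the resulting $\rho_U$ with $\phi_{V\subseteq U}$, which uses $U\cap V=V$ and that $\phi_{V\subseteq V}$ is the unitor;
\item the modification step.
\end{itemize}

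What your route buys is that it makes explicit exactly the point the paper's ``similarly'' glosses over. A direct gluing on the poset would still have to verify compatibility of the gluing data on non-affine overlaps, which is precisely your first step. The direct route, in exchange, avoids passing through the cospan diagram at all.

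One caveat: your reduction uses the cover theorem in the form its proof actually establishes. There the pseudocones carry cocycle conditions on triple overlaps, and in the paper's proof also inclusion $2$-cells among cover members. The bare-cospan pseudolimit in its statement would not by itself enforce the cocycle condition. Since you verify the triple-overlap condition explicitly, this causes no gap.
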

\begin{corx}[cf. Corollary \ref{Cor: Section QCoh: QCoh Locally presentable}]
For any scheme $X$ the category $\QCoh(X)$ is a locally presentable category.
\end{corx}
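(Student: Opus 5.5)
The plan is to derive local presentability from the preceding theorem, which writes $\QCoh(X)$ as the pseudolimit $\operatorname*{pslim}_{U \in \mathbf{AffOp}(X)} \QCoh(U)$. The point is that each term is locally presentable and the pseudolimit is of a kind that stays inside locally presentable categories.

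\emph{Step 1: the terms.} For an affine open $U = \Spec A$, the equivalence $\QCoh(U) \simeq \Mod{A}$ (via $M \mapsto \widetilde{M}$) identifies $\QCoh(U)$ with a category of modules. This is locally finitely presentable, since $\Mod{A}$ is the category of models of a finitary algebraic theory, generated under filtered colimits by finitely presented modules with $A$ as a strong generator.

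\emph{Step 2: the transition functors.} For an inclusion $V \subseteq U$ of affine opens, say $V = \Spec B \to \Spec A = U$, the restriction $\QCoh(U) \to \QCoh(V)$ corresponds to extension of scalars $B \otimes_A (-) : \Mod{A} \to \Mod{B}$. This functor is a left adjoint, with right adjoint restriction of scalars. Hence it is cocontinuous, and in particular accessible, since it preserves filtered colimits.

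\emph{Step 3: invoke the limit theorem.} The $2$-category of locally presentable categories and cocontinuous functors is closed under small pseudolimits (bilimits) computed in $\fCat$. This is the standard result of Bird, also recorded in Makkai--Par\'e and in Ad\'amek--Rosick\'y (cf.\ their results on limits of accessible categories). Applying it to the diagram indexed by $\mathbf{AffOp}(X)^{\op}$, which is small because the affine opens form a set, shows that the pseudolimit is locally presentable. By the preceding theorem this pseudolimit is $\QCoh(X)$.

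The main obstacle is Step 3. One must check that the diagram produced in the preceding theorem is a genuine pseudofunctor into the $2$-category of locally presentable categories with colimit-preserving $1$-cells, not just into $\fCat$. The compositors come from $(g\circ f)^{\ast} \cong f^{\ast}g^{\ast}$, and these are natural isomorphisms between left adjoints, so they cause no trouble.

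If one wants to avoid citing Bird's theorem, the fallback is the Makkai--Par\'e argument. First, a pseudolimit of accessible categories along accessible functors is accessible; this is the step where the limit-closure results are really used. Second, cocompleteness comes directly: colimits in the pseudolimit are computed componentwise. This works because the transition functors preserve colimits, so a cocone of componentwise colimits carries compatible gluing isomorphisms. An accessible cocomplete category is locally presentable. Concretely, one can also bound the presentability rank by choosing a regular cardinal $\kappa$ larger than the number of affine opens, so that the finite-presentability conditions glue into $\kappa$-presentability in the limit.
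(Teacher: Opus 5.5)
Your proposal is correct and follows the paper's route: the pseudolimit decomposition of $\QCoh(X)$ over $\mathbf{AffOp}(X)$, locally presentable module categories as terms, cocontinuous (hence accessible) restriction functors, then a Makkai--Par\'e limit theorem plus ``accessible and cocomplete implies locally presentable''. Your fallback is exactly the paper's argument, but you justify cocompleteness by computing colimits componentwise, using that the transition functors preserve colimits; this is sharper than the paper, which infers cocompleteness only from $\QCoh(X)$ being Abelian.
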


In addition to the pseudouniversal perspective we describe above, we sketch the proof of the result which shows that the pullback functor $f^{\ast}$ of quasicoherent sheaves admits a right adjoint $f_{\square}$ for any morphism $f:X \to Y$ of schemes. We also take care to indicate that the functor $f_{\square}$ is naturally isomorphic to the direct image functor $f_{\ast}$ in the case that $f$ is both quasi-compact and quasi-separated:
\begin{propx}[{cf.\@ Propositions \ref{Prop: Section Background Scheme: Right adjoint for quasicoherent sheaves}, \ref{Prop: Section Background Scheme: Qcqs maps have quasicompact pushforward}}]
For any morphism $f:X \to Y$ of schemes there is an adjunction:
\[
\begin{tikzcd}
	\QCoh(X) \ar[rr, bend right = 20, swap, ""{name = R}]{}{f_{\square}} & & \QCoh(Y) \ar[ll, bend right = 20, swap, ""{name = L}]{}{f^{\ast}}
	\ar[from = L, to = R, symbol = \dashv]
\end{tikzcd}
\]
Furthermore, when $f$ is quasi-compact and quasi-separated (qcqs) then $f_{\square} \cong f_{\ast}$ for $f_{\ast}$ the direct image functor of sheaves.
\end{propx}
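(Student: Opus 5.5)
The plan has two parts: first construct $f_{\square}$ abstractly using local presentability, then identify it with $f_{\ast}$ when $f$ is qcqs.

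\textbf{Existence of $f_{\square}$.} By Corollary \ref{Cor: Section QCoh: QCoh Locally presentable}, both $\QCoh(X)$ and $\QCoh(Y)$ are locally presentable. By the adjoint functor theorem for locally presentable categories, it then suffices to show that $f^{\ast}:\QCoh(Y) \to \QCoh(X)$ preserves all small colimits. I would verify this in the following order.
\begin{enumerate}
\item The inclusion $\QCoh(X) \hookrightarrow \Mod{\Ocal_X}$ creates colimits. Quasi-coherence is local, and on an affine open $\Spec A$ the functor $M \mapsto \widetilde{M}$ commutes with colimits.
\item The sheaf-theoretic pullback $f^{\ast} = \Ocal_X \otimes_{f^{-1}\Ocal_Y} f^{-1}(-)$ is a left adjoint on all sheaves of modules, so it preserves colimits.
\item The sheaf pullback sends quasi-coherent sheaves to quasi-coherent sheaves. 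Locally on affines $\Spec B \to \Spec A$ this is the formula $\widetilde{M} \mapsto (B \otimes_A M)^{\sim}$.
\end{enumerate}
Together these show that $f^{\ast}$ on $\QCoh$ is cocontinuous, so a right adjoint $f_{\square}$ exists. Concretely, $f_{\square}$ is the quasi-coherator applied to $f_{\ast}$.

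\textbf{The qcqs case.} Here it is enough to show that $f_{\ast}\Fscr$ is quasi-coherent whenever $\Fscr$ is. Granting that:
\begin{itemize}
\item the usual adjunction $f^{\ast} \dashv f_{\ast}$ on $\Ocal$-modules restricts to the full subcategories $\QCoh(X)$ and $\QCoh(Y)$;
\item uniqueness of right adjoints then gives $f_{\square} \cong f_{\ast}$.
\end{itemize}

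Quasi-coherence is local on $Y$, so I would reduce to $Y = \Spec A$ affine. Then:
\begin{enumerate}
\item Quasi-compactness of $f$ lets us cover $X$ by finitely many affines $U_1,\dots,U_n$.
\item Quasi-separatedness lets us cover each $U_i \cap U_j$ by finitely many affines $V_{ijk}$.
\item The sheaf condition exhibits $f_{\ast}\Fscr$ as the equalizer of
\[
\prod_i (f|_{U_i})_{\ast}\Fscr|_{U_i} \rightrightarrows \prod_{i,j,k} (f|_{V_{ijk}})_{\ast}\Fscr|_{V_{ijk}} .
\]
\item Each factor is the pushforward along a map of affine schemes, which is $\widetilde{M}$ with $M$ regarded as an $A$-module, hence quasi-coherent.
\item The products are finite, and $\QCoh(Y)$ is closed under finite limits in $\Mod{\Ocal_Y}$ because $\Mod{A} \to \Mod{\Ocal_Y}$ via $M \mapsto \widetilde{M}$ is exact. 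So the equalizer is quasi-coherent.
\end{enumerate}
Finiteness is exactly where the qcqs hypothesis is used: infinite products of $\widetilde{M}$'s need not be quasi-coherent.

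\textbf{Main obstacle.} I expect the hardest step to be the bookkeeping in the qcqs argument. One must check that the equalizer presentation is compatible with restriction to basic opens $D(a) \subseteq \Spec A$, i.e.\ that forming the equalizer commutes with localization $A \to A_a$. This holds because localization is exact and finite products commute with tensoring. The verification will be done by comparing $\Gamma(f^{-1}D(a),\Fscr)$ with $\Gamma(X,\Fscr)_a$ using the same finite covers intersected with $f^{-1}D(a)$.
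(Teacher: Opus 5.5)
Your proposal is correct and follows essentially the same route as the paper. Existence of $f_{\square}$ comes from local presentability of $\QCoh(X)$ and $\QCoh(Y)$ together with cocontinuity of $f^{\ast}$ and the adjoint functor theorem, and in the qcqs case one shows $f_{\ast}\Fscr$ is quasi-coherent and then uses uniqueness of right adjoints. The only difference is that the paper cites EGA I, Proposition 9.2.1 for quasi-coherence of $f_{\ast}\Fscr$ once the finite affine covers are in hand, whereas you reprove it via the finite equalizer presentation; your closing localization check is already covered by the fact that quasi-coherent sheaves on $\Spec A$ are closed under finite limits of $\Ocal$-modules.
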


We additionally describe carefully in Section \ref{Subsection: QCoh CAlg} the category of quasicoherent sheaves of commutative $\Ocal_X$-algebras and the corresponding relative symmetric algebra and underlying quasicoherent sheaf functors $\RelSym{\Ocal_X}:\QCoh(X) \to \QCoh(X,\CAlg{\Ocal_X})$ and $\RelUnd{\Ocal_X}:\QCoh(X,\CAlg{\Ocal_X}) \to \QCoh(X)$. In Section \ref{Subscetion: Functoriality between QCoh and QCoh CAlg} we then show that the relative symmetric algebra and the underlying quasi-coherent sheaf functors both assemble to fibrations over $\Sch_{/S}$ and induce an adjunction of fibrations:
\begin{propx}[{cf.\@ Proposition \ref{Prop: Section Background Scheme: Rel Sym Rel Und adjunction at fibration level}}]
	For any base scheme $S$ there is an adjunction between fibrations:
\[
\begin{tikzcd}
	\QQCAlg \ar[dr]{}{} \ar[rr, bend right = 15, swap, ""{name = R}]{}{\RelUnd{(-)}} & & \QQCoh \ar[ll, swap, bend right = 15, ""{name = L}]{}{\RelSym{(-)}} \ar[dl] \\
	& \Sch_{/S}
	\ar[from = L, to = R, symbol = \dashv]
\end{tikzcd}
\]
\end{propx}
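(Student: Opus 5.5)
The plan is to reduce the statement to its affine shadow, Proposition \ref{Prop: Section Background Alg: Sym and Und adjunction between CAlg and Mod fibrations over crig}, and then glue along the pseudolimit description of $\QCoh(X)$ from Theorem \ref{Thm: Section Background Scheme: QCoh is a pseudolimit}. Concretely, I will build the adjunction in three layers: fibrewise, then cartesianness, then the unit and counit as vertical transformations.

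\textbf{Fibrewise adjunction.} For each $S$-scheme $X$, I first construct $\RelSym{\Ocal_X} \dashv \RelUnd{\Ocal_X}$ between $\QCoh(X)$ and $\QCoh(X,\CAlg{\Ocal_X})$. On an affine open $U = \Spec A$, a quasi-coherent sheaf is $\widetilde{M}$, and
\[
\RelSym{\Ocal_X}(\widetilde{M})|_U \cong \widetilde{\Sym{A}(M)}.
\]
The unit $M \to \Und{A}\Sym{A}(M)$ and counit $\Sym{A}\Und{A}(B) \to B$ come from the ring-level adjunction. These affine-local units and counits are compatible on intersections $U_i \times_X U_j$ because the ring-level adjunction is pseudonatural in the base; this is Corollary \ref{Cor: Section Background Alg: Pseudonat for Sym} together with the strict commutation of $\und$ with restriction, Proposition \ref{Prop: Section Background in Algebra: Underlying commutes strictly with restriction}. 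Compatible data on an affine cover then glue, by the pseudolimit theorem, to global natural transformations. The triangle identities can be checked affine-locally, and they hold there because they hold for rings.

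\textbf{Functors of fibrations.} Next I check that both functors are morphisms of fibrations over $\Sch_{/S}$, i.e.\ that they preserve cartesian arrows. For $f : X \to Y$, this means $f^{\ast}$ commutes with $\RelUnd{}$ and with $\RelSym{}$ up to coherent isomorphism:
\[
f^{\ast}\RelUnd{\Ocal_Y} \cong \RelUnd{\Ocal_X} f^{\ast}, \qquad f^{\ast}\RelSym{\Ocal_Y} \cong \RelSym{\Ocal_X} f^{\ast}.
\]
Again I reduce to affine $X = \Spec B \to Y = \Spec A$, where $f^{\ast}$ is $B \otimes_A (-)$. For $\und$ the statement is that extension of scalars of an algebra has underlying module $B \otimes_A M$. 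For $\sym$ the statement is the standard isomorphism
\[
B \otimes_A \Sym{A}(M) \cong \Sym{B}(B \otimes_A M),
\]
which is natural and satisfies the cocycle condition. I glue these isomorphisms using the pseudolimit decomposition and the fact that $f$ restricts to affine pieces after refining covers (affine opens of $X$ mapping into affine opens of $Y$).

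\textbf{Unit and counit over the identity.} Finally, I verify that the unit and counit are vertical transformations, i.e.\ they lie over the identity of $\Sch_{/S}$, which holds by construction. With the pieces above, this gives an adjunction in the 2-category of fibrations over $\Sch_{/S}$. Here I use the standard fact that a fibred left adjoint between fibrations whose fibrewise components are adjunctions, and which preserves cartesian maps, forms a fibred adjunction. The right adjoint $\RelUnd{}$ always preserves cartesian maps, and the left adjoint's cartesian preservation is the Beck--Chevalley-type isomorphism displayed above.

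The main obstacle is the coherence bookkeeping. The affine isomorphism $B \otimes_A \Sym{A}(M) \cong \Sym{B}(B\otimes_A M)$ must be compatible with the chosen pseudofunctor structure on $f^{\ast}$, and with the gluing isomorphisms on overlaps, so that the glued map is well defined and independent of the cover. I would handle this first by working with the pseudolimit over $\mathbf{AffOp}(X)$ (Theorem \ref{Thm: Section QCoh: Qcoh is pseudolim over poset of affine opens}), which avoids choosing covers. I would then check compatibility only for the inclusion of basic opens $D(g)$, where everything reduces to localization commuting with $\Sym{}$.
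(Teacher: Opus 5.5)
Your proposal is correct. It uses the same three ingredients as the paper: the fibrewise adjunction $\RelSym{\Ocal_X} \dashv \RelUnd{\Ocal_X}$, fibredness of both functors, and an assembly step. You realize two of them differently.

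\textbf{Fibrewise adjunction.} The paper does no gluing here. It transposes sectionwise, $\varphi_U \mapsto \varphi_U^{\sharp}$ for every open $U$. This is legitimate because maps from the sheafification of a presheaf into a sheaf are the same as maps from the presheaf. Your construction builds units and counits on affine opens via $\RelSym{\Ocal_X}(\widetilde{M})|_U \cong \widetilde{\Sym{A}(M)}$ and glues them through the pseudolimit theorem. That also works, but it is heavier. Note that the compatibility you need along affine inclusions is that $\und$ commutes with extension of scalars $A' \otimes_A (-)$, not the restriction-of-scalars statement you cite.

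\textbf{Assembly.} The paper simply writes down the bijection $(f,\rho) \mapsto (f,\rho^{\sharp})$ between maps $(X,\RelSym{\Ocal_X}(\Fscr)) \to (Y,\Ascr)$ and maps $(X,\Fscr) \to (Y,\RelUnd{\Ocal_Y}(\Ascr))$. This needs only the strict equality $\RelUnd{\Ocal_X}(f^{\ast}\Ascr) = f^{\ast}\RelUnd{\Ocal_Y}(\Ascr)$; fibredness of $\RelSym{(-)}$ is proved separately beforehand. You instead appeal to the Beck--Chevalley criterion, whose correct form is: a fibred functor whose fibre functors have left adjoints $L_X$ has a fibred left adjoint iff the canonical mates $L_X f^{\ast} \Rightarrow f^{\ast} L_Y$ are invertible. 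Your route has the merit of isolating exactly what makes the left adjoint a morphism of fibrations.

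\textbf{Coherence.} The coherence problem you anticipate largely dissolves if you take the comparison $\RelSym{\Ocal_X} f^{\ast} \Rightarrow f^{\ast}\RelSym{\Ocal_Y}$ to be the mate of that strict equality, rather than a glued isomorphism. It is then defined globally, and only its invertibility needs checking. Invertibility is affine-local, where the map is the isomorphism $B \otimes_A \Sym{A}(M) \cong \Sym{B}(B \otimes_A M)$. You would need this choice anyway: the fibrewise units assemble into a natural transformation on the total category only when the comparison isomorphism is that mate.
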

This then leads to a clean conceptual proof of the fact that the relative symmetric algebra functors commute up to isomorphism with pullback functors of quasicoherent sheaves (cf.\@ Corollary \ref{Cor: Section Background Scheme: The pullback Sym isos for qcoh}).

After establishing the level on which the symmetric algebra and underlying quasi-coherent sheaf functors live, we move on in Section \ref{Subsection: Coalgebra structure on Relative Sym} to relativizing Proposition \ref{Prop: Section Background Alg: Symmetric Alg is Coalg}. That is, we work towards proving and constructing the fact that the relative symmetric algebra functor $\RelSym{\Ocal_X}$ lifts to a functor from $\QCoh(X)$ to cocommutative Hopf algebra objects in $\QCoh(X,\CAlg{\Ocal_X})$:
\begin{thmx}[{cf.\@ Theorem \ref{Thm: Section Background Scheme: Hopf algebras for RelSym}, Corollary \ref{Cor: Section Background Scheme: Hopf algebra result in opposite land}}]
Let $X$ be a scheme. Then for any quasi-coherent sheaf $\Fscr$, the quadruple
\[
\left(\RelSym{\Ocal_X}(\Fscr), \nabla_{\Fscr}, \epsilon_{\Fscr}, S_{\Fscr}\right)
\]
is a cocommutative Hopf algebra in $\QCoh(X,\CAlg{\Ocal_X})$ and for any quasi-coherent sheaf map $\varphi:\Fscr \to \Gscr$ $\RelSym{\Ocal_X}(\varphi)$ is a morphism of Hopf algebras. In particular, there is an induced functor
\[
\widehat{\RelSym{\Ocal_X}}:\QCoh(X) \to \mathbf{Hopf}^{\operatorname{cocom}}(\QCoh(X))
\]
which sends quasi-coherent sheaves $\Fscr$ to the corresponding Hopf algebra on $\RelSym{\Ocal_X}(\Fscr)$ above and sends sheaf maps $\varphi$ to $\RelSym{\Ocal_X}(\varphi)$. In particular, taking opposites gives a functor to the category of Abelian group objects $\Ab\left(\QCoh(X,\CAlg{\Ocal_X})^{\op}\right)$.
\end{thmx}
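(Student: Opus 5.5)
The plan is to build the Hopf structure maps on $\RelSym{\Ocal_X}(\Fscr)$ affine-locally from the algebraic ones, glue them, and then check the Hopf axioms on an affine cover, where they reduce to the algebraic statements of Section \ref{Section: Background Alg}. Concretely, for an affine open $U = \Spec A \subseteq X$ with $\Fscr|_U \cong \widetilde{M}$, we have $\RelSym{\Ocal_X}(\Fscr)|_U \cong \widetilde{\Sym{A}(M)}$. Proposition \ref{Prop: Section Background Alg: Symmetric Alg is Coalg} equips $\Sym{A}(M)$ with its cocommutative Hopf structure:
\begin{itemize}
\item the comultiplication $m \mapsto m\otimes 1 + 1 \otimes m$;
\item the counit $m \mapsto 0$;
\item the antipode $m \mapsto -m$.
\end{itemize}
These are morphisms of $A$-algebras: they are induced by the $A$-linear maps $M \to M \oplus M$, $M \to 0$ and $-1:M\to M$. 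This uses the identification $\Sym{A}(M\oplus M)\cong \Sym{A}(M)\otimes_A\Sym{A}(M)$ and the fact that $\Sym{A}$ is a left adjoint, so it preserves coproducts, and tensor product is the coproduct in commutative algebras.

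The key observation is that the same description works globally. Since $\RelSym{\Ocal_X}$ is left adjoint to $\RelUnd{\Ocal_X}$ (Proposition \ref{Prop: Section Background Scheme: Rel Sym Rel Und adjunction at fibration level}), it preserves coproducts. Coproducts in $\QCoh(X,\CAlg{\Ocal_X})$ are tensor products over $\Ocal_X$, so
\[
\RelSym{\Ocal_X}(\Fscr\oplus\Fscr)\cong \RelSym{\Ocal_X}(\Fscr)\otimes_{\Ocal_X}\RelSym{\Ocal_X}(\Fscr).
\]
Applying $\RelSym{\Ocal_X}$ to the following maps in $\QCoh(X)$ then gives the structure morphisms:
\begin{itemize}
\item the diagonal $\Fscr \to \Fscr\oplus\Fscr$ gives $\nabla_{\Fscr}$;
\item the zero map $\Fscr \to 0$ gives $\epsilon_{\Fscr}$, using $\RelSym{\Ocal_X}(0)\cong\Ocal_X$;
\item the map $-\id_{\Fscr}$ gives $S_{\Fscr}$.
\end{itemize}
This avoids gluing arguments entirely, and these maps restrict on each affine $U$ to the algebraic maps above. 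That compatibility follows from Corollary \ref{Cor: Section Background Scheme: The pullback Sym isos for qcoh} applied to the open immersions.

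Next I verify the Hopf axioms (coassociativity, counitality, cocommutativity, and the antipode identity $\mu\circ(S\otimes\id)\circ\nabla = \eta\circ\epsilon$). The first three are images under $\RelSym{\Ocal_X}$ of the corresponding identities in $\QCoh(X)$. Diagonals in the additive category $\QCoh(X)$ make every object a cocommutative comonoid for $\oplus$, and $\RelSym{\Ocal_X}$ carries $\oplus$ to $\otimes$ strong-monoidally. The antipode identity involves the multiplication $\mu$, which is not in the image of a module map, so I would check it locally. Equality of two morphisms of quasi-coherent sheaves can be tested on an affine cover, and by the compatibility above, on $U=\Spec A$ the identity becomes the algebraic identity from Proposition \ref{Prop: Section Background Alg: Symmetric Alg is Coalg}. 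Alternatively, the two sides are algebra maps out of a free object, so it suffices that they agree on generators $\Fscr$, where both are $0$ since $m - m = 0$.

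Functoriality comes next. For $\varphi:\Fscr\to\Gscr$, $\RelSym{\Ocal_X}(\varphi)$ is an algebra map, and it commutes with $\nabla,\epsilon,S$ because $\varphi$ commutes with the diagonal, zero and negation maps in $\QCoh(X)$, together with naturality of the coproduct isomorphism. This yields $\widehat{\RelSym{\Ocal_X}}$. Finally, passing to $\QCoh(X,\CAlg{\Ocal_X})^{\op}$ turns $\otimes_{\Ocal_X}$ into a categorical product, so a cocommutative Hopf algebra becomes a commutative group object, that is, an abelian group object.

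The main obstacle I expect is bookkeeping the coherence of the coproduct isomorphism $\RelSym{\Ocal_X}(\Fscr\oplus\Gscr)\cong\RelSym{\Ocal_X}(\Fscr)\otimes\RelSym{\Ocal_X}(\Gscr)$. It must be natural and compatible with associators and unitors, so that it is genuinely strong monoidal, and it must agree on affines with the algebraic isomorphism. I would handle this by noting that it is the canonical comparison map for a coproduct-preserving left adjoint, hence automatically coherent.
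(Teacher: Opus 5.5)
Your proposal is correct, and it takes a different route from the paper.

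\textbf{The paper's route.} The paper works sectionwise. It defines $\nabla_{\Fscr}$ as the sheafification of the presheaf map $U\mapsto\nabla_{\Fscr(U)}$ on $\Sym{\Ocal_X(U)}(\Fscr(U))$, and defines $\epsilon_{\Fscr}$ and $S_{\Fscr}$ sectionwise. It invokes Proposition \ref{Prop: Section Background Alg: Strictness of Hopf for Restriction of Scalars} to see that these maps commute with restrictions. It applies Proposition \ref{Prop: Section Background Alg: Symmetric Alg is Coalg} on each $U$ to get naturality in $\Fscr$. It obtains the Hopf axioms by sheafifying the ring-level ones.

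\textbf{Your route.} You never touch sections. Every object of the additive category $\QCoh(X)$ is canonically a cocommutative comonoid for $\oplus$, in fact an abelian cogroup via $(\Delta, \Fscr\to 0, -\id)$. The left adjoint $\RelSym{\Ocal_X}$ preserves coproducts, so it is strong monoidal from $(\QCoh(X),\oplus,0)$ to $(\QCoh(X,\CAlg{\Ocal_X}),\otimes_{\Ocal_X},\Ocal_X)$ and transports this structure. The fibrewise adjunction of Proposition \ref{Prop: Section Background Scheme: Folklore Sym is left adj to Rel Und} already suffices here; you do not need the fibred version. Naturality in $\Fscr$, and the fact that $\RelSym{\Ocal_X}(\varphi)$ is a Hopf morphism, then come for free. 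The only concrete input is the generator-level check that your maps are the paper's, for instance $m\mapsto m\otimes 1+1\otimes m$.

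\textbf{A correction that removes your local check.} The multiplication $\mu$ \emph{is} the image of a module map. In commutative algebras $\otimes_{\Ocal_X}$ is the coproduct and $\mu$ is its codiagonal, so $\mu$ corresponds to $\RelSym{\Ocal_X}$ applied to the fold map $\Fscr\oplus\Fscr\to\Fscr$. Hence $\mu\circ(S_{\Fscr}\otimes\id)\circ\nabla_{\Fscr}$ corresponds to $\RelSym{\Ocal_X}((-\id)+\id)=\RelSym{\Ocal_X}(0)$. This factors through $\RelSym{\Ocal_X}(0)\cong\Ocal_X$ as the unit $\Ocal_X\to\RelSym{\Ocal_X}(\Fscr)$ composed with $\epsilon_{\Fscr}$. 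So the antipode axiom is formal too, and no affine-local verification is needed.

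\textbf{What each route buys.} The paper's argument is explicit and fits its gluing exposition. However, it leans on the presheaf formula for $\RelSym{\Ocal_X}(\Fscr)$ and on sheafifying both the maps and the target $\RelSym{\Ocal_X}(\Fscr)\otimes_{\Ocal_X}\RelSym{\Ocal_X}(\Fscr)$. Yours reduces to the fact that coproduct-preserving functors send abelian cogroups to abelian cogroups, and it sidesteps those sheafification issues. It also adapts to the semiadditive (rig) setting, where it yields exactly the cocommutative bialgebras of the paper's remark.
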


In Section \ref{Subsection: Relative Spectrum} we get to the final purely scheme-theoretic results which we discuss carefully: that of the relative spectrum functor and the category of affine structure maps over a fixed base scheme (cf.\@ Definition \ref{Defn: Scheme Background: Affine Morphism}). The relative spectrum functor is a core construction used both by Grothendieck in \cite{EGA2} and \cite{EGA44}. Additionally, the construction of the tangent strucutre on $\Sch_{/S}$ uses this functor crucially. The relative spectrum functor comes down to the following process: associating to a quasi-coherent sheaf $\Fscr$ on an $S$-scheme $X$ a ``fibre bundle with coefficients in $\Fscr$'' over the scheme $X$. This construction is precisely the relative spectrum functor $\RelSpec{X}:\QCoh(X,\CAlg{\Ocal_X})^{\op} \to \Sch_{/X}$ (cf.\@ Definition \ref{Defn: Section Scheme Background: Relspec functor}), and working with it is the last major purely scheme-theoretic ingredient we need to understand before moving to working towards understanding \emph{differential} algebraic geometry. The functor $\RelSpec{X}$ also plays an important role in the proof of \cite[Theorem 4.28]{GeoffJSDiffBunComAlg} (which shows an equivalence $\DBun(X)^{\op} \simeq \QCoh(X)$), as it is the technical tool that allows one to move between quasi-coherent sheaves over $X$ to schemes $Y \to X$ with affine structure map. Because of this we spend time defining and working with both $\RelSpec{X}$ and affine morphisms in algebraic geometry. We ultimately conclude this section with proofs that the relative spectrum functor is an equivalence of categories, how it interacts with pullbacks, how it admits pushforwards by post-composing with affine structure maps, and also how it interacts with the Hopf-algebra valued functor $\widehat{\RelSym{\Ocal_S}}$:
\begin{thmx}[{cf.\@ Corollary \ref{Cor: Section Background Scheme: Equiv of Cats for Relspec}, Proposition \ref{Prop: Section Background Scheme: Incl of Aff is cont}, Corollary \ref{Cor: Section Background Scheme: Relative Spec of Relative Sym is functor into Ab Sch}}]
For any scheme $X$ if $\mathbf{Aff}_{/X}$ denotes the category of schemes over $X$ with affine structure map then there is an equivalence of categories:
\[
\begin{tikzcd}
	\mathbf{Aff}_{/S} \ar[rr, swap, bend right = 30, ""{name = R}]{}{X \mapsto f_{\ast}(\Ocal_X)} & & \QCoh(X, \CAlg{\Ocal_X})^{\op} \ar[ll, bend right = 30, swap, ""{name = L}]{}{\RelSpec{X}}
	\ar[from = L, to = R, symbol = \simeq]
\end{tikzcd}
\]
Furthermore:
\begin{enumerate}
	\item  The inclusion functor $\operatorname{incl}:\mathbf{Aff}_{/X} \to \Sch_{/X}$ is continuous;
	\item For any quasi-coherent sheaf $\Fscr$ on $X$, the scheme $\RelSpec{X}(\RelSym{\Ocal_X}(\Fscr))$ is an Abelian group object in $\Sch_{/X}$.
\end{enumerate}
\end{thmx}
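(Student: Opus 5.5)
The plan is to reduce everything to the affine case and glue, using the relative spectrum construction together with the quasi-coherent gluing results established earlier in the paper.

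\emph{Step 1: the equivalence.} For an affine morphism $f:Y \to X$, the sheaf $f_{\ast}\Ocal_Y$ is a quasi-coherent $\Ocal_X$-algebra. To see this, check it on an affine open cover $U_i = \Spec A_i$ of $X$: there $f^{-1}(U_i) = \Spec B_i$ and $f_{\ast}\Ocal_Y|_{U_i} \cong \widetilde{B_i}$. Since $f$ is affine it is qcqs, so by Proposition \ref{Prop: Section Background Scheme: Qcqs maps have quasicompact pushforward} we have $f_\ast = f_\square$, and quasi-coherence follows.

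Conversely, $\RelSpec{X}(\Ascr)$ is built by gluing the schemes $\Spec \Ascr(U_i)$ along the canonical isomorphisms over the overlaps $U_i \times_X U_j$. This uses the pseudolimit description of $\QCoh(X)$ and $\QCoh(X,\CAlg{\Ocal_X})$ (Theorem \ref{Thm: Section Background Scheme: QCoh is a pseudolimit}).

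The unit and counit are then checked locally:
\begin{itemize}
\item $\RelSpec{X}(f_\ast \Ocal_Y)|_{U_i} \cong \Spec B_i \cong f^{-1}(U_i)$;
\item $p_\ast \Ocal_{\RelSpec{X}(\Ascr)}|_{U_i} \cong \widetilde{\Ascr(U_i)}$.
\end{itemize}
These local isomorphisms glue because both functors are compatible with restriction to affine opens. That compatibility reduces to the statement that $\Spec$ is an anti-equivalence between rings and affine schemes, together with the fact that localization $A_g \otimes_A B \cong B_g$ commutes with $\Spec$.

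I expect this step to be the main obstacle. The gluing data must satisfy the cocycle condition, and the local isomorphisms must be natural in the affine open. The cleanest route is to use the poset description of Theorem \ref{Thm: Section QCoh: Qcoh is pseudolim over poset of affine opens}. There one defines $\RelSpec{X}(\Ascr)$ as the colimit in $\Sch$ of $\Spec \Ascr(U)$ over $U \in \mathbf{AffOp}(X)$. This avoids choosing a cover, and the cocycle condition becomes automatic from functoriality in $U$.

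\emph{Step 2: continuity of the inclusion.} Limits in $\mathbf{Aff}_{/X}$ correspond under the equivalence to colimits in $\QCoh(X,\CAlg{\Ocal_X})$. These colimits are computed affine-locally, for instance via tensor products and coequalizers of algebras, and their restrictions to each $U$ are the ring-level colimits. Over each affine $U$, the scheme $\Spec$ of a ring colimit is the limit in $\Sch_{/U}$ of the corresponding affine schemes, since $\Spec$ is a right adjoint to global sections. Limits in $\Sch_{/X}$ are local on $X$, so these local limits glue, and the limit computed in $\mathbf{Aff}_{/X}$ agrees with the one in $\Sch_{/X}$. Finally, the terminal object $\id_X$ is affine.

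\emph{Step 3: the abelian group object.} By Corollary \ref{Cor: Section Background Scheme: Hopf algebra result in opposite land}, $\RelSym{\Ocal_X}(\Fscr)$ is an abelian group object in $\QCoh(X,\CAlg{\Ocal_X})^{\op}$. The functor $\RelSpec{X}$ is an equivalence by Step 1, so it preserves finite products and hence abelian group objects, giving an abelian group object in $\mathbf{Aff}_{/X}$. By Step 2 the inclusion into $\Sch_{/X}$ preserves finite products, so the group structure transfers to $\Sch_{/X}$.
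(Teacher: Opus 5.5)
Your proposal is correct. Steps 1 and 3 follow the paper's approach. For the equivalence, the paper also argues by Zariski descent along affine opens and defers the construction of $\RelSpec{X}$ to \cite[Proposition 1.3.1]{EGA2}. That construction is exactly your gluing of the $\Spec\Ascr(U)$ over affine opens $U$. For item (2), the paper likewise transports the abelian group objects of Corollary~\ref{Cor: Section Background Scheme: Hopf algebra result in opposite land} across the equivalence and then along the inclusion.

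Your claim that the cocycle condition becomes automatic over the poset $\mathbf{AffOp}(X)$ is right, but it rests on two facts you should state explicitly:
\begin{enumerate}
\item For affine opens $V\subseteq U$ one has $\Ascr(V)\cong\Ascr(U)\otimes_{\Ocal_X(U)}\Ocal_X(V)$. Hence each transition map is an open immersion onto the preimage of $V$.
\item Any two affine opens containing a point contain a common affine neighbourhood of it. Hence zigzags in the colimit create no extra identifications.
\end{enumerate}

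Where you genuinely differ from the paper is Step 2. The paper treats only fibre products. It observes formally that $\RelSpec{S}(\Ascr\otimes_{\Rscr}\Bscr)$ is the pullback in $\mathbf{Aff}_{/S}$, because $\RelSpec{S}$ is an anti-equivalence and $\otimes_{\Rscr}$ is the pushout of algebras. It then cites \cite[Proposition 1.4.6]{EGA2} for the fact that this is also the pullback in $\Sch_{/S}$.

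Your argument runs differently:
\begin{enumerate}
\item You compute an arbitrary colimit of quasi-coherent algebras affine-locally. This works because base change along $\Ocal_X(U)\to\Ocal_X(V)$ preserves colimits of algebras, so the ring-level colimits glue.
\item You use that $\Spec:\CAlg{A}^{\op}\to\Sch_{/\Spec A}$ is right adjoint to global sections, so it preserves all limits.
\item You glue, using that limits in a slice of $\Sch$ are local on the base.
\end{enumerate}

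This route is more self-contained, and it proves strictly more. It covers all small limits, including infinite products and cofiltered limits, so it actually justifies the word ``continuous''. The paper's argument only establishes preservation of fibre products, and hence of finite limits together with the terminal object $\id_X$. That weaker statement is all that item (2) and the later tangent-categorical uses require. The paper's route buys brevity by handing the non-formal comparison with $\Sch_{/S}$ to EGA.
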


After developing a working familiarity of working with quasi-coherent sheaves and schemes affine over a fixed base scheme, we revisit and recall the theory of K{\"a}hler differentials and derivations as they appear in commutative algebra and algebraic geometry in Section \ref{Section: Diffles}. The basic theory of derivations and K{\"a}hler differentials of commutative rings and commutative algebras is recalled in Section \ref{Subsection:Derivations for rings}; we assume the reader is at least passingly familiar with said theory at the ring-theoretic level and so provide a brief introduction. What is of interest and ``new'' here is simply organizing the fact that the construction of taking a commutative $R$-algebra and forming the module of relative K{\"a}hler differentials actually lifts to a functor from the category $\CAlg{R}$ to the module fibration $\MMod$ and some of the consequences of that construction.
\begin{propx}[{cf.\@ Proposition \ref{Prop: Section Kahlers: Kahler diffles are functors}}]
	The K{\"a}hler differentials arise as a functor
\[
\Kah{(-)}{R}:\CAlg{R} \to \MMod
\]
given on objects by $A \mapsto (A,\Kah{A}{R})$ and on morphisms $f:A \to B$ of $R$-algebras via
\[
\Kah{(-)}{R}(f) := (f,\mathrm{d}f):\left(A,\Kah{A}{R}\right) \to \left(B,\Kah{B}{R}\right).
\]
\end{propx}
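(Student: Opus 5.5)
We need to show that the assignment $A \mapsto (A,\Kah{A}{R})$, $f \mapsto (f,\mathrm{d}f)$, is a well-defined functor into the module fibration $\MMod$. Recall that a morphism $(A,M) \to (B,N)$ in $\MMod$ is a pair $(f,\phi)$ with $f:A \to B$ a ring map and $\phi:M \to W_f(N)$ an $A$-linear map into the restriction of scalars. Equivalently, by the adjunction between extension and restriction of scalars, $\phi$ corresponds to a $B$-linear map $B \otimes_A M \to N$. So the first task is to construct $\mathrm{d}f:\Kah{A}{R} \to W_f(\Kah{B}{R})$.

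For this I use the universal property of Kähler differentials. The composite $A \xrightarrow{f} B \xrightarrow{\mathrm{d}_B} \Kah{B}{R}$ is an $R$-linear derivation of $A$ into the $A$-module $W_f(\Kah{B}{R})$. It is additive, it kills the image of $R$ because $f$ is an $R$-algebra map and $\mathrm{d}_B$ kills $R$, and the Leibniz rule $\mathrm{d}_B(f(aa')) = f(a)\mathrm{d}_B f(a') + f(a')\mathrm{d}_B f(a)$ is exactly Leibniz for the $A$-action on $W_f(\Kah{B}{R})$. By the universal property of $(\Kah{A}{R},\mathrm{d}_A)$ there is then a unique $A$-linear map $\mathrm{d}f:\Kah{A}{R} \to W_f(\Kah{B}{R})$ with $\mathrm{d}f \circ \mathrm{d}_A = \mathrm{d}_B \circ f$. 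Concretely, $\mathrm{d}f(a\,\mathrm{d}_A a') = f(a)\,\mathrm{d}_B f(a')$. This makes $(f,\mathrm{d}f)$ a morphism of $\MMod$ lying over $f$.

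Functoriality follows from the uniqueness clause. For the identity, $\mathrm{d}(\id_A)$ and $\id_{\Kah{A}{R}}$ both satisfy $g \circ \mathrm{d}_A = \mathrm{d}_A$, so they coincide. For composable $f:A \to B$ and $g:B \to C$, composition in $\MMod$ sends the pair to $(g\circ f, W_f(\mathrm{d}g)\circ \mathrm{d}f)$. Here I use that restriction of scalars composes strictly, $W_f W_g = W_{g\circ f}$ (Proposition \ref{Prop: Section Background in Algebra: Underlying commutes strictly with restriction} and the surrounding discussion), so that this is a well-typed map into $W_{gf}(\Kah{C}{R})$. Both $W_f(\mathrm{d}g)\circ \mathrm{d}f$ and $\mathrm{d}(gf)$ precompose with $\mathrm{d}_A$ to give $\mathrm{d}_C \circ g \circ f$, so uniqueness forces them to be equal.

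The only real obstacle is bookkeeping. One has to make sure the composition law of $\MMod$, as set up earlier in the paper, really is the one written above, and not a version routed through extension of scalars where coherence isomorphisms $B\otimes_A(-)$ could appear. If the paper's $\MMod$ is presented via pushforward/extension, I would transport across the adjunction $B\otimes_A(-) \dashv W_f$, which is natural in $f$. The uniqueness argument then goes through unchanged, since the adjunction is a bijection on hom-sets.
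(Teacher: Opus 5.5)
Your proof is correct and takes essentially the paper's approach: the paper defines $\mathrm{d}f$ by $\mathrm{d}a \mapsto \mathrm{d}(f(a))$, extended $A$-linearly into $\res_f(\Omega_{B/R})$, and leaves functoriality implicit; your universal-property construction of $\mathrm{d}f$ and your uniqueness argument for identities and composites simply make this precise. One citation needs fixing: the strict equality $\res_f \circ \res_g = \res_{g\circ f}$ is recorded in the definition of the module pseudofunctor (whose compositor has identity mate), not in the proposition that the underlying-module functors commute with restriction.
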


In Section \ref{Subsection: Scheme Diffles} we change gears a bit and move to study differentials and derivations for schemes. It is worth noting that in this paper we take a particularly nonstandard approach towards defining the sheaf $\Kah{X}{S}$ of relative K{\"a}hler differentials. Instead of defining $\Kah{X}{S}$ as a pullback of the diagonal $\Delta_f:X \to X \times_S X$ for an $S$-scheme $f:X \to S$, we define $\Kah{X}{S}$ by a direct sheaf-theoretic argument and then prove that it represents the functor $\mathsf{Der}_S(\Ocal_X,-):\QCoh(X) \to \Mod{\Ocal_X(\lvert X \rvert)}$:
\begin{propx}[{cf.\@ Propositions \ref{Prop: Section Kahlers: Relative Kahlers are qcoh on X}, \ref{Prop: Section Kahlers: Kahler Diffles are Corepresenting}}]
Let $f:X \to S$ be a map of schemes. Then the presheaf of relative K{\"a}hler differentials
\[
\Kah{X}{S}\left(U\right) := \Kah{\Ocal_X(U)}{(f^{-1}\Ocal_S)(U)}
\] 
is a quasi-coherent sheaf on $X$ and induces a natural isomorphism
\[
\mathsf{Der}_S\left(\Ocal_X,-\right) \cong \QCoh(X)\left(\Kah{X}{S},-\right).
\]
\end{propx}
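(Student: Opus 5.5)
Let me write $\Omega$ for the presheaf $U \mapsto \Kah{\Ocal_X(U)}{(f^{-1}\Ocal_S)(U)}$. The plan is to reduce everything to affine opens, where the ring-theoretic facts of Section \ref{Subsection:Derivations for rings} apply, and then glue using the pseudolimit description of $\QCoh(X)$ (Theorem \ref{Thm: Section QCoh: Qcoh is pseudolim over poset of affine opens}). Strictly speaking, the presheaf written in the statement must be sheafified: $(f^{-1}\Ocal_S)(U)$ is not literally $\Ocal_S(V)$, and Kähler differentials do not commute with arbitrary limits. So I would define $\Kah{X}{S}$ as the sheaf associated to $\Omega$. The main content is then that on the basis of affine opens $U=\Spec B$ mapping into affine opens $V=\Spec A$ of $S$, the value is already correct, namely $\Kah{B}{A}$.

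\textbf{Step 1 (affine computation).} Let $U=\Spec B\subseteq X$ and $V=\Spec A\subseteq S$ with $f(U)\subseteq V$. I would show that the stalkwise/presheaf values agree with the $\Ocal_U$-module $\widetilde{\Kah{B}{A}}$. The key algebraic input is the localization formula: for $g\in B$ there is a natural isomorphism $\Kah{B_g}{A}\cong (\Kah{B}{A})_g$. In addition, $\Kah{B}{A}\cong\Kah{B}{A'}$ whenever $A\to A'$ is a localization through which $A\to B$ factors, since derivations vanishing on $A$ automatically vanish on inverses of elements of $A$. This handles the discrepancy between $(f^{-1}\Ocal_S)(U)$ and $\Ocal_S(V)$: the former is a filtered colimit of localizations of $A$, as far as the relevant elements are concerned. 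From this, the sheafification of $\Omega$ restricted to $\Spec B$ agrees on basic opens $D(g)$ with $\widetilde{\Kah{B}{A}}$. Since a sheaf is determined by its values on a basis, this gives $\Kah{X}{S}|_U\cong\widetilde{\Kah{B}{A}}$, which is quasi-coherent. Because quasi-coherence is local, $\Kah{X}{S}\in\QCoh(X)$. Compatibility on overlaps is automatic, because the identifications are natural in the rings, via Proposition \ref{Prop: Section Kahlers: Kahler diffles are functors}.

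\textbf{Step 2 (corepresentability).} Next I need a universal derivation $\mathrm{d}:\Ocal_X\to\Kah{X}{S}$, obtained from the sections-wise maps $\mathrm{d}_U$ followed by sheafification. I then show that composition with $\mathrm{d}$ gives a bijection $\QCoh(X)(\Kah{X}{S},\Fscr)\to\mathsf{Der}_S(\Ocal_X,\Fscr)$, natural in $\Fscr$.
\begin{itemize}
\item \emph{Injectivity.} Two maps agreeing after composing with $\mathrm{d}$ agree on each affine $U$, because $\Kah{B}{A}$ is generated by $\mathrm{d}B$. They are therefore equal as sheaf maps.
\item \emph{Surjectivity.} An $S$-derivation $D:\Ocal_X\to\Fscr$ restricts on each affine $U$ to an $A$-derivation $B\to\Fscr(U)$. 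By the ring-level universal property this corresponds to a unique $B$-linear map $\Kah{B}{A}\to\Fscr(U)$, i.e.\ a map $\widetilde{\Kah{B}{A}}\to\Fscr|_U$, using that $\Fscr|_U$ is quasi-coherent so that $\Gamma$ and $\widetilde{(-)}$ are adjoint equivalences. The uniqueness just established makes these local maps agree on overlaps of affine opens. They therefore glue, either by the sheaf property of $\mathcal{H}om$ or by the pseudolimit description of $\QCoh(X)$, to a global map.
\end{itemize}
Naturality in $\Fscr$ is clear because everything is defined by postcomposition.

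\textbf{Main obstacle.} I expect the hard part to be Step 1, specifically the bookkeeping with $f^{-1}\Ocal_S$. Its sections over $U$ are only a sheafified colimit of $\Ocal_S(V)$ over $V\supseteq f(U)$. So one must check carefully that replacing $\Ocal_S(V)$ by $(f^{-1}\Ocal_S)(U)$ changes neither the Kähler module nor the derivations. My first attempt would be to observe that an $A$-derivation kills the image of every $\Ocal_S(V')$ for affine $V'\subseteq V$ containing $f(U)$, since such a $V'$ is covered by basic opens, i.e.\ localizations of $A$. It then kills the colimit, and hence also the sheafification, by locality.
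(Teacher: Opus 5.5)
Your proof is correct. Its overall shape matches the paper's: compute affine-locally using base change and localization of K\"ahler differentials, globalize, and get corepresentability from the ring-level universal property on sections. It differs from the paper at two points, and in both places your version is the one that actually works.

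\emph{Sheafification.} The paper asserts that the presheaf $U\mapsto\Kah{\Ocal_X(U)}{(f^{-1}\Ocal_S)(U)}$ is itself a sheaf. In the proof of Proposition~\ref{Prop: Section Kahlers: Relative Kahlers are qcoh on X} it deduces this from the footnoted claim that a presheaf which is a sheaf on each member of an open cover, with matching restrictions on overlaps, is a sheaf. That claim is false, and so is the conclusion. Take an elliptic curve $E$ over a field $k$ and $S=\Spec k$. The presheaf has global sections $\Kah{k}{k}=0$, whereas compatible sections over an affine cover glue to $H^0(E,\Kah{E}{k})\cong k$. So the statement holds only for the sheafification, which agrees with the presheaf on affine opens whose image lies in an affine open of $S$. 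That is exactly what you prove.

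\emph{The affine step.} In Lemma~\ref{Lemma: Section Kahlers: Relative Kahlers are qcoh over Affine Schemes} the paper identifies $(f^{-1}\Ocal_A)(D(b))$ with $A[\Sigma_b^{-1}]$, where $\Sigma_b=\varphi^{-1}(b)$, and uses a pushout square with $B\to B[b^{-1}]$. This fails in general. For $A=k[t]\subseteq B=k[t,x]$ and $b=tx$ one has $\Sigma_b=\emptyset$, yet $t$ is a unit in $(f^{-1}\Ocal_A)(D(b))$. The error happens to be harmless for the K\"ahler module. Two of your observations are what make the reduction to $\Kah{B}{A}$ rigorous: that $\Kah{B}{A}\cong\Kah{B}{A'}$ for any localization $A\to A'$ through which $A\to B$ factors, and your locality argument that $A$-derivations kill the image of $(f^{-1}\Ocal_S)(U)$.

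\emph{Corepresentability.} The paper writes a sectionwise chain of bijections. You instead check that $\alpha\mapsto\alpha\circ\mathrm{d}$ is injective and surjective affine-locally, then glue using uniqueness. Both are fine. With your sheafified definition there is also a shorter, purely formal route. By the ring-level universal property applied on each open $U$, an $S$-derivation $\partial:\Ocal_X\to\Fscr$ is the same thing as a morphism of presheaves of $\Ocal_X$-modules $\Omega\to\Fscr$. Naturality in $U$ holds because $\partial$ is a map of sheaves and the restriction maps of $\Omega$ send $\mathrm{d}a$ to $\mathrm{d}(a|_{U'})$. Such morphisms correspond to sheaf maps $\Kah{X}{S}\to\Fscr$ by the universal property of sheafification. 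This route uses neither the affine computation nor quasi-coherence of $\Fscr$, so your Step 1 is needed only to show that $\Kah{X}{S}$ is quasi-coherent.
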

By using this construction and taking this perspective, we are able to give a clean and conceptual construction of the fact that the relative K{\"a}hler differentials arise as a functor between the category of $S$-schemes and the fibration of quasi-coherent sheaves.
\begin{propx}[{cf.\@ Proposition \ref{Prop: Section Kahlers: Functoriality of Kahlers for QCoh}}]
For any scheme $S$ there is a functor 
\[
\Kah{(-)}{S}:\Sch_{/S} \to \mathbb{QC}
\]
which sends schemes $X$ to $(X,\Ocal_X)$ and which sends morphisms $f:X \to Y$ in $\Sch_{/S}$ to the morphism $(f,u_f):(X,\Kah{X}{S}) \to (Y,\Kah{Y}{S})$ where $u_f$ is the map in the Relative Cotangent Complex indicated below:
\[
\begin{tikzcd}
	f^{\ast}\Kah{Y}{S} \ar[r]{}{u_f}  & \Kah{X}{S} \ar[r] & \Kah{X}{Y} \ar[r] & 0
\end{tikzcd}
\]
\end{propx}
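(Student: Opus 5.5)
The plan is to build the functor in two layers. On objects, $X$ goes to $(X,\Kah{X}{S})$; the statement's $(X,\Ocal_X)$ is presumably a typo, since the morphism component clearly uses $\Kah{X}{S}$. On morphisms, a map $f:X\to Y$ over $S$ goes to the pair $(f,u_f)$, which I read as a morphism in the fibration $\mathbb{QC}$ of quasi-coherent sheaves over $\Sch_{/S}$. I will take morphisms there to be pairs $(f,\varphi)$ with $\varphi:f^{\ast}\Gscr\to\Fscr$ a map in $\QCoh(X)$, which is the convention the direction of $u_f$ dictates.

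To construct $u_f$, I use Proposition \ref{Prop: Section Kahlers: Kahler Diffles are Corepresenting}. A map $f^{\ast}\Kah{Y}{S}\to\Kah{X}{S}$ corresponds, by the adjunction $f^{\ast}\dashv f_{\square}$ (or, more concretely, via $f^{-1}$ and extension of scalars along $f^{\sharp}$), to an $S$-derivation $\Ocal_Y\to f_{\ast}\Kah{X}{S}$. The derivation I take is $\mathrm{d}_{X/S}\circ f^{\sharp}$. It is $f^{-1}\Ocal_S$-linear, hence an $S$-derivation, precisely because $f$ commutes with the structure maps to $S$, so that $f^{\sharp}$ restricted along $\Ocal_S$ is the structure map of $X$. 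Locally, on affine opens $V=\Spec B\subseteq Y$ and $U=\Spec A\subseteq f^{-1}(V)$ over an affine $W=\Spec R\subseteq S$, this map is just the ring-level map $A\otimes_B\Kah{B}{R}\to\Kah{A}{R}$, $a\otimes \mathrm{d}b\mapsto a\,\mathrm{d}f(b)$. That is the map produced by Proposition \ref{Prop: Section Kahlers: Kahler diffles are functors} followed by the adjunction for extension of scalars, and it is exactly the first map in the standard ring-theoretic cotangent sequence. The local maps glue because they are all defined by the single formula $\mathrm{d}\circ f^{\sharp}$. Alternatively, gluing follows from the pseudolimit description of Theorem \ref{Thm: Section Background Scheme: QCoh is a pseudolimit}, together with compatibility of Kähler differentials with localization. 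The same local picture shows that the sequence $f^{\ast}\Kah{Y}{S}\to\Kah{X}{S}\to\Kah{X}{Y}\to 0$ is exact, and this justifies the name.

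Next comes functoriality. For identities, $u_{\id}$ corresponds to the derivation $\mathrm{d}_{X/S}$ itself, so it is the identity up to the unit isomorphism $\id^{\ast}\cong\id$. For a composite $X\xrightarrow{f}Y\xrightarrow{g}Z$, I must show that $u_{g\circ f}$ equals
\[
(g\circ f)^{\ast}\Kah{Z}{S}\cong f^{\ast}g^{\ast}\Kah{Z}{S}\xrightarrow{f^{\ast}u_g}f^{\ast}\Kah{Y}{S}\xrightarrow{u_f}\Kah{X}{S},
\]
which is exactly the composition law in $\mathbb{QC}$, with the pseudofunctor compositor inserted. Both sides are maps out of $(g\circ f)^{\ast}\Kah{Z}{S}$. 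By corepresentability and the adjunction, they are determined by their associated $S$-derivations $\Ocal_Z\to(g\circ f)_{\ast}\Kah{X}{S}$. The left side gives $\mathrm{d}_X\circ (g\circ f)^{\sharp}$. For the right side, I would trace $\mathrm{d}_Z$ through $u_g$ to $g_{\ast}(\mathrm{d}_Y)\circ g^{\sharp}$, and then through $u_f$ to $\mathrm{d}_X\circ f^{\sharp}\circ g^{\sharp}$. These agree because $(g\circ f)^{\sharp}=g_{\ast}(f^{\sharp})\circ g^{\sharp}$. The cleanest way to make this rigorous is to check it on sections over affine opens, where it reduces to functoriality of $\mathrm{d}$ at the ring level (Proposition \ref{Prop: Section Kahlers: Kahler diffles are functors}): on a generator, $a\otimes\mathrm{d}c\mapsto a\,\mathrm{d}(fg)(c)$ on both sides.

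I expect the main obstacle to be the bookkeeping of the compositor isomorphisms $f^{\ast}g^{\ast}\cong(g f)^{\ast}$ in the composition law of $\mathbb{QC}$. The issue is to make sure the adjunction transposes really are compatible with these coherence cells. I would handle this by working locally on affine covers, where the pullback is $A\otimes_B(-)$ and the compositor is the canonical associativity isomorphism $A\otimes_B(B\otimes_C M)\cong A\otimes_C M$. The equality can then be checked on the generators $a\otimes \mathrm{d}c$. Finally, two maps of quasi-coherent sheaves that agree on an affine cover are equal.
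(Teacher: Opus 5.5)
The paper gives no proof of this proposition. It takes $u_f$ and the exactness of $f^{\ast}\Kah{Y}{S}\to\Kah{X}{S}\to\Kah{X}{Y}\to 0$ from EGA IV (Corollaire 16.4.19), as the schemified form of the ring-level transpose $(\mathrm{d}f)^{\sharp}$ of Remark \ref{Remark: Section Kahlers: Cotangent sequence}. It then treats functoriality into $\mathbb{QC}$ as evident by analogy with Proposition \ref{Prop: Section Kahlers: Kahler diffles are functors}.

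Your proof is correct and supplies what the paper leaves out. You characterize $u_f$ intrinsically as the transpose of the $S$-derivation $f_{\ast}(\mathrm{d}_{X/S})\circ f^{\sharp}$ via Proposition \ref{Prop: Section Kahlers: Kahler Diffles are Corepresenting}. You identify it on affine pieces with $a\otimes\mathrm{d}b\mapsto a\,\mathrm{d}(f^{\sharp}b)$. You then check the identity law and the composition law of $\mathbb{QC}$ on the $A$-module generators $1\otimes\mathrm{d}c$. This includes the compositor $\phi_{f,g}$, which locally is the associativity isomorphism $A\otimes_B(B\otimes_C M)\cong A\otimes_C M$.

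The paper's route rests entirely on the external reference and never verifies the composition law. Yours proves the functor claim itself from the paper's own corepresentability result, and recovers exactness locally from the ring-level sequence. You are also right that $(X,\Ocal_X)$ in the statement should read $(X,\Kah{X}{S})$.

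There are two small corrections.

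First, the adjunction $f^{\ast}\dashv f_{\square}$ of Proposition \ref{Prop: Section Background Scheme: Right adjoint for quasicoherent sheaves} does not give you derivations into $f_{\ast}\Kah{X}{S}$. The functor $f_{\square}$ differs from $f_{\ast}$ unless $f$ is quasi-compact and quasi-separated, and $f_{\ast}\Kah{X}{S}$ need not be quasi-coherent. So Proposition \ref{Prop: Section Kahlers: Kahler Diffles are Corepresenting}, as stated with targets in $\QCoh(Y)$, does not apply directly. You can fix this in either of two ways:
\begin{itemize}
\item Use the module-level adjunction $f^{\ast}\dashv f_{\ast}$ between $\Mod{\Ocal_X}$ and $\Mod{\Ocal_Y}$, together with the universal property of $\Kah{Y}{S}$ for arbitrary $\Ocal_Y$-module targets. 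The sectionwise proof of that proposition never uses quasi-coherence of the target.
\item Define $u_f$ directly by your affine-local formula and glue, which avoids the issue altogether.
\end{itemize}

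Second, the linearity needed for an $S$-derivation on $Y$ is over $\nu_Y^{-1}\Ocal_S$, not $f^{-1}\Ocal_S$.
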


The penultimate section of this paper, Section \ref{Section: Tan Cats}, concerns tangent categories and the tangent category of schemes in particular. In Section \ref{Subsection: Tan Cat Exposition} we recall the definitions of a tangent category (cf.\@ Definition \ref{Defn: Section Tangent: Tangent category}), differential bundles (cf.\@ Definition \ref{Defn: Section Tangent: Differential Bundle}) and their morphisms (cf.\@ Definition \ref{Defn: Section Tangent: Morphism of dbundles}). Because tangent categories are likely as familiar to the algebraic geometer as the sheaf of relative K{\"a}hler differentials on a scheme and the relative spectrum functor are to the category theorist, we have also taken the perspective that it is important to give a myriad and varied list of examples of tangent categories and their differential bundles. We also give a very explicit and full-detail description of the tangent category of affine schemes (over an affine base, of course) in Example \ref{Example: The tangent category of affine R schemes}, as understanding this is what the core of our construction of the tangent category of $S$-schemes is built upon.

In Section \ref{Subsection: Tangent Category of Schemes} we finally construct the tangent category of schemes by leveraging everything we built up to this point in the paper. We begin this journey by showing how to construct an ``infinitesimal $S$-scheme'' $W_S \cong S[\epsilon]$ for any scheme $S$ and then give an explict scheme-theoretic proof of the fact that there is an adjunction defining the tangent functor $(-) \times_S W_S \dashv T_{(-)/S}$. This shows that in particular, the tangent functor for $\Sch_{/S}$ is representable in the sense studied in \cite{GarnerEmbeddingTanCat}.
\begin{propx}[{cf.\@ Proposition \ref{Prop: Section Tangent: Tangent fucntor is a right adjoint}}]
Let $S$ be a scheme. Then there is an adjunction:
\[
\begin{tikzcd}
	\Sch_{/S} \ar[rr, bend right = 20, swap, ""{name = R}]{}{T_{(-)/S}} & & \Sch_{/S} \ar[ll, bend right = 20, swap, ""{name = L}]{}{(-) \times_S S[\epsilon]}
	\ar[from = L, to = R, symbol = \dashv]
\end{tikzcd}
\]
\end{propx}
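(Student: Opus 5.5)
We need to prove $(-)\times_S S[\epsilon] \dashv T_{(-)/S}$ on $\Sch_{/S}$. Here $T_{X/S} := \RelSpec{X}\bigl(\RelSym{\Ocal_X}(\Kah{X}{S})\bigr)$ and $S[\epsilon] := \RelSpec{S}\bigl(\Ocal_S[\epsilon]\bigr)$, with $\Ocal_S[\epsilon] = \Ocal_S \oplus \Ocal_S\epsilon$ and $\epsilon^2=0$. The plan is to construct a natural bijection
\[
\Sch_{/S}\bigl(Y \times_S S[\epsilon], X\bigr) \cong \Sch_{/S}\bigl(Y, T_{X/S}\bigr)
\]
for $S$-schemes $Y$ and $X$. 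We go through the universal property of $\RelSpec{X}$ together with the corepresentability of derivations by $\Kah{X}{S}$ (Propositions \ref{Prop: Section Kahlers: Relative Kahlers are qcoh on X} and \ref{Prop: Section Kahlers: Kahler Diffles are Corepresenting}).

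\textbf{Step 1: identify the left side.} Since $S[\epsilon] \to S$ is affine, so is the base change $Y \times_S S[\epsilon] \to Y$. Using the compatibility of $\RelSpec{}$ with pullback, $Y\times_S S[\epsilon] \cong \RelSpec{Y}(\Ocal_Y[\epsilon])$. Topologically this scheme is $\lvert Y\rvert$, with structure sheaf $\Ocal_Y \oplus \Ocal_Y\epsilon$.

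An $S$-morphism $Y[\epsilon] \to X$ then amounts to the following data:
\begin{itemize}
\item a continuous map $g:\lvert Y\rvert \to \lvert X\rvert$;
\item a local $f^{-1}\Ocal_S$-algebra map $g^{-1}\Ocal_X \to \Ocal_Y[\epsilon]$.
\end{itemize}
Nilpotent thickenings do not change the underlying space, and a ring map into $A[\epsilon]$ is local exactly when its reduction to $A$ is local. So this data is equivalent to an $S$-morphism $g:Y\to X$ (the reduction mod $\epsilon$) together with an $S$-derivation $g^{-1}\Ocal_X \to \Ocal_Y$ over $g^\sharp$. Adjoining, the latter is an $S$-derivation $D:\Ocal_X \to g_*\Ocal_Y$. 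This is the standard ring identity
\[
\operatorname{Hom}_R(A, B[\epsilon]) \cong \coprod_{\varphi} \mathsf{Der}_R(A, {}_{\varphi}B),
\]
sheafified.

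\textbf{Step 2: identify the right side.} By the relative spectrum property (Corollary \ref{Cor: Section Background Scheme: Equiv of Cats for Relspec} and its universal property), an $X$-morphism $Y\to \RelSpec{X}(\Ascr)$ over a fixed $g:Y\to X$ is the same as an $\Ocal_X$-algebra map $\Ascr \to g_*\Ocal_Y$. Here $g_*\Ocal_Y$ need only be a sheaf of algebras; the universal property holds for arbitrary $g$, by reducing to affine opens of $X$ and using $\Spec$ adjunction.

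Take $\Ascr = \RelSym{\Ocal_X}(\Kah{X}{S})$. The $\RelSym{}\dashv\RelUnd{}$ adjunction then identifies such algebra maps with $\Ocal_X$-module maps $\Kah{X}{S} \to g_*\Ocal_Y$.

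We want to use corepresentability of derivations, but $g_*\Ocal_Y$ need not be quasi-coherent. To handle this, I would work with $g^*$ instead:
\[
\operatorname{Hom}_{\Ocal_X}(\Kah{X}{S}, g_*\Ocal_Y) \cong \operatorname{Hom}_{\Ocal_Y}(g^*\Kah{X}{S}, \Ocal_Y).
\]
Alternatively, one can verify the universal property of Kähler differentials for arbitrary $\Ocal_X$-modules directly from the presheaf formula, sheafifying $\Kah{\Ocal_X(U)}{\cdots}$. This yields $\mathsf{Der}_S(\Ocal_X, g_*\Ocal_Y)$, which matches Step 1.

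\textbf{Step 3: naturality and the main obstacle.} Finally, check naturality in $Y$ (precomposition) and in $X$ (via the functoriality $u_f$ of Proposition \ref{Prop: Section Kahlers: Functoriality of Kahlers for QCoh}).

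The main obstacle is making Step 1 rigorous as a sheaf-level statement with no affineness assumptions. This means ensuring the bijection is compatible with gluing. I would first prove both bijections when $X$, $Y$ and $S$ are affine, where everything reduces to the ring identity above. I would then glue using that both sides are Zariski sheaves in $Y$, together with the pseudolimit description of Theorem \ref{Thm: Section Background Scheme: QCoh is a pseudolimit}, and reduce in $X$ via affine opens.
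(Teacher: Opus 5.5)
Your proposal is correct, but it takes a different route from the paper. The paper's proof stays affine-local throughout. After identifying the product with a relative spectrum over the source, it chooses compatible affine covers of the base, source and target. It then turns a local map $\Spec A_i[\epsilon]\to\Spec B_j$ into a ring map $B_j\to A_i[\epsilon]$, then into a $C_k$-derivation, and then, via $\Kah{B_j}{C_k}$ and $\Sym{B_j}\dashv\Und{B_j}$, into a map $U_i\to T_{V_j/S_k}$. Finally it glues; the compatibility on overlaps and the inverse direction are only asserted. You instead argue globally. You observe that $Y\times_S S[\epsilon]$ is the thickening $(\lvert Y\rvert,\Ocal_Y[\epsilon])$, so a morphism out of it splits into its reduction $g$ plus an $S$-derivation $\Ocal_X\to g_\ast\Ocal_Y$. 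On the other side, the universal property of $\RelSpec{X}$, the $\RelSym{\Ocal_X}\dashv\RelUnd{\Ocal_X}$ adjunction and corepresentability turn $\Sch_{/S}(Y,T_{X/S})$ into the same set $\coprod_g\mathsf{Der}_S(\Ocal_X,g_\ast\Ocal_Y)$. This gives a choice-free bijection that is visibly natural in $Y$. It also makes explicit that a map into $A[\epsilon]$ is a ring map plus a derivation, which the paper's chain of deductions leaves implicit. The cost is three standard facts the paper does not prove: the universal property of $\RelSpec{X}$ against arbitrary non-affine $g$ (the paper only has $\mathbf{Aff}_{/X}\simeq\QCoh(X,\CAlg{\Ocal_X})^{\op}$), and both the Sym--Und adjunction and the corepresentability of $\mathsf{Der}_S(\Ocal_X,-)$ for arbitrary $\Ocal_X$-modules, since $g_\ast\Ocal_Y$ need not be quasi-coherent. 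Of your two options for that last point, use the direct one: maps out of the sheafification of $U\mapsto\Kah{\Ocal_X(U)}{(\nu_X^{-1}\Ocal_S)(U)}$ are exactly compatible families of sectionwise derivations. The $g^\ast$ detour only moves the problem to derivations $g^{-1}\Ocal_X\to\Ocal_Y$, for which the paper gives no corepresentability. Finally, Step 1 is not where the difficulty lies, since it is already a sheaf-level statement about morphisms of locally ringed spaces. The only genuinely affine-local argument you need is the one establishing the universal property of $\RelSpec{X}$ in Step 2.
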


After proceeding with some technical but relatively routine algebra, we arrive finally at the main goal of this paper: to provide an explicit and careful proof of the tangent structure on $\Sch_{/S}$. We also show that these tangent structures, as one varies the base schemes $S$, give rise to a pseudofunctor from $\Sch_{/S}^{\op}$ into the $2$-category of tangent categories. After this, we conclude this section of the paper by showing that the tangent structure on $\Sch_{/S}$ gives rise to a corresponding dual tangent structure on $\Sch_{/S}^{\op}$ which has, to the knowledge of the author, yet to be recorded so far.

\begin{thmx}[{cf.\@ Theorem \ref{Thm: Section Tangents: The Tangent Category of Schemes}}]
The category $\Sch_{/S}$ with tangent functor $T_{(-)/S}$ given in Definition \ref{Defn: Section Tangent: Tangent functor for schemes}, with bundle projection $p$ and zero section $0$ given in Definition \ref{Defn: Section Tangent: Zero and Projection Transformations}, addition $\operatorname{add}$ as given in Definition \ref{Defn: Section Tangent: The addition on the tangent bundle}, with vertical lift $\ell$ given in Definition \ref{Defn: Section Tangent: Vertical Lift over Schemes}, and with canonical flip given as in Definition \ref{Defn: Section Tangent: Canonical Flip} is a tangent category.
\end{thmx}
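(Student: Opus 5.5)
The strategy is to reduce every tangent-category axiom to the affine case over an affine base, where Example \ref{Example: The tangent category of affine R schemes} already gives a tangent structure, and then glue. First recall that $T_{X/S} = \RelSpec{X}(\RelSym{\Ocal_X}(\Kah{X}{S}))$. All the iterated pullbacks $T_n X = T_{X/S}\times_X \cdots \times_X T_{X/S}$ and composites $T^m_{X/S}$ are affine over $X$. So by Corollary \ref{Cor: Section Background Scheme: Equiv of Cats for Relspec} every structure map between them, being a map of schemes affine over $X$ (or over $T^k X$), corresponds to a map of quasi-coherent $\Ocal_X$-algebras. For instance, $T_2X \cong \RelSpec{X}(\RelSym{\Ocal_X}(\Kah{X}{S}\oplus\Kah{X}{S}))$ follows from continuity of the inclusion $\mathbf{Aff}_{/X}\to\Sch_{/X}$ (Proposition \ref{Prop: Section Background Scheme: Incl of Aff is cont}) together with $\RelSym{}$ sending sums to tensor products.

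Next I verify the additive bundle structure $(p,0,\operatorname{add})$. By Corollary \ref{Cor: Section Background Scheme: Relative Spec of Relative Sym is functor into Ab Sch}, $T_{X/S}\to X$ is an abelian group object in $\Sch_{/X}$. Its group law and unit are $\RelSpec{X}$ applied to the comultiplication and counit of the Hopf algebra $\RelSym{\Ocal_X}(\Kah{X}{S})$ from Theorem \ref{Thm: Section Background Scheme: Hopf algebras for RelSym}, so they coincide with $\operatorname{add}$ and $0$. Naturality of $p$, $0$ and $\operatorname{add}$ in $X$ comes from the functor $\Kah{(-)}{S}:\Sch_{/S}\to\mathbb{QC}$ of Proposition \ref{Prop: Section Kahlers: Functoriality of Kahlers for QCoh}, combined with the pullback isomorphisms for $\RelSym{}$ (Corollary \ref{Cor: Section Background Scheme: The pullback Sym isos for qcoh}) and the compatibility of $\RelSpec{}$ with base change. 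The fact that $T$ preserves the pullbacks $T_nX$, and that $T(f)$ is an additive bundle morphism, comes cheaply from Proposition \ref{Prop: Section Tangent: Tangent fucntor is a right adjoint}. Since $T_{(-)/S}$ is a right adjoint, it preserves all limits, including the pullback powers of $p$ and the tangent pullbacks required in the universality of the vertical lift.

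The remaining axioms are equational: $\ell$ and $c$ are bundle morphisms, $c^2=\id$, $c\circ\ell=\ell$, the Yang–Baxter hexagon for $c$, $T\ell\circ\ell=\ell_T\circ\ell$, and the compatibilities of $c$ with $\ell$. Each is an equality of two morphisms of $S$-schemes. Such an equality can be checked Zariski-locally on the source. I would cover $S$ by affines $\Spec R$ and $X$ by affines $\Spec A$ mapping into them. I then use that $T$ commutes with restriction to opens, namely $T_{U/S}=T_{X/S}\times_X U$ for $U\subseteq X$ open, and similarly that $T_{U/V}\cong T_{U/S}$ when $V\subseteq S$ is an open through which $U$ factors. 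This restriction compatibility holds because $\Kah{X}{S}$ is quasi-coherent and computed sectionwise (Proposition \ref{Prop: Section Kahlers: Relative Kahlers are qcoh on X}), and because each structure map was defined by gluing its affine counterparts. On the cover $\lbrace \Spec A\rbrace$ of $X$, the preimages in $T^mX$ form the cover $\lbrace T^m_{\Spec A/\Spec R}\rbrace$. There each identity reduces to the corresponding identity of $R$-algebra maps on $\sym_A(\Omega_{A/R})$ and its iterates, which is exactly Example \ref{Example: The tangent category of affine R schemes}.

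Universality of the vertical lift asks that a certain square be a pullback, equivalently that $(\ell, 0\circ p)$-type maps exhibit $T_2X$ as an equalizer or pullback. Being a limit property, it is also Zariski-local, since pullbacks of schemes are glued from affine pieces. Via $\RelSpec{X}$ it becomes a pushout of quasi-coherent algebras, which is checked on affines and again follows from the affine example.

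I expect the main obstacle to be the bookkeeping in this localization step. For iterates $T^2X$ and $T^3X$ one must check that the affine pieces $T^2_{\Spec A/\Spec R}$ genuinely cover $T^2_{X/S}$. One must also check that the locally defined $\ell$ and $c$ agree on overlaps $\Spec A\cap\Spec A'$, so that they glue well-definedly; this uses the pseudolimit description of $\QCoh(X)$ in Theorem \ref{Thm: Section Background Scheme: QCoh is a pseudolimit}, applied to the sheaves $\Kah{T^kX}{S}$. Once this is established, every axiom is a local computation already done in the affine case.
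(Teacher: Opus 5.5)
Your proposal is correct and follows essentially the same route as the paper. Limit preservation comes from $T_{(-)/S}$ being a right adjoint, the additive bundle structure comes from the Hopf algebra structure on $\RelSym{\Ocal_X}(\Kah{X}{S})$, and the equational axioms together with universality of the vertical lift are checked affine-locally over $X$ (all the relevant maps being affine) and reduced to Example \ref{Example: The tangent category of affine R schemes}; your explicit bookkeeping on compatibility of $T$ with restriction to opens and on gluing over overlaps simply fills in details the paper leaves implicit.
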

\begin{propx}[{cf.~ Proposition \ref{Prop: Section Tangent: PSeudofunctor in Tan}}]
For any base scheme $S$, there is a slice category pseudofunctor
\[
\mathsf{Sl}_{S}:\Sch_{/S}^{\op} \to \fTan_{\operatorname{str}}
\]
given by sending schemes $X$ to their slice category tangent structures $(\Sch_{/X}, T_{(-)/X})$ and given by sending morphisms $f:X \to Y$ to the strong tangent morphism $(f^{\ast},T_f)$ of Proposition \ref{Prop: Pullback functor strong tangent for schemes}.
\end{propx}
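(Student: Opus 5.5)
To build $\mathsf{Sl}_S$ I will first produce the data on objects, then on morphisms, then the structure $2$-cells, and finally check the coherence axioms.

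\emph{Objects.} Applying Theorem \ref{Thm: Section Tangents: The Tangent Category of Schemes} with base scheme $X$ in place of $S$ shows that $(\Sch_{/X}, T_{(-)/X})$ is a tangent category. This gives the assignment on objects.

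\emph{Morphisms.} For a map $f:X \to Y$ of $S$-schemes, the pullback functor is
\[
f^{\ast} = (-)\times_Y X : \Sch_{/Y} \to \Sch_{/X}.
\]
Proposition \ref{Prop: Pullback functor strong tangent for schemes} makes $(f^{\ast},T_f)$ a strong tangent morphism. I expect $T_f$ to be the canonical isomorphism
\[
T_{Z/Y}\times_Y X \cong T_{Z\times_Y X/X}.
\]
This comes from the adjunction $(-)\times_Y Y[\epsilon] \dashv T_{(-)/Y}$ of Proposition \ref{Prop: Section Tangent: Tangent fucntor is a right adjoint}, together with $Y[\epsilon]\times_Y X \cong X[\epsilon]$. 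The latter isomorphism holds because $Y[\epsilon]$ is $\RelSpec{Y}$ of $\Ocal_Y[\epsilon]$, and the relative spectrum commutes with base change.

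\emph{Compositors and unitors.} Pullback is only pseudofunctorial, and the structure cells come from the universal property of fibre products:
\[
\phi_{f,g}:f^{\ast}g^{\ast} \cong (g\circ f)^{\ast}, \qquad \iota_X:\id \cong \id_X^{\ast}.
\]
These already satisfy the pseudofunctor coherence axioms in $\fCat$, since they are the usual cleavage coherences of the codomain fibration. What remains is to show that each $\phi_{f,g}$ and $\iota_X$ is a tangent $2$-cell. By the definition of $2$-cells in $\fTan_{\operatorname{str}}$, this means the following square must commute:
\[
T_{g\circ f}\circ \phi_{f,g}T \;=\; T\phi_{f,g}\circ \bigl(f^{\ast}T_g \cdot T_f g^{\ast}\bigr).
\]

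\emph{Main obstacle: verifying this tangent $2$-cell condition.} All the maps involved are isomorphisms between schemes defined by universal properties, so I will test the square on generalized points. A morphism $W \to T_{Z/Y}\times_Y X$ over $X$ corresponds, by the adjunction, to an $X$-morphism $W\times_X X[\epsilon] \to Z\times_Y X$. Both composites then reduce to the canonical identifications
\[
W\times_X X[\epsilon] \;\cong\; W\times_X (X\times_Y Y[\epsilon]) \;\cong\; W \times_Y Y[\epsilon].
\]
By the Yoneda lemma the square commutes. Equivalently, one can work Zariski-locally: reduce to affine $X$, $Y$ and $Z$ using the gluing results of Section \ref{Section: QCoh}. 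There the tangent scheme is $\Spec$ of $\Sym{A}(\Kah{A}{R})$, and the check becomes the base-change compatibility of Kähler differentials with $\Sym{}$ (Corollary \ref{Cor: Section Background Alg: Pseudonat for Sym}), which is a strict equality of ring maps.

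The unitor case is the same argument with $f=\id$. This completes the plan for showing that $\mathsf{Sl}_S$ is a pseudofunctor into $\fTan_{\operatorname{str}}$.
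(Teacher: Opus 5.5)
Your argument is correct, but it takes a different route from the paper. The paper offers nothing here beyond citing \cite[Proposition 6.16]{PronkVooys}. Judging from the sketch of Proposition \ref{Prop: Pullback functor strong tangent for schemes}, the compatibilities there are checked affine-locally on the explicit algebras $\Sym{A}(\Kah{A}{R})$ and then glued.

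You instead use the representability $(-)\times_Y W_Y \dashv T_{(-)/Y}$ of Proposition \ref{Prop: Section Tangent: Tangent fucntor is a right adjoint}. Write $\Sigma_f \dashv f^{\ast}$, where $\Sigma_f$ is postcomposition with $f$. Then your $T_f$ is the mate, along this adjunction, of the canonical isomorphism $\Sigma_f\bigl((-)\times_X W_X\bigr)\cong \Sigma_f(-)\times_Y W_Y$. The tangent-transformation condition on $\phi_{f,g}$ then becomes compatibility of mates with pasting, which is what your Yoneda computation checks.

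Two points need to be made explicit.

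First, your displayed identification involves only one morphism $f$. For the compositor you need the three-stage version, with $g:Y\to V$ and $W\times_V W_V$ appearing. The square commutes precisely because the base-change isomorphisms satisfy the cocycle condition $W_X\cong X\times_Y W_Y\cong X\times_Y(Y\times_V W_V)\cong X\times_V W_V$. This holds for the canonical isomorphisms coming from $f^{\ast}\Ocal_Y[\epsilon]\cong\Ocal_X[\epsilon]$ (with $\epsilon\mapsto\epsilon$) and the pullback isomorphisms for the relative spectrum. It could fail for other choices, so this is where the real content of the check lies.

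Second, the statement refers to the $T_f$ of Proposition \ref{Prop: Pullback functor strong tangent for schemes}, which is built from \cite[Line 16.5.12.2]{EGA44}. You only \emph{expect} that isomorphism to coincide with your Yoneda-defined one. You need that coincidence to borrow the compatibility of $T_f$ with $p,0,\operatorname{add},\ell,c$. It is true: on affine patches both are induced by the base-change isomorphism $\Kah{A}{R}\otimes_R R'\cong\Kah{A\otimes_R R'}{R'}$. But it should be stated.

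Your approach gives a coordinate-free proof. All coherence is inherited formally from the codomain fibration and the mate calculus, with no gluing over covers. The affine-local approach works directly with the EGA isomorphism and explicit ring maps, at the cost of the cover bookkeeping that the paper outsources to \cite{PronkVooys}.
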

\begin{propx}[{cf.~ Proposition \ref{Prop: Section Tangents: Dual Numbers for Schemes Tangents}}]
The functor
\[
\left((-) \times_S W_S\right):\Sch_{/S}^{\op} \to \Sch_{/S}^{\op}
\]
induces a tangent structure on $\Sch_{/S}^{\op}$.
\end{propx}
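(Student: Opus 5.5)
The plan is to get the tangent structure on $\Sch_{/S}^{\op}$ by dualizing the adjunction $(-)\times_S W_S \dashv T_{(-)/S}$ from Proposition \ref{Prop: Section Tangent: Tangent fucntor is a right adjoint}. The tool is the general fact that a tangent structure whose tangent functor has a left adjoint $L$ induces, by taking mates, a tangent structure on the opposite category with tangent functor $L^{\op}$. This is the ``adjoint tangent structure'' principle of Cockett--Cruttwell, in the representable form studied in \cite{GarnerEmbeddingTanCat}.

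\textbf{Transporting the structure maps.} Write $L = (-)\times_S W_S$. Since $T^n \dashv$-adjoints compose, $L^n \dashv T^n$ with $L^n(X) = X\times_S W_S\times_S\cdots\times_S W_S$. Because $L$ is a left adjoint, it preserves colimits, and these are precisely the limits needed in $\Sch_{/S}^{\op}$. Each structure map of Theorem \ref{Thm: Section Tangents: The Tangent Category of Schemes} is then sent to its mate:
\begin{itemize}
\item $p\colon T\Rightarrow \id$ gives $\id\Rightarrow L$, i.e.\ $X\times_S W_S\to X$, which is a map $X\to L X$ in the opposite category.
\item $0$ gives the map induced by the closed point $S\to W_S$.
\item $\ell$ and $c$ correspond to the maps $X\times W_S\times W_S\to X\times W_S$ induced by the co-operations $W_S\times_S W_S\to W_S$. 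Concretely these are $\epsilon\mapsto \epsilon_1\epsilon_2$ for the lift and the swap $\epsilon_1\leftrightarrow\epsilon_2$ for the flip.
\end{itemize}
The tangent axioms hold because equations of natural transformations are preserved under mates, $\operatorname{mate}(\alpha\beta)=\operatorname{mate}(\beta)\operatorname{mate}(\alpha)$, with the order reversal absorbed by passing to $\op$. Alternatively, everything can be checked directly: the maps are all induced by the algebraic structure of $S[\epsilon]$, that is, $\mathcal{O}_S[\epsilon]$ with its coalgebra-like maps. The identities then reduce, Zariski-locally on $S$, to the ring identities of Example \ref{Example: The tangent category of affine R schemes}.

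\textbf{The universal properties (expected main obstacle).} The addition requires the pullback $T_2$ in $\Sch_{/S}^{\op}$, which is a pushout in $\Sch_{/S}$:
\[
(X\times_S W_S)\sqcup_X (X\times_S W_S).
\]
I need this pushout to exist, to be $X\times_S W_S(2)$ with $W_S(2)=\Spec_S\mathcal{O}_S[\epsilon_1,\epsilon_2]/(\epsilon_i\epsilon_j)$, and to be preserved by $L^n$. I would prove it by noting that all schemes in sight are affine over $X$ and have the same underlying space. Working locally, a pushout along closed immersions with the same underlying space is computed by the fibre product of structure sheaves, $\mathcal{O}_X[\epsilon_1]\times_{\mathcal{O}_X}\mathcal{O}_X[\epsilon_2]\cong \mathcal{O}_X[\epsilon_1,\epsilon_2]/(\epsilon_i\epsilon_j)$, using the relative spectrum equivalence of Corollary \ref{Cor: Section Background Scheme: Equiv of Cats for Relspec}.

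The same sheaf-level computation, now with the equalizer of rings, gives the universality of the vertical lift. In $\Sch_{/S}$ it becomes a coequalizer statement, and it reduces to the exactness of
\[
A[\epsilon]\to A[\epsilon_1,\epsilon_2] \rightrightarrows A[\epsilon_1,\epsilon_2,\epsilon_3],
\]
which is the dual-numbers identity already used in the affine case. These pushouts are stable under $-\times_S W_S$ because $W_S$ is flat and finite over $S$. That settles the pullback-preservation axiom, and hence the claim.
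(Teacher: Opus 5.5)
Your route is the same as the paper's: both apply the Cockett--Cruttwell dual tangent structure result \cite[Proposition 5.17]{GeoffRobinDiffStruct} to the adjunction $(-)\times_S W_S \dashv T_{(-)/S}$. In both, the only real input is that the $n$-fold pushouts $W_{X,n}=\underline{\Spec}_X(\Ocal_X[\epsilon]\times_{\Ocal_X}\cdots\times_{\Ocal_X}\Ocal_X[\epsilon])$ exist in $\Sch_{/S}$. Your justification via pushouts along nilpotent thickenings with the same underlying space is more careful than the paper's bare hom-set assertion. Your separate checks of the universality of the vertical lift and of preservation by $L^n$ are unnecessary, since both follow from the adjunctions $L_n\dashv T_n$ by Yoneda.

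There is one bookkeeping slip: you have swapped the mates of $p$ and $0$. The mate of $p$ is the closed immersion $X\hookrightarrow X\times_S W_S$ induced by $S\to W_S$; this is the projection $L^{\op}X\to X$ in $\Sch_{/S}^{\op}$, and it is exactly the map you correctly push out along in your last paragraph. The mate of $0$ is the projection $X\times_S W_S\to X$. Likewise, the mate of $c$ is the swap endomorphism of $X\times_S W_S\times_S W_S$, not a map into $X\times_S W_S$.
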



In the final section of this paper, Section \ref{Section: Appies ofTan on Sch}, we apply the work done in this paper by showing that for any quasi-separated scheme $S$, the categories of differential bundles for each $S$-scheme are complete invariants on the category $\qsSch_{/S}$ of quasi-separated $S$-schemes (which has not, to the knowledge of the author, been observed prior to this paper). We do this first by proving if $\qsSch_{/S}$ denotes  the category of quasi-separated $S$-schemes, then $\DBun(X) \simeq \DBun(Y)$ if and only if $X \cong Y$ for $X, Y$ quasi-separated $S$-schemes. We do this by proving that $\qsSch_{/S}$ is a strict tangent subcategory of $\Sch_{/S}$ for any scheme $S$ and then showing that $\DBun_{\qsSch}(X) = \DBun_{\Sch}(X)$ for a quasi-separated $S$-scheme $X$. 

\begin{propx}[cf.\@ Propositions \ref{Prop: Section Gabriel: qsSch is tangent sbucat}, \ref{Prop: Section Gabriel: DBun in qs is Dbun in sch}]
For any scheme $S$, the category $\qsSch_{/S}$ of quasi-coherent $S$-schemes is a strict tangent subcategory of the tangent category $\Sch_{/S}$. Furthermore, for any quasi-separated $S$-scheme $X$, there is an equality of categories
\[
\DBun_{\qsSch}(X) = \DBun_{\Sch}(X).
\]
\end{propx}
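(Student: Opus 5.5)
Two things have to be shown: that $\qsSch_{/S}$ is a strict tangent subcategory of $\Sch_{/S}$, and that differential bundles over a quasi-separated $X$ are the same in both categories.

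For the first part, I will show that the tangent functor $T_{(-)/S}$ of Definition \ref{Defn: Section Tangent: Tangent functor for schemes} preserves quasi-separatedness. The same must hold for the iterated pullbacks $T_n = T\times_X \cdots \times_X T$. Once this is known, all structure maps $p$, $0$, $\operatorname{add}$, $\ell$, $c$ are morphisms between objects of the full subcategory $\qsSch_{/S}$, so they restrict automatically. The key input is that $T_{X/S} = \RelSpec{X}(\RelSym{\Ocal_X}(\Kah{X}{S}))$ is affine over $X$. Affine morphisms are separated, hence quasi-separated. Quasi-separated morphisms are stable under composition, so $T_{X/S} \to X \to S$ is quasi-separated. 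The same argument applies to $T_n X$, which is $\RelSpec{X}$ of a tensor product of symmetric algebras and hence affine over $X$. It also applies to $T^2 X$, since it is of the form $T(TX)$.

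It remains to check that the pullbacks used in the tangent category axioms stay inside $\qsSch_{/S}$. These are the pullbacks $T_n$, the universality-of-the-vertical-lift pullback, and the pullbacks along $p$ that $T^m$ must preserve. Each of them is either a fibre product of schemes affine over a common quasi-separated base, hence affine over that base, or a fibre product of quasi-separated $S$-schemes. Quasi-separated $S$-schemes are closed under fibre products over $S$, since quasi-separatedness is stable under base change and composition. Because $\qsSch_{/S}$ is full and closed under these limits, the limits computed in $\Sch_{/S}$ are also limits in $\qsSch_{/S}$. Since $T$ is preserved on the nose, the inclusion is a strict tangent morphism.

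For the second part, recall that a differential bundle over $X$ consists of a map $q\colon E \to X$ together with its pullback powers, $T$, and the maps $\sigma$, $\zeta$, $\lambda$. Fullness means the only issue is whether the total space $E$ of a bundle in $\Sch_{/S}$ is itself quasi-separated. I will handle this with Cruttwell--Lemay \cite[Theorem 4.28]{GeoffJSDiffBunComAlg}, recalled earlier. It says that every differential bundle over $X$ in $\Sch_{/S}$ is isomorphic to $\RelSpec{X}(\RelSym{\Ocal_X}(\Fscr)) \to X$ for some quasi-coherent $\Fscr$. Such a map is affine, so $E$ is quasi-separated over $S$ by the composition argument above.

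Hence the objects of $\DBun_{\Sch}(X)$ all lie in $\qsSch_{/S}$. Two points remain: the required pullbacks agree by the closure argument of the first part, and morphisms agree by fullness. Together these give the equality of categories. The main obstacle I expect is justifying \emph{equality}, not merely equivalence, of categories. The issue is that the Cruttwell--Lemay result is only an equivalence. The statement is about objects on the nose, so I must check that quasi-separatedness is invariant under isomorphism over $X$, which is clear. I must also check that the structure maps of an arbitrary isomorphic bundle still live in the full subcategory, which follows from fullness.
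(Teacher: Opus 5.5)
Your proposal is correct and follows essentially the same route as the paper. Both arguments reduce everything to three ingredients:
\begin{itemize}
\item affineness of the relevant projections, together with affine $\Rightarrow$ separated $\Rightarrow$ quasi-separated and stability under composition and base change;
\item closure of the full subcategory $\qsSch_{/S}$ under the finite limits the axioms need;
\item fullness, to handle morphisms.
\end{itemize}
The only difference is how you get affineness of the bundle projection $q\colon E\to X$. The paper cites \cite[Proposition 4.25]{GeoffJSDiffBunComAlg} directly. You instead use the classification $E\cong\RelSpec{X}(\RelSym{\Ocal_X}(\Fscr))$ over $X$, together with invariance of affineness and quasi-separatedness under isomorphism over $X$. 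This amounts to the same input.
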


\begin{thmx}[Tangent-Categorical Reconstruction Theorem; cf.\@ Theorem \ref{Thm: Tan Cat Recon Thm}]
For any quasi-separated schemes $X$ and $Y$, there is an isomorphism $X \cong Y$ if and only if there is an equivalence of categories $\DBun(X) \simeq \DBun(Y)$.
\end{thmx}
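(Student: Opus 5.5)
The plan is to reduce the statement to the Gabriel--Rosenberg reconstruction theorem for quasi-separated schemes, which says that a quasi-separated scheme $X$ can be recovered up to isomorphism from the abelian category $\QCoh(X)$. In particular, $X \cong Y$ if and only if $\QCoh(X) \simeq \QCoh(Y)$ as categories. (This is Gabriel's theorem in the noetherian case and Rosenberg's extension, with Brandenburg's treatment covering the quasi-separated case.) The passage from $\DBun$ to $\QCoh$ is supplied by the Cruttwell--Lemay theorem \cite[Theorem 4.28]{GeoffJSDiffBunComAlg}, which gives $\DBun(X)^{\op} \simeq \QCoh(X)$ for every $S$-scheme $X$, with $S$ taken to be $\Spec \Z$ or the common quasi-separated base. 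I will read $\DBun(X)$ as differential bundles over $X$ in $\qsSch_{/S}$. By the preceding proposition on $\qsSch_{/S}$ being a strict tangent subcategory with $\DBun_{\qsSch}(X) = \DBun_{\Sch}(X)$, this is the same as in $\Sch_{/S}$, so the Cruttwell--Lemay equivalence applies verbatim.

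For the forward direction, an isomorphism $f: X \to Y$ induces an isomorphism of slice tangent categories, for instance via the pseudofunctor $\mathsf{Sl}_S$ or directly by postcomposition, and hence an equivalence (indeed an isomorphism) $\DBun(X) \simeq \DBun(Y)$. Alternatively, $f^{\ast}: \QCoh(Y) \to \QCoh(X)$ is an equivalence with inverse $f_{\ast}$, and transporting it along the Cruttwell--Lemay equivalences gives the required $\DBun$ equivalence.

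For the converse, suppose $F: \DBun(X) \to \DBun(Y)$ is an equivalence. Then $F^{\op}$ is an equivalence $\DBun(X)^{\op} \to \DBun(Y)^{\op}$, and composing with the Cruttwell--Lemay equivalences gives
\[
\QCoh(X) \simeq \DBun(X)^{\op} \simeq \DBun(Y)^{\op} \simeq \QCoh(Y).
\]
Any equivalence of categories between abelian categories is automatically additive and exact, since it preserves finite biproducts, kernels and cokernels. So we obtain an equivalence of abelian categories, and Gabriel--Rosenberg then gives $X \cong Y$ for quasi-separated $X$ and $Y$.

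The main obstacle is making sure that the category $\DBun(X)$ used in the Cruttwell--Lemay theorem is literally the one in the statement. The potential mismatch concerns which morphisms are allowed: the theorem concerns differential bundles over $X$ with linear bundle morphisms over the identity. If the paper's $\DBun(X)$ allows more general morphisms, I would first restrict to the fibre over $\id_X$, which is intrinsic, before invoking the equivalence. The proposition that differential bundles computed in $\qsSch_{/S}$ agree with those in $\Sch_{/S}$ removes any remaining ambiguity about the ambient category. A secondary point is that Rosenberg's reconstruction yields an isomorphism of schemes rather than of $S$-schemes. The statement asks only for $X \cong Y$, so this suffices.
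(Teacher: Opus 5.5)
Your proposal is correct and is essentially the paper's proof. The paper likewise combines the Gabriel--Rosenberg Reconstruction Theorem with the Cruttwell--Lemay equivalence $\QCoh(X)^{\op}\simeq\DBun(X)$, using that $\QCoh(X)\simeq\QCoh(Y)$ if and only if $\QCoh(X)^{\op}\simeq\QCoh(Y)^{\op}$, and gets both directions at once; your extra care about linear morphisms over $\id_X$ and about $\DBun_{\qsSch}(X)=\DBun_{\Sch}(X)$ is consistent with, and slightly more explicit than, what the paper states.
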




%
%
%
%
%
%

\subsection*{Relation to Other Work}
The work done in this paper arose as the intersection of two projects of the author with different collaborates. The first project, \cite{RobinMePartialMapRigs}, is a project with R.\@ Cockett on the tangent restriction structure on the partial map categories $\Par{\CAlg{R}^{\op}}{\Loc}$ and $\Par{\CAlg{R}^{\op}}{\Locfp}$ of $\CAlg{R}^{\op}$ for a commtutative \emph{rig}\footnote{A rig, also known as a semiring in algebra literature, is a ri\emph{n}g without \emph{n}egatives.} $R$ with restriction monics taken to be those given by localizatons (respectively finitely presented localizations).  This project required us to very carefully develop much of the theory of K{\"a}hler differentials and their interactions with tensor products, localizations, and the like in the rig-theoretic case. The main tools there lay in using the module and commutative algebra fibrations over $\Crig$ together with the symmetric algebra and underlying module morphisms of (op)fibrations. Much of the reason the tangent structure on $\CAlg{R}^{\op}$ works as it does lie in how the $\Sym{(-)}$ and $\Und{(-)}$ functors interact, how the K{\"a}hler differentials interact with tensors, and the way in which derivations fundamentally record differential geometric information. The techniques developed in that project led the author to many of the technical tools we use in this paper.

The second project, \cite{JSMeMapFlavoursInTanCats} with J.-S.\@ Lemay, develops a zoo of various structural maps in tangent categories which extend immersions, submersions, unramified morphisms, and local diffeomorphisms\footnote{As a historical note: the notion of submersions and local diffeomorphisms in tangent categories have been defined for some time (with submersions being known as $T$-submersions and local diffeomorphisms being known as $T$-{\'e}tale or {\'e}tale maps --- cf.\@ \cite{BenVectorBundles} for the introduction of $T$-submersions to the literature and \cite{GeoffMarcelloTSubmersionPaper} for a recent in-depth study on display $T$-submersions.; {\'e}tale maps were defined in \cite{GeoffJonBenACT2019} and redefined/reframed in \cite{BenThesis} and \cite{GeoffJSReverse}.} together with an extensive examination of how such morphisms behave. Example \ref{Example: Inclusion of varieties is not strong tangent mor} in particular was built on observations and patterns regarding relative tangent bundles developed in \cite{JSMeMapFlavoursInTanCats}.

\subsection*{Acknowledgments}

I would like to thank Robin Cockett for helpful discussions and aiding me in seeing the ways in which category-theorists would best benefit in seeing presentations of material which is traditionally scheme-theoretic in nature. I also want to thank Robin for all our discussions and work together in developing the material used in \cite{RobinMePartialMapRigs} as well as his insights in giving fibration-theoretic perspectives to the commutative algebraic material in Section \ref{Section: Background Alg}.

I would like to thank JS Lemay for his helpful discussions regarding framing some of the material in this paper and also in reminding me about dual tangent structures and how they allow us to deduce the existence of a tangent structure on $\Sch_{/S}^{\op}$. It is worth noting as well that the main impetus for writing this paper came from the related paper \cite{JSMeMapFlavoursInTanCats}. We needed a particularly strong knowledge of the tangent category of schemes in order to determine which morphisms of schemes coincide with the various tangent-categorical generalizations of immersions, submersions, and local diffeomorphisms. For the interested reader, details regarding these morphisms and the classifications in $\Sch_{/S}$ may be found in \cite[Sections 6 -- 10]{JSMeMapFlavoursInTanCats}.

I would like to thank Kristine Bauer for helpful discussions on writing and ways in which people are excited to see material discussed and presented. Additionally, I'd also like to thank both the Peripatetic Seminar (and especially Robin Cockett, Kristine Bauer, Berndt Brenken, Florian Schwarz, Xanna Little, Durgesh Kumar, Sam Steakley, and Saina Daneshmandjahromi), Deni Salja, and also the Voganish Research Group (and especially Clifton Cunningham, James Steel, Jos{\'e} Cruz, and Alex Cameron) for allowing me to inflict discussions of this topic upon them and see what they would  want to see and/or get out of a paper diving into the \emph{tangent category theory} of schemes.


\section{Category-Theoretic Background on Categories of Commutative Algebras and Modules over Affine Bases}\label{Section: Background Alg}


Much of the material in this section is formally similar to work done by R.\@ Cockett and myself in the forthcoming paper \cite{RobinMePartialMapRigs} which establishes these results in higher generality (\cite{RobinMePartialMapRigs} works with rigs instead of rings) and with more detail. As such, we give a brief account of the fibrations/pseudofunctors of modules and commutative algebras defined over/on the category $\Cring$. The approach here is more to summarize results than give complete proofs of correctness; the interested reader may consult \cite{RobinMePartialMapRigs} (when it is out and for the rig-theoretic versions of all statements here) or various other aspects of standard graduate commutative algebra texts. When appropriate, I have attempted to organize relevant references where they apply and simply recognize that what we are doing, to a large degree, is recognizing and restating old results in terms of their categorical structure.

In more detail: this section will focus on covering background material regarding the way in which the categories $\CAlg{R}$ and $\Mod{R}$ interact and behave over the base category $\Cring$ of commutative rings. These constructions behave as bifibrations over $\Cring$. That is, the corresponding fibrations arise as pseudofunctors from $\Cring^{\op}$ into the $2$-category $\Adj$ of adjoint functors between categories via the Grothendieck construction. We will focus more on framing the results we use rather than proving them explicitly and fully for the sake of time and to keep the page-count of this paper as low as possible.\footnote{The proofs of the results in this section are all individually well-known in the literature (they may be assembled out of the various piecemeal results on commutative rings and modules in most introductory graduate algebra textbooks such as \cite{Eisenbud}) and by applying basic fibration and pseudofunctor theory to our setting. The basic theory of fibrations, pseudofunctors, and the Grothendieck construction may be found in \cite[Chapter 8.1 -- 8.3]{BorceuxHandBook2} (for a purely categorical approach), \cite[Section 3]{Vistoli} (for an algebraic-geometry friendly approach), \cite[Chapter 1]{Jacobs} (for a categorical logic friendly approach), or in the original \cite[Expos{\'e} VI]{SGA1} (where the so-called Grothendieck Construction originally appeared).}

Before proceeding, we give the following remark/warning to the reader. We will frequently use without remark the fact that if $R$ is a commutative ring then there is an equality of categories 
\[\CAlg{R} = R \downarrow \Cring = {}^{R/}\Cring.
\]
That is, the category of $R$-algebras is equal to the coslice category of $R$ over $\Cring$. Note that we also use that if $R$ is a commutative \emph{rig} then $\CAlg{R} = R \downarrow \Crig$ as well.

\subsection{The Pseudofunctor and Bifibration of Commutative Algebras}\label{Subsection: Background Alg: Pseudofunctor of CAlg}
In this subsection we give a relatively terse introduction into the pseudofunctor $\CAlg{(-)}:\Cring^{\op} \to \fCat$ and its associated commutative algebra fibration. We start by recalling the $2$-category $\Adj$ of adjoints before beginning our view of the commutative algebra fibration and pseudofunctor in earnest.

\begin{definition}\label{Defn: Section Background Algebra: Two Cat of ADjoints}
The $2$-category $\Adj$ is the $2$-category defined as follows:
\begin{itemize}
	\item Objects ($0$-cells): Categories $\Cscr$.
	\item Morphisms ($1$-cells): A morphism $f:\Cscr \to \Dscr$ is an adjunction $f^{\ast} \dashv f_{\ast}:\Dscr \to \Cscr$, which we dispaly as:
	\[
	\begin{tikzcd}
	\Cscr \ar[rr, bend right = 20, swap, ""{name = R}]{}{f_{\ast}} & & \Dscr \ar[ll, swap, bend right = 20, ""{name = L}]{}{f^{\ast}}
	\ar[from = L, to = R, symbol = \dashv]
	\end{tikzcd}
	\]
	Note that we follow the topos-theoretic convention of \cite{ElephantVol1} and index the direction of our $1$-cells in the direction of the right adjoint (the so-called geometric direction).
	\item Transformations ($2$-cells): Mate\footnote{Mates are a way of translating $2$-cells between left adjoints to $2$-cells between right adjoints and vice-versa. More accurately, assume we have a $2$-category $\mathsf{C}$, adjoints $(\eta_0,\epsilon_0):\ell_0 \dashv r_0:x_0 \to y_0$ and $(\eta_1,\epsilon_1):\ell_1 \dashv r_1:x_1 \to y_1$, and $1$-cells $f:X_0 \to X_1$, $g:Y_0 \to Y_1$. Then the mate-calculus is the bijection of hom-sets $\mathsf{C}(X_0,Y_1)(\ell_1 \circ f, g \circ \ell_0) \cong \mathsf{C}(Y_0,X_1)(f \circ r_0, r_1 \circ g)$ induced by pasting a $2$-cell $\alpha: \ell_1 \circ f \Rightarrow g \circ \ell_0$ with the unit of one adjunction and the counit of the other. For details, textbook accounts may be found in \cite[Section V.7]{MacLaneCWM} (without the term mate appearing) or \cite[Definition 6.1.12]{TwoDimCat} at the level of bicategories.}-pairs $(\alpha, \hat{\alpha}):f \Rightarrow g$ where
	\[
	\begin{tikzcd}
	\Dscr \ar[rr, bend left = 30, ""{name = U}]{}{f^{\ast}} \ar[rr, bend right = 30, swap, ""{name = D}]{}{g^{\ast}} & & \Cscr
	\ar[from = U, to = D, Rightarrow, shorten <= 4pt, shorten >= 4pt]{}{\alpha}
	\end{tikzcd}
	\]
	is a natural transformation and $\hat{\alpha}$ is the corresponding mate transformation:
	\[
	\begin{tikzcd}
	\Cscr \ar[rr, bend left = 30, ""{name = U}]{}{g_{\ast}} \ar[rr, swap, bend right = 30, ""{name = D}]{}{f_{\ast}} & & \Dscr
	\ar[from = U, to = D, Rightarrow, shorten <= 4pt, shorten >= 4pt]{}{\hat{\alpha}}
	\end{tikzcd}
	\]
	\item Composition functors: as in the $2$-category $\fCat$ of categories.
\end{itemize}
\end{definition}
The reason we work with the $2$-category $\Adj$ lies in the usual Grothendieck Construction dictionary which passes between fibrations and pseudofunctors. Namely for any category $\Cscr$, bifibrations\footnote{Functors $p:\Escr \to \Bscr$ which simultaneously have the properties that $p$ is a fibration and also that $p^{\op}:\Escr^{\op} \to \Bscr^{\op}$ is a fibration, i.e., that $p$ is a fibration and opfibration simultaneously.} $p:\Escr \to \Cscr$ are equivalent to working with pseudofunctors defined on $\Cscr^{\op}$ taking values in $\Adj$. More explicitly, a bifibration $p:\Escr \to \Cscr$ arises from the Grothendieck construction $p \cong \operatorname{El}(F:\Cscr^{\op} \to \fCat)$ where $F$ is a pseudofunctor factoring as
\[
\begin{tikzcd}
\Cscr^{\op} \ar[rr]{}{F} \ar[dr, swap]{}{F} & & \fCat \\
 & \Adj \ar[ur, swap]{}{\pr_R}
\end{tikzcd}
\]
with $\pr_{R}:\Adj \to \fCat$ is the projection onto the right adjoint component of $\Adj$. The corresponding opfibration $p^{\op}:\Escr^{\op} \to \Cscr^{\op}$ arises from the Grothendieck construction applied to a pseudoufnctor $F:\Cscr \to \fCat$ which factors through the left adjoint projection $\pr^{L}:\Adj^{\op} \to \fCat$. We will use this observation to describe the bifibration $p:\CCalg \to \Cring$ of commutative algebras.

\begin{definition}\label{Defn: Section Background Alg: Commutative Algebra Pseudofunctor}
The pseudofunctor $\CAlg{(-)}:\Cring^{\op} \to \fCat$ is defined as follows:
\begin{itemize}
	\item To every commutative ring $R$ we define
	\[
	\big[\CAlg{(-)}\big](R) := \CAlg{R} = R/\Cring.
	\]
	To every morphism $f:R \to S$ of commutative rings we define $\CAlg{f}$ to be the adjunction
	\[
	\begin{tikzcd}
	\CAlg{S} \ar[rr, swap, ""{name = R}, bend right = 20]{}{W_f} & & \CAlg{R} \ar[ll, swap, bend right = 20, ""{name = L}]{}{(-) \otimes_R S}
	\ar[from = L, to = R, symbol = \dashv]
	\end{tikzcd}
	\]
	where the left adjoint is induced by taking the pushout in $\Cring$ and where the right adjoint $W_f$ is given by sending a commutative $S$-algebra $\nu:S \to B$ to the commutative $R$-algebra 
	\[
	R \xrightarrow{f} S \xrightarrow{\nu} B.
	\]
	\item The compositor $\phi_{f,g}$ of a pair of ring maps $f:R \to S$ and $g:S \to T$ is induced by taking the pair $(\alpha_{f,g},\id)$ where $\alpha_{f,g}:((-) \otimes_R S) \otimes_S T \xRightarrow{\cong} (-) \otimes_R T$ is the tensor cancellation natural isomorphism; its mate pair is the identity because the right adjoints compose strictly, i.e., $W_f \circ W_g = W_{g \circ f}$.
\end{itemize}
\end{definition}
\begin{remark}
The right adjoints $W_{f}$ are named as such because they are the affine algebraic-geometric incarnation of Weil restriction along $f$.
\end{remark}

By applying the Grothendieck construction to $\CAlg{(-)}$ we obtain a bifibration $p:\operatorname{El}(\CAlg{(-)}) \to \Cring$ which encodes the total space of commutative algebras. Its opfibration structure $p^{\op}$ is simply the slice category opfibration on $\Cring^{\op}$. We define this fibration explicitly below.

\begin{definition}\label{Defn: Section Background Algebra: Bifibration of commutative algebras}
The fibration $p:\CCalg \to \Cring$ is defined as follows. The category $\CCalg$ is the category with:
\begin{itemize}
	\item Objects: Pairs $(R, A)$ where $R$ is a commutative ring and where $\nu:R \to A$ is a commutative $R$-algebra.
	\item Morphisms: A morphism $(R,A) \to (S,B)$ is given by the following rule:
	\begin{prooftree}
		\AxiomC{$(R,A) \to (S,B)$ in $\CCalg$}\doubleLine
		\UnaryInfC{$(f,\rho):(R,A) \to (S,B)$ in $\CCalg$}\doubleLine
		\UnaryInfC{$f \in \Cring(R,S)$ and $\rho \in \CAlg{R}(A,W_f(B))$}
	\end{prooftree}
	\item Composition: The composition of $(f,\rho):(R,A) \to (S,B)$ and $(g,\varphi):(S,B) \to (T,C)$ is given by:
	\begin{prooftree}
		\AxiomC{$(f,\rho):(R,A) \to (S,B)$ in $\CCalg$}
		\UnaryInfC{$f \in \Cring(R,S)$ and $\rho \in \CAlg{R}(A,W_f(B))$}
		\AxiomC{$(g,\varphi):(S,B) \to (T,C)$ in $\CCalg$}
		\UnaryInfC{$g \in \Cring(S,T)$ and $\varphi \in \CAlg{S}(B,W_g(C))$}
		\BinaryInfC{$g \circ f \in \Cring(R,T)$ and $W_f(\varphi) \circ \rho \in \CAlg{R}(A,W_{g \circ f}(C))$}
	\end{prooftree}
	\item Identities: The identity of $(R,A)$ is $(\id_R, \id_A)$.
\end{itemize}
The fibration functor $p:\CCalg \to \Cring$ sends a pair $(R,A)$ to $R$ and similarly for morphisms.
\end{definition}
\begin{remark}
 Given a map $f:R \to S$ in $\Cring$ and a $\CCalg$-object $(S,B)$ then the Cartesian lifts of $f$ take the form $(f,\id_{W_fB}):(R,W_fB) \to (S,B)$.
\end{remark}

We now give a quick sketch of the fact that the functors $W_f:\CAlg{S} \to \CAlg{R}$ are all monadic. This will be helpful for deducing that in particular $W_f$ is isomorphism-reflecting.

\begin{proposition}\label{Prop: Section Background Alg: Weil Restriction Monadic}
	Let $f:R \to S$ be a morphism of commutative rigs. Then the Weil restriction functor
	\[
	W_{f}:\CAlg{S} \to \CAlg{R}
	\]
	is monadic.
\end{proposition}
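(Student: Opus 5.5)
The plan is to apply Beck's (crude or precise) monadicity theorem to $W_f:\CAlg{S} \to \CAlg{R}$. We already know from Definition \ref{Defn: Section Background Alg: Commutative Algebra Pseudofunctor} that $W_f$ has a left adjoint $(-)\otimes_R S$. It therefore remains to check two things: that $W_f$ reflects isomorphisms, and that $\CAlg{S}$ has, and $W_f$ preserves, coequalizers of $W_f$-split (or reflexive) pairs.

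The key observation is that $W_f$ commutes strictly with the forgetful functors to $\Set$. Write $U_S:\CAlg{S}\to\Set$ and $U_R:\CAlg{R}\to\Set$ for the underlying-set functors. Since $W_f$ does not change the underlying ring $B$, only its structure map, we have $U_R\circ W_f = U_S$ on the nose.

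Conservativity then follows immediately. A morphism of $S$-algebras that becomes an isomorphism of $R$-algebras is a bijective ring homomorphism. Its inverse is automatically a ring map, and it is compatible with the structure maps from $S$ because the original map was. So the inverse is an $S$-algebra map.

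For coequalizers, I would use the standard fact that $U_S$ and $U_R$ are monadic: commutative $S$-algebras are models of an algebraic (Lawvere) theory over $\Set$. Hence $U_S$ creates coequalizers of $U_S$-split pairs, and both categories are cocomplete, with reflexive coequalizers computed as the quotient of the underlying ring by the congruence generated by the pair. A pair in $\CAlg{S}$ that is $W_f$-split is then $U_S$-split, because $U_S = U_R W_f$. Its coequalizer in $\CAlg{S}$ therefore exists and has underlying set the split coequalizer in $\Set$.

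Applying $W_f$ to this coequalizer gives an $R$-algebra whose underlying set is that same split coequalizer in $\Set$. Since $U_R$ creates that coequalizer, $W_f$ of the coequalizer is a coequalizer in $\CAlg{R}$. Alternatively, one can verify directly that $W_f$ preserves reflexive coequalizers, since the quotient $B/I$ of $B$ by the ideal generated by the differences is the same ring whether it is viewed over $S$ or over $R$. Beck's theorem then gives monadicity. The same argument works verbatim for rigs, with congruences in place of ideals, which matters because the statement is phrased for rigs.

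The main obstacle is the coequalizer step: one has to be careful that split coequalizers in $\CAlg{R}$ genuinely lift, rather than merely arguing with underlying sets. Invoking the creation property of the monadic functors $U_R$ and $U_S$, together with $U_R W_f = U_S$, handles this cleanly. An alternative route is the general fact that if $U_S = U_R\circ W_f$ with $U_S$ and $U_R$ both monadic and $W_f$ having a left adjoint, then $W_f$ is monadic; I would cite this as a backup.
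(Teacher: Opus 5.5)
Your proof is correct, but it takes a different route from the paper.

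The paper's sketch works straight from the definition. It forms the monad $\Tbb$ with underlying functor $W_f \circ ((-)\otimes_R S)$ and asserts, omitting the check as routine, that the comparison functor $C:\CAlg{S} \to \CAlg{R}^{\Tbb}$ is an equivalence. Carrying that out means identifying the $\Tbb$-algebras by hand. A structure map $A \otimes_R S \to A$ satisfying the unit law is determined by its restriction to $S$, i.e.\ by a ring map $S \to A$ whose composite with $f$ is the structure map of $A$. Associativity is then automatic, so $C$ is even an isomorphism of categories.

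You instead apply Beck's theorem. You prove conservativity directly, and you get existence and preservation of coequalizers of $W_f$-split pairs from the strict factorization $U_R \circ W_f = U_S$ through the (strictly) monadic forgetful functors to $\Set$. The uniqueness clause in ``$U_R$ creates coequalizers of $U_R$-split pairs'' is exactly what shows that $W_f$ of the coequalizer is again a coequalizer, and you use it correctly.

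The paper's route is a concrete computation tailored to this adjunction, and it yields an explicit description of the $\Tbb$-algebras. Yours needs no such computation. It applies to any right adjoint between varieties of algebras that commutes with the forgetful functors, so it transfers to rigs with no extra work. It also establishes conservativity independently, whereas the paper deduces isomorphism-reflection afterwards from monadicity.
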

\begin{proof}[Sketch]
	Let $\Tbb$ be the monad associated to the adjunction
	\[
	\begin{tikzcd}
		\CAlg{S} \ar[rr, bend right = 20, swap, ""{name = R}]{}{W_{f}} & & \CAlg{R} \ar[ll, bend right = 20, swap, ""{name = L}]{}{(-) \otimes_R S}
		\ar[from = L, to = R, symbol = \dashv]
	\end{tikzcd}
	\]
	and recall that the underlying functor $T$ of $\Tbb$ is $T := W_{f} \circ ((-) \otimes_R S)$. Recall also that the statement that $W_{f}$ is monadic is equivalent to saying that the comparison functor
	\[
	C:\CAlg{S} \to \CAlg{R}^{\Tbb}
	\]
	given by sending an $S$-algebra $A$ to the corresponding $\Tbb$-algebra
	\[
	A \mapsto \begin{tikzcd}
		W_{f}\left(W_{f}(A) \otimes_R S\right) \ar[d]{}{(W_{f} \ast \epsilon)_A} \\
		W_{f}(A)
	\end{tikzcd}
	\]
	is an equivalence of categories. However, this is straightforward and routine to verify and hence is omitted for the sake of paper length. 
\end{proof}
\begin{corollary}\label{Cor: Section Background Alg: Weil restriction is monadic}
	For any map $f:R \to S$ of commutative rings, $W_{f}$ is isomorphism-reflecting.
\end{corollary}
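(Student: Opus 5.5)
The corollary follows formally from Proposition \ref{Prop: Section Background Alg: Weil Restriction Monadic}, using the standard fact that monadic functors reflect isomorphisms. Let $\Tbb$ be the monad of the adjunction $(-)\otimes_R S \dashv W_f$. Monadicity gives an equivalence $C:\CAlg{S} \to \CAlg{R}^{\Tbb}$ with $U^{\Tbb} \circ C = W_f$, where $U^{\Tbb}:\CAlg{R}^{\Tbb} \to \CAlg{R}$ is the forgetful functor. Equivalences reflect isomorphisms, so it suffices to show that $U^{\Tbb}$ reflects isomorphisms.

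For the forgetful functor, let $h:(A,a) \to (B,b)$ be a map of $\Tbb$-algebras with $h$ invertible in $\CAlg{R}$. We check that $h^{-1}$ is again a $\Tbb$-algebra map, i.e.\ $h^{-1}\circ b = a \circ T(h^{-1})$. Starting from $h \circ a = b \circ T(h)$, precompose with $T(h^{-1})$ and postcompose with $h^{-1}$. Functoriality of $T$ gives $T(h)\circ T(h^{-1}) = \id$, which yields the required identity. Hence $h$ is an isomorphism in $\CAlg{R}^{\Tbb}$.

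Putting these together: if $\rho$ is a map of $S$-algebras with $W_f(\rho)$ invertible, then $U^{\Tbb}(C\rho)$ is invertible. So $C\rho$ is invertible, and therefore $\rho$ is invertible.

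No step is a real obstacle, since all the content sits in the cited monadicity proposition. As a sanity check independent of that sketch, there is also a direct argument. $W_f$ does not change the underlying ring map: it only reindexes the structure map along $f$. So a bijective $S$-algebra map $\rho$ has an inverse ring map, and that inverse automatically commutes with the $S$-structure maps $\nu$. Thus $\rho^{-1}$ is an $S$-algebra map, which gives the corollary even without relying on the omitted details of the monadicity proof.
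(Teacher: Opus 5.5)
Your proof is correct and follows the paper's route: the paper deduces conservativity of $W_f$ from its monadicity by citing Beck's monadicity theorem, and you spell out that standard fact directly via the comparison equivalence $C$ together with the observation that the forgetful functor $U^{\Tbb}$ reflects isomorphisms. Your closing direct argument (the inverse of a bijective $S$-algebra map is a ring map that automatically respects the $S$-structure maps) is also correct, and it has the advantage of not depending on the details omitted from the paper's sketch of the monadicity proposition.
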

\begin{proof}
	This follows from Beck's Monadicity Theorem (cf.\@  \cite[Theorem 4.4.4]{BorceuxHandBook2}) and the fact that $W_{f}$ is monadic.
\end{proof}

\subsection{The Pseudofunctor of Modules}\label{Subsection: Background Alg: The Pseudofunctor of Modules}
We now preform the same analysis as we did for commutative algebras over commutative rings to constructing the fibration of modules over commutative rings. The pseudofunctor we first define uses the extension/restriction of scalars functors in order to produce a pseudofunctor valued in adjunctions.

\begin{definition}\label{Defn: Background Algebra: Pseudofunctor of Modules}
Define the pseudofunctor $\Mod{(-)}:\Cring^{\op} \to \Adj$ as follows:
\begin{itemize}
	\item For each commutative ring $R$, define
	\[
	\big[\Mod{(-)}\big](R) := \Mod{R}.
	\]
	\item For each morphism $f:R \to S$ of commutative rings, let $\Mod{f}$ denote the adjunction
	\[
	\begin{tikzcd}
	\Mod{S} \ar[rr, bend right = 20, swap, ""{name = R}]{}{\res_f} & & \Mod{R} \ar[ll, swap, bend right = 20, ""{name = L}]{}{(-) \otimes_R S}
	\ar[from = L, to = R, symbol = \dashv]
	\end{tikzcd}
	\]
	where the right adjoint is restriction of scalars along $f$ and the left adjoint is extension of scalars along $f$.
	\item Compositors: Given maps $f:R \to S$ and $g:S \to T$ of commutative rings, the compositor pair takes the form $\phi_{f,g} = (\alpha_{f,g},\id)$ where $\alpha_{f,g}: ((-) \otimes_R S) \otimes_S T \xRightarrow{\cong} (-) \otimes_R T$ is once again the tensor cancellation natural isomorphism; its mate is the identity functor because $\res_{f} \circ \res_{g} = \res_{g \circ f}$.
\end{itemize}
\end{definition}
By applying the Grothendieck construction to the pseudofunctor $\Mod{(-)}$ we obtain a bifibration $p:\operatorname{El}(\Mod{(-)}) \to \Cring$ which encodes the total space of modules. Its opfibration structure $p^{\op}$ is induced by taking the opposite of the usual induced tensor product of modules universal property. We define this fibration $p$ explicitly below.

\begin{definition}\label{Defn: Section Background Algebra: Bifibration of modules}
	The fibration $p:\MMod \to \Cring$ is defined as follows. The category $\MMod$ is the category with:
	\begin{itemize}
		\item Objects: Pairs $(R, M)$ where $R$ is a commutative ring and where $M$ is an $R$-module
		\item Morphisms: A morphism $(R,M) \to (S,N)$ is given by the following rule:
		\begin{prooftree}
			\AxiomC{$(R,M) \to (S,N)$ in $\CCalg$}\doubleLine
			\UnaryInfC{$(f,\rho):(R,M) \to (S,N)$ in $\CCalg$}\doubleLine
			\UnaryInfC{$f \in \Cring(R,S)$ and $\rho \in \Mod{R}(A,\res_f(B))$}
		\end{prooftree}
		\item Composition: The composition of $(f,\rho):(R,M) \to (S,N)$ and $(g,\varphi):(S,N) \to (T,L)$ is given by:
		\begin{prooftree}
			\AxiomC{$(f,\rho):(R,M) \to (S,N)$ in $\MMod$}
			\UnaryInfC{$f \in \Cring(R,S)$ and $\rho \in \Mod{R}(M,\res_f(N))$}
			\AxiomC{$(g,\varphi):(S,N) \to (T,L)$ in $\MMod$}
			\UnaryInfC{$g \in \Cring(S,T)$ and $\varphi \in \MMod{S}(N,W_g(L))$}
			\BinaryInfC{$g \circ f \in \Cring(R,T)$ and $\res_f(\varphi) \circ \rho \in \CAlg{R}(M,\res_{g \circ f}(L))$}
		\end{prooftree}
		\item Identities: The identity of $(R,M)$ is $(\id_R, \id_M)$.
	\end{itemize}
	The fibration functor $p:\MMod \to \Cring$ sends a pair $(R,M)$ to $R$ and similarly for morphisms.
\end{definition}
\begin{remark}
	Given a map $f:R \to S$ in $\Cring$ and a $\MMod$-object $(S,M)$ then the Cartesian lifts of $f$ take the form $(f,\id_{\res_fB}):(R,\res_{f}M) \to (S,M)$.
\end{remark}

We close this subsection with sketch of the fact that for all morphisms $f:R \to S$ in $\Crig$, the restriction of scalars functors
\[
\res_{f}:\Mod{S} \to \Mod{R}
\]
are monadic.
\begin{proposition}\label{Prop: Section Background Alg: Res of Scalars Monadic}
Let $f:R \to S$ be a morphism of commutative rigs. Then the restriction of scalars functor
\[
\res_{f}:\Mod{S} \to \Mod{R}
\]
is monadic.
\end{proposition}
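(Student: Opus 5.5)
The plan is to apply Beck's (crude or precise) monadicity theorem directly to the adjunction $(-)\otimes_R S \dashv \res_f$ of Definition \ref{Defn: Background Algebra: Pseudofunctor of Modules}. An alternative is to identify the comparison functor with an equivalence by hand, as in the sketch of Proposition \ref{Prop: Section Background Alg: Weil Restriction Monadic}. I would use Beck, since it reduces everything to two checks on $\res_f$ itself.

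The first step is to observe that $\res_f$ has the left adjoint $(-)\otimes_R S$, which is already recorded in the pseudofunctor $\Mod{(-)}$. The second step is to show that $\res_f$ reflects isomorphisms. A morphism of $S$-modules $\varphi:M\to N$ is an isomorphism exactly when its underlying function is bijective. The inverse function is automatically $S$-linear: the identity $\varphi^{-1}(s\cdot n)=s\cdot\varphi^{-1}(n)$ follows by applying the injective map $\varphi$ to both sides. Since $\res_f$ does not change underlying sets or underlying additive structure, it reflects isomorphisms. In the rig case the same argument works, using commutative monoids in place of abelian groups.

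The third step is to show that $\Mod{S}$ has coequalizers of $\res_f$-split pairs and that $\res_f$ preserves them. In fact I would prove the stronger statement that $\res_f$ creates all colimits, or at least all coequalizers. Take a pair $g,h:M\rightrightarrows N$ of $S$-linear maps. In the ring case, its coequalizer in $\Mod{S}$ is $N/\operatorname{im}(g-h)$. Restricting scalars gives the quotient of $\res_f N$ by the same subgroup, and this is the coequalizer in $\Mod{R}$. So $\res_f$ preserves all coequalizers, and in particular the split ones.

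I expect the rig generality to be the main obstacle. There are no negatives there, so coequalizers are quotients by the smallest congruence generated by the pairs $(g(m),h(m))$, rather than by an image. The key point is that this congruence is the same set-theoretic relation whether it is generated $S$-linearly or $R$-linearly. To see this, describe the generated congruence as the smallest equivalence relation on $N$ containing those pairs that is closed under addition and under scalar action. The $S$-action closure contains the $R$-action closure, since $R$ acts through $f$. The $S$-closure is also automatic, because the generating set $\{(g(m),h(m))\}$ is already closed under the $S$-action, as $g$ and $h$ are $S$-linear. Hence the two quotients coincide. An alternative is to use the precise Beck theorem for split coequalizers only. In that case, given a split fork in $\Mod{R}$ with splittings $s,t$, one shows that the $R$-module coequalizer carries a unique $S$-action making the projection $S$-linear, as in the standard proof that algebraic-theory forgetful functors are monadic. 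Either version of the argument, combined with Beck's theorem (cf.\@ \cite[Theorem 4.4.4]{BorceuxHandBook2}), gives that $\res_f$ is monadic.
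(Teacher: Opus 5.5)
Your proof is correct, but it takes a different route from the paper. The paper's sketched proof works directly with the comparison functor $C:\Mod{S}\to\Mod{R}^{\Tbb}$ and asserts, without details, that it is an equivalence. Concretely, a $\Tbb$-algebra $\alpha:\res_f(N\otimes_R S)\to N$ amounts to an $S$-action $s\cdot n:=\alpha(n\otimes s)$ extending the $R$-action, so $C$ is in fact an isomorphism of categories. Reflection of isomorphisms (Corollary \ref{Cor: Section Background Alg: Restriction of scalars is monadic}) is then deduced from monadicity.

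You instead verify Beck's hypotheses. Reflection of isomorphisms becomes an input, which you prove directly from bijectivity; this is not circular. The real work then moves to coequalizers. The paper's route gives an explicit description of the algebras and strict monadicity with no colimit computations. Yours is more modular, and it is actually carried out in the rig case, which the paper omits.

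On your third step, three remarks.
\begin{itemize}
\item Your claim that the $R$-congruence $\sim_R$ generated by $P=\lbrace (g(m),h(m))\rbrace$ is automatically $S$-stable needs one line. Define $x\approx y$ to mean $sx\sim_R sy$ for all $s\in S$. This $\approx$ is an $R$-congruence containing $P$, using $sP\subseteq P$ and $s f(r)\in S$. Hence $\approx$ contains $\sim_R$, which is exactly the $S$-stability you need.
\item You could skip the congruence computation entirely. $\res_f$ also has a right adjoint $\operatorname{Hom}_R(S,-)$ (coinduction, with $(s\phi)(t)=\phi(ts)$), which works verbatim for rigs. So $\res_f$ preserves all colimits, and in particular all coequalizers.
\item In the rig case, the left adjoint $(-)\otimes_R S$ is not literally recorded by the paper's pseudofunctor, which is defined only on $\Cring$. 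You need the tensor product of semimodules, which exists but should be cited or constructed.
\end{itemize}
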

\begin{proof}[Sketch]
Let $\Tbb$ be the monad associated to the adjunction
\[
\begin{tikzcd}
\Mod{S} \ar[rr, bend right = 20, swap, ""{name = R}]{}{\res_{f}} & & \Mod{R} \ar[ll, bend right = 20, swap, ""{name = L}]{}{(-) \otimes_R S}
\ar[from = L, to = R, symbol = \dashv]
\end{tikzcd}
\]
and recall that the underlying functor $T$ of $\Tbb$ is $T := \res_{f} \circ ((-) \otimes_R S)$. Recall also that the statement that $\res_{f}$ is monadic is equivalent to saying that the functor
\[
C:\Mod{S} \to \Mod{R}^{\Tbb}
\]
given by sending a module $M$ to the corresponding $\Tbb$-algebra
\[
M \mapsto \begin{tikzcd}
	\res_{f}\left(\res_{f}(M) \otimes_R S\right) \ar[d]{}{(\res_{f} \ast \epsilon)_M} \\
	\res_{f}(M)
\end{tikzcd}
\]
is an equivalence of categories. However, as in Proposition \ref{Prop: Section Background Alg: Res of Scalars Monadic}, this is straightforward to check and so is omitted. 
\end{proof}
\begin{corollary}\label{Cor: Section Background Alg: Restriction of scalars is monadic}
For any map $f:R \to S$ of commutative rigs, $\res_{f}$ is isomorphism-reflecting.
\end{corollary}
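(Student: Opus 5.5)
The plan mirrors Corollary \ref{Cor: Section Background Alg: Weil restriction is monadic}: deduce isomorphism-reflection from monadicity. By Proposition \ref{Prop: Section Background Alg: Res of Scalars Monadic}, $\res_f$ is monadic, so up to the comparison equivalence $C:\Mod{S} \to \Mod{R}^{\Tbb}$ it is the forgetful functor $U^{\Tbb}:\Mod{R}^{\Tbb} \to \Mod{R}$, with $\res_f \cong U^{\Tbb} \circ C$. Equivalences reflect isomorphisms, so it suffices to show that $U^{\Tbb}$ does. This is the standard argument: if $h:(M,a) \to (N,b)$ is a map of $\Tbb$-algebras whose underlying map $h$ is invertible in $\Mod{R}$, then from $h \circ a = b \circ T(h)$ we get $h^{-1} \circ b = a \circ T(h^{-1})$. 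Hence $h^{-1}$ is a $\Tbb$-algebra map, and $h$ is an isomorphism of $\Tbb$-algebras. Alternatively, one can cite Beck's Monadicity Theorem as in the cited \cite[Theorem 4.4.4]{BorceuxHandBook2}, where conservativity is one of the conditions satisfied by monadic functors.

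To avoid depending on the omitted verification in the sketch of Proposition \ref{Prop: Section Background Alg: Res of Scalars Monadic}, I would also record a direct argument. Let $\varphi:M \to N$ be $S$-linear with $\res_f(\varphi)$ an isomorphism of $R$-modules. The underlying abelian group map of $\res_f(\varphi)$ is literally $\varphi$, so $\varphi$ is bijective and its set-theoretic inverse $\varphi^{-1}$ exists. For $s \in S$ and $n = \varphi(m)$ we have
\[
\varphi^{-1}(sn) = \varphi^{-1}(s\varphi(m)) = \varphi^{-1}(\varphi(sm)) = sm = s\varphi^{-1}(n),
\]
and additivity follows in the same way. Therefore $\varphi^{-1}$ is $S$-linear and $\varphi$ is an isomorphism in $\Mod{S}$. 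This argument uses no negatives, so it also works for rigs and their semimodules.

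There is no real obstacle here. The only subtle point is the one above: the monadicity sketch omits its verification, so I would present the direct argument as the actual proof and mention the monadic one as the conceptual reason.
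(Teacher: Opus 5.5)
Your proposal is correct, and your first (monadic) argument is exactly the paper's route. The paper's proof is the single line ``this follows from Beck's Monadicity Theorem and the fact that $\res_f$ is monadic,'' resting on Proposition~\ref{Prop: Section Background Alg: Res of Scalars Monadic}, whose verification is omitted. You make that line precise by observing that only the elementary conservativity of the Eilenberg--Moore forgetful functor $U^{\Tbb}$ is needed, not the full strength of Beck's theorem. (In fact $\res_f = U^{\Tbb}\circ C$ holds on the nose, not merely up to isomorphism.)

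Your second, direct argument is genuinely different: it never mentions monads. It uses only that $\res_f$ leaves underlying sets and maps unchanged, and that a bijective $S$-semimodule homomorphism has an $S$-linear inverse. This buys three things:
\begin{itemize}
\item independence from the unverified monadicity claim;
\item validity verbatim for rigs, since no subtraction is used;
\item a non-circular order of logic: if one proved monadicity of $\res_f$ via Beck's theorem, rather than by checking the comparison functor directly, conservativity would be one of the hypotheses to establish first, and your computation is exactly how one would do so.
\end{itemize}
The paper's route is shorter on the page and keeps the parallel with Corollary~\ref{Cor: Section Background Alg: Weil restriction is monadic}, at the cost of resting on an omitted verification.
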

\begin{proof}
This follows from Beck's Monadicity Theorem and the fact that $\res_{f}$ is monadic.
\end{proof}

\subsection{An Adjunction Between the Bifibrations of Commutative Algebtras and Modules}\label{Subsection: Background Alg: Adjunction between bifibration}
In this subsection we again give a description of how the symmetric algebra and underlying module functors may be phrased in terms of an adjunction between the bifibrations $\CCalg$ and $\MMod$. This will require briefly recalling both the symmetric algebra and underlying module functors.

The underlying module functor $\Und{(-)}:\CAlg{(-)} \to \Mod{(-)}$ may be written as the object component of a pseudonatural transformation (and hence can be seen to induce a morphism of fibrations by the Grothendieck construction). The functor
\[
\Und{R}:\CAlg{R} \to \Mod{R}
\]
is defined by sending an $R$-algebra $A$ to its underlying $R$-module by forgetting the multiplication $A$ carries and sending an $R$-algebra map to the same map regarded as a morphism of $R$-modules. For any map of rings $f:R \to S$, the underlying module functors enjoy the following property. 
\begin{proposition}\label{Prop: Section Background in Algebra: Underlying commutes strictly with restriction}
Let $f:R \to S$ be a morphism of crigs. Then the diagram
\[
\begin{tikzcd}
\CAlg{S} \ar[r]{}{\Und{S}} \ar[d, swap]{}{W_{f}} & \Mod{S} \ar[d]{}{\res_{f}} \\
\CAlg{R} \ar[r, swap]{}{\Und{R}} & \Mod{R}
\end{tikzcd}
\]
commutes strictly. In particular, the transformation
\[
\Und{(-)}:\CAlg{(-)} \Rightarrow \Mod{(-)}:\Cring^{\op} \to \fCat
\] 
is a strict natural transformation.
\end{proposition}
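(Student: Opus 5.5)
The plan is to check the square directly, on objects and then on morphisms, by unwinding the definitions of $W_f$, $\res_f$, $\Und{S}$ and $\Und{R}$. After that, strict commutativity of the squares upgrades the family $\Und{(-)}$ to a strict natural transformation, because the compositors are compatible on the right-adjoint side.

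\emph{Objects.} Take an $S$-algebra $\nu:S \to B$. Then $\Und{S}(B)$ is the abelian group $(B,+)$ with $S$-action $s \cdot b = \nu(s)b$. Restricting scalars along $f$ gives the same abelian group with $R$-action $r \cdot b = \nu(f(r))b$. On the other side, $W_f(B)$ is the $R$-algebra $\nu \circ f:R \to B$, and forgetting its multiplication gives $(B,+)$ with action $r \cdot b = (\nu\circ f)(r)b$. These two $R$-modules have the same underlying set, the same addition and the same scalar action, so they are literally equal rather than merely isomorphic.

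\emph{Morphisms.} Take an $S$-algebra map $\varphi:B \to B'$. Each of the four functors sends $\varphi$ to the same underlying function, now regarded as a map in the appropriate category. Hence $\res_f(\Und{S}\varphi) = \Und{R}(W_f\varphi)$ as $R$-linear maps, and the square commutes strictly.

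\emph{Strict naturality.} The pseudofunctors $\CAlg{(-)}$ and $\Mod{(-)}$, viewed in $\fCat$ via the right-adjoint projection $\pr_R$, send $f$ to $W_f$ and $\res_f$ respectively. By Definitions \ref{Defn: Section Background Alg: Commutative Algebra Pseudofunctor} and \ref{Defn: Background Algebra: Pseudofunctor of Modules}, their compositors on the right-adjoint side are identities, since $W_f W_g = W_{gf}$ and $\res_f\res_g=\res_{gf}$. The same holds for unitors, since $W_{\id}=\id$ and $\res_{\id}=\id$. So taking every naturality 2-cell of $\Und{(-)}$ to be the identity, which the strict squares above allow, the coherence axioms of a pseudonatural transformation against compositors and unitors reduce to equalities of identity 2-cells. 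They therefore hold trivially, and $\Und{(-)}$ is a strict natural transformation.

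The only real care needed is in this last step: one has to make sure the variance and the $\pr_R$-component are the ones being used. On the left-adjoint side the tensor-cancellation compositors are non-identity isomorphisms, and $\Und{(-)}$ does not commute strictly with extension of scalars. Otherwise the argument is a direct unwinding of definitions, with no substantive obstacle.
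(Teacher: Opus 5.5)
Your proposal is correct and follows the paper's route: the paper simply says the strict commutativity is immediate from the definitions of $W_f$, $\res_f$, $\Und{R}$ and $\Und{S}$, and your object/morphism unwinding plus the remark that the right-adjoint compositors and unitors are identities is that same check written out. One cosmetic fix: since the square is stated for crigs, ``abelian group $(B,+)$'' should read ``commutative monoid $(B,+)$'' there, which changes nothing in the argument.
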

\begin{proof}
This is immediate from the definition of each functor in sight.
\end{proof}

We now get to know the symmetric algebra functors $\Sym{(-)}:\Mod{(-)} \to \CAlg{(-)}$ as they vary over the category $\Cring$. Given a module $M$, we can define $\Sym{R}(M)$ as the free $R$-algebra on the set $M$ modulo the addition laws on $M$ and action law:
\[
\Sym{R}(M) := \frac{R[x_m:m \in M]}{(x_{rm} - rx_m, x_m + x_{n} - x_{m+n}: m, n \in M; r \in R)}.
\]
The symmetric algebra functors can then be checked to be left adjoint to the underlying module functor by simply using the universal properties that free algebras $R[x_i:i \in I]$ carry. By observing this together with the fact that $\Und{(-)}$ is a strict natural transformation, we can check that $\Sym{(-)}$ varies op-pseudonaturally in $\Cring^{\op}$ and hence that $\Sym{(-)}$ forms a morphism of opfibrations $\MMod^{\op} \to \CCalg^{\op}$. We give a short proof of the isomorphism between $\Sym{(-)}$ and the various tensor products below. We will not, however, prove that $\Sym{(-)}$ is a pseudonatural transformation here as it is straightforward but tedious; instead we will simply observe it and proceed.

\begin{corollary}\label{Cor: Section Background Alg: Sym and Tensors Commute}
Let $f:R \to S$ be a morphism of commutative rigs. Then there is a natural isomorphism of functors
\[
\Sym{S}(-) \circ \big((-)\otimes_{R}S\big) \cong \big((-) \otimes_{R} S\big) \circ \Sym{R}(-):\Mod{R} \to \CAlg{S}.
\]
\end{corollary}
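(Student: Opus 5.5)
The plan is to argue by uniqueness of left adjoints: both composites are left adjoints, and their right adjoints agree strictly. The relevant adjunctions are:
\begin{itemize}
	\item $\Sym{R} \dashv \Und{R}$, for each commutative rig $R$;
	\item $(-)\otimes_R S \dashv \res_f$ on modules (Definition \ref{Defn: Background Algebra: Pseudofunctor of Modules});
	\item $(-)\otimes_R S \dashv W_f$ on algebras (Definition \ref{Defn: Section Background Alg: Commutative Algebra Pseudofunctor}).
\end{itemize}
Composing adjunctions, $\Sym{S}\circ((-)\otimes_R S)$ is left adjoint to $\res_f \circ \Und{S}$, and $((-)\otimes_R S)\circ \Sym{R}$ is left adjoint to $\Und{R}\circ W_f$. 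Proposition \ref{Prop: Section Background in Algebra: Underlying commutes strictly with restriction} gives the strict equality $\res_f \circ \Und{S} = \Und{R}\circ W_f$. Hence the two left adjoints are both left adjoint to the same functor $\CAlg{S} \to \Mod{R}$, and so they are canonically naturally isomorphic.

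To make the isomorphism concrete, I would write out the chain of natural bijections, for an $R$-module $M$ and an $S$-algebra $B$:
\begin{align*}
\CAlg{S}\big(\Sym{S}(M\otimes_R S),B\big) &\cong \Mod{S}\big(M\otimes_R S,\Und{S}B\big) \cong \Mod{R}\big(M,\res_f\Und{S}B\big) \\
&= \Mod{R}\big(M,\Und{R}W_f B\big) \cong \CAlg{R}\big(\Sym{R}M,W_f B\big) \cong \CAlg{S}\big(\Sym{R}(M)\otimes_R S,B\big).
\end{align*}
This chain is natural in both $M$ and $B$. The Yoneda lemma, applied objectwise in $B$, then yields isomorphisms $\Sym{S}(M\otimes_R S)\cong \Sym{R}(M)\otimes_R S$ natural in $M$. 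This is exactly the standard uniqueness-of-adjoints statement, for instance as in \cite[Section IV.1]{MacLaneCWM}.

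The only genuinely delicate point is that the equality $\res_f\circ\Und{S} = \Und{R}\circ W_f$ must hold on morphisms as well as objects, and the adjunction bijections must be natural. Proposition \ref{Prop: Section Background in Algebra: Underlying commutes strictly with restriction} handles the former. In the rig setting, one should also check that the presentation of $\Sym{R}$ still yields a left adjoint to $\Und{R}$; this follows from the universal property of free algebras, as remarked above. Finally, I would note the explicit form of the isomorphism, which sends $x_{m\otimes s}\mapsto x_m\otimes s$.
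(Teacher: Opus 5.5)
Your proposal is correct and follows the paper's proof essentially verbatim. The paper observes that both composites are left adjoint to $\res_f \circ \Und{S} = \Und{R} \circ W_f$, using the strict equality of Proposition~\ref{Prop: Section Background in Algebra: Underlying commutes strictly with restriction}, and then concludes by uniqueness of left adjoints; your explicit hom-set chain and the formula $x_{m\otimes s} \mapsto x_m \otimes s$ just unpack what the paper leaves implicit.
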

\begin{proof}
Since left adjoints compose we have that \[
\Sym{S} \circ \big((-) \otimes_{R} S\big) \dashv \res_{f} \circ \Und{S}
\]
and that 
\[
\big((-) \otimes_R S\big) \circ \Sym{R} \dashv \Und{R} \circ W_{f}.
\]
Thus, since $\Und{R} \circ W_{f} = \res_{f} \circ \Und{S}$ by Proposition \ref{Prop: Section Background in Algebra: Underlying commutes strictly with restriction}, both composite left adjoints are left adjoint to the same functor. As such they are naturally isomorphic.
\end{proof}
\begin{corollary}\label{Cor: Section Background Alg: Pseudonat for Sym}
The functors $\Sym{R}:\Mod{R} \to \CAlg{R}$ for all commutative rings $R$ together with the witness natural isomorphisms
\[
\begin{tikzcd}
\Mod{R} \ar[rr, ""{name = U}]{}{\Sym{R}} \ar[d, swap]{}{(-) \otimes_R S} & & \CAlg{R} \ar[d]{}{(-) \otimes_R S} \\
\Mod{S} \ar[rr, swap, ""{name = D}]{}{\Sym{S}} & & \CAlg{S}
\ar[from = U, to = D, Rightarrow, shorten <= 4pt, shorten >= 4pt]{}{\Sym{f}}
\ar[from = U, to = D, Rightarrow, swap, shorten <= 4pt, shorten >= 4pt]{}{\cong}
\end{tikzcd}
\]
form a pseudonatural transformation:
\[
\begin{tikzcd}
	\Cring \ar[rrr, bend left = 30, ""{name = U}]{}{\pi_L \circ \Mod{(-)}^{\circ}} \ar[rrr, bend right = 30, swap, ""{name = D}]{}{\pi_L \circ \CAlg{(-)}^{\circ}} & & & \fCat
	\ar[from = U, to = D, Rightarrow, shorten <= 4pt, shorten >= 4pt]{}{\Sym{(-)}}
\end{tikzcd}
\]
\end{corollary}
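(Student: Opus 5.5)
The plan is to obtain the witnesses $\Sym{f}$ as mates of identities, so that the pseudonaturality coherences reduce to the strict naturality of $\Und{(-)}$ from Proposition \ref{Prop: Section Background in Algebra: Underlying commutes strictly with restriction}.

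\textbf{Step 1: the witnesses.} For each ring map $f:R \to S$, let $\Sym{f}$ be the canonical isomorphism of Corollary \ref{Cor: Section Background Alg: Sym and Tensors Commute}. Concretely, both composites
\[
\Sym{S}\circ\big((-)\otimes_R S\big) \quad\text{and}\quad \big((-)\otimes_R S\big)\circ \Sym{R}
\]
are left adjoint to the single functor $\res_f \circ \Und{S} = \Und{R}\circ W_f$. We take $\Sym{f}$ to be the mate of the identity $2$-cell $\Und{R}\circ W_f = \res_f\circ\Und{S}$. Equivalently, it is the unique natural isomorphism compatible with the two composite units.

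\textbf{Step 2: naturality in $1$-cells and the identity axiom.} Naturality of $\Sym{f}$ in the module variable comes for free from its construction as a mate. For $f=\id_R$ we have $(-)\otimes_R R \cong \id$ and $\res_{\id} = \id$. The mate of the identity is therefore the identity, up to the unitor of the pseudofunctors, so the unit axiom holds.

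\textbf{Step 3: the composition axiom.} This is the main step. Given $f:R\to S$ and $g:S\to T$, we must check that the pasting of $\Sym{f}$ and $\Sym{g}$ with the compositors $\alpha_{f,g}$ of $\Mod{(-)}$ and of $\CAlg{(-)}$ equals $\Sym{g\circ f}$.

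\begin{itemize}
\item Mates are functorial with respect to horizontal pasting of squares. Hence the pasting of $\Sym{f}$ and $\Sym{g}$ is the mate of the pasting of the corresponding right-adjoint squares.
\item Those right-adjoint squares are identities. Their pasting is therefore the identity square $\Und{R}\circ W_{g\circ f} = \res_{g\circ f}\circ\Und{T}$, which uses $W_f W_g = W_{gf}$ and $\res_f\res_g=\res_{gf}$.
\item In both pseudofunctors the compositors are pairs $(\alpha_{f,g},\id)$, so their mates are identities. Thus on the right-adjoint side there is nothing to check.
\item Applying the mate bijection again gives the equality of the left-adjoint pastings, and this equality is exactly the pseudonaturality coherence.
\end{itemize}

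The main obstacle is bookkeeping rather than content. One must confirm that the tensor cancellation isomorphisms $\alpha_{f,g}$ are precisely the mates of the identities $\res_f\res_g=\res_{gf}$ and $W_fW_g=W_{gf}$. This holds because $\alpha_{f,g}$ is the canonical comparison of left adjoints to equal composites. If a direct check is preferred, evaluate on generators: $x_m\otimes s \mapsto s\,x_{m\otimes 1}$. Both sides of the coherence then send $x_m\otimes s\otimes t$ to $t\,g(s)\,x_{m\otimes 1}$ in $\Sym{T}(M\otimes_R T)$. Since the $x_m$ and scalars generate, the two isomorphisms agree.
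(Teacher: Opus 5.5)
Your proposal is correct and follows the paper's own argument. The paper's sketch likewise reduces every coherence to the fact that all composites in sight are left adjoint to the same right adjoint on the nose: here $\Und{R}\circ W_f = \res_f\circ\Und{S}$, and the right adjoints $W_f$, $\res_f$ compose strictly. Your mate-calculus bookkeeping is an explicit version of that, and the only point you leave implicit is the one the paper names, that $\Cring$ is locally discrete, so the $2$-cell axiom for a pseudonatural transformation is vacuous.
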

\begin{proof}[Sketch]
	This is a striaghtforward but ultimately tedious check which ultimately comes down to the fact that $\Cring$ is regarded as a locally discrete $2$-category and every composite functor in sight is left adjoint to the same functor (on the nose).
\end{proof}

The most important aspects (for our purposes) of the symmetric algebra and underlying module functors are that they assemble to an adjunction between the fibrations $\CCalg$ and $\MMod$. This allows us to deduce the way in which the $\Sym{(-)}$ functors may be extended to the setting of quasi-coherent sheaves.
\begin{proposition}\label{Prop: Section Background Alg: Sym and Und adjunction between CAlg and Mod fibrations over crig}
There is an adjunction between fibrations
\[
\begin{tikzcd}
	\CCalg \ar[rr, bend right = 20, swap, ""{name = R}]{}{\Ubb\!\operatorname{nd}} \ar[ddr, swap]{}{p_{\CAlg{(-)}}} & & \MMod \ar[ddl]{}{p_{\Mod{(-)}}} \ar[ll, bend right = 20, swap, ""{name = L}]{}{\Sbb\!\operatorname{ym}} \\
	\\
	& \Cring
	\ar[from = L, to = R, symbol = \dashv]
\end{tikzcd}
\]
where
\[
\Ubb\!\operatorname{nd}(R,A) := \left(R,\Und{R}(A)\right)
\]
and
\[
\Sbb\!\operatorname{ym}(S,M) := \left(S,\Sym{S}(M)\right)
\]
for all objects $(R,A)$ in $\CCalg$ and for all objects $(S,M)$ in $\MMod$.
\end{proposition}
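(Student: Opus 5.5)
We will show that $\Ubb\!\operatorname{nd}$ and $\Sbb\!\operatorname{ym}$ are functors over $\Cring$, that $\Ubb\!\operatorname{nd}$ preserves Cartesian morphisms, and then build the adjunction fibrewise from the adjunctions $\Sym{R} \dashv \Und{R}$, checking that unit and counit are vertical.

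First, the functors. On a morphism $(f,\rho):(R,A) \to (S,B)$ of $\CCalg$, with $\rho \in \CAlg{R}(A,W_fB)$, set
\[
\Ubb\!\operatorname{nd}(f,\rho) := (f,\Und{R}(\rho)).
\]
This is well typed because $\Und{R}(W_fB) = \res_f(\Und{S}B)$ strictly, by Proposition \ref{Prop: Section Background in Algebra: Underlying commutes strictly with restriction}. Functoriality reduces to
\[
\Und{R}(W_f(\varphi) \circ \rho) = \res_f(\Und{S}\varphi) \circ \Und{R}\rho,
\]
which is the same strict commutation. Cartesian lifts $(f,\id_{W_fB})$ are sent to $(f,\id_{\res_f \Und{S}B})$, which are Cartesian in $\MMod$, so $\Ubb\!\operatorname{nd}$ is a morphism of fibrations.

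For a morphism $(f,\psi):(R,M) \to (S,N)$ of $\MMod$, with $\psi:M \to \res_f N$, define $\Sbb\!\operatorname{ym}(f,\psi) := (f,\widetilde{\psi})$. Here $\widetilde{\psi}:\Sym{R}M \to W_f\Sym{S}N$ is the transpose under $\Sym{R} \dashv \Und{R}$ of the composite
\[
M \xrightarrow{\psi} \res_f N \xrightarrow{\res_f \eta_N} \res_f \Und{S}\Sym{S}N = \Und{R}W_f\Sym{S}N.
\]
Concretely, $\widetilde{\psi}$ sends $x_m \mapsto x_{\psi(m)}$. Functoriality follows from naturality of the units and the strict equality of the right adjoints. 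We do not need $\Sbb\!\operatorname{ym}$ to preserve Cartesian maps; the statement only asks for an adjunction over $\Cring$.

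Next, the hom-set bijection. For $(R,M)$ in $\MMod$ and $(S,B)$ in $\CCalg$ we want
\[
\CCalg\big((R,\Sym{R}M),(S,B)\big) \cong \MMod\big((R,M),(S,\Und{S}B)\big)
\]
over each $f \in \Cring(R,S)$. The left side is $\CAlg{R}(\Sym{R}M, W_fB)$ and the right side is $\Mod{R}(M,\res_f\Und{S}B) = \Mod{R}(M,\Und{R}W_fB)$. The fibrewise adjunction $\Sym{R} \dashv \Und{R}$ gives the bijection directly, and it commutes with the projections to $\Cring$ because the $f$-component is untouched.

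Equivalently, the unit is $(\id_R,\eta_M)$ and the counit is $(\id_S,\epsilon_B)$. Both are vertical, so this is an adjunction in the $2$-category of fibrations over $\Cring$. The triangle identities hold componentwise, since they are the fibrewise ones.

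The main obstacle is naturality of the bijection in non-vertical morphisms, that is, compatibility of transposition with precomposition by $\Sbb\!\operatorname{ym}(f,\psi)$ and postcomposition by $(g,\varphi)$. For the first, I will reduce to the defining identity of $\widetilde{\psi}$ together with naturality of $\eta$ and $\epsilon$. For the second, I will use that the transpose of $W_g\varphi \circ \rho$ equals $\res_g\Und{}\varphi$ composed with the transpose of $\rho$, which holds because $\Und{(-)}$ is strictly natural and $\Und{R}$ commutes with $W_f$ on the nose. If that bookkeeping becomes heavy, the fallback is to check on generators $x_m$ of $\Sym{R}M$. Both sides are ring maps determined by their values on the generators $x_m$, and there the compatibility is immediate.
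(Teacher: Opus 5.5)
Your proposal is correct and is the paper's argument. The paper likewise obtains the bijection $\CAlg{R}(\Sym{R}M,W_fB)\cong\Mod{R}(M,\res_f\Und{S}B)$ over each $f\in\Cring(R,S)$ from the fibrewise adjunction $\Sym{R}\dashv\Und{R}$ and the strict equality $\Und{R}\circ W_f=\res_f\circ\Und{S}$, and it leaves implicit the functoriality, Cartesian-preservation, naturality and verticality checks that you spell out.

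Your caution about $\Sbb\!\operatorname{ym}$ is well placed: it sends the Cartesian lift $(f,\id_0)\colon(R,0)\to(S,0)$ to $(f,f)$ with $f\colon R\to W_fS$, which is Cartesian only when $f$ is invertible. So what you (and the paper) actually prove is an adjunction in $\mathbf{Cat}/\Cring$ with fibred right adjoint and vertical unit and counit. It is not an adjunction in the $2$-category of fibrations and fibred functors, as your closing sentence suggests.
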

\begin{proof}
If we have a morphism $(f,\varphi):(R,M) \to (S,\Und{S}(B))$ in $\MMod$ for $(R,M)$ an object of $\CCalg$ and for $(S,\Und{S}(B))$ an object of $\MMod$, then the following chain of deductions
\begin{prooftree}
	\AxiomC{$(f,\varphi):(R,M) \to (S,\Und{S}(B))$ in $\MMod$}\doubleLine
	\UnaryInfC{$f:R \to S$ in $\Crig$, $\varphi:M \to \res_f(\Und{S}(B))$ in $\Mod{R}$}\doubleLine
	\UnaryInfC{$f:R \to S$ in $\Crig$, $\varphi:M \to \Und{R}(W_f(B))$ in $\Mod{R}$}
	\UnaryInfC{$f:R \to S$ in $\Crig$, $\varphi^{\sharp}:\Sym{R}(M) \to W_f(B)$ in $\CAlg{R}$}
	\UnaryInfC{$(f,\varphi^{\sharp}):(R,\Sym{R}(M)) \to (S,B)$ in $\CCalg$}
\end{prooftree}
gives rise to a combinator $(f,\varphi) \mapsto (f,\varphi^{\sharp})$ where $\varphi^{\sharp}$ is the adjoint transpose of $\varphi$. Similarly, given a morphism $(f,\psi):(R,\Sym{R}(M)) \to (S,B)$ there is a corresponding combinator
\begin{prooftree}
	\AxiomC{$(f,\psi):(R,\Sym{R}(M)) \to (S,B)$ in $\CCalg$}\doubleLine
	\UnaryInfC{$f:R \to S$ in $\Crig$, $\psi:\Sym{R}(M) \to W_f(B)$ in $\CAlg{R}$}
	\UnaryInfC{$f:R \to S$ in $\Crig$, $\psi^{\flat}:M \to \Und{R}(W_f(B))$ in $\Mod{S}$}\doubleLine
	\UnaryInfC{$f:R \to S$ in $\Crig$, $\psi^{\flat}:M \to \res_f(\Und{S}(B))$ in $\Mod{S}$}\doubleLine
	\UnaryInfC{$(f,\psi^{\flat}):(R,M) \to (S,\Und{S}(B))$ in $\MMod$}
\end{prooftree}
where $\psi^{\flat}$ is the adjoint transpose of $\psi$. A routine calculation shows that these combinators are mutually inverse to each other and so determine that $\Sbb\!\operatorname{ym} \dashv \Ubb\!\operatorname{nd}$.
\end{proof}

\subsection{The Coalgebra Structure of Symmetric Algebras}\label{Subsection: Coalg of Sym}
An important technical result for constructing the tangent structure on $\mathbf{AffSch}_{/R}$ (and hence, later on, for the tangent structure on $\Sch_{/S}$) regarding the symmetric algebra functor is that it also functorially produces cocommutative Hopf algebras in the symmetric monoidal category $(\CAlg{R},\otimes_R, R)$. When extending these results to the rig-theoretic case, one is obliged to only work with bialgebras, as we cannot necessarily construct the antipode if we do not have negatives in our rig of scalars.

To see how to construct the comultiplication $\nabla$ on $\Sym{R}(M)$, assume that $M$ is an $R$-module.To define the comultiplication $\nabla$ on $\Sym{R}(M)$ in $\CAlg{R}$ is equivalent to defining the map $\nabla^{\sharp}$ in $\Mod{R}$ as below:
\begin{prooftree}
	\AxiomC{$\nabla:\Sym{R}(M) \to \Sym{R}(M) \otimes_R \Sym{R}(M)$}
	\UnaryInfC{$\nabla^{\sharp}:M \to \Und{R}\left(\Sym{R}(M) \otimes_R \Sym{R}(M)\right)$}\doubleLine
	\UnaryInfC{$\nabla^{\sharp}:M \to \Sym{R}(M) \otimes_R \Sym{R}(M)$}
\end{prooftree}
Note in the last map, we regard $\Sym{R}(M) \otimes_R \Sym{R}(M)$ only with respect to its $R$-module structure. We consequently define the module-theoretic map $\nabla^{\sharp}:M \to \Sym{R}(M) \otimes_R \Sym{R}(M)$ by the equation\footnote{From here on in the paper, for any $m \in M$ we write $m \in \Sym{R}(M)$ as a shorthand/abuse of notation for the residue class of the free variable $x_m \in R[x_m:m \in M]/(x_{rm}-rx_m, x_m+x_n-x_{m+n}:r \in R; m,n \in M)$.}
\begin{equation*}\label{Eqn: Section Background Alg: Comultiplication of Sym}
m \mapsto m \otimes 1 + 1 \otimes m.
\end{equation*}
The corresponding map on $\Sym{R}(M)$ is given by $x \mapsto x \otimes 1 + 1 \otimes x$.

To define the counit $\epsilon_M:\Sym{R}(M) \to R$ of the Hopf algebra structure on $\Sym{R}(M)$, we note that since the functor $\Sym{R}$ is a left adjoint, it preserves initial objects. Consequently, $\Sym{R}(0) \cong R$. Writing $0_R:\Sym{R}(0) \xrightarrow{\cong} R$ for this ismorphism and $\bang_M:M \to 0$ for the zero map in $\Mod{R}$, we then obtain a map $\epsilon_M:\Sym{R}(M) \to R$ defined by the composition
\begin{equation*}\label{Eqn: Section Background Alg: The counit map}
\Sym{R}(M) \xrightarrow{\Sym{R}(\bang_M)} \Sym{R}(0) \xrightarrow{0_R} R.
\end{equation*}
The antipode $S:\Sym{R}(M) \to \Sym{R}(M)$ simply sends $x_m$ to $-x_{m} = x_{-m}$ for $m \in M$ and acts on $R$ via the identity.
\begin{proposition}\label{Prop: Section Background Alg: Symmetric Alg is Coalg}
For any commutative ring $R$, the symmetric algebra functor $\Sym{R}:\Mod{R} \to \CAlg{R}$ extends to a functor 
\[
\widehat{\Sym{R}}:\Mod{R} \to \mathbf{Hopf}^{\operatorname{cocom}}(R).
\]
\end{proposition}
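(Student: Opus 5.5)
The plan is to build all structure maps by the adjunction $\Sym{R} \dashv \Und{R}$ and to check every Hopf axiom by the uniqueness half of that universal property. Concretely: an $R$-algebra map out of $\Sym{R}(M)$ is determined by its restriction to the generators $m \in M$. So any identity between two algebra maps $\Sym{R}(M) \to A$ reduces to checking that they agree on elements $m \in M$.

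First I will make sure the structure maps are well-defined $R$-algebra maps. The assignment $\nabla^{\sharp}(m) = m \otimes 1 + 1 \otimes m$ is $R$-linear, and $\Sym{R}(M) \otimes_R \Sym{R}(M)$ is the coproduct in $\CAlg{R}$, so its transpose $\nabla$ is an $R$-algebra map. The counit $\epsilon_M$ is already defined as a composite of algebra maps, and it sends $m \mapsto 0$. The antipode $S$ is the transpose of the linear map $m \mapsto -m$, which is just $\Sym{R}(-\id_M)$. The unit is the structure map $R \to \Sym{R}(M)$ and the multiplication is the algebra multiplication. Both are algebra maps because $\Sym{R}(M)$ is commutative, so the bialgebra compatibility (comultiplication and counit multiplicative) holds automatically.

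Next I check the coalgebra axioms on generators. For coassociativity, both $(\nabla \otimes \id)\nabla$ and $(\id \otimes \nabla)\nabla$ send $m$ to $m\otimes 1\otimes 1 + 1\otimes m\otimes 1 + 1\otimes 1\otimes m$. For counitality, $(\epsilon\otimes\id)\nabla(m) = 0\otimes 1 + 1\otimes m \mapsto m$, and symmetrically on the other side. For cocommutativity, $\tau\nabla(m) = 1\otimes m + m\otimes 1 = \nabla(m)$.

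The antipode axiom is the one step not of this form, since $\mu\circ(S\otimes\id)\circ\nabla$ is not a priori an algebra map. Here I use commutativity: in a commutative algebra, $\mu\colon \Sym{R}(M)\otimes_R\Sym{R}(M) \to \Sym{R}(M)$ is an algebra map, so the composite is an algebra map. It is then determined on generators, where $m \mapsto -m\cdot 1 + 1\cdot m = 0$. The composite $\eta\circ\epsilon$ is also an algebra map and sends $m \mapsto 0$, so the two agree.

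Finally, for functoriality, take $\varphi\colon M\to N$. I check that $\Sym{R}(\varphi)$ commutes with $\nabla$, $\epsilon$ and $S$. All the maps involved are algebra maps, so it again suffices to compare generators: $m$ goes to $\varphi(m)\otimes 1 + 1\otimes\varphi(m)$ either way, to $0$ under the counit, and to $-\varphi(m)$ under the antipode. Preservation of identities and composition is inherited from $\Sym{R}$, which gives the functor $\widehat{\Sym{R}}$.

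I expect no real obstacle. The only subtle point is the antipode step, where commutativity is essential for the reduction to generators.
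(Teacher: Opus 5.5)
Your proposal is correct and follows the paper's route. You define $\nabla$, $\epsilon$ and $S$ through the universal property of $\Sym{R}$, then check each Hopf identity and each functoriality identity on generators $m\in M$. The antipode computation $-m+m=0$ is the one check the paper actually writes out. Your remark that the generator-wise check is valid for the antipode only because $\mu$ is an algebra map when $\Sym{R}(M)$ is commutative supplies a justification the paper's sketch leaves implicit.
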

\begin{proof}[Sketch]
Because we already know that $\Sym{R}(-)$ is functorial, it suffices to prove that $(\Sym{R}(M),\nabla_M,\epsilon_M, S)$ is a cocommutative Hopf algebra in $\CAlg{R}$ for all $R$-modules $M$ and that each map $\Sym{R}(f)$ is also a morphism of Hopf algebras. However, this is a straightforward check which follows immediately from expanding out the definitions and following one's nose. What is missing is checking that the antipode is in fact a (co)inversion for the Hopf algebra. However, this is trivial to verify as 
\[
\mu((\id \otimes S)(\nabla(x))) = \mu(1 \otimes (-x) + x \otimes 1) = -x + x = 0
\] 
for all $x \in M$. 
\end{proof}
\begin{remark}
If one is interested in developing the rig-theoretic extension of Proposition \ref{Prop: Section Background Alg: Symmetric Alg is Coalg}, one can only prove that $\Sym{R}(-)$ extends to is a functor
\[
\widehat{\Sym{R}}:\Mod{R} \to \mathbf{Bialg}(\Mod{R},\otimes_R,R)
\]
whenever $R$ is a commutative rig. The issue lies in the fact that the definition of the antipode $S:\Sym{R}(M) \to \Sym{R}(M), x_m \mapsto x_{-m}$ only makes sense if each $m \in M$ has an additive inverse.
\end{remark}

We now argue that the Hopf algebra structures on $\Sym{R}(M)$ are compatible with ring maps in the sense that if we have a morphism $f:A \to B$ of commutative rings with identity then the tensor product functor
\[
(-) \otimes_A B: \CAlg{A} \to \CAlg{B}
\]
induces a strong comonoidal functor
\[
\mathbf{Hopf}^{\operatorname{cocom}}(\CAlg{A}) \to \mathbf{Hopf}^{\operatorname{cocom}}(\CAlg{B}).
\]
However, this is routine, as the tensor product is the pushout and so is cocontinuous (and hence preserves cogroup objects in $\CAlg{A}$). Together with the isomorphism $\Sym{B}((-) \otimes_A B) \cong \Sym{A}(-) \otimes_A B$, this allows us to deduce that the functor $(-) \otimes_A B:\CAlg{A} \to \CAlg{B}$ induces a comonoidal natural isomorphism
\[
\begin{tikzcd}
\Mod{A} \ar[rr, ""{name= U}]{}{(-) \otimes_A B} \ar[d, swap]{}{\widehat{\Sym{A}}} & & \Mod{B} \ar[d]{}{\widehat{\Sym{B}}} \\
\mathbf{Hopf}^{\operatorname{cocom}}(A) \ar[rr, swap, ""{name = D}]{}{(-) \otimes_A B} & & \mathbf{Hopf}^{\operatorname{cocom}}(B)
\ar[from = U, to = D, Rightarrow, shorten <= 4pt, shorten >= 4pt]{}{\cong}
\ar[from = U, to = D, Rightarrow, shorten <= 4pt, shorten >= 4pt, swap]{}{\widehat{\Sym{f}}}
\end{tikzcd}
\]
By the universal properties of the tensor products on which these natural isomorphisms are defined, it follows that they vary pseudonaturally in $\Cring$. In particular, we record this as proposition below for future use.

\begin{proposition}\label{Prop: Section Background Alg: Sym Hopf Structure Pseudonatural}
There is a pseudonatural transformation
\[
\begin{tikzcd}
\Cring \ar[rr, bend left = 30, ""{name = U}]{}{\Mod{(-)}} \ar[rr, bend right = 30, swap, ""{name = D}]{}{\mathbf{Hopf}^{\operatorname{cocom}}(-)} & & \fCat
\ar[from = U, to = D, Rightarrow, shorten <= 4pt, shorten >= 4pt]{}{\widehat{\sym}}
\end{tikzcd}
\]
where the object functors are given by $\widehat{\Sym{A}}$ and the commutativity witness natural isomorphisms are given by the natural isomorphisms $\widehat{\Sym{f}}$.
\end{proposition}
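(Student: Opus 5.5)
The plan is to assemble the pseudonatural transformation from the data already in hand: the object functors $\widehat{\Sym{A}}$ of Proposition \ref{Prop: Section Background Alg: Symmetric Alg is Coalg}, and the natural isomorphisms $\Sym{f}$ of Corollary \ref{Cor: Section Background Alg: Pseudonat for Sym}, upgraded to Hopf algebra isomorphisms $\widehat{\Sym{f}}$. There are three things to check: each component of $\Sym{f}$ is a morphism of Hopf algebras; the $\widehat{\Sym{f}}$ are natural; and the pseudonaturality coherence axioms hold. Naturality and coherence should come almost for free, since $\mathbf{Hopf}^{\operatorname{cocom}}(-)$ comes with a faithful forgetful functor to $\CAlg{(-)}$ and the tensor cancellation compositors are compatible with it. So the real content is the first point.

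First I fix the functor $(-)\otimes_A B:\mathbf{Hopf}^{\operatorname{cocom}}(A) \to \mathbf{Hopf}^{\operatorname{cocom}}(B)$. For a Hopf algebra $H$ over $A$, I use the canonical isomorphism $(H \otimes_A H)\otimes_A B \cong (H\otimes_A B)\otimes_B (H \otimes_A B)$ to transport $\nabla_H \otimes_A B$, $\epsilon_H \otimes_A B$ and $S_H \otimes_A B$. The Hopf axioms hold because $(-)\otimes_A B$ is strong monoidal $(\CAlg{A},\otimes_A,A)\to(\CAlg{B},\otimes_B,B)$. It also preserves pushouts, hence cogroup objects, as already remarked in the paper.

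Next I show that $\Sym{f}_M:\Sym{B}(M\otimes_A B)\to \Sym{A}(M)\otimes_A B$ respects comultiplication, counit and antipode. Both sides of each required identity are $B$-algebra maps out of $\Sym{B}(M\otimes_A B)$. By the adjunction $\Sym{B}\dashv\Und{B}$, it therefore suffices to compare them on generators $m\otimes b$. Under $\Sym{f}_M$ the generator $x_{m\otimes b}$ goes to $x_m\otimes b$. Then:
\begin{itemize}
\item Applying $\nabla$ in either order gives $(x_m\otimes b)\otimes 1 + 1\otimes(x_m\otimes b)$, up to the canonical isomorphism above.
\item The counit sends the generator to $0$ on both sides.
\item The antipode sends it to $x_{-m}\otimes b = x_m\otimes(-b)$ on both sides.
\end{itemize}
Naturality of $\widehat{\Sym{f}}$ in $M$ follows from naturality of $\Sym{f}$, because morphisms of Hopf algebras are just algebra maps satisfying a property.

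Finally, the coherence for $f:A\to B$ and $g:B\to C$ says that the composite $\widehat{\Sym{g}}\circ\widehat{\Sym{f}}$ (whiskered appropriately) agrees with $\widehat{\Sym{g\circ f}}$ modulo the tensor cancellation compositors. The unit axiom for $\id_A$ is similar. Because the forgetful functor to $\CAlg{C}$ is faithful, this reduces to the corresponding identity for $\Sym{(-)}$ in Corollary \ref{Cor: Section Background Alg: Pseudonat for Sym}. One can also check it directly on generators: both sides send $x_{m\otimes c}$ to $x_m\otimes 1\otimes c$, matching under the identification with $x_m\otimes c$.

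I expect the main obstacle to be bookkeeping rather than mathematics. The comonoidal structure isomorphisms of $(-)\otimes_A B$ must be tracked when comparing $\nabla$'s, so that the identification of $(H\otimes_A H)\otimes_A B$ with $(H\otimes_A B)\otimes_B(H\otimes_A B)$ is used consistently throughout. I will handle this by always reducing to generators $x_m\otimes b$, where every canonical isomorphism acts by the evident rebracketing.
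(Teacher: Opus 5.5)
Your proposal is correct and follows essentially the same route as the paper. You lift $(-)\otimes_A B$ to cocommutative Hopf algebras (the paper does this by noting that the tensor product is a pushout and so preserves cogroup objects), you show that the isomorphisms $\Sym{f}$ respect the Hopf structure, and you inherit pseudonaturality from the tensor-product universal properties behind Corollary \ref{Cor: Section Background Alg: Pseudonat for Sym}; the only difference is that you make explicit, by checking on the generators $x_{m\otimes b}$, the compatibility of $\Sym{f}$ with $\nabla$, $\epsilon$ and $S$ that the paper simply asserts.
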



Less surprising than the case of tensor products, the Hopfification functors $\widehat{\Sym{(-)}}$ are particularly well-suited to the restriction of scalars functors $W_f:\Mod{S} \to \Mod{R}$ for any ring map $f:R \to S$. That is, we argue that the diagram
\[
\begin{tikzcd}
\Mod{S} \ar[d, swap]{}{\widehat{\Sym{S}}} \ar[rr]{}{W_f} & & \Mod{R} \ar[d]{}{\widehat{\Sym{R}}} \\
\mathbf{Hopf}^{\operatorname{cocom}}(S) \ar[rr, swap]{}{W_f^{\operatorname{cocom}}} & & \mathbf{Hopf}^{\operatorname{cocom}}(R)
\end{tikzcd}
\]
commutes. This amounts to showing that for any $S$-module $M$ that the diagrams
\[
\begin{tikzcd}
\Sym{R}(W_f(M)) \ar[rr]{}{F} \ar[d, swap]{}{} & & \Sym{S}(M)\ar[d]{}{} \\ 
\Sym{R}(W_f(M)) \otimes_R \Sym{R}(W_f(M)) \ar[rr, swap]{}{F \otimes F} & & \Sym{S}(M) \otimes_S \Sym{S}(M)
\end{tikzcd}
\]
\[
\begin{tikzcd}
\Sym{R}(W_fM) \ar[r]{}{\epsilon_{W_fM}} \ar[d, swap]{}{F} & R \ar[d]{}{f} \\
\Sym{S}(M) \ar[r, swap]{}{\epsilon_M} & S
\end{tikzcd}
\]
both commute; note that the map $F:\Sym{R}(W_f(M)) \to \Sym{S}(M)$ is induced by the ring maps 
\[
\frac{R[x_{m} : m \in M]}{(rx_m-x_{rm}, x_m+x_{n} - x_{m+n}:m,n \in M; r \in R)} \to \frac{S[x_{m}:m \in M]}{(sx_m - x_{sm}, x_mx_n - x_{m+n}:m,n \in M; s \in S)}
\]
 via $rx_m \mapsto f(r)x_m$ for all $m \in M$ and for all $r \in R$. The commutativity of the first diagram follows from the fact that for all $rx_m$,
\begin{align*}
\left(F\otimes F\right)\left(\nabla_{W_fM}(rx_m)\right) &= (F \otimes F)\left(r(x_m \otimes 1)\right) + (F \otimes F)\left(r(1 \otimes x_m)\right) \\
&= f(r)(x_m \otimes 1) + f(r)(1 \otimes x_m) = f(r)x_m \otimes 1 + 1 \otimes f(r)x_m \\
&= \nabla_{M}(F(rx_m))
\end{align*}
while the commutativity of the second diagram follows from the computations that 
\[
\epsilon_M(F(r)) = \epsilon_M(f(r)) = f(r) = f(\epsilon_{W_fM}(r))
\]
for all $r \in R$ while for all $m \in M$,
\[
\epsilon_M(F(rx_m)) = \epsilon_M(f(r)x_m) = f(r)\cdot 0 = 0 = f(0) = f\left(\epsilon_{W_fM}(x_m)\right).
\]
This allows us to deduce the following proposition which indicates that the symmetric Hopf algebra functors $\widehat{\Sym{(-)}}$ give a strict natural transformation between the module pseudofunctor and the (cocommutative) Hopf algebra pseudofunctor on $\Cring^{\op}$. It will also allow us to deduce later that the morphisms $\nabla$ may be extended to quasi-coherent sheaves.
\begin{proposition}\label{Prop: Section Background Alg: Strictness of Hopf for Restriction of Scalars}
There is a strict pseudonatural transformation:
\[
\begin{tikzcd}
\Cring^{\op} \ar[rrr, bend left = 30, ""{name = U}]{}{\Mod{(-)}} \ar[rrr, bend right = 30, swap, ""{name = D}]{}{\mathbf{Hopf}^{\operatorname{cocom}}(-)} & & & \fCat
\ar[from = U, to = D, Rightarrow, shorten <= 4pt, shorten >= 4pt]{}{\widehat{\Sym{(-)}}}
\end{tikzcd}
\]
In particular, for any map of commutative rings $f:R \to S$ and for any $S$-module $M$, the induced ring map
\[
F:\Sym{R}(W_f(M)) \to \Sym{S}(M)
\]
is a morphism of Hopf algebras.
\end{proposition}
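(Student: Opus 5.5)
We need to show that for every ring map $f:R \to S$ and every $S$-module $M$, the comparison map $F:\Sym{R}(W_f(M)) \to \Sym{S}(M)$ is a morphism of cocommutative Hopf algebras, and that these maps make the square with $W_f$ and $W_f^{\operatorname{cocom}}$ commute, naturally in $M$. Strictness then means the component $2$-cells are identities. Concretely, $W_f^{\operatorname{cocom}}$ sends a Hopf $S$-algebra $H$ to $W_f(H)$, with comultiplication given by the composite $W_f(H) \to W_f(H \otimes_S H)$ and the $R$-algebra map $W_f(H)\otimes_R W_f(H) \to W_f(H \otimes_S H)$. We read "strict" as saying that the lax structure relating $\widehat{\Sym{R}} \circ W_f$ to $W_f^{\operatorname{cocom}} \circ \widehat{\Sym{S}}$ is given by $F$ itself, with no further invertible $2$-cell. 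Equivalently, it is the transformation induced from the strictly commuting square of Proposition \ref{Prop: Section Background in Algebra: Underlying commutes strictly with restriction} by passing to adjoints.

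First we pin down $F$ without coordinates. It is the adjoint transpose, under $\Sym{R} \dashv \Und{R}$, of the $R$-linear identity $W_f(M) \to \res_f(\Und{S}\Sym{S}(M))$ restricted to degree one. By Proposition \ref{Prop: Section Background Alg: Sym and Und adjunction between CAlg and Mod fibrations over crig}, $(f,F)$ is exactly the morphism in $\CCalg$ corresponding to the Cartesian lift $(f,\id):(R,W_fM) \to (S,M)$ in $\MMod$. This gives naturality of $F$ in $M$ for free: a map $\varphi$ of $S$-modules gives commuting Cartesian squares, and transposing gives $F_N \circ \Sym{R}(W_f\varphi) = \Sym{S}(\varphi) \circ F_M$.

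Next we check the Hopf compatibilities on algebra generators. Every map in sight is a ring map, so it suffices to test the comultiplication and counit squares on generators $r \in R$ and $x_m$ with $m \in M$, as in the displayed computations preceding the statement. For the comultiplication square, the bottom-left path passes through the canonical map $\Sym{R}(W_fM)^{\otimes_R 2} \to \Sym{S}(M)^{\otimes_S 2}$, $a\otimes b \mapsto F(a)\otimes F(b)$. Both paths send $x_m$ to $x_m\otimes 1 + 1\otimes x_m$. For the counit square, both paths send $x_m \mapsto 0$ and $r \mapsto f(r)$. The antipode square also commutes on generators, since $F(x_{-m}) = x_{-m}$. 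A bialgebra map between Hopf algebras automatically preserves antipodes, so this last check is redundant but cheap.

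Finally, we assemble the pseudonaturality data. The compositor of $\Mod{(-)}$ for restriction is an identity, since $\res_f \circ \res_g = \res_{g\circ f}$. So the coherence axiom reduces to $F_{g\circ f} = F_f \circ \Sym{R}(W_f F_g)$, under the evident identifications. This holds by uniqueness of transposes: both sides are ring maps restricting to the identity on the degree-one generators $x_m$.

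The main obstacle is not computational but definitional. We must be precise about which structure on the Hopf side is strict: $W_f$ is only lax monoidal from $(\CAlg{S},\otimes_S)$ to $(\CAlg{R},\otimes_R)$. So we have to specify how $W_f^{\operatorname{cocom}}$ is defined, namely via the comparison $W_fH\otimes_R W_fH \to W_f(H\otimes_S H)$, and confirm that the square then commutes with $F$ as the component rather than needing an extra isomorphism. We would fix this convention first and then run the generator checks above.
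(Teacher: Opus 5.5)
Your generator checks are the paper's entire argument. The paper verifies only two squares on the generators $r$ and $x_m$: the comultiplication square, with $F\otimes F$ landing in $\Sym{S}(M)\otimes_S\Sym{S}(M)$, and the counit square, landing in $S$ via $f$. It never constructs $W_f^{\operatorname{cocom}}$. The step that fails in your proposal is your construction of $W_f^{\operatorname{cocom}}$.

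\textbf{Why the construction fails.} A lax monoidal functor transports monoids, not comonoids. From $W_f(\nabla_H)\colon W_fH\to W_f(H\otimes_S H)$ and the comparison $W_fH\otimes_R W_fH\to W_f(H\otimes_S H)$ you cannot produce a map $W_fH\to W_fH\otimes_R W_fH$. The comparison is the quotient $H\otimes_R H\to H\otimes_S H$: it points the wrong way and is not invertible in general.

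Worse, $W_fH$ need not carry any Hopf $R$-algebra structure at all. A counit would be an $R$-algebra map $W_fH\to R$. For $R=\Z$, $S=\mathbb{Q}$ and $H=\Sym{\mathbb{Q}}(\mathbb{Q})\cong\mathbb{Q}[t]$, no ring map $\mathbb{Q}[t]\to\Z$ exists. Geometrically, $W_f$ is post-composition $\mathbf{AffSch}_{/S}\to\mathbf{AffSch}_{/R}$, which does not preserve fibre products, so it does not send $S$-group schemes to $R$-group schemes.

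So there is no functor $W_f^{\operatorname{cocom}}$ lifting $W_f$, and hence no square to commute, strictly or laxly. Nor can ``strict'' be rescued by taking $F$ as the component: $F$ is not even injective. For $\Z\to\Z[i]$ and $M=\Z[i]$, it sends $x_1^2+x_i^2$ to $t^2+(it)^2=0$.

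\textbf{The convention to fix instead.} Read ``morphism of Hopf algebras'' as a morphism lying over $f$:
$\nabla_M\circ F=(F\otimes F)\circ\nabla_{W_fM}$ into $\Sym{S}(M)\otimes_S\Sym{S}(M)$, and $\epsilon_M\circ F=f\circ\epsilon_{W_fM}$.
That is, $(f,F)$ is a morphism in the Grothendieck construction of $R\mapsto\mathbf{Hopf}^{\operatorname{cocom}}(R)$, $f\mapsto(-)\otimes_R S$. This is exactly what your generator checks (and the paper's) prove, and it is the real content of the ``in particular'' clause.

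It also has a coordinate-free proof in the spirit of your transposition paragraph. $F$ is a Hopf map over $f$ if and only if its $S$-linear transpose $F^{\sharp}\colon\Sym{R}(W_fM)\otimes_R S\to\Sym{S}(M)$ is a Hopf $S$-algebra map. Under the isomorphism of Corollary~\ref{Cor: Section Background Alg: Sym and Tensors Commute}, $F^{\sharp}$ is $\Sym{S}$ applied to the counit $W_fM\otimes_R S\to M$. Propositions~\ref{Prop: Section Background Alg: Symmetric Alg is Coalg} and~\ref{Prop: Section Background Alg: Sym Hopf Structure Pseudonatural} then finish the argument.

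\textbf{Coherence identity.} Your formula is mistyped: $F_g$ is an algebra map, so $\Sym{R}(W_fF_g)$ is meaningless. The intended identity is $F_{g\circ f}=F_g\circ F_f$, with $F_f$ taken at the $S$-module $W_gN$. That does follow from uniqueness of transposes, as you say.
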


\section{A Review of Quasi-Coherent Sheaves and Related Structures: Schemifying and Sheafifying the Catgorical Algebra Background}\label{Section: QCoh}

Quasi-coherent sheaves occupy a central place in the study of scheme-theoretic algebraic geometry because they are to schemes what modules are to rings. Given an affine scheme $\Spec A = (\lvert \Spec A\rvert, \Ocal_A)$, the notion of an $\Ocal_A$-module is too ``fuzzy'' to be of help if we want to say that $\Ocal_A$-modules come from the sheafification of $A$-modules; in particular, we have ``too many'' $\Ocal_{A}$-modules because $\Ocal_{A}$-modules need not even locally be quotients of free $\Ocal_{A}$-modules. The way to pare this down is to use quasi-coherent sheaves: these are built with the property that there are pseudofunctorial equivalences $\QCoh(\Spec A) \simeq \Mod{A}$. Consequently, when working with $\QCoh(\Spec A)$ we are restricting ourselves to studying the sheaves of $\Ocal_A$-modules which are fundamentally ring-theoretic in nature. In this section we review the basics of quasi-coherent sheaves, study the coalgebra algebra structures that the relative spectrum functor carries, and then show how one can encode so-called affine morphisms of schemes in terms of what are essentially relative spectra of quasi-coherent sheaves of $\Ocal_X$-algebras.

In the first subsection below, we pay particular attention to the $2$-categorical ways we can phrase the fact that the categories of quasi-coherent sheaves are gluings of categories of modules. This both gives a categorical explanation of why it is that algebraic geometers care about quasi-coherent sheaves as much as they do (and in what sense they really are the scheme-theoretic analogue of modules), as well as indicates category-theoretically precisely what it is that the phrase ``affine-local gluing'' really means (namely taking specific pseudolimits).

\subsection{A Short Primer on the Fibration of Quasi-Coherent Sheaves}\label{Subsection: QCoh Modules}

The definition and construction of quasi-coherent sheaves was given originally in \cite[Section 5.1.3]{EGA01}. A sheaf of modules $\Fscr$ on a locally ringed space $(\lvert X \rvert, \Ocal_X)$ is quasi-coherent when it is locally a cokernel of maps of free sheaves of modules $\Ocal_X^{\oplus I}$. This definition is not well-behaved for general locally ringed spaces, but \emph{is} relatively well-behaved\footnote{The failure of this to be well-behaved lies in the fact that if $f:X \to Y$ is a map of schemes then $f_{\ast}$ need not send quasi-coherent $X$-sheaves to quasi-coherent $Y$-sheaves without some finiteness conditions on $f$; cf.\@ Proposition \ref{Prop: Section Background Scheme: Qcqs maps have quasicompact pushforward}.} for schemes.The main reason this works for schemes  is that if a scheme $X$ has an open affine cover $\lbrace f_i:\Spec A_i \to X \; | \; i \in I \rbrace$, then $\QCoh(X)$ arises as the pseudolimit of the categories of modules $\Mod{A_i}$ (cf.\@ Theorem \ref{Thm: Section Background Scheme: QCoh is a pseudolimit}). It is this which allows us to extend the symmetric algebra and underlying module functors to the scheme-theoretic setting.

\begin{definition}[{\cite[Section 5.1.3]{EGA01}}]
Let $X = (\lvert X \rvert, \Ocal_X)$ be a locally ringed space. A sheaf $\Fscr$ of $\Ocal_X$-modules is \emph{quasi-coherent} if for all $x \in \lvert X \rvert$ there exists an open $U \subseteq \lvert X \rvert$ with $x \in U$ and immersion $j:U \to X$ for which there is a cokernel diagram
\[
\begin{tikzcd}
\bigoplus\limits_{i \in I} \Ocal_{U} \ar[r] & \bigoplus\limits_{k \in J} \Ocal_{U} \ar[r] & j^{\ast}\Fscr \ar[r] & 0
\end{tikzcd}
\]
for arbitrary index sets $I, J$ in the category of $\Ocal_{U}$-modules.
\end{definition}
An important but key structural result is that when $X$ is a scheme, the category $\QCoh(X)$ is Abelian.
\begin{proposition}[{\cite[Proposition II.5.7]{Hartshorne}}]\label{Prop: Section QCoh: Qcoh Abelian}
If $X$ is a scheme then $\QCoh(X)$ is an Abelian category.
\end{proposition}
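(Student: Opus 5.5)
The plan is to show that $\QCoh(X)$ is an additive full subcategory of the Abelian category $\Mod{\Ocal_X}$ of all sheaves of $\Ocal_X$-modules, and that it is closed under kernels, cokernels and finite direct sums. A full additive subcategory of an Abelian category with these closure properties is itself Abelian. Its kernels and cokernels agree with the ambient ones, so the canonical map from coimage to image is computed in $\Mod{\Ocal_X}$, where it is an isomorphism. That $\Mod{\Ocal_X}$ is Abelian is standard: kernels are computed sectionwise and cokernels are the sheafifications of the sectionwise cokernels.

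The first step is to reduce to the affine case. Quasi-coherence is local on $\lvert X \rvert$ by definition, and kernels, cokernels and direct sums commute with restriction $j^{\ast}$ to opens, since restriction to an open is exact. So it suffices to check the closure properties on an affine open cover, i.e.\@ for $X = \Spec A$. There I would use the affine dictionary in two parts. First, a sheaf of $\Ocal_A$-modules is quasi-coherent if and only if it is isomorphic to $\widetilde{M}$ for some $A$-module $M$. Second, the functor $M \mapsto \widetilde{M}$ is exact, fully faithful, and commutes with direct sums.

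The second step is to check fullness of the dictionary. Given a morphism $\varphi:\widetilde{M} \to \widetilde{N}$ of sheaves, taking global sections gives a map $M \to N$ whose tilde recovers $\varphi$. The reason is that $\widetilde{M}(D(g)) = M_g$, and a morphism of sheaves is determined by its values on a basis of opens.

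The third step is closure itself. Exactness of $\widetilde{(-)}$ then shows $\ker \varphi \cong \widetilde{\ker(M\to N)}$ and $\operatorname{coker}\varphi \cong \widetilde{\operatorname{coker}(M \to N)}$. Exactness follows because the stalk of $\widetilde{M}$ at $\mathfrak{p}$ is $M_{\mathfrak{p}}$, localization is exact, and exactness of sheaves can be checked on stalks.

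The main obstacle is the claim that every quasi-coherent sheaf on $\Spec A$ is globally of the form $\widetilde{M}$. Locally, on a cover by basic opens $D(g_i)$, the sheaf is a cokernel of free modules and hence of the form $\widetilde{M_i}$. Taking $M = \Fscr(\Spec A)$, one must show that the natural maps $M_{g_i} \to \Fscr(D(g_i))$ are isomorphisms. I would do this with the standard argument of \cite[Lemma II.5.3]{Hartshorne}: using quasi-compactness of $\Spec A$, reduce to a finite cover, then verify injectivity and surjectivity by clearing denominators with powers of $g_i$ and gluing on the overlaps $D(g_ig_j)$. This is the one place where real commutative algebra, rather than formal category theory, is required; the rest of the argument is formal.
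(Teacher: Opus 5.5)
Your proposal is correct and is essentially the argument behind the cited \cite[Proposition II.5.7]{Hartshorne}; the paper gives no proof of its own beyond that citation. That argument localizes to affine opens and uses that $\widetilde{(-)}$ is exact, fully faithful, and essentially surjective onto quasi-coherent sheaves on $\Spec A$, with \cite[Lemma II.5.3]{Hartshorne} as the only real commutative-algebra input. As a small simplification, you could avoid that lemma: around each point, choose one basic affine open on which both the source and target of $\varphi$ are already presented by free sheaves, so that only exactness and full faithfulness of $\widetilde{(-)}$ are needed.
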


The most important foundational result regarding quasi-coherent sheaves on schemes is, of course, that $\QCoh(\Spec A) \simeq \Mod{A}$ (cf.\@ \cite[Th{\'e}or{\`e}me 1.4.1]{EGA1} or \cite[Corollary II.5.5]{Hartshorne}). This is done by using the same sheafification functor that defines the structure sheaf on $\Spec A$. That is the functor
\begin{equation}\label{Eqn: Tilde functor for ring modules}
\widetilde{(-)}:\Mod{A} \to \QCoh(\Spec A)
\end{equation}
is defined by defining $\widetilde{M}$ to be the sheaf generated by the assignments
\[
\widetilde{M}(D(f)) := M_f \cong M \otimes_A A[f^{-1}]
\]
on the basic opens\footnote{By a standard result in site theory (cf.\@ \cite[Proposition III.4.1]{MacLaneMoerdijk} for instance) if $\tau$ is a pretopolgoy generating a Grothendieck topology $J$, a presheaf $\Fscr$ is a $J$-sheaf if and only if it satisfies the sheaf axiom for all covers in $\tau$; in particular, the functor from $J$-sheaves on $\Cscr$ to $\tau$-sheaves is an equivalence of categories. This yields the standard topological argument that it suffices to define a sheaf on a basis of opens.} $D(f) = \lbrace \pfrak \in \lvert \Spec A \rvert \; : \; f \notin \pfrak \rbrace$, $f \in A$. The global sections functor
\[
\Gamma(\widetilde{M}) = \widetilde{M}(\lvert \Spec A \rvert)
\]
gives the quasi-inverse of the sheafification functor.

\begin{Theorem}[{\cite[Th{\'e}or{\`e}me 1.4.1]{EGA1}, \cite[Corollary II.5.5]{Hartshorne}}]\label{Thm: Section Background Scheme: QCoh on affine is just modules}
Let $A$ be a commutative ring. Then the functors
\[
\widetilde{(-)}:\Mod{A} \to \QCoh(\Spec A)
\]
and
\[
\Gamma(-):\QCoh(\Spec A) \to \Mod{A}
\]
are inverse equivalences of categories.
\end{Theorem}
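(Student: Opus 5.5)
The plan is to verify directly that the unit and counit of a natural pair of functors are isomorphisms. First I will confirm that the tilde construction lands in $\QCoh(\Spec A)$. By the footnoted basis-of-opens principle, the assignment $D(f) \mapsto M_f$ extends to a sheaf once it satisfies the sheaf axiom for covers of a basic open $D(g)$ by basic opens $D(f_i)$. Since $D(g) \cong \Spec A_g$ and $M_g$ is an $A_g$-module, it suffices to treat covers of $\lvert \Spec A \rvert$ itself. Quasi-compactness reduces us to a finite cover, in which case $(f_1,\dots,f_n) = A$. The sheaf condition then becomes exactness of
\[
0 \to M \to \prod_i M_{f_i} \to \prod_{i,j} M_{f_if_j}.
\]
This is the usual partition-of-unity argument. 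I would check exactness after localizing at each prime $\pfrak$: some $f_i$ is a unit in $A_\pfrak$, so the localized complex is split exact. Equivalently, $A \to \prod A_{f_i}$ is faithfully flat.

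Quasi-coherence is then immediate. The functor $M \mapsto \widetilde{M}$ is exact, because localization is exact and sheafification is determined on the basis. It also preserves arbitrary direct sums, since $(\bigoplus M_k)_f = \bigoplus (M_k)_f$ and direct sums on a quasi-compact basis agree with presheaf sums. So a presentation $A^{\oplus I} \to A^{\oplus J} \to M \to 0$ yields a global presentation $\Ocal^{\oplus I} \to \Ocal^{\oplus J} \to \widetilde{M} \to 0$. Functoriality of tilde, and the facts that $\Gamma \circ \widetilde{(-)} = \id$ and $\Gamma(\widetilde{M}) = M$ via the case $f = 1$, are all direct. For full faithfulness, a sheaf map $\widetilde{M} \to \widetilde{N}$ is determined by its components on the basis. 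Each component $M_f \to N_f$ is forced to be the localization of the global component, because it is $A_f$-linear and compatible with restriction. Hence $\Hom(\widetilde{M},\widetilde{N}) = \Hom_A(M,N)$.

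The main obstacle is essential surjectivity: every quasi-coherent $\Fscr$ is isomorphic to $\widetilde{\Gamma(\Fscr)}$. I will use the comparison map $\widetilde{\Gamma(\Fscr)} \to \Fscr$ adjoint to the identity on global sections, which is given on $D(f)$ by $\Gamma(\Fscr)_f \to \Fscr(D(f))$. The key lemma is that for a quasi-coherent $\Fscr$ on $X = \Spec A$ this map is an isomorphism for every $f$. The plan has three steps.
\begin{enumerate}
\item Using quasi-compactness of $X$ and the fact that the $D(g)$ form a basis, cover $X$ by finitely many $D(g_i)$ on which $\Fscr|_{D(g_i)} \cong \widetilde{M_i}$. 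This step needs the first two paragraphs: a cokernel of a map of free sheaves on $\Spec A_{g_i}$ is $\widetilde{\operatorname{coker}}$, by exactness of tilde and its compatibility with sums.
\item Prove the two halves of the key lemma in the style of Hartshorne II.5.3. Injectivity: if $s \in \Gamma(\Fscr)$ restricts to $0$ on $D(f)$, then on each $D(g_i)$ we have $f^{n}s|_{D(g_i)} = 0$ because $\Fscr|_{D(g_i)} = \widetilde{M_i}$, and the cover is finite. Surjectivity: a section $t \in \Fscr(D(f))$ satisfies that $f^{n}t$ extends to each $D(g_i)$. The extensions agree after further multiplication by powers of $f$ on the overlaps $D(g_ig_j)$, by injectivity applied to $\Fscr|_{D(g_ig_j)}$. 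They therefore glue to a global section.
\item Conclude that the comparison map is an isomorphism on a basis, hence an isomorphism of sheaves.
\end{enumerate}
The bookkeeping of powers of $f$ across a finite cover in step (2) is the delicate part. I would organize it by first proving the lemma for a single sheaf of the form $\widetilde{M}$, where it is trivial, and then transporting it across the finite cover.
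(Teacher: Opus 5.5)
Your proposal is correct. It is essentially the standard argument from the sources the paper cites for this theorem (Hartshorne II.5.1--5.5, EGA I \S\S1.3--1.4); the paper gives no proof of its own and simply quotes the result. Your Step (1) bridges the paper's ``locally a cokernel of free sheaves'' definition to ``locally of the form $\widetilde{M}$'', which is the one part not literally in Hartshorne. Note that identifying the map of free sheaves on $D(g_i)$ as $\widetilde{u}$ for a module map $u$ uses your full-faithfulness result, not only exactness of $\widetilde{(-)}$ and its compatibility with direct sums.
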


The fact that $\widetilde{(-)}:\Mod{A} \to \Mod{\Ocal_A}$ does not give an equivalence of categories (and hence the necessity of adding the adjective ``quasi-coherent'' to ``sheaf of $\Ocal_A$-modules'' everywhere in sight) can be seen below. The example below follows structure well-known to algebraic geometers, but presented in such a way so that the reader who is unfamiliar with quasi-coherent sheaves can see exactly what may fail.
\begin{example}\label{Example: EZPZ QCoh}
Consider the ring $\Z_p$ of $p$-adic integers for a positive integer prime $p$. Then $\lvert \Spec \Z_p \rvert = \lbrace (0), (p)  \rbrace$ with the Sierpinski topology: its open subsets are 
\[
\lbrace \emptyset, \lbrace (0) \rbrace, \lvert \Spec \Z_p \rvert \rbrace.
\]
As such, $\lvert \Spec \Z_p \rvert$ has \emph{no} proper covers, i.e., any cover $U$ of $\lvert \Spec \Z_p \rvert$ contains $\lvert \Spec \Z_p \rvert$. Thus we see that $\Fscr$ is a quasi-coherent sheaf on $\Spec \Z_p$ if and only if in the diagram
\[
\begin{tikzcd}
	\Fscr(\Spec \Z_p) \ar[dr]{}{} \ar[dd] \\
	 & \Fscr((0)) \ar[dl] \\
	 0
\end{tikzcd}
\]
of Abelian groups we have an isomorphism $\Fscr((0)) \cong \Fscr(\Spec \Z_p) \otimes_{\Z_p} \Qbb_p$. For an explicit non-example of a quasi-coherent sheaf of $\Ocal_{\Z_p}$-modules, the sheaf $\Fscr$ defined by
\[
\begin{tikzcd}
\Z_p \ar[dr] \ar[dd] & \\
 & \Qbb_p^4 \ar[dl] \\
 0
\end{tikzcd}
\]
with map $\Z_p \to \Qbb_p^4$ given by $1 \mapsto (1,1,1,1)$ is not quasi-coherent over $\Spec \Z_p$.
\end{example}

In general for any morphism $f:X \to Y$ of schemes, the functor $f^{\ast}:\Mod{\Ocal_Y} \to \Mod{\Ocal_X}$ is given by 
\[
\Fscr \mapsto f^{\ast}\Fscr := f^{-1}\Fscr \otimes_{f^{-1}\Ocal_Y} \Ocal_X.
\]
This functor is cocontinuous and so it follows that $f^{\ast}$ preserves quasi-coherent sheaves\footnote{To argue this find an open cover $\lbrace j_i:U_i \to Y \; | \; i \in I \rbrace$ for which each $j_i^{\ast}\Fscr$ is a cokernel of free sheaves in $\Mod{\Ocal_{U_i}}$. Now pull this cover back along $f:X \to Y$ in order to obtain a cover $U_i \times_Y X =: f^{-1}U_i$ of $X$. By construction the pullback projections $\pr_{2,i}:f^{-1}U_i \to X$ are open immersions. By inspection, $\pr_{1,i}^{\ast}\Fscr$ is a cokernel in $\Mod{\Ocal_{f^{-1}U_i}}$ of free sheaves and from the natural isomorphisms $\pr_{2,i}^{\ast} \circ f^{\ast} \cong (f \circ \pr_{2,i})^{\ast} = (j_i \circ \pr_{1,i})^{\ast} = \pr_{1,i}^{\ast} \circ j_i^{\ast}$ we find that $\pr_{2,i}^{\ast}(f^{\ast}\Fscr)$ is indeed a cokernel of free sheaves in each $\Mod{\Ocal_{f^{-1}U_i}}$.}  and hence restricts to a functor fitting into a commuting diagram
\[
\begin{tikzcd}
\QCoh(Y) \ar[r]{}{f^{\ast}} \ar[d, swap]{}{\operatorname{incl}_Y} & \QCoh(X) \ar[d]{}{\operatorname{incl}_X} \\
\Mod{\Ocal_Y} \ar[r, swap]{}{f^{\ast}} & \Mod{\Ocal_X}
\end{tikzcd}
\]
of categories. This means in particular that we have a pullback functor $f^{\ast}:\QCoh(Y) \to \QCoh(X)$ between quasi-coherent sheaves. Additionally, if $j:U \to X$ is an open immersion of schemes (that is if $\lvert j \rvert$ homeomorphically embeds $\lvert U\rvert$ onto an open subspace of $\lvert X \rvert$ and if $j^{\sharp}:j^{-1}\Ocal_X \to \Ocal_U$ is an isomorphism of $U$-sheaves) then $j^{\ast}$ is both left exact and cocontinuous.

Let us now show that the category $\QCoh(X)$ of quasi-coherent sheaves arises as a pseudolimit of its open subschemes. We go through this because the result is nice and not as well known as it should be, in the author's opinion, and also because to state and prove the theorem is precisely to show that a quasi-coherent sheaf is completely determined by its Zariski descent. At first we will show how, if $\lbrace U_i \to X \; | \; i \in I \rbrace$ is an open affine cover of $X$, then $\QCoh(X)$ arises as the pseudolimit of the $\QCoh(U_i)$ as glued along their intersections $\QCoh(U_i \times_X U_j)$. After this we will give the local characterization of quasi-coherent sheaves as gluings of quasi-coherent sheaves on affine open subschemes. Finally, we will also show that $\QCoh(X)$ is the pseudolimit of \emph{all} the categories $\QCoh(U)$ for affine open subschemes $U$ and use this to conclude that $\QCoh(X)$ is a locally presentable category. In particular, we will use that $\QCoh(X)$ is locally presentable in order to prove the existence of a right adjoint of the functor $f^{\ast}:\QCoh(Y) \to \QCoh(X)$ when given a scheme morphism $f:X \to Y$.

\begin{Theorem}[Folklore]\label{Thm: Section Background Scheme: QCoh is a pseudolimit}
	Let $X$ be a scheme and let $C = \lbrace \gamma_i:U_i \to X \; | \; i \in I \rbrace$ be an affine open cover of $X$. Then $\QCoh(X)$ is the pseudolimit in the $2$-category of categories of the cospans
	\[
	\QCoh(U_i) \xrightarrow{(\pi_0^{ij})^{\ast}} \QCoh(U_i \times_X U_j) \xleftarrow{(\pi_1^{ij})^{\ast}} \QCoh(U_j)
	\]
	for all $i, j \in I$.
\end{Theorem}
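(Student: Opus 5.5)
The plan is to show that the comparison functor from $\QCoh(X)$ into the pseudolimit is an equivalence of categories; this gives the pseudouniversal property up to equivalence, which is the natural meaning of a pseudolimit in $\fCat$. First I will describe the pseudolimit $\Lscr$ of the cospans concretely. Its objects are families $(\Fscr_i, \theta_{ij})_{i,j \in I}$ with $\Fscr_i \in \QCoh(U_i)$ and isomorphisms
\[
\theta_{ij}:(\pi_0^{ij})^{\ast}\Fscr_i \xrightarrow{\cong} (\pi_1^{ij})^{\ast}\Fscr_j
\]
in $\QCoh(U_i \times_X U_j)$. Its morphisms are families $\varphi_i:\Fscr_i \to \Gscr_i$ compatible with the $\theta_{ij}$. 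The explicit pseudolimit of this diagram shape imposes no cocycle condition; I return to this point below, since it is the main obstacle. The comparison functor is $\Phi:\QCoh(X) \to \Lscr$, given by $\Fscr \mapsto (\gamma_i^{\ast}\Fscr, \theta_{ij})$. Here $\theta_{ij}$ is the composite of compositor isomorphisms
\[
(\pi_0^{ij})^{\ast}\gamma_i^{\ast} \cong (\gamma_i\pi_0^{ij})^{\ast} = (\gamma_j \pi_1^{ij})^{\ast} \cong (\pi_1^{ij})^{\ast}\gamma_j^{\ast}.
\]
The invertible $2$-cells in the statement are exactly these, so $\Phi$ is the functor induced by the pseudocone.

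Next I will prove that $\Phi$ is fully faithful. Since each $\gamma_i$ is an open immersion, $\gamma_i^{\ast}$ is just restriction to $U_i$, and $U_i \times_X U_j \cong U_i \cap U_j$. A compatible family of maps $\varphi_i:\Fscr|_{U_i} \to \Gscr|_{U_i}$ therefore agrees on overlaps. By the sheaf property of $\mathcal{H}om_{\Ocal_X}(\Fscr,\Gscr)$ it glues uniquely to a map $\Fscr \to \Gscr$. Faithfulness holds because the $U_i$ cover $X$.

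For essential surjectivity I glue the $\Fscr_i$ along the $\theta_{ij}$ into a sheaf of $\Ocal_X$-modules $\Fscr$ with $\Fscr|_{U_i} \cong \Fscr_i$. This is the standard gluing of sheaves of modules (Hartshorne Ex.\ II.1.22), and it requires the cocycle identity $\theta_{jk}\theta_{ij} = \theta_{ik}$ on triple overlaps together with $\theta_{ii} = \id$. Quasi-coherence of $\Fscr$ is local, so it follows from quasi-coherence of the $\Fscr_i$; by Theorem \ref{Thm: Section Background Scheme: QCoh on affine is just modules}, each $\Fscr_i$ is $\widetilde{M_i}$ for an $A_i$-module $M_i$.

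The main obstacle is the cocycle condition. A bare pseudolimit of the cospans, with arbitrary $\theta_{ij}$, contains descent data that need not satisfy the cocycle identity. I would handle this by interpreting the pseudolimit as the one over the full \v{C}ech nerve truncated at triple intersections. Equivalently, the diagram indexed by the $U_i$, $U_i\times_X U_j$ and $U_i \times_X U_j \times_X U_k$, with the pseudocone's coherence with respect to the restriction maps to triple overlaps, forces the cocycle and unit identities. Concretely, I would impose that the $\theta$ respect the canonical comparisons of pullbacks to $U_i\times_X U_j\times_X U_k$, and that $\theta_{ii}$ be the identity along the diagonal. These are exactly the conditions the $2$-cells of $\Phi$ satisfy automatically, since they are built from pseudofunctor compositors. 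Once this reading is fixed, the gluing lemma gives essential surjectivity. A fully faithful, essentially surjective functor compatible with the cone $2$-cells then exhibits $\QCoh(X)$ as the pseudolimit.
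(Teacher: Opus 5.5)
Your proposal is correct and is essentially the paper's argument. Both rest on the Gluing Lemma for sheaves and for morphisms of sheaves, together with the affine-locality of quasi-coherence. Your proof that the comparison functor into the explicit descent category is fully faithful and essentially surjective is the paper's direct check of the $1$- and $2$-dimensional pseudouniversal property against an arbitrary test category $\Cscr$, specialised to $\Cscr = \mathbbm{1}$ and to $2$-cells.

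Your treatment of the cocycle condition is right, and sharper than the paper's. The bare cospan diagram imposes no relation among the $\theta_{ij}$; for a one-element affine cover its pseudolimit is quasi-coherent sheaves equipped with an automorphism. The paper's sketch simply assumes that the test pseudocone's isomorphisms $\pi_{ij}$ satisfy the cocycle condition. So both arguments in fact establish the statement for the pseudolimit over the \v{C}ech diagram truncated at triple intersections.
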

\begin{proof}[Sketch (category-theortic proof)]
	Whenever we have any two open affine subschemes $U_i, U_j$ of $X$ in our cover, define
	\[
	\begin{tikzcd}
		U_i \times_X U_j \ar[r]{}{\pi_1^{ij}} \ar[d, swap]{}{\pi_0^{ij}} & U_j \ar[d]{}{\gamma_j} \\
		U_i \ar[r, swap]{}{\gamma_i} & X
	\end{tikzcd}
	\]
	for the corresponding pullback maps for all $i,j \in I$. Additionally, recall that since $\QCoh(-):\Sch^{\op} \to \fCat$ is a pseudofunctor, for all $i, j \in I$ we have an invertible $2$-cell:
	\[
	\begin{tikzcd}
		\QCoh(X) \ar[rrrr, bend right = 20, swap, ""{name = D}]{}{(\pi_1^{ij})^{\ast} \circ \gamma_j^{\ast}} \ar[rrrr, bend left = 20, ""{name = U}]{}{(\pi_0^{ij})^{\ast} \circ \gamma_i^{\ast}} & & & & \QCoh(U_i \times_X U_j)
		\ar[from = U, to = D, Rightarrow, shorten <= 4pt, shorten >= 4pt]{}[description]{\phi_{\pi_1,\gamma_j}^{-1} \circ \phi_{\pi_0,\gamma_i}}
	\end{tikzcd}
	\]
	For the sake of brevity, write this compositor natural isomorphism as
	\[
	\phi_{ij} :=  \phi_{\pi_1,\gamma_j}^{-1} \circ \phi_{\pi_0,\gamma_i}.
	\]
	
	Now that we have initialized our data appropriately, to prove the Theorem we first must have a pseudocone above the $\QCoh(U_i)$. To this end, assume that we have a category $\Cscr$ together with functors $F_i:\Cscr \to \QCoh(U_i)$ such that whenever there is an inclusion map $\gamma_{ij}:U_i \to U_j$ there is an invertible $2$-cell
	\[
	\begin{tikzcd}
		\Cscr \ar[rr]{}{F_j} \ar[dr, swap]{}{F_j} & {} & \QCoh(U_i) \\
		& \QCoh(U_j) \ar[ur, swap]{}{\gamma_{ij}^{\ast}}
		\ar[from = 2-2, to = 1-2, Rightarrow, shorten <= 4pt, shorten >= 4pt]{}{\alpha_{ij}}
	\end{tikzcd}
	\] 
	and for all $i, j \in I$ we have invertible $2$-cells
	\[
	\begin{tikzcd}
		\Cscr \ar[rr, ""{name = U}]{}{F_i}	\ar[d, swap]{}{F_j} & & \QCoh(U_i)  \ar[d]{}{(\pi_{0}^{ij})^{\ast}} \\
		\QCoh(U_j) \ar[rr, swap, ""{name = D}]{}{(\pi_1^{ij})^{\ast}} & & \QCoh(U_i \times_X U_j)
		\ar[from = U, to = D, Rightarrow, shorten <= 4pt, shorten >= 4pt]{}{\pi_{ij}}
	\end{tikzcd}
	\]
	which satisfy the cocycle condition pasting diagrams. That is, for any open affines $U_i, U_j$ with an inclusion $\gamma_{ij}:U_i \to U_j$ the pasting diagram
	\[
	\begin{tikzcd}
		& \QCoh(U_j) \ar[dr, ""{name = UR}]{}{(\pi_{1}^{ij})^{\ast}} \ar[dd]{}{\gamma_{ij}^{\ast}} \\
		\Cscr \ar[ur, ""{name = UL}]{}{F_i} \ar[dr, swap, ""{name = DL}]{}{F_j}  & & \QCoh(U_i \times_{X} U_j) \\
		& \QCoh(U_i) \ar[ur, swap, ""{name = DR}]{}{(\pi_0^{ij})^{\ast}}
		\ar[from =UR, to = DR, swap, Rightarrow, shorten <= 4pt, shorten >= 4pt]{}{\phi_{ij}}
		\ar[from = UL, to = DL, Rightarrow, shorten <= 4pt, shorten >= 4pt]{}{\alpha_{ij}}
	\end{tikzcd}
	\]
	is equal to the $2$-cell:
	\[
	\begin{tikzcd}
		\Cscr \ar[rr, ""{name = U}]{}{F_i}	\ar[d, swap]{}{F_j} & & \QCoh(U_i)  \ar[d]{}{(\pi_{0}^{ij})^{\ast}} \\
		\QCoh(U_j) \ar[rr, swap, ""{name = D}]{}{(\pi_1^{ij})^{\ast}} & & \QCoh(U_i \times_X U_j)
		\ar[from = U, to = D, Rightarrow, shorten <= 4pt, shorten >= 4pt]{}{\pi_{ij}}
	\end{tikzcd}
	\]
	
	Our goal now is to define a functor $F:\Cscr \to \QCoh(X)$. Since the maps $\gamma_i:U_i \to X$ constitute an open affine cover of $X$, and since we have functors $F_i:\Cscr \to \QCoh(U_i)$ for all $i \in I$ for every object $A$ of $\Cscr$ we have a system of sheaves $F_iA$ in each category $\QCoh(U_i)$ with isomorphisms
	\[
	\pi_{ij}^{A}:\left(\pi_0^{ij}\right)^{\ast}\left(F_iA\right) \xrightarrow{\cong} \left(\pi_1^{ij}\right)^{\ast}\left(F_jA\right)
	\]
	which satisfy the cocycle condition for all $i, j \in I$. We can thus apply the Gluing Lemma and hence obtain a sheaf
	\[
	FA := \mathsf{Glue}\left(F_iA, \pi_{ij}^{A}:\left(\pi_0^{ij}\right)^{\ast}(F_iA) \xrightarrow{\cong} \left(\pi_1^{ij}\right)^{\ast}(F_jA)\right)
	\]
	on $X$. To see that this is a quasi-coherent sheaf on $X$ we note first that for all $i \in I$,
	\[
	(FA)|_{U_i} = \gamma_i^{\ast}(FA) = \gamma_i^{\ast}\mathsf{Glue}\left(F_iA, \pi_{ij}^{A}:\left(\pi_0^{ij}\right)^{\ast}(F_iA) \xrightarrow{\cong} \left(\pi_1^{ij}\right)^{\ast}(F_jA)\right) \cong F_iA
	\]
	by construction. Since $F_iA$ is an object of $\QCoh(U_i)$ and $U_i$ is affine for all $i \in I$, we can find an $\Ocal_{U_i}(\lvert U_i\rvert)$-module $M_i$ for which $F_iA \cong \widetilde{M}_i$ for all $i \in I$ by Theorem \ref{Thm: Section Background Scheme: QCoh on affine is just modules}. Thus we obtain isomorphisms
	\[
	(FA)|_{U_i} = \gamma_i^{\ast}FA \cong \widetilde{M}_i
	\]
	for all $i \in I$ and hence conclude that $FA$ is an object in $\QCoh(X)$. Additionally, this construction is natural in $A$ and hence allows us to lift maps $f:A \to B$ in $\Cscr$ to morphisms $Ff:FA \to FB$ in $\QCoh(X)$. Finally, the isomorphisms
	\[
	\rho_i^{A} := \gamma_i^{\ast}(FA) \xrightarrow{\cong} F_iA
	\]
	assemble to form invertible $2$-cells
	\[
	\begin{tikzcd}
		\Cscr \ar[rr]{}{F_i} \ar[dr, swap]{}{F} & {} & \QCoh(U_i) \\
		& \QCoh(X) \ar[ur, swap]{}{\gamma_i^{\ast}}
		\ar[from = 2-2, to = 1-2, Rightarrow, shorten <= 4pt, ]{}{\rho_i}
	\end{tikzcd}
	\]
	for all $i \in I$. By construction, since $\rho_i$ and $\rho_j$ invert local gluing and restrictions, we additionally have that the induced pasting diagram
	\[
	\begin{tikzcd}
		& & \QCoh(U_j) \ar[dr]{}{(\pi_1^{ij})^{\ast}} \\
		\Cscr \ar[drr, swap, bend right = 30, ""{name = DL}]{}{F_i} \ar[r, ""{name = M}]{}[description]{F} \ar[urr, bend left = 30, ""{name = UL}]{}{F_j} & \QCoh(X) \ar[ur]{}{\gamma_j^{\ast}} \ar[dr, swap]{}{\gamma_i^{\ast}} & & \QCoh(U_i \times_X U_j) \\
		& & \QCoh(U_i) \ar[ur, swap]{}{(\pi_0^{ij})^{\ast}}
		\ar[from = 1-3, to = 3-3, Rightarrow, shorten <=4pt, shorten >= 4pt]{}{\phi}
		\ar[from = UL, to = 2-2, Rightarrow, shorten <= 4pt, shorten >= 4pt]{}{\rho_j^{-1}}
		\ar[from = 2-2, to = DL, Rightarrow, shorten <= 4pt, shorten >= 4pt]{}{\rho_i}
	\end{tikzcd}
	\]
	is equal to the diagram
	\[
	\begin{tikzcd}
		\Cscr \ar[d, swap]{}{F_i} \ar[rr, ""{name = U}]{}{F_j} & & \QCoh(U_j) \ar[d]{}{(\pi_1^{ij})^{\ast}} \\
		\QCoh(U_i) \ar[rr, swap]{}{(\pi_0^{ij})^{\ast}} & & \QCoh(U_i \times_X U_j)
		\ar[from = U, to = D, Rightarrow, shorten <= 4pt, shorten = 4pt]{}{\pi_{ij}}
	\end{tikzcd}
	\]
	for all $i, j \in I$. Thus every pseudocone $F$ over the $\QCoh(U_i)$ which glue coherently over the intersections $U_i \times_X U_j$ factors through the pseudocone $\QCoh(X)$.
	
	We now need to show that the pseudocone $\QCoh(X)$ has the desired pseudouniversal property. That is, we must prove that for any functors $G, H:\Cscr \to \QCoh(X)$ for which there are natural transformations
	\[
	\begin{tikzcd}
		\Cscr \ar[rr, ""{name = U}]{}{G} \ar[d, swap]{}{H} & & \QCoh(X) \ar[d]{}{\gamma_i^{\ast}} \\
		\QCoh(X) \ar[rr, swap, ""{name = D}]{}{\gamma_i^{\ast}} & & \QCoh(U_i)
		\ar[from = U, to = D, Rightarrow, shorten <= 4pt, shorten >= 4pt]{}{\psi_i}
	\end{tikzcd}
	\]
	for all $i \in I$ then there exists a unique natural transformation $\psi:G \to H$ making $\psi_i = \gamma_i^{\ast} \ast \psi$, i.e., for which $\psi_i$ is the restriction of $\psi$ to $U_i$: $\psi_i = \psi|_{U_i}$. To this end it suffices to prove that we can apply the Gluing Lemma to the $\psi_i$. However, as post-composing the functors $G$ and $H$ by the restrictions $\gamma_i^{\ast}$ give rise to pseudocones $G:\operatorname{constant}(\Cscr) \Rightarrow \QCoh(-)$ on the cover $C$, we know that the $\psi_i$ make the diagrams
	\[
	\begin{tikzcd}
		\left(\pi_0^{ij}\right)^{\ast}\left(\gamma_i^{\ast}GA\right) \ar[rrr]{}{\left(\pi_0^{ij}\right)^{\ast}(\psi_i)} \ar[d, swap]{}{\pi_{ij}^{GA}} & & & \left(\pi_0^{ij}\right)^{\ast}\left(\gamma_i^{\ast}HA\right) \ar[d]{}{\pi_{ij}^{HA}} \\
		\left(\pi_1^{ij}\right)^{\ast}\left(\gamma_j^{\ast}GA\right) \ar[rrr, swap]{}{\left(\pi_{1}^{ij}\right)^{\ast}\psi_j} & & & \left(\pi_1^{ij}\right)^{\ast}\left(\gamma_j^{\ast}HA\right) 
	\end{tikzcd}
	\]
	commute for all $i, j \in I$ and for all $A \in \Cscr_0$. However, this shows that we can apply the Gluing Lemma and obtain a unique morphism $\psi_A:GA \to HA$ in $\QCoh(X)$ which is necessarily the $A$-component of a natural transformation which, by construction, satisfies $(\psi_A)|_{U_i} = \gamma_i^{\ast}(\psi_A) = \psi_i^{A}$. Thus we obtain a unique natural transformation $\psi:G \Rightarrow H$ for which $\psi_i = \gamma_i^{\ast} \ast \psi$ for all $i \in I$. This shows that $\QCoh(X)$ satisfies the pseudounviersal property required of a pseudolimit and so completes the proof that $\QCoh(X)$ is the claimed pseudolimit.
\end{proof}
\begin{Theorem}[{\cite[Tag 01BK]{stacks-project}}]\label{Thm: Section Background Scheme: Checking QCoh affine locally}
	Let $X$ be a scheme and let $\Fscr$ be a sheaf of $\Ocal_X$-modules on $X$. Then $\Fscr$ is quasi-coherent if and only if there exists an open affine cover $\lbrace \gamma_i:U_i  \to X \; | \; i \in I \rbrace$ of $X$ for which if $U_i \cong \Spec A_i$ then for all $i \in I$ there exists an $A_i$-module $M_i$ such that there are isomorphisms
	\[
	\gamma_i^{\ast}\left(\Fscr\right) \cong \widetilde{M_i}, \quad \left(\pi_0^{ij}\right)^{\ast}\widetilde{M_i} \cong \left(\pi_1^{ij}\right)\widetilde{M_j}
	\]
	in $\QCoh(U_i)$ and $\QCoh(U_i \times_X U_j)$.
\end{Theorem}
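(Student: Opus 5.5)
We prove the two implications separately. The forward direction is nearly formal, and the converse reduces, via the gluing mechanism used in the proof of Theorem \ref{Thm: Section Background Scheme: QCoh is a pseudolimit}, to the affine statement Theorem \ref{Thm: Section Background Scheme: QCoh on affine is just modules}.

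For the forward direction, assume $\Fscr$ is quasi-coherent and take any affine open cover $\lbrace \gamma_i:U_i \to X\rbrace$, say with $U_i \cong \Spec A_i$. Since $\gamma_i^{\ast}$ preserves quasi-coherence (the footnoted argument above), $\gamma_i^{\ast}\Fscr$ is an object of $\QCoh(U_i)$. Theorem \ref{Thm: Section Background Scheme: QCoh on affine is just modules} then gives $\gamma_i^{\ast}\Fscr \cong \widetilde{M_i}$ with $M_i := \Gamma(U_i,\gamma_i^{\ast}\Fscr)$. For the overlaps, the pseudofunctoriality of $\QCoh(-)$ together with $\gamma_i\circ \pi_0^{ij} = \gamma_j \circ \pi_1^{ij}$ gives compositor isomorphisms
\[
(\pi_0^{ij})^{\ast}\widetilde{M_i} \cong (\pi_0^{ij})^{\ast}\gamma_i^{\ast}\Fscr \cong (\gamma_i\pi_0^{ij})^{\ast}\Fscr = (\gamma_j\pi_1^{ij})^{\ast}\Fscr \cong (\pi_1^{ij})^{\ast}\widetilde{M_j}.
\]
These transition isomorphisms automatically satisfy the cocycle condition, since each is assembled from compositors.

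For the converse, suppose such a cover and such isomorphisms exist. Quasi-coherence is a local condition, so it suffices to show that each $\gamma_i^{\ast}\Fscr \cong \widetilde{M_i}$ is locally a cokernel of free $\Ocal_{U_i}$-modules. Choose a presentation of $M_i$ by free modules,
\[
A_i^{\oplus I} \to A_i^{\oplus J} \to M_i \to 0,
\]
for instance with $J = M_i$ as a set and $I$ the kernel. Since $\widetilde{(-)}$ is an equivalence onto $\QCoh(U_i)$ and is computed on basic opens by localization, it is exact and preserves direct sums, and $\widetilde{A_i} = \Ocal_{U_i}$. Hence
\[
\Ocal_{U_i}^{\oplus I} \to \Ocal_{U_i}^{\oplus J} \to \widetilde{M_i} \to 0
\]
is exact in $\Mod{\Ocal_{U_i}}$. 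Every point $x \in X$ lies in some $U_i$, and $\gamma_i^{\ast}\Fscr \cong \widetilde{M_i}$ is such a cokernel, so $\Fscr$ is quasi-coherent. In this direction the overlap isomorphisms are not needed; they only record that the local modules glue back to $\Fscr$, which is precisely the datum of Theorem \ref{Thm: Section Background Scheme: QCoh is a pseudolimit}. We should remark this explicitly.

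The main obstacle is exactness of $\widetilde{(-)}$ as a functor into all $\Ocal_{U_i}$-modules, not merely into $\QCoh(U_i)$. We must check that cokernels computed in $\QCoh(U_i)$ agree with those in $\Mod{\Ocal_{U_i}}$ and that $\widetilde{(-)}$ commutes with infinite direct sums. Both can be verified on stalks: $\widetilde{M}_{\pfrak} = M_{\pfrak}$, and localization is exact and commutes with direct sums. A sheaf map is an epimorphism precisely when it is surjective on all stalks, so the stalkwise exactness gives the displayed right exact sequence. For the infinite direct sums one should also note that on the quasi-compact basic opens $D(f)$, the presheaf direct sum is already a sheaf.
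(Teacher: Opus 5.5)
Your proof is correct. Your forward direction matches the paper's, and you also write out the compositor isomorphisms on the overlaps, which the paper leaves implicit.

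For the converse you take a genuinely different route. The paper treats $\Fscr$ as a one-object diagram and observes that the sheaves $\gamma_i^{\ast}\Fscr$ form a pseudocone over the cospans $\QCoh(U_i) \to \QCoh(U_i \times_X U_j) \leftarrow \QCoh(U_j)$. It then invokes Theorem \ref{Thm: Section Background Scheme: QCoh is a pseudolimit} to factor this pseudocone through $\QCoh(X)$. You instead verify the defining local-presentation condition directly: you push a free presentation of $M_i$ through $\widetilde{(-)}$ and check exactness and compatibility with direct sums stalkwise in $\Mod{\Ocal_{U_i}}$.

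What your route buys is that it is self-contained and shows the overlap isomorphisms are superfluous for this direction. The paper's route fits its descent-theoretic narrative, but it quietly needs more than the statement supplies:
\begin{itemize}
\item The $2$-cells of the pseudocone must be the canonical compositors induced by $\Fscr$, so that the cocycle condition holds. The bare existence of some isomorphisms $(\pi_0^{ij})^{\ast}\widetilde{M_i} \cong (\pi_1^{ij})^{\ast}\widetilde{M_j}$ does not guarantee this.
\item The glued object must then be identified with $\Fscr$ by gluing in $\Mod{\Ocal_X}$.
\item The paper's proof of the pseudolimit theorem concludes that the glued sheaf lies in $\QCoh(X)$ from the local isomorphisms $\gamma_i^{\ast}(FA) \cong \widetilde{M_i}$. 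That is exactly the implication you prove here.
\end{itemize}
So your argument supplies the foundational step that the paper's converse tacitly relies on.
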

\begin{proof}
	This follows immediately from the pseudouniversal property of $\QCoh(X)$ in the following way.
	
	$\implies$: Fix an affine open cover $\lbrace \gamma_i:U_i \to X \; | \; i \in I \rbrace$. If $\Fscr$ is quasicoherent then applying each functor $\gamma_i^{\ast}:\QCoh(X) \to \QCoh(U_i)$ to $\underline{\Fscr}$ gives rise to a quasi-coherent sheaf $\gamma_i^{\ast}(\Fscr)$ on $U_i$. Since $U_i$ is affine for all $i \in I$, we can again apply Theorem \ref{Thm: Section Background Scheme: QCoh on affine is just modules} to find an $\Ocal_{U_i}(\lvert U_i \rvert)$-module $M_i$ with $\gamma_i^{\ast}(\Fscr) \cong \widetilde{M}_i$.
	
	$\impliedby$: Consider the functor
	\[
	\underline{\Fscr}:\mathbbm{1} \to \Mod{\Ocal_X}
	\]
	which picks out the sheaf $\Fscr$. The information we assume to give this direction of the proof implies that the functors $\gamma_i^{\ast} \circ \underline{\Fscr}:\mathbbm{1}\to \QCoh(U_i)$, for all $i \in I$, form a pseudocone:
	\[
	\begin{tikzcd}
		\mathbbm{1} \ar[rrr, ""{name = U}]{}{\gamma_i^{\ast} \circ \underline{\Fscr}} \ar[d, swap]{}{\gamma_j^{\ast} \circ \underline{\Fscr}} & & & \QCoh(U_i) \ar[d]{}{(\pi_0^{ij})^{\ast}} \\
		\QCoh(U_j) \ar[rrr, swap, ""{name= D}]{}{(\pi_1^{ij})^{\ast}} & & & \QCoh(U_i \times_{X} U_j)
		\ar[from = U, to = D, Rightarrow, shorten <= 4pt, shorten >= 4pt]{}{\cong}
	\end{tikzcd}
	\]
	By Theorem \ref{Thm: Section Background Scheme: QCoh is a pseudolimit} this implies that the $\gamma_i^{\ast} \underline{\Fscr}$ factor through $\QCoh(X)$ via
	\[
	\underline{\Fscr}:\mathbbm{1} \to \QCoh(X)
	\]
	and so gives the result.
\end{proof}
A convenient corollary of Theorem \ref{Thm: Section Background Scheme: QCoh is a pseudolimit} is that we can also write $\QCoh(X)$ as the pseudolimit of module categories taken over the poset of affine open subschemes of $X$. We simply state this theorem, as it is established similarly. However, making use of this theorem is a much more convenient way to prove that $\QCoh(X)$ is locally presentable.
\begin{Theorem}[Folklore]\label{Thm: Section QCoh: Qcoh is pseudolim over poset of affine opens}
	Let $X$ be a scheme and let $\mathbf{AffOp}(X)$ be the poset of affine open subschemes of $X$, ordered by inclusion. Then there is a pseudolimit decomposition
	\[
	\QCoh(X) \cong \operatorname*{pslim}_{U \in \mathbf{AffOp}(X)} \QCoh(U).
	\]
\end{Theorem}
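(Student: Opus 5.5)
The plan is to build the pseudolimit by comparison with the pseudolimit of Theorem \ref{Thm: Section Background Scheme: QCoh is a pseudolimit}, taking the cover $C$ to be the cover by \emph{all} affine open subschemes of $X$. First I set up the diagram: since $\QCoh(-)$ is a pseudofunctor on $\Sch^{\op}$, restricting it along the inclusion $\mathbf{AffOp}(X)^{\op} \to \Sch^{\op}$ gives a pseudofunctor $U \mapsto \QCoh(U)$ whose transition functors are the restrictions $\gamma_{UV}^{\ast}$ along inclusions $V \subseteq U$. The functors $\gamma_U^{\ast}:\QCoh(X) \to \QCoh(U)$, together with the compositor isomorphisms $\gamma_{UV}^{\ast}\gamma_U^{\ast} \cong \gamma_V^{\ast}$, form a pseudocone; its coherence is exactly the pseudofunctor axiom for $\QCoh(-)$.

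Next comes the existence half of pseudouniversality. Suppose $\Cscr$ carries functors $F_U:\Cscr \to \QCoh(U)$ and coherent invertible $2$-cells $\alpha_{UV}:\gamma_{UV}^{\ast}F_U \cong F_V$. I will produce from these the descent data demanded by Theorem \ref{Thm: Section Background Scheme: QCoh is a pseudolimit}. The intersection $U \times_X V = U \cap V$ need not be affine, but it is covered by affine opens $W \subseteq U \cap V$, and the $\alpha$'s supply isomorphisms
\[
F_U|_W \cong F_W \cong F_V|_W .
\]
By the cocycle condition on the $\alpha$'s, these agree on overlaps $W \cap W'$. Gluing morphisms of sheaves, which is the Gluing Lemma for morphisms already used in the proof of Theorem \ref{Thm: Section Background Scheme: QCoh is a pseudolimit}, then yields isomorphisms
\[
\pi_{UV}:(\pi_0^{UV})^{\ast}F_U \cong (\pi_1^{UV})^{\ast}F_V
\]
on $U \cap V$. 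The cocycle condition for the $\pi_{UV}$ on triple overlaps can be checked locally on affines $W \subseteq U\cap V\cap V'$, and there it reduces to the cocycle condition for the $\alpha$'s. Theorem \ref{Thm: Section Background Scheme: QCoh is a pseudolimit} then gives $F:\Cscr \to \QCoh(X)$ with $\gamma_U^{\ast}F \cong F_U$. I still have to check that these isomorphisms are compatible with the $\alpha_{UV}$ for inclusions $V \subseteq U$. Since morphisms of sheaves are determined locally, this can again be tested on a cover of $V$ by affines, where it holds by construction.

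For the uniqueness half, let $G,H:\Cscr \to \QCoh(X)$ and let $\psi_U:\gamma_U^{\ast}G \Rightarrow \gamma_U^{\ast}H$ be compatible with restrictions along inclusions. These $\psi_U$ are in particular compatible on the overlaps $U \cap V$: the two restrictions to $U\cap V$ agree once further restricted to each affine $W \subseteq U \cap V$, and morphisms of sheaves satisfy the sheaf property. The $2$-cell part of Theorem \ref{Thm: Section Background Scheme: QCoh is a pseudolimit} therefore produces a unique $\psi$ with $\gamma_U^{\ast}\ast\psi = \psi_U$. Combined with the previous paragraph, this shows that the comparison functor from $\QCoh(X)$ to the pseudolimit is essentially surjective and fully faithful, hence an equivalence, which is the meaning of $\cong$ in the statement.

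I expect the main obstacle to be the step that converts the poset-indexed coherence data into descent data on the possibly non-affine intersections $U \cap V$. This is where the pseudolimit over the poset carries less explicit data than the cospan pseudolimit. Carrying it out needs a second application of gluing, now for morphisms on $U\cap V$, and some bookkeeping to confirm that the resulting cocycles are independent of the chosen affine refinements.
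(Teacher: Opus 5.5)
Your argument is correct, but it follows a different route from the paper. The paper gives no separate proof: it only says the result is ``established similarly'' to Theorem~\ref{Thm: Section Background Scheme: QCoh is a pseudolimit}, meaning the Gluing Lemma argument is rerun directly on the poset-indexed data. You instead use that theorem as a black box for the cover by all affine opens, and you convert a pseudocone over $\mathbf{AffOp}(X)$ into descent data.

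What your route buys is that it isolates the one place where ``similarly'' is not literal. If $X$ is not separated, $U\cap V$ need not be affine, so the poset data gives no isomorphism on $U\cap V$. One must glue the isomorphisms $\alpha_{VW}^{-1}\circ\alpha_{UW}$ over affine $W\subseteq U\cap V$, and check agreement on affines inside $W\cap W'$, which again need not be affine. This is exactly the step you flagged.

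A direct proof would either face the same step or, more economically, use a sheaf on a basis. The data $(F_U,\alpha_{UV})$ is precisely a sheaf of modules on the basis $\mathbf{AffOp}(X)$, with sections $F_U(U)$ and restrictions $F_U(U)\to F_U(V)\xrightarrow{\alpha_{UV}}F_V(V)$. By the basis comparison the paper cites in its footnote on pretopologies, this extends uniquely to a sheaf on $X$. Its restriction to each affine $U$ is $F_U$, so it is quasi-coherent, and the double gluing disappears.

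Two small points on your write-up:
\begin{itemize}
\item The compatibility of $\gamma_U^{\ast}F\cong F_U$ with $\alpha_{UV}$ for $V\subseteq U$ needs no further cover. Here $U\cap V=V$ is itself affine, so $\pi_{UV}=\alpha_{UV}$ by the unit axiom of the pseudocone.
\item Your check of the cocycle condition for the $\pi_{UV}$ is genuinely required, not optional. The cospan diagram alone contains no triple overlaps, and the paper's proof of the cospan theorem assumes the cocycle condition as part of the pseudocone data.
\end{itemize}
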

\begin{corollary}[Folklore]\label{Cor: Section QCoh: QCoh Locally presentable}
	The category $\QCoh(X)$ is locally presentable.
\end{corollary}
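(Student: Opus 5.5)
The plan is to deduce local presentability from the pseudolimit decomposition of Theorem \ref{Thm: Section QCoh: Qcoh is pseudolim over poset of affine opens}. We combine it with the standard fact that the $2$-category of locally presentable categories and cocontinuous (equivalently, left adjoint) functors is closed under pseudolimits (more precisely, bilimits) computed in $\fCat$. This is a theorem of Bird and of Makkai--Par\'e; cf.\@ the limit theorem for accessible categories in Ad\'amek--Rosick\'y and its cocomplete refinement. The argument therefore reduces to checking that the diagram $U \mapsto \QCoh(U)$ on $\mathbf{AffOp}(X)^{\op}$ lands in locally presentable categories and cocontinuous functors.

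First we treat the vertices. For each affine open $U \cong \Spec A$, Theorem \ref{Thm: Section Background Scheme: QCoh on affine is just modules} gives an equivalence $\QCoh(U) \simeq \Mod{A}$. Now $\Mod{A}$ is locally finitely presentable: it is cocomplete, and $A$ is a strong generator which is finitely presentable, since $\Mod{A}(A,-)$ is the forgetful functor and this preserves filtered colimits. Local presentability is invariant under equivalence, so each $\QCoh(U)$ is locally presentable.

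Next we treat the transition functors. For an inclusion $j:V \to U$ of affine opens, with $V \cong \Spec B$, the functor $j^{\ast}:\QCoh(U) \to \QCoh(V)$ corresponds under these equivalences to extension of scalars $(-)\otimes_A B$. This is a left adjoint to $\res$ by Definition \ref{Defn: Background Algebra: Pseudofunctor of Modules}, so it is cocontinuous. Alternatively, we may appeal directly to the remark that $j^{\ast}$ is cocontinuous for open immersions. It is also accessible, since it preserves all colimits. Hence the diagram $\QCoh(-)$ restricted to $\mathbf{AffOp}(X)^{\op}$ is a pseudofunctor into the $2$-category $\mathbf{Pr}^L$ of locally presentable categories and left adjoints. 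Its pseudolimit in $\fCat$ is therefore locally presentable, and by Theorem \ref{Thm: Section QCoh: Qcoh is pseudolim over poset of affine opens} it is equivalent to $\QCoh(X)$.

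The main obstacle is the closure step itself: a proper citation or argument showing that the pseudolimit in $\fCat$ of locally presentable categories along cocontinuous functors is locally presentable. Small size of $\mathbf{AffOp}(X)$ is clear, since it is a set of opens. If a self-contained route is preferred, one can argue in two parts. Cocompleteness is direct: colimits in the pseudolimit are computed componentwise, because the transition functors preserve them, and the descent isomorphisms glue. Accessibility follows from the theorem that the $2$-category of accessible categories and accessible functors is closed under pseudolimits, as in Makkai--Par\'e's \emph{Accessible Categories}, Ad\'amek--Rosick\'y Theorem 2.72 for the pie/weighted-limit version. Combining these two facts gives a cocomplete accessible category, that is, a locally presentable one.
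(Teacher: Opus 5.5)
Your proposal is correct and follows essentially the paper's route: you write $\QCoh(X)$ as the pseudolimit over $\mathbf{AffOp}(X)$, observe that the vertices $\Mod{A}$ are locally presentable and that the transition functors $j^{\ast}$ are cocontinuous (hence accessible), obtain accessibility of the pseudolimit from Makkai--Par\'e, and combine this with cocompleteness. The one real difference is the cocompleteness step: your componentwise computation of colimits along the cocontinuous $j^{\ast}$ is the right justification, whereas the paper appeals to $\QCoh(X)$ being Abelian, which by itself only guarantees finite colimits.
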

\begin{proof}
	Recall that for any commutative ring $A$, the category $\Mod{A}$ of $A$-modules is locally presentable because $\Mod{A}$ is an AB5 Abelian category which contains a generator (namely $A$ itself). Now write
	\[
	\QCoh(X) \cong \operatorname*{pslim}_{U \in \mathbf{AffOp}(X)} \QCoh(U)
	\]
	and note that if we have an inclusion $j:U \to V$ in $\mathbf{AffOp}(X)$ (so notably $U$ is an affine open subscheme of $V$ via the open immersion $j$), then the functor $j^{\ast}:\QCoh(V) \to \mathbf{QCoh}(U)$ is left exact\footnote{Open immersions $j:U \to V$ are flat (cf.~ \cite[Proposition III.9.2]{Hartshorne}) and so $j^{\ast}$ is left exact.} and cocontinuous. Thus each functor $j^{\ast}$ is an accessible functor and so by \cite[Theorem 5.1.6, Corollary 5.1.8]{MakkaiPareAccess} we have that the pseudolimit $\QCoh(X)$ is an accessible category. Finally, as $\QCoh(X)$ is cocomplete because it is an Abelian category\footnote{It is known that if $\Cscr$ is a $1$-category and if $F:\Cscr^{\op} \to \fCat$ is a pseudofunctor with the properties that for objects $X \in \Cscr_0$ and morphisms $f \in \Cscr_1$, $F(X)$ is an Abelian category and $F(f)$ is exact, then the pseudolimit $\operatorname{pslim}(F)$ is an Abelian category as well. An explicit reference for this fact may be found in \cite[Proposition 3.1.12]{PseudoconeMonograph}.} it follows from \cite[Corollary 2.47]{AdamekRosicky} that $\QCoh(X)$ is a locally presentable category.
\end{proof}

Let us now return to the situation where $f:X \to Y$ is a morphism of schemes. While we have seen that the pullback functor
\[
f^{\ast}:\QCoh(Y) \to \QCoh(X)
\] 
is cocontinuous, it unfortunately need not be the case that the usual sheaf-theoretic pushforward $f_{\ast}:\Mod{\Ocal_X} \to \Mod{\Ocal_Y}$ only restricts to a functor 
\[
f_{\ast}:\QCoh(X) \to \Mod{\Ocal_Y}
\] 
because $f_{\ast}$ need not be right exact (as a right adjoint it generically only preserves limits). It is true, however, that $f^{\ast}$still admits a right adjoint; we will argue below that it simply need not coincide with the pushforward functor $f_{\ast}$ usually defined on $\Ocal_X$-modules. The construction of this right adjoint $f_{\square}$ of the pullback $f^{\ast}$ essentially follows from the facts that the categories of quasi-coherent sheaves are locally presentable and from the Special Adjoint Functor Theorem.
\begin{proposition}[Folklore]\label{Prop: Section Background Scheme: Right adjoint for quasicoherent sheaves}
For any morphism $f:X \to Y$ of schemes there is an adjunction:
\[
\begin{tikzcd}
\QCoh(X) \ar[rr, bend right = 20, swap, ""{name = R}]{}{f_{\square}} & & \QCoh(Y) \ar[ll, bend right = 20, swap, ""{name = L}]{}{f^{\ast}}
\ar[from = L, to = R, symbol = \dashv]
\end{tikzcd}
\]
\end{proposition}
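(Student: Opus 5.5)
The plan is to obtain $f_{\square}$ from an adjoint functor theorem for locally presentable categories. The paper has already set this up: Corollary \ref{Cor: Section QCoh: QCoh Locally presentable} gives that $\QCoh(X)$ and $\QCoh(Y)$ are locally presentable. For a cocontinuous functor between locally presentable categories, a right adjoint always exists (cf.\@ \cite[Theorem 1.66]{AdamekRosicky}). Equivalently, one can use the Special Adjoint Functor Theorem, since a locally presentable category is cocomplete, co-wellpowered and has a small generating set. So the proof reduces to showing that $f^{\ast}:\QCoh(Y) \to \QCoh(X)$ preserves all small colimits.

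To prove cocontinuity, I would proceed as follows.
\begin{enumerate}
	\item Recall that $f^{\ast}:\Mod{\Ocal_Y} \to \Mod{\Ocal_X}$ is given by $f^{-1}(-) \otimes_{f^{-1}\Ocal_Y} \Ocal_X$. This is a left adjoint (of the sheaf pushforward $f_{\ast}$), so it preserves all colimits in the categories of sheaves of modules.
	\item Check that colimits in $\QCoh(Y)$ are computed in $\Mod{\Ocal_Y}$. That is, $\QCoh(Y) \hookrightarrow \Mod{\Ocal_Y}$ is closed under small colimits, and likewise for $X$. I would argue this affine-locally. A colimit of sheaves of modules, restricted along an open immersion $\gamma_i:U_i \to Y$, is the colimit of the restrictions, because $\gamma_i^{\ast}$ is a left adjoint. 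On an affine $U_i = \Spec A_i$, the functor $\widetilde{(-)}:\Mod{A_i} \to \Mod{\Ocal_{U_i}}$ preserves colimits: it is built from localizations $M \mapsto M_g$ and sheafification, both of which commute with colimits. Hence a colimit of sheaves of the form $\widetilde{M_\lambda}$ is $\widetilde{\operatorname{colim} M_\lambda}$, which is quasi-coherent by Theorem \ref{Thm: Section Background Scheme: QCoh on affine is just modules}. Theorem \ref{Thm: Section Background Scheme: Checking QCoh affine locally} then shows that the colimit is quasi-coherent on $Y$.
	\item Combine these with the commuting square $\operatorname{incl}_X \circ f^{\ast} = f^{\ast} \circ \operatorname{incl}_Y$ already established in the text. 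Since both inclusions create colimits and the bottom $f^{\ast}$ preserves them, the top $f^{\ast}$ preserves them as well.
\end{enumerate}

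Given cocontinuity and local presentability, the adjoint functor theorem produces $f_{\square}$ with $f^{\ast} \dashv f_{\square}$. Concretely, $f_{\square}(\Gscr)$ represents the functor $\QCoh(X)(f^{\ast}(-),\Gscr)$ on $\QCoh(Y)$. This functor sends colimits to limits, and representability follows from the solution-set condition supplied by a small set of presentable generators.

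I expect the main obstacle to be step 2, closure of quasi-coherent sheaves under arbitrary colimits. It requires knowing that sheafification of the presheaf colimit agrees with $\widetilde{\operatorname{colim}M_\lambda}$ on basic opens $D(g)$, i.e.\@ that $(\operatorname{colim}M_\lambda)_g \cong \operatorname{colim}(M_\lambda)_g$. That identification holds because localization is a tensor product. Alternatively, I could avoid step 2 by quoting the footnoted fact that $\QCoh(Y)$ is an Abelian pseudolimit and computing colimits componentwise in the pseudolimit description of Theorem \ref{Thm: Section QCoh: Qcoh is pseudolim over poset of affine opens}. In that description $f^{\ast}$ is induced affine-locally by the extension of scalars functors $(-)\otimes_A B$, which are cocontinuous.
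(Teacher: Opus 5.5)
Your proposal is correct and follows the paper's proof: local presentability of $\QCoh(X)$ and $\QCoh(Y)$ from Corollary \ref{Cor: Section QCoh: QCoh Locally presentable}, cocontinuity of $f^{\ast}$, and then \cite[Theorem 1.66]{AdamekRosicky}. The only difference is that you actually justify the cocontinuity of $f^{\ast}$ on $\QCoh$, by showing that quasi-coherent sheaves are closed under colimits in $\Mod{\Ocal_Y}$; the paper just asserts this, and your argument also supplies the cocompleteness of $\QCoh(X)$ that the local presentability corollary needs.
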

\begin{proof}
Begin by recalling from Corollary \ref{Cor: Section QCoh: QCoh Locally presentable} that both categories $\QCoh(X)$ and $\QCoh(Y)$ are locally presentable. Additionally, the functor $f^{\ast}:\QCoh(Y) \to \QCoh(X)$ is cocontinuous. Thus we may apply the Special Adjoint Functor Theorem (\cite[Theorem 1.66]{AdamekRosicky}) to derive that $f^{\ast}$ admits a right adjoint.
\end{proof}

While the above argument does \emph{not} indicate that the right adjoint $f_{\square}$ coincides with the usual sheaf-theoretic pushforward, it is the case that under moderate finiteness conditions on the map $f$ the two functors agree. That is, we \emph{can} get that $f_{\ast}$ restricts to a functor between quasi-coherent sheaves whenever $f$ is quasi-compact and quasi-separated\footnote{Following \cite[D{\'e}finition 6.6.1]{EGA1}, a morphism $f:X \to Y$ of schemes is quasi-compact for all quasi-compact open subsets $V$ of $\lvert Y \rvert$, the preimage $f^{-1}(V)$ is quasi-compact in $X$. Because we study quasi-separated morphisms in Section \ref{Section: Appies ofTan on Sch}, we defer the precise statement of what it means to be quasi-separated to Definition \ref{Defn: qsMorphism} below.} This is well-known to algebraic geometers, but we record it here for the reader who is not an algebraic geometer or for the algebraic geometer who tends to work only with varieties.
\begin{proposition}[Well-Known to Experts]\label{Prop: Section Background Scheme: Qcqs maps have quasicompact pushforward}
Let $f:X \to Y$ be a quasi-compact quasi-separated morphism of schemes. Then $f_{\ast}:\Mod{\Ocal_X} \to \Mod{\Ocal_Y}$ restricts to a functor
\[
f_{\ast}:\QCoh(X) \to \QCoh(Y)
\]
and fits into the adjunction:
\[
\begin{tikzcd}
\QCoh(X) \ar[rr, swap, bend right = 20, ""{name = R}]{}{f_{\ast}} & & \QCoh(Y) \ar[ll, swap, bend right = 20, ""{name = L}]{}{f^{\ast}}
\ar[from = L, to = R, symbol = \dashv]
\end{tikzcd}
\]
\end{proposition}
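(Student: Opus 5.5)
The proof has two parts. First, $f_{\ast}\Fscr$ is quasi-coherent whenever $\Fscr$ is. Second, once $f_{\ast}$ preserves quasi-coherence, the adjunction is inherited from the sheaf-level adjunction $f^{\ast} \dashv f_{\ast}$ on $\Mod{\Ocal_X}$ and $\Mod{\Ocal_Y}$, because $\QCoh(X) \subseteq \Mod{\Ocal_X}$ and $\QCoh(Y) \subseteq \Mod{\Ocal_Y}$ are full subcategories. In detail, for $\Gscr \in \QCoh(Y)$ and $\Fscr \in \QCoh(X)$ we have
\[
\QCoh(X)(f^{\ast}\Gscr,\Fscr) = \Mod{\Ocal_X}(f^{\ast}\Gscr,\Fscr) \cong \Mod{\Ocal_Y}(\Gscr,f_{\ast}\Fscr) = \QCoh(Y)(\Gscr,f_{\ast}\Fscr).
\]
Uniqueness of adjoints then gives $f_{\square} \cong f_{\ast}$ as a byproduct.

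For quasi-coherence, quasi-coherence is local on $Y$ and $f_{\ast}$ commutes with restriction to opens: $(f_{\ast}\Fscr)|_V = (f|_{f^{-1}V})_{\ast}(\Fscr|_{f^{-1}V})$. So by Theorem \ref{Thm: Section Background Scheme: Checking QCoh affine locally} we may assume $Y = \Spec B$ is affine. Then $X$ is quasi-compact, and since $f$ is quasi-separated, $X$ is quasi-separated. Cover $X$ by finitely many affines $U_1,\dots,U_n$. Each intersection $U_i \cap U_j$ is then quasi-compact, so we can cover it by finitely many affines $U_{ijk}$. The sheaf axiom gives an exact sequence
\[
0 \to f_{\ast}\Fscr \to \bigoplus_{i} (f|_{U_i})_{\ast}(\Fscr|_{U_i}) \to \bigoplus_{i,j,k} (f|_{U_{ijk}})_{\ast}(\Fscr|_{U_{ijk}}).
\]
For a morphism of affines $g:\Spec A \to \Spec B$, the sheaf $g_{\ast}\widetilde{M}$ is $\widetilde{\res_g M}$. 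This follows by evaluating on basic opens: $g^{-1}D(b) = D(g(b))$, and $M_{g(b)} = (\res_g M)_b$. Hence every term past $f_{\ast}\Fscr$ is quasi-coherent. Finite direct sums of quasi-coherent sheaves are quasi-coherent, and by Proposition \ref{Prop: Section QCoh: Qcoh Abelian} kernels of maps between quasi-coherent sheaves on a scheme are quasi-coherent (the kernel computed in $\Mod{\Ocal_Y}$ agrees with the one in $\QCoh(Y)$, since on an affine it is $\widetilde{(-)}$ of a module kernel and $\widetilde{(-)}$ is exact). So $f_{\ast}\Fscr$ is quasi-coherent.

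The main obstacle is the middle term of the Čech-type sequence. The intersections $U_i \cap U_j$ need not be affine, so we must refine them. Finiteness is essential here: infinite products of quasi-coherent sheaves computed in $\Mod{\Ocal_Y}$ need not be quasi-coherent, because localization does not commute with infinite products. Quasi-compactness makes the first product finite, and quasi-separatedness makes the refinements $U_{ijk}$ finite. Once this step is handled, everything else is formal.
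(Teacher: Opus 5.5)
Your argument is correct and follows the same route as the paper. The paper checks exactly your finiteness conditions (a finite affine cover of each $f^{-1}(V)$ from quasi-compactness, and finitely many affines covering each pairwise intersection from quasi-separatedness) and then cites EGA~I, Proposition~9.2.1, which is the finite \v{C}ech-kernel argument you carry out by hand. You also spell out the adjunction: it is inherited from $f^{\ast} \dashv f_{\ast}$ on $\Mod{\Ocal_X}$ and $\Mod{\Ocal_Y}$ because $\QCoh(X)$ and $\QCoh(Y)$ are full subcategories, a step the paper's sketch leaves implicit.
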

\begin{proof}[Sketch]
By \cite[Proposition 9.2.1]{EGA1} $f_{\ast}$ restricts to a functor of quasi-coherent sheaves precisely when $f$ has the following property: there exists an open affine cover $\lbrace U_i \; | \; i \in I \rbrace$ of $Y$ for which each pullback $f^{-1}U_i$ admits a finite cover of open affine subschemes $\lbrace V_j \to f^{-1}(U_i)\; | \; j \in J_i \rbrace$ for which each pullback $V_j \times_X V_k$ is itself covered by finitely many affine open subschemes of $X$. Since affine schemes are compact (in the sense that if $S$ is affine $\lvert S \rvert$ is compact as a topological space) and since $f$ is quasi-compact, each pullback scheme $f^{-1}(U_i)$ is quasi-compact and so any affine open cover of $f^{-1}(U_i)$ may be refined to a finite subcover. Now since $f$ is quasi-separated, by \cite[Proposition 1.2.7]{EGA41} for any (finite) open affine cover $\lbrace V_j \; | \; j \in J_i \rbrace$ of $f^{-1}(U_i)$, the pullback $V_j \times_X V_k$ is quasi-compact. As such any open affine cover admits a finite refinement and so $V_j \times_X V_k$ may be covered by finitely many affine opens. Applying \cite[Proposition 9.2.1]{EGA1} thus gives the result.
\end{proof}
\begin{remark}
It is known that, unfortunately, there are examples of scheme maps for which the pushforward $f_{\ast}$ does not carry quasi-coherent sheaves to quasi-coherent sheaves. Instead we note that Proposition \ref{Prop: Section Background Scheme: Qcqs maps have quasicompact pushforward} can be rephrased as saying that when $f$ is quasi-compact quasi-separated there is a natural isomorphism $f_{\square} \cong f_{\ast}$.
\end{remark}

Perhaps unsurprisingly, the categories $\QCoh(X)$ and adjunctions $f^{\ast} \dashv f_{\square}$ fit into a pseudofunctor taking values in the $2$-category $\Adj$ of adjunctions. For any morphism $f:X \to Y$ of schemes, we want to send $f$ to the adjunction:
\[
\begin{tikzcd}
\QCoh(X) \ar[rr, bend right, swap, ""{name = R}]{}{f_{\square}} & & \QCoh(Y) \ar[ll, swap, bend right = 20, ""{name = L}]{}{f^{\ast}}
\ar[from = L, to = R, symbol = \dashv]
\end{tikzcd}
\] 
Because each pullback functor $f^{\ast}:\QCoh(Y) \to \QCoh(X)$ is defined by
\[
f^{\ast}(\Fscr) = f^{-1}\Fscr \otimes_{f^{-1}\Ocal_Y} \Ocal_X
\] 
the usual tensor yoga and preimage compositor yoga shows that we have compositor natural isomorphisms
\[
\phi_{f,g}:g^{\ast} \circ f^{\ast} \xRightarrow{\cong} (g \circ f)^{\ast}
\]
whenever $f:X \to Y$ and $g:Y \to Z$ are composable morphisms. Note that, as the exposition above implies, these isomorphisms are the usual natural isomorphisms between
\begin{align*}
f^{\ast}\left(g^{\ast}\Fscr\right) &= f^{\ast}\left(g^{-1}\Fscr \otimes_{g^{-1}\Ocal_Z} \Ocal_Y\right) \cong \left(f^{-1}\left(g^{-1}\Fscr\right) \otimes_{f^{-1}(g^{-1}\Ocal_Z)} f^{-1}\Ocal_Y\right) \otimes_{f^{-1}\Ocal_Y} \Ocal_X \\
&\cong f^{-1}\left(g^{-1}\Fscr\right) \otimes_{f^{-1}(g^{-1}\Ocal_Z)} \Ocal_X
\end{align*}
and
\[
(g \circ f)^{\ast}\Fscr = (g \circ f)^{-1}\Fscr \otimes_{(g \circ f)^{-1}\Ocal_Z} \Ocal_X
\]
for quasi-coherent sheaves $\Fscr$ on $Z$. The mates of these natural isomorphisms are precisely the witness natural isomorphisms
\[
\phi_{f,g}^{m}:(g \circ f)_{\square} \xRightarrow{\cong} g_{\square} \circ f_{\square}
\]
which indicate that the right adjoints $f_{\square}$ and $g_{\square}$ compose. Assembling these give rise to a pseudofunctor
\[
\QCoh:\Sch \to \Adj
\]
whose corresponding left adjoint projection is the pseudofunctor
\[
\QCoh:\Sch^{\op} \to \fCat, f \mapsto f^{\ast}
\]
and whose corresponding right adjoint is the pseudofunctor
\[
\QCoh:\Sch \to \fCat, f \mapsto f_{\square}.
\]
We will denote this pseudofunctor as $\QCoh(-):\Sch \to \Adj$.

If instead we work with relative schemes $X \to S$, we can apply the same construction as above and get the relative version of the pseudofunctor of quasi-coherent sheaves
\[
\QCoh:\Sch_{/S} \to \Adj, \quad f\mapsto \begin{tikzcd}
\QCoh(X) \ar[rr, bend right = 20, swap, ""{name = R}]{}{f_{\square}} & & \QCoh(Y) \ar[ll, bend right = 20, swap, ""{name = L}]{}{f^{\ast}}
\ar[from = L, to = R, symbol = \dashv]
\end{tikzcd}.
\]
The associated Grothendieck construction applied to this pseudofunctor gives the bifibration of quasi-coherent sheaves.
\begin{definition}
	The \emph{bifibration of quasi-coherent sheaves over schemes} is the bifibration $p:\mathcal{QC} \to \Sch^{\op}$ associated to the Grothendieck construction of the pseudofunctor $\QCoh(-):\Sch\to \Adj$.
\end{definition}
For what is most relevant to this paper, we will work with the left adjoint fibration $p:\QQCoh \to \Sch$.
\begin{definition}
	The \emph{fibration of quasi-coherent sheaves over schemes} is the fibration $p:\QQCoh \to \Sch$ associated to the Grothendieck construction of the pseudofunctor $\QCoh:\Sch^{\op} \to \fCat$ given by $f \mapsto f^{\ast}$ on morphisms.
\end{definition}
This means in particular that morphisms $(X,\Fscr) \to (Y,\Gscr)$ in $\QQCoh$ are pairs $(f,\rho)$  where $f:X \to Y$ is a morphism of schemes and where $\rho:\Fscr \to f^{\ast}\Gscr$ is a morphism of quasi-coherent sheaves.

Another category of interest for us (which will become important for framing the flavour of functoriality that the sheaves of relative K{\"a}hler differentials satisfy; cf.\@ Proposition \ref{Prop: Section Kahlers: Kahler diffles are functors}) regarding quasi-coherent sheaves is the corresponding dual fibration (cf.\@ \cite[Definition 8.3.5]{BorceuxHandBook2}) of the fibration $p:\QQCoh \to \Sch$ described above. While formally this is simply the fibration induced via the diagram of pseudofunctors
\[
\begin{tikzcd}
\Sch_{/S} \ar[rr]{}{\QCoh(-)} & &\fCat \ar[r]{}{(-)^{\op}} & \fCat^{\operatorname{co}}
\end{tikzcd}
\]
it is important enough for our purposes that it warrants an explicit description. Consequently we will prove explicitly that it is a fibration as described.
\begin{definition}
Consider the category $\mathbb{QC}$ defined by:
\begin{itemize}
	\item Objects: Pairs $(X,\Fscr)$ where $X$ is a scheme and where $\Fscr$ is a quasi-coherent sheaf on $X$.
	\item Morphisms: A morphism $(X,\Fscr) \to (Y,\Gscr)$ is a pair $(f,\rho)$ where $f:X \to Y$ is a map of schemes and where $\rho:f^{\ast}\Gscr \to \Fscr$ is a map of quasi-coherent sheaves.
	\item Composition: Given maps $(f,\rho):(X,\Fscr) \to (Y,\Gscr)$ and $(g,\varphi):(Y,\Gscr) \to (Z,\Hscr)$, the composition is defined to be $(g \circ f, \rho \bullet \varphi):(X,\Fscr) \to (Z,\Hscr)$ where $\rho \bullet \varphi$ is the map:
	\[
	\begin{tikzcd}
	(g \circ f)^{\ast}\Hscr \ar[r]{}{\phi_{f,g}^{-1}} & f^{\ast}\big(g^{\ast}\Hscr\big) \ar[r]{}{f^{\ast}\varphi} & f^{\ast}\Gscr \ar[r]{}{\rho} & \Fscr
	\end{tikzcd}
	\]
	\item Identities: The identity on $(X,\Fscr)$ is $(\id_X,\id_{\Fscr})$.
\end{itemize}
\end{definition}
\begin{proposition}\label{Prop: Section Background Scheme: QCoh cofibration}
Let $S$ be a base scheme. The category $\mathbb{QC}$ together with the projection 
\[
\pr:\mathbb{QC} \to \Sch_{S}, \quad (X,\Fscr) \mapsto X, \quad (f,\rho) \mapsto f
\]
is a fibration.
\end{proposition}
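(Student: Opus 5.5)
I will verify directly that every morphism $f:X \to Y$ in $\Sch_{/S}$ admits a Cartesian lift at any object over its codomain. Before that I will check that $\mathbb{QC}$ is a category and that $\pr$ is a functor. Functoriality of $\pr$ is immediate from the definition of composition, since $\pr(g \circ f, \rho \bullet \varphi) = g \circ f$. The identity laws reduce to the unitor isomorphisms $\id_X^{\ast} \cong \id$ of the pseudofunctor $\QCoh(-)$. Strictly speaking these must be inserted into the definition of $\rho \bullet \varphi$ alongside the compositors, exactly as in the Grothendieck construction. Associativity of $\bullet$ reduces to the coherence (cocycle) axiom for the compositors $\phi_{f,g}$ of $\QCoh:\Sch^{\op}\to\fCat$, together with naturality of $\phi$. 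I will write out the two composites $(\rho\bullet\varphi)\bullet\psi$ and $\rho\bullet(\varphi\bullet\psi)$, use naturality of $\phi_{f,g}^{-1}$ to move $f^{\ast}g^{\ast}\psi$ past the compositor, and then invoke the pseudofunctor associativity square $\phi_{f,h\circ g}\circ f^{\ast}\phi_{g,h} = \phi_{g\circ f,h}\circ\phi_{f,g}h^{\ast}$.

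For the fibration property, fix $f:X\to Y$ and an object $(Y,\Gscr)$. The candidate Cartesian lift is
\[
(f,\id_{f^{\ast}\Gscr}):(X,f^{\ast}\Gscr)\to(Y,\Gscr),
\]
mirroring the remarks on Cartesian lifts for $\CCalg$ and $\MMod$. To verify the universal property, take a morphism $(h,\sigma):(Z,\Hscr)\to(Y,\Gscr)$ with $\sigma:h^{\ast}\Gscr\to\Hscr$, and a factorization $h = f\circ k$ in $\Sch_{/S}$. I must produce a unique $\tau:k^{\ast}f^{\ast}\Gscr\to\Hscr$ with $(f,\id)\circ(k,\tau)=(h,\sigma)$. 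Unwinding, $(f,\id)\circ(k,\tau)$ has second component
\[
\tau\circ k^{\ast}(\id)\circ\phi_{k,f}^{-1} = \tau\circ\phi_{k,f}^{-1}:(f\circ k)^{\ast}\Gscr\to\Hscr.
\]
So the equation $\tau\circ\phi_{k,f}^{-1}=\sigma$ forces $\tau=\sigma\circ\phi_{k,f}$. Since $\phi_{k,f}$ is invertible, this gives both existence and uniqueness.

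The only genuinely non-formal point, and the step I expect to take the most care, is the associativity and unit bookkeeping in the first paragraph. There one must be careful that the contravariance of the sheaf component (maps $f^{\ast}\Gscr\to\Fscr$ rather than $\Fscr\to f^{\ast}\Gscr$) reverses the order in which compositors appear. I would handle this by observing that $\mathbb{QC}$ is precisely the Grothendieck construction of the composite pseudofunctor $(-)^{\op}\circ\QCoh(-)$ displayed before the definition. Coherence then follows from the general Grothendieck construction (cf.\ \cite[Chapter 8.3]{BorceuxHandBook2}), and the direct computation above serves only to identify the Cartesian morphisms explicitly.
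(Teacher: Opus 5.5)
Your argument is correct and is essentially the paper's. The paper also takes $(f,\id_{f^{\ast}\Fscr})$ as the Cartesian lift and defines the factorization by $\hat{\rho}=\rho\circ\phi_{g,f}$, which is exactly your $\tau=\sigma\circ\phi_{k,f}$, with uniqueness coming from invertibility of the compositor. Your additional check that $\mathbb{QC}$ is actually a category, inserting the unitors $\id_X^{\ast}\cong\id$ and using the compositor coherence for associativity, is a point the paper silently omits, and you handle it correctly.
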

\begin{proof}
Begin by observing that if $f:X \to Y$ is any morphism of schemes and if $(Y,\Fscr)$ is an object in $\mathbb{QC}$ then $\pr(Y,\Fscr) = Y$. Additionally, the map
\[
(X,f^{\ast}\Fscr) \xrightarrow{(f,\id_{f^{\ast}\Fscr})} (Y,\Fscr)
\]
is a map in $\mathbb{QC}$ with $\pr(f,\id) = f$. Our goal now is to prove that this is a Cartesian lift of $f$.

Assume that we have a cospan
\[
\begin{tikzcd}
(Z,\Gscr) \ar[drr]{}{(h,\rho)} \\
(X,f^{\ast}\Fscr) \ar[rr, swap]{}{(f,\id_{f^{\ast}\Fscr})} & & (Y,\Fscr)
\end{tikzcd}
\]
in $\mathbb{QC}$ for which there is a morphism $g:Z \to X$ making the diagram
\[
\begin{tikzcd}
Z \ar[dr]{}{h} \ar[d, swap]{}{g} \\
X \ar[r, swap]{}{f} & Y
\end{tikzcd}
\]
commute in $\Sch_{/S}$. Consider now that since $f \circ g = h$ and since $\rho:h^{\ast}\Fscr \to \Gscr$, by post-composing $\rho$ with the compositor isomorphism $\phi_{g,f}$ we get a map $\hat{\rho}$ defined via the composition:
\[
\begin{tikzcd}
g^{\ast}(f^{\ast}\Fscr) \ar[d, swap]{}{\hat{\rho}} \ar[r]{}{\phi_{g,f}} & (f \circ g)^{\ast}\Fscr \ar[d, equals] \\
\Gscr & h^{\ast}\Fscr \ar[l]{}{\rho} 
\end{tikzcd}
\]
Then $(g,\hat{\rho}):(Z,\Gscr) \to (X,f^{\ast}\Fscr)$ is a map in $\mathbb{QC}$. Furthermore, we compute also that
\[
\hat{\rho} \bullet \id_{f^{\ast}\Fscr} = \hat{\rho} \circ g^{\ast}\id_{f^{\ast}\Fscr} \circ \phi_{g,f}^{-1} = \hat{\rho} \circ \phi_{g,f}^{-1} = \rho \circ \phi_{g,f} \circ \phi_{g,f}^{-1} = \rho 
\]
which shows that the diagram
\[
\begin{tikzcd}
	(Z,\Gscr) \ar[drr]{}{(h,\rho)} \ar[d, swap]{}{(g,\hat{\rho})} \\
	(X,f^{\ast}\Fscr) \ar[rr, swap]{}{(f,\id_{f^{\ast}\Fscr})} & & (Y,\Fscr)
\end{tikzcd}
\]
commutes.

We now must show that $(g,\hat{\rho})$ is the unique such map above making the diagram commute. To this end we observe that if there is a map $(g,\alpha):(Z,\Gscr) \to (X,f^{\ast}\Fscr)$ making
\[
\begin{tikzcd}
	(Z,\Gscr) \ar[drr]{}{(h,\rho)} \ar[d, swap]{}{(g,\alpha)} \\
	(X,f^{\ast}\Fscr) \ar[rr, swap]{}{(f,\id_{f^{\ast}\Fscr})} & & (Y,\Fscr)
\end{tikzcd}
\]
commute then we necessarily have that $\rho = \alpha \bullet \id_{f^{\ast}\Fscr} = \alpha \circ \phi_{g,f}^{-1}$. However, this in turn gives that
\[
\hat{\rho} = \rho \circ \phi_{g,f} = \alpha
\]
and hence that $(g,\alpha) = (g,\hat{\rho})$. Thus $(f,\id_{f^{\ast}\Fscr})$ is a Cartesian arrow above $f$ in $\mathbb{QC}$.
\end{proof}

\subsection{A Short Primer on the Fibration of Quasi-Coherent Sheaves of Algebras}\label{Subsection: QCoh CAlg}

We can specialize the notion of a quasi-coherent sheaf of $\Ocal_X$-modules to that of a quasi-coherent sheaf of $\Ocal_X$-algebras both by picking out the monoids internal to $(\QCoh(X),\otimes_{\Ocal_X},\Ocal_X)$ or by carefully extending the definition of the underlying module functor from $\CAlg{A} \to \Mod{A}$ to the category of sheaves of $\Ocal_X$-algebras in $\Mod{\Ocal_X}$.
The trick here is to recall that a sheaf of $\Ocal_X$-algebras is a sheaf $\Ascr$ on $\lvert X \rvert$ for which for all $U \subseteq \lvert X \rvert$ open, the ring $\Ascr(U)$ is an $\Ocal_X(U)$-algebra and asking that the restriction maps be ring morphisms.
\begin{definition}\label{Defn: Section Background Scheme: QCoh Algebras Defn Internal}
	If $X$ is a locally ringed space then we define the category $\CAlg{\Ocal_X}$ of sheaves of commutative $\Ocal_X$-algebras to be the category of commutative monoid objects in $(\Mod{\Ocal_X}, \otimes_{\Ocal_X}, \Ocal_X)$.
\end{definition}

By employing said definition we define the functor
\[
\RelUnd{\Ocal_X}:\CAlg{\Ocal_X} \to \Mod{\Ocal_X}
\]
which locally is given by $\Ascr(U) \mapsto \Und{\Ocal_X(U)}(\Ascr(U))$. Grothendieck used this functor to exactly define the \emph{quasi-coherent} sheaves of $\Ocal_X$-modules as those $\Ocal_X$-algebras whose underlying sheaf of $\Ocal_X$-modules is quasi-coherent.
\begin{definition}[{\cite[Section 1.5.3, Sentence 3]{EGA01}}]\label{Defn: Section Background Scheme: QCoh Algebras}
Let $X = (\lvert X \rvert, \Ocal_X)$ be a locally ringed space. A sheaf $\Ascr$ of $\Ocal_X$-algebras is a \emph{quasi-coherent sheaf of $\Ocal_X$-modules} if the sheaf $\RelUnd{\Ocal_X}(\Ascr)$ is quasi-coherent on $X$. We write $\QCoh(X,\CAlg{\Ocal_X})$ for the category of such sheaves.
\end{definition}
\begin{remark}
	If $X$ is a scheme then to define a quasi-coherent sheaf of  commutative $\Ocal_X$-algebras $\Ascr$ is, perhaps unsurprisingly, equivalent defining a quasi-coherent sheaf $\Ascr$ with the property that each $\Ascr(U)$ is a commutative $\Ocal_X(U)$-algebra for all open subsets $U$ of $X$.
\end{remark}

This gives us the relative underlying module functor we use in this paper. It is worth noting that for our purposes we only work with the relative underlying module functor on the level of quasi-coherent sheaves; while it does extend to all $\Ocal_X$-modules, for our purposes we do not need it at that level.
\begin{definition}\label{Defn: Section Kahlers and Quasi Coherent Sheaves}
The relative underlying quasi-coherent sheaf functor 
\[
\RelUnd{\Ocal_X}:\QCoh\left(X,\CAlg{\Ocal_X}\right) \longrightarrow \QCoh(X)
\]
is given on a sheaf $\Ascr$ by sending each ring $\Ascr(U)$ to the module
\[
\Ascr(U) \mapsto \Und{\Ocal_X(U)}\left(\Ascr(U)\right)
\]
for all $U$ open in $X$ and taking the restriction morphisms to be their underlying module morphisms.
\end{definition}

Immediate from these definitions, we get that for any map of schemes $f:X \to Y$ the functor
\[
f^{\ast}:\QCoh(Y) \to \QCoh(X)
\]
preserves quasi-coherent sheaves of $\Ocal_Y$-algebras --- this is particularly evident upon applying Definition \ref{Defn: Section Background Scheme: QCoh Algebras Defn Internal}. This implies that we have a pseudofunctor
\[
\QCoh\left(-,\CAlg{\Ocal_{(-)}}\right):\Sch^{\op} \to \fCat
\]
which sends a scheme $X$ to its category of quasi-coherent sheaves of $\Ocal_X$-modules. Once again, applying the Grothendieck construction to this pseudofunctor now gives the fibration of quasi-coherent sheaves of commutative algebras.
\begin{definition}
The \emph{fibration of quasi-coherent sheaves of commutative algebras} is the (cloven) fibration $p:\QQCAlg \to \Sch$ induced by applying the Grothendieck construction to the pseudofunctor of quasi-coherent sheaves of $\Ocal_{(-)}$-algebras $\QCoh(-,\CAlg{\Ocal{(-)}})$.
\end{definition}

\subsection{Functoriality Between Fibrations of Quasi-Coherent Sheaves}\label{Subscetion: Functoriality between QCoh and QCoh CAlg}

In this subsection we will indicate the nature in which the quasi-coherent fibrations are \emph{functorial} over schemes. In particular, we will show that as in the case of the fibrations $\CCalg$ and $\MMod$, we can produce a symmetric and underlying algebra adjunction. To do this, however, we first need a relative symmetric algebra functor at the level of quasi-coherent sheaves. This is also one of the key functors which go into the construction of the tangent functor on $\Sch_{/S}$.

Let $\Fscr$ be a sheaf of $\Ocal_X$-modules on a locally ringed space $X = (\lvert X \rvert, \Ocal_X)$. By applying for each open $U \subseteq \lvert X \rvert$ we have the functor $\Sym{\Ocal_X(U)}$ we have an object-local assignment
\[
\Fscr(U) \mapsto \Sym{\Ocal_X(U)}\big(\Fscr(U)\big)
\]
which may be assembled into a sheaf $\RelSym{\Ocal_X}(\Fscr)$ whose object assignments are exactly
\[
\left(\RelSym{\Ocal_X}(\Fscr)\right)(U) := \Sym{\Ocal_X(U)}\big(\Fscr(U)\big)
\] 
by the pseudonaturality of the $\Sym{(-)}$ transformation. While this does give rise to a functor on the full level, we are primarily interested in the case when the modules we feed this relative $\sym$ functor are quasicoherent. Luckily for us, this is covered and argued in \cite{EGA2}.

\begin{lemma}[{cf.\@ \cite[Proposition 1.7.6, Corollaire 1.7.7]{EGA2}}]
	If $A$ is a commutative ring and if $M$ is an $A$-module then there is an isomorphism of quasicoherent sheaves of $\Ocal_A$-algebras
	\[
	\RelSym{A}(\widetilde{M}) \cong \widetilde{\Sym{A}(M)}
	\]
	where $\widetilde{(-)}$ is the functor appearing in Line (\ref{Eqn: Tilde functor for ring modules}). In particular, for any quasicoherent sheaf $\Fscr$ on any scheme $X$, $\RelSym{\Ocal_X}(\Fscr)$ is a quasicoherent sheaf of (commutative) $\Ocal_X$-algebras.
\end{lemma}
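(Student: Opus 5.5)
The plan is to compare the two sheaves on the basis of distinguished opens $D(f)$, $f \in A$, and then glue. A quasi-coherent sheaf on $\Spec A$ is determined by its values and restriction maps on this basis, so a compatible family of isomorphisms over the $D(f)$ suffices.

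First, compute the sections of $\RelSym{A}(\widetilde{M})$ over $D(f)$. The structure sheaf gives $\Ocal_A(D(f)) = A_f$, and by definition $\widetilde{M}(D(f)) = M \otimes_A A_f$. Hence $\Sym{A_f}(M \otimes_A A_f)$ is the value of the object-local assignment on $D(f)$. On the other side, $\widetilde{\Sym{A}(M)}(D(f)) = \Sym{A}(M) \otimes_A A_f$.

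These two rings are identified by Corollary \ref{Cor: Section Background Alg: Sym and Tensors Commute}, applied to the localization map $A \to A_f$. It gives a natural isomorphism
\[
\Sym{A_f}(M \otimes_A A_f) \cong \Sym{A}(M) \otimes_A A_f,
\]
which is an isomorphism of $A_f$-algebras.

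Next, check that these isomorphisms are compatible with restriction along $D(g) \subseteq D(f)$, that is, with the localization maps $A_f \to A_g$. The restriction maps of both sides are induced by base change along $A_f \to A_g$. Compatibility then follows from the pseudonaturality of $\Sym{(-)}$ in Corollary \ref{Cor: Section Background Alg: Pseudonat for Sym}: the witness isomorphism for the composite $A \to A_f \to A_g$ agrees with the pasting of the witnesses, up to the tensor cancellation isomorphisms $(-)\otimes_A A_f \otimes_{A_f} A_g \cong (-)\otimes_A A_g$. Concretely, one can also check it on generators $x_m \otimes (a/f^n)$, which are sent to the same elements either way.

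With compatibility in hand, the isomorphisms assemble into an isomorphism of sheaves on the basis. By the basis-sheaf equivalence recalled in the footnote, this extends uniquely to an isomorphism of sheaves of $\Ocal_A$-algebras on $\Spec A$. The right-hand side is quasi-coherent by Theorem \ref{Thm: Section Background Scheme: QCoh on affine is just modules}, so the left-hand side is too.

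The main obstacle is a subtlety in the definition: the presheaf $U \mapsto \Sym{\Ocal_X(U)}(\Fscr(U))$ need not be a sheaf, and $\RelSym{\Ocal_X}(\Fscr)$ is really its sheafification. I would handle this by noting that the presheaf agrees with the sheaf $\widetilde{\Sym{A}(M)}$ on the basis of distinguished opens. Sheafification depends only on values on a basis, so the sheafification of the presheaf is isomorphic to $\widetilde{\Sym{A}(M)}$.

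For the general statement, let $\Fscr$ be quasi-coherent on a scheme $X$ and choose an affine open cover $\gamma_i : U_i = \Spec A_i \to X$, with $\gamma_i^{\ast}\Fscr \cong \widetilde{M_i}$. Formation of $\RelSym{}$ commutes with restriction to opens, since it is defined sectionwise over opens. Therefore
\[
\gamma_i^{\ast}\RelSym{\Ocal_X}(\Fscr) \cong \RelSym{A_i}(\widetilde{M_i}) \cong \widetilde{\Sym{A_i}(M_i)}.
\]
This shows that the underlying module sheaf $\RelUnd{\Ocal_X}\RelSym{\Ocal_X}(\Fscr)$ is quasi-coherent. By Theorem \ref{Thm: Section Background Scheme: Checking QCoh affine locally} and Definition \ref{Defn: Section Background Scheme: QCoh Algebras}, $\RelSym{\Ocal_X}(\Fscr)$ is then a quasi-coherent sheaf of commutative $\Ocal_X$-algebras.
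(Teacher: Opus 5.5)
Your proposal is correct. It also cannot follow the paper's proof, because the paper gives none: it only cites EGA II.

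Your argument is the standard local one, built from the paper's own tools:
\begin{itemize}
\item compare the two sides on the basis $D(f)$;
\item identify $\Sym{A_f}(M_f)\cong \Sym{A}(M)\otimes_A A_f$ via Corollary \ref{Cor: Section Background Alg: Sym and Tensors Commute};
\item check compatibility with the localizations $A_f\to A_g$ on the algebra generators $x_m\otimes 1$;
\item conclude that the sheafification is $\widetilde{\Sym{A}(M)}$.
\end{itemize}
The claim that sheafification only sees a basis deserves one line of justification: stalks and germs can be computed over the cofinal system of basic neighbourhoods. In the globalization step, the overlap isomorphisms needed for Theorem \ref{Thm: Section Background Scheme: Checking QCoh affine locally} are automatic, since the local pieces are restrictions of one global sheaf.

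What your version gains over the citation is that you treat $\RelSym{\Ocal_X}(\Fscr)$ correctly as the sheafification of $U\mapsto \Sym{\Ocal_X(U)}(\Fscr(U))$. The paper's text just before the lemma asserts that this sectionwise assignment is already a sheaf, which fails on non-affine opens. For example, take $\Fscr=\Ocal(1)\oplus\Ocal(-1)$ on $\mathbb{P}^1_k$. The degree-two part of $\Sym{k}(\Gamma(\Fscr))=\Sym{k}(k^2)$ is $3$-dimensional. But $\Gamma(\mathbb{P}^1_k,\Ocal(2)\oplus\Ocal\oplus\Ocal(-2))$ is $4$-dimensional. So the sheafification step you flag is genuinely needed.
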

This allows us to define our quasicoherent sheaf relative symmetric algebra functor.
\begin{definition}
	Let $X$ be a scheme. The relative symmetric algebra functor
	\[
	\RelSym{\Ocal_X}:\QCoh(X) \to \QCoh\left(X, \CAlg{\Ocal_X}\right)
	\]
	is given on a quasicoherent sheaf $\Fscr$ by sending each $\Ocal_X(U)$-module $\Fscr(U)$ to the $\Ocal_X(U)$-algebra
	\[
	\Fscr(U) \mapsto \Sym{\Ocal_X(U)}\left(\Fscr(U)\right)
	\]
	and similarly on restriction morphisms.
\end{definition}

While not recorded in \cite{EGA2}, an immediate and well-known consequence of this construction and the adjoints $\Sym{A} \dashv \Und{A}$ is an adjunction $\RelSym{\Ocal_X} \dashv \RelUnd{\Ocal_X}.$ For the sake of completeness, we prove this below.

\begin{proposition}[Folklore]\label{Prop: Section Background Scheme: Folklore Sym is left adj to Rel Und}
	For any scheme $X$ there is an adjunction:
	\[
	\begin{tikzcd}
		\QCoh(X) \ar[rr, bend right = 20, swap, ""{name = R}]{}{\RelUnd{\Ocal_X}} & & \QCoh(X, \CAlg{\Ocal_X}) \ar[ll, bend right = 20, swap, ""{name= L}]{}{\RelSym{\Ocal_X}}
		\ar[from = L, to = R, symbol = \dashv]
	\end{tikzcd}
	\]
\end{proposition}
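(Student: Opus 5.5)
The plan is to build the adjunction sectionwise out of the ring-level adjunctions $\Sym{\Ocal_X(U)} \dashv \Und{\Ocal_X(U)}$, and then check that the resulting bijections respect restriction maps. Fix $\Fscr \in \QCoh(X)$ and $\Ascr \in \QCoh(X,\CAlg{\Ocal_X})$. A morphism $\varphi:\Fscr \to \RelUnd{\Ocal_X}(\Ascr)$ of quasi-coherent sheaves consists of $\Ocal_X(U)$-linear maps $\varphi_U:\Fscr(U) \to \Und{\Ocal_X(U)}(\Ascr(U))$ for each open $U$, compatible with restrictions. Transposing each one under $\Sym{\Ocal_X(U)} \dashv \Und{\Ocal_X(U)}$ gives $\Ocal_X(U)$-algebra maps
\[
\varphi_U^{\sharp}:\Sym{\Ocal_X(U)}(\Fscr(U)) \to \Ascr(U).
\]
Conversely, an algebra map $\psi:\RelSym{\Ocal_X}(\Fscr) \to \Ascr$ has components $\psi_U$ with transposes $\psi_U^{\flat}$. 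Since transposition is bijective on each open, the only work is showing that naturality in $U$ is preserved in both directions. The resulting bijection is then natural in $\Fscr$ and $\Ascr$ because each ring-level adjunction is natural.

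The key step is the compatibility with restrictions. For $V \subseteq U$, let $r:\Ocal_X(U) \to \Ocal_X(V)$ be the restriction. The restriction maps of $\Fscr$ and $\Ascr$ are morphisms $(r,\rho):(\Ocal_X(U),\Fscr(U)) \to (\Ocal_X(V),\Fscr(V))$ in $\MMod$ and $(\Ocal_X(U),\Ascr(U)) \to (\Ocal_X(V),\Ascr(V))$ in $\CCalg$. The restriction maps of $\RelSym{\Ocal_X}(\Fscr)$ are, by definition, $\Sbb\!\operatorname{ym}$ applied to those of $\Fscr$. So I will invoke Proposition \ref{Prop: Section Background Alg: Sym and Und adjunction between CAlg and Mod fibrations over crig}: the adjunction $\Sbb\!\operatorname{ym} \dashv \Ubb\!\operatorname{nd}$ between the fibrations over $\Cring$ is natural with respect to all morphisms of $\MMod$ and $\CCalg$, not just vertical ones. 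Its naturality square says that transposing $\varphi_V \circ \rho^{\Fscr}$ gives $\varphi_V^{\sharp}\circ \Sbb\!\operatorname{ym}(\rho^{\Fscr})$, and transposing $\Ubb\!\operatorname{nd}(\rho^{\Ascr}) \circ \varphi_U$ gives $\rho^{\Ascr}\circ\varphi_U^{\sharp}$. Hence $\varphi$ commutes with restrictions if and only if $\varphi^{\sharp}$ does, and likewise for $\psi^{\flat}$. This also uses Proposition \ref{Prop: Section Background in Algebra: Underlying commutes strictly with restriction} to identify $\res_r\Und{\Ocal_X(V)}$ with $\Und{\Ocal_X(U)} W_r$.

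One subtlety is the main obstacle: $\RelSym{\Ocal_X}(\Fscr)$ was defined by the open-wise assignment $U \mapsto \Sym{\Ocal_X(U)}(\Fscr(U))$, which is a priori only a presheaf. Its sheaf structure and quasi-coherence come from the preceding lemma, via the identification $\RelSym{A}(\widetilde{M}) \cong \widetilde{\Sym{A}(M)}$. I will handle this by working on the basis of affine opens, or more simply on basic opens $D(f)$ inside affine opens. There, $\Sym{}$ commutes with localization by Corollary \ref{Cor: Section Background Alg: Sym and Tensors Commute}, so the presheaf already agrees with the sheaf. A map of sheaves is determined by a compatible family on a basis, so it suffices to run the bijection above over this basis.

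Alternatively, a cleaner route uses the pseudolimit description of $\QCoh(X)$ in Theorem \ref{Thm: Section QCoh: Qcoh is pseudolim over poset of affine opens}. On each affine $U = \Spec A$, the equivalence of Theorem \ref{Thm: Section Background Scheme: QCoh on affine is just modules} identifies $\RelSym{}\dashv\RelUnd{}$ with $\Sym{A}\dashv\Und{A}$. Corollary \ref{Cor: Section Background Alg: Sym and Tensors Commute} and Proposition \ref{Prop: Section Background in Algebra: Underlying commutes strictly with restriction} show that these adjunctions are compatible with the restriction functors $j^{\ast}$. Adjunctions compatible in this way glue to an adjunction on the pseudolimit, where the same pseudolimit argument also applies to algebras. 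I will present the sectionwise argument and note this as the conceptual reason it works.
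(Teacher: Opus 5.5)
Your proposal is correct and follows essentially the same route as the paper: transpose open-wise under $\Sym{\Ocal_X(U)} \dashv \Und{\Ocal_X(U)}$ and check that compatibility with restrictions is preserved in both directions. You are more careful than the paper in two places: it only asserts the ``naturality in $U$'', which you justify via the fibred adjunction $\Sbb\!\operatorname{ym} \dashv \Ubb\!\operatorname{nd}$ of Proposition \ref{Prop: Section Background Alg: Sym and Und adjunction between CAlg and Mod fibrations over crig}, and it treats the open-wise assignment $U \mapsto \Sym{\Ocal_X(U)}(\Fscr(U))$ as already a sheaf, a point you correctly handle by restricting to a basis of affine opens where the presheaf and the sheaf agree.
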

\begin{proof}
	Fix a quasicoherent sheaf $\Fscr$ on $X$ and a quasicoherent sheaf of $\Ocal_X$-algebras $\Ascr$. We then compute:
	\begin{prooftree}
		\AxiomC{$\varphi:\RelSym{\Ocal_X}(\Fscr) \to \Ascr$ in $\QCoh(X,\CAlg{\Ocal_X})$}
		\UnaryInfC{For all $U$ open $\varphi_U:\Sym{\Ocal_X(U)}(\Fscr(U)) \to \Ascr(U)$ in $\CAlg{\Ocal_X(U)}$ together with naturality in $U$}
		\UnaryInfC{For all $U$ open $\varphi_U^{\sharp}:\Fscr(U) \to \Und{\Ocal_X(U)}(\Ascr(U))$ in $\Mod{\Ocal_X(U)}$ together with naturality in $U$}
		\UnaryInfC{$\varphi^{\sharp}:\Fscr \to \RelUnd{\Ocal_X}(\Ascr)$ in $\QCoh(X)$}
	\end{prooftree}
	Arguing dually allows us to produce the combinator
	\begin{prooftree}
		\AxiomC{$\psi:\Fscr \to \RelUnd{\Ocal_X}(\Ascr)$ in $\QCoh(X)$}
		\UnaryInfC{$\psi^{\flat}:\RelSym{\Ocal_X}(\Fscr) \to \Ascr$ in $\QCoh(X,\CAlg{\Ocal_X})$}
	\end{prooftree}  
	from the inverse combinators $\psi_U \mapsto \psi_U^{\flat}$ for opens $U$ of $X$. Finally using the equalities $(\varphi_U^{\sharp})^{\flat} = \varphi_U$ and $(\psi_U^{\flat})^{\sharp} = \psi_U$ for all opens $U$ of $X$ give that $(\varphi^{\sharp})^{\flat} = \varphi$ and $(\psi^{\flat})^{\sharp} = \psi$, proving that $\RelSym{\Ocal_X} \dashv \RelUnd{\Ocal_X}$.
\end{proof}

As before, the relative underlying sheaf functor and the relative symmetric algebra functor assemble to give morphisms of fibrations.
\begin{proposition}\label{Prop: Section Background Scheme: Rel Und is Fibration Map}
	The functor
	\[
	\RelUnd{(-)}:\QQCAlg \to \QQCoh
	\]
	given by
	\[
	(X,\Ascr) \mapsto \left(X,\RelUnd{\Ocal_X}(\Ascr)\right)
	\]
	on objects and by, for morphisms $(f,\rho):(X,\Ascr) \to (Y,\Bscr)$,
	\[
	(f,\rho) \mapsto (f, \RelUnd{\Ocal_X}(\rho))
	\]
	is a morphism of fibrations.
\end{proposition}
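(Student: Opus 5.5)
To prove that $\RelUnd{(-)}:\QQCAlg \to \QQCoh$ is a morphism of fibrations, I will check two things: that it is a functor commuting strictly with the projections to $\Sch$, and that it sends Cartesian morphisms to Cartesian morphisms. Since both fibrations come from the Grothendieck construction, the cleanest route is to exhibit the fibrewise functors $\RelUnd{\Ocal_X}$ as the components of a pseudonatural (in fact strict) transformation $\QCoh(-,\CAlg{\Ocal_{(-)}}) \Rightarrow \QCoh(-)$. Functoriality of the Grothendieck construction then gives a morphism of fibrations that preserves the canonical cleavages. This is the sheafified analogue of Proposition \ref{Prop: Section Background in Algebra: Underlying commutes strictly with restriction}.

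First I will make the action on morphisms precise. A morphism $(f,\rho):(X,\Ascr)\to(Y,\Bscr)$ in $\QQCAlg$ has $\rho:\Ascr \to f^{\ast}\Bscr$ in $\QCoh(X,\CAlg{\Ocal_X})$. So $\RelUnd{\Ocal_X}(\rho)$ has codomain $\RelUnd{\Ocal_X}(f^{\ast}\Bscr)$, and I need to identify this with $f^{\ast}\RelUnd{\Ocal_Y}(\Bscr)$. Here I use Definition \ref{Defn: Section Background Scheme: QCoh Algebras Defn Internal}: the pullback of a sheaf of algebras is $f^{-1}\Bscr\otimes_{f^{-1}\Ocal_Y}\Ocal_X$, with multiplication induced through the strong monoidal structure of $f^{\ast}$. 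Its underlying $\Ocal_X$-module is exactly the module pullback of the underlying module. Forgetting the multiplication commutes with $f^{-1}$ and with the tensor product, since both are constructed on underlying modules (as presheaves before sheafification, and sheafification is also computed on underlying modules). This gives an equality, or at worst a canonical isomorphism $\theta_f:\RelUnd{\Ocal_X}\circ f^{\ast}\cong f^{\ast}\circ\RelUnd{\Ocal_Y}$, and I define the image of $(f,\rho)$ as $(f,\theta_f\circ\RelUnd{\Ocal_X}(\rho))$. The statement suppresses $\theta_f$, which suggests it is an identity.

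Next I will verify the coherence of the $\theta_f$ with the compositors $\phi_{f,g}$ of both pseudofunctors, and that $\theta_{\id}$ is trivial. The compositors on the algebra side are by construction the module compositors (the tensor-cancellation and preimage isomorphisms) regarded as algebra maps. Applying $\RelUnd{}$ therefore returns the module compositor on the nose. It follows that $\RelUnd{(-)}$ preserves identities and composition, using the composition law $\rho\bullet\varphi$ of the Grothendieck construction, and it plainly satisfies $p\circ\RelUnd{(-)}=p$.

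Finally, for Cartesian morphisms: the chosen Cartesian lift of $f$ at $(Y,\Bscr)$ is $(f,\id_{f^{\ast}\Bscr})$, and it is sent to $(f,\theta_f)$. This is a Cartesian lift in $\QQCoh$, because a vertical isomorphism composed with a Cartesian arrow is Cartesian (compare the argument of Proposition \ref{Prop: Section Background Scheme: QCoh cofibration}). Every Cartesian arrow is a chosen lift composed with a vertical isomorphism, and $\RelUnd{}$ preserves vertical isomorphisms, so all Cartesian arrows are preserved. I expect the main obstacle to be the second step: carefully justifying that the underlying module of the algebra pullback is the module pullback, compatibly with compositors, which requires unwinding sheafification of the tensor product.
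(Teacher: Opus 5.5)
Your proof is correct and has the same skeleton as the paper's: identify $\RelUnd{\Ocal_X}(f^{\ast}\Bscr)$ with $f^{\ast}\RelUnd{\Ocal_Y}(\Bscr)$, then show that the chosen Cartesian lift $(f,\id_{f^{\ast}\Bscr})$ is sent to a Cartesian arrow of $\QQCoh$. The paper takes that identification to be an equality, so your $\theta_f$ is the identity.

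The two arguments differ only in how the last step is discharged:
\begin{itemize}
\item The paper checks the universal property by hand. Given $(g,\rho)\colon(Z,\Fscr)\to(Y,\RelUnd{\Ocal_Y}(\Bscr))$ and $h$ with $g=f\circ h$, it writes down the factorization $\hat{\rho}=(\RelUnd{\Ocal_Z}\ast\phi_{h,f}^{-1})\circ\rho$ and verifies that it is unique.
\item You instead invoke two general facts: canonical lifts in a Grothendieck construction are Cartesian, and Cartesian arrows are stable under precomposition with vertical isomorphisms.
\end{itemize}

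Your version is somewhat more robust. It works unchanged if $\theta_f$ is only a canonical isomorphism. It covers all Cartesian arrows, not just the chosen ones. It also addresses the compositor coherence needed for functoriality, which the paper dismisses as straightforward. The paper's version, in exchange, is self-contained and makes the factorization explicit.

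One small correction to your framing: the ring-level analogue is not Proposition~\ref{Prop: Section Background in Algebra: Underlying commutes strictly with restriction}, which concerns restriction of scalars. Here $f^{\ast}$ plays the role of extension of scalars, so what you are sheafifying is the fact that $\Und{S}(A\otimes_R S)=\Und{R}(A)\otimes_R S$ for an $R$-algebra $A$.
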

\begin{proof}
	First, that $\RelUnd{(-)}$ forms a functor is straightforward to verify and omitted once we recognize that $\RelUnd{(-)}$ commutes with pullback functors by the calculation
	\[
	\RelUnd{\Ocal_X}(f^{\ast}\Ascr) = \RelUnd{\Ocal_X}\!\left(f^{-1}\!\Ascr \otimes_{f^{-1}\Ocal_Y} \Ocal_X\right) = f^{-1}\!\left(\RelUnd{\Ocal_Y}\!(\Ascr)\right) \otimes_{f^{-1}\Ocal_Y} \Ocal_X =  f^{\ast}\left(\RelUnd{\Ocal_X}(\Ascr)\right).
	\]
	As such, we must show that a Cartesian lift of $f$ in $\QQCAlg$ gets sent to a Cartesian lift of $f$ in $\QQCoh$.
	
	Assume that we have a scheme $X$, a quasi-coherent sheaf of $\Ocal_Y$-algebras $\Ascr$, and a map of schemes $f:X \to Y$.  Because the Cartesian lifts of $f$ in $\QQCAlg$ are of the form $(f,\id_{f^{\ast}\Ascr}):(X,f^{\ast}\Ascr) \to (Y,\Ascr)$, we must prove that applying $\RelUnd{(-)}$ to said map gives a Cartesian lift of $f$ in $\QQCoh$.
	
	To prove that this is indeed the case, first note that since $\RelUnd{\Ocal_X}(f^{\ast}\Ascr) = f^{\ast}(\RelUnd{\Ocal_Y}\Ascr)$ the components of $\RelUnd{(-)}(f,\id_{f^{\ast}\Ascr})$ are $f$ and the identity on the pullback of the underlying quasi-coherent sheaf of $\Ascr$. As such it suffices to show that given a $\QQCoh$ map $(g,\rho):(Z,\Fscr) \to (Y,\RelUnd{\Ocal_Y}(\Ascr))$ and a commuting diagram
	\[
	\begin{tikzcd}
		Z \ar[dr]{}{g} \ar[d, swap]{}{h} \\
		X \ar[r, swap]{}{f} & Y
	\end{tikzcd}
	\]
	of schemes then there exists a unique $\QQCoh$ map $(Z,\Fscr) \to (X,f^{\ast}(\RelUnd{\Ocal_Y}(\Ascr)))$ rendering the diagram
	\[
	\begin{tikzcd}
		(Z,\Fscr) \ar[d, dashed, swap]{}{\exists?} \ar[drrr]{}{(g,\rho)} & & & \\
		(X,\RelUnd{\Ocal_X}(f^{\ast}\Ascr)) \ar[rrr, swap]{}{(f, \id_{f^{\ast}\RelUnd{\Ocal_Y}\Ascr})} & & & (Y,\RelUnd{\Ocal_Y}(\Ascr))
	\end{tikzcd}
	\]
	commutative. However, as $\RelUnd{(-)}$ commutes with the pullback functors building a lift is routine: given the map $\rho:\Fscr \to g^{\ast}(\RelUnd{\Ocal_Y}(\Ascr))$ and the fact that $g = h \circ f$, we define the lift $\hat{\rho}:\Fscr \to h^{\ast}(\RelUnd{\Ocal_X}(f^{\ast}\Ascr))$ via the diagram
	\[
	\begin{tikzcd}
	\Fscr \ar[r]{}{\rho} \ar[dr, swap]{}{\hat{\rho}} & g^{\ast}\left(\RelUnd{\Ocal_Y}(\Ascr)\right) \ar[r, equals] & \RelUnd{\Ocal_Z}(g^{\ast}\Ascr) \ar[d]{}{\RelUnd{\Ocal_Z} \ast \phi_{h,f}^{-1}} \\
	 & h^{\ast}\left(\RelUnd{\Ocal_X}(f^{\ast}\Ascr)\right) & \RelUnd{\Ocal_Z}(h^{\ast}(f^{\ast}\Ascr)) \ar[l, equals]{}{}
	\end{tikzcd}
	\]
	where $\phi_{h,f}$ is the compositor natural isomorphism for the pullback functors. The commutativity of the triangle follows from the computation that
	\begin{align*}
	\id_{f^{\ast}\RelUnd{\Ocal_Y}\Ascr} \bullet \hat{\rho} &= \left(\RelUnd{\Ocal_Z} \ast \phi_{h,f}\right) \circ \id_{f^{\ast}\RelUnd{\Ocal_Y}\Ascr} \circ \left(\RelUnd{\Ocal_Z} \ast \phi_{h,f}^{-1}\right) \circ \rho \\
	&= \left(\RelUnd{\Ocal_Z} \ast \phi_{h,f}\right) \circ \left(\RelUnd{\Ocal_Z} \ast \phi_{h,f}^{-1}\right) \circ \rho = \rho.
	\end{align*}
	Finally the uniqueness of said lift follows from the fact that if there is a map $(h,\alpha):(Z,\Fscr) \to (X,\RelUnd{\Ocal_X}(f^{\ast}\Ascr))$ making
	\[
		\begin{tikzcd}
		(Z,\Fscr) \ar[d, swap]{}{(h,\alpha)} \ar[drrr]{}{(g,\rho)} & & & \\
		(X,\RelUnd{\Ocal_X}(f^{\ast}\Ascr)) \ar[rrr, swap]{}{(f, \id_{f^{\ast}\RelUnd{\Ocal_Y}\Ascr})} & & & (Y,\RelUnd{\Ocal_Y}(\Ascr))
	\end{tikzcd}
	\]
	 commute then
	 \[
	 \left(\RelUnd{\Ocal_Z} \ast \phi_{h,f}\right) \circ \alpha = \id_{f^{\ast}\RelUnd{\Ocal_Y}\Ascr} \bullet \alpha = \rho = \left(\RelUnd{\Ocal_Z} \ast \phi_{h,f}\right) \circ \left(\RelUnd{\Ocal_Z} \ast \phi_{h,f}\right)^{-1} \circ \rho
	 \]
	 and hence
	 \[
	 \alpha = \left(\RelUnd{\Ocal_Z} \ast \phi_{h,f}\right)^{-1} \circ \rho = \hat{\rho}.
	 \]
	 Thus $(h,\hat{\rho})$ is the unique map rendering
	 \[
	 		\begin{tikzcd}
	 	(Z,\Fscr) \ar[d, swap, dashed]{}{\exists!(h,\hat{\rho})} \ar[drrr]{}{(g,\rho)} & & & \\	 	(X,\RelUnd{\Ocal_X}(f^{\ast}\Ascr)) \ar[rrr, swap]{}{(f, \id_{f^{\ast}\RelUnd{\Ocal_Y}\Ascr})} & & & (Y,\RelUnd{\Ocal_Y}(\Ascr))
	 \end{tikzcd}
	 \]
	 commutative and so proves that $\RelUnd{(-)}$ preserves Cartesian lifts. Thus $\RelUnd{(-)}$ is a morphism of fibrations.
\end{proof}

We now want to make the same observation regarding the relative symmetric algebra functor so that we can conclude $\RelUnd{(-)}$ and $\RelSym{(-)}$ are mutual adjoints between fibrations. To do this we make the following ring-theoretic observation: because the relative symmetric algebra functors have natural isomorphisms, for any commutative ring map $f:R \to S$,
\[
\begin{tikzcd} 
\Mod{R}  \ar[d, swap]{}{\Sym{R}(-)} \ar[r, ""{name = U}]{}{(-) \otimes_R S} & \Mod{S} \ar[d]{}{\Sym{S}(-)}	\\
\CAlg{R} \ar[r, swap, ""{name = D}]{}{(-) \otimes_R S} & \CAlg{S}
\ar[from = U, to = D, Rightarrow, shorten <= 4pt, shorten >= 4pt]{}{\cong}
\end{tikzcd}
\]
which vary pseudonaturally in $\Cring$, the relative symmetric algebra functors (at the level of relative schemes now) commute up to isomorphism with pullbacks. That is, for any morphism $f:X \to Y$ of $S$-schemes there is a natural isomorphism
\[
\begin{tikzcd}
\QCoh(Y) \ar[rr, ""{name = U}]{}{f^{\ast}} \ar[d, swap]{}{\RelSym{\Ocal_Y}(-)} & & \QCoh(X) \ar[d]{}{\RelSym{\Ocal_X}} \\
\QCoh(Y, \CAlg{\Ocal_Y}) \ar[rr, swap, ""{name = D}]{}{f^{\ast}} & & \QCoh(X,\CAlg{\Ocal_X})
\ar[from = U, to = D, Rightarrow, shorten <= 4pt, shorten >= 4pt]{}{\cong}
\ar[from = U, to = D, Rightarrow, shorten <= 4pt, shorten >= 4pt, swap]{}{\alpha_f}
\end{tikzcd}
\]
which also varies pseudonaturally in the opposite category of (relative) schemes $\Sch_{/S}^{\op}$. Implementing these natural isomorphisms as coherence data for a natural isomorphism $\alpha_f:\RelSym{\Ocal_X}(f^{\ast}\Fscr) to f^{\ast}(\RelSym{\Ocal_Y}(\Fscr))$
allows us to view $\RelSym{(-)}$ as a functor between $\QQCoh$ and $\QQCAlg$ defined by sending an object $(X,\Fscr)$ to the pair $(X,\RelSym{\Ocal_X}(\Fscr))$ and sending a morphism $(f,\rho):(X,\Fscr) \to (Y,\Gscr)$ given by the data
\[
f \in \Sch_{/S}(X,Y), \qquad \rho \in \QCoh(\Fscr, f^{\ast}\Gscr)
\] 
to the pair $(f,\RelSym{(-)}(\rho))$ where $\RelSym{(-)}(\rho)$ is the map:
\[
\begin{tikzcd}
\RelSym{\Ocal_X}(\Fscr) \ar[rr]{}{\RelSym{\Ocal_X}(\rho)} & & \RelSym{\Ocal_X}(f^{\ast}\Gscr) \ar[r]{}{\alpha_f} & f^{\ast}\left(\RelSym{\Ocal_Y}(\Gscr)\right)
\end{tikzcd}
\]

Running the same argument given in Proposition \ref{Prop: Section Background Scheme: Rel Und is Fibration Map} mutatis mutandis for the functor $\RelSym{(-)}$ save that we replace the identities commuting the pullback functors with $\RelUnd{(-)}$ functors with the corresponding natural transformations $\alpha_f$ and then using the pseudonaturality of the $\alpha_f$ gives that $\RelSym{(-)}$ is also a fibration morphism.
\begin{proposition}\label{Prop: Section Background Scheme: Rel Sym is Fibration Map}
	The functor
	\[
	\RelUnd{(-)}:\QQCoh \to \QQCAlg
	\]
	given by
	\[
	(X,\Fscr) \mapsto \left(X,\RelSym{\Ocal_X}(\Fscr)\right)
	\]
	on objects and by, for morphisms $(f,\rho):(X,\Fscr) \to (Y,\Gscr)$,
	\[
	(f,\rho) \mapsto (f, \alpha_f \circ \RelSym{\Ocal_X}(\rho))
	\]
	is a morphism of fibrations.
\end{proposition}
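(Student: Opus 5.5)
The plan is to mirror the proof of Proposition \ref{Prop: Section Background Scheme: Rel Und is Fibration Map}. The one change is that the strict equalities $\RelUnd{\Ocal_X}(f^{\ast}\Ascr) = f^{\ast}\RelUnd{\Ocal_Y}(\Ascr)$ used there are replaced by the invertible comparison maps $\alpha_f:\RelSym{\Ocal_X}(f^{\ast}\Gscr) \to f^{\ast}\RelSym{\Ocal_Y}(\Gscr)$. (The functor in the statement is, of course, $\RelSym{(-)}$; the label $\RelUnd{(-)}$ there is a typo.)

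\textbf{Step 1: functoriality.} Identities are preserved because $\alpha_{\id_X}$ is, up to the unitor of the pseudofunctor $\QCoh$, the identity. For composition, take $(f,\rho):(X,\Fscr) \to (Y,\Gscr)$ and $(g,\varphi):(Y,\Gscr) \to (Z,\Hscr)$. We must check that
\[
\phi_{f,g}\circ f^{\ast}(\alpha_g \circ \RelSym{\Ocal_Y}(\varphi)) \circ \alpha_f \circ \RelSym{\Ocal_X}(\rho) = \alpha_{g\circ f} \circ \RelSym{\Ocal_X}(\phi_{f,g} \circ f^{\ast}\varphi \circ \rho).
\]
The argument has three moves. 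First, use naturality of $\alpha_f$ to slide $f^{\ast}\RelSym{\Ocal_Y}(\varphi)$ past $\alpha_f$, turning it into $\RelSym{\Ocal_X}(f^{\ast}\varphi)$. Second, use functoriality of $\RelSym{\Ocal_X}$ to merge the maps under it. Third, use the pseudonaturality (modification) axiom for the family $\alpha$, namely $\phi_{f,g} \circ f^{\ast}\alpha_g \circ \alpha_f = \alpha_{g\circ f} \circ \RelSym{\Ocal_X}(\phi_{f,g})$, which is the compatibility of the $\alpha$'s with the compositors. This third move is the main obstacle. I would justify the axiom affine-locally: on affine opens the $\alpha_f$ are the tensor-cancellation isomorphisms of Corollary \ref{Cor: Section Background Alg: Sym and Tensors Commute}. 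Those isomorphisms are uniquely determined as mates of the identity $\Und{R}\circ W_f = \res_f \circ \Und{S}$ from Proposition \ref{Prop: Section Background in Algebra: Underlying commutes strictly with restriction}. Uniqueness of adjoint comparisons therefore forces the cocycle identity (Corollary \ref{Cor: Section Background Alg: Pseudonat for Sym}), and it glues to all of $X$ by the uniqueness half of Theorem \ref{Thm: Section Background Scheme: QCoh is a pseudolimit}.

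\textbf{Step 2: Cartesian lifts.} Apply $\RelSym{(-)}$ to the Cartesian lift $(f,\id_{f^{\ast}\Gscr}):(X,f^{\ast}\Gscr) \to (Y,\Gscr)$ in $\QQCoh$. This gives $(f,\alpha_f)$, with $\alpha_f$ invertible. In any fibration, a morphism over $f$ that equals a Cartesian lift composed with a vertical isomorphism is itself Cartesian. Indeed, $(f,\alpha_f) = (f,\id)\circ(\id_X,\alpha_f)$ up to a unitor, and $(\id_X,\alpha_f)$ is a vertical isomorphism in $\QQCAlg$.

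The argument can also be written concretely, as in Proposition \ref{Prop: Section Background Scheme: Rel Und is Fibration Map}. Suppose we are given $(h,\rho):(W,\Bscr) \to (Y,\RelSym{\Ocal_Y}\Gscr)$ and $h = f\circ k$. Define the lift as
\[
\hat\rho := k^{\ast}(\alpha_f^{-1}) \circ \phi_{k,f}^{-1} \circ \rho.
\]
Commutativity of the triangle then follows by cancelling isomorphisms. Uniqueness follows because each factor is invertible, exactly as in the earlier uniqueness computation. This shows that $\RelSym{(-)}$ preserves Cartesian morphisms and is a morphism of fibrations.
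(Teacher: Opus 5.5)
Your proposal is correct and follows the paper's route. Like the paper, you transport the proof of Proposition~\ref{Prop: Section Background Scheme: Rel Und is Fibration Map} by replacing the strict equalities with the invertible $\alpha_f$ and their pseudonaturality, you obtain the same lift $\hat{\rho}$, and you add an explicit functoriality check and the correct factor $k^{\ast}(\alpha_f^{-1})$ (the paper's sketch writes $h^{\ast}\ast\alpha_f$, which points the wrong way given $\alpha_f:\RelSym{\Ocal_X}(f^{\ast}\Fscr)\to f^{\ast}\RelSym{\Ocal_Y}(\Fscr)$).
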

\begin{proof}[Sketch]
The argument prior to the statement of the proposition gives that if we have a commuting triangle of schemes
\[
\begin{tikzcd}
Z \ar[dr]{}{g} \ar[d, swap]{}{h} & \\
X \ar[r, swap]{}{f} & Y
\end{tikzcd}
\]
with corresponding cospan 
\[
\begin{tikzcd}
(Z,\Ascr) \ar[drr]{}{\rho} \\
(X,\RelSym{\Ocal_X}(f^{\ast}\Fscr)) \ar[rr, swap]{}{(f, \alpha_f)} & & (Y, \RelSym{\Ocal_Y}(\Fscr))
\end{tikzcd}
\]
in $\QQCAlg$ then the unique map $\hat{\rho}$ rendering the diagram
\[
\begin{tikzcd}
(Z,\Ascr) \ar[drr]{}{(g,\rho)} \ar[d, dashed, swap]{}{\exists!(h,\hat{\rho})} \\
(X,\RelSym{\Ocal_X}(f^{\ast}\Fscr)) \ar[rr, swap]{}{(f, \alpha_f)} & & (Y,\RelSym{\Ocal_Y}(\Fscr))
\end{tikzcd}
\]
commutative in $\QQCAlg$ is the map  described by the diagram:
\[
\begin{tikzcd}
\Ascr \ar[r]{}{\rho} \ar[drrr, swap]{}{\hat{\rho}} & g^{\ast}(\RelSym{\Ocal_Y}(\Fscr)) \ar[rr]{}{\phi_{h,f}^{-1} \ast \RelSym{\Ocal_Y}} & & h^{\ast}(f^{\ast}(\RelSym{\Ocal_Y}(\Fscr))) \ar[d]{}{h^{\ast} \ast \alpha_f} \\
 & & & h^{\ast}(\RelSym{\Ocal_X}(f^{\ast}\Fscr))
\end{tikzcd}
\]
\end{proof}

We now can conclude that $\RelSym{(-)} \dashv \RelUnd{(-)}$ is an adjunction between fibrations.
\begin{proposition}\label{Prop: Section Background Scheme: Rel Sym Rel Und adjunction at fibration level}
For any base scheme $S$, there is an adjunction of fibrations:
\[
\begin{tikzcd}
\QQCAlg \ar[dr]{}{} \ar[rr, bend right = 15, swap, ""{name = R}]{}{\RelUnd{(-)}} & & \QQCoh \ar[ll, swap, bend right = 15, ""{name = L}]{}{\RelSym{(-)}} \ar[dl] \\
 & \Sch_{/S}
 \ar[from = L, to = R, symbol = \dashv]
\end{tikzcd}
\]
In particular, the relative symmetric algebra and relative underlying quasi-coherent sheaf pseudonatural transformations $\RelSym{(-)}:\QCoh(-) \Rightarrow \QCoh(-,\CAlg{\Ocal_{(-)}})$ and $\RelUnd{(-)}:\QCoh(-,\CAlg{\Ocal_{(-)}}) \Rightarrow \QCoh(-)$ are left-and-right adjoint $1$-cells in the $2$-category of pseudofunctors, pseudonatural transformations, and modifications $\Bicat(\Sch_{/S}^{\op},\fCat)$.
\end{proposition}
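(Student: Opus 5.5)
The plan is to mimic the proof of Proposition \ref{Prop: Section Background Alg: Sym and Und adjunction between CAlg and Mod fibrations over crig}, with the fibre-wise adjunction of Proposition \ref{Prop: Section Background Scheme: Folklore Sym is left adj to Rel Und} in place of $\Sym{R} \dashv \Und{R}$. By Propositions \ref{Prop: Section Background Scheme: Rel Und is Fibration Map} and \ref{Prop: Section Background Scheme: Rel Sym is Fibration Map}, both $\RelUnd{(-)}$ and $\RelSym{(-)}$ are already morphisms of fibrations over $\Sch_{/S}$. So it remains to produce a natural bijection
\[
\QQCAlg\big((X,\RelSym{\Ocal_X}\Fscr),(Y,\Ascr)\big) \cong \QQCoh\big((X,\Fscr),(Y,\RelUnd{\Ocal_Y}\Ascr)\big)
\]
that lies over the identity of $\Sch_{/S}(X,Y)$.

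First I would describe the bijection fibre-by-fibre over a fixed $f:X \to Y$. A morphism on the left is a pair $(f,\psi)$ with $\psi:\RelSym{\Ocal_X}(\Fscr) \to f^{\ast}\Ascr$ in $\QCoh(X,\CAlg{\Ocal_X})$. Under the adjunction $\RelSym{\Ocal_X} \dashv \RelUnd{\Ocal_X}$ this corresponds to a map $\psi^{\flat}:\Fscr \to \RelUnd{\Ocal_X}(f^{\ast}\Ascr)$. The calculation in the proof of Proposition \ref{Prop: Section Background Scheme: Rel Und is Fibration Map} gives the strict equality $\RelUnd{\Ocal_X}(f^{\ast}\Ascr) = f^{\ast}\RelUnd{\Ocal_Y}(\Ascr)$. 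Hence $\psi^{\flat}$ is exactly the data of a morphism $(f,\psi^{\flat})$ on the right. The inverse is $(f,\varphi) \mapsto (f,\varphi^{\sharp})$. These are mutually inverse because the fibre-wise transposes are.

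Next I would verify naturality in both variables. Precomposition by a morphism $(g,\beta):(Z,\Gscr) \to (X,\Fscr)$ in $\QQCoh$ becomes precomposition by $\RelSym{(-)}(g,\beta) = (g,\alpha_g \circ \RelSym{\Ocal_Z}(\beta))$. Postcomposition by $(k,\gamma):(Y,\Ascr) \to (W,\Bscr)$ becomes postcomposition by $(k,\RelUnd{\Ocal_Y}\gamma)$. Naturality in the second variable is the easy half: composition in $\QQCoh$ involves $\RelUnd{}$ applied to the compositors $\phi$, and the identities $\RelUnd{} f^{\ast} = f^{\ast}\RelUnd{}$ are strict. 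Naturality in the first variable is where I expect the real work. One must check that transposing across $\RelSym{\Ocal_Z} \dashv \RelUnd{\Ocal_Z}$ turns the pullback $g^{\ast}\psi$, precomposed with $\alpha_g$, into $g^{\ast}(\psi^{\flat})$. Equivalently, $\alpha_g$ must be the mate of the identity $\RelUnd{\Ocal_Z} g^{\ast} = g^{\ast}\RelUnd{\Ocal_X}$, that is,
\[
\alpha_g^{-1} = \epsilon_{g^{\ast}\RelSym{\Ocal_X}} \circ \RelSym{\Ocal_Z}\, g^{\ast}\eta
\]
up to orientation. I would establish this affine-locally. By Theorem \ref{Thm: Section Background Scheme: QCoh is a pseudolimit}, morphisms of quasi-coherent sheaves are determined on an affine cover. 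On affines $\Spec B \to \Spec A$, $\alpha_g$ is the isomorphism $\Sym{B}(M \otimes_A B) \cong \Sym{A}(M) \otimes_A B$ of Corollary \ref{Cor: Section Background Alg: Sym and Tensors Commute}. That isomorphism was constructed precisely as the canonical comparison between two left adjoints of the same functor $\Und{A} \circ W_f = \res_f \circ \Und{B}$, so it is the required mate.

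Finally, the ``in particular'' clause follows from the standard fact that, via the Grothendieck construction, an adjunction between fibrations whose functors are cartesian and whose unit and counit are vertical corresponds to an adjunction in $\Bicat(\Sch_{/S}^{\op},\fCat)$. Our bijection lies over identities, so the unit and counit are vertical. The pseudonatural transformations are $\RelSym{(-)}$ with coherence cells $\alpha$ and $\RelUnd{(-)}$ with identity coherence cells. The modifications are the fibre-wise units $\eta$ and counits $\epsilon$. The triangle identities hold componentwise because they hold in each fibre.
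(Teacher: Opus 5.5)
Your proposal is correct and follows the paper's route: you use the fibre-wise transpose across $\RelSym{\Ocal_X} \dashv \RelUnd{\Ocal_X}$ together with the strict equality $\RelUnd{\Ocal_X}(f^{\ast}\Ascr) = f^{\ast}\RelUnd{\Ocal_Y}(\Ascr)$, and then transport the adjunction along the Grothendieck construction for the second claim. You also isolate naturality in the first variable, namely that $\alpha_g = \epsilon_{g^{\ast}\RelSym{\Ocal_X}\Fscr} \circ \RelSym{\Ocal_Z}(g^{\ast}\eta_{\Fscr})$ (this is $\alpha_g$ itself, not $\alpha_g^{-1}$, with the paper's orientation); the paper's proof leaves this step implicit, and your affine-local justification via Corollary \ref{Cor: Section Background Alg: Sym and Tensors Commute} correctly fills it in.
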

\begin{proof}
The second claim simply follows from the fact that the Grothendieck construction is a $2$-equivalence of categories and hence transfers adjoints. As such, it suffices to prove that $\RelSym{(-)} \dashv \RelUnd{(-)}$. To do this we show that to give a map of the form $(X,\RelSym{\Ocal_X}(\Fscr)) \to (Y,\Ascr)$ is to give a map of the form $(X,\Fscr) \to (Y,\RelUnd{\Ocal_Y}(\Ascr))$ and vice-versa. 

Fix schemes $X$ and $Y$ with $\Fscr$ a quasi-coherent sheaf over $X$ and with $\Ascr$ a quasi-coherent sheaf of $\Ocal_Y$-modules. We now derive that on one hand
\begin{prooftree}
	\AxiomC{$(f,\rho):(X,\RelSym{\Ocal_X}(\Fscr)) \rightarrow (Y,\Ascr)$ in $\QQCAlg$}\doubleLine
	\UnaryInfC{$f:X \to Y$, $\rho:\RelSym{\Ocal_X}(\Fscr) \to f^{\ast}\Ascr$}\RightLabel{Proposition \ref{Prop: Section Background Scheme: Folklore Sym is left adj to Rel Und}}
	\UnaryInfC{$f:X \to Y$, $\rho^{\sharp}:\Fscr \to \RelUnd{\Ocal_X}(f^{\ast}\Ascr)$}\doubleLine\RightLabel{$\RelUnd{\Ocal_X}(f^{\ast}\Ascr) = f^{\ast}(\RelUnd{\Ocal_Y}(\Ascr))$}
	\UnaryInfC{$f:X \to Y$, $\rho^{\sharp}:\Fscr \to f^{\ast}(\RelUnd{\Ocal_Y}(\Ascr))$}\doubleLine
	\UnaryInfC{$(f,\rho^{\sharp}):(X,\Fscr) \to (Y,\RelUnd{\Ocal_Y}(\Fscr))$ in $\QQCoh$}
\end{prooftree}
while on the other hand:
\begin{prooftree}
	\AxiomC{$(f,\rho):(X,\Fscr) \rightarrow (Y,\RelUnd{\Ocal_Y}(\Ascr))$ in $\QQCoh$}\doubleLine
	\UnaryInfC{$f:X \to Y$, $\rho:\Fscr \to f^{\ast}(\RelUnd{\Ocal_Y}(\Ascr)$}\doubleLine\RightLabel{$f^{\ast}(\RelUnd{\Ocal_Y}(\Ascr)) = \RelUnd{\Ocal_X}(f^{\ast}\Ascr)$}
	\UnaryInfC{$f:X \to Y$, $\rho:\Fscr \to \RelUnd{\Ocal_X}(f^{\ast}\Fscr)$}\RightLabel{Proposition \ref{Prop: Section Background Scheme: Folklore Sym is left adj to Rel Und}}
	\UnaryInfC{$f:X \to Y$, $\rho^{\flat}:\RelSym{\Ocal_X}(\Fscr) \to f^{\ast}\Ascr$}\doubleLine
	\UnaryInfC{$(f,\rho^{\flat}):(X,\RelSym{\Ocal_X}(\Fscr)) \to (Y,\Ascr)$ in $\QQCAlg$}
\end{prooftree}
Because the sharp and flat combinators used in each derivation are mutually inverse by virtue of the adjunction $\RelSym{\Ocal_X} \dashv \RelUnd{\Ocal_X}$, it follows that $(f,\rho^{\flat})^{\sharp} = (f,\rho)$ and $(f, \rho^{\sharp})^{\flat} = (f,\rho)$, proving our claimed adjunction holds.
\end{proof}

As a final lone result in this section we record that pullbacks commute with the relative symmetric algebra functors. 

\begin{corollary}\label{Cor: Section Background Scheme: The pullback Sym isos for qcoh}
For any morphism $f:X \to Y$ of schemes there is a natural isomorphism
\[
\begin{tikzcd}
\QCoh(Y) \ar[rr, ""{name = U}]{}{f^{\ast}} \ar[d, swap]{}{\RelSym{\Ocal_Y}} & & \QCoh(X) \ar[d]{}{\RelSym{\Ocal_X}} \\
\QCoh(Y,\CAlg{\Ocal_Y}) \ar[rr, swap, ""{name = D}]{}{f^{\ast}} & & \QCoh(X,\CAlg{\Ocal_X})
\ar[from = U, to = D, Rightarrow, shorten <= 4pt, shorten >= 4pt]{}{\cong}
\end{tikzcd}
\]
\end{corollary}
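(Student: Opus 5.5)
The cleanest route is the same argument used for Corollary \ref{Cor: Section Background Alg: Sym and Tensors Commute}, now one level up: compare right adjoints and invoke uniqueness of left adjoints. By Proposition \ref{Prop: Section Background Scheme: Folklore Sym is left adj to Rel Und} we have $\RelSym{\Ocal_X} \dashv \RelUnd{\Ocal_X}$ and $\RelSym{\Ocal_Y} \dashv \RelUnd{\Ocal_Y}$. By Proposition \ref{Prop: Section Background Scheme: Right adjoint for quasicoherent sheaves} we have $f^{\ast} \dashv f_{\square}$ on $\QCoh$. I will also need the analogous adjunction on algebras, $f^{\ast} \dashv f_{\square}^{\mathrm{alg}}$ from $\QCoh(Y,\CAlg{\Ocal_Y})$ to $\QCoh(X,\CAlg{\Ocal_X})$.

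\textbf{Step 1: the algebra-level right adjoint.} I will construct $f_{\square}^{\mathrm{alg}}$ in one of two ways. The first is to note that $f^{\ast}$ is strong monoidal, so its right adjoint $f_{\square}$ is lax monoidal and carries commutative monoids to commutative monoids, by Definition \ref{Defn: Section Background Scheme: QCoh Algebras Defn Internal}. The second is to show that $\QCoh(X,\CAlg{\Ocal_X})$ is locally presentable, being monadic over $\QCoh(X)$ via $\RelSym{\Ocal_X} \dashv \RelUnd{\Ocal_X}$ with a finitary monad, and that $f^{\ast}$ on algebras is cocontinuous; the adjoint functor theorem then applies as in Proposition \ref{Prop: Section Background Scheme: Right adjoint for quasicoherent sheaves}. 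I prefer the first route, since it immediately yields the strict equality
\[
\RelUnd{\Ocal_Y} \circ f_{\square}^{\mathrm{alg}} = f_{\square} \circ \RelUnd{\Ocal_X}.
\]

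\textbf{Step 2: uniqueness of left adjoints.} Composing adjunctions gives
\[
f^{\ast} \circ \RelSym{\Ocal_Y} \dashv \RelUnd{\Ocal_Y} \circ f_{\square}^{\mathrm{alg}}
\qquad\text{and}\qquad
\RelSym{\Ocal_X} \circ f^{\ast} \dashv f_{\square} \circ \RelUnd{\Ocal_X}.
\]
By Step 1 the two right adjoints coincide (or are naturally isomorphic). Uniqueness of adjoints up to unique natural isomorphism then gives $\RelSym{\Ocal_X} \circ f^{\ast} \cong f^{\ast} \circ \RelSym{\Ocal_Y}$, which is the claimed 2-cell.

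\textbf{Alternative without Step 1.} If building $f_{\square}^{\mathrm{alg}}$ becomes delicate, I will instead read the isomorphism off the data already assembled. Proposition \ref{Prop: Section Background Scheme: Rel Sym is Fibration Map} says that $\RelSym{(-)}$ preserves Cartesian arrows. The Cartesian lift of $f$ at $(Y,\Fscr)$ is $(f,\id)$, so its image, with second component $\alpha_f$, is again Cartesian over $f$. The canonical Cartesian lift at $(Y,\RelSym{\Ocal_Y}\Fscr)$ is $(f,\id)$ with domain $(X, f^{\ast}\RelSym{\Ocal_Y}\Fscr)$. Two Cartesian lifts of the same arrow with the same codomain differ by a unique vertical isomorphism, so $\RelSym{\Ocal_X}(f^{\ast}\Fscr) \cong f^{\ast}\RelSym{\Ocal_Y}(\Fscr)$. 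Naturality in $\Fscr$ follows from the uniqueness clause of Cartesianness.

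\textbf{Local check and main obstacle.} As a sanity check on affine opens $\Spec B \to \Spec A$, both sides reduce via the lemma $\RelSym{A}(\widetilde{M}) \cong \widetilde{\Sym{A}(M)}$ to Corollary \ref{Cor: Section Background Alg: Sym and Tensors Commute}. The main obstacle is Step 1, namely confirming that the lax monoidal structure on $f_{\square}$ exists and that the induced functor on algebras is genuinely right adjoint to $f^{\ast}$ on algebras. I expect this to follow from doctrinal adjunction for the strong monoidal left adjoint $f^{\ast}$.
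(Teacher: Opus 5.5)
Your main argument (Step~1 via doctrinal adjunction, then Step~2) is correct, but it is not the paper's route. The paper argues locally. It chooses affine covers $\{V_i\}$ of $Y$ and $\{U_j\}$ of $X$ with $f(U_j)\subseteq V_i$, and applies the ring-level isomorphism $\Sym{A}(M)\otimes_A B\cong\Sym{B}(M\otimes_A B)$ of Corollary~\ref{Cor: Section Background Alg: Sym and Tensors Commute} on each patch. It then notes that these isomorphisms are natural in the opens and sheafifies.

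You instead repeat the \emph{proof} of that ring-level corollary one level up. Since $f^{\ast}$ is strong symmetric monoidal, $f_{\square}$ acquires a lax symmetric monoidal structure: the transpose of $f^{\ast}(f_{\square}\Ascr\otimes f_{\square}\Bscr)\cong f^{\ast}f_{\square}\Ascr\otimes f^{\ast}f_{\square}\Bscr\to\Ascr\otimes\Bscr$. The resulting monoidal adjunction restricts to commutative monoids and gives $\RelUnd{\Ocal_Y}\circ f^{\mathrm{alg}}_{\square}=f_{\square}\circ\RelUnd{\Ocal_X}$ on the nose. Uniqueness of left adjoints then finishes the argument, so the obstacle you flag is not a real one.

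Comparing the two: your version needs no covers, gluing or sheafification. Naturality in $\Fscr$, and the coherence in $f$ that the paper invokes as ``pseudonaturality of the $\alpha_f$'' for Proposition~\ref{Prop: Section Background Scheme: Rel Sym is Fibration Map}, come from the calculus of mates rather than needing separate checks. The price is that you rely on the existence of $f_{\square}$ (local presentability plus the adjoint functor theorem) and on doctrinal adjunction. You also obtain the isomorphism only as a mate, not as the explicit patchwise formula the local argument gives.

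You should drop both of your fallbacks, since both are circular.
\begin{itemize}
\item Proposition~\ref{Prop: Section Background Scheme: Rel Sym is Fibration Map} defines $\RelSym{(-)}$ on morphisms by $(f,\rho)\mapsto(f,\alpha_f\circ\RelSym{\Ocal_X}(\rho))$, and $\alpha_f$ is exactly the isomorphism you are trying to construct. Moreover, in $\QQCAlg$ a morphism $(f,\beta)$ is Cartesian precisely when $\beta$ is invertible. So ``the image $(f,\alpha_f)$ of the Cartesian lift $(f,\id)$ is Cartesian'' restates the corollary rather than proving it.
\item Building $f^{\mathrm{alg}}_{\square}$ by monadicity and the adjoint functor theorem only yields \emph{some} right adjoint. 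Step~2 needs the commutation $\RelUnd{\Ocal_Y}\circ f^{\mathrm{alg}}_{\square}\cong f_{\square}\circ\RelUnd{\Ocal_X}$, and by uniqueness of adjoints that commutation is equivalent to the statement itself.
\end{itemize}
Only the doctrinal-adjunction construction supplies that commutation independently, so keep that one.
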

\begin{proof}[Sketch]
Fix an affine open cover $\lbrace V_i \; | \; i \in I \rbrace$ of $Y$ and an affine open cover $\lbrace U_j \; | \; j \in J_i, i \in I \rbrace$ of $X$ such that for all indices $i \in I$ and for all $j \in J_i$, the diagram
\[
\begin{tikzcd}
U_j \ar[d, swap] \ar[r]{}{f} & V_i \ar[d] \\
X \ar[r, swap]{}{f} & Y
\end{tikzcd}
\]
commutes. Then we have the natural isomorphisms
\[
\Sym{\Ocal_Y(V_i)}\big(\Fscr(V_i)\big) \otimes_{\Ocal_Y(V_i)} \Ocal_X(U_j) \cong \Sym{\Ocal_X(U_i)}\left(\Fscr(V_i) \otimes_{\Ocal_Y(V_i)} \Ocal_X(U_j)\right)
\]
for all quasi-coherent sheaves $\Fscr$ on $Y$ and for all $i \in I, j \in J_i$. These isomorphisms hence induce an isomorphism of presheaf assignments
\[
\Sym{(f^{-1}\Ocal_Y)(U_j)}\left((f^{-1}\Fscr)(U_j)\right) \otimes_{(f^{-1}\Ocal_Y)(U_j)} \Ocal_X(U_j) \cong \Sym{\Ocal_X(U_j)}\left((f^{-1}\Fscr)(U_j) \otimes_{(f^{-1}\Ocal_Y)(U_j)} \Ocal_X(U_j)\right)
\]
which are natural in the $U_j$. Sheafifying this gives our desired isomorphism
\[
f^{\ast} \circ \RelSym{\Ocal_Y} \cong \RelSym{\Ocal_X} \circ f^{\ast}
\]
\end{proof}

\subsection{The Coalgebra Structure on Relative Symmetric Algebras}\label{Subsection: Coalgebra structure on Relative Sym}

In this short section we simply record the fact that the coalgebra maps $\nabla, S, \epsilon$ defined making $(\Sym{R}(M),\nabla_M,S_M,\epsilon_M)$ into a commutative and cocommutative Hopf algebra for all commutative rings $R$ and for all $R$-modules $M$ may be appropriately sheafified. The main tool we use here is that for any module map $f:M \to N$, the Hopf map $\Sym{R}(f)$ is a Hopf algebra morphism.

We first rephrase Proposition \ref{Prop: Section Background Alg: Symmetric Alg is Coalg} in its affine scheme-theoretic incarnation. While this is obvious, it is crucial for setting the stage.

\begin{corollary}\label{Cor: Section Background Alg: Sym as a functor into CMon of CAlg op}
	For any commutative ring $R$, the functor $\Sym{R}^{\op}$ extends to a functor
	\[
	\Spec\circ \widehat{\Sym{R}}^{\op}:\Mod{R}^{\op} \to \mathbf{Ab}(\mathbf{AffSch}_{/R})
	\]
	where $\Ab(\mathbf{AffSch}_{/R})$ denotes the category of Abelian group objects in $\mathbf{AffSch}_{/R}$.
\end{corollary}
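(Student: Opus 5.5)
The plan is to take opposites in Proposition \ref{Prop: Section Background Alg: Symmetric Alg is Coalg} and then transport the result along the anti-equivalence $\Spec:\CAlg{R}^{\op} \simeq \mathbf{AffSch}_{/R}$.

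First I would observe that the symmetric monoidal category $(\CAlg{R},\otimes_R,R)$ is cocartesian: $\otimes_R$ is the pushout over $R$, i.e.\@ the coproduct in $\CAlg{R} = R/\Cring$, and $R$ is initial. Hence a cocommutative Hopf algebra in $\CAlg{R}$ (a commutative algebra equipped with comultiplication, counit and antipode that are algebra maps) is exactly a cogroup object in $\CAlg{R}$ with commutative co-operation. This is just rereading the Hopf axioms as the cogroup axioms written with coproducts, so the cocommutative Hopf algebras in $\CAlg{R}$ form precisely the category of abelian cogroup objects in $\CAlg{R}$. Passing to opposites, this becomes $\Ab(\CAlg{R}^{\op})$: the comultiplication $\nabla_M$ becomes a multiplication $\Sym{R}(M) \otimes_R \Sym{R}(M) \to \Sym{R}(M)$ in $\CAlg{R}^{\op}$, where the tensor product is now the categorical product. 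Likewise $\epsilon_M$ becomes a unit $R \to \Sym{R}(M)$ out of the terminal object, and $S_M$ becomes the inverse. Hopf algebra morphisms correspond exactly to group homomorphisms. Thus $\widehat{\Sym{R}}^{\op}:\Mod{R}^{\op} \to \mathbf{Hopf}^{\operatorname{cocom}}(R)^{\op} = \Ab(\CAlg{R}^{\op})$.

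Next I would use the anti-equivalence $\Spec:\CAlg{R}^{\op} \to \mathbf{AffSch}_{/R}$, i.e.\@ the affine case of the standard equivalence. Equivalences preserve finite products and terminal objects, and here this is explicit: $\Spec(A \otimes_R B) \cong \Spec A \times_{\Spec R} \Spec B$, and $\Spec R$ is terminal over $R$. So $\Spec$ induces an equivalence $\Ab(\CAlg{R}^{\op}) \simeq \Ab(\mathbf{AffSch}_{/R})$. One applies $\Spec$ to the structure maps and conjugates by the product-comparison isomorphisms, which is fine because group-object axioms are diagrams built from products and these are transported coherently. Composing the two steps gives the desired functor $\Spec\circ\widehat{\Sym{R}}^{\op}$, and it lies over $\Spec\circ\Sym{R}^{\op}$ by construction.

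The only point requiring care is the dictionary between cocommutative Hopf algebras and abelian cogroups. One must check that cocommutativity of $\nabla_M$ is exactly commutativity of the group law, which holds because the symmetry of $\otimes_R$ is the coproduct swap. One must also check that the Hopf axioms in Proposition \ref{Prop: Section Background Alg: Symmetric Alg is Coalg} are imposed in $\CAlg{R}$ and not merely in $\Mod{R}$, so that $\nabla_M$, $\epsilon_M$ and $S_M$ are genuinely algebra maps. For $\nabla_M$ and $\epsilon_M$ this holds by construction as adjoint transposes. For $S_M$ it holds because $S_M = \Sym{R}(-\id_M)$. Beyond that dictionary, the argument is formal.
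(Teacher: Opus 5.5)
Your proposal is correct and follows the same route as the paper. The paper's proof says only that, by taking opposites and spectra, cocommutative Hopf algebras in $(\CAlg{R},\otimes_R,R)$ are exactly abelian group objects in $(\mathbf{AffSch}_{/R},\times_R,\Spec R)$; you supply the details that make this dictionary valid, namely that $\otimes_R$ is the coproduct in $\CAlg{R}$ and that $\nabla_M$, $\epsilon_M$ and $S_M = \Sym{R}(-\id_M)$ are genuine algebra maps.
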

\begin{proof}
	By the formal calculus of taking opposites and spectra, it follows that cocommutative Hopf algebras in the symmetric monoidal category $(\CAlg{R},\otimes_R,R)$ are precisely internal Abelian groups in $(\mathbf{AffSch}_{/R},\times_R, \Spec R)$.
\end{proof}

The strict naturality of the functors $\widehat{\Sym{(-)}}$ with restriction of scalars functors (cf.\@ Proposition \ref{Prop: Section Background Alg: Strictness of Hopf for Restriction of Scalars}) imply that for any scheme $X$ and for any quasicoherent sheaf $\Fscr$ on $X$, the map
\begin{prooftree}
	\AxiomC{$\nabla^{\operatorname{pre}}_{\Fscr}:\RelSym{\Ocal_X}(\Fscr) \to \RelSym{\Ocal_X}\Fscr \otimes_{\Ocal_X} \RelSym{\Ocal_X}\Fscr$ in $[\Open(X)^{\op},\Set]$}\doubleLine
	\UnaryInfC{$U \in \mathbf{Open}(X)$. $(\nabla_{\Fscr}^{\operatorname{pre}})_U := \nabla_{\Fscr(U)}:\Sym{\Ocal_X(U)}\Fscr(U) \to \Sym{\Ocal_X(U)}\Fscr(U) \otimes_{\Ocal_X(U)} \Sym{\Ocal_X(U)}\Fscr(U)$}
\end{prooftree}
is a morphism of presheaves of commutative $\Ocal_X$-algebras; note that the naturality of $\nabla$ is nontrivial, as it asks that
\[
\begin{tikzcd}
\Sym{\Ocal_X(U)}\Fscr(U) \ar[d] \ar[rrr]{}{\nabla_U} & & & \Sym{\Ocal_X(U)}\Fscr(U) \otimes_{\Ocal_X(U)} \Sym{\Ocal_X(U)}\Fscr(U) \ar[d] \\
\Sym{\Ocal_X(V)}\Fscr(V) \ar[rrr, swap]{}{\nabla_{V}} & & & \Sym{\Ocal_X(V)}\Fscr(V) \otimes_{\Ocal_X(V)} \Sym{\Ocal_X(V)}\Fscr(V)
\end{tikzcd}
\]
commute for all opens $V \subseteq U$ in $X$. However, this is precisely given by the analysis of the map $F$ in the statement prior to Proposition \ref{Prop: Section Background Alg: Strictness of Hopf for Restriction of Scalars}. Using this we define $\nabla$ via sheafification.
\begin{definition}\label{Defn: Section Background Scheme: Nabla on QCoh}
For any scheme $X$ and any quasi-coherent sheaf $\Fscr$, define the map
\[
\nabla_{\Fscr}:\RelSym{\Ocal_X}\!\left(\Fscr\right) \longrightarrow \RelSym{\Ocal_X}\!\left(\Fscr\right) \otimes_{\Ocal_X} \RelSym{\Ocal_X}\!\left(\Fscr\right)
\]
by the sheafification
\[
\nabla_{\Fscr} := \left(\nabla_{\Fscr}^{\operatorname{pre}}\right)^{++}.
\]
\end{definition}

In order to see the maps $\nabla_{\Fscr}$ are natural transformations in the sense of arising as the $\Fscr$-component of a $2$-cell
\[
\begin{tikzcd}
\QCoh(X) \ar[rrr, bend left = 20, ""{name = U}]{}{\RelSym{\Ocal_X}} \ar[rrr, swap, bend right = 20, ""{name = D}]{}{\left(\RelSym{\Ocal_X}(-) \otimes \RelSym{\Ocal_X}(-)\right) \circ \Delta} & & & \QCoh(X,\CAlg{\Ocal_X})
\ar[from = U, to = D, Rightarrow, shorten <= 4pt, shorten >= 4pt]{}{\nabla}
\end{tikzcd}
\]
for $\Delta$ the diagonal functor $\QCoh(X) \to \QCoh(X) \times \QCoh(X)$, we also need to know that if $\varphi:\Fscr \to \Gscr$ is a morphism of quasi-coherent sheaves on $X$ then 
\[
\begin{tikzcd}
\RelSym{\Ocal_X}\!(\Fscr) \ar[d, swap]{}{\RelSym{\Ocal_X}\!(\varphi)} \ar[rr]{}{\nabla_{\Fscr}} & & \RelSym{\Ocal_X}\!(\Fscr) \otimes_{\Ocal_X} \RelSym{\Ocal_X}\!(\Fscr) \ar[d]{}{\RelSym{\Ocal_X}\!(\varphi) \otimes \RelSym{\Ocal_X}\!(\varphi)} \\
\RelSym{\Ocal_X}\!(\Gscr) \ar[rr, swap]{}{\nabla_{\Gscr}} & & \RelSym{\Ocal_X}\!(\Gscr)\otimes_{\Ocal_X} \RelSym{\Ocal_X}\!(\Gscr)
\end{tikzcd}
\]
commutes. Because $\nabla_{\Fscr} = (\nabla_{\Fscr}^{\operatorname{pre}})^{++}$ and similarly for $\nabla_{\Gscr}$, it suffices to argue for all opens $U$ of $X$ that the diagram
\[
\begin{tikzcd}
\Sym{\Ocal_X(U)}\!\big(\Fscr(U)\big) \ar[rrr]{}{(\nabla_{\Fscr}^{\operatorname{pre}})_U} \ar[d, swap]{}{\Sym{\Ocal_X(U)}(\varphi_U)} & & & \Sym{\Ocal_X(U)}\!\big(\Fscr(U)\big) \otimes_{\Ocal_X(U)} \Sym{\Ocal_X(U)}\!\big(\Fscr(U)\big) \ar[d]{}{\Sym{\Ocal_X(U)}(\varphi_U) \otimes \Sym{\Ocal_X(U)}(\varphi_U)} \\
\Sym{\Ocal_X(U)}\big(\Gscr(U)\big) \ar[rrr, swap]{}{(\nabla_{\Gscr}^{\operatorname{pre}})_U} & & & \Sym{\Ocal_X(U)}\big(\Gscr(U)\big) \otimes_{\Ocal_X(U)} \Sym{\Ocal_X(U)}\big(\Gscr(U)\big)
\end{tikzcd}
\]
commutes. However, by Proposition \ref{Prop: Section Background Alg: Symmetric Alg is Coalg} we know that every map $\rho$ of $\Ocal_X(U)$-algebras has $\Sym{\Ocal_X(U)}(\rho)$ a coalgebra morphism, so the diagram above indeed commutes for all $U$ open. Thus 
\[
\nabla_{\Gscr}^{\operatorname{pre}} \circ \RelSym{\Ocal_X}\!(\varphi) = \left(\RelSym{\Ocal_X}\!(\varphi) \otimes \RelSym{\Ocal_X}\!(\varphi)\right) \circ \nabla_{\Fscr}^{\operatorname{pre}}
\]
and so sheafifying gives us the lemma below.
\begin{lemma}\label{Lemma: Section Background Scheme: Nabla as a natural comultiplication}
Let $X$ be a scheme and let $\varphi:\Fscr \to \Gscr$ be a morphism of quasi-coherent sheaves on $X$. Then
\[
\begin{tikzcd}
\RelSym{\Ocal_X}\!(\Fscr) \ar[d, swap]{}{\RelSym{\Ocal_X}\!(\varphi)} \ar[rr]{}{\nabla_{\Fscr}} & & \RelSym{\Ocal_X}\!(\Fscr) \otimes_{\Ocal_X} \RelSym{\Ocal_X}\!(\Fscr) \ar[d]{}{\RelSym{\Ocal_X}\!(\varphi) \otimes \RelSym{\Ocal_X}\!(\varphi)} \\
\RelSym{\Ocal_X}\!(\Gscr) \ar[rr, swap]{}{\nabla_{\Gscr}} & & \RelSym{\Ocal_X}\!(\Gscr) \otimes_{\Ocal_X} \RelSym{\Ocal_X}\!(\Gscr)
\end{tikzcd}
\]
commutes. In particular, $\nabla$ arises as a natural transformation:
\[
\begin{tikzcd}
	\QCoh(X) \ar[rrr, bend left = 20, ""{name = U}]{}{\RelSym{\Ocal_X}} \ar[rrr, swap, bend right = 20, ""{name = D}]{}{\left(\RelSym{\Ocal_X}(-) \otimes \RelSym{\Ocal_X}(-)\right) \circ \Delta} & & & \QCoh(X,\CAlg{\Ocal_X})
	\ar[from = U, to = D, Rightarrow, shorten <= 4pt, shorten >= 4pt]{}{\nabla}
\end{tikzcd}
\]
\end{lemma}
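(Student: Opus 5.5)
The plan is to reduce the commutativity of the square of sheaves to commutativity of the corresponding square of presheaves, and then invoke functoriality of sheafification. By Definition \ref{Defn: Section Background Scheme: Nabla on QCoh}, $\nabla_{\Fscr} = (\nabla_{\Fscr}^{\operatorname{pre}})^{++}$ and $\nabla_{\Gscr} = (\nabla_{\Gscr}^{\operatorname{pre}})^{++}$. The vertical maps $\RelSym{\Ocal_X}(\varphi)$ and $\RelSym{\Ocal_X}(\varphi)\otimes \RelSym{\Ocal_X}(\varphi)$ are likewise sheafifications of the evident presheaf maps $U \mapsto \Sym{\Ocal_X(U)}(\varphi_U)$ and $U \mapsto \Sym{\Ocal_X(U)}(\varphi_U)\otimes \Sym{\Ocal_X(U)}(\varphi_U)$. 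For the tensor product, recall that $\otimes_{\Ocal_X}$ of sheaves is the sheafification of the sectionwise tensor presheaf, so its functoriality is also computed sectionwise and then sheafified.

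The first step is to fix an open $U \subseteq \lvert X \rvert$ and check the sectionwise square. Here $\varphi_U:\Fscr(U) \to \Gscr(U)$ is an $\Ocal_X(U)$-module map. By Proposition \ref{Prop: Section Background Alg: Symmetric Alg is Coalg}, $\Sym{\Ocal_X(U)}(\varphi_U)$ is a morphism of cocommutative Hopf algebras, and in particular it commutes with the comultiplications $\nabla_{\Fscr(U)}$ and $\nabla_{\Gscr(U)}$. If one wants this concretely, it suffices to check it on generators $m \in \Fscr(U)$: both paths send $m$ to $\varphi_U(m)\otimes 1 + 1 \otimes \varphi_U(m)$. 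Both composites are $\Ocal_X(U)$-algebra maps, so agreement on generators forces equality.

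The second step is to note that these sectionwise equalities assemble into an equality of morphisms of presheaves. Both composites are already natural in $U$, as recorded in the discussion preceding Definition \ref{Defn: Section Background Scheme: Nabla on QCoh} via Proposition \ref{Prop: Section Background Alg: Strictness of Hopf for Restriction of Scalars}. We therefore obtain
\[
\nabla_{\Gscr}^{\operatorname{pre}} \circ \RelSym{\Ocal_X}(\varphi)^{\operatorname{pre}} = \left(\RelSym{\Ocal_X}(\varphi)\otimes\RelSym{\Ocal_X}(\varphi)\right)^{\operatorname{pre}} \circ \nabla_{\Fscr}^{\operatorname{pre}}.
\]
Applying the functor $(-)^{++}$, which preserves composition, yields the commutativity of the square in the statement.

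The final claim, that $\nabla$ is a natural transformation $\RelSym{\Ocal_X} \Rightarrow (\RelSym{\Ocal_X}(-)\otimes\RelSym{\Ocal_X}(-))\circ\Delta$, is just a restatement of this commutativity for every $\varphi$. One should also confirm that each component $\nabla_{\Fscr}$ lives in $\QCoh(X,\CAlg{\Ocal_X})$. This holds because sheafification preserves algebra maps, and both source and target are quasi-coherent by the lemma from \cite{EGA2} together with the fact that tensor products of quasi-coherent sheaves are quasi-coherent.

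The only point I expect to need care is the identification of the sheaf-level maps $\RelSym{\Ocal_X}(\varphi)$ and the tensor of two sheaf maps with the sheafifications of their sectionwise presheaf counterparts. I would handle this by the universal property of sheafification: any two morphisms out of a sheafified presheaf that agree after precomposition with the unit $P \to P^{++}$ are equal. The rest is routine.
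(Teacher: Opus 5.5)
Your proposal is correct and follows essentially the same route as the paper. Like the paper, you use $\nabla_{\Fscr} = (\nabla_{\Fscr}^{\operatorname{pre}})^{++}$ to reduce to the sectionwise square, note that this square commutes because $\Sym{\Ocal_X(U)}(\varphi_U)$ is a Hopf (in particular coalgebra) morphism by Proposition \ref{Prop: Section Background Alg: Symmetric Alg is Coalg}, and then sheafify the resulting equality of presheaf maps; your extra remarks (checking on the generators $m \in \Fscr(U)$, and identifying the sheaf-level vertical maps with sheafifications of the sectionwise maps) only make explicit details the paper leaves implicit.
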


We also need to define the counit $\epsilon_{\Fscr}:\RelSym{\Ocal_X}\!(\Fscr) \to \Ocal_X$ and antipode $S_{\Fscr}:\RelSym{\Ocal_X}\!(\Fscr) \to \RelSym{\Ocal_X}\!(\Fscr)$ morphisms, respectively, defining the Hopf algebra structure on $\RelSym{\Ocal_X}\!(\Fscr)$. However, this is significantly more straightforward than defining the comultiplication $\nabla$. For $\epsilon_{\Fscr}$ we simply declare
\begin{prooftree}
	\AxiomC{$\epsilon_{\Fscr}:\RelSym{\Ocal_X}\!(\Fscr) \to \Ocal_X$ in $\QCoh(X,\CAlg{\Ocal_X})$}\doubleLine
	\UnaryInfC{For $U$ open, $(\epsilon_{\Fscr})_U := \epsilon_{\Fscr(U)}:\Sym{\Ocal_X(U)}\!(\Fscr(U)) \to \Ocal_X(U)$ in $\CAlg{\Ocal_X(U)}$}
\end{prooftree}
while for $S_{\Fscr}$ we define
\begin{prooftree}
	\AxiomC{$S_{\Fscr}:\RelSym{\Ocal_X}\!(\Fscr) \to \RelSym{\Ocal_X}\!(\Fscr)$ in $\QCoh(X,\CAlg{\Ocal_X})$}\doubleLine
	\UnaryInfC{For all $U$ open, $(S_{\Fscr})_U := S_{\Fscr(U)}:\Sym{\Ocal_X(U)}\!(\Fscr(U)) \to \Sym{\Ocal_X(U)}\!(\Fscr(U))$ in $\CAlg{\Ocal_X(U)}$}
\end{prooftree}
where each $\epsilon$ and $S$ are the corresponding structure maps asserted by Proposition \ref{Prop: Section Background Alg: Symmetric Alg is Coalg}. The fact that these all assemble to sheaf morphisms arises as a consequence of Proposition \ref{Prop: Section Background Alg: Strictness of Hopf for Restriction of Scalars}, as it appealing to said result implies that for any quasi-coherent sheaf $\Fscr$ and for any opens $V \subseteq U$ in $X$ the diagrams
\[
\begin{tikzcd}
\Sym{\Ocal_X(U)}\!(\Fscr(U)) \ar[r]{}{\epsilon_{\Fscr(U)}} \ar[d, swap] & \Ocal_X(U) \ar[d] \\
\Sym{\Ocal_X(V)}\!(\Fscr(V)) \ar[r, swap]{}{\epsilon_{\Fscr(V)}} & \Ocal_X(V)
\end{tikzcd}\quad
\begin{tikzcd}
\Sym{\Ocal_X(U)}\!(\Fscr(U)) \ar[r]{}{S_{\Fscr(U)}} \ar[d, swap] & \Sym{\Ocal_X(U)}\!(\Fscr(U)) \ar[d] \\
\Sym{\Ocal_X(V)}\!(\Fscr(V)) \ar[r, swap]{}{S_{\Fscr(V)}} & \Sym{\Ocal_X(V)}\!(\Fscr(V))
\end{tikzcd}
\]
commute with the unlabelled maps the induced restriction maps. Finally the fact that they induce natural transformations
\[
\begin{tikzcd}
\QCoh(X) \ar[rr, bend left = 20, ""{name = U}]{}{\RelSym{\Ocal_X}} \ar[rr, swap, bend right = 20, ""{name = D}]{}{\cnst(\Ocal_X)} & & \QCoh(X,\CAlg{\Ocal_X})
\ar[from = U, to = D, Rightarrow, shorten <= 4pt, shorten >= 4pt]{}{\epsilon}
\end{tikzcd}\quad
\begin{tikzcd}
	\QCoh(X) \ar[rr, bend left = 20, ""{name = U}]{}{\RelSym{\Ocal_X}} \ar[rr, swap, bend right = 20, ""{name = D}]{}{\RelSym{\Ocal_X}} & & \QCoh(X,\CAlg{\Ocal_X})
	\ar[from = U, to = D, Rightarrow, shorten <= 4pt, shorten >= 4pt]{}{S}
\end{tikzcd}
\]
for $\cnst(\Ocal_X)$ the constant functor outputting $\Ocal_X$, arises by an application of Proposition \ref{Prop: Section Background Alg: Symmetric Alg is Coalg} (which is used to apply the fact that each map $\Sym{\Ocal_X(U)}\!(\rho_U)$ is a Hopf algebra morphism). Putting these observations together we get the lemma below and can immediately prove the resulting theorem.
\begin{lemma}\label{Lemma: Section Background Scheme: Counit and Unit as natural transforms}
Let $X$ be a scheme. Then there are natural transformations
\[
\begin{tikzcd}
	\QCoh(X) \ar[rr, bend left = 20, ""{name = U}]{}{\RelSym{\Ocal_X}} \ar[rr, swap, bend right = 20, ""{name = D}]{}{\cnst(\Ocal_X)} & & \QCoh(X,\CAlg{\Ocal_X})
	\ar[from = U, to = D, Rightarrow, shorten <= 4pt, shorten >= 4pt]{}{\epsilon}
\end{tikzcd}\quad
\begin{tikzcd}
	\QCoh(X) \ar[rr, bend left = 20, ""{name = U}]{}{\RelSym{\Ocal_X}} \ar[rr, swap, bend right = 20, ""{name = D}]{}{\RelSym{\Ocal_X}} & & \QCoh(X,\CAlg{\Ocal_X})
	\ar[from = U, to = D, Rightarrow, shorten <= 4pt, shorten >= 4pt]{}{S}
\end{tikzcd}
\]
whose $\Fscr$-components are the sheaf morphisms $\epsilon_{\Fscr}$ and $S_{\Fscr}$, respectively.
\end{lemma}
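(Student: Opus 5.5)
The strategy mirrors the treatment of $\nabla$ before Lemma \ref{Lemma: Section Background Scheme: Nabla as a natural comultiplication}. We first build presheaf-level versions $\epsilon^{\operatorname{pre}}_{\Fscr}$ and $S^{\operatorname{pre}}_{\Fscr}$ from the ring-level maps, then sheafify, and finally check naturality in $\Fscr$ sectionwise.

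\textbf{Step 1: the components are presheaf maps.} For each open $U$ set $(\epsilon^{\operatorname{pre}}_{\Fscr})_U := \epsilon_{\Fscr(U)}$ and $(S^{\operatorname{pre}}_{\Fscr})_U := S_{\Fscr(U)}$. To see that these are compatible with restriction along $V \subseteq U$, consider the restriction $\Sym{\Ocal_X(U)}(\Fscr(U)) \to \Sym{\Ocal_X(V)}(\Fscr(V))$. It factors as the map $F:\Sym{\Ocal_X(U)}(W_r\Fscr(V)) \to \Sym{\Ocal_X(V)}(\Fscr(V))$ attached to the ring map $r:\Ocal_X(U) \to \Ocal_X(V)$, precomposed with $\Sym{\Ocal_X(U)}$ of the module map $\Fscr(U) \to W_r(\Fscr(V))$. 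Proposition \ref{Prop: Section Background Alg: Strictness of Hopf for Restriction of Scalars} says $F$ is a Hopf algebra morphism. Proposition \ref{Prop: Section Background Alg: Symmetric Alg is Coalg} says $\Sym{\Ocal_X(U)}$ of a module map is a Hopf morphism. Hence the composite commutes with counits, where the target counit is followed by $r$, and with antipodes. These are exactly the two displayed squares preceding the lemma.

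\textbf{Step 2: sheafify.} Define $\epsilon_{\Fscr} := (\epsilon^{\operatorname{pre}}_{\Fscr})^{++}$ and $S_{\Fscr} := (S^{\operatorname{pre}}_{\Fscr})^{++}$, as in Definition \ref{Defn: Section Background Scheme: Nabla on QCoh}. The target of $\epsilon$ is $\Ocal_X$, which is already a sheaf. The target of $S$ is $\RelSym{\Ocal_X}(\Fscr)$, which is a quasi-coherent sheaf of algebras by the lemma after \cite{EGA2}. So both maps land in $\QCoh(X,\CAlg{\Ocal_X})$. Each is a map of sheaves of $\Ocal_X$-algebras because sectionwise the maps are algebra maps and sheafification preserves this structure.

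\textbf{Step 3: naturality.} Let $\varphi:\Fscr \to \Gscr$. Since sheafification is a functor, it suffices to check the naturality squares on presheaves, that is, for each open $U$:
\[
\epsilon_{\Gscr(U)} \circ \Sym{\Ocal_X(U)}(\varphi_U) = \epsilon_{\Fscr(U)}, \qquad S_{\Gscr(U)} \circ \Sym{\Ocal_X(U)}(\varphi_U) = \Sym{\Ocal_X(U)}(\varphi_U) \circ S_{\Fscr(U)}.
\]
Both equalities say that $\Sym{\Ocal_X(U)}(\varphi_U)$ is a Hopf algebra morphism, which is Proposition \ref{Prop: Section Background Alg: Symmetric Alg is Coalg}. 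They can also be checked directly on generators $x_m$: both sides of the first send $x_m$ to $0$, and both sides of the second send $x_m$ to $x_{-\varphi(m)}$.

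The only nontrivial point is Step 1, where one must make sure the restriction maps really factor through the map $F$ of Proposition \ref{Prop: Section Background Alg: Strictness of Hopf for Restriction of Scalars}. I would confirm this factorization by computing on generators $r x_m$, exactly as in the computation preceding that proposition.
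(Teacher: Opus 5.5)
Your proposal is correct and follows essentially the same route as the paper. Both define $\epsilon$ and $S$ sectionwise from the ring-level maps, use Proposition \ref{Prop: Section Background Alg: Strictness of Hopf for Restriction of Scalars} for compatibility with restrictions, and use Proposition \ref{Prop: Section Background Alg: Symmetric Alg is Coalg} (each $\Sym{\Ocal_X(U)}(\varphi_U)$ is a Hopf morphism) for naturality in $\Fscr$. The differences are minor. You spell out how the restriction map factors through $F$, and you sheafify, as the paper does for $\nabla$, whereas the paper simply declares the sectionwise maps to be the sheaf morphisms. Your extra sheafification step is the more careful choice, since $U \mapsto \Sym{\Ocal_X(U)}(\Fscr(U))$ need not itself be a sheaf.
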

\begin{Theorem}\label{Thm: Section Background Scheme: Hopf algebras for RelSym}
Let $X$ be a scheme. Then for any quasi-coherent sheaf $\Fscr$, the quadruple
\[
\left(\RelSym{\Ocal_X}(\Fscr), \nabla_{\Fscr}, \epsilon_{\Fscr}, S_{\Fscr}\right)
\]
is a cocommutative Hopf algebra in $\QCoh(X,\CAlg{\Ocal_X})$ and for any quasi-coherent sheaf map $\varphi:\Fscr \to \Gscr$ $\RelSym{\Ocal_X}(\varphi)$ is a morphism of Hopf algebras. In particular, there is an induced functor
\[
\widehat{\RelSym{\Ocal_X}}:\QCoh(X) \to \mathbf{Hopf}^{\operatorname{cocom}}(\QCoh(X))
\]
which sends quasi-coherent sheaves $\Fscr$ to the corresponding Hopf algebra on $\RelSym{\Ocal_X}(\Fscr)$ above and sends sheaf maps $\varphi$ to $\RelSym{\Ocal_X}(\varphi)$.
\end{Theorem}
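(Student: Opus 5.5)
The plan is to reduce every Hopf algebra axiom for $\left(\RelSym{\Ocal_X}(\Fscr), \nabla_{\Fscr}, \epsilon_{\Fscr}, S_{\Fscr}\right)$ to the corresponding axiom for $\left(\Sym{\Ocal_X(U)}(\Fscr(U)), \nabla_{\Fscr(U)}, \epsilon_{\Fscr(U)}, S_{\Fscr(U)}\right)$ over each open $U$, where it holds by Proposition \ref{Prop: Section Background Alg: Symmetric Alg is Coalg}. Each axiom (coassociativity, counitality, cocommutativity, the antipode identities $\mu \circ (\id \otimes S) \circ \nabla = \eta \circ \epsilon = \mu \circ (S \otimes \id) \circ \nabla$, and the requirement that $\nabla$, $\epsilon$ be algebra maps) is an equality of two morphisms of sheaves built from $\nabla_{\Fscr}$, $\epsilon_{\Fscr}$, $S_{\Fscr}$, the multiplication $\mu$, the unit $\eta$, the symmetry, and tensor products of these. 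Every such composite is the sheafification of the analogous presheaf composite built from $\nabla^{\operatorname{pre}}_{\Fscr}$, $\epsilon_{\Fscr}$ and $S_{\Fscr}$. This holds because:
\begin{itemize}
\item $\otimes_{\Ocal_X}$ of sheaves is the sheafification of the sectionwise tensor product;
\item sheafification $(-)^{++}$ is a functor;
\item sheafification is strong monoidal from presheaves of $\Ocal_X$-modules (with sectionwise tensor) to sheaves.
\end{itemize}
So I would first record this compatibility: $(\alpha \otimes^{\operatorname{pre}} \beta)^{++} \cong \alpha^{++} \otimes \beta^{++}$, naturally, with the canonical identifications matching the associators, unitors and symmetries.

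Next I check each axiom at the presheaf level, sectionwise. For fixed $U$, evaluating both sides at $U$ gives exactly the two sides of the axiom for the $\Ocal_X(U)$-Hopf algebra $\Sym{\Ocal_X(U)}(\Fscr(U))$, which agree by Proposition \ref{Prop: Section Background Alg: Symmetric Alg is Coalg}. The restriction maps are compatible with all the structure maps; this is the naturality in $U$ already established before Definition \ref{Defn: Section Background Scheme: Nabla on QCoh} and Lemma \ref{Lemma: Section Background Scheme: Counit and Unit as natural transforms}, via Proposition \ref{Prop: Section Background Alg: Strictness of Hopf for Restriction of Scalars}. Hence the presheaf equalities hold, and applying $(-)^{++}$ yields the sheaf equalities. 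For $\epsilon_{\Fscr}$ and $S_{\Fscr}$, which the paper defined sectionwise, I would note that $\RelSym{\Ocal_X}(\Fscr)$ is already a sheaf by the lemma citing \cite{EGA2}, so these sectionwise definitions are sheaf maps and agree with their own sheafifications. The object then lies in $\QCoh(X,\CAlg{\Ocal_X})$, since that lemma gives quasi-coherence.

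For morphisms, $\RelSym{\Ocal_X}(\varphi)$ commutes with the structure maps:
\begin{itemize}
\item compatibility with $\nabla$ is Lemma \ref{Lemma: Section Background Scheme: Nabla as a natural comultiplication};
\item compatibility with $\epsilon$ and $S$ is Lemma \ref{Lemma: Section Background Scheme: Counit and Unit as natural transforms}.
\end{itemize}
So $\RelSym{\Ocal_X}(\varphi)$ is a Hopf morphism. Functoriality of $\widehat{\RelSym{\Ocal_X}}$ is then inherited from that of $\RelSym{\Ocal_X}$, because the forgetful functor from Hopf algebras is faithful.

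The main obstacle is the first step: making precise that sheafification intertwines the sectionwise tensor structure with $\otimes_{\Ocal_X}$ coherently, so that, for example, $(\nabla \otimes \id) \circ \nabla$ on sheaves really is $((\nabla^{\operatorname{pre}} \otimes^{\operatorname{pre}} \id) \circ \nabla^{\operatorname{pre}})^{++}$ under the canonical isomorphisms. My first attempt would use the universal property of sheafification, namely that maps out of $P^{++}$ into a sheaf correspond to maps out of $P$. With it, I would check each equality after precomposing with the unit $\RelSym{\Ocal_X}(\Fscr) \to \cdots$ and the comparison maps $P \otimes^{\operatorname{pre}} Q \to (P \otimes^{\operatorname{pre}} Q)^{++}$. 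Equalities then only need to be verified on presheaf sections, where they hold by the ring-level result.
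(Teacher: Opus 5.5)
Your proposal follows the paper's proof. The paper also obtains the Hopf axioms by sheafifying the sectionwise cocommutative Hopf algebras $\Sym{\Ocal_X(U)}(\Fscr(U))$ of Proposition~\ref{Prop: Section Background Alg: Symmetric Alg is Coalg}, and takes the morphism and functoriality claims from Lemmas~\ref{Lemma: Section Background Scheme: Nabla as a natural comultiplication} and~\ref{Lemma: Section Background Scheme: Counit and Unit as natural transforms}, so your use of the strong monoidality of sheafification only spells out what the paper compresses into the word ``sheafifying''. One correction: despite the paper's phrasing of the definition, the sectionwise presheaf $U \mapsto \Sym{\Ocal_X(U)}(\Fscr(U))$ is not a sheaf in general (on $\mathbb{P}^1_k$ with $\Fscr = \Ocal_{\mathbb{P}^1_k}(2)$, writing $A,B,C$ for $s^2, st, t^2$ in $\Sym{k}(\Gamma(\Fscr))$, the nonzero global section $AC-B^2$ restricts to zero on both standard affine opens), so instead of asserting that the sectionwise $\epsilon_{\Fscr}$ and $S_{\Fscr}$ are already sheaf maps you should sheafify them exactly as you do $\nabla^{\operatorname{pre}}_{\Fscr}$, after which your argument covers all structure maps uniformly.
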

\begin{proof}
That each given quadruple is a Hopf algebra follows from (sheafifying) the result that for all opens $U \subseteq \lvert X \rvert$, each quadruple
\[
\left(\Sym{\Ocal_X(U)}\!\big(\Fscr(U)\big), (\nabla_{\Fscr}^{\operatorname{pre}})_U, \epsilon_{\Fscr(U)}, S_{\Fscr(U)}\right)
\]
is a cocommutative Hopf algebra in $\Mod{\Ocal_X(U)}$ by Proposition \ref{Prop: Section Background Alg: Symmetric Alg is Coalg}. Finally that this assignment is functorial is immediate from the naturality of $\nabla, \epsilon,$ and $S$ implied by Lemmas \ref{Lemma: Section Background Scheme: Nabla as a natural comultiplication} and \ref{Lemma: Section Background Scheme: Counit and Unit as natural transforms}.
\end{proof}

In order to apply it in the section below we conclude that the opposite of all functors above takes values in the category of Abelian group objects internal to $\QCoh(X,\CAlg{\Ocal_X})^{\op}$.
\begin{corollary}\label{Cor: Section Background Scheme: Hopf algebra result in opposite land}
For any scheme $X$, the opposite of the relative symmetric algebra functor extends to a functor:
\[
\left(\widehat{\RelSym{\Ocal_X}}\right)^{\op}:\QCoh(X)^{\op} \to \Ab\left(\QCoh(X,\CAlg{\Ocal_X})^{\op}\right)
\]
\end{corollary}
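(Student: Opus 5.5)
The plan is to deduce the corollary formally from Theorem \ref{Thm: Section Background Scheme: Hopf algebras for RelSym}, exactly as Corollary \ref{Cor: Section Background Alg: Sym as a functor into CMon of CAlg op} was deduced from Proposition \ref{Prop: Section Background Alg: Symmetric Alg is Coalg}. First, I would record that the tensor product $\otimes_{\Ocal_X}$ is the coproduct in $\QCoh(X,\CAlg{\Ocal_X})$, with $\Ocal_X$ the initial object. Locally, on sections over each open $U$, the tensor of $\Ocal_X(U)$-algebras is the pushout over $\Ocal_X(U)$. After sheafification this universal property persists, because sheafification is a left adjoint and the sheaf tensor product is defined as the sheafification of the presheaf one. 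Quasi-coherence is preserved because on affines this reduces to $\widetilde{A \otimes_R B} \cong \widetilde{A} \otimes \widetilde{B}$.

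Consequently, in $\QCoh(X,\CAlg{\Ocal_X})^{\op}$ the monoidal structure $\otimes_{\Ocal_X}$ becomes the cartesian product and $\Ocal_X$ becomes the terminal object. A cocommutative Hopf algebra $(\Ascr,\nabla,\epsilon,S)$ in $(\QCoh(X,\CAlg{\Ocal_X}),\otimes_{\Ocal_X},\Ocal_X)$ then corresponds to a commutative group object in the opposite category. Under this correspondence:
\begin{itemize}
\item the multiplication is $\nabla^{\op}:\Ascr\times\Ascr\to\Ascr$;
\item the unit is $\epsilon^{\op}$;
\item the inverse is $S^{\op}$.
\end{itemize}
The Hopf axioms (coassociativity, counitality, antipode identity, cocommutativity) dualize precisely to the group axioms and commutativity. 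Here one uses that the multiplication $\mu$ of the algebra is the codiagonal of the coproduct, so that $\mu\circ(\id\otimes S)\circ\nabla = \eta\circ\epsilon$ becomes the inverse law. Similarly, Hopf algebra morphisms dualize to group homomorphisms. This gives an isomorphism of categories $\mathbf{Hopf}^{\operatorname{cocom}}(\QCoh(X,\CAlg{\Ocal_X}))^{\op}\cong \Ab(\QCoh(X,\CAlg{\Ocal_X})^{\op})$.

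Composing $(\widehat{\RelSym{\Ocal_X}})^{\op}$ from Theorem \ref{Thm: Section Background Scheme: Hopf algebras for RelSym} with this isomorphism yields the required functor. Its underlying object is $\RelSym{\Ocal_X}(\Fscr)$, and it sends $\varphi^{\op}$ to $\RelSym{\Ocal_X}(\varphi)^{\op}$.

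The main obstacle is the first step. One must check that the sheafified tensor product really is the coproduct of quasi-coherent algebras, and that the Hopf algebras of the theorem live in this cocartesian structure rather than merely in $\QCoh(X)$. I would handle this by checking the universal property on an affine cover and gluing via Theorem \ref{Thm: Section Background Scheme: QCoh is a pseudolimit}, or directly by verifying it sectionwise on the presheaf level and then sheafifying.
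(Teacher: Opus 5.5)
Your proposal is correct and is the same argument as the paper's, which simply invokes the ``formal calculus of taking opposites'' of the functor in Theorem \ref{Thm: Section Background Scheme: Hopf algebras for RelSym}. You additionally make explicit that cocommutative Hopf algebras in the cocartesian category $(\QCoh(X,\CAlg{\Ocal_X}),\otimes_{\Ocal_X},\Ocal_X)$ are exactly abelian group objects in its opposite. The step you flag as the main obstacle is in fact purely formal: with the paper's definition of $\CAlg{\Ocal_X}$ as commutative monoids in $(\Mod{\Ocal_X},\otimes_{\Ocal_X},\Ocal_X)$, the tensor product is the coproduct of commutative monoids in any symmetric monoidal category, and $\QCoh(X,\CAlg{\Ocal_X})$ is a full subcategory closed under $\otimes_{\Ocal_X}$ and containing $\Ocal_X$.
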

\begin{proof}
This is immediate from applying the formal calculus of taking opposites of the functor appearing in Theorem \ref{Thm: Section Background Scheme: Hopf algebras for RelSym}.
\end{proof}

\subsection{The Relative Spectrum Functor and the Category of Affine Structure Morphisms}\label{Subsection: Relative Spectrum}

As we will see below, one of the more important tools which appears in the presentation of the tangent category $\Sch_{/S}$ lies in the application of the relative spectrum functor $\RelSpec{X}:\QCoh(X,\CAlg{\Ocal_X}) \to \Sch_{/X}$ and the fact that it is one-half of an equivalence of categories with the subcategory $\mathbf{Aff}_{/X}$ of $\Sch_{/X}$ of schemes $q:Y \to X$ for which $q$ is an affine morphism. On one hand, it is possible to first define the functor $\RelSpec{X}$ and then \emph{define} affine structure morphisms by way of said functor. However, this is not practical. It is more practical to have a structural characterization of said maps and then prove that they arise as the structure maps of the essential image of the relative spectrum functor.

In view of taking the second approach, we recall the definition of affine structure morphisms and then what will turn out to be the main structural characterization of said maps. We will then describe the relative spectrum functor and show that our structural characterization is indeed exactly that.
\begin{definition}[{\cite[D{\'e}finition 1.2.1]{EGA2}}]\label{Defn: Scheme Background: Affine Morphism}
Let $S$ be a scheme and let $f:X \to S$ be a scheme over $S$. We say that the structure map $f$ is \emph{affine (over $S$)} if there exists an open affine cover $\lbrace \iota_i:S_i \to S \; | \; i \in I \rbrace$ cover of $S$ for which for each $i \in I$ the subscheme $S_i \times_S X \to X$ is an affine open subscheme of $X$.
\end{definition}

A useful fact about affine morphisms of schemes is that if $f:X \to \Spec A$ is an affine morphism, then $X$ is necessarily an affine scheme as well.
\begin{proposition}\label{Prop: Affines over affines are affine}
	Let $f:X \to \Spec A$ be an affine morphism. Then $X$ is an affine scheme as well.
\end{proposition}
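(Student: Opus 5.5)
The plan is to show that the canonical morphism $c:X \to \Spec B$ is an isomorphism, where $B := \Ocal_X(\lvert X \rvert)$ and $c$ corresponds to $\id_B$ under the adjunction between $\Gamma$ and $\Spec$. The proof runs through the usual ``affine-locally affine plus unit ideal'' criterion. Write $\varphi := f^{\sharp}:A \to B$ for the induced map on global sections. For $g \in A$ write $X_g := f^{-1}(D(g))$; this is the open locus of $X$ where $\varphi(g)$ does not vanish.

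First I refine the cover in Definition \ref{Defn: Scheme Background: Affine Morphism}. Let $\lbrace S_i \rbrace$ be an affine open cover of $\Spec A$ with each $f^{-1}(S_i) \cong \Spec C_i$ affine. Each $S_i$ is a union of basic opens $D(g)$ with $g \in A$. For such a $D(g) \subseteq S_i$, the open $D(g)$ is also the basic open of $S_i$ defined by the restriction of $g$. Hence $X_g = f^{-1}(D(g))$ is the basic open $D(f^{\sharp}(g)|) \subseteq \Spec C_i$, which is affine. Since $\lvert \Spec A \rvert$ is quasi-compact, I can extract finitely many $g_1,\dots,g_n \in A$ with $\bigcup_k D(g_k) = \Spec A$ and every $X_{g_k}$ affine. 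Then $(g_1,\dots,g_n) = A$, so the elements $b_k := \varphi(g_k)$ generate the unit ideal of $B$. Moreover, the intersections $X_{g_k} \cap X_{g_l} = X_{g_kg_l}$ are basic opens in the affine scheme $X_{g_k}$, hence affine. So $X$ is quasi-compact and quasi-separated, with an explicit finite affine cover whose pairwise intersections are affine.

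Next comes the key computation: $\Ocal_X(X_{g_k}) \cong B_{b_k}$ via the restriction map. I write the sheaf equalizer
\[
0 \to B \to \prod_{k} \Ocal_X(X_{g_k}) \rightrightarrows \prod_{k,l} \Ocal_X(X_{g_kg_l})
\]
and localize it at $b := b_m$. Localization is exact, and the products are finite, so localization commutes with them. Inside each affine $X_{g_k}$, and inside each $X_{g_kg_l}$, localizing the global sections at $b$ gives the sections over the corresponding basic open where $b$ is invertible. Therefore the localized sequence is the sheaf equalizer for the cover $\lbrace X_{g_k} \cap X_{g_m} \rbrace$ of $X_{g_m}$. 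This gives $B_{b_m} \cong \Ocal_X(X_{g_m})$. I expect this to be the main obstacle, because it is exactly where finiteness of the cover and affineness of the intersections (the quasi-compact quasi-separated input) are used. It also requires care that the identifications are compatible with the restriction maps.

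Finally I glue. Since the $b_k$ generate the unit ideal, the basic opens $D(b_k)$ cover $\Spec B$. By naturality of $c$, we have $c^{-1}(D(b_k)) = X_{g_k}$, and the restriction $X_{g_k} \to D(b_k) = \Spec B_{b_k}$ is the canonical map of an affine scheme to the spectrum of its global sections, using the identification of the previous step. It is therefore an isomorphism. A morphism of schemes that restricts to isomorphisms over the members of an open cover of its target is an isomorphism. Hence $c$ is an isomorphism, and $X \cong \Spec B$ is affine.
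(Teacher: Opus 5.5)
Your proof is correct, but it takes a genuinely different and more substantive route than the paper. The paper argues in one line: $\lbrace \Spec A \rbrace$ is itself an affine open cover of $\Spec A$ and $X \cong \Spec A \times_{\Spec A} X$, so $X$ is affine. That step applies the defining property of an affine morphism to the particular cover $\lbrace \Spec A \rbrace$. However, Definition \ref{Defn: Scheme Background: Affine Morphism} only guarantees \emph{some} affine open cover $\lbrace S_i \rbrace$ with affine preimages. The fact that every affine cover works is the content of Proposition \ref{Prop: Section Background Scheme: Affine iff affine for all covers}, and the paper deduces that proposition from the present statement. So the paper's argument is circular as written, unless one adopts the stronger ``every affine open'' definition or imports cover-independence from EGA~II.

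Your argument works directly from the existential definition. Quasi-compactness of $\Spec A$ lets you refine $\lbrace S_i \rbrace$ to finitely many $D(g_k)$ whose preimages are basic opens of the affine schemes $f^{-1}(S_i)$. Hence the elements $b_k = \varphi(g_k)$ generate the unit ideal of $B = \Gamma(X,\Ocal_X)$, and all $X_{g_k}$ and $X_{g_kg_l}$ are affine. Localizing the finite sheaf equalizer gives the key identification $\Ocal_X(X_{g_m}) \cong B_{b_m}$, and the canonical map $X \to \Spec B$ is then an isomorphism over each $D(b_k)$. This is the standard affineness criterion \cite[Exercise II.2.17]{Hartshorne}.

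The paper's route buys brevity, but only under a stronger definition than the one it states. Yours buys a self-contained proof from the definition as actually stated. Combined with the usual fact that an affine open $U$ of the base can be covered by opens that are simultaneously basic in $U$ and in some $S_i$, it also yields Proposition \ref{Prop: Section Background Scheme: Affine iff affine for all covers} without circularity.
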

\begin{proof}
	Because $\Spec A$ is affine, the singleton cover $\lbrace\id_A:\Spec A \to \Spec A\rbrace$ is an affine open cover of $\Spec A$. But then since
	\[
	\begin{tikzcd}
		X \ar[r, equals] \ar[d, swap]{}{f} & X\ar[d]{}{f} \\
		\Spec A \ar[r, equals] & \Spec A
	\end{tikzcd}
	\]
	is a pullback, we see that $X \cong \Spec A \times_A X$ is affine as well.
\end{proof}

We now move on to define the category of $S$-schemes with affine structure map $f:X\to S$. Afterwards, we will give an example of a scheme with affine structure map.

\begin{definition}
	Let $S$ be a scheme. The category $\mathbf{Aff}_{/S}$ of \emph{schemes affine over $S$} is the full subcategory of $\Sch_{/S}$ generated by $S$-schemes $f:X \to S$ for which $f$ is affine.
\end{definition}

\begin{example}\label{Example: Section QCoh: Reduction of a scheme}
Let $X$ be a scheme. Define the scheme $X_{\red}$ as follows: the underlying space of $X_{\red}$ is $\lvert X \rvert$, and the structure sheaf of $X_{\red}$ is defined by declaring, for $U \subseteq \lvert X \rvert$ open,
\[
\Ocal_{X_{\red}}(U) := \frac{\Ocal_X(U)}{\mathsf{nil}(\Ocal_X(U))} = \frac{\Ocal_X(U)}{\sqrt{(0)}}.
\]
That is $\Ocal_{X_{\red}}(U)$ is the reduction of the ring $\Ocal_X(U)$, i.e., is the algebra obtained by quotienting at nilpotents. Then the map
\[
\epsilon_X:X_{\red} \to X
\]
given by defining the underlying topological morphism to be given by $\lvert \epsilon_X \rvert := \id_{\lvert X \rvert}$ and taking the sheaf map to be given by locally taking the quotient of $\Ocal_X(U)$ at its nilradical:
\begin{prooftree}
	\AxiomC{$\epsilon^{\sharp}:\Ocal_X \to \Ocal_{X_{\red}}$}
	\UnaryInfC{$\forall\,U\subseteq \lvert X \rvert\,\text{open}.\,\epsilon^{\sharp}_U := \pi_{\mathsf{nil}}:\Ocal_X(U) \to \Ocal_X(U)/\mathsf{nil}(\Ocal_X(U))$}
\end{prooftree}
The $\epsilon_X:X_{\red} \to X$ is an affine morphism of schemes. The scheme $X_{\red}$ is called the \emph{reduction} of $X$.
\end{example}
\begin{remark}\label{Remark: Section QCoh: Reduction functor}
For any fixed base scheme $S$, because the reduction $X_{\red}$ is a subscheme of $X$ via the map $\epsilon_X$, $X_{\red}$ is an $S$-scheme as well. The operation $X \mapsto X_{\red}$ defines a functor
\[
(-)_{\red}:\Sch_{/S} \to \Sch_{/S}
\]
on objects which is idempotent in the sense that $(X_{\red})_{\red} \cong X_{\red}$ for all schemes $X$. When we define the category $\rSch_{/S}$ of reduced $S$-schemes to be the category of $S$-schemes for which the map $\epsilon_X:X_{\red} \to X$ is an isomorphism, we obtain an adjuction
	\[
	\begin{tikzcd}
	\Sch_{/S} \ar[rr, bend right = 30, ""{name = R}, swap]{}{\red} & & \rSch_{/S} \ar[ll, bend right = 30, ""{name = L}, swap]{}{\operatorname{incl}}
	\ar[from = L, to = R, symbol = \dashv]
	\end{tikzcd}
	\]
which recognizes the category $\rSch_{/S}$ of reduced $S$-schemes as a coreflective subcategory of $S$-schemes. This functor is important because when $S = \Spec K$ for a field $K$, the category of $K$-varieties is a full subcategory of $\rSch_{/K}$. In fact, as varieties are reduced separated schemes of finite type over a field $K$, some basic calculations showing that taking the reduction of a separated and finite type\footnote{A morphism $f:X \to S$ of schemes if finite type if, following \cite[D{\'e}finition 6.3.1]{EGA1}, given an affine open cover $\lbrace V_i \; | \; i \in I \rbrace$ of $S$, each pullback $V_i \times_S X$ is covered by finitely many affine open subschemes $U_{ij}$ of $X$ for which $\Ocal_X(U_{ij})$ is a finite type $\Ocal_Y(V_i)$-algebra. We will not be using this seriously in this paper.} morphism $f:X \to \Spec K$ remains separated and finite type\footnote{The fact that if $f$ is separated so too is $f_{\red}$ is \cite[Proposition 5.5.1.vi]{EGA1}, while the fact that if $f$ is finite type so too is $f_{\red}$ is \cite[Proposition 6.3.4.(vi)]{EGA1}.} shows that we can restrict the reduction functor to the category of finite type separated $K$-schemes to get a coreflection:
\[
\begin{tikzcd}
\Sch_{/K}^{\operatorname{sep,f.t.}} \ar[rr, bend right = 20, ""{name = R}, swap]{}{\red} & & \mathbf{Var}_{/K} \ar[ll, bend right = 20, ""{name = L}, swap]{}{\operatorname{incl}}
\ar[from = L, to = R, symbol = \dashv]
\end{tikzcd}
\]
We will see the definition of separated morphisms in Section \ref{Subsection: DBun as invariant} below when we examine the category of quasi-separated schemes.
%
\end{remark}

A potentially surprising, but important, result regarding affine morphisms is that the pullback of \emph{all} affine open covers remains affine. The trick here is to ultimately use the observation that affine morphisms over affine schemes have affine domain.

\begin{proposition}[{cf.\@ \cite[Proposition 1.2.5]{EGA2}}]\label{Prop: Section Background Scheme: Affine iff affine for all covers}
	Let $f:X \to S$ be a morphism of schemes. Then $f$ is affine if and only if for all open affine covers $\lbrace U_i \; | \; i \in I \rbrace$ of $S$, $\lbrace f^{-1}(U_i) \; | \; i  \in I \rbrace$ is an affine open cover of $X$.
\end{proposition}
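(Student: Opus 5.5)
The reverse implication is immediate. If for every affine open cover $\lbrace U_i \rbrace$ of $S$ the preimages $f^{-1}(U_i) = U_i \times_S X$ are affine, then we choose any affine open cover of $S$; one exists because $S$ is a scheme. This satisfies Definition \ref{Defn: Scheme Background: Affine Morphism}, so $f$ is affine. All the content is in the forward direction. There we reduce to a purely local statement: \emph{if $V \subseteq S$ is an affine open, then $f^{-1}(V)$ is affine.} Given this, any affine open cover of $S$ pulls back to an affine open cover of $X$, since the $f^{-1}(U_i)$ are open and cover $X$.

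To prove the local statement, fix the affine cover $\lbrace S_i \rbrace$ witnessing affineness, with each $X_i := f^{-1}(S_i)$ affine, and let $V = \Spec A$ be an affine open of $S$. Write $B := \Ocal_X(f^{-1}(V))$; there is a canonical morphism $g : f^{-1}(V) \to \Spec B$ over $\Spec A$. The goal is to show that $g$ is an isomorphism. First we cover $V$ by opens $D(a) \subseteq V$, $a \in A$, such that each $D(a)$ is also a basic open of some $S_i$. This is possible because in a scheme the intersection $V \cap S_i$ is covered by opens that are simultaneously basic in both affines. For such an $a$, the open $f^{-1}(D(a))$ is a basic open $D(\bar a)$ of the affine scheme $X_i$, where $\bar a$ denotes the image of the corresponding element of $\Ocal_S(S_i)$ in $\Ocal_X(X_i)$. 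It is therefore affine. Since $V$ is quasi-compact, finitely many such $a_1,\dots,a_n$ suffice, and they generate the unit ideal of $A$.

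We then apply the standard affineness criterion (EGA I, 1.1.10 / Hartshorne Ex.\@ II.2.17): a scheme $Y$ is affine provided there are $b_1,\dots,b_n \in \Ocal_Y(Y)$ generating the unit ideal such that each $Y_{b_k}$ is affine. Take $Y = f^{-1}(V)$ and $b_k$ the images of the $a_k$ in $B$. Then $Y_{b_k} = f^{-1}(D(a_k))$ is affine, and the $b_k$ generate $(1)$ because the $a_k$ do. Equivalently, and more in the spirit of the paper, we can argue directly. Since $Y$ is quasi-compact and quasi-separated (it is a finite union of affines with affine pairwise intersections), we have $\Ocal_Y(Y)_{b_k} \cong \Ocal_Y(Y_{b_k})$. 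So $g$ restricts to isomorphisms $Y_{b_k} \to D(b_k)$, and these glue to show that $g$ is an isomorphism.

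The main obstacle is this last step, specifically the localization identity $\Ocal_Y(Y)_{b} \cong \Ocal_Y(Y_b)$. This needs $Y$ to be quasi-compact and quasi-separated. Quasi-separatedness follows because $f^{-1}(D(a_k)) \cap f^{-1}(D(a_l)) = f^{-1}(D(a_ka_l))$ is again a basic open in an affine, hence affine. The rest is bookkeeping with covers. Proposition \ref{Prop: Affines over affines are affine} is the special case $V = S$ affine, and we can use it as a sanity check of the argument.
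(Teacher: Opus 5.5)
Your proof is correct, but it is organized differently from the paper's. The paper handles the forward direction in two moves. First, it cites \cite[Proposition 1.2.5]{EGA2} for the fact that $U \times_S X \to U$ is again affine for every affine open $U \subseteq S$. Second, it applies Proposition \ref{Prop: Affines over affines are affine}: an affine morphism into an affine scheme has affine source.

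You instead prove directly that $f^{-1}(V)$ is affine for an arbitrary affine open $V$. You refine $V$ into finitely many opens $D(a_k)$ that are basic both in $V$ and in some witnessing $S_i$. You observe that each $f^{-1}(D(a_k))$ is a basic open of an affine $X_i$. You then finish with the affineness criterion, or equivalently with $\Ocal_Y(Y)_b \cong \Ocal_Y(Y_b)$ on the quasi-compact, quasi-separated scheme $Y = f^{-1}(V)$.

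The paper's version is shorter, but its content really lives in the external citation. Its proof of Proposition \ref{Prop: Affines over affines are affine} applies Definition \ref{Defn: Scheme Background: Affine Morphism} to the singleton cover $\lbrace \Spec A \rbrace$. That step is justified only if one already knows that every affine open cover of the base witnesses affineness, which is precisely the statement under review.

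Your argument is elementary and self-contained apart from a standard criterion. Its special case $V = S = \Spec A$ gives a genuine proof of Proposition \ref{Prop: Affines over affines are affine}. So that proposition is best regarded as a corollary of your argument rather than as a sanity check for it.
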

\begin{proof}
	The proposition in the reference above, \cite[Proposition 1.2.5]{EGA2}, shows that if $f:X \to S$ is affine and if $\gamma:U \to S$ is an affine open then the pullback projection $U \times_S X \to U$ is affine over $U$. Because the only affine morphisms over affine schemes are maps of affine schemes by Proposition \ref{Prop: Affines over affines are affine}, the result follows.
\end{proof}

\begin{lemma}\label{Lemma: Background Scheme: Affine morphisms stable under composition}
	If $f:X \to Y$ and $g:Y \to Z$ are both affine morphisms then so too is $g \circ f$.
\end{lemma}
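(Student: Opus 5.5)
The plan is to use the characterization in Proposition \ref{Prop: Section Background Scheme: Affine iff affine for all covers}: $f$ is affine exactly when, for \emph{every} affine open cover of its target, the preimages of the cover members are affine. Together with Proposition \ref{Prop: Affines over affines are affine}, this reduces everything to preimages of affine opens.

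Fix an affine open cover $\lbrace W_i \; | \; i \in I\rbrace$ of $Z$. Since $g$ is affine, Proposition \ref{Prop: Section Background Scheme: Affine iff affine for all covers} shows that each $g^{-1}(W_i) = W_i \times_Z Y$ is an affine open subscheme of $Y$. Moreover, $\lbrace g^{-1}(W_i)\; | \; i \in I\rbrace$ is an affine open cover of $Y$.

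Since $f$ is affine, the same proposition applied to this cover of $Y$ shows that each $f^{-1}(g^{-1}(W_i))$ is affine. It remains to identify the preimages for $g \circ f$. On the one hand, $(g\circ f)^{-1}(W_i) = f^{-1}(g^{-1}(W_i))$ as open subsets. Scheme-theoretically, the pasting lemma for pullbacks gives
\[
W_i \times_Z X \cong \left(W_i \times_Z Y\right) \times_Y X,
\]
which matches the open-subscheme structure. Hence each $(g\circ f)^{-1}(W_i)$ is an affine open subscheme of $X$, and these cover $X$.

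This is precisely the condition in Definition \ref{Defn: Scheme Background: Affine Morphism} for $g \circ f$ to be affine over $Z$, which completes the argument. The only point requiring care is the step where the cover of $Y$ produced from $g$ must be an arbitrary affine cover to which $f$'s affineness applies. This is exactly why the "for all covers" direction of Proposition \ref{Prop: Section Background Scheme: Affine iff affine for all covers} is needed, rather than the single cover supplied by the definition. The pullback-pasting identification of preimages is routine.
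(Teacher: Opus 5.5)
Your proposal is correct and is essentially the paper's proof: fix an affine open cover of $Z$, apply the characterization of affine morphisms in terms of arbitrary affine open covers first to $g$ and then to $f$ on the resulting cover of $Y$, and identify $f^{-1}(g^{-1}(W_i))$ with $(g \circ f)^{-1}(W_i)$. The only difference is that you spell out the pullback-pasting justification of that last identification, which the paper leaves implicit.
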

\begin{proof}
Fix an affine open cover $\lbrace U_i \; | \; i \in I \rbrace$ of $Z$ and observe that since $g$ is affine, the cover $\lbrace g^{-1}(U_i) \; | \; i \in I\rbrace$ is an affine open cover of $Y$ by Proposition \ref{Prop: Section Background Scheme: Affine iff affine for all covers}. But then using that $f$ is affine and appealing again to Proposition \ref{Prop: Section Background Scheme: Affine iff affine for all covers} gives that the open cover
\[
\lbrace f^{-1}(g^{-1}U_i) \; | \; i \in I \rbrace \cong \lbrace (g \circ f)^{-1}(U_i) \; | \; i \in I \rbrace
\]
is an affine open cover of $X$. Thus $g \circ f$ is affine.
\end{proof}

One of the most important properties of affine morphisms is that the sheaf $f_{\ast}(\Ocal_X)$ is a quasi-coherent $S$-algebra. We will use this in giving a characterization of affine $S$-schemes by using it to compare $\mathbf{Aff}_{/S}$ with the category $\QCoh(S,\CAlg{\Ocal_S})$.

\begin{proposition}
	[{\cite[Proposition 1.2.6]{EGA2}}]
Let $f:X \to S$ be an affine morphism of schemes. Then $f_{\ast}(\Ocal_X)$ is a quasi-coherent $\Ocal_S$-algebra.
\end{proposition}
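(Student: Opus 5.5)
We have to show that if $f:X \to S$ is affine, then $f_{\ast}(\Ocal_X)$ is quasi-coherent, and that it carries an $\Ocal_S$-algebra structure; the algebra structure is the easy part. Since $f^{\sharp}:\Ocal_S \to f_{\ast}\Ocal_X$ is a morphism of sheaves of rings, each $(f_{\ast}\Ocal_X)(V) = \Ocal_X(f^{-1}V)$ is an $\Ocal_S(V)$-algebra. This structure is compatible with restriction, so $f_{\ast}\Ocal_X$ is a commutative monoid in $(\Mod{\Ocal_S},\otimes_{\Ocal_S},\Ocal_S)$, as in Definition \ref{Defn: Section Background Scheme: QCoh Algebras Defn Internal}. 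By Definition \ref{Defn: Section Background Scheme: QCoh Algebras} it then suffices to prove that $\RelUnd{\Ocal_S}(f_{\ast}\Ocal_X)$ is quasi-coherent.

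\textbf{Reduction to the affine case.} Quasi-coherence is local on $S$, so we work on an open affine cover. Fix any affine open cover $\lbrace V_i = \Spec A_i \rbrace$ of $S$. By Proposition \ref{Prop: Section Background Scheme: Affine iff affine for all covers}, each $f^{-1}(V_i)$ is affine, say $\Spec B_i$, and the restriction $f_i : f^{-1}(V_i) \to V_i$ corresponds to a ring map $A_i \to B_i$. Moreover,
\[
(f_{\ast}\Ocal_X)|_{V_i} = (f_i)_{\ast}\Ocal_{f^{-1}(V_i)},
\]
because pushforward commutes with restriction to open subsets of the target. By Theorem \ref{Thm: Section Background Scheme: Checking QCoh affine locally} (or directly from the local definition of quasi-coherence), it now suffices to show $(f_i)_{\ast}\Ocal_{\Spec B_i} \cong \widetilde{B_i}$, where $B_i$ is regarded as an $A_i$-module via $A_i \to B_i$.

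\textbf{The affine computation.} Let $\varphi:A \to B$ be a ring map and $g = \Spec\varphi$. Both $g_{\ast}\Ocal_B$ and $\widetilde{\res_\varphi B}$ are sheaves, so we compare them on the basis of distinguished opens $D(a)$, $a \in A$ (see the footnote on bases after Line (\ref{Eqn: Tilde functor for ring modules})). We have $g^{-1}(D(a)) = D(\varphi(a)) \subseteq \Spec B$, which gives
\[
(g_{\ast}\Ocal_B)(D(a)) = B_{\varphi(a)} \cong B \otimes_A A_a = \widetilde{B}(D(a)).
\]
These isomorphisms are compatible with the restriction maps $D(a') \subseteq D(a)$, since both sides are the canonical localization maps. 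They therefore glue to an isomorphism of sheaves of $\Ocal_A$-modules, and in fact of $\Ocal_A$-algebras.

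\textbf{Main obstacle.} The only step requiring care is the naturality check in the affine computation: the identification $B_{\varphi(a)} \cong B\otimes_A A_a$ must commute with restrictions, and this compatibility must hold for distinguished opens that are inclusions $D(a') \subseteq D(a)$ without $a'$ being a multiple of $a$. We handle this by using the universal property of localization: $a$ is invertible in $A_{a'}$, so there are unique canonical maps $A_a \to A_{a'}$ and $B_{\varphi(a)} \to B_{\varphi(a')}$. By uniqueness of maps out of a localization, the two restriction routes through the isomorphism agree, so the square commutes.
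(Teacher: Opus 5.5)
Your proof is correct. The paper gives no argument of its own for this statement; it only cites EGA~II, Proposition~1.2.6. Your route is the standard proof of that fact:
\begin{itemize}
\item restrict to an affine open $V=\Spec A\subseteq S$ whose preimage $\Spec B$ is affine;
\item identify $(\Spec\varphi)_{\ast}\Ocal_{\Spec B}$ with $\widetilde{B}$, viewed as an $A$-module, by comparing sections on distinguished opens, using $(\Spec\varphi)^{-1}(D(a))=D(\varphi(a))$ and $B_{\varphi(a)}\cong B\otimes_A A_a$.
\end{itemize}
Your treatment of the restriction maps for arbitrary $D(a')\subseteq D(a)$, via the universal property of localization, is exactly the point that needs checking.

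Two small remarks. First, the cover supplied by the definition of an affine morphism already has affine preimages, so you do not need the ``all affine covers'' proposition. Second, the only alternative available inside the paper is its earlier proposition that $f_{\ast}$ preserves quasi-coherence for quasi-compact quasi-separated $f$, since affine morphisms are quasi-compact and separated. That route is more general, but the paper only sketches it by appeal to EGA~I. Your computation is self-contained, and it works verbatim to give $(\Spec\varphi)_{\ast}\widetilde{N}\cong\widetilde{N}$ (as an $A$-module) for every $B$-module $N$.
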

As stated above, we can sharpen the proposition above into give a \emph{functorial} and fully faithful assignment of affine schemes to the category of quasi-coherent sheaves of $\Ocal_S$-algebras provided we recognize that this functor must \emph{reverse} the direction of either the maps of schemes or the maps of quasi-coherent sheaves.
\begin{proposition}[{\cite[Proposition 1.2.7]{EGA2}}]\label{Prop: Section Scheme Background: Affine morphisms in terms of algebra}
For any scheme $S$ and any schemes $f:X \to S$ and $g:Y \to S$ in $\mathbf{Aff}_{/S}$, there is a natural isomorphism
\[
\mathbf{Aff}_{/S}(X,Y) \cong \QCoh\left(S,\CAlg{\Ocal_S}\right)\big(g_{\ast}(\Ocal_Y), f_{\ast}(\Ocal_X)\big)
\]
In particular, up to the Axiom of Choice, the object assignment $X \mapsto f_{\ast}(\Ocal_X)$ induces a functor 
\[
\left((-) \mapsto (\nu_{(-)})_{\ast}\Ocal_{(-)}\right):\mathbf{Aff}_{/S} \to \QCoh(S,\CAlg{\Ocal_S})^{\op}.
\]
Furthermore, this functor is fully faithful.
\end{proposition}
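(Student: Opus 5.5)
The plan is to reduce to the affine case with Theorem \ref{Thm: Section Background Scheme: QCoh on affine is just modules} and then glue using the pseudolimit description of Theorem \ref{Thm: Section Background Scheme: QCoh is a pseudolimit}, or rather its algebra-valued analogue.

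First, functoriality. Given an $S$-morphism $h:X \to Y$ with $g\circ h = f$, we have $h^{\sharp}:\Ocal_Y \to h_{\ast}\Ocal_X$, a map of sheaves of $\Ocal_Y$-algebras. Pushing forward along $g$ gives
\[
g_{\ast}(h^{\sharp}):g_{\ast}\Ocal_Y \to g_{\ast}h_{\ast}\Ocal_X = f_{\ast}\Ocal_X .
\]
This is a morphism of $\Ocal_S$-algebras, and both ends are quasi-coherent by \cite[Proposition 1.2.6]{EGA2}. Compatibility with composition and identities comes from the strict equality $(g\circ h)_{\ast} = g_{\ast}h_{\ast}$ of sheaf pushforwards. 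This gives the contravariant functor, and the choice of representing objects accounts for the remark about the Axiom of Choice.

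The substantive step is showing that the map $\mathbf{Aff}_{/S}(X,Y) \to \QCoh(S,\CAlg{\Ocal_S})(g_{\ast}\Ocal_Y, f_{\ast}\Ocal_X)$ is bijective, naturally in $X$ and $Y$.

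\emph{Affine base.} Suppose $S=\Spec A$. By Proposition \ref{Prop: Affines over affines are affine}, $X=\Spec B$ and $Y=\Spec C$, and $f_{\ast}\Ocal_X \cong \widetilde{B}$, $g_{\ast}\Ocal_Y \cong \widetilde{C}$. The claim then reduces to two facts:
\[
\mathbf{Sch}_{/A}(\Spec B,\Spec C)\cong \CAlg{A}(C,B)
\]
(the standard $\Spec$–$\Gamma$ adjunction restricted to affine schemes), and full faithfulness of $\widetilde{(-)}$ on algebras, which follows from Theorem \ref{Thm: Section Background Scheme: QCoh on affine is just modules} together with the fact that $\Gamma$ respects multiplications.

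\emph{General base.} Choose an affine open cover $\{S_i\}$ of $S$, and cover each $S_i\cap S_j$ by affines $S_{ijk}$. By Proposition \ref{Prop: Section Background Scheme: Affine iff affine for all covers}, each $f^{-1}(S_i)$, $g^{-1}(S_i)$, $f^{-1}(S_{ijk})$ and $g^{-1}(S_{ijk})$ is affine. Two inputs are needed.
\begin{enumerate}
\item Morphisms of schemes glue uniquely: an $S$-morphism $X\to Y$ is the same as a family of $S_i$-morphisms $f^{-1}(S_i)\to g^{-1}(S_i)$ agreeing on overlaps. This holds because $X$ and $Y$ are covered by these opens, and any $S$-morphism sends $f^{-1}(S_i)$ into $g^{-1}(S_i)$.
\item Maps of sheaves of algebras glue uniquely, by the sheaf property, exactly as in the $2$-cell part of the proof of Theorem \ref{Thm: Section Background Scheme: QCoh is a pseudolimit}.
\end{enumerate}
The affine-base bijections commute with restriction to smaller affine opens, since localization corresponds to base change $-\otimes_{A}A'$. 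So the two gluing problems are identified termwise, and the global bijection follows. Naturality in $X$ and $Y$ reduces to the affine case, where it is naturality of the $\Spec$–$\Gamma$ correspondence.

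I expect the main obstacle to be bookkeeping rather than an idea: checking that the local bijections are compatible with restriction along inclusions of affine opens $S_{ijk}\subseteq S_i$. The cleanest route is to note that $(f_{\ast}\Ocal_X)|_{S_i} = (f|_{S_i})_{\ast}\Ocal_{f^{-1}S_i}$, which is a strict identity of restricted sheaves, so both sides restrict compatibly. Full faithfulness is then just the statement that the map on hom-sets is this bijection.
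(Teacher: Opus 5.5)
Your proposal is correct and follows essentially the same route as the paper: define the functor by $h \mapsto g_{\ast}(h^{\sharp})$ using the strict identity $g_{\ast}h_{\ast} = f_{\ast}$, then get bijectivity on hom-sets by Zariski descent over an affine open cover of $S$ whose preimages in $X$ and $Y$ are affine, where $S$-morphisms correspond to algebra maps on sections. Your extra step of covering each $S_i \cap S_j$ by affines $S_{ijk}$ is a slight refinement of the paper's sketch, which compares the local maps directly over $U_i \times_S U_j$ even though that overlap need not be affine.
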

\begin{proof}[Sketch]
Let us describe the assignment of the functor on morphisms. Given a commuting diagram of affine $S$-scheme
\[
\begin{tikzcd}
X \ar[rr]{}{f} \ar[dr, swap]{}{\nu_X} & & Y \ar[dl]{}{\nu_Y} \\
 & S
\end{tikzcd}
\]
we then have an induced diagram of quasi-coherent sheaves of $S$-modules
\[
\begin{tikzcd}
\Ocal_{S} \ar[d, swap]{}{\nu_Y^{\sharp}} \ar[rr, equals]  & & \Ocal_S \ar[d]{}{\nu_X^{\sharp}}  \\
(\nu_Y)_{\ast}\Ocal_Y \ar[dr, swap]{}{(\nu_Y)_{\ast}f^{\sharp}} & & (\nu_X)_{\ast}\Ocal_X \\
 & (\nu_{Y})_{\ast}(f_{\ast}\Ocal_X) \ar[ur, equals]
\end{tikzcd}
\]
by virtue of the fact that the pushforward functors $(\nu_Y)_{\ast} \circ f_{\ast} = (\nu_Y \circ f)_{\ast} = (\nu_X)_{\ast}$ of quasi-coherent sheaves commute strictly. Writing $F := ((-) \mapsto (\nu_{(-)})_{\ast}\Ocal_{(-)})$ for brevity and clarity, we then get
\[
F\left(\begin{tikzcd}
X \ar[rr]{}{f} \ar[dr, swap]{}{\nu_X} & & Y \ar[dl]{}{\nu_Y}  \\
 & Y
\end{tikzcd}\right) := (\nu_Y)_{\ast}f^{\sharp}.
\]
That this is functorial is straightforward to check and that it is fully faithful follows from a straightfoward application of Zariski descent. That is, because $\nu_X:X \to S$ and $\nu_Y:Y \to S$ are affine morphisms, for any affine open cover $\lbrace \gamma_i:U_i \to S \; | \; i \in I \rbrace$ we can pullback along both $\nu_X$ and $\nu_Y$ simultaneously as in the diagram
\[
\begin{tikzcd}
X \ar[r]{}{\nu_X} & S & Y \ar[l, swap]{}{\nu_Y} \\
X \times_S U_i \ar[u]{}{\pi_0^{i,X}} \ar[r, swap]{}{\pi_1^{i,X}} & U_i \ar[u]{}{\gamma_i} & Y \times_S U_i \ar[l]{}{\pi_1^{i,Y}} \ar[u, swap]{}{\pi_0^{i,Y}}
\end{tikzcd}
\]
and derive open affine covers $\lbrace \pi_0^{i,X}:X \times_S U_i \to X \; | \; i \in I \rbrace$ and $\lbrace \pi_0^{i,Y}:Y \times_S U_i \to Y \; | \; i \in I \rbrace$. Writing $U_{ij} := U_i \times_S U_j$ for the intersection of the affine opens of $S$, we have corresponding intersections
\[
X \times_S U_{ij} \cong (X \times_S U_i) \times_X (X \times_S U_j)
\]
and
\[
Y \times_S U_{iJ} \cong (Y \times_S U_i) \times_Y (Y \times_S U_j)
\]
of affine opens of both $X$ and $Y$. Now, by virtue of Theorem \ref{Thm: Section Background Scheme: QCoh is a pseudolimit} and \ref{Thm: Section Background Scheme: Checking QCoh affine locally}, to give a morphism of quasi-coherent sheaves $(\nu_Y)_{\ast}\Ocal_Y \to (\nu_X)_{\ast}\Ocal_X$ it suffices to give morphisms of $\Ocal_S$-algebras
\[
\varphi_{U_i}:\left[(\nu_Y)_{\ast}\Ocal_Y\right](U_i) \longrightarrow \left[(\nu_X)_{\ast}\Ocal_X\right](U_i)
\]
which agree on the pullbacks $U_{ij}$. However, as
\[
\left[(\nu_Y)_{\ast}\Ocal_Y(U_i)\right] = \Ocal_Y(Y \times_S U_i),\quad \left[(\nu_X)_{\ast}\Ocal_X\right](U_i) = \Ocal_X(X\times_S U_i)
\]
and
\[
\left[(\nu_Y)_{\ast}\Ocal_Y\right](U_{ij}) = \Ocal_Y(Y \times_S U_{ij}),\quad \left[(\nu_X)_{\ast}\Ocal_X\right](U_{ij}) = \Ocal_X(X \times_S U_{ij})
\]
for all $i, j \in I$, it follows from the Gluing Lemma and the fact that each of $X \times_S U_i$ and $Y \times_S U_i$ are affine\footnote{The fact that $X \times_S U_i$ and $Y \times_S U_i$ are affine means that maps $X \times_S U_i \to Y \times_S U_i$ are equivalent to giving ring maps $\Ocal_{Y}(Y \times_S U_i) \to \Ocal_X(X \times_S U_i)$.} open covers of $X$ and $Y$, respectively, that to determine such a collection $\varphi_{U_i}$ is equivalent to defining a scheme morphism $f:X \to Y$.
\end{proof}

Proposition \ref{Prop: Section Scheme Background: Affine morphisms in terms of algebra} is a translation of a geometry-to-algebra result in the sense that it tells us that the geometry of affine maps $f:X \to S$ give rise to quasi-coherent sheaves of algebras $f_{\ast}(\Ocal_X)$; our goal at the moment is to show that this is a precise characterization. To do this we use the relative spectrum functor of \cite{EGA2}: this functor is encoded in the following result of Grothendieck.

\begin{proposition}[{\cite[Proposition 1.3.1]{EGA2}}]\label{Prop: Section Scheme Background: Relspec functor}
Let $S$ be a scheme and let $\Ascr$ be a quasi-coherent sheaf of $\Ocal_S$-algebras. Then there is a (unique up to isomorphism of $S$-schemes) scheme $X$ with affine structure map $f:X \to S$ for which $f_{\ast}(\Ocal_X) \cong \Ascr$ in $\QCoh(X,\CAlg{\Ocal_X})$. In particular, up to the Axiom of Choice, there is a functor
\[
\RelSpec{S}:\QCoh(S,\CAlg{\Ocal_S})^{\op} \to \mathbf{Aff}_{/S}
\]
which is fully faithful.
\end{proposition}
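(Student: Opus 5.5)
We construct $X$ by gluing affine pieces along an affine open cover of $S$, and then read off functoriality and full faithfulness from Proposition \ref{Prop: Section Scheme Background: Affine morphisms in terms of algebra}.

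\textbf{Local construction.} Fix an affine open cover $\lbrace \gamma_i:U_i \to S \; | \; i \in I\rbrace$ with $U_i \cong \Spec R_i$. By Theorem \ref{Thm: Section Background Scheme: QCoh on affine is just modules}, the algebra version of the equivalence gives $R_i$-algebras $A_i := \Gamma(U_i, \gamma_i^{\ast}\Ascr)$. Set $X_i := \Spec A_i$, with structure map $f_i:X_i \to U_i$ induced by $R_i \to A_i$. For any open $V \subseteq U_i$, the restriction $f_i^{-1}(V)$ is the relative spectrum of $\Ascr|_V$. When $V = D(g)$ this is just the statement $\Spec (A_i)_g \cong X_i \times_{U_i} D(g)$.

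\textbf{Gluing.} On the overlaps $U_i \times_S U_j$ there are canonical identifications
\[
f_i^{-1}(U_i \times_S U_j) \cong f_j^{-1}(U_i \times_S U_j).
\]
To construct them, cover $U_i \times_S U_j$ by basic opens that are simultaneously basic in $U_i$ and in $U_j$. On each such basic open, both sides are $\Spec$ of the same ring of sections $\Ascr(D)$, and these local identifications glue. The isomorphisms are canonical because they come from the equality of sections of the single sheaf $\Ascr$. They therefore satisfy the cocycle condition on triple overlaps, and the scheme-gluing lemma yields $X$ together with $f:X \to S$.

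\textbf{Properties of $X$.} By construction $f^{-1}(U_i) = X_i$ is affine, so $f$ is affine in the sense of Definition \ref{Defn: Scheme Background: Affine Morphism}. Next, the two sheaves agree on the cover:
\[
f_{\ast}\Ocal_X|_{U_i} \cong \widetilde{A_i} \cong \Ascr|_{U_i}.
\]
These local isomorphisms are compatible on overlaps, since both are induced by $\Ascr$ itself. By Theorem \ref{Thm: Section Background Scheme: QCoh is a pseudolimit}, applied to algebra objects, they glue to an isomorphism $f_{\ast}\Ocal_X \cong \Ascr$ of $\Ocal_S$-algebras.

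\textbf{Uniqueness.} If $f':X' \to S$ is another affine scheme with $f'_{\ast}\Ocal_{X'} \cong \Ascr$, then $f'_{\ast}\Ocal_{X'} \cong f_{\ast}\Ocal_X$. Since the functor $Y \mapsto (\nu_Y)_{\ast}\Ocal_Y$ is fully faithful by Proposition \ref{Prop: Section Scheme Background: Affine morphisms in terms of algebra}, this isomorphism lifts to an isomorphism $X \cong X'$ over $S$.

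\textbf{Functoriality.} Choosing one such $X$ for each $\Ascr$ (this is where choice enters) defines $\RelSpec{S}$ on objects. A morphism $\Bscr \to \Ascr$ in $\QCoh(S,\CAlg{\Ocal_S})$ transports, via the chosen isomorphisms, to a morphism $(\nu_{\RelSpec{S}\Bscr})_{\ast}\Ocal \to (\nu_{\RelSpec{S}\Ascr})_{\ast}\Ocal$. By full faithfulness in Proposition \ref{Prop: Section Scheme Background: Affine morphisms in terms of algebra}, this corresponds to a unique morphism $\RelSpec{S}\Ascr \to \RelSpec{S}\Bscr$ in $\mathbf{Aff}_{/S}$. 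Uniqueness makes the assignment preserve identities and composites, and it is bijective on hom-sets, so $\RelSpec{S}$ is fully faithful. Equivalently, $\RelSpec{S}$ is a pseudo-inverse on its image of the fully faithful functor of Proposition \ref{Prop: Section Scheme Background: Affine morphisms in terms of algebra}.

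\textbf{Main obstacle.} The key difficulty is the gluing step: showing that the overlap identifications are canonical and satisfy the cocycle condition. The plan is to reduce everything to the basic-open compatibility $\Spec (A_i)_g \cong X_i \times_{U_i} D(g)$. Comparisons between distinct charts then become identities of rings of sections $\Ascr(D)$, and the cocycle condition holds tautologically.
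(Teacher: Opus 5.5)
Your proposal is correct and takes the paper's approach. The paper defers the construction of $X$ to \cite[Proposition 1.3.1]{EGA2}, which is the same gluing of the schemes $\Spec \Ascr(U_i)$ over affine opens that you carry out explicitly, and, as in your proposal, it gets functoriality and full faithfulness of $\RelSpec{S}$ from the hom-set bijection of Proposition \ref{Prop: Section Scheme Background: Affine morphisms in terms of algebra}.
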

\begin{proof}[Sketch]
Let $\Ascr$ be a quasi-coherent sheaf of $\Ocal_S$-algebras. The construction of a scheme and affine structure map $p_{\Ascr}:\RelSpec{S}(\Ascr) \to S$ associated to $\Ascr$ is described in \cite[Proposition 1.3.1]{EGA2}. That the scheme $\underline{\Spec}_{S}(\Ascr)$ and corresponding structure map $p_{\Ascr}$ are produced functorially is also derived from, but not explicitly stated in, the proof of \cite[Proposition 1.3.1]{EGA2}. Finally, that this functor is fully faithful is immediate from the isomorphisms of hom-sets
\[
\mathbf{Aff}_{/S}(X,Y) \cong \QCoh(S,\CAlg{\Ocal_S})(g_{\ast}(\Ocal_Y),f_{\ast}(\Ocal_X)) \cong \QCoh(S,\CAlg{\Ocal_S})^{\op}(f_{\ast}(\Ocal_X),g_{\ast}(\Ocal_Y)).
\]
\end{proof}
\begin{corollary}[{\cite[Proposition 1.4.1]{EGA2}}]\label{Cor: Section Background Scheme: Equiv of Cats for Relspec}
	There is an equivalence of categories
	\[
	\begin{tikzcd}
		\mathbf{Aff}_{/S} \ar[rr, swap, bend right = 30, ""{name = R}]{}{X \mapsto f_{\ast}(\Ocal_X)} & & \QCoh(X, \CAlg{\Ocal_X})^{\op} \ar[ll, bend right = 30, swap, ""{name = L}]{}{\RelSpec{X}}
		\ar[from = L, to = R, symbol = \simeq]
	\end{tikzcd}
	\]
	with quasi-inverses $\RelSpec{X}$ and $X \mapsto f_{\ast}(\Ocal_X)$.
\end{corollary}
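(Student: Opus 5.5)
The plan is to assemble the equivalence from the two fully faithful functors already in hand. Write $F:\mathbf{Aff}_{/S} \to \QCoh(S,\CAlg{\Ocal_S})^{\op}$ for the functor $X \mapsto f_{\ast}(\Ocal_X)$ of Proposition \ref{Prop: Section Scheme Background: Affine morphisms in terms of algebra}, which is fully faithful. Write $\RelSpec{S}$ for the functor of Proposition \ref{Prop: Section Scheme Background: Relspec functor}, which is also fully faithful. A fully faithful functor is an equivalence as soon as it is essentially surjective, so it suffices to show that $F$ is essentially surjective and then to identify $\RelSpec{S}$ with a quasi-inverse.

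For essential surjectivity, let $\Ascr$ be a quasi-coherent sheaf of $\Ocal_S$-algebras. Proposition \ref{Prop: Section Scheme Background: Relspec functor} produces an affine $S$-scheme $p_{\Ascr}:\RelSpec{S}(\Ascr) \to S$ together with an isomorphism
\[
\theta_{\Ascr}:(p_{\Ascr})_{\ast}\Ocal_{\RelSpec{S}(\Ascr)} \xrightarrow{\cong} \Ascr .
\]
This says exactly that $F(\RelSpec{S}(\Ascr)) \cong \Ascr$, so $F$ is essentially surjective and hence an equivalence.

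It remains to see that $\RelSpec{S}$ is a quasi-inverse. First I show that the isomorphisms $\theta_{\Ascr}$ assemble into a natural isomorphism $F \circ \RelSpec{S} \cong \id$. For a morphism $\varphi:\Ascr \to \Bscr$, the morphism $\RelSpec{S}(\varphi)$ is by construction the unique $S$-morphism whose image under $F$ corresponds to $\varphi$ after conjugating by $\theta_{\Ascr}$ and $\theta_{\Bscr}$. Uniqueness here comes from the full faithfulness of $F$, and this forces the naturality square to commute.

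Next, for $X \in \mathbf{Aff}_{/S}$, I take the isomorphism $\theta_{F(X)}$ and use full faithfulness of $F$ to lift it to a unique isomorphism
\[
\eta_X:X \xrightarrow{\cong} \RelSpec{S}(F(X))
\]
of $S$-schemes. Naturality of $\eta$ again follows from faithfulness of $F$. In other words, $\eta$ is the unit obtained by transporting the counit $\theta$ along the fully faithful $F$, which is the standard construction of adjoint equivalence data.

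The main obstacle is the naturality of $\theta$, because it depends on how $\RelSpec{S}$ was defined on morphisms. That definition is only implicit in the sketch of Proposition \ref{Prop: Section Scheme Background: Relspec functor}. To avoid depending on the particular gluing construction there, I would \emph{define} $\RelSpec{S}$ on morphisms as $F^{-1}$ applied to $\theta_{\Bscr}^{-1} \circ \varphi \circ \theta_{\Ascr}$, with the indices adjusted to the variance of the opposite category. Functoriality of $\RelSpec{S}$ and naturality of $\theta$ are then formal consequences of $F$ being fully faithful.

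One also checks that this definition agrees with Grothendieck's construction. On each affine open $U_i \subseteq S$, both descriptions reduce to $\Spec$ of the algebra map $\varphi_{U_i}$. By the gluing argument used in the proof of Proposition \ref{Prop: Section Scheme Background: Affine morphisms in terms of algebra}, morphisms that agree on such a cover agree globally.

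Finally, the variable $X$ in the displayed statement should be read as the base $S$.
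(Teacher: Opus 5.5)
Your argument is correct, but it is organized differently from the paper's.

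The paper checks both composites directly by descent:
\begin{itemize}
\item it gets $\RelSpec{S}\circ F\cong\id$ by re-running the gluing argument from the end of the proof of Proposition~\ref{Prop: Section Scheme Background: Affine morphisms in terms of algebra} for the structure map $\nu_X$;
\item it gets $F\circ\RelSpec{S}\cong\id$ separately, from the Gluing Lemma and the local identifications $\Ocal_{\RelSpec{S}(\Ascr)}(p_{\Ascr}^{-1}U)\cong\Ascr(U)$ on affine opens $U\subseteq S$.
\end{itemize}

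You instead reduce to the formal fact that a fully faithful, essentially surjective functor is an equivalence. The only geometric input beyond full faithfulness of $F$ is then the existence half of Proposition~\ref{Prop: Section Scheme Background: Relspec functor}, which supplies $\theta$. The other isomorphism $\eta$ comes for free by lifting $\theta_{F(X)}$ along $F$, and it is automatically invertible because fully faithful functors reflect isomorphisms. So no second gluing argument is needed.

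Your version also makes explicit two things the paper leaves implicit: how $\RelSpec{S}$ acts on morphisms, and where choice enters. The cost is one local check, that your morphism assignment agrees with Grothendieck's $\Spec(\varphi_{U_i})$ on affine patches.

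The paper's route, in exchange, gives concrete affine-local formulas for both comparison isomorphisms. That is the form in which the equivalence is used later, e.g.\ when reading off $(p_X)_{\ast}\Ocal_{T_{X/S}}\cong\RelSym{\Ocal_X}(\Kah{X}{S})$ section by section.

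Your reading of the base $X$ in the displayed statement as $S$ is right.
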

\begin{proof}
All that remains to be shown is that both functors are inverse equivalences to each other. However, by the construction in the proof of \cite[Proposition 1.3.1]{EGA2} this is a straightforward argument using Zariski descent along affine opens\footnote{That the composite $\underline{\Spec}_X \circ (X \mapsto (\nu_X)_{\ast}\Ocal_X)$ is naturally isomorphic to the identity follows by doing the same reasoning as performed at the end of the proof of Proposition \ref{Prop: Section Scheme Background: Affine morphisms in terms of algebra} applied to the structure map $\nu_X:X\to S$. That $(X\mapsto (\nu_X)_{\ast}\Ocal_X) \circ \underline{\Spec}_X$ follows from the Gluing Lemma and the fact that the scheme $\underline{\Spec}_X(\Ascr)$ has a structure sheaf with $\Ocal_{\underline{\Spec}_X(\Ascr)}(p_{\Ascr}^{-1}U) \cong \Ascr(U)$ for affine open subschemes $U$ of $S$.}.
\end{proof}
\begin{definition}\label{Defn: Section Scheme Background: Relspec functor}
Let $X$ be a scheme. The \emph{relative spectrum functor over $X$}
\[
\RelSpec{X}:\QCoh(X,\CAlg{\Ocal_X})^{\op} \to \mathbf{Aff}_{/X}
\] 
is the functor constructed in Proposition \ref{Prop: Section Scheme Background: Relspec functor}.
\end{definition}
\begin{example}
With $X_{\red}$ the reduction of $X$ constructed in Example \ref{Example: Section QCoh: Reduction of a scheme}. Then with $\Ocal_{X_{\red}}$ the sheaf of commutative $\Ocal_X$-algebras also constructed in Example \ref{Example: Section QCoh: Reduction of a scheme}, we have an isomorphism of $X$-schemes
\[
X_{\red} \cong \RelSpec{X}(\Ocal_{X_{\red}}).
\]
It is additionally worth noting that $\lvert X_{\red} \rvert \cong \lvert X \rvert$ --- this follows from the fact that for any ring $A$, $\lvert \Spec A_{\red} \rvert \cong \lvert \Spec A \rvert$.
\end{example}

Because affine morphisms have the property that they are separated (cf.\@ \cite[Proposition 1.2.4]{EGA2}) and quasi-compact (which follows from the fact that affine schemes are all quasi-compact as topolgical spaces), they satisfy the hypotheses of Proposition \ref{Prop: Section Background Scheme: Right adjoint for quasicoherent sheaves}. In particular, this shows tha the structure sheaf $f_{\ast}\Ascr$ of a quasi-coherent sheaf of $\Ocal_X$-algebras is a quasi-coherent sheaf of $\Ocal_Y$-algebras by pre-composition (cf.\@ \cite[Proposition 1.2.4]{EGA2}). Thus, since $f_{\ast}$ is exact in this case, we obtain the following lemma.
\begin{lemma}\label{Lemma: Section Background Scheme: Forward functoriality of relspec for affine maps}
Let $f:X \to Y$ be an affine morphism of schemes. Then there is a commuting diagram
\[
\begin{tikzcd}
\mathbf{Aff}_{/X} \ar[rr]{}{f \circ (-)} \ar[d, swap]{}{Q_X} & & \mathbf{Aff}_{/Y} \ar[d]{}{Q_Y} \\
\QCoh(X,\CAlg{\Ocal_X})^{\op} \ar[rr, swap]{}{f_{\ast}} & & \QCoh(Y,\CAlg{\Ocal_Y})^{\op}
\end{tikzcd}
\]
and an invertible $2$-cell:
\[
\begin{tikzcd}
	\QCoh(X,\CAlg{\Ocal_X})^{\op} \ar[d, swap]{}{\RelSpec{X}} \ar[rr, ""{name = U}]{}{f_{\ast}} & & \QCoh(Y,\CAlg{\Ocal_Y})^{\op}	\ar[d]{}{\RelSpec{Y}} \\
	\mathbf{Aff}_{/X} \ar[rr, swap, ""{name = D}]{}{f \circ (-)}  & & \mathbf{Aff}_{/Y} 
	\ar[from = U, to = D, Rightarrow, shorten <= 4pt, shorten >= 4pt]{}{\cong}
\end{tikzcd}
\]
where $Q_X$ and $Q_Y$ are the inverse equivalences of $\RelSpec{X}$ and $\RelSpec{Y}$, respectively, while the functor $f \circ (-)$ takes an affine morphism $q:Z \to X$ and sends it to the corresponding affine morphism $f \circ q:Z \to Y$.
\end{lemma}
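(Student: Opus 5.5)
The plan is to prove the strict commuting square first, and then obtain the invertible $2$-cell from it formally via the equivalences of Corollary \ref{Cor: Section Background Scheme: Equiv of Cats for Relspec}.

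\emph{Well-definedness of the functors.} The top functor $f \circ (-)$ lands in $\mathbf{Aff}_{/Y}$ by Lemma \ref{Lemma: Background Scheme: Affine morphisms stable under composition}. On morphisms it sends a triangle over $X$ to the same map of schemes, now regarded over $Y$.

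For the bottom functor, note that $f$ is affine, hence quasi-compact and separated. Proposition \ref{Prop: Section Background Scheme: Qcqs maps have quasicompact pushforward} then shows that $f_{\ast}$ carries $\QCoh(X)$ into $\QCoh(Y)$. Since $f_{\ast}$ is lax monoidal (indeed strong on affine opens of $Y$, whose preimages are affine), it sends commutative monoids to commutative monoids. The underlying module of $f_{\ast}\Ascr$ is $f_{\ast}$ of the underlying module of $\Ascr$, so $f_{\ast}$ restricts to a functor $\QCoh(X,\CAlg{\Ocal_X}) \to \QCoh(Y,\CAlg{\Ocal_Y})$. Its opposite is the bottom arrow.

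\emph{Strict commutativity.} Let $q:Z \to X$ be affine, and recall that $Q_X(q) = q_{\ast}\Ocal_Z$. Going around the square one way gives $Q_Y(f\circ q) = (f \circ q)_{\ast}\Ocal_Z$; going the other way gives $f_{\ast}(q_{\ast}\Ocal_Z)$. These are literally equal, because direct images of sheaves compose strictly: $(f\circ q)_{\ast} = f_{\ast}\circ q_{\ast}$, as in the proof of Proposition \ref{Prop: Section Scheme Background: Affine morphisms in terms of algebra}.

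On a morphism $h:Z \to Z'$ over $X$, with $q' \circ h = q$, we have $Q_X(h) = q'_{\ast}h^{\sharp}$ by the description in that same proof. Then
\[
f_{\ast}\big(q'_{\ast}h^{\sharp}\big) = (f\circ q')_{\ast}h^{\sharp} = Q_Y(h),
\]
so the square commutes on the nose.

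\emph{The $2$-cell.} Let $\eta:\id \Rightarrow Q_X\circ\RelSpec{X}$ be the unit of the equivalence and $\epsilon_Y:\RelSpec{Y}\circ Q_Y \Rightarrow \id$ the counit of the other; both are invertible. The required $2$-cell is then the composite
\[
\RelSpec{Y}\circ f_{\ast} \xRightarrow{\RelSpec{Y} f_{\ast}\eta} \RelSpec{Y}\circ f_{\ast}\circ Q_X \circ \RelSpec{X} = \RelSpec{Y}\circ Q_Y\circ (f\circ(-))\circ\RelSpec{X} \xRightarrow{\epsilon_Y} (f\circ(-))\circ\RelSpec{X}.
\]
The middle equality is the strict square just established. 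Each step is a whiskering of an invertible natural transformation, so the composite is invertible.

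\emph{Main obstacle.} The step needing real care is the restriction of $f_{\ast}$ to quasi-coherent algebras. The key point is that $f_{\ast}$ preserves the multiplication and is compatible with the monoidal structure. I would check this on an affine open cover $\lbrace V_i\rbrace$ of $Y$: each $f^{-1}V_i$ is affine by Proposition \ref{Prop: Section Background Scheme: Affine iff affine for all covers}, so there $f_{\ast}$ is simply restriction of scalars $\res$. Restriction of scalars commutes strictly with $\Und{}$ by Proposition \ref{Prop: Section Background in Algebra: Underlying commutes strictly with restriction}, which gives the compatibility locally, and gluing via Theorem \ref{Thm: Section Background Scheme: QCoh is a pseudolimit} gives it globally.
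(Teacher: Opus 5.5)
Your proposal is correct and follows the paper's proof. The strict square comes from $(f\circ q)_{\ast} = f_{\ast}\circ q_{\ast}$, and the invertible $2$-cell is obtained by transporting that square along the equivalences; this is exactly the mate the paper takes. Your explicit composite of $\RelSpec{Y}f_{\ast}\eta$ with the whiskered counit spells that mate out, and your check on morphisms fills in detail the paper leaves implicit. The same goes for your verification that $f_{\ast}$ preserves quasi-coherent algebras, which the paper only notes in the paragraph before the lemma.

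One harmless slip: on affine patches $f_{\ast}$ is restriction of scalars, which is lax but not strong monoidal, so the parenthetical ``indeed strong'' is false. Lax monoidality is all your argument uses.
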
 
\begin{proof}
Recall that the inverse equivalences $Q_X:\mathbf{Aff}_{/X} \to \QCoh(X,\CAlg{\Ocal_X})^{\op}$ and $Q_Y:\mathbf{Aff}_{/Y} \to \QCoh(Y,\CAlg{\Ocal_Y})^{\op}$ of $\RelSpec{X}$ and $\RelSpec{Y}$, respectively, each take a corresponding affine morphism $q:Z \to S$ (for the cases $S = X$ and $S = Y$, respectively) and send it to the sheaf $q_{\ast}\Ocal_Z$ on $S$; note that this is a quasi-coherent sheaf on $S$ by \cite[Proposition 1.2.6]{EGA2}. 

Now on one hand we compute that for any affine morphism $q:Z \to X$,
\[
Q_{Y}\left(\bigg(f \circ (-)\bigg)(q:Z \to X)\right) = Q_{Y}\left(f \circ q:Z \to Y\right) = (f \circ q)_{\ast}\Ocal_Z.
\]
On the other hand we compute
\[
f_{\ast}\left(Q_X(q:Z \to X)\right) = f_{\ast}\left(q_{\ast}\Ocal_Z\right) = (f_{\ast} \circ q_{\ast})\Ocal_Z.
\]
Because pushforwards of sheaves on topological spaces commute strictly\footnote{Given maps $\alpha:A \to B$ and $\beta:B \to C$ of spaces, for any open $U \subseteq C$, $(\beta \circ \alpha)^{-1}(U) = \alpha^{-1}(\beta^{-1}(U))$ which implies that $(\beta \circ \alpha)_{\ast}\Fscr = (\beta_{\ast} \circ \alpha_{\ast})\Fscr$ for any sheaf $\Fscr$ on $A$.} we thus have that
\[
Q_{Y} \circ \bigg(f \circ (-)\bigg) = f_{\ast} \circ Q_{X}
\]
as functors. Taking the mates of the corresponding strictly commuting diagram, get the desired invertible $2$-cell:
\[
\begin{tikzcd}
	\QCoh(X,\CAlg{\Ocal_X})^{\op} \ar[d, swap]{}{\RelSpec{X}} \ar[rr, ""{name = U}]{}{f_{\ast}} & & \QCoh(Y,\CAlg{\Ocal_Y})^{\op}	\ar[d]{}{\RelSpec{Y}} \\
	\mathbf{Aff}_{/X} \ar[rr, swap, ""{name = D}]{}{f \circ (-)}  & & \mathbf{Aff}_{/Y} 
	\ar[from = U, to = D, Rightarrow, shorten <= 4pt, shorten >= 4pt]{}{\cong}
\end{tikzcd}
\]
\end{proof}

Another important but immediate lemma is that the pullbacks in $\mathbf{Aff}_{/S}$ conicide with those in $\Sch_{/S}$. In particular, the inclusion functor $\mathbf{Aff}_{/S} \to \Sch_{/S}$ is continuous.
\begin{proposition}\label{Prop: Section Background Scheme: Incl of Aff is cont}
For any scheme $S$ the pullback in $\mathbf{Aff}_{/S}$ coincides with the pullback in $\Sch_{/S}$. In particular, the inclusion functor $\mathbf{Aff}_{/S} \to \Sch_{/S}$ is continuous.
\end{proposition}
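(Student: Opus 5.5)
The proof comes down to a single claim: affine morphisms are stable under base change and closed under composition. Once that is in hand, the pullback computed in $\Sch_{/S}$ of a cospan of affine $S$-schemes is again an object of $\mathbf{Aff}_{/S}$. Since $\mathbf{Aff}_{/S}$ is a full subcategory of $\Sch_{/S}$, its universal property there restricts verbatim to $\mathbf{Aff}_{/S}$. So the pullback in $\mathbf{Aff}_{/S}$ exists and agrees with the pullback in $\Sch_{/S}$.

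Concretely, let $\nu_X:X\to S$, $\nu_Y:Y\to S$ and $\nu_Z:Z\to S$ be affine, and let $f:X\to Z$ and $g:Y\to Z$ be $S$-morphisms. Form $P:=X\times_Z Y$ in $\Sch_{/S}$ with structure map $\nu_Z\circ f\circ\pi_0$. We must show this structure map is affine.

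The cleanest route avoids base change along $g$ altogether by computing everything over affine opens of $S$. Fix an affine open cover $\{U_i\}$ of $S$. By Proposition \ref{Prop: Section Background Scheme: Affine iff affine for all covers}, the preimages $X_i:=\nu_X^{-1}(U_i)$, $Y_i$ and $Z_i$ are affine, say $\Spec B_i$, $\Spec C_i$ and $\Spec A_i$. The maps $f,g$ are over $S$, so they restrict to $X_i\to Z_i\leftarrow Y_i$. The preimage of $U_i$ in $P$ is then the open subscheme $X_i\times_{Z_i}Y_i$, because fibre products are compatible with restriction to opens. This fibre product of affine schemes is the affine scheme $\Spec(B_i\otimes_{A_i}C_i)$. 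Hence the preimage of each $U_i$ in $P$ is affine, and Definition \ref{Defn: Scheme Background: Affine Morphism} shows that $P\to S$ is affine.

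The main obstacle is justifying the identification $(\nu_P)^{-1}(U_i)\cong X_i\times_{Z_i}Y_i$. This is the standard fact that if $U\subseteq Z$ is open then $(X\times_Z Y)\times_Z U\cong (X\times_Z U)\times_U (Y\times_Z U)$, which follows by pasting pullback squares. One also needs that fibre products of affine schemes over an affine scheme are affine, which is the fact that $\Spec$ turns the pushout $B\otimes_A C$ in $\Cring$ into a pullback. I will cite both rather than reprove them.

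Finally, for continuity: $\mathbf{Aff}_{/S}$ has the terminal object $\id_S$, which is affine, and it has pullbacks. Hence it has all finite limits, and the inclusion preserves both the terminal object and pullbacks. Therefore the inclusion preserves finite limits, which is the sense of "continuous" intended here. If arbitrary limits are meant, I would add only the remark that limits existing in $\mathbf{Aff}_{/S}$ correspond under Corollary \ref{Cor: Section Background Scheme: Equiv of Cats for Relspec} to colimits of quasi-coherent algebras, and would not pursue this further.
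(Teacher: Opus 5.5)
Your proof is correct, and it takes a genuinely different route from the paper.

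The paper builds the pullback inside $\mathbf{Aff}_{/S}$ algebraically. Because $\RelSpec{S}$ is an opposite equivalence and $\Ascr\otimes_{\Rscr}\Bscr$ is the pushout in $\QCoh(S,\CAlg{\Ocal_S})$, the pullback in $\mathbf{Aff}_{/S}$ is $\RelSpec{S}(\Ascr\otimes_{\Rscr}\Bscr)$. The substantive claim that this is also the pullback in $\Sch_{/S}$ is simply deferred to EGA II.

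You go the other way. You take the fibre product in $\Sch_{/S}$ and show its structure map is affine by restricting to one affine open cover $\{U_i\}$ of $S$. By the ``all covers'' characterization, that single cover gives affine covers of $X$, $Y$ and $Z$ at once, and then $X_i\times_{Z_i}Y_i\cong\Spec(B_i\otimes_{A_i}C_i)$ does the rest. Fullness of $\mathbf{Aff}_{/S}$ then transfers the universal property.

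What your route buys:
\begin{itemize}
\item It is self-contained and proves by hand exactly the step the paper outsources.
\item It never needs $f$ or $g$ to be affine. Dropping your opening base-change-and-composition framing was the right call, since that route needs an extra input, such as the fact that an $S$-morphism between affine $S$-schemes is itself affine.
\item It checks the terminal object $\id_S$, which the paper's passage from pullbacks to continuity leaves implicit.
\end{itemize}

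What the paper's route buys is the explicit formula $\RelSpec{S}(\Ascr\otimes_{\Rscr}\Bscr)$, which is what is actually used later. For instance, it identifies $(T_{X/S})_2$ with the relative spectrum of $\RelSym{\Ocal_X}(\Kah{X}{S})\otimes_{\Ocal_X}\RelSym{\Ocal_X}(\Kah{X}{S})$ in order to define $\operatorname{add}_X$. From your argument this formula is visible only locally, as $\Spec(B_i\otimes_{A_i}C_i)$, and would still need gluing.

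Your reading of ``continuous'' as preservation of finite limits is exactly what either pullback argument delivers. Preservation of arbitrary limits would need a stronger fact: that $S$-morphisms $X\to\RelSpec{S}(\Ascr)$ correspond to $\Ocal_S$-algebra maps $\Ascr\to(\nu_X)_{\ast}\Ocal_X$ for every $S$-scheme $X$.
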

\begin{proof}[Sketch]
The fact that for any span $\Ascr\leftarrow \Rscr \rightarrow \Bscr$ of quasi-coherent sheaves of $\Ocal_S$-algebras we have
\[
\RelSpec{S}\left(\Ascr \otimes_{\Rscr} \Bscr\right) \cong \RelSpec{S}(\Ascr) \times_{\RelSpec{S}(\Rscr)} \RelSpec{S}(\Bscr)
\]
in $\mathbf{Aff}_{/S}$ is a formal consequence of the fact that $\RelSpec{S}$ is an (opposite) equivalence of categories and that the tensor product $(-) \otimes_{\Rscr}(-)$ is a pushout in $\QCoh(S,\CAlg{\Ocal_S})$. For the fact that the pullback $\RelSpec{S}(\Ascr) \times_{\RelSpec{S}(\Rscr)}\RelSpec{S}(\Bscr)$ computes the pullback in $\Sch_{/S}$ of each scheme affine over $S$, we defer to \cite[Proposition 1.4.6]{EGA2}. 
\end{proof}
\begin{corollary}[{\cite[Corollaire 1.5.2]{EGA2}}]\label{Cor: Section Background Scheme: The pullback iso for relspec}
Let $f:X \to Y$ be a morphism of schemes. Then there is an invertible $2$-cell
\[
\begin{tikzcd}
\QCoh\left(Y,\CAlg{\Ocal_Y}\right)^{\op} \ar[rr, ""{name = U}]{}{f^{\ast}} \ar[d, swap]{}{\RelSpec{Y}} & &\QCoh\left(X,\CAlg{\Ocal_X}\right)^{\op} \ar[d]{}{\RelSpec{X}} \\
\mathbf{Aff}_{/Y} \ar[rr, swap, ""{name = D}]{}{f^{\ast}} & & \mathbf{Aff}_{/X}
\ar[from = U, to = D, Rightarrow, shorten <= 4pt, shorten >=4pt]{}{\cong}
\end{tikzcd}
\]
where the functor $f^{\ast}:\mathbf{Aff}_{/Y} \to \mathbf{Aff}_{/X}$ is given by chosen pullbacks $f^{\ast}(Z) := Z \times_Y X$.
\end{corollary}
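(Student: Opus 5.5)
We need to show that for $f:X \to Y$ and a quasi-coherent $\Ocal_Y$-algebra $\Ascr$ there is a natural isomorphism of $X$-schemes
\[
\RelSpec{X}(f^{\ast}\Ascr) \cong \RelSpec{Y}(\Ascr) \times_Y X .
\]
The plan is to argue by representability, so that the isomorphism comes from a Yoneda argument and not from gluing by hand.

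First we need to know that both sides land in $\mathbf{Aff}_{/X}$. The left side does by Proposition \ref{Prop: Section Scheme Background: Relspec functor}. For the right side, affine morphisms are stable under base change, by \cite[Proposition 1.2.5]{EGA2} as used in Proposition \ref{Prop: Section Background Scheme: Affine iff affine for all covers}. Hence it suffices to produce, for every affine $X$-scheme $q:Z \to X$, a bijection natural in $Z$,
\[
\mathbf{Aff}_{/X}\big(Z, \RelSpec{X}(f^{\ast}\Ascr)\big) \cong \mathbf{Aff}_{/X}\big(Z, \RelSpec{Y}(\Ascr)\times_Y X\big).
\]
We then conclude by Yoneda in $\mathbf{Aff}_{/X}$.

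The bijection comes from a chain of natural isomorphisms:
\begin{align*}
\mathbf{Aff}_{/X}\big(Z,\RelSpec{X}(f^{\ast}\Ascr)\big) &\cong \QCoh(X,\CAlg{\Ocal_X})\big(f^{\ast}\Ascr, q_{\ast}\Ocal_Z\big) \\
&\cong \QCoh(Y,\CAlg{\Ocal_Y})\big(\Ascr, f_{\ast}q_{\ast}\Ocal_Z\big) \cong \Sch_{/Y}\big(Z, \RelSpec{Y}(\Ascr)\big) \\
&\cong \Sch_{/X}\big(Z, \RelSpec{Y}(\Ascr)\times_Y X\big).
\end{align*}
The first isomorphism is Corollary \ref{Cor: Section Background Scheme: Equiv of Cats for Relspec} together with Proposition \ref{Prop: Section Scheme Background: Affine morphisms in terms of algebra}. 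The last is the universal property of the pullback, where $Z$ is regarded over $Y$ via $f \circ q$.

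The middle two steps are the main obstacle, for two reasons.

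First, $f^{\ast} \dashv f_{\ast}$ must hold on quasi-coherent \emph{algebras}. Here I would use that $f^{\ast}$ is strong monoidal, so that $f^{\ast}\dashv f_{\ast}$ lifts to commutative monoids. This works directly on the level of all sheaves of $\Ocal$-algebras, because $q_{\ast}\Ocal_Z$ and $\Ascr$ need only appear as sheaves of algebras in the hom-sets.

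Second, we need the universal property $\Sch_{/Y}(W,\RelSpec{Y}(\Ascr)) \cong \CAlg{\Ocal_Y}(\Ascr, g_{\ast}\Ocal_W)$ for an \emph{arbitrary} $Y$-scheme $g:W \to Y$, not just an affine one. The composite $f\circ q$ need not be affine, and since $f$ is arbitrary, Lemma \ref{Lemma: Section Background Scheme: Forward functoriality of relspec for affine maps} does not apply. I would prove this universal property by Zariski descent. Choose an affine open cover $\{V_i\}$ of $Y$. Over each $V_i = \Spec B_i$ we have $\RelSpec{Y}(\Ascr)|_{V_i} = \Spec \Ascr(V_i)$. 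Maps $g^{-1}V_i \to \Spec \Ascr(V_i)$ over $V_i$ correspond to $B_i$-algebra maps $\Ascr(V_i) \to \Ocal_W(g^{-1}V_i)$, by the adjunction between $\Spec$ and global sections for locally ringed spaces. These maps are compatible on the overlaps $V_i\times_Y V_j$ (covered by affines), so the gluing argument from the proof of Theorem \ref{Thm: Section Background Scheme: QCoh is a pseudolimit} assembles them on both sides.

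Finally, naturality in $Z$ is routine because each step is natural. Yoneda then yields the isomorphism for each $\Ascr$. Naturality in $\Ascr$ (which makes the $2$-cell an actual natural isomorphism) follows likewise, since the chain is also natural in $\Ascr$.
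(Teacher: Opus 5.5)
Your argument is correct, but it takes a different route from the paper's. The paper's proof is two lines. It cites \cite[Proposition 1.5.1]{EGA2} to know that $f^{\ast}\colon\mathbf{Aff}_{/Y}\to\mathbf{Aff}_{/X}$ is well defined, and then calls the isomorphism an immediate consequence of Proposition \ref{Prop: Section Background Scheme: Incl of Aff is cont}. That proposition works over a single base: it says $\RelSpec{S}$ turns tensor products into fibre products in $\mathbf{Aff}_{/S}$. So for non-affine $f$, the paper is really importing the identification $\RelSpec{X}(f^{\ast}\Ascr)\cong\RelSpec{Y}(\Ascr)\times_Y X$ from EGA, where (to my recollection) it is part of the statement of Proposition 1.5.1. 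You instead prove it, by a Yoneda argument resting on two inputs. The first is $f^{\ast}\dashv f_{\ast}$ for sheaves of commutative algebras. The second is the universal property of $\RelSpec{Y}$ against \emph{arbitrary} $Y$-schemes. You rightly flag this second input as the key extra ingredient. The paper only records Proposition \ref{Prop: Section Scheme Background: Affine morphisms in terms of algebra} for schemes affine over the base, and $f\circ q$ need not be affine. Your route explains why no hypothesis on $f$ is needed and gives naturality in $\Ascr$ for free. The price is establishing that universal property by descent.

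There are two small repairs to make.

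First, as the paper uses it, \cite[Proposition 1.2.5]{EGA2} only handles restriction to affine opens of the base. General base change is \cite[Proposition 1.5.1]{EGA2}, which already contains your result. You do not need it. Your descent argument gives the universal property of $\RelSpec{X}(f^{\ast}\Ascr)$ against every $X$-scheme $q\colon Z\to X$. So run Yoneda in $\Sch_{/X}$ instead of $\mathbf{Aff}_{/X}$, and affineness of $\RelSpec{Y}(\Ascr)\times_Y X\to X$ then follows as a byproduct.

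Second, $f_{\ast}q_{\ast}\Ocal_Z$ and $g_{\ast}\Ocal_W$ need not be quasi-coherent. So the middle hom-sets of your chain must be taken in all sheaves of $\Ocal_Y$-algebras, not in $\QCoh(Y,\CAlg{\Ocal_Y})$. Likewise, extending a map $\Ascr(V_i)\to(g_{\ast}\Ocal_W)(V_i)$ to a sheaf map on $V_i$ needs $\widetilde{M}$ to be left adjoint to global sections on \emph{all} $\Ocal_{V_i}$-modules. Theorem \ref{Thm: Section Background Scheme: QCoh is a pseudolimit} does not supply that.
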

\begin{proof}
The isomorphism of the diagram is an immediate consequence of Proposition \ref{Prop: Section Background Scheme: Incl of Aff is cont} once we know that $f^{\ast}:\mathbf{Aff}_{/Y} \to \mathbf{Aff}_{/X}$ is defined. However, \cite[Proposition 1.5.1]{EGA2} shows that the pullback of an affine morphism is affine.
\end{proof}

As an immediate application of the equivalence of categories $\RelSpec{X}$ provides, we can see that for any quasi-coherent sheaf $\Fscr$ on $S$, the scheme
\[
\Vbb(\Fscr) := \underline{\Spec}_X\left(\Sym{\Ocal_X}(\Fscr)\right)
\] 
is an internal Abelian group to $\mathbf{Aff}_{/S}$ and hence also an Abelian group internal to $\Sch_{/S}$.
\begin{corollary}\label{Cor: Section Background Scheme: Relative Spec of Relative Sym is functor into Ab Sch}
For any scheme $S$ there is an induced functor:
\[
\begin{tikzcd}
\QCoh(S)^{\op} \ar[rr]{}{\left(\widehat{\RelSym{\Ocal_S}}\right)^{\op}} & & \Ab\left(\QCoh(S,\CAlg{\Ocal_S})^{\op}\right) \ar[rr]{}{\RelSpec{S}} \ar[rr, swap]{}{\simeq} & & \Ab\left(\mathbf{Aff}_{/S}\right) \ar[r] & \Ab(\Sch_{/S})
\end{tikzcd}
\]
\end{corollary}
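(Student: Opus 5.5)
The functor is a composite of three functors, and the plan is to check that each one exists and preserves Abelian group objects.

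The first functor is $(\widehat{\RelSym{\Ocal_S}})^{\op}:\QCoh(S)^{\op} \to \Ab(\QCoh(S,\CAlg{\Ocal_S})^{\op})$. This is exactly Corollary \ref{Cor: Section Background Scheme: Hopf algebra result in opposite land}, which we take as given. The only formal point to record is the dictionary already used there. A cocommutative Hopf algebra $(\Ascr,\nabla,\epsilon,S)$ in $(\QCoh(S,\CAlg{\Ocal_S}),\otimes_{\Ocal_S},\Ocal_S)$ is a cogroup object for the cocartesian structure, since $\otimes_{\Ocal_S}$ is the coproduct and $\Ocal_S$ is initial in $\QCoh(S,\CAlg{\Ocal_S})$. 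In the opposite category it is therefore an Abelian group object for the cartesian structure, with $\Ocal_S$ as terminal object. Cocommutativity is what gives commutativity of the group law.

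The second functor comes from the relative spectrum. By Corollary \ref{Cor: Section Background Scheme: Equiv of Cats for Relspec}, $\RelSpec{S}:\QCoh(S,\CAlg{\Ocal_S})^{\op}\to\mathbf{Aff}_{/S}$ is an equivalence of categories. An equivalence preserves all finite limits, so it sends finite products to finite products and the terminal object $\Ocal_S$ to the terminal object $S=\RelSpec{S}(\Ocal_S)$ of $\mathbf{Aff}_{/S}$. Hence applying $\RelSpec{S}$ to each structure map (multiplication, unit, inverse) yields an Abelian group object. Here we use the canonical comparison isomorphisms
\[
\RelSpec{S}(\Ascr\otimes_{\Ocal_S}\Bscr)\cong\RelSpec{S}(\Ascr)\times_S\RelSpec{S}(\Bscr)
\]
supplied by Proposition \ref{Prop: Section Background Scheme: Incl of Aff is cont}, taking $\Rscr=\Ocal_S$. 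Group homomorphisms go to group homomorphisms, so we get a functor $\Ab(\QCoh(S,\CAlg{\Ocal_S})^{\op})\to\Ab(\mathbf{Aff}_{/S})$. It is moreover an equivalence, because its quasi-inverse $Q_S$ is also product-preserving.

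The third functor is induced by the inclusion $\mathbf{Aff}_{/S}\to\Sch_{/S}$. This step is the only one with real content: one must know that products of affine $S$-schemes computed in $\mathbf{Aff}_{/S}$ agree with fibre products in $\Sch_{/S}$, and that $S$ is terminal in both categories. The terminal object is clear, since $\id_S$ is affine. The agreement of products is exactly Proposition \ref{Prop: Section Background Scheme: Incl of Aff is cont}, which says the inclusion is continuous. A continuous, fully faithful inclusion carries the group-object diagrams, and with them the axioms expressed through product projections, to group objects in $\Sch_{/S}$, and it carries homomorphisms to homomorphisms. This gives $\Ab(\mathbf{Aff}_{/S})\to\Ab(\Sch_{/S})$.

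Composing the three functors yields the claimed functor. In particular each $\Vbb(\Fscr)=\RelSpec{S}(\RelSym{\Ocal_S}\Fscr)$ is an Abelian group object in $\Sch_{/S}$. The main care needed throughout is keeping track of the product-comparison isomorphisms, and coherence holds automatically because these isomorphisms are the canonical ones coming from universal properties.
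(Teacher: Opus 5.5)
Your proposal is correct and follows the paper's own proof. The paper likewise takes the first functor from Corollary \ref{Cor: Section Background Scheme: Hopf algebra result in opposite land}, lifts the equivalence $\RelSpec{S}$ to internal Abelian groups (phrased there as equivalences lifting to models of algebraic theories), and gets the last functor from the continuity of the inclusion in Proposition \ref{Prop: Section Background Scheme: Incl of Aff is cont}; your explicit tracking of the product-comparison isomorphisms and of the terminal object only spells out what the paper leaves implicit.
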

\begin{proof}
The first functor is simply the functor of Corollary \ref{Cor: Section Background Scheme: Hopf algebra result in opposite land}. The fact that $\RelSpec{S}$ translates to the equivalence of categories between categories of internal Abelian groups is immediate from the fact that equivalences of categories lift to equivalences between models of algebraic theories. The presence of the final functor into $\Ab(\Sch_{/S})$ is precisely the statement of Proposition \ref{Prop: Section Background Scheme: Incl of Aff is cont}.
\end{proof}

\section{A Review of K{\"a}hler Differentials for Rings and Schemes}\label{Section: Diffles}

Derivations, and the corresponding module of K{\"a}hler differentials, are some of the most fundamental tools in differential algebra and differential algebraic geometry. Because the theory of derivations both provides a groundwork for studying the differential geometric aspects of algebraic geometry and commutative algebra, it is quite important for us to have a firm grasp on this theory so that we may see how it controls and reflects the actual tangential geometry that schemes carry; it is additionally important to see how the differential algebra schemifies to differential algebraic geometry and the ways in which we can see differential algebraic geometry as glued-together affine-local differential algebra. Consequently, in this section we both review the basics of differential algebra and the theory of derivations for commutative ring as well as how to extend these notions to schemes.

\subsection{The Theory of Derivations for Rings and The Module of K{\"a}hler Differentials}\label{Subsection:Derivations for rings}

In this section we record some basic but important results regarding the module of relative K{\"a}hler differentials for a commutative algebra $R \to A$ as well its extension to the sheaf-theoretic and scheme-theoretic settings. To this end let us first recall the module $\Kah{A}{R}$  for a commutative $R$-algebra is. While we generally assume basic familiarity with derivations, let us recall at least the definitions inasmuch as we need them.

\begin{definition}[{\cite[Definition 4.1]{RickRobinRobertTim}}]
	Let $R$ be a cring, let $A$ be a commutative $R$-algebra, and let $M$ be an $A$-module. \emph{An $R$-derivation} from $A$ to $M$, $\partial:A \to M$, is an $R$-linear morphism for which the equation
	\[
	\partial(ab) = a\partial(b) + \partial(a)b
	\]
	holds for all $a, b \in A$. We write $\mathsf{Der}_R(A,M)$ for the set of all $R$-derivations from $A$ to the $A$-module $M$.
\end{definition}
\begin{remark}
	The identity
	\[
	\partial(ab) = a\partial(b) + \partial(a)b
	\]
	is often called the \emph{Leibniz Rule}. We will make use of said nomenclature in this paper.
\end{remark}
\begin{remark}
If one is interested in extending K{\"a}hler differentials to rigs, one is obliged to add the equation
\[
\partial(1) = 0
\]
to the Leibniz rule. While this is implied for rings, in rigs one simply has
\[
\partial(1) = \partial(1\cdot 1) = \partial(1) + \partial(1)
\]
which simply says that $\partial(1)$ is an additive idempotent when one does not have negatives.
\end{remark}
We now give an abstract definition of the module of relative K{\"a}hler differentials.
\begin{definition}\label{Defn: Section Diff: Kahler Diffs}
	Let $R$ be a cring and let $A$ be a commutative $R$-algebra. The \emph{$A$-module of relative K{\"a}hler differentials}, $\Kah{A}{R}$, is the $A$-module which corepresents the functor
	\[
	\mathsf{Der}_R(A,-):\Mod{A} \to \Mod{A}.
	\]
	That is, $\Kah{A}{R}$ is defined by natural isomorphisms
	\[
	\Mod{A}(\Kah{A}{R},M) \cong \mathsf{Der}_R(A,M)
	\]
	for all $A$-modules $M$.
\end{definition}
\begin{remark}
	A more familiar way to phrase the universal property defining $\Kah{A}{R}$ is to say first that there is an $R$-derivation $\mathrm{d}:A \to \Kah{A}{R}$ such that that for any $A$-module $M$ and any $R$-derivation $\partial:A \to M$, there exists a unique $A$-module morphism $\overline{\partial}:\Kah{A}{R} \to M$ making
	\[
	\begin{tikzcd}
		\Kah{A}{R} \ar[r, dashed]{}{\exists!\overline{\partial}} & M \\
		A \ar[u]{}{\mathrm{d}} \ar[ur, swap]{}{\partial}
	\end{tikzcd}
	\]
	commute. This leads to the familiar description of $\Kah{A}{R}$ in terms of the free $A$-module generated by symbols $\mathrm{d}a$ subject to the rules
	\[
	\begin{cases}
		\mathrm{d}(a+b) = \mathrm{d}(a) + \mathrm{d}(b) & \text{for all}\, a, b \in A; \\
		\mathrm{d}(ab) = \mathrm{d}(a)b + a\mathrm{d}(b) & \text{for all}\, a,b \in A.
	\end{cases}
	\]
\end{remark}

Given a morphism $f:A \to B$ of commutative $R$-algebras, we can induce an $A$-linear map  of the form $\Kah{A}{R} \to \res_f(\Kah{B}{R})$ as follows. For each $a \in A$, we define
\[
\mathrm{d}a \mapsto \mathrm{d}\bigg(f(a)\bigg)
\]
and extend $A$-linearly. This induces the map $\mathrm{d}f:\Kah{A}{R} \to \res_f(\Kah{B}{R})$ which lives entirely within $\Mod{A}$ and allows us to write the K{\"a}hler module construction as  functor from commutative $R$-algebras to the fibration of modules. There is an analogous  construction for semirings performed in \cite{RobinMePartialMapRigs},  but it is more involved and delicate.

\begin{proposition}\label{Prop: Section Kahlers: Kahler diffles are functors}
	The K{\"a}hler differentials induce  a functor
	\[
	\Kah{(-)}{R}:\CAlg{R} \to \MMod
	\]
	given on objects by $A \mapsto (A,\Kah{A}{R})$ and on morphisms $f:A \to B$ of $R$-algebras via
	\[
	\Kah{(-)}{R}(f) := (f,\mathrm{d}f):\left(A,\Kah{A}{R}\right) \to \left(B,\Kah{B}{R}\right).
	\]
\end{proposition}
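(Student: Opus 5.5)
To prove the proposition, I will check three things: the assignment $f \mapsto (f,\mathrm{d}f)$ is well defined as a morphism of $\MMod$, it preserves identities, and it preserves composition. For well-definedness, I will use the corepresenting property of $\Kah{A}{R}$ from Definition \ref{Defn: Section Diff: Kahler Diffs}. The composite
\[
A \xrightarrow{f} B \xrightarrow{\mathrm{d}_B} \Kah{B}{R}
\]
is $R$-linear, since $f$ is a map of $R$-algebras and $\mathrm{d}_B$ is $R$-linear. It satisfies
\[
\mathrm{d}_B(f(ab)) = f(a)\,\mathrm{d}_B(f(b)) + \mathrm{d}_B(f(a))\,f(b).
\]
This is exactly the Leibniz rule for a map into the $A$-module $\res_f(\Kah{B}{R})$, where $a$ acts by multiplication by $f(a)$. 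So $\mathrm{d}_B \circ f \in \mathsf{Der}_R(A,\res_f(\Kah{B}{R}))$. The universal property then gives a unique $A$-linear map
\[
\mathrm{d}f:\Kah{A}{R} \to \res_f(\Kah{B}{R}), \qquad \mathrm{d}f \circ \mathrm{d}_A = \mathrm{d}_B \circ f.
\]
This is precisely the map $\mathrm{d}a \mapsto \mathrm{d}(f(a))$ described before the statement. Since $f$ is in particular a ring map, the pair $(f,\mathrm{d}f)$ is a morphism $(A,\Kah{A}{R}) \to (B,\Kah{B}{R})$ in $\MMod$ by Definition \ref{Defn: Section Background Algebra: Bifibration of modules}.

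Identities follow from uniqueness. Both $\mathrm{d}(\id_A)$ and $\id_{\Kah{A}{R}}$ satisfy $\varphi \circ \mathrm{d}_A = \mathrm{d}_A$, so they are equal, and hence $\Kah{(-)}{R}(\id_A) = (\id_A,\id_{\Kah{A}{R}})$ is the identity of $\MMod$.

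For composition, let $f:A \to B$ and $g:B \to C$ be maps of $R$-algebras. The composite in $\MMod$ is
\[
(g,\mathrm{d}g) \circ (f,\mathrm{d}f) = \big(g \circ f,\ \res_f(\mathrm{d}g) \circ \mathrm{d}f\big).
\]
Since $\res_f$ does not change the underlying map, $\res_f(\mathrm{d}g)$ is just $\mathrm{d}g$ regarded as an $A$-linear map $\res_f(\Kah{B}{R}) \to \res_f\res_g(\Kah{C}{R})$. Because $\res_f \circ \res_g = \res_{g\circ f}$ strictly (Definition \ref{Defn: Background Algebra: Pseudofunctor of Modules}), the composite $\res_f(\mathrm{d}g) \circ \mathrm{d}f$ is an $A$-linear map $\Kah{A}{R} \to \res_{g \circ f}(\Kah{C}{R})$, the same type as $\mathrm{d}(g \circ f)$. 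Precomposing with $\mathrm{d}_A$ gives
\[
\mathrm{d}g \circ \mathrm{d}f \circ \mathrm{d}_A = \mathrm{d}g \circ \mathrm{d}_B \circ f = \mathrm{d}_C \circ g \circ f.
\]
So both maps satisfy the defining equation of $\mathrm{d}(g\circ f)$, and uniqueness in the universal property of $\Kah{A}{R}$ forces $\mathrm{d}(g\circ f) = \res_f(\mathrm{d}g)\circ\mathrm{d}f$. This proves functoriality.

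I expect no step to be hard. The only place needing care is the bookkeeping of module structures. One must check that $\mathrm{d}_B \circ f$ is a derivation into the restricted module $\res_f(\Kah{B}{R})$, not into $\Kah{B}{R}$ as a $B$-module. One must also check that the composition law of $\MMod$ really produces $\res_f(\mathrm{d}g)\circ\mathrm{d}f$, which relies on the strict equality $\res_f\circ\res_g=\res_{g\circ f}$.
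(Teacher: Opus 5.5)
Your proof is correct and follows essentially the same route as the paper. The paper gives no separate proof: just before the statement it defines $\mathrm{d}f$ by $\mathrm{d}a \mapsto \mathrm{d}(f(a))$, extended $A$-linearly, and leaves well-definedness and functoriality implicit, and your use of the corepresenting property (via the derivation $\mathrm{d}_B \circ f$ into $\res_f(\Kah{B}{R})$) and of uniqueness for the identity and composition laws supplies exactly those omitted checks.
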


The major benefit of taking this perspective is that it allows us to carefully and precisely phrase the relative cotangent sequence and then extend it to the sheaf-theoretic setting. 
\begin{definition}[Relative Cotangent Sequence; cf.\@ {\cite[Th{\'e}or{\`e}me 20.5.7.i]{EGA04}, \cite[Proposition 16.2]{Eisenbud}}]
	For any  any composable pair of morphisms of commutative rings $f:R \to A$ and $g:A \to B$ there is an exact sequence of $B$-modules:
	\[
	\begin{tikzcd}
		\Kah{A}{R} \otimes_A B \ar[r]{}{u_{f,g}} & \Kah{B}{R} \ar[r]{}{v_{f,g}} & \Kah{B}{A} \ar[r] & 0
	\end{tikzcd}
	\]
\end{definition}
\begin{remark}\label{Remark: Section Kahlers: Cotangent sequence}
	The map $u_{f,g}$ is defined on pure tensors by
	\[
	\mathrm{d}(a) \otimes b \mapsto b\mathrm{d}\bigg(f(a)\bigg).
	\]
	This is precisely the adjoint transpose of the $A$-module map $\mathrm{d}f$:
	\begin{prooftree}
		\AxiomC{$\mathrm{d}f:\Kah{A}{R} \to \res_{f}(\Kah{B}{R})$ in $\Mod{A}$}
		\UnaryInfC{$u_{f,g} = (\mathrm{d}f)^{\sharp}:\Kah{A}{R} \otimes_A B \to \Kah{B}{R}$ in $\Mod{B}$}
	\end{prooftree}
\end{remark}

In order to discuss how to extend the relative K{\"a}hler differentials $\Kah{A}{R}$ of a ring map $\nu:R \to A$ to the relative K{\"a}hler differentials $\Kah{X}{S}$ of a scheme map $f:X \to S$, we first require a sheaf of relative differentials. For the purpose of getting to the tangent structure on $\Sch_{/S}$, we will also need to know how to work with sheaves of relative differentials and the fact that said sheaf is quasi-coherent over $X$.

\subsection{Extending K{\"a}hler Differentials to Schemes}\label{Subsection: Scheme Diffles}

As detailed in \cite[Section 16.3]{EGA44} and \cite[Chapter II.8]{Hartshorne}, we can extend the module of relative K{\"a}hler differentials to a \emph{quasi-coherent sheaf} of relative K{\"a}hler differentials. While \cite[Section 16.3]{EGA44} defines the sheaf $\Kah{X}{S}$ in terms of the graded filtered algebra associated to the diagonal $\Delta_f:X \to X \times_S X$ of the structure map $f:X \to S$, we do not take that perspective here. Instead we apply the various machinery of K{\"a}hler differentials in order to translate from modules to quasicoherent sheaves.

To construct the sheaf $\Kah{X}{S}$ we use the theory of scheme derivations and a functorial/representable functor characterization of K{\"a}hler differentials. For this we require a short review of said derivations before giving a proof of the sheaf $\Omega_{X/S}$ in terms of its universal property as a (co)representing object for the functor of derivations.

\begin{definition}[{\cite[Section 16.5.1]{EGA44}}]
Let $f:X \to S$ be a map of schemes and let $\Fscr$ be a quasi-coherent sheaf on $X$. Regard $\Ocal_X$ and $\Fscr$ as $f^{-1}\Ocal_S$-modules via the sheaf of rings morphism $f^{\flat}:f^{-1}\Ocal_S \to \Ocal_X$. An \emph{$f$-derivation on $\Ocal_X$ with coefficients in $\Fscr$} (alternatively \emph{an $S$-derivation on $\Ocal_X$ with coefficients in $\Fscr$} if the map $f$ is clear from context) is a morphism
\[
\partial \in \Mod{\left(f^{-1}\Ocal_S\right)}\!\left(\Ocal_X,\Fscr\right)
\]
such that:
\begin{itemize}
	\item for all opens $U \subseteq \lvert X \rvert$ the $U$-component of $\partial$ is a derivation
	\[
	\partial_U \in \mathsf{Der}_{(f^{-1}\Ocal_S)(U)}\left(\Ocal_X(U),\Fscr(U)\right)
	\]
	\item Assume we have opens inclusions $U \subseteq \lvert X \rvert$ and $V \subseteq \lvert S \rvert$ open for which there is a commuting diagram of the form:
	\[
	\begin{tikzcd}
	U \ar[ddr, swap, bend right = 20, dashed]{}{\exists\,j_{UV}} \ar[drr, bend left = 20]{}{j_U} \ar[dr, dashed]{}{\exists!} \\
	 & V \times_S X \ar[r]{}{\pi_1} \ar[d, swap]{}{\pi_0} & X \ar[d]{}{f} \\
	 & V \ar[r, swap]{}{j_V} & S
	\end{tikzcd}
	\]
	That is,  $f(U) \subseteq V$. Additionally, write $\alpha_{UV}:\Ocal_S(V) \to (f^{-1}\Ocal_S)(U)$ for the induced morphism. We then ask that for all $t \in \Ocal_X(U)$ and for all $s \in \Ocal_S(V)$,
	\[
	\partial_U\big(\alpha_{UV}(s)t\big) = \alpha_{UV}(s)\partial_U(t).
	\]
\end{itemize} 
\end{definition}
%

As in the usual ring-theoretic setting, we can rephrase giving $S$-derivations in terms of giving sections of an infinitesimal extension of $\Ocal_X$ by the quasi-coherent sheaf. That is, if $\Ocal_X\ltimes \Fscr$ denotes the sheaf of rings for which
\[
\left(\Ocal_X\ltimes \Fscr\right)(U) := \Ocal_X(U) \ltimes \Fscr(U) = \left((s,f) \; | \; s \in \Ocal_X(U), f \in \Fscr(U)\right)
\]
with the ``ring of dual numbers'' multiplication $(s,f)(x,g) = (sx, sg + xf)$ then the first projection $\pr_0:\Ocal_X \ltimes \Fscr \to \Ocal_X$ is a sheaf map. Moreover, both sheaves $\Ocal_X$ and $\Ocal_X\ltimes \Fscr$ are $f^{-1}\Ocal_S$-algebras. Running the usual argument  gives the following lemma.

\begin{lemma}[{\cite[Section 16.5.1]{EGA44}}]\label{Lemma: Section Background Scheme: S derivations and sections}
Let $f:X \to S$ be a morphism of schemes and let $\Fscr$ be a quasi-coherent sheaf on $X$. Then to give an $f$-derivation $\partial:\Ocal_X \to \Fscr$ is equivalent to giving a section to the projection $\pr_0:\Ocal_X \ltimes \Fscr \to \Ocal_X$ in the category $\CAlg{f^{-1}\Ocal_S}$.
\end{lemma}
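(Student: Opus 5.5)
The plan is to prove the lemma sectionwise, as one does for rings, and then check that the sectionwise constructions are compatible with restriction. Given an $f$-derivation $\partial:\Ocal_X \to \Fscr$, define
\[
\sigma_\partial:\Ocal_X \to \Ocal_X \ltimes \Fscr, \qquad (\sigma_\partial)_U(t) := (t,\partial_U(t))
\]
on each open $U \subseteq \lvert X \rvert$. Conversely, given a section $\sigma$ of $\pr_0$ in $\CAlg{f^{-1}\Ocal_S}$, define $\partial_\sigma := \pr_1 \circ \sigma$, where $\pr_1:\Ocal_X \ltimes \Fscr \to \Fscr$ is the second projection. The projection $\pr_1$ is a morphism of sheaves of abelian groups (indeed of $f^{-1}\Ocal_S$-modules), since $\Ocal_X \ltimes \Fscr \cong \Ocal_X \oplus \Fscr$ as sheaves of modules. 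The two assignments are visibly mutually inverse: because $\pr_0 \circ \sigma = \id$, we have $\sigma_U(t) = (t, \pr_1\sigma_U(t))$.

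The key steps are as follows.

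\begin{enumerate}
\item \emph{Ring maps.} For each $U$, $(\sigma_\partial)_U$ is a ring map if and only if $\partial_U$ satisfies the Leibniz rule. Additivity is immediate, and multiplicativity follows from the dual-number multiplication:
\[
(s,\partial s)(t,\partial t) = (st,\, s\,\partial t + t\,\partial s).
\]
Unitality corresponds to $\partial(1) = 0$, which the Leibniz rule forces over rings.

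\item \emph{Algebra maps and linearity.} A ring map $\Ocal_X(U) \to \Ocal_X(U) \ltimes \Fscr(U)$ is a map of $(f^{-1}\Ocal_S)(U)$-algebras exactly when it commutes with the structure maps. Here the structure map of $\Ocal_X \ltimes \Fscr$ is $r \mapsto (f^\flat(r),0)$. Commuting with it is equivalent to $\partial_U(f^\flat(r)) = 0$, and, given the Leibniz rule, this is equivalent to $(f^{-1}\Ocal_S)(U)$-linearity of $\partial_U$. This also recovers the second axiom: the compatibility $\partial_U(\alpha_{UV}(s)t) = \alpha_{UV}(s)\partial_U(t)$ is the special case of linearity along the elements $\alpha_{UV}(s)$.

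\item \emph{Naturality in $U$.} Naturality of $\sigma_\partial$ in $U$ is equivalent to naturality of $\partial$, because the restriction maps of $\Ocal_X \ltimes \Fscr$ are componentwise. Hence $\sigma_\partial$ is a morphism of sheaves, and conversely $\pr_1 \circ \sigma$ is a morphism of sheaves of $f^{-1}\Ocal_S$-modules.
\end{enumerate}

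The main obstacle is step 2. The definition of $f$-derivation only imposes linearity on elements of the form $\alpha_{UV}(s)$ with $s \in \Ocal_S(V)$, whereas an algebra map over $f^{-1}\Ocal_S$ must respect all sections of the inverse-image sheaf. These sections are only locally of the form $\alpha_{UV}(s)$, since $f^{-1}\Ocal_S$ is a sheafification of the colimit presheaf. I would handle this first by observing that $\partial$ is by hypothesis a morphism in $\Mod{(f^{-1}\Ocal_S)}$, which already gives full linearity. If one instead wants to derive linearity from the second axiom alone, the argument runs as follows:
\begin{itemize}
\item a section $r \in (f^{-1}\Ocal_S)(U)$ is locally, on a cover $U_k$ of $U$, of the form $\alpha_{U_kV_k}(s_k)$;
\item on each $U_k$ the axiom gives $\partial_{U_k}(rt|_{U_k}) = r|_{U_k}\,\partial_{U_k}(t|_{U_k})$;
\item the identity $\partial_U(rt) = r\,\partial_U(t)$ then follows from the sheaf property of $\Fscr$, since both sides agree on every $U_k$.
\end{itemize}
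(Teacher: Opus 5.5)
Your proposal is correct and is exactly the sectionwise dual-numbers argument the paper has in mind; the paper gives no proof of its own beyond ``running the usual argument'' and citing EGA IV, 16.5.1. The obstacle you flag in step 2 does not actually arise. The paper's definition of $\mathsf{Der}_R(A,M)$ already requires $R$-linearity, and an $f$-derivation is by hypothesis a morphism in $\Mod{f^{-1}\Ocal_S}$. So each $\partial_U$ is already linear over all of $(f^{-1}\Ocal_S)(U)$, and your local gluing argument is optional.
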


A straightforward calculation shows that for any scheme map $f:X \to S$ and any quasi-coherent sheaf $\Fscr$, the set $\mathsf{Der}_S(\Ocal,\Fscr)$ is an $\Ocal_X(\lvert X\rvert)$-module. If there is a morphism $\varphi:\Fscr \to \Gscr$ of quasi-coherent sheaves, post-composition by $\varphi$ gives an induced map
\[
\varphi_{\circ}:\mathsf{Der}_S(\Ocal_X,\Fscr) \to \mathsf{Der}_S(\Ocal_X,\Gscr)
\]
and shows that, as in the ring-theoretic case, there is a functor
\[
\mathsf{Der}_S(\Ocal_X,-):\QCoh(X) \to \Mod{\Ocal_X(\lvert X\rvert)}.
\]
We will prove that this is corepresentable below by \emph{constructing} the sheaf $\Kah{X}{S}$ as the corepresenting object of the derivation functor. We first show that we can sheafify the K{\"a}hlers first and then conclude that this sheaf corepresents the derivations. Note that our construction of $\Kah{X}{S}$ is distinct from what is conventionally found in the algebraic geometry literature: there (such as in \cite{EGA44}, \cite{Hartshorne}, \cite{stacks-project}) the sheaf $\Omega_{X/S}$ on $X$ is defined as the pullback against the diagonal $\Delta:X \to X \times_S X$ of a certain quasi-coherent $\Iscr/\Iscr^2$ where $\Iscr$ is a sheaf of ideals on $X \times_S X$ locally generated by the kernel of the multiplication map $B \otimes_A B \to B$ over affine patches\footnote{The yoga here is twofold. First, the pullback $X \times_S X$ is generated by gluing open affines pullbacks $U_i \times_{V_j} U_k$ where $U_i, U_k$ are open affines of $X$ and where $V_j$ is an open affine of $S$. Second, given a ring map $A \to B$, the diagonal map $\Spec B \to \Spec B \times_A \Spec B$ is the spectrum of the multiplication map $\mu:B \otimes_A B \to B$. Putting these together, we find that we can generate a quasi-coherent sheaf $\Iscr$ by ensuring that the values of $\Iscr(U_i  \times_{V_j} U_i)$ are isomorphic to $\operatorname{Ker}(\Gamma(U_i) \otimes_{\Gamma(V_j)} \Gamma(U_i) \to \Gamma(U_i)$ and then consider its quotient sheaf $\Iscr/\Iscr^2$. This ideal is well-known locally to be related to $\Kah{\Gamma(U_i)}{\Gamma(V_j)}$ and pulling this sheaf back to the original scheme gives us our K{\"a}hler differentials.}. However, there is an arguably more conceptually straightforward structural argument we can make which only involves the existence of the K{\"a}hlers on each open subscheme of $X$. In fact, the argument we make can conjecturally be massaged to work at the level of K{\"a}hler categories which need not invoke kernels nor negatives in its construction. Because of the author's interest in rig-theoretic algebraic geometry, we have chosen to take this additive inverse/kernel agnostic\footnote{In the sense that it is indifferent to the existence or lack thereof of both additive inverses or kernels.} approach towards building $\Kah{X}{S}$. The cost, however, is a requirement to build this from the ground up.

In order to construct the sheaf $\Kah{X}{S}$ and show that it is quasi-coherent over $X$, we proceed in three steps:
\begin{enumerate}
	\item First we show $\Kah{X}{S}$ is a presheaf on $X$.
	\item Second we show that for a map of affine schemes $\Spec f:\Spec B \to \Spec A$, there is an isomorphism of presheaves $\Kah{\Spec B/\Spec A} \cong \widetilde{\Kah{B}{A}}$.
	\item Third we show that $\Kah{X}{S}$ is a quasi-coherent sheaf whose affine open restrictions are those constructed in Step $(2)$.
\end{enumerate}
By virtue of combining these results, we will be able to deduce that $\Kah{X}{S}$ is a quasi-coherent sheaf \emph{and} that $\Kah{X}{S}$ corepresents the functor $\mathsf{Der}_S(\Ocal_X,-)$.

\begin{proposition}\label{Prop: Section Kahlers: Presheaf of relative Kahlers}
Let $f:X \to S$ be a morphism of schemes. Then there is a presheaf $\Kah{X}{S}$ of $\Ocal_X$-modules on $X$ with the property that for all affine opens $U$ of $X$,
\[
\left(\Kah{X}{S}\right)(U) = \Kah{\Ocal_X(U)}{(f^{-1}\Ocal_S)(U)}.
\]
\end{proposition}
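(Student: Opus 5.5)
I will define $\Kah{X}{S}$ on every open $U \subseteq \lvert X \rvert$, not only the affine ones, by the formula
\[
\left(\Kah{X}{S}\right)(U) := \Kah{\Ocal_X(U)}{(f^{-1}\Ocal_S)(U)}.
\]
This agrees with the displayed formula on affine opens by construction. The sheaf map $f^{\flat}:f^{-1}\Ocal_S \to \Ocal_X$ makes each $\Ocal_X(U)$ a commutative $(f^{-1}\Ocal_S)(U)$-algebra, so each value is a well-defined $\Ocal_X(U)$-module. What remains is to build the restriction maps and check the presheaf and module axioms, and the plan is to get these from the functoriality of K{\"a}hler differentials recorded in Proposition \ref{Prop: Section Kahlers: Kahler diffles are functors}, suitably extended to let the base ring vary as well.

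For $V \subseteq U$ open, write $r_{UV}:\Ocal_X(U) \to \Ocal_X(V)$ and $s_{UV}:(f^{-1}\Ocal_S)(U) \to (f^{-1}\Ocal_S)(V)$ for the restriction maps of the two sheaves. Since $f^{\flat}$ is a morphism of sheaves of rings, the square $r_{UV} \circ f^{\flat}_U = f^{\flat}_V \circ s_{UV}$ commutes. So $(s_{UV}, r_{UV})$ is a morphism of ring maps (an arrow in the arrow category of $\Cring$), not just a map of algebras over one fixed base. I will therefore first record a mild generalization of Proposition \ref{Prop: Section Kahlers: Kahler diffles are functors}: for any commuting square of rings $R \to A$, $R' \to A'$ with maps $s:R \to R'$ and $r:A \to A'$, the assignment $\mathrm{d}a \mapsto \mathrm{d}(r(a))$ extends $A$-linearly to a map
\[
\mathrm{d}(s,r):\Kah{A}{R} \to \res_r\left(\Kah{A'}{R'}\right).
\]
To see that it is well defined, I will check that $a \mapsto \mathrm{d}(r(a))$ is an $R$-derivation $A \to \res_r(\Kah{A'}{R'})$ and then invoke the universal property of $\Kah{A}{R}$. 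It is additive and satisfies Leibniz because $r$ is a ring map. For $R$-linearity, I will use commutativity of the square: an element $x \in R$ maps to $s(x) \in R'$, and $\mathrm{d}$ kills the image of $R'$ in $A'$. Uniqueness in the universal property then gives $\mathrm{d}(s,r)$ as the unique $A$-linear map sending $\mathrm{d}a \mapsto \mathrm{d}(r(a))$.

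I will take the restriction map of $\Kah{X}{S}$ along $V \subseteq U$ to be $\rho_{UV} := \mathrm{d}(s_{UV}, r_{UV})$. Functoriality then reduces to a uniqueness argument. The identity $\rho_{UU} = \id$ holds because $\mathrm{d}(\id,\id)$ and $\id$ agree on the generators $\mathrm{d}a$. For $W \subseteq V \subseteq U$, both $\rho_{VW} \circ \rho_{UV}$ and $\rho_{UW}$ are $\Ocal_X(U)$-linear and send $\mathrm{d}a$ to $\mathrm{d}(r_{UW}(a))$, because $r_{VW}\circ r_{UV} = r_{UW}$. Since the elements $\mathrm{d}a$ generate $\Kah{\Ocal_X(U)}{(f^{-1}\Ocal_S)(U)}$ as an $\Ocal_X(U)$-module, the two maps are equal. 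Finally, compatibility with the $\Ocal_X$-module structure, $\rho_{UV}(t\omega) = r_{UV}(t)\rho_{UV}(\omega)$, is exactly the $\res_{r_{UV}}$-linearity built into $\mathrm{d}(s,r)$. Together these show that $\Kah{X}{S}$ is a presheaf of $\Ocal_X$-modules.

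The main obstacle is the generalization step: the base ring $(f^{-1}\Ocal_S)(U)$ changes with $U$, so the fixed-base functor $\Kah{(-)}{R}$ does not apply verbatim. One must verify honestly that the square built from $f^{\flat}$ commutes and that this is enough for $R$-linearity of the composite derivation. A secondary subtlety is that $f^{-1}\Ocal_S$ is only the sheafified inverse image, so its sections need not be explicit. The argument above avoids this: it uses only that $f^{-1}\Ocal_S$ is a sheaf of rings with restriction maps and that $f^{\flat}$ is a morphism of sheaves of rings, never an explicit description of the sections.
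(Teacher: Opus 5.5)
Your proof is correct, and it produces the same restriction maps as the paper, namely $\mathrm{d}a \mapsto \mathrm{d}(r_{UV}(a))$. You reach them by a different route.

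\emph{The paper's route.} It keeps the base ring fixed: it applies $\mathrm{d}r_{UV}\colon \Kah{\Ocal_X(U)}{(f^{-1}\Ocal_S)(U)} \to \Kah{\Ocal_X(V)}{(f^{-1}\Ocal_S)(U)}$ and tensors with $(f^{-1}\Ocal_S)(V)$ over $(f^{-1}\Ocal_S)(U)$. It then invokes base change for K\"ahler differentials together with an identification $\Ocal_X(V)\otimes_{(f^{-1}\Ocal_S)(U)}(f^{-1}\Ocal_S)(V)\cong \Ocal_X(V)$ to land in $\Kah{\Ocal_X(V)}{(f^{-1}\Ocal_S)(V)}$. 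Functoriality is omitted there as tedious.

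\emph{Your route.} You extend the functoriality of $\Omega$ to commuting squares of rings, so the change of coefficient ring is absorbed into a single universal-property argument. Functoriality then follows at once from agreement on the generators $\mathrm{d}a$.

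\emph{Why yours is more robust.} The paper's identification holds when $s_{UV}$ is an epimorphism of rings (e.g.\ a localization), but it fails in general. For $X=S=\mathbb{P}^1_k$, $f=\id$, $U=X$, $V=\mathbb{A}^1_k$, it would assert $k[t]\otimes_k k[t]\cong k[t]$, and the claimed isomorphism of differentials would give $\Kah{k[t]}{k}\otimes_k k[t]\cong \Kah{k[t]}{k[t]}=0$, which is false. If that claimed isomorphism is replaced by the natural map $\mathrm{d}b\otimes x\mapsto x\,\mathrm{d}b$ into $\Kah{\Ocal_X(V)}{(f^{-1}\Ocal_S)(V)}$, which always exists, the paper's composite becomes exactly your $\mathrm{d}(s_{UV},r_{UV})$.

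\emph{What the paper's framing buys.} Where it is valid, it presents restrictions as base-change maps, which is the form used in the next lemma on basic opens of an affine scheme. With your definition, that comparison is a short check on generators.

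One caution applies equally to both versions. On non-affine opens the same formula does not give a sheaf: for an elliptic curve $E$ over a field $k$, its value on $E$ is $\Kah{k}{k}=0$, whereas $\Gamma(E,\Omega_{E/k})\cong k$. So for the subsequent quasi-coherence claim, only the values on affine opens should be used, followed by sheafification.
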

\begin{proof}
Given the statement of the proposition we must first construct the restriction maps
\[
\left(\Kah{X}{S}\right)(U) \to \left(\Kah{X}{S}\right)(V)
\]
for $U \supseteq V$ open; second prove that these assemble to a sheaf on $X$; and, third, prove that the given sheaf is quasi-coherent, i.e., that $\Omega_{X/S}$ is given by gluing affine locally.

In the first case write $\rho_{UV}:=\Ocal_X(U \supseteq V):\Ocal_X(U) \to \Ocal_X(V)$ for the restriction morphism of $\Ocal_X$. By Proposition \ref{Prop: Section Kahlers: Kahler diffles are functors} we have that $\mathrm{d}\rho_{UV}$ is a map
\[
\mathrm{d}\left(\rho_{UV}\right):\Kah{\Ocal_X(U)}{(f^{-1}\Ocal_S)(U)} \to \Kah{\Ocal_X(V)}{(f^{-1}\Ocal_S)(U)}
\]
in $\Mod{\Ocal_X(U)}$; note that we have omitted the $\res_{\rho_{UV}}$ for the sake of readability and to streamline some arguments. Now as these module maps restrict, via the ring maps from $(f^{-1}\Ocal_S)(U)$, to maps of $(f^{-1}\Ocal_S)(U)$-modules. Thus we obtain via an extension of scalars a module map
\[
\Kah{\Ocal_X(U)}{(f^{-1}\Ocal_S)(U)} \underset{{(f^{-1}\Ocal_S)(U)}}{\otimes} \left(f^{-1}\Ocal_S\right)(V) \xrightarrow{\mathrm{d}(\rho_{UV}) \otimes \id} \Kah{\Ocal_X(V)}{(f^{-1}\Ocal_S)(U)} \underset{(f^{-1}\Ocal_S)(U)}{\otimes} \left(f^{-1}\Ocal_S\right)(V).
\]
On one hand, from basic results on K{\"a}hler differentials (cf.\@, for instance, \cite[Proposition 16.4]{Eisenbud}) we have a natural isomorphism
\[
\Kah{\Ocal_X(V)}{(f^{-1}\Ocal_S)(U)} \underset{(f^{-1}\Ocal_S)(U)}{\otimes} \left(f^{-1}\Ocal_S\right)(V) \cong \Kah{(\Ocal_X(V) \otimes_{(f^{-1}\Ocal_S)(U)} \left(f^{-1}\Ocal_S\right)(V))}{(f^{-1}\Ocal_S)(V)}.
\]
Using the fact that colimits commute, that $f^{-1}(\Ocal_S)$ is computed via sheafification (which is a colimit) of a filtered colimit, and the fact that $V \subseteq U$ we get a natural isomorphism
\[
\Ocal_X(V) \otimes_{(f^{-1}\Ocal_S)(U)} \left(f^{-1}\Ocal_S\right)(V) \cong \Ocal_X(V) \otimes_{(f^{-1}\Ocal_S)(V)} f^{-1}\left(\Ocal_X(V)\right) \cong \Ocal_X(V)
\]
and hence natural isomorphisms, given via composition,
\[
\Kah{(\Ocal_X(V) \otimes_{(f^{-1}\Ocal_S)(U)} \left(f^{-1}\Ocal_S\right)(V))}{(f^{-1}\Ocal_S)(V)} \cong \Kah{\Ocal_X(V)}{(f^{-1}\Ocal_S)(V)}
\]
and
\[
\Kah{\Ocal_X(V)}{(f^{-1}\Ocal_S)(U)} \underset{(f^{-1}\Ocal_S)(U)}{\otimes} \left(f^{-1}\Ocal_S\right)(V) \cong \Kah{\Ocal_X(V)}{(f^{-1}\Ocal_S)(V)}.
\]
By pre-composing with the tensor product inclusion
\[
\Kah{\Ocal_X(U)}{(f^{-1}\Ocal_S)(U)} \xrightarrow{\iota_0} \Kah{\Ocal_X(U)}{(f^{-1}\Ocal_S)(U)} \underset{{(f^{-1}\Ocal_S)(U)}}{\otimes} \left(f^{-1}\Ocal_S\right)(V)
\]
we obtain the restriction morphism $(\Kah{X}{S})(U \supseteq V)$ defined below:
\[
\begin{tikzcd}
\Kah{\Ocal_X(U)}{(f^{-1}\Ocal_S)(U)} \ar[rrr]{}{\iota_0} \ar[d, swap]{}{(\Kah{X}{S})(U \supseteq V)} & & &  \Kah{\Ocal_X(U)}{(f^{-1}\Ocal_S)(U)} \underset{{(f^{-1}\Ocal_S)(U)}}{\otimes} \left(f^{-1}\Ocal_S\right)(V) \ar[d]{}{\mathrm{d}(\rho_{UV}) \otimes \id} \\
\Kah{\Ocal_X(V)}{(f^{-1}\Ocal_S)(V)} & & & \Kah{\Ocal_X(V)}{(f^{-1}\Ocal_S)(U)} ]\underset{(f^{-1}\Ocal_S)(U)}{\otimes} \left(f^{-1}\Ocal_S\right)(V) \ar[lll]{}{\cong}
\end{tikzcd}
\]
Checking the functoriality of these maps is straightforward but tedious and omitted. Thus $\Kah{X}{S}$ is a presheaf of $\Ocal_X$-modules on $X$. 
\end{proof}

In order to prove that $\Kah{X}{S}$ is a quasi-coherent sheaf, we first will show that for affine scheme morphisms $\Spec B \to \Spec A$, the presheaf $\Kah{\Spec B}{\Spec A}$ is quasi-coherent by exhibiting an isomorphism $\Kah{X}{S} \cong \tilde{\Kah{B}{A}}$. We will need this when we prove that for general scheme maps $X \to S$ the sheaf $\Kah{X}{S}$ is quasi-coherent, as we will show that on affine covers $\Kah{X}{S}$ restricts to the sheaves $\tilde{\Kah{B_i}{A_j}}$.

\begin{lemma}\label{Lemma: Section Kahlers: Relative Kahlers are qcoh over Affine Schemes}
For a map $f:\Spec B \to \Spec A$ of affine schemes, the presheaf $\Kah{\Spec B}{\Spec A}$ is a quasi-coherent sheaf on $\Spec B$ with
\[
\Kah{\Spec B}{\Spec A} \cong \widetilde{\Kah{B}{A}}.
\]
\end{lemma}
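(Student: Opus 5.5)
The plan is to compare the presheaf $\Kah{\Spec B}{\Spec A}$ with $\widetilde{\Kah{B}{A}}$ on the basis of distinguished opens $D(g)$, $g \in B$. Since a sheaf on a scheme is determined by its values and restriction maps on a basis (the basis/pretopology remark in the paper), it suffices to produce isomorphisms
\[
\left(\Kah{\Spec B}{\Spec A}\right)(D(g)) \cong \Kah{B}{A} \otimes_B B_g = \widetilde{\Kah{B}{A}}(D(g))
\]
compatible with restriction along $D(h) \subseteq D(g)$. The comparison then extends to an isomorphism of presheaves on all opens, and it forces $\Kah{\Spec B}{\Spec A}$ to be a sheaf, indeed a quasi-coherent one by Theorem \ref{Thm: Section Background Scheme: QCoh on affine is just modules}.

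First I compute the left side. By Proposition \ref{Prop: Section Kahlers: Presheaf of relative Kahlers}, on the affine open $D(g) \cong \Spec B_g$ its value is $\Kah{B_g}{(f^{-1}\Ocal_A)(D(g))}$. The ring $(f^{-1}\Ocal_A)(D(g))$ receives a map from $A$, and every element of it is locally a localization $A_a$ of $A$ at some $a$ with $f(D(g)) \subseteq D(a)$. Any such element becomes invertible in $B_g$ after composing to $\Ocal_B$.

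The key algebraic inputs are standard facts about K\"ahler differentials (cf.\@ \cite[Section 16]{Eisenbud}). The first is localization: $\Kah{B_g}{A} \cong \Kah{B}{A} \otimes_B B_g$. The second is that derivations are insensitive to inverting elements of the base: if $A \to R \to B_g$ with $R$ a localization (or filtered colimit of localizations) of $A$, then $\Kah{B_g}{R} \cong \Kah{B_g}{A}$. For the second fact I would use the universal property of Definition \ref{Defn: Section Diff: Kahler Diffs}. An $A$-derivation $\partial$ kills $a^{-1}$ whenever it kills $a$, since $0=\partial(a \cdot a^{-1})$ gives $\partial(a^{-1}) = -a^{-2}\partial(a) = 0$. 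Hence $\mathsf{Der}_R(B_g,M) = \mathsf{Der}_A(B_g,M)$ for every $B_g$-module $M$, and corepresenting objects agree. The same argument handles sections of $f^{-1}\Ocal_A$ that are only locally quotients: each section is a compatible family of fractions, and an $A$-derivation annihilates each piece. The identity principle then lets me conclude that $\partial$ is linear over the whole ring. Composing the two facts gives the displayed isomorphism.

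Next I check compatibility with restriction. The restriction $D(g) \supseteq D(h)$ in Proposition \ref{Prop: Section Kahlers: Presheaf of relative Kahlers} is built from $\mathrm{d}\rho$ followed by base-change isomorphisms. Under the identifications above, both restrictions send $\mathrm{d}b \otimes (1/g^n)$ to $\mathrm{d}b \otimes (1/g^n)$ in $\Kah{B}{A}\otimes_B B_h$, so naturality reduces to checking this on generators $\mathrm{d}b$.

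The main obstacle is the second fact in its sheafified form: controlling the ring $(f^{-1}\Ocal_A)(D(g))$, which is not literally a localization of $A$, and showing that $\Kah{B_g}{(f^{-1}\Ocal_A)(D(g))}$ agrees with $\Kah{B_g}{A}$. I would first try the derivation-killing argument on germs. An $A$-derivation on $B_g$ kills the image of every element of $A_a$, and a section of $f^{-1}\Ocal_A$ is glued from such elements. Its image in $B_g$ is then locally, on a cover of $D(g)$ by $D(g_i)$, the image of an element of some $A_{a_i}$. So $\partial$ of it vanishes after mapping to each $M_{g_i}$. That only shows vanishing for $\widetilde{M}$-type coefficients, but the universal $B_g$-module $\Kah{B_g}{A}$ is itself of that type, since $\Kah{B_g}{A}$ embeds into $\prod_i (\Kah{B_g}{A})_{g_i}$ by the sheaf property of $\widetilde{\Kah{B}{A}}$. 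I would therefore test against $M=\Kah{B_g}{A}$ directly and conclude the equality of corepresenting objects.
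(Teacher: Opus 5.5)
Your computation on distinguished opens is correct, but at the key step it takes a different route from the paper.

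The paper identifies $(f^{-1}\Ocal_A)(D(b))$ with the localization $A[\Sigma_b^{-1}]$, where $\Sigma_b=\varphi^{-1}(b)$. It then asserts that the square $A\to A[\Sigma_b^{-1}]$, $B\to B_b$ is a pushout, and chains base change for K\"ahler differentials with the localization isomorphism. This is short, but it rests entirely on that identification.

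You never identify $R=(f^{-1}\Ocal_A)(D(g))$. You only use that its sections are images of elements of $A_{a_i}$ on a refinement $\{D(g_i)\}$ with $f(D(g_i))\subseteq D(a_i)$; say explicitly that you refine the cover to get this. From this you deduce that every $A$-derivation of $B_g$ kills the image of $R$, so $\Kah{B_g}{R}=\Kah{B_g}{A}$. After that, only $\Kah{B_g}{A}\cong\Kah{B}{A}\otimes_B B_g$ is needed.

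Your argument is the more robust of the two, for two reasons:
\begin{itemize}
\item $R$ need not be a localization of $A$: for $B=A\times A$ with the diagonal map and $g=1$, one gets $R=A\times A$.
\item If $b\notin\varphi(A)$, then $\Sigma_b=\emptyset$, and the paper's square is in general not a pushout.
\end{itemize}
Neither issue affects your proof.

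Your hedge about ``$\widetilde{M}$-type coefficients'' is unnecessary. For every $B_g$-module $M$, the map $M\to\prod_i M_{g_i}$ is injective because $\widetilde{M}$ is a sheaf on $\Spec B_g$. So the vanishing argument applies to arbitrary $M$, not only to $M=\Kah{B_g}{A}$.

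One step, which you share with the paper, is not valid as written. A presheaf that agrees compatibly with a sheaf on a basis need not be a sheaf, and the isomorphism does not extend to all opens for free. What you actually prove is that the restriction of $\Kah{\Spec B}{\Spec A}$ to distinguished opens is a sheaf on that basis, isomorphic to $\widetilde{\Kah{B}{A}}$. Hence the sheaf it determines is $\widetilde{\Kah{B}{A}}$. The literal values $\Kah{\Ocal_B(U)}{(f^{-1}\Ocal_A)(U)}$ on other opens can differ.

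For example, take $A=\mathbb{Q}$, $B=P=\prod_n\mathbb{Q}[x]$, and $U=\bigcup_n D(e_n)$ for the coordinate idempotents $e_n$. The value on $U$ is $\Kah{P}{Q}$ with $Q=\prod_n\mathbb{Q}$. The sheaf axiom would require the natural map $\Kah{P}{Q}\to\prod_n\Kah{\mathbb{Q}[x]}{\mathbb{Q}}$ to be bijective, but it is not injective:
\begin{itemize}
\item Let $K=Q/\mathfrak{m}$, where $\mathfrak{m}$ is the maximal ideal of $Q$ attached to a nonprincipal ultrafilter. Then $P\otimes_Q K$ is a domain in which the classes of $(x)_n$ and $(x^n)_n$ are algebraically independent over $K$.
\item Hence some $Q$-derivation $P\to\operatorname{Frac}(P\otimes_Q K)$ kills $(x)_n$ but not $(x^n)_n$.
\item This derivation separates $\mathrm{d}\big((x^n)_n\big)$ from $(nx^{n-1})_n\,\mathrm{d}\big((x)_n\big)$, although both have the same image in the product.
\end{itemize}
So state your conclusion for the sheaf determined by the values on distinguished (or affine) opens.
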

\begin{proof}
Let $\varphi:A \to B$ be the map corresponding to $f = \Spec \varphi$. Begin by recalling that if $b \in B$ with $D(b) := \lbrace \pfrak \in \lvert \Spec B \rvert \; : \; b \notin \pfrak \rbrace$ the set $\lbrace D(b) \; | \; b \in B \rbrace$ is an open basis for $\Spec B$. Moreover, if we write
\[
\Sigma_b := \varphi^{-1}(b)  = \lbrace a \in A \; | \; \varphi(a) = b \rbrace \subseteq A
\]
then $D(\Sigma_b) = \cup_{a \in \Sigma_b} D(a)$ is also an open of $\lvert \Spec A \rvert$; it in fact is an open for which we find $f(D(b)) = \Sigma_b$. Furthermore, by construction we have a chain of natural isomorphisms
\[
A[\Sigma_{b}^{-1}] \cong \lim_{\substack{\longrightarrow \\ a \in A.\, \varphi(a) = b}}\left(A[a^{-1}]\right) \cong \lim_{\substack{\longrightarrow \\ D(a).\,D(b) \subseteq f^{-1}(D(a))}}\left( \Ocal_A(D(a))\right) = \left(f^{-1}\Ocal_A\right)\big(D(b)\big)
\]
so we have a natural isomorphism
\[
A[\Sigma_{b}^{-1}] \cong \left(f^{-1}\Ocal_A\right)\big(D(b)\big).
\]

With the natural isomorphism involving $A[\Sigma_b^{-1}] \cong (f^{-1}\Ocal_A)(D(b))$ in mind, observe from the fact that localizations are stable under base change by construction we get that the diagram
\[
\begin{tikzcd}
A \ar[r]{}{} \ar[d, swap]{}{\varphi} & A[\Sigma_b^{-1}] \ar[d]{}{\varphi[\Sigma_b^{-1}]} \\
B \ar[r, swap]{}{} & B[b^{-1}]
\end{tikzcd}
\]
is a pushout in $\CAlg{A}$. Using that the module of K{\"a}hler differentials is stable under base change by \cite[Proposition 16.4]{Eisenbud} and stable under localizations by \cite[Proposition 16.9]{Eisenbud}, we derive the following chain of natural isomorphisms:
\begin{align*}
\left(\Kah{\Spec B}{\Spec A}\right)\big(D(b)\big) &= \Kah{B[b^{-1}]}{A[\Sigma_b^{-1}]} \cong \Kah{\left(B \otimes_A A[\Sigma_b^{-1}]\right)}{A[\Sigma_b^{-1}]} \cong \Kah{B}{A} \otimes_A A[\Sigma_b^{-1}] \cong \Kah{B}{A} \otimes_B B[b^{-1}] \\
&= \left(\Kah{B}{A}\right)[b^{-1}] = \left(\widetilde{\Kah{B}{A}}\right)\big(D(b)\big).
\end{align*}
As such, we have natural isomorphisms
\[
\left(\Kah{\Spec B}{\Spec A}\right)\big(D(b)\big) \cong  \left(\Kah{B}{A}\right)[b^{-1}] = \left(\widetilde{\Kah{B}{A}}\right)\big(D(b)\big)
\]
for all $b \in B$, i.e., for all basic opens $D(b)$ of $\lvert \Spec B \rvert$. Because topological bases induced Grothendieck pretopolgies and the fact that $\widetilde{\Kah{B}{A}}$ is a Zariski sheaf on $\Spec B$, by passing through the natural isomorphisms above we deduce that $\Kah{\Spec B}{\Spec A}$ is a Zariski-sheaf as well. Finally, because $\Kah{\Spec B}{\Spec A}$ is isomorphic to $\widetilde{\Kah{B}{A}}$ on a pretopology for the open Grothendieck topology on $\Spec B$, it follows that $\Kah{\Spec B}{\Spec A} \cong \widetilde{\Kah{B}{A}}$ in $\Mod{\Ocal_B}$. Thus $\Kah{\Spec B}{\Spec A}$ is a quasi-coherent sheaf on $\Spec B$.
\end{proof}

\begin{proposition}\label{Prop: Section Kahlers: Relative Kahlers are qcoh on X}
Let $f:X \to S$ be a map of schemes. Then the presheaf of relative K{\"a}hler differentials is a quasi-coherent sheaf on $X$.
\end{proposition}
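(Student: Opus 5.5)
The plan is to reduce to the affine case, Lemma \ref{Lemma: Section Kahlers: Relative Kahlers are qcoh over Affine Schemes}, and then glue using the characterization of quasi-coherence in Theorem \ref{Thm: Section Background Scheme: Checking QCoh affine locally}. First fix an affine open cover $\lbrace V_i \cong \Spec A_i \; | \; i \in I\rbrace$ of $S$. For each $i$, cover the open $f^{-1}(V_i) = V_i \times_S X$ by affine opens $U_{ik} \cong \Spec B_{ik}$. The $U_{ik}$ together form an affine open cover of $X$ such that $f$ restricts to morphisms of affine schemes $f_{ik}:U_{ik} \to V_i$.

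The first key step is a \emph{locality} statement for the presheaf of Proposition \ref{Prop: Section Kahlers: Presheaf of relative Kahlers}. For any open $W \subseteq U_{ik}$, the ring $(f^{-1}\Ocal_S)(W)$ is the filtered colimit over opens of $S$ containing $f(W)$, and we may cofinally restrict to opens inside $V_i$. Hence $(f^{-1}\Ocal_S)(W) = (f_{ik}^{-1}\Ocal_{V_i})(W)$, and the restriction of $\Kah{X}{S}$ to $U_{ik}$ agrees, as a presheaf, with $\Kah{U_{ik}}{V_i}$. The restriction maps also agree, since both are built from the same maps $\mathrm{d}(\rho)$ and the same base-change isomorphisms. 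By Lemma \ref{Lemma: Section Kahlers: Relative Kahlers are qcoh over Affine Schemes}, it follows that $\gamma_{ik}^{\ast}\Kah{X}{S} \cong \widetilde{\Kah{B_{ik}}{A_i}}$, and in particular this restriction is a sheaf.

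Second, we show $\Kah{X}{S}$ is a sheaf on all of $X$. Given any open $W$ and a cover of $W$, refine it by the opens $W \cap U_{ik}$ and by basic opens inside these. The sheaf condition then follows from the sheaf condition on each $U_{ik}$ together with the standard fact that a presheaf which is a sheaf on each member of an open cover satisfies the gluing axiom globally. Since the presheaf $\Kah{X}{S}$ is defined directly on every open, it is consistent on overlaps by construction, so no cocycle data needs to be produced by hand. The overlap isomorphisms $(\pi_0)^{\ast}\widetilde{\Kah{B_{ik}}{A_i}} \cong (\pi_1)^{\ast}\widetilde{\Kah{B_{jl}}{A_j}}$ required by Theorem \ref{Thm: Section Background Scheme: Checking QCoh affine locally} are then simply the identifications of both sides with $\Kah{X}{S}$ restricted to $U_{ik}\cap U_{jl}$. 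Applying that theorem yields quasi-coherence.

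The main obstacle is the locality step. One must check carefully that on an arbitrary (not necessarily basic) open $W \subseteq U_{ik}$, the value $\Kah{\Ocal_X(W)}{(f^{-1}\Ocal_S)(W)}$ computed in $X$ coincides with the one computed in $U_{ik}$ over $V_i$, and that the restriction maps match. This comes down to the cofinality argument for the colimit defining $f^{-1}\Ocal_S$. I would handle it by using base-change and localization stability of K\"ahler differentials (\cite[Propositions 16.4, 16.9]{Eisenbud}) exactly as in the proof of Lemma \ref{Lemma: Section Kahlers: Relative Kahlers are qcoh over Affine Schemes}. If arbitrary opens prove troublesome, I would instead work on the basis of affine opens contained in some $U_{ik}$, which suffices by the basis-of-opens remark preceding Theorem \ref{Thm: Section Background Scheme: QCoh on affine is just modules}.
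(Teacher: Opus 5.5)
You follow the paper's route exactly. You take an affine cover $\{V_i\}$ of $S$ and affine covers $\{U_{ik}\}$ of the $f^{-1}(V_i)$. You identify $\gamma_{ik}^{\ast}\Kah{X}{S}\cong\Kah{U_{ik}}{V_i}\cong\widetilde{\Kah{B_{ik}}{A_i}}$ via Lemma~\ref{Lemma: Section Kahlers: Relative Kahlers are qcoh over Affine Schemes}, and then apply Theorem~\ref{Thm: Section Background Scheme: Checking QCoh affine locally}.

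The locality step you flag as the main obstacle is harmless, because inverse image commutes with restriction to opens. Just note that $(f^{-1}\Ocal_S)(W)$ means sections of the sheafified inverse image, not the bare colimit.

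The genuine gap is the step you treat as routine. A presheaf that is a sheaf on each member of an open cover need not be a sheaf. Those restrictions say nothing about its value on an open lying in no single $U_{ik}$, such as $X$ itself. Theorem~\ref{Thm: Section Background Scheme: Checking QCoh affine locally} takes a sheaf of $\Ocal_X$-modules as input, so it cannot supply the gluing.

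In fact the assertion is false for the presheaf $U\mapsto\Kah{\Ocal_X(U)}{(f^{-1}\Ocal_S)(U)}$ on all opens. Let $E$ be an elliptic curve over a field $k$ and $S=\Spec k$.
\begin{itemize}
\item Here $f^{-1}\Ocal_S$ is constant with value $k$ and nonempty opens are connected, so the presheaf is $U\mapsto\Kah{\Ocal_E(U)}{k}$.
\item Every nonempty proper open of $E$ is affine, so there the presheaf agrees with $\Kah{E}{k}$. In particular it is a sheaf on each member of a cover $E=U_1\cup U_2$ by two affines.
\item Its value on $E$ is $\Kah{k}{k}=0$, whereas the invariant differential gives compatible nonzero sections over $U_1$ and $U_2$. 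So gluing fails.
\end{itemize}

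The affine lemma is also only valid on basic opens. Take $B=k[x,y,z]/(xy-z^2)$ over $A=k$, and let $W=D(x)\cup D(y)$ be the punctured cone. The presheaf agrees with $\widetilde{\Kah{B}{k}}$ on $D(x)$, $D(y)$ and $D(xy)$. But $\Ocal_{\Spec B}(W)=B$, so its value on $W$ is $\Kah{B}{k}$. Since $\Kah{B}{k}$ has depth one at the vertex, this is a proper submodule of $\Gamma(W,\widetilde{\Kah{B}{k}})$. So ``in particular this restriction is a sheaf'' does not follow either.

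The paper's proof rests on the same claim, in its site-theoretic footnote. So this gap is shared with the text, not introduced by you.

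What your argument does prove is the corrected statement. Let $\mathcal{B}$ be the basis of affine opens $U=\Spec B$ of $X$ with $f(U)$ inside an affine open $V=\Spec A$ of $S$.
\begin{itemize}
\item For $U\in\mathcal{B}$, every section of $f^{-1}\Ocal_S$ over $U$ is locally a fraction of elements of $A$. Its differential therefore vanishes in every localization of $\Kah{B}{A}$, so $\Kah{B}{(f^{-1}\Ocal_S)(U)}\cong\Kah{B}{A}$.
\item For affine $\Spec B'\subseteq U$ one has $\Kah{B'}{A}\cong\Kah{B}{A}\otimes_B B'$, since $B\to B'$ is \'etale.
\item Hence the presheaf restricted to $\mathcal{B}$ is a sheaf on $\mathcal{B}$. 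The sheaf it determines, equivalently the sheafification of $\Kah{X}{S}$ (the standard $\Omega_{X/S}$), is quasi-coherent and restricts to $\widetilde{\Kah{B}{A}}$ on each $U\in\mathcal{B}$.
\end{itemize}
Your fallback is essentially this, but be clear about what it yields. The basis principle lets you \emph{construct} a sheaf from its values on $\mathcal{B}$. It does not show that a presheaf already defined on every open coincides with that sheaf.
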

\begin{proof}
Because we know that $\Kah{X}{S}$ is a presheaf on $X$ by Proposition \ref{Prop: Section Kahlers: Presheaf of relative Kahlers}, it suffices to prove that $\Kah{X}{S}$ is a quasi-coherent sheaf. Give $S$ an affine open cover $\lbrace S_i \; | \; i \in I \rbrace$, pull this cover back to an open cover $\lbrace f^{-1}(S_i) \; | \; i \in I \rbrace$ of $X$ (we can without loss of generality use the model of the pullback where $f^{-1}(S_i) = S_i \times_S X$ has underlying topological space the preimage of $S_i$ under $f$), and for each $i \in I$ give $f^{-1}(S_i)$ an affine open cover $\lbrace U_{ij} \; | \; j \in J_i \rbrace$. Then $\lbrace U_{ij} \; | \; i \in I, j \in J_i \rbrace$ is an affine open cover of $X$ and by construction $f(U_{ij})  \subseteq S_i$ for all $i \in I$ and all $j \in J_i$, i.e., the diagram
\[
\begin{tikzcd}
U_{ij} \ar[r]{}{\gamma_{ij}} \ar[d, swap]{}{f} & X \ar[d]{}{f} \\
S_i \ar[r, swap]{}{\gamma_i} & S
\end{tikzcd}
\]
commutes in $\Sch$. By construction of this cover and an application of the Gluing Lemma, for all $i \in I$ and for all $j \in J_i$, if $\gamma_{ij}:U_{ij} \to X$ is the open immersion fitting into the commuting diagram
\[
\begin{tikzcd}
U_{ij} \ar[r]{}{\gamma_{ij}} \ar[d, swap]{}{f} & X \ar[d]{}{f} \\
S_i \ar[r, swap] & S
\end{tikzcd}
\]
then we have $\Kah{U_{ij}}{S_i} \cong \gamma^{\ast}_{ij}\Kah{X}{S}$ in each category $\QCoh(U_{ij})$. Because we have isomorphisms $U_{ij} \cong \Spec B_{ij}$ and $S_i \cong \Spec A_i$ for some rings $A_i, B_{ij}$ by virtue of each cover being an affine cover, we compute that
\[
\Kah{U_{ij}}{S_i} \cong \Kah{\Spec B_{ij}}{\Spec A_i} \cong \widetilde{\Kah{B_{ij}}{A_i}}
\]
by invoking Lemma \ref{Lemma: Section Kahlers: Relative Kahlers are qcoh over Affine Schemes} for each index $i \in I$ and fore each $j \in J_i$. This implies that in particular,
\[
\gamma_{ij}^{\ast}\left(\Kah{X}{S}\right) \cong \Kah{U_{ij}}{S_i} \cong  \left(\widetilde{\Kah{B_{ij}}{A_i}}\right)
\] 
in $\QCoh(U_{ij})$. Using that the $U_{ij}$ cover $X$ and appealing to Theorem \ref{Thm: Section Background Scheme: Checking QCoh affine locally} gives that  $\Kah{X}{S}$ is a quasi-coherent sheaf\footnote{It is a standard lemma in site theory that if $(\Cscr,J)$ is a site generated by a Grothendieck pretopology $(\Cscr,\tau)$, and if $\Fscr$ is a presheaf on $\Cscr_{/X}$ in the slice topology $J_{/X}$ with a cover $\lbrace f_i:U_i \to X \; | \; i \in I \rbrace \in \tau(U)$ for which each presheaf $f_i^{\ast}\Fscr$ is a $J_{/U_i}$-sheaf with isomorphisms $(\pi_0^{ij})^{\ast}(f_i^{\ast}\Fscr) \cong (\pi_1^{ij})^{\ast}(f_j^{\ast}\Fscr)$ then $\Fscr$ is a $J_{/X}$-sheaf. That is, if $\Fscr$ is a presheaf on $X$ for which each restriction $f_i^{\ast}\Fscr$ is a $U_i$-sheaf (for each $f_i$ a map appearing in a cover $\lbrace f_i:U_i \to X \; | \; i \in I \rbrace \in \tau(X)$) and if there are isomorphisms of the restrionctions $f_i^{\ast}\Fscr$ and $f_j^{\ast}\Fscr$ as they restrict to the pullbacks $U_i \times_X U_j$, then $\Fscr$ is a sheaf on $X$.} on $X$.
\end{proof}

We now show (finally) that the sheaf of relative K{\"a}hler differentials is a corepresenting object for the $S$-derivations functor. It is in this sense that the quasi-coherent sheaf $\Kah{X}{S}$ of K{\"a}hler differentials play the same role for schemes what the module $\Kah{A}{B}$ of K{\"a}hler differentials plays for commutative rings.

\begin{proposition}\label{Prop: Section Kahlers: Kahler Diffles are Corepresenting}
Let $f:X \to S$ be a morphism of schemes. Then the quasi-coherent sheaf $\Kah{X}{S}$ corepresents the functor $\mathsf{Der}_S(\Ocal_X,-):\QCoh(X) \to \Mod{\Ocal_X(\lvert X\rvert)}$ in the sense that there is a natural isomorphism
\[
\mathsf{Der}_S\left(\Ocal_X, -\right) \cong \QCoh(X)\left(\Kah{X}{S},-\right).
\]
\end{proposition}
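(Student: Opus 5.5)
The plan is to build the isomorphism locally on affine opens, where it is the ring-theoretic universal property, and then glue.

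First I produce a universal derivation $\mathrm{d}:\Ocal_X \to \Kah{X}{S}$. On each open $U$ take $\mathrm{d}_U:\Ocal_X(U) \to \Kah{\Ocal_X(U)}{(f^{-1}\Ocal_S)(U)}$ to be the universal derivation of Definition \ref{Defn: Section Diff: Kahler Diffs}. The restriction maps of Proposition \ref{Prop: Section Kahlers: Presheaf of relative Kahlers} are built from $\mathrm{d}(\rho_{UV})$ and the base-change isomorphisms, and these send $\mathrm{d}_U(t)$ to $\mathrm{d}_V(\rho_{UV}(t))$. So the $\mathrm{d}_U$ form a morphism of presheaves, hence of sheaves by Proposition \ref{Prop: Section Kahlers: Relative Kahlers are qcoh on X}. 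Each $\mathrm{d}_U$ is $(f^{-1}\Ocal_S)(U)$-linear, which gives the compatibility condition with the maps $\alpha_{UV}$. Hence $\mathrm{d}$ is an $S$-derivation. Precomposition then defines a natural map
\[
\Phi_{\Fscr}:\QCoh(X)\left(\Kah{X}{S},\Fscr\right) \to \mathsf{Der}_S(\Ocal_X,\Fscr), \qquad \psi \mapsto \psi \circ \mathrm{d}.
\]
This map is $\Ocal_X(\lvert X \rvert)$-linear and natural in $\Fscr$ because composition is associative.

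Next I construct an inverse. Given an $S$-derivation $\partial:\Ocal_X \to \Fscr$, each component $\partial_U$ is a $(f^{-1}\Ocal_S)(U)$-derivation. The ring-level universal property therefore yields a unique $\Ocal_X(U)$-linear map
\[
\overline{\partial}_U:\Kah{\Ocal_X(U)}{(f^{-1}\Ocal_S)(U)} \to \Fscr(U) \quad\text{with}\quad \overline{\partial}_U \circ \mathrm{d}_U = \partial_U.
\]
To see that the $\overline{\partial}_U$ commute with restrictions, I compare the two maps $\Kah{X}{S}(U) \to \Fscr(V)$ obtained by restricting first or second. The image of $\Kah{X}{S}(U)$ in $\Kah{X}{S}(V)$, after scalar extension, is generated as an $\Ocal_X(V)$-module by the elements $\mathrm{d}_V(\rho_{UV}t)$. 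Both composites agree on these generators because $\partial$ is natural in $U$. Uniqueness at the ring level then forces agreement. This gives a presheaf morphism, hence a sheaf map $\overline{\partial}$, and $\overline{\partial} \circ \mathrm{d} = \partial$ by construction.

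Uniqueness of $\overline{\partial}$ follows because any $\psi$ with $\psi \circ \mathrm{d} = \partial$ agrees with $\overline{\partial}_U$ on every $U$, again by the ring-level universal property. So $\Phi_{\Fscr}$ is bijective with inverse $\partial \mapsto \overline{\partial}$.

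The main obstacle is the restriction-compatibility step. The restriction maps of $\Kah{X}{S}$ were defined through the isomorphism $\Kah{\Ocal_X(V)}{(f^{-1}\Ocal_S)(U)} \otimes (f^{-1}\Ocal_S)(V) \cong \Kah{\Ocal_X(V)}{(f^{-1}\Ocal_S)(V)}$. I have to check that $\overline{\partial}_V$ is compatible with this identification; it is, since both sides are generated by symbols $\mathrm{d}t$. If this is awkward on arbitrary opens, I would instead work only on the affine cover $\{U_{ij}\}$ of Proposition \ref{Prop: Section Kahlers: Relative Kahlers are qcoh on X}. There $\gamma_{ij}^{\ast}\Kah{X}{S} \cong \widetilde{\Kah{B_{ij}}{A_i}}$, and $\Mod{B_{ij}}(\Kah{B_{ij}}{A_i}, M) \cong \mathsf{Der}_{A_i}(B_{ij}, M)$ holds by Theorem \ref{Thm: Section Background Scheme: QCoh on affine is just modules}. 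I would then glue the resulting local maps using the pseudolimit description of Theorem \ref{Thm: Section Background Scheme: QCoh is a pseudolimit}; the agreement on overlaps comes again from uniqueness in the universal property.
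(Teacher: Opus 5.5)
This is essentially the paper's argument: apply the ring-level universal property of $\Kah{\Ocal_X(U)}{(f^{-1}\Ocal_S)(U)}$ on every open $U$ and assemble the maps $\partial_U^{\sharp}$ into a sheaf morphism. Your version is more complete, since you supply the universal derivation $\mathrm{d}$, the restriction-compatibility check, and the uniqueness argument that the paper's chain of equivalences leaves implicit.

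One caveat applies equally to the paper. The identification $\Kah{X}{S}(U)=\Kah{\Ocal_X(U)}{(f^{-1}\Ocal_S)(U)}$ on all opens, which you import from Proposition~\ref{Prop: Section Kahlers: Relative Kahlers are qcoh on X}, holds only for the presheaf $P$. For example, for a smooth projective curve $X$ of genus $g\geq 1$ over an algebraically closed field $k$, it gives $\Kah{k}{k}=0$ on $U=X$, whereas $\Gamma(X,\Kah{X}{k})\cong k^{g}$. So your sectionwise argument really proves $\mathsf{Der}_S(\Ocal_X,\Fscr)\cong\operatorname{Hom}(P,\Fscr)$. To conclude, use the universal property of sheafification, $\operatorname{Hom}(P,\Fscr)\cong\operatorname{Hom}(P^{++},\Fscr)$, or use your affine-cover fallback, which is the robust route.
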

\begin{proof}
Begin by fixing a quasi-coherent sheaf $\Fscr$ on $X$. Then we derive the following chain of equivalences:
\begin{prooftree}
	\AxiomC{$\partial \in \operatorname{Der}_S(\Ocal_X,\Fscr)$}
	\UnaryInfC{$\partial \in \Mod{f^{-1}\Ocal_S}(\Ocal_X,\Fscr)$ and $\partial_U \in \operatorname{Der}_{\Ocal_X(U)}(\Ocal_X(U),\Fscr(U))$ for $U$ open}
	\UnaryInfC{$\partial \in \Mod{f^{-1}\Ocal_S}(\Ocal_X,\Fscr)$ and $\partial_U^{\sharp} \in \Mod{\Ocal_X(U)}(\Kah{\Ocal_X(U)}{(f^{-1}\Ocal_S)(U)}, \Fscr(U))$ for $U$ open}
	\UnaryInfC{$\partial \in \Mod{f^{-1}\Ocal_S}(\Ocal_X,\Fscr)$ and $\partial_U^{\sharp} \in \Mod{\Ocal_X(U)}((\Kah{X}{S})(U), \Fscr(U))$ for $U$ open}
	\UnaryInfC{$\partial^{\sharp} \in \Mod{\Ocal_X}(\Kah{X}{S},\Fscr)$ and $\partial_U^{\sharp} \in \Mod{\Ocal_X(U)}((\Kah{X}{S})(U), \Fscr(U))$ for $U$ open}
	\UnaryInfC{$\partial^{\sharp} \in \Mod{\Ocal_X}(\Kah{X}{S},\Fscr)$}
\end{prooftree}
Because each derivation above is a natural equivalence this shows that we have our natural isomorphism $\mathsf{Der}_S(\Ocal_X,-) \cong \QCoh(X)(\Kah{X}{S},-)$, as claimed.
\end{proof}
\begin{remark}[{cf.~ \cite[Section 16.5.4, Corollaire 16.5.5]{EGA44}}]
One may be interested in sheafifying the scheme-theoretic derivations we have been studying. That is, it is natural to construct a quasi-coherent sheaf $\mathcal{Der}_S(\Ocal_X,\Fscr)$ associated to a quasi-coherent sheaf $\Fscr$. The idea here is to define, for opens $U \subseteq \lvert X \rvert$ with associated inclusion $j:U \to X$,
\[
\mathcal{Der}_S(\Ocal_X,\Fscr)(U) := \mathsf{Der}_S(\Ocal_U,j^{\ast}\Fscr).
\]
Then it is relatively straightforward to show that $\mathcal{Der}_S(\Ocal_X,\Fscr)$ is a quasi-coherent sheaf. The main difference between this sheaf and the $\Ocal_X(X)$-module $\mathsf{Der}_S(\Ocal_X,\Fscr)$ lie in the fact that if we write $[-,-]$ to denote the internal hom functor in $\QCoh(X)$, then
\[
\mathcal{Der}_S(\Ocal_X,\Fscr) \cong [\Kah{X}{S},\Fscr]
\]
in $\QCoh(X)$. In particular, we get that this also recaptures Proposition \ref{Prop: Section Kahlers: Kahler Diffles are Corepresenting} by taking global sections of the isomorphism of sheaves above.
\end{remark}

One benefit of the approach to developing the sheaf of relative K{\"a}hler differentials we took is that we can deduce many of the universal properties and compatibility conditions that $\Kah{X}{S}$ satisfies from arguing locally on rings by the sheaf-theoretic natural of $\Kah{X}{S}$ while also, by making use of the relative spectrum and relative symmetric algebra functors, argue with $\Kah{X}{S}$ geometrically via the affine $X$-scheme $\RelSpec{X}(\RelSym{\Ocal_X}(\Kah{X}{S}))$. In fact, as we will see below, the scheme $\RelSpec{X}(\RelSym{\Ocal_X}(\Kah{X}{S})) =: T_{X/S}$ is the tangent scheme of $X$ in $\Sch_{/S}$ and also Grothendieck's ``{fibr{\'e} tangent de $X$ r{\'e}lativement {\`a} $S$}'' (cf.\@ \cite[Section 16.5.12, Line 3]{EGA44}). 

To conclude this section we simply need to record the flavour in which the relative K{\"a}hler differentials are functorial in $\Sch_{/S}$ so that we can later define the tangent functor on $S$-schemes. To this end we need to know how, given an $S$-scheme morphism $f:X \to Y$, in what way is there are map between relative K{\"a}hler differentials $\Kah{X}{S}$ and $\Kah{Y}{S}$.

We now recall from the affine case that by Proposition \ref{Prop: Section Kahlers: Kahler diffles are functors}, the K{\"a}hler differentials $\Kah{(-)}{R}$ induce a functor
\[
\Kah{(-)}{R}:\CAlg{R} \to \MMod
\]
because we have access to a fibration $\MMod \to \Cring$ whose associated pseudofunctor $\Mod{(-)}:\Cring^{\op} \to \fCat$ has translation functors given by restriction of scalars $\res_{(-)}$. Because in the case $\QCoh(-):\Sch_{/S}^{\op} \to \fCat$ is given by \emph{pullback} (which plays the role of extension of scalars in the ring-theoretic setting), we want to be looking for an alternative perspective. However we are guided in this case precisely by Remark \ref{Remark: Section Kahlers: Cotangent sequence}. 

In Remark \ref{Remark: Section Kahlers: Cotangent sequence}, we saw that given a morphism $f:A \to B$ of rings, the adjunct $(\mathrm{d}f)^{\sharp}:\Kah{A}{R} \otimes_A B \to \Kah{B}{R}$ gave rise to a morphism from the extension of scalars along $f:A \to B$ of $\Kah{A}{R}$ to the module $\Kah{B}{R}$.In the scheme-theoretic setting, assume that we have a pair of morphisms $f:X \to Y$ and $g:Y \to S$. By schemifying the relative cotangent sequence we obtain a scheme-theoretic relative cotangent sequence: an exact sequence of quasicoherent sheaves (cf.\@ \cite[Corollaire 16.4.19]{EGA44}):
\[
\begin{tikzcd}
f^{\ast}\Kah{Y}{S} \ar[r]{}{u_{f,g}} &  \Kah{X}{S} \ar[r] & \Kah{X}{Y} \ar[r] & 0
\end{tikzcd}
\]
This indicates that the functor $\Sch_{/S}^{\op} \to \QQCoh$ is \emph{op}fibrational in nature as opposed to fibrational. As such, we must use the category $\mathbb{QC}$ and Proposition \ref{Prop: Section Background Scheme: QCoh cofibration} instead of the fibration $\QQCoh$.
\begin{proposition}\label{Prop: Section Kahlers: Functoriality of Kahlers for QCoh}
For any scheme $S$ there is a functor 
\[
\Kah{(-)}{S}:\Sch_{/S} \to \mathbb{QC}
\]
which sends schemes $X$ to $(X,\Ocal_X)$ and which sends morphisms $f:X \to Y$ to $(f,u_f):(X,\Kah{X}{S}) \to (Y,\Kah{Y}{S})$ where $u_f$ is the map in the Relative Cotangent Complex indicated below:
\[
\begin{tikzcd}
f^{\ast}\Kah{Y}{S} \ar[r]{}{u_f}  & \Kah{X}{S} \ar[r] & \Kah{X}{Y} \ar[r] & 0
\end{tikzcd}
\]
\end{proposition}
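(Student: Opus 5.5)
The plan is to check directly the functor axioms for the assignment $X \mapsto (X,\Kah{X}{S})$ and $f \mapsto (f,u_f)$; the statement's $(X,\Ocal_X)$ should be read as $(X,\Kah{X}{S})$. Recall that a morphism $(X,\Kah{X}{S}) \to (Y,\Kah{Y}{S})$ in $\mathbb{QC}$ is a pair $(f,\rho)$ with $\rho:f^{\ast}\Kah{Y}{S} \to \Kah{X}{S}$. So $(f,u_f)$ is a morphism of the correct type once $u_f$ is defined as a map of quasi-coherent sheaves, and quasi-coherence of both sides holds by Proposition \ref{Prop: Section Kahlers: Relative Kahlers are qcoh on X}.

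I would first construct $u_f$ intrinsically, rather than quoting \cite[Corollaire 16.4.19]{EGA44}, using the corepresentability of Proposition \ref{Prop: Section Kahlers: Kahler Diffles are Corepresenting}. Let $\mathrm{d}_Y:\Ocal_Y \to \Kah{Y}{S}$ be the universal $S$-derivation. Apply $f^{-1}$ and compose with $f^{-1}\Kah{Y}{S} \to f^{\ast}\Kah{Y}{S}$. This gives an $f^{-1}\Ocal_S$-linear map $f^{-1}\Ocal_Y \to f^{\ast}\Kah{Y}{S}$ satisfying the Leibniz rule along $f^{\flat}$. Its $\Ocal_X$-linear adjoint $f^{\ast}\Kah{Y}{S} \to \Kah{X}{S}$ is obtained by extending $f^{-1}\Ocal_Y \to f^{-1}\Kah{Y}{S} \xrightarrow{f^{-1}\mathrm{d}_Y\text{-induced}} \Kah{X}{S}$, where the middle map is $\mathrm{d}_X \circ f^{\flat}$ factored through $f^{-1}\mathrm{d}_Y$.

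Concretely, on an affine cover chosen as in the proof of Proposition \ref{Prop: Section Kahlers: Relative Kahlers are qcoh on X}, take $U=\Spec B \subseteq X$ mapping into $V=\Spec C \subseteq Y$, which maps into $\Spec A \subseteq S$. Lemma \ref{Lemma: Section Kahlers: Relative Kahlers are qcoh over Affine Schemes} identifies the restriction of $u_f$ with the sheafification of $(\mathrm{d}\varphi)^{\sharp}:\Kah{C}{A}\otimes_C B \to \Kah{B}{A}$ from Remark \ref{Remark: Section Kahlers: Cotangent sequence}. These local maps agree on overlaps because $\mathrm{d}(-)$ is functorial (Proposition \ref{Prop: Section Kahlers: Kahler diffles are functors}) and compatible with localization. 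Theorem \ref{Thm: Section Background Scheme: QCoh is a pseudolimit} then glues them to a unique global $u_f$.

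Next come identities and composition. For $f=\id_X$, $u_{\id}$ is locally $(\mathrm{d}\,\id)^{\sharp}=\id$ up to the unitor $\id^{\ast}\cong\id$, so $(\id,u_{\id})$ is the identity. For $f:X\to Y$ and $g:Y\to Z$, composition in $\mathbb{QC}$ gives $u_f\circ f^{\ast}u_g\circ\phi_{f,g}^{-1}$, and we must show this equals $u_{g\circ f}$. By the faithfulness part of the pseudolimit (Theorem \ref{Thm: Section Background Scheme: QCoh is a pseudolimit}), it suffices to check it on a common affine refinement $\Spec B\to\Spec C\to\Spec D$. There it reduces to the identity $(\mathrm{d}(\psi\circ\varphi))^{\sharp}=(\mathrm{d}\psi)^{\sharp}\circ((\mathrm{d}\varphi)^{\sharp}\otimes_C B)$ modulo the tensor-cancellation isomorphism. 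This follows by evaluating both sides on $\mathrm{d}x\otimes b$: each gives $b\,\mathrm{d}(\psi\varphi(x))$.

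I expect the main obstacle to be the bookkeeping that the local cotangent maps glue and that the compositor $\phi_{f,g}$, defined sheaf-theoretically via $f^{-1}(-)\otimes\Ocal_X$, matches the ring-level tensor cancellation isomorphism on affine patches. I would handle this by working with the universal-property description throughout. Both $u_{g\circ f}$ and the composite correspond, under Proposition \ref{Prop: Section Kahlers: Kahler Diffles are Corepresenting} and the adjunction $f^{\ast}\dashv f_{\ast}$ at the level of $\Ocal$-modules, to the same derivation $\mathrm{d}_X\circ(g\circ f)^{\flat}$ on $(g\circ f)^{-1}\Ocal_Z$. Uniqueness in the universal property then forces the two maps to be equal, and this avoids explicit compositor calculations.
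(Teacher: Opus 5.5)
The paper gives no proof of this proposition. It takes the existence of $u_f$ from the schemified relative cotangent sequence (EGA IV, Corollaire 16.4.19) and leaves the functor axioms implicit. As you noticed, the object assignment should read $X\mapsto(X,\Kah{X}{S})$.

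Your route is sound and supplies the verification the paper omits:
\begin{itemize}
\item define $u_f$ on an affine patch $\Spec B\to\Spec C\to\Spec A$ as the sheafification of $(\mathrm{d}\varphi)^{\sharp}\colon\Kah{C}{A}\otimes_C B\to\Kah{B}{A}$;
\item glue these local maps;
\item check identities and composition on a common affine refinement by evaluating on $\mathrm{d}x\otimes b$.
\end{itemize}
This is the same affine-local spirit in which the paper builds $\Kah{X}{S}$. Citing EGA buys brevity and exactness of the sequence, which is irrelevant for functoriality. Your argument buys an actual check that $u_{g\circ f}=u_f\circ f^{\ast}u_g\circ\phi_{f,g}^{-1}$, the one nontrivial point. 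On affine patches $\phi_{f,g}$ is just tensor cancellation, so your ring identity settles it.

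Three places need tidying.

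First, your opening ``intrinsic'' construction is garbled. An adjoint of a derivation $f^{-1}\Ocal_Y\to f^{\ast}\Kah{Y}{S}$ is not a map $f^{\ast}\Kah{Y}{S}\to\Kah{X}{S}$. Also, the base should be $\nu_X^{-1}\Ocal_S=f^{-1}\nu_Y^{-1}\Ocal_S$, not $f^{-1}\Ocal_S$. The correct global recipe factors the derivation $\mathrm{d}_X\circ f^{\flat}\colon f^{-1}\Ocal_Y\to\Kah{X}{S}$ through $f^{-1}\mathrm{d}_Y$ and then extends scalars. That requires knowing $f^{-1}\Kah{Y}{S}$ is the sheaf of differentials of $f^{-1}\Ocal_Y$ over $\nu_X^{-1}\Ocal_S$, which the paper never establishes. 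Simply take the affine-local formula as the definition.

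Second, your closing universal-property argument overreaches. Proposition~\ref{Prop: Section Kahlers: Kahler Diffles are Corepresenting} only corepresents derivations into quasi-coherent sheaves. Passing through $(g\circ f)^{\ast}\dashv(g\circ f)_{\ast}$ lands you in $(g\circ f)_{\ast}\Kah{X}{S}$, which need not be quasi-coherent unless $g\circ f$ is quasi-compact and quasi-separated. The universal property against all $\Ocal$-modules is true, but the paper does not prove it. You do not need it anyway. Both candidate maps $(g\circ f)^{\ast}\Kah{Z}{S}\to\Kah{X}{S}$ send $\mathrm{d}z\otimes 1$ to $\mathrm{d}_X\bigl((g\circ f)^{\flat}z\bigr)$, and these sections generate the source locally, so the maps agree. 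That is exactly your affine-local computation.

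Third, the overlaps $U_i\times_X U_j$ in Theorem~\ref{Thm: Section Background Scheme: QCoh is a pseudolimit} need not be affine. Phrase the gluing as independence of the local construction from the chosen triple of affines, checked on affine opens covering each overlap. The input is $\Kah{C'}{A'}\cong\Kah{C}{A}\otimes_C C'$ for affine opens $\Spec C'\subseteq\Spec C$ lying over $\Spec A'\subseteq\Spec A$.
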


\section{The Tangent Categories of Commutative Algebras and Schemes}\label{Section: Tan Cats}
In this section provide a new\footnote{In the sense that it has not appeared in this manner nor at this level of detail to my knowledge.} perspective on the tangent structure on the category $\Sch_{/S}$ of $S$-schemes. This description gives a relatively clean explanation of how to use the affine scheme tangent categories $\CAlg{R}^{\op} \simeq \mathbf{AffSch}_{/R}$ and glue them up to a tangent structure on $\Sch_{/S}$ for arbitrary (not-necessarily-affine) schemes $S$. The basic strategy is to use Theorem \ref{Thm: Section Background Scheme: Hopf algebras for RelSym} to glue the ways in which we induce the tangent structure on $\mathbf{AffSch}_{/R}$ via the Hopf algebra structure on each tangent scheme $\Spec(\Sym{A}(\Kah{A}{R}))$ for commutative $R$-algebras $A$. 

While much of what we do below is known in the tangent-category theory literature, we present it very explicitly below to provide a bridge between the tangent-categorical and algebraic-geometric perspectives. In particular, we provide it to show how tangent-categorical information may be carefully and precisely glued sheaf-theoretically. As such, we first recall what tangent categories and their differential bundles are (as well as how each object is incarnated in the affine scheme setting).

\subsection{Tangent Categories, Differential Bundles, and the Tangent Category of Affine Schemes}\label{Subsection: Tan Cat Exposition}

We begin this section by giving a quick recollection of what it means to be a tangent category in the sense of \cite{GeoffRobinDiffStruct} as well as what it means to be a differential bundle in a tangent category (in the sense of \cite{GeoffRobinBundle}). While we will give some intuition towards what the axioms are encoding, we will not spend too much time presenting the various examples of these gadgets which occur in nature aside from pointing out that they exist.

\begin{definition}[{\cite[Definition 2.3]{GeoffRobinDiffStruct}, \cite{GeoffRobinBundle}}]\label{Defn: Section Tangent: Tangent category}
	Let $\Cscr$ be a category. A \emph{tangent structure} $\Tbb$ on $\Cscr$ consists of the following information:
	\begin{enumerate}
		\item There is a functor $T:\Cscr \to \Cscr$ together with a natural transformation
		\[
		\begin{tikzcd}
			\Cscr \ar[rr, bend left = 20, ""{name = U}]{}{T} \ar[rr, bend right = 20, swap, ""{name = D}]{}{\id} & & \Cscr
			\ar[from = U, to = D, Rightarrow, shorten <= 4pt, shorten >= 4pt]{}{p}
		\end{tikzcd}
		\]
		for which for all objects $X \in \Cscr_0$ and for all $n \in \N$, the wide pullback
		\[
		T_nX := \lim\left(\begin{tikzcd}
			TX \ar[dr, swap]{}{p_X} & \cdots \ar[d]{}[description]{n\,\text{copies}} & TX \ar[dl]{}{p_X} \\
			& X
		\end{tikzcd}\right)
		\]
		exists and is preserved by the functors $T^m$ for all $m \in \N$.
		\item There are natural transformations
		\[
		\begin{tikzcd}
			\Cscr \ar[rr, bend left = 20, ""{name = U}]{}{\id} \ar[rr, bend right = 20, swap, ""{name = D}]{}{T} & & \Cscr
			\ar[from = U, to = D, Rightarrow, shorten <= 4pt, shorten >= 4pt]{}{0}
		\end{tikzcd}\quad \begin{tikzcd}
			\Cscr \ar[rr, bend left = 20, ""{name = U}]{}{T_2} \ar[rr, bend right = 20, swap, ""{name = D}]{}{T} & & \Cscr
			\ar[from = U, to = D, Rightarrow, shorten <= 4pt, shorten >= 4pt]{}{\operatorname{add}}
		\end{tikzcd}
		\]
		which make $(p_X, 0_X, \operatorname{add}_X)$ into an internal commutative monoid in $\Cscr_{/X}$ for all objects $X \in \Cscr_0$.
		\item There is a natural transformation
		\[
		\begin{tikzcd}
			\Cscr \ar[rr, bend left = 20, ""{name = U}]{}{T} \ar[rr, bend right = 20, swap, ""{name = D}]{}{T^2} & & \Cscr
			\ar[from = U, to = D, Rightarrow, shorten <= 4pt, shorten >= 4pt]{}{\ell}
		\end{tikzcd}
		\]
		called the \emph{vertical lift} which renders the diagrams
		\[
		\begin{tikzcd}
			TX \ar[r]{}{\ell_X} \ar[d, swap]{}{p_X} & T^2X \ar[d]{}{(T \ast p)_X} \\
			X \ar[r, swap]{}{0_X} & TX
		\end{tikzcd}\quad
		\begin{tikzcd}
			T_2X \ar[rr]{}{\langle \ell_X \circ \pi_0, \ell_X \circ \pi_1\rangle} \ar[d, swap]{}{\operatorname{add}_X} & & T(T_2X) \ar[d]{}{(T \ast \operatorname{add})_X} \\
			TX \ar[rr, swap]{}{\ell_X} & & T^2X
		\end{tikzcd}\quad
		\begin{tikzcd}
			X \ar[r]{}{0_X} \ar[d, swap]{}{0_X} & TX \ar[d]{}{(T \ast 0)_X} \\
			TX \ar[r, swap]{}{\ell_X} & T^2X
		\end{tikzcd}
		\]
		commutative. In the language of \cite{GeoffRobinDiffStruct}, $(\ell_X,0_X)$ is a morphism of additive bundles.
		\item There is a natural transformation
		\[
		\begin{tikzcd}
			\Cscr \ar[rr, bend left = 20, ""{name = U}]{}{T^2} \ar[rr, bend right = 20, swap, ""{name = D}]{}{T^2} & & \Cscr
			\ar[from = U, to = D, Rightarrow, shorten <= 4pt, shorten >= 4pt]{}{c}
		\end{tikzcd}
		\]
		called the \emph{canonical flip} which renders the diagrams
		\[
		\begin{tikzcd}
			T^2X \ar[r]{}{c_X} \ar[d, swap]{}{(T \ast p)_X} & T^2X \ar[d]{}{(p \ast T)_X} \\
			TX \ar[r, equals] & TX
		\end{tikzcd}\quad
		\begin{tikzcd}
			(T^2)_2X \ar[rr]{}{\langle c_X \circ \pi_0, c_X \circ \pi_1\rangle} \ar[d, swap]{}{(T \ast \operatorname{add})_X} & & (T^2)_2X \ar[d]{}{(\operatorname{add} \ast T)_X} \\
			T^2X \ar[rr, equals] & & T^2X
		\end{tikzcd}\quad
		\begin{tikzcd}
			TX \ar[r, equals] \ar[d, swap]{}{(T \ast 0)_X} & TX \ar[d]{}{(0 \ast T)_X} \\
			T^2X \ar[r, swap]{}{c_X} & T^2X
		\end{tikzcd}
		\]
		commutative. In the language of \cite{GeoffRobinDiffStruct}, $(c_X, \id_{TX})$ is a morphism of additive bundles.
		\item The identities $c^2 = \id, \ell = c \circ \ell$, and
		\[
		\begin{tikzcd}
			T^3 \ar[d, swap]{}{T \ast c} \ar[r]{}{c \ast T} & T^3 \ar[r]{}{T \ast c} & T^3 \ar[d]{}{c \ast T} \\
			T^3 \ar[r, swap]{}{c \ast T} & T^3 \ar[r, swap]{}{T \ast c} & T^3
		\end{tikzcd}\quad
		\begin{tikzcd}
			T \ar[r]{}{\ell} \ar[d, swap]{}{\ell} & T^2 \ar[d]{}{T \ast \ell} \\
			T^2 \ar[r, swap]{}{\ell \ast T} & T^3
		\end{tikzcd}\quad
		\begin{tikzcd}
			T^2 \ar[d, swap]{}{c} \ar[r]{}{\ell \ast T} & T^3 \ar[r]{}{T \ast c} & T^3 \ar[d]{}{c \ast T} \\
			T^2 \ar[rr, swap]{}{T \ast \ell} & & T^3
		\end{tikzcd}
		\]
		all hold in $[\Cscr,\Cscr]$.
		\item For all objects $X \in \Cscr_0$, the diagram
		\[
		\begin{tikzcd}
		T_2X \ar[rr]{}{p_X\circ \pi_0} \ar[d, swap]{}{v} & & X \ar[d]{}{0_X} \\
		T^2X \ar[rr, swap]{}{(T \ast p)_X} & & TX
		\end{tikzcd}
		\]
		is an equalizer and where $v$ is the morphism
		\[
		v := (T \ast \operatorname{add})_X \circ \left\langle (0 \ast T)_X, \ell_X \right\rangle.
		\] 
		This pullback expresses the \emph{universality of the vertical lift}.
	\end{enumerate}
	A \emph{tangent category} is a pair $(\Cscr,\Tbb)$ where $\Cscr$ is a category and $\Tbb$ is a tangent structure.
\end{definition}
\begin{remark}
	Here are some interpretations of the axioms above.
	\begin{enumerate}
		\item The wide pullbacks $T_nX$ encode the space of $n$-tangent vectors of $X$ which have the same anchor vector in $X$. In this case $T^m$ preserving $T_nX$ says that if you differentiate $m$ times and had $n$ tangent vectors anchored at the same point, then you now have precisely the space of $n$ degree $m$-vectors tangent to $T^mX$ anchored at the same base point.
		\item The vertical lift $\ell:T \Rightarrow T^2$ encodes a differentiation operation, as it says that if you take a tangent vector and differentiate it, you get a tangent vector which is in a degree $2$ infinitesimal neighbourhood of the base point.
		\item The canonical flip $c$ encodes the symmetry of mixed partials
		\[
		\frac{\partial^2f}{\partial x\,\partial y} = \frac{\partial^2f}{\partial y\,\partial x}
		\]
		for $\mathcal{C}^{\infty}$-morphisms $f$. Equivalently, it encodes the symmetry of the Hessian matrix for $\Ccal^{\infty}$-morphisms of smooth manifolds.
	\end{enumerate}
\end{remark}
\begin{example}
Every category $\Cscr$ is a tangent category with tangent structure $\Ibb = (\id, \id, \id, \id, \id, \id)$.
\end{example}
\begin{example}
The category $\SMan$ of smooth real manifolds is a tangent category with tangent functor $T$ given by the tangent bundle functor, $p:T \Rightarrow \id$ given by the bundle projection, $0:\id \Rightarrow T$ given by selecting the zero vector, and $\operatorname{add}:T_2 \Rightarrow T$ given by adding two tangent vectors which are anchored at the same base point.
\end{example}
\begin{example}\label{Example: Tangent Structure on CAlgR}
If $R$ is a commutative rig then $\CAlg{R}$ is a tangent category with $T(A) := A[\epsilon] \cong A[x]/(x^2) \cong A \otimes_R R[x]/(x^2)$, $p_A:TA \to A$ the augmentation map $a + x\epsilon \mapsto a$, $0_A:A \to TA$ the algebra map, and with $\operatorname{add}:T_2(A) \to TA$ given by adding nilpotent indeterminates, i.e., by $(a+x\epsilon,a+y\epsilon) \mapsto (a+(x+y)\epsilon)$.
\end{example}
\begin{example}
Let $\Cscr$ be a category with finite biproducts, i.e., a semiadditive category. Then $\Cscr$ is a tangent category with tangent functor defined by $TX := X \oplus X$ and $Tf := f \oplus f$, with bundle projection $p$ defined to be the first projection $\pi_0:X \oplus X \to X$, and with zero section $0$ defined to be the first inclusion $\iota_0:X \to X \oplus X$. A routine calculation shows that $T_2(X) \cong X \oplus (X \oplus X)$ and that the addition map $\operatorname{add}:T_2X \to TX$ is given by $\id_X \oplus (\id_X + \id_X)$. 
The verical lift $\ell_X:TX \to T^2X$ is then given by
\[
\begin{tikzcd}
TX \ar[r, equals] & X \oplus X \ar[r]{}{\cong} &  X \oplus (0 \oplus 0) \oplus X \ar[r]{}{} & (X \oplus X) \oplus (X \oplus X) \ar[r, equals] & T^2X
\end{tikzcd}
\]
while the canonical flip $c_X:T^2X \to T^2X$ is the map
\[
\begin{tikzcd}
T^2X \ar[rrr]{}{c_X} \ar[d, equals] & & & T^2X \\
(X \oplus X) \oplus (X \oplus X) \ar[d, swap]{}{\cong} & & & (X \oplus X) \oplus (X \oplus X) \ar[u, equals] \\
(X \oplus (X \oplus X)) \oplus X \ar[rrr, swap]{}{(\id_X \oplus s_{X,X})\oplus \id_X} & & & (X \oplus (X \oplus X)) \oplus X \ar[u, swap]{}{\cong}
\end{tikzcd}
\]
where $s_{X,X}$ is the map which switches the order of the terms in the product and the unlabeled vertical isomorphisms are simply reassociation isomorphisms.
\end{example}

We now discuss differential bundles for tangent categories. These play the same role for tangent categories that vector bundles play for smooth manifolds. However, it is worth noting that the definition of a differential bundle does not involve any intrinsic notion of saying that the bundle map $q:E \to X$ of a differential bundle is locally trivializable; instead it uses the tangent structure directly to say that the bundle map $q$ is locally linear (in a sense encoded by the map $\lambda$ below) and then encodes the vector bundle nature of an object through said local linearity.

\begin{definition}[{\cite[Definition 2.3]{GeoffRobinBundle}}]\label{Defn: Section Tangent: Differential Bundle}
	Let $\Cscr$ be a tangent category with $X \in \Cscr_0$. A \emph{differential bundle} 
	\[
	\mathsf{q} = \bigg(q:E \to X, \zeta:X \to E, \sigma:E \times_X E \to E, \lambda:E \to TE\bigg)
	\]
	over $X$ consists of the maps $q, \zeta, \sigma, \lambda$ above, subject to the following conditions:
	\begin{itemize}
		\item For all $n \in \N$, the wide pullbacks
		\[
		E_n := \lim\left(\begin{tikzcd}
			E \ar[dr, swap]{}{q} & \cdots \ar[d]{}[description]{n\,\text{copies}} & E \ar[dl]{}{q} \\
			& X
		\end{tikzcd}\right)
		\]
		exist and are preserved by $T^m$ for all $m \in \N$.
		\item The maps $\zeta:X \to E$ and $\sigma:E_2 \to E$ make $(q,\zeta,\sigma)$ into an internal commutative monoid in $\Cscr_{/X}$.
		\item The lift map $\lambda:E \to TE$ make the diagrams
		\[
		\begin{tikzcd}
			E \ar[r]{}{\lambda} \ar[d, swap]{}{q} & TE \ar[d]{}{Tq} \\
			X \ar[r, swap]{}{0_X} & TX
		\end{tikzcd}\quad
		\begin{tikzcd}
			X \ar[d, swap]{}{\zeta} \ar[r]{}{0_X} & TX \ar[d]{}{T\zeta} \\
			E \ar[r,swap]{}{\lambda} & TE
		\end{tikzcd}\quad
		\begin{tikzcd}
			E_2 \ar[rr]{}{\langle \lambda \circ \pi_0, \lambda \circ \pi_1\rangle} \ar[d, swap]{}{\sigma} & & T(E_2) \ar[d]{}{T\sigma} \\
			E \ar[rr, swap]{}{\lambda} & & E
		\end{tikzcd}
		\]
		commute. In the language of \cite{GeoffRobinDiffStruct}, $(\lambda,0_X)$ is a morphism of additive bundles in $\Cscr$.
		\item The map $\lambda$ makes the diagrams
		\[
		\begin{tikzcd}
			E \ar[r]{}{\lambda} \ar[d, swap]{}{q} & TE \ar[d]{}{p_E} \\
			X \ar[r, swap]{}{\zeta} & E
		\end{tikzcd}\quad
		\begin{tikzcd}
			X \ar[d, swap]{}{\zeta} \ar[r]{}{\zeta} & E \ar[d]{}{0_E} \\
			E \ar[r,swap]{}{\lambda} & TE
		\end{tikzcd}\quad
		\begin{tikzcd}
			E_2 \ar[rr]{}{\langle \lambda \circ \pi_0, \lambda \circ \pi_1\rangle} \ar[d, swap]{}{\sigma} & & (TE)_2 \ar[d]{}{\operatorname{add}_E} \\
			E \ar[rr, swap]{}{\lambda} & & E
		\end{tikzcd}
		\]
		commute. That is, in the language of \cite{GeoffRobinDiffStruct} $(\lambda,\zeta)$ is a morphism of additive bundles in $\Cscr$.
		\item The diagram
		\[
		\begin{tikzcd}
		E \ar[r]{}{\lambda} \ar[d, swap]{}{\lambda} & TE \ar[d]{}{\ell_E} \\
		TE \ar[r, swap]{}{T\lambda} & T^2E
		\end{tikzcd}
		\]
		commutes.
		\item The lift $\lambda$ is universal and locally linear in the sense that the diagram
		\[
		\begin{tikzcd}
			E_2 \ar[d, swap]{}[description]{q \circ \pi_0 = q \circ \pi_1} \ar[rrrr]{}{T(\sigma) \circ \langle \ell_E \circ \pi_0, 0_E \circ \pi_1 \rangle} & & & & TE \ar[d]{}{Tq} \\
			X \ar[rrrr, swap]{}{0_X} & & & & TX
		\end{tikzcd}
		\]
		is a pullback which is preserved by $T^m$ for all $m \in \N$.
	\end{itemize}
\end{definition}

\begin{definition}\label{Defn: Section Tangent: Morphism of dbundles}
	Let $\Cscr$ be a tangent category with $q:E \to X$ and $r:F \to Y$ differential bundles. A \emph{morphism of differential bundles} from $q$ to $r$ is a pair of maps $(g,f)$ where $g:E \to F$ and $f:X \to Y$ are morphisms such that the diagram
	\[
	\begin{tikzcd}
		E \ar[r]{}{g} \ar[d, swap]{}{q} & F \ar[d]{}{r} \\
		X \ar[r, swap]{}{f} & Y
	\end{tikzcd}
	\]
	commutes. We say that $(g,f)$ is a \emph{linear morphism of differential bundles} if it is a morphism of differential bundles and the diagram
	\[
	\begin{tikzcd}
		E \ar[r]{}{g} \ar[d, swap]{}{\lambda_E} & F \ar[d]{}{\lambda_F} \\
		TE \ar[r, swap]{}{Tg} & TF
	\end{tikzcd}
	\]
	commutes as well.
\end{definition}
\begin{definition}
	If $\Cscr$ is a tangent category and $X \in \Cscr_0$ is an object then we will write $\DBun(X)$ for the category of differential bundles $q:E \to X$ over $X$ with linear morphisms $(f,\id_X)$ of differential bundles over $X$. If we need to explicitly emphasize the linearity of the maps, we write $\DBun_{\operatorname{lin}}(X)$ instead. Additionally, we write $\DBun_{\operatorname{bun}}(X)$ for the category of differential bundles over $X$ with not-necessarily-linear maps between bundles.
\end{definition}

Let us see some basic examples of categories of differential bundles for tangent categories.
\begin{example}
For the tangent category $\SMan$, by \cite[Theorem 1]{BenVectorBundles} there is an equivalence of categories $\DBun(M) \simeq \mathbf{VecBun}(M)$ for any smooth manifold $M$.
\end{example}
\begin{example}
Let $R$ be a commutative rig. For the tangent category $\CAlg{R}$, there is an equivalence of categories $\DBun(A) \simeq \Mod{A}$ for any commutative $A$-algebra by \cite[Theorem 3.13]{GeoffJSDiffBunComAlg} for the ring-theoretic case and by going through the argument of \cite[Remark 3.15]{GeoffJSDiffBunComAlg} for the rig-theoretic case; see also \cite[Example 2.2.3]{JSMeMapFlavoursInTanCats} for details.
\end{example}
\begin{example}
If $\Cscr$ is a category with finite biproducts equipped with the tangent structure described prior, then for any object $X$, $\DBun(X) \simeq \Cscr$ via the observation that a differential bundle $E \to X$ takes the form $E \cong X \oplus Y$ for some object $Y$ of $\Cscr$ while bundle morphisms $\varphi:E \to F$ precisely take the form $\id_X \oplus g:X \oplus Y \to X \oplus Z$.
\end{example}

The next example is of crucial importance to us,  so we will go through it in a high level of detail. Most of the descriptions here are derived from \cite{GeoffJSDiffBunComAlg} and can/will also be found in various expressions and degrees of detail in both works \cite{RobinMePartialMapRigs} and \cite{JSMeMapFlavoursInTanCats}. The reader who is uninterested in seeing the explicit combinatorial details checked and performed may want to skip ahead to Remark \ref{Remark: Dual Tangent Structure} for a purely formal way to construct the tangent structure on $\CAlg{R}^{\op}$ for any commutative rig $R$.

\begin{example}[{cf.\@ \cite[Example 4.2]{GeoffJSDiffBunComAlg}, \cite{RobinMePartialMapRigs}, \cite{JSMeMapFlavoursInTanCats}}]\label{Example: The tangent category of affine R schemes}
	Let $R$ be a commutative rig and let $\CAlg{R}^{\op}$ denote the opposite category of commutative $R$-algebras --- when $R$ is a commutative ring this is of course equivalent to the category of affine $R$-schemes. Then $\CAlg{R}^{\op}$ is a tangent category where the tangent functor is defined by sending a commutative $R$-algebra $A$ to the symmetric $A$-algebra of relative K{\"a}hler differentials\footnote{If $R$ is a rig and if $A$ is a commutative $R$ algebra, an $R$-derivation on $A$ with coefficients in an $A$-module $M$ is an $R$-linear map $\partial:A \to M$ such that for all $a, b \in A$, $\partial(a+b) = \partial(a)+\partial(b)$, $\partial(ab) = \partial(a)b + a\partial(b),$ and $\partial(1) = 0$. Only in the presence of negatives can we omit the equation $\partial(1) = 0$, as the identity $\partial(1) = \partial(1^2) = \partial(1)+\partial(1)$ simply says that $\partial(1)$ is additively idempotent otherwise.} 
	\begin{equation}\label{Eqn: Section Tangent: Tangent Structure of Affine Tangent Functor}
	T_{A/R} := \Sym{A}\left(\Kah{A}{R}\right).
	\end{equation}
	The projection $p:T_{(-)/R} \Rightarrow \id$ in $\CAlg{R}^{\op}$ is defined by taking $p_A:T_{A/R} \to A$ to be the opposite of the map
	\begin{equation}\label{Eqn: Section Tangent: Tangent Structure of Affine Projection Map}
	A \xrightarrow{\cong} \Sym{A}\left(0\right) \xrightarrow{\Sym{A}(\gnab_{\Omega})} \Sym{A}\left(\Kah{A}{R}\right)
	\end{equation}
	where $\gnab_{\Omega}:0 \to \Kah{A}{R}$ is the zero map from $0$ to $\Kah{A}{R}$ in $\Mod{A}$. 
	
	The zero map $0:\id \Rightarrow T_{(-)/R}$ in $\CAlg{R}^{\op}$ is the opposite of the map
	\begin{equation}\label{Eqn: Section Tangent: Tangent Structure of Affine Zero Map}
	\Sym{A}\left(\Kah{A}{R}\right) \xrightarrow{\Sym{A}(\bang_{\Omega})} \Sym{A}\left(0\right) \xrightarrow{\cong} A
	\end{equation}
	where $\bang_{\Omega}:\Kah{A}{R} \to 0$ is the zero map from $\Kah{A}{R}$ to $0$ in $\Mod{A}$.
	
	The addition $\operatorname{add}_A:T_2(-) \Rightarrow T_{(-)/R}$ in $\CAlg{R}^{\op}$ is the map opposite to the bialgebra addition morphism
	\[
	\nabla_{\Omega}:\Sym{A}\left(\Kah{A}{R}\right) \to \Sym{A}\left(\Kah{A}{R}\right)  \otimes_A \Sym{A}\left(\Kah{A}{R}\right), \quad x \mapsto x \otimes 1 + 1 \otimes x
	\]
	which  recognizes $\Sym{A}(M)$ as an $A$-bialgebra for the $A$-module $M = \Kah{A}{R}$. When $R$ is a commutative ring, each commutative monoid $(p_A, 0_A, \operatorname{add}_A)$ is an internal Abelian group by Proposition \ref{Prop: Section Background Alg: Symmetric Alg is Coalg}.
	
	The vertical lift $\ell:T \Rightarrow T^2$ and canonical flip are defined as follows. Given a commutative $R$-algebra $A$, the underlying $A$-algebra which describes $T^2(A)$ is the algebra
	\[
	T^2(A) := \Sym{\Sym{A}(\Kah{A}{R})}\left(\Kah{\big[\Sym{A}(\Kah{A}{R})\big]}{R}\right).
	\] 
	The rig $T^2(A)$ admits the following explicit description (cf.\@ \cite{RobinMePartialMapRigs} for the genesis of our description below in the rig-theoretic setting and \cite{GeoffRobinDiffStruct} for a description in the ring-theoretic setting). First, we can always write the rig $\Sym{A}(\Kah{A}{R})$ with the description
	\[
	\Sym{A}(\Kah{A}{R}) \cong \frac{A[\mathrm{d}a:a \in A]}{\mathsf{cong}\langle\mathrm{d}(a+a^{\prime}) \simeq \mathrm{d}(a) + \mathrm{d}(a^{\prime}), \mathrm{d}(aa^{\prime}) \simeq \mathrm{d}(a)a^{\prime} + a\mathrm{d}(a^{\prime}), \mathrm{d}(1_R) \simeq 0: a, a^{\prime} \in A\rangle}.
	\]
	where $\mathsf{cong}\langle \operatorname{Relations}\rangle$ denotes the smallest congruence on the algebra generated by the relations given. Applying the tangent functor $T_{(-)/R}$ to this algebra gives yet another algebra
	\begin{align*}
		\Sym{\Sym{A}(\Kah{A}{R})}\left(\Kah{\big[\Sym{A}(\Kah{A}{R})\big]}{R}\right) \cong \frac{A[\mathrm{d}a, \delta(a), (\mathrm{d}\delta)(a):a \in A]}{C}
	\end{align*}
	where $C$ is the congruence generated by the relations:
	\begin{align}
	\label{Eqn: First of the T2 rels}	\mathrm{d}(1_R) &= 0, & \delta(1_R) &= 0, \\
	\label{Eqn: Second of the T2 rels}	\mathrm{d}(a+b) &= \mathrm{d}(a) + \mathrm{d}(b), & \delta(a+b) &= \delta(a) + \delta(b), \\
	\label{Eqn: Third of the T2 rels}	\mathrm{d}(ab) &= \mathrm{d}(a)b + a\mathrm{d}(b), & \delta(ab) &= \delta(a)b + a\delta(b) \\
	\label{Eqn: Fourth of the T2 rels}	(\mathrm{d}\delta)(1_R) &= 0 & (\mathrm{d}\delta)(a+b) &= (\mathrm{d}\delta)(a) + (\mathrm{d}\delta)(b) \\
	\label{Eqn: Fifth of the T2 rels}	(\mathrm{d}\delta)(ab) &= \mathrm{d}(a)\delta(b) + (\mathrm{d}\delta)(a)b + \mathrm{d}(b)\delta(a) + a(\mathrm{d}\delta)(b)
	\end{align}
	On one hand the vertical lift $\ell_A:T_{A/R} \to T^2(A)$ is defined to be the opposite of the rig-map
	\begin{equation}\label{Eqn: Section Tangent: Tangent Structure of Affine VertLift Map}
	\rotatebox[origin = c]{180}{$\ell$}_A:\Sym{\Sym{A}(\Kah{A}{R})}\left(\Kah{\big[\Sym{A}(\Kah{A}{R})\big]}{R}\right) \longrightarrow \Sym{A}\left(\Kah{A}{R}\right)
	\end{equation}
	given by
	\[
	a \mapsto a,\quad \mathrm{d}a \mapsto 0,\quad  \delta(a) \mapsto 0,\quad (\mathrm{d}\delta)(a) \mapsto \mathrm{d}a
	\]
	on generators. On the other hand, the canonical flip $c_{A}:T^2_{A/R} \to T^2_{A/R}$ is the opposite map of the morphism $\hat{c}$ defined on generators by
	\begin{equation}\label{Eqn: Section Tangent: Tangent Structure of Affine CanonFlip Map}
	c_A(a) = a,\quad c_A(\mathrm{d}(a)) = \delta(a),\quad c_A(\delta(a)) = \mathrm{d}a,\quad c_A(\mathrm{d}\delta)(a) = (\mathrm{d}\delta)(a).
	\end{equation}
\end{example}
\begin{example}
	Let us now examine the category of linear differential bundle maps over a base $R$-algebra $A$ in $\CAlg{R}^{\op}$. The arguments of \cite[Theorem 4.17]{GeoffJSDiffBunComAlg} prove for the ring-theoretic case (\cite[Remark 4.20]{GeoffJSDiffBunComAlg} argues for the rig-theoretic case; cf.\@ also \cite[Example 2.2.4]{JSMeMapFlavoursInTanCats}), we have that $\DBun(A)^{\op} \simeq \Mod{A}$. When $R$ is a commutative ring then for any affine $R$-scheme $\Spec A$, we obtain an opposite equivalence
	\[
	\mathbf{QCoh}(\Spec A) \simeq \Mod{A} \simeq \DBun(A)^{\op}.
	\]
	An important take-away from this equivalence is that for any map $f:E \to F$ of differential bundles over $A$ in $\CAlg{R}^{\op}$, $f$ is linear if and only if $f$ is isomorphic to a map of the form $\Sym{A}(g)$ for some module map $g:N \to M$.
\end{example}
\begin{remark}\label{Remark: Dual Tangent Structure}
While we constructed it very explicitly, the existence of the tangent structure on $\CAlg{R}^{\op}$ can be seen as a formal consequence of the tangent structure on $\CAlg{R}$ of Example \ref{Example: Tangent Structure on CAlgR} --- it is the so-called dual tangent structure of \cite[Proposition 5.17]{GeoffRobinDiffStruct}. In \cite[Proposition 5.17]{GeoffRobinDiffStruct}, R.~ Cockett and G.~ Cruttwell showed that when one has a tangent category $(\Cscr,\Tbb)$ for which there are left adjoints
\[
\begin{tikzcd}
\Cscr \ar[rr, swap, bend right = 20, ""{name =R}]{}{T} & & \Cscr \ar[ll, swap, bend right = 20, ""{name = L}]{}{L} 
\ar[from = L, to = R, symbol = \dashv]
\end{tikzcd}\quad
\begin{tikzcd}
	\Cscr \ar[rr, swap, bend right = 20, ""{name =R}]{}{T_n} & & \Cscr \ar[ll, swap, bend right = 20, ""{name = L}]{}{L_n} 
	\ar[from = L, to = R, symbol = \dashv]
\end{tikzcd}
\]
for all $n \in \N$, then $\Cscr^{\op}$ has a tangent structure with $L^{\op}:\Cscr^{\op} \to \Cscr^{\op}$ as the tangent functor. This happens for commutative $R$-algebras, as for any $R$-algebras $A$ and $B$ the natural isomorphisms
\begin{align*}
\CAlg{R}(A,B[\epsilon]) &\cong \mathsf{Der}_{R}(A,B) \cong \Mod{A}(\Kah{A}{R},\Und{A}(B)) \cong \CAlg{R}\left(\Sym{A}(\Kah{A}{R}),B\right)
\end{align*}
show that the functor $\Sym{A}(\Kah{(-)}{R}) \dashv (-)[\epsilon]$ in $\CAlg{R}$. Noting further that because the category $\CAlg{R}$ is coextensive (as the category $\CAlg{R}^{\op} \simeq \mathbf{AffSch}_{/R}$ is extensive) gives that pushouts of finite products of commutative algebras commute and allows one to deduce in a straightforward manner that there are natural isomorphisms
\[
(-)[\epsilon]_n \cong \left((-)_n\right)[\epsilon]
\]
and adjunctions
\[
\begin{tikzcd}
	\CAlg{R} \ar[rrrr, swap, bend right = 20, ""{name =R}]{}{(-)[\epsilon]_n} & & & & \Cscr \ar[llll, swap, bend right = 20, ""{name = L}]{}{\Sym{(-)}(\Kah{(-)}{R})_n} 
	\ar[from = L, to = R, symbol = \dashv]
\end{tikzcd}
\]
for all $n \in \N$. Consequently by applying \cite[Proposition 5.17]{GeoffRobinDiffStruct} we see formally that the functor 
\[
\Sym{(-)}(\Kah{(-)}{R})^{\op}:\CAlg{R}^{\op} \to \CAlg{R}^{\op}
\]
induces a tangent structure on $\CAlg{R}^{\op}$ with each structure transformation $p, 0, \operatorname{add}, \ell, c$ given by the mate of the corresponding structure transformation of the tangent structure on $\CAlg{R}$. While this is formally much more straightforward to build, seeing the explicit combinatorial construction in Example \ref{Example: The tangent category of affine R schemes} gives one an idea of how to actually compute such maps and transformations.
\end{remark}

\subsection{The Tangent Structure on Categories of (Relative) Schemes}\label{Subsection: Tangent Category of Schemes}

Let us  now examine, prove, and study the tangent structure on $\Sch_{/S}$ for an arbitrary base scheme $S$ using the technology we have built thus far. Our first tasks are to define the tangent functor, its bundle projection, its zero section, and its addition. However, in light of how the tangent structure is defined in Example \ref{Example: The tangent category of affine R schemes}, how Theorem \ref{Thm: Section Background Scheme: Hopf algebras for RelSym} lifts the cocommutative Hopf algebra structure enjoyed by the symmetric algebra functor to quasi-coherent sheaves, and by both \cite[Line 16.5.12.1]{EGA44} and \cite[Theorem 4.27]{GeoffJSDiffBunComAlg}, we are led to the following definitions.

First, the object assignment for our tangent functor $T$ is forced essentially by what we described above: we require
\[
T_{X/S} := \RelSpec{X}\left(\RelSym{\Ocal_X}\left(\Kah{X}{S}\right)\right).
\]
The definition of $T$ on morphisms is a little more delicate. Applying the morphism $u_f$ of Proposition \ref{Prop: Section Background Scheme: QCoh cofibration} recall by Corollary \ref{Cor: Section Background Scheme: The pullback Sym isos for qcoh} that for any morphism $f:X \to Y$ of $S$-schemes we have a morphism
\[
f^{\ast}\left(\RelSym{\Ocal_Y}\left(\Kah{Y}{S}\right)\right) \xrightarrow{\cong} \RelSym{\Ocal_X}\left(f^{\ast}\Kah{Y}{S}\right) \xrightarrow{\RelSym{\Ocal_X}(u_f)} \RelSym{\Ocal_X}\left(\Kah{X}{S}\right)
\]
of quasi-coherent sheaves on $X$. Applying the relative spectrum functor at $X$ gives the map
\[
\RelSpec{X}\left(\RelSym{\Ocal_X}\left(\Kah{X}{S}\right)\right) \xrightarrow{\RelSpec{X}(u_f)} \RelSpec{X}\left(\RelSym{\Ocal_X}\left(f^{\ast}\Kah{Y}{S}\right)\right) \xrightarrow{\cong} \RelSpec{X}\left(f^{\ast}\left(\RelSym{\Ocal_Y}\left(\Kah{Y}{S}\right)\right)\right).
\]
Post-composing by the pullback preservation isomorphism of Corollary \ref{Cor: Section Background Scheme: The pullback iso for relspec} gives rise to a morphism of schemes
\[
\RelSpec{X}\left(\RelSym{\Ocal_X}\left(\Kah{X}{S}\right)\right) \xrightarrow{\RelSpec{X}(u_f)} \RelSpec{X}\left(\RelSym{\Ocal_X}\left(f^{\ast}\Kah{Y}{S}\right)\right) \xrightarrow{\cong} f^{\ast}\left(\RelSpec{Y}\left(\RelSym{\Ocal_Y}\left(\Kah{Y}{S}\right)\right)\right).
\]
Call this map $\theta_f$. Post-composing $\theta_f$ by the projection
\[
f^{\ast}\left(\RelSpec{Y}\left(\RelSym{\Ocal_Y}\left(\Kah{Y}{S}\right)\right)\right) = \RelSpec{Y}\left(\RelSym{\Ocal_Y}\left(\Kah{Y}{S}\right)\right) \times_Y X \longrightarrow \RelSpec{Y}\left(\RelSym{\Ocal_Y}\left(\Kah{Y}{S}\right)\right)
\]
allows us to define $Tf$ as the composite below:
\begin{equation}\label{Eqn: Section Tangent: Tangent Functor Assignment on Morphisms}
	\begin{tikzcd}
	\RelSpec{X}\left(\RelSym{\Ocal_X}\left(\Kah{X}{S}\right)\right) \ar[rr]{}{\theta_f} \ar[drr, swap]{}{Tf} & & f^{\ast}\left(\RelSpec{Y}\left(\RelSym{\Ocal_Y}\left(\Kah{Y}{S}\right)\right)\right) \ar[d]{}{\pr_0} \\
	 & & \RelSpec{Y}\left(\RelSym{\Ocal_Y}\left(\Kah{Y}{S}\right)\right)
	\end{tikzcd}
\end{equation}

\begin{definition}\label{Defn: Section Tangent: Tangent functor for schemes}
Let $S$ be a scheme. The tangent functor $T:\Sch_{/S} \to \Sch_{/S}$ is defined on objects by
\[
T_{X/S} := \RelSpec{X}\left(\RelSym{\Ocal_X}\left(\Kah{X}{S}\right)\right)
\]
and defined on morphisms $f:X \to Y$ by sending $f$ to
\[
Tf:\RelSpec{X}\left(\RelSym{\Ocal_X}\left(\Kah{X}{S}\right)\right) \longrightarrow \RelSpec{Y}\left(\RelSym{\Ocal_Y}\left(\Kah{Y}{S}\right)\right)
\]
as defined in Diagram (\ref{Eqn: Section Tangent: Tangent Functor Assignment on Morphisms}).
\end{definition}

An important observation regarding the tangent functor on schemes is that it is a right adjoint to a product functor and hence is an internal hom functor; that is, $T_{(-)/S} \cong [W_S,-]:\Sch_{/S} \to \Sch_{/S}$ for an infinitesimal object we now describe. Begin by observing that the presheaf $\Ocal_{S}[\epsilon]$ on $S$ defined by declaring, for $U \subseteq \lvert S \rvert$ open,
\[
\big(\Ocal_S[\epsilon]\big)(U) := \Ocal_S(U)[\epsilon] \cong \frac{\Ocal_S(U)[x]}{(x^2)}
\]
and extending the restriction maps by pushing out, is a quasi-coherent sheaf of $\Ocal_S$-algebras. As such, we define 
\[
W_S := \RelSpec{S}(\Ocal_S[\epsilon]).
\]
Because it is built in terms of the relative spectrum functor and hence has affine structure map to $S$, the scheme $W_S$ has the property that if $\lbrace U_i \; | \; i \in I \rbrace$ is an affine open cover of $S$ then the collection $\lbrace W_{U_i} \; | \; i \in I \rbrace$ is an affine open cover of $W_S$. By using the fact that affine-locally maps of the form $W_{U_i} \to V_j$ look like ring-theoretic maps $\Ocal_X(V_j) \to \Ocal_S(U_i)[\epsilon]$ and hence give derivations of $\Ocal_X(V_j)$ valued in $\Ocal_S(U_i)$, we are able to prove that the functor $(-) \times_S W_S$ has right adjoint given by the tangent functor $T_{(-)/S}$.

\begin{proposition}\label{Prop: Section Tangent: Tangent fucntor is a right adjoint}
Let $S$ be a scheme. Then there is an adjunction:
\[
\begin{tikzcd}
\Sch_{/S} \ar[rr, bend right = 20, swap, ""{name = R}]{}{T_{(-)/S}} & & \Sch_{/S} \ar[ll, bend right = 20, swap, ""{name = L}]{}{(-) \times_S W_S}
\ar[from = L, to = R, symbol = \dashv]
\end{tikzcd}
\]
\end{proposition}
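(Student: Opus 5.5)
We will establish a bijection
\[
\Sch_{/S}\left(Z \times_S W_S, X\right) \cong \Sch_{/S}\left(Z, T_{X/S}\right)
\]
natural in $Z$ and $X$. Both sides will be reduced to derivation data. On the right, $T_{X/S} = \RelSpec{X}(\RelSym{\Ocal_X}(\Kah{X}{S}))$ is affine over $X$. So an $S$-map $Z \to T_{X/S}$ is the same as a morphism $g:Z \to X$ over $S$ together with an $X$-map $Z \to T_{X/S}$.

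Since $Z$ need not be affine over $X$, we avoid Corollary \ref{Cor: Section Background Scheme: Equiv of Cats for Relspec} directly. Instead we base change along $g$ using Corollary \ref{Cor: Section Background Scheme: The pullback iso for relspec} and Corollary \ref{Cor: Section Background Scheme: The pullback Sym isos for qcoh}. The $X$-map is then a section of $\RelSpec{Z}(\RelSym{\Ocal_Z}(g^{\ast}\Kah{X}{S})) \to Z$. By Corollary \ref{Cor: Section Background Scheme: Equiv of Cats for Relspec} applied over $Z$, together with Proposition \ref{Prop: Section Background Scheme: Folklore Sym is left adj to Rel Und}, such a section is a map of quasi-coherent sheaves $g^{\ast}\Kah{X}{S} \to \Ocal_Z$. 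Adjointly, it is a map $\Kah{X}{S} \to g_{\square}\Ocal_Z$, or, more concretely and locally, an $S$-derivation $\Ocal_X \to g_{\ast}\Ocal_Z$ by Proposition \ref{Prop: Section Kahlers: Kahler Diffles are Corepresenting}.

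On the left, $Z \times_S W_S \to Z$ is affine with $Z\times_S W_S \cong \RelSpec{Z}(\Ocal_Z[\epsilon])$ by Corollary \ref{Cor: Section Background Scheme: The pullback iso for relspec}. In particular $Z\times_S W_S$ has the same underlying space as $Z$, with structure sheaf $\Ocal_Z[\epsilon] = \Ocal_Z \ltimes \Ocal_Z$. A morphism of $S$-schemes $Z\times_S W_S \to X$ is therefore a continuous map $|g|:|Z| \to |X|$ together with a local map of sheaves of $f^{-1}\Ocal_S$-algebras $g^{-1}\Ocal_X \to \Ocal_Z[\epsilon]$. Composing with the augmentation $\epsilon \mapsto 0$ yields a morphism $g:Z \to X$, and the $\epsilon$-component is exactly an $S$-derivation $g^{-1}\Ocal_X \to \Ocal_Z$. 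This is the sheafified version of Lemma \ref{Lemma: Section Background Scheme: S derivations and sections}, using $\Ocal_Z \ltimes \Ocal_Z$ in place of $\Ocal_X\ltimes\Fscr$.

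Locality of the map is automatic, because elements of $A[\epsilon]$ with invertible image in $A$ are invertible. Derivations $g^{-1}\Ocal_X \to \Ocal_Z$ correspond adjointly to derivations $\Ocal_X \to g_{\ast}\Ocal_Z$, so the two sides match. Naturality in $Z$ and $X$ then follows from the naturality of each step: $u_f$ from Proposition \ref{Prop: Section Kahlers: Functoriality of Kahlers for QCoh}, precomposition of derivations, and the pseudonaturality isomorphisms of $\RelSpec{}$ and $\RelSym{}$.

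The main obstacle is the middle identification for non-affine $Z$. If it proves delicate, the fallback is to check the bijection affine-locally, which is Remark \ref{Remark: Dual Tangent Structure} in the form $\CAlg{R}(A,B[\epsilon])\cong\CAlg{R}(\Sym{A}(\Kah{A}{R}),B)$. One covers $S$ by affines $U_i$, $X$ by affines $V_j$ over them, and $Z$ by affines $Z_k$ with $g(Z_k)\subseteq V_j$. The local bijections are compatible with restriction, by Lemma \ref{Lemma: Section Kahlers: Relative Kahlers are qcoh over Affine Schemes} and the stability of Kähler differentials under localization. They then glue by the Gluing Lemma and Theorem \ref{Thm: Section Background Scheme: QCoh is a pseudolimit}, using that $T_{X/S}$ and $Z\times_S W_S$ both restrict over these affine opens to the corresponding affine constructions. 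Verifying this compatibility with restriction is the step I expect to require the most care.
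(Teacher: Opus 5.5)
Your main argument is correct, but it follows a different route from the paper. The paper's proof is essentially your fallback. It writes $X\times_S W_S\cong\RelSpec{X}(\nu_X^{\ast}\Ocal_S[\epsilon])$ and chooses compatible affine covers so that each $\pr_0^{-1}U_i\cong\Spec A_i[\epsilon]$ maps into some $V_j\cong\Spec B_j$ over $S_k\cong\Spec C_k$. It then runs the ring-level chain from maps $B_j\to A_i[\epsilon]$, through $\mathsf{Der}_{C_k}(B_j,A_i)$ and $\CAlg{B_j}(\Sym{B_j}(\Kah{B_j}{C_k}),A_i)$, to maps $U_i\to T_{V_j/S_k}$. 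Finally it glues, without checking agreement on overlaps or independence of the chosen covers.

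You instead identify both hom-sets, globally and canonically, with pairs $(g,\partial)$, where $g\colon Z\to X$ is an $S$-morphism and $\partial$ is an $S$-derivation $g^{-1}\Ocal_X\to\Ocal_Z$. On the left this works because $Z\times_S W_S$ is a square-zero thickening of $Z$ with the same underlying space and structure sheaf $\Ocal_Z[\epsilon]$. On the right it works via base change of $\RelSpec{}$ and $\RelSym{}$, sections of affine morphisms, and $\RelSym{\Ocal_Z}\dashv\RelUnd{\Ocal_Z}$.

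Your route gives a choice-free bijection. The overlap-compatibility step you expected to be hardest, and which the paper passes over, never arises, and naturality in $Z$ is immediate. The paper's route has the advantage that every individual step is a ring-level identity already recorded in Remark \ref{Remark: Dual Tangent Structure}.

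One small repair is needed. Proposition \ref{Prop: Section Kahlers: Kahler Diffles are Corepresenting} is stated only for quasi-coherent targets, while $g_{\ast}\Ocal_Z$ need not be quasi-coherent. Detouring through $g_{\square}$ does not help, since comparing derivations into $g_{\square}\Ocal_Z$ with derivations into $g_{\ast}\Ocal_Z$ requires the same fact. You need the standard universal property of sheaves of differentials against arbitrary modules (EGA IV, \S 16.5). Either apply it to $\Kah{X}{S}$ against $g_{\ast}\Ocal_Z$, or apply it to $g^{-1}\Kah{X}{S}\cong\Omega_{g^{-1}\Ocal_X/g^{-1}\nu_X^{-1}\Ocal_S}$ against $\Ocal_Z$. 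The second option matches $\Ocal_Z$-linear maps $g^{\ast}\Kah{X}{S}\to\Ocal_Z$ directly with derivations $g^{-1}\Ocal_X\to\Ocal_Z$, without pushing forward at all.

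Naturality in $X$ is only sketched in your proposal: you must show that composing with $Tf$ corresponds to precomposing $\partial$ with the map $g^{-1}f^{-1}\Ocal_Y\to g^{-1}\Ocal_X$ induced by $f$, which follows from the definition of $u_f$. The paper does not verify this at all.
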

\begin{proof}
Let $X$ and $Y$ be $S$-schemes with structure maps
\[
\nu_X:X \to S, \quad \nu_Y:Y \to S
\] 
and assume that we have a morphism
\[
f:X \times_S W_S \longrightarrow Y
\]
of $S$-schemes. As $W_S$ is affine over $S$, the projection $\pr_0:X \times_S W_S \to X$ is affine over $X$ by \cite[Proposition 1.5.1]{EGA2}. In particular, we have that there is a natural isomorphism
\[
X \times_S W_S = X \times_S \RelSpec{S}(\Ocal_S[\epsilon]) \cong \RelSpec{X}\left(\Ocal_X \otimes_{\nu_X^{-1}\Ocal_S} \nu_X^{-1}(\Ocal_S[\epsilon])\right) = \RelSpec{X}\left(\nu_X^{\ast}\Ocal_S[\epsilon]\right).
\]
by applying Corollary \ref{Cor: Section Background Scheme: The pullback iso for relspec}. Thus the map $f:X \times_S W_S \to S$ is equivalently given by a morphism of $S$-schemes
\[
\tilde{f}:\RelSpec{X}\left(\nu_X^{\ast}\Ocal_S[\epsilon]\right) \to Y
\]
Because $\tilde{f}$ has a scheme affine over $X$ as its domain, to define $\tilde{f}$ is equivalent to defining affine open covers $\lbrace V_j \; | \; j \in J \rbrace$ of $Y$, $\lbrace U_i \; | \; i \in I \rbrace$ of $X$, and $\lbrace S_k \; | \; k \in K \rbrace$ of $S$ for which if $\pr_0^{-1}U_i$ is the corresponding open affine subscheme in $X \times_S W_S$, then for all $i \in I$ there exists a $j \in J$ and a $k \in K$ for which the diagram
\[
\begin{tikzcd}
\pr_0^{-1}U_i \ar[rr] \ar[dd, swap]{}{\tilde{f}} \ar[dr] & & X \times_S W_S \ar[dr] \\
& S_k \ar[rr] & & S \\
V_j \ar[ur, swap]{}{\nu_Y} \ar[rr, swap] & & Y \ar[ur, swap]{}{\nu_Y}
\ar[from = 1-3, to = 3-3, crossing over, near start]{}{f}
\end{tikzcd}
\]
commutes. But by construction we have that
\[
\pr_0^{-1}U_i \cong U_i \times_{S_k} W_{S_k}.
\]
If $S_k \cong \Spec C_k$, $W_{S_k} \cong \Spec C_k[\epsilon]$ and if $U_i \cong \Spec A_i$ then
\[
\pr_0^{-1}U_i \cong \Spec A_i \times_{C_k} \Spec C_k[\epsilon] \cong \Spec\left(A_i \otimes_{C_k} C_k[\epsilon]\right) \cong \Spec A_i[\epsilon].
\]
Thus to give $\tilde{f}$ is to give maps $\tilde{f}_{ijk}:U_i \times_{S_k} W_{S_k} \to V_j$. Now if we additionally write $V_j \cong \Spec B_j$ then we derive the following chain of natural equivalences:
\begin{prooftree}
	\AxiomC{$\tilde{f}:X \times_S W_S \to Y$}
	\UnaryInfC{For all compatible $(i,j,k)$.\, $\tilde{f}_{ijk}:U_i \times_{S_k} W_{S_k} \to V_j$}
	\UnaryInfC{For all compatible $(i,j,k)$.\,$\Spec \varphi_{ijk}: \Spec A_i[\epsilon] \to \Spec B_j$}
	\UnaryInfC{For all compatible $(i,j,k)$.\,$\varphi_{ijk}:B_j \to A_i[\epsilon]$}
	\UnaryInfC{For all compatible $(i,j,k)$.\,$\partial_{ijk} \in \mathsf{Der}_{C_k}(B_j,A_i)$}
	\UnaryInfC{For all compatible $(i,j,k)$.\,$\tilde{\partial}_{ijk} \in \Mod{B_j}(\Kah{B_j}{C_k},\Und{B_j}(A_i))$}
	\UnaryInfC{For all compatible $(i,j,k)$.\,$\tilde{\partial}_{ijk}^{\sharp} \in \CAlg{B_j}(\Sym{B_j}(\Kah{B_j}{C_k}),A_i)$}
	\UnaryInfC{For all compatible $(i,j,k)$.\,$\Spec \tilde{\partial}_{ijk}^{\sharp}:U_i \to T_{V_j/S_k}$}\RightLabel{Glue the $\Spec \tilde{\partial}_{ijk}^{\sharp}$ over $i,j,k$}
	\UnaryInfC{$\hat{f}:X \to T_{Y/S}$}
\end{prooftree}
By running this argument in reverse we find that $(-) \times_S W_S \dashv T_{(-)/S}$, as was desired.
\end{proof}
\begin{corollary}\label{Cor: Section Tangent: Tangent functor pres limits}
Let $S$ be a scheme. Then $T_{(-)/S}$ is continuous.
\end{corollary}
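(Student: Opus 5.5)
The plan is to read off Corollary \ref{Cor: Section Tangent: Tangent functor pres limits} from Proposition \ref{Prop: Section Tangent: Tangent fucntor is a right adjoint}, using the standard categorical fact that right adjoints preserve all limits that exist in their domain. By that proposition, $T_{(-)/S}:\Sch_{/S} \to \Sch_{/S}$ is right adjoint to $(-) \times_S W_S$. So for any small diagram $D:\Jscr \to \Sch_{/S}$ with a limit $\lim D$, we will show directly that the canonical comparison map $T_{\lim D/S} \to \lim T_{D(-)/S}$ is an isomorphism whenever the right-hand limit exists.

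The key steps, in order, are as follows. First, fix a test $S$-scheme $Z$. Second, use the hom-set bijections of the adjunction, which are natural in both variables:
\[
\Sch_{/S}\left(Z, T_{\lim D/S}\right) \cong \Sch_{/S}\left(Z \times_S W_S, \lim D\right) \cong \lim_{j} \Sch_{/S}\left(Z \times_S W_S, Dj\right) \cong \lim_j \Sch_{/S}\left(Z, T_{Dj/S}\right).
\]
The middle isomorphism is just the universal property of $\lim D$. Third, note that naturality in $Z$ makes this composite bijection arise from the cone $(T(\pi_j))_j$. By the Yoneda lemma, $T_{\lim D/S}$ together with this cone is then a limit of $T \circ D$; in particular the limit of $T\circ D$ exists and is preserved.

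The one point needing care is the naturality in the second variable of the bijection built in the proof of Proposition \ref{Prop: Section Tangent: Tangent fucntor is a right adjoint}. That proof glued affine-local correspondences between ring maps $B_j \to A_i[\epsilon]$ and maps $\Sym{B_j}(\Kah{B_j}{C_k}) \to A_i$. Each local step (the universal property of $\Kah{}{}$ and the $\Sym{} \dashv \Und{}$ adjunction) is natural, and gluing preserves this naturality. So the composite is natural in $Y$ as long as $T$ acts on morphisms compatibly with Definition \ref{Defn: Section Tangent: Tangent functor for schemes}. We would check this affine-locally, where $Tf$ is $\Spec$ of $\Sym{}(\mathrm{d}f)$ and the compatibility is the functoriality in Proposition \ref{Prop: Section Kahlers: Kahler diffles are functors}. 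This is the main obstacle, and it is routine. Once it is granted, the corollary follows formally.
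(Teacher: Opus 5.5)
Your proposal is correct and is exactly the paper's argument. The paper proves the corollary in one line, by noting that $T_{(-)/S}$ is right adjoint to $(-)\times_S W_S$; your Yoneda computation is the standard proof that right adjoints preserve whatever limits exist, and the naturality-in-$Y$ check you flag is something the paper leaves implicit in its sketch of that adjunction.
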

\begin{proof}
This is immediate from $T_{(-)/S}$ being a right adjoint.
\end{proof}

Luckily for us, defining the natural transformations which give the bundle projection and zero sections of the tangent functor follow much as in Example \ref{Example: The tangent category of affine R schemes} and in particular in Lines (\ref{Eqn: Section Tangent: Tangent Structure of Affine Projection Map}) and (\ref{Eqn: Section Tangent: Tangent Structure of Affine Zero Map}). Because $\QCoh(X)$ is an Abelian category and $\RelSym{\Ocal_X}$ is a left adjoint, we again have that using the natural isomorphism $\Ocal_X \cong \RelSym{\Ocal_X}(0)$ together with the zero inclusion $\gnab$ at the sheaf of relative K{\"a}hlers gives a map
\[
\begin{tikzcd}
\Ocal_X \ar[r]{}{\cong} & \RelSym{\Ocal_X}(0) \ar[rr]{}{\RelSym{\Ocal_X}(\gnab_{\Omega})} & & \RelSym{\Ocal_X}\left(\Kah{X}{S}\right)
\end{tikzcd}
\]
in the category $\QCoh(X,\CAlg{\Ocal_X})$ of quasi-coherent sheaves of $\Ocal_X$-algebras; call this map $\rho_X$. Dually, writing the zero projection $\bang_{\Omega}:\Kah{X}{S} \to 0$, we also obtain a morphism
\[
\begin{tikzcd}
\RelSym{\Ocal_X}\left(\Kah{X}{S}\right) \ar[rr]{}{\RelSym{\Ocal_X}(\bang_{\Omega})} & & \RelSym{\Ocal_X}(0) \ar[r]{}{\cong} & \Ocal_X
\end{tikzcd}
\] 
in $\QCoh(X,\CAlg{\Ocal_X})$; call this map $z_X$. Applying the relative spectrum functor at $X$ gives us the morphisms which define the $X$-component of the projection and zero section transformations.
\begin{definition}\label{Defn: Section Tangent: Zero and Projection Transformations}
Let $S$ be a base scheme. The projection and zero section natural transformations
\[
\begin{tikzcd}
\Sch_{/S} \ar[rr, swap, bend right = 30, ""{name = D}]{}{\id} \ar[rr, bend left = 30, ""{name = U}]{}{T_{(-)/S}} & & \Sch_{/S}
\ar[from = U, to = D, Rightarrow, shorten <= 4pt, shorten >= 4pt]{}{p}
\end{tikzcd}\qquad
\begin{tikzcd}
	\Sch_{/S} \ar[rr, swap, bend right = 30, ""{name = D}]{}{T_{(-)/S}} \ar[rr, bend left = 30, ""{name = U}]{}{\id} & & \Sch_{/S}
	\ar[from = U, to = D, Rightarrow, shorten <= 4pt, shorten >= 4pt]{}{0}
\end{tikzcd}
\]
are defined by setting $p_X := \RelSpec{X}(\rho_X)$ and $0_X := \RelSpec{X}(z_X)$, respectively.
\end{definition}
\begin{proposition}\label{Prop: Section Tangent: Projection and zero are affine}
For any $S$-scheme $X$, the morphisms $p_X:T_{X/S} \to X$ and $0_X:X \to T_{X/S}$ are affine.
\end{proposition}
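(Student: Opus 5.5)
The plan is to reduce both claims to the fact that $T_{X/S} = \RelSpec{X}(\RelSym{\Ocal_X}(\Kah{X}{S}))$ is an object of $\mathbf{Aff}_{/X}$. Throughout I write $q_X : T_{X/S} \to X$ for its structure map. I would then use the definition of affine morphisms (Definition \ref{Defn: Scheme Background: Affine Morphism}) together with Proposition \ref{Prop: Section Background Scheme: Affine iff affine for all covers}.

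\textbf{Step 1: the projection $p_X$.} I first identify $p_X$ with $q_X$.
\begin{itemize}
\item The algebra $\Ocal_X$ is the initial object of $\QCoh(X,\CAlg{\Ocal_X})$. Hence $\RelSpec{X}(\Ocal_X)$ is terminal in $\mathbf{Aff}_{/X}$, so $\RelSpec{X}(\Ocal_X) \cong X$ via its structure map $\id_X$.
\item The map $\rho_X : \Ocal_X \to \RelSym{\Ocal_X}(\Kah{X}{S})$ is the unique algebra map out of $\Ocal_X$, namely the unit of the algebra. This uses $\RelSym{\Ocal_X}(0) \cong \Ocal_X$ and that $\RelSym{\Ocal_X}(\gnab_\Omega)$ is an algebra map.
\item By Corollary \ref{Cor: Section Background Scheme: Equiv of Cats for Relspec}, applying $\RelSpec{X}$ to $\rho_X$ gives the unique $X$-morphism $T_{X/S} \to X$, which is $q_X$.
\end{itemize}
Since $q_X$ is affine by Proposition \ref{Prop: Section Scheme Background: Relspec functor}, so is $p_X$.

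Making this identification precise is the only step I expect to need care. One has to check that "$p_X$ as a morphism in $\Sch_{/S}$" is really the $X$-structure map, after composing with the chosen isomorphism $\RelSpec{X}(\Ocal_X)\cong X$. If that bookkeeping is awkward, I would fall back to a local check. Over each affine open $U=\Spec A$ of $X$, the map $p_X$ restricts to $\Spec$ of $A \to \Sym{A}(\Kah{A}{R})$, using the construction of $\RelSpec{X}$ in \cite[Proposition 1.3.1]{EGA2}. Then $p_X^{-1}(U)$ is affine, which is exactly Definition \ref{Defn: Scheme Background: Affine Morphism}.

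\textbf{Step 2: the zero section $0_X$.} Fix any affine open cover $\lbrace U_i \; | \; i \in I\rbrace$ of $X$.
\begin{itemize}
\item Because $p_X$ is affine, Proposition \ref{Prop: Section Background Scheme: Affine iff affine for all covers} shows that $\lbrace p_X^{-1}(U_i) \; | \; i \in I\rbrace$ is an affine open cover of $T_{X/S}$.
\item The identity $p_X \circ 0_X = \id_X$ holds because $z_X \circ \rho_X = \id_{\Ocal_X}$. This is immediate since $\RelSym{\Ocal_X}(\bang_\Omega \circ \gnab_\Omega) = \RelSym{\Ocal_X}(\id_0)$, and applying the functor $\RelSpec{X}$ preserves the identity.
\item Consequently $0_X^{-1}(p_X^{-1}(U_i)) = (p_X\circ 0_X)^{-1}(U_i) = U_i$, which is affine.
\end{itemize}
Thus the affine open cover $\lbrace p_X^{-1}(U_i)\rbrace$ of $T_{X/S}$ pulls back along $0_X$ to affine opens. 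By Definition \ref{Defn: Scheme Background: Affine Morphism}, $0_X$ is affine.

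Alternatively, $0_X$ is a morphism between two objects of $\mathbf{Aff}_{/X}$, and such a morphism is affine by the same preimage argument. It is in fact a closed immersion, since $z_X$ is a surjection of sheaves of algebras, but this is not needed.
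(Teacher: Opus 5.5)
Your argument is correct and is essentially the paper's: the paper simply notes that $p_X=\RelSpec{X}(\rho_X)$ and $0_X=\RelSpec{X}(z_X)$ lie in the image of $\RelSpec{X}$ and cites Corollary~\ref{Cor: Section Background Scheme: Equiv of Cats for Relspec}. Your identification of $p_X$ with the structure map, your preimage argument for $0_X$, and your closing remark that any $X$-morphism between objects of $\mathbf{Aff}_{/X}$ is affine make explicit the step the paper leaves implicit, namely why a morphism of $\mathbf{Aff}_{/X}$ is an affine morphism of schemes.
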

\begin{proof}
Definition \ref{Defn: Section Tangent: Zero and Projection Transformations} defines $p$ and $0$ by way of a relative spectrum functor, so an appeal to Corollary \ref{Cor: Section Background Scheme: Equiv of Cats for Relspec} gives the result.
\end{proof}

Recall from Corollary \ref{Cor: Section Background Scheme: Relative Spec of Relative Sym is functor into Ab Sch} that for any quasi-coherent sheaf $\Fscr$ on $X$, the scheme
\[
\mathbb{V}(\Fscr) := \RelSpec{X}\left(\widehat{\RelSym{\Ocal_X}}(\Fscr)\right)
\]
is a group scheme with unit section $0_{\Fscr}:X \to \mathbb{V}(\Fscr)$ given by applying $\RelSpec{X} \circ \RelSym{\Ocal_X}$ to the zero projection map $\bang_{\Fscr}:\Fscr \to 0$ in $\QCoh(X)$. Additionally, when we set $\Fscr = \Kah{X}{S}$ we find that we have natural isomorphisms
\[
\mathbb{V}(\Kah{X}{S}) \times_X \mathbb{V}(\Kah{X}{S}) \cong T_{X/S} \times_X T_{X/S} = (T_{X/S})_2
\]
of schemes in $\mathbf{Aff}_{/X}$. This essentially forces our description of the addition on $(T_{X/S})_2$: we use the group scheme structure on $\mathbb{V}(\Kah{X}{S}) = T_{X/S}$ as asserted by Corollary \ref{Cor: Section Background Scheme: Relative Spec of Relative Sym is functor into Ab Sch}.
\begin{definition}\label{Defn: Section Tangent: The addition on the tangent bundle}
Let $S$ be a scheme. For any scheme $X$ over $S$ we define the addition transformation
\[
\begin{tikzcd}
\Sch_{/S} \ar[rr, bend left = 30, ""{name = U}]{}{(T_{(-)/S})_2} \ar[rr, swap, bend right = 30, ""{name = D}]{}{T_{(-)/S}} & & \Sch_{/S}
\ar[from = U, to = D, Rightarrow, shorten <= 4pt, shorten >= 4pt]{}{\operatorname{add}}
\end{tikzcd}
\]
by defining the $X$-component of $\operatorname{add}$ to be given by taking the $X$-relative spectrum of the comultiplication map 
\[
\nabla_{\Omega}:\RelSym{\Ocal_X}\left(\Kah{X}{S}\right) \longrightarrow \RelSym{\Ocal_X}\left(\Kah{X}{S}\right) \otimes_{\Ocal_X} \RelSym{\Ocal_X}\left(\Kah{X}{S}\right)
\] 
given in Theorem \ref{Thm: Section Background Scheme: Hopf algebras for RelSym}, i.e., by setting
\[
\operatorname{add}_X := \RelSpec{X}(\nabla_{\Omega}).
\]
\end{definition}
\begin{proposition}\label{Prop: Section Tangent: Affine addition map}
The for any scheme $S$ and for any $S$-scheme $X$, the morphism $\operatorname{add}_X$ is affine.
\end{proposition}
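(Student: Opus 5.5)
The plan mirrors the proof of Proposition \ref{Prop: Section Tangent: Projection and zero are affine}: $\operatorname{add}_X$ is obtained by applying $\RelSpec{X}$ to a morphism of quasi-coherent $\Ocal_X$-algebras, so it is a morphism in $\mathbf{Aff}_{/X}$. The job is to turn this into affineness of $\operatorname{add}_X$ as a bare morphism of schemes.

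First I will identify the domain and codomain as objects of $\mathbf{Aff}_{/X}$. The codomain $T_{X/S} = \RelSpec{X}(\RelSym{\Ocal_X}(\Kah{X}{S}))$ is affine over $X$ via $p_X$. For the domain, the tensor product $\RelSym{\Ocal_X}(\Kah{X}{S}) \otimes_{\Ocal_X} \RelSym{\Ocal_X}(\Kah{X}{S})$ is the pushout over $\Ocal_X$ in $\QCoh(X,\CAlg{\Ocal_X})$. Proposition \ref{Prop: Section Background Scheme: Incl of Aff is cont} then gives
\[
\RelSpec{X}\left(\RelSym{\Ocal_X}(\Kah{X}{S}) \otimes_{\Ocal_X} \RelSym{\Ocal_X}(\Kah{X}{S})\right) \cong T_{X/S} \times_X T_{X/S} = (T_{X/S})_2,
\]
and this fibre product is computed in $\Sch_{/S}$. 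So $\operatorname{add}_X$ is a morphism over $X$ between two $X$-schemes with affine structure maps $q:(T_{X/S})_2 \to X$ and $p_X$. Alternatively, $q$ is affine because it is the composite of a base change of $p_X$ (affine by \cite[Proposition 1.5.1]{EGA2}) with $p_X$, using Lemma \ref{Lemma: Background Scheme: Affine morphisms stable under composition}.

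Next I will show that a morphism $g:Z \to Z'$ over $X$ between $X$-affine schemes is affine. Fix an affine open cover $\lbrace U_i \rbrace$ of $X$. By Proposition \ref{Prop: Section Background Scheme: Affine iff affine for all covers}, the preimages $p_X^{-1}(U_i)$ form an affine open cover of $T_{X/S}$. Since $p_X \circ \operatorname{add}_X = q$, we get
\[
\operatorname{add}_X^{-1}\left(p_X^{-1}(U_i)\right) = q^{-1}(U_i),
\]
and this is affine, again by Proposition \ref{Prop: Section Background Scheme: Affine iff affine for all covers} applied to $q$. Concretely, it is the spectrum of $\Gamma(U_i, \RelSym{\Ocal_X}(\Kah{X}{S}))^{\otimes 2}$. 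This exhibits an affine open cover of the codomain whose preimages under $\operatorname{add}_X$ are affine, which is exactly Definition \ref{Defn: Scheme Background: Affine Morphism}.

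The main obstacle is a bookkeeping point rather than a conceptual one. Definition \ref{Defn: Scheme Background: Affine Morphism} is phrased with a pullback $S_i \times_S X$ rather than a topological preimage, so I will need to note that for an open immersion this pullback is the open subscheme on the preimage. I will also need to check the compatibility $p_X \circ \operatorname{add}_X = q$. This holds because $\nabla_{\Omega}$ is a map of $\Ocal_X$-algebras, so it commutes with the unit maps from $\Ocal_X$, and those unit maps are sent to the structure maps by $\RelSpec{X}$.
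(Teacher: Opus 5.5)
Your proposal is correct and follows the same route as the paper, whose proof (by reference to the proof for $p_X$ and $0_X$) just notes that $\operatorname{add}_X = \RelSpec{X}(\nabla_{\Omega})$ is a morphism in $\mathbf{Aff}_{/X}$ via the equivalence $\mathbf{Aff}_{/X} \simeq \QCoh(X,\CAlg{\Ocal_X})^{\op}$, and stops there. Your affine-open-cover argument, using $p_X \circ \operatorname{add}_X = q$ and the characterization of affine morphisms via arbitrary affine open covers, makes explicit a step the paper leaves implicit: a morphism over $X$ between schemes affine over $X$ is itself an affine morphism.
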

\begin{proof}
This is proved mutatis mutandis to Proposition \ref{Prop: Section Tangent: Projection and zero are affine}.
\end{proof}

All that remains to define are the vertical lift $\ell$ and the canonical flip $c$. To go about this we take a much more explicit and ``computational'' approach than what has been taken so far in this paper. Recall from the map expressed in Line (\ref{Eqn: Section Tangent: Tangent Structure of Affine VertLift Map}) that when $S = \Spec R$ is affine, the vertical lift on affine schemes $\ell_A:T_{A/R} \to T^2_{A/R}$ is given by the spectrum of the map
\[
\hat{\ell}_A:\Sym{\Sym{A}(\Kah{A}{R})}\left(\Kah{\big[\Sym{A}(\Kah{A}{R})\big]}{R}\right) \longrightarrow \Sym{A}\left(\Kah{A}{R}\right)
\]
which is the $A$-algebra map defined by $\mathrm{d}a, \delta a \mapsto 0$ and $(\mathrm{d}\delta)a \mapsto \mathrm{d}a$ for all $a \in A$. To extend this to schemes carefully, we will use the structure of affine morphisms of schemes together with Lemmas \ref{Lemma: Background Scheme: Affine morphisms stable under composition} and \ref{Lemma: Section Background Scheme: Forward functoriality of relspec for affine maps}. In particular, we will use the observation that to define our morphism $\ell:T_{X/S} \to T^2_{X/S}$ it suffices to define a morphism
\[
\rotatebox[origin = c]{180}{$\ell$}:Q_{X}(T^2_{X/S}) \to Q_{X}(T_{X/S})
\]
of quasi-coherent sheaves of $\Ocal_X$-algebras; this is where Lemma \ref{Lemma: Section Background Scheme: Forward functoriality of relspec for affine maps} comes in.

Let us examine how the lemma to which we just referred may be used to aid us. For any scheme map $f:X \to S$ the structure map 
\[
p_{T_{X/S}} = (p \ast T)_{X}:T_{X/S}^2 \to T_{X/S}
\]
is affine by Proposition \ref{Prop: Section Tangent: Projection and zero are affine}, 
so since affine morphisms compose by Lemma \ref{Lemma: Background Scheme: Affine morphisms stable under composition}  with $p_X:T_{X/S} \to X$ also affine by Proposition \ref{Prop: Section Tangent: Projection and zero are affine}, every map in the diagram
\[
\begin{tikzcd}
T^2_{X/S} \ar[dr] \ar[rr]{}{(p \ast T)_X} & & T_{X/S} \ar[dl]{}{p_X} \\
 & X
\end{tikzcd}
\] 
is affine. Thus we can write $T_{X/S}$ as
\[
T_{X/S} \cong \RelSpec{X}\left(\RelSym{\Ocal_X}(\Kah{X}{S})\right).
\]
In particular, the quasi-coherent algebras $\Tscr$ on $X$ which corresponds to $T_{X/S}$ through the equivalences $Q_X$ and $\RelSpec{X}$ is the sheaf
\[
Q_X(T_{X/S}) = (p_X)_{\ast}\Ocal_{T_{X/S}} \cong \RelSym{\Ocal_X}\left(\Kah{X}{S}\right)
\]
and has the property that for all $U \subseteq X$
\[
\RelSym{\Ocal_X}\left(\Kah{X}{S}\right)(U) \cong (p_X)_{\ast}\Ocal_{T_{X/S}}(U) \cong \frac{\Ocal_X(U)[\mathrm{d}a:a \in \Ocal_X(U)]}{(\mathrm{d}(a+b) - \mathrm{d}a - \mathrm{d}b, \mathrm{d}(ab) - \mathrm{d}(a)b - a\mathrm{d}b:a,b \in \Ocal_X(U))}.
\]

To write $T_{X/S}^2$ as a scheme affine over $X$ in terms of quasi-coherent sheaves of $\Ocal_X$-algebras requires a little more effort and the use of Lemma \ref{Lemma: Section Background Scheme: Forward functoriality of relspec for affine maps}. Recall that by Lemma \ref{Lemma: Section Background Scheme: Forward functoriality of relspec for affine maps} applied to the map $p_X:T_{X/S} \to X$, we have a strictly commuting diagram
\[
\begin{tikzcd}
\mathbf{Aff}_{/T_{X/S}} \ar[rrr]{}{p_{X} \circ (-)} \ar[d, swap]{}{Q_{T_{X/S}}} & & & \mathbf{Aff}_{/X} \ar[d]{}{Q_X} \\
\QCoh(T_{X/S},\CAlg{\Ocal_{T_{X/S}}})^{\op} \ar[rrr, swap]{}{(p_X)_{\ast}} & & & \QCoh(X,\CAlg{\Ocal_X})^{\op}
\end{tikzcd}
\]
where, as before, $Q_{T_{X/S}}$ and $Q_{X}$ are the inverse equivalences of $\RelSpec{T_{X/S}}$ and $\RelSpec{X}$, respectively. Under this diagram, we trace that the scheme $T^2_{X/S}$ gets sent to the sheaf
\[
\left((p_X)_{\ast} \circ Q_{T_{X/S}}\right)\big(T_{X/S}^{2}\big) = \left(p_X \circ \left(p \ast T\right)_{X}\right)_{\ast}\Ocal_{T_{X/S}^{2}}.
\]
On opens $U \subseteq X$, we calculate that
\[
\left(\left(p_X \circ \left(p \ast T\right)_{X}\right)_{\ast}\Ocal_{T_{X/S}^{2}}\right)(U) \cong \frac{\Ocal_X(U)[\mathrm{d}a, \delta(a), (\mathrm{d}\delta)(a):a \in \Ocal_X(U)]}{I}
\]
where $I$ is the ideal generated by the equational identities expressed in Lines (\ref{Eqn: First of the T2 rels})  -- (\ref{Eqn: Fifth of the T2 rels}).

We now define $\rotatebox[origin = c]{180}{$\ell$}:(p_X \circ (p \ast T)_X)_{\ast}\Ocal_{T^2_{X/S}} \to (p_{X})_{\ast}\Ocal_{T_{X/S}}$ by setting
\begin{prooftree}
	\AxiomC{$\rotatebox[origin = c]{180}{$\ell$}_U:((p_X \circ (p \ast T)_X)_{\ast}\Ocal_{T^2_{X/S}})(U) \to (p_{X})_{\ast}\Ocal_{T_{X/S}}(U)$}\doubleLine
	\UnaryInfC{$\hat{\ell}_{\Ocal_X(U)}:\Ocal_X(U)[\mathrm{d}a,\delta(a), (\mathrm{d}\delta)a]/I \to \Sym{\Ocal_X(U)}(\Kah{X}{S}(U))$}
\end{prooftree}
for all opens $U \subseteq \lvert X \rvert$. Because the morphisms $\hat{\ell}_A$ are natural in commutative rings $A$, it follows that the map $\rotatebox[origin = c]{180}{$\ell$}$ is a morphism of sheaves of $\Ocal_X$-algebras on $X$. This in turn implies that $\rotatebox[origin = c]{180}{$\ell$}$ is a map in $\QCoh(X,\CAlg{\Ocal_X})^{\op}$. Applying the relative spectrum functor allows us to define the vertical lift (once we prove that $\ell$ is natural in $X$).
\begin{lemma}\label{Lemma: Section Tangent: Vertical lift is natural}
Let $S$ be a scheme and let $\ell_X:T^{2}_{X/S} \to T_{X/S}$ be the morphism of schemes defined by
\[
\ell_X := \RelSpec{X}\left(\rotatebox[origin = c]{180}{$\ell$}_X\right)
\]
where $\rotatebox[origin = c]{180}{$\ell$}_X$ is the morphism of sheaves defined directly prior to the statement of the lemma. Then $\ell$ is a natural transformation:
\[
\begin{tikzcd}
\Sch_{/S} \ar[rr, bend left = 30, ""{name=  U}]{}{T_{(-)/S}} \ar[rr, swap, bend right = 30, ""{name = D}]{}{T_{(-)/S}^2} & & \Sch_{/S}
\ar[from = U, to = D, Rightarrow, shorten <= 4pt, shorten >= 4pt]{}{\ell}
\end{tikzcd}
\]
\end{lemma}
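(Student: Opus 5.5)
To prove that $\ell$ is natural, I will fix a morphism $f:X \to Y$ of $S$-schemes and show that $T^2f \circ \ell_X = \ell_Y \circ Tf$ as maps $T_{X/S} \to T^2_{Y/S}$. Here $\ell_X:T_{X/S} \to T^2_{X/S}$, as in the tangent-category axioms. The strategy is to reduce this identity to the naturality of the ring-theoretic maps $\hat{\ell}_A$ of Line (\ref{Eqn: Section Tangent: Tangent Structure of Affine VertLift Map}), which holds for ring maps $B \to A$ over a base ring $R$. The key point is that equality of two morphisms of schemes is local on the domain. Every scheme involved is affine over $X$ or over $Y$, by Proposition \ref{Prop: Section Tangent: Projection and zero are affine} and Lemma \ref{Lemma: Background Scheme: Affine morphisms stable under composition}. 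Hence an affine open cover of the base pulls back to affine open covers of $T_{X/S}$, $T^2_{X/S}$ and so on, by Proposition \ref{Prop: Section Background Scheme: Affine iff affine for all covers}.

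First I choose compatible affine covers as in the proof of Proposition \ref{Prop: Section Kahlers: Relative Kahlers are qcoh on X}. Take $S_k = \Spec C_k$ covering $S$, $V_j = \Spec B_j$ covering $Y$ with $\nu_Y(V_j) \subseteq S_k$, and $U_i = \Spec A_i$ covering $X$ with $f(U_i) \subseteq V_j$. Over $U_i$ I identify $p_X^{-1}(U_i) \cong \Spec \Sym{A_i}(\Kah{A_i}{C_k})$ using Lemma \ref{Lemma: Section Kahlers: Relative Kahlers are qcoh over Affine Schemes}, the lemma $\RelSym{A}(\widetilde{M}) \cong \widetilde{\Sym{A}(M)}$, and Corollary \ref{Cor: Section Background Scheme: The pullback iso for relspec}. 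Likewise $(p_X \circ (p\ast T)_X)^{-1}(U_i) \cong \Spec T^2(A_i)$, with $T^2(A_i)$ the algebra presented by the relations (\ref{Eqn: First of the T2 rels})--(\ref{Eqn: Fifth of the T2 rels}). Under these identifications the restriction of $\ell_X$ is $\Spec \hat{\ell}_{A_i}$, by the very definition of $\rotatebox[origin = c]{180}{$\ell$}_X$ on sections. The same holds for $\ell_Y$ over $V_j$.

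Next I compute $Tf$ and $T^2f$ locally. The map $Tf$ is built from $u_f$, the pullback isomorphisms $\alpha_f$ of Corollary \ref{Cor: Section Background Scheme: The pullback Sym isos for qcoh}, and the base-change isomorphisms of Corollary \ref{Cor: Section Background Scheme: The pullback iso for relspec}. I will check that its restriction $p_X^{-1}(U_i) \to p_Y^{-1}(V_j)$ is $\Spec$ of the algebra map $\Sym{B_j}(\Kah{B_j}{C_k}) \to \Sym{A_i}(\Kah{A_i}{C_k})$ given by $b \mapsto \varphi(b)$ and $\mathrm{d}b \mapsto \mathrm{d}\varphi(b)$, where $\varphi:B_j \to A_i$ corresponds to $f$. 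This holds because $u_f$ is locally the adjoint transpose $(\mathrm{d}\varphi)^\sharp$ of Remark \ref{Remark: Section Kahlers: Cotangent sequence}. Applying this description twice, once to $f$ and once to $Tf$, shows that $T^2f$ is locally $\Spec$ of the map sending $b \mapsto \varphi(b)$, $\mathrm{d}b \mapsto \mathrm{d}\varphi(b)$, $\delta b \mapsto \delta\varphi(b)$ and $(\mathrm{d}\delta)b \mapsto (\mathrm{d}\delta)\varphi(b)$. I expect this local identification to be the main obstacle. The definition of $Tf$ passes through several canonical isomorphisms (pullback of $\RelSpec{}$, pullback of $\RelSym{}$, and the projection $\pr_0$), and one must verify that after restricting to affine pieces these isomorphisms become the evident ones. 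I would handle this by using Theorem \ref{Thm: Section Background Scheme: QCoh is a pseudolimit} together with the fact that all the isomorphisms in question are identities on the generators $x_m$, so that they cancel.

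With these local descriptions, the naturality square over each compatible pair $(U_i, V_j)$ is $\Spec$ of a square of ring maps. That square commutes because both composites send $a \mapsto \varphi(a)$, send $\mathrm{d}b$ and $\delta b$ to $0$, and send $(\mathrm{d}\delta)b \mapsto \mathrm{d}\varphi(b)$. It suffices to check this on generators, since all the maps are algebra maps. The local squares agree on the overlaps, since they are restrictions of globally defined morphisms. The preimages of the $U_i$ cover $T_{X/S}$, and morphisms of schemes glue uniquely. Hence the two global composites coincide, and $\ell$ is natural.
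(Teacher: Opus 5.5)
Your proposal is correct and takes essentially the paper's route: you choose compatible affine covers and identify $\ell$, $Tf$ and $T^2f$ locally as $\Spec$ of the ring maps $\hat{\ell}$, $(Tf|_{U_k})^{\sharp}$ and $(T^2f|_{U_k})^{\sharp}$, which reduces the claim to the naturality of $\hat{\ell}$ from Example \ref{Example: The tangent category of affine R schemes}; you check that naturality directly on generators. You are more explicit than the paper about why $Tf$ and $T^2f$ restrict to the evident ring maps, and your closing appeal to equality of morphisms being local on the source is a cleaner justification than the paper's ``filtered colimit'' step.
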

\begin{proof}
Let $f:X \to Y$ be a morphism of $S$-schemes with structure maps $\nu_X:X \to S$ and $\nu_Y:Y \to S$. Give $S = \lbrace S_i \; | \; i \in I \rbrace$ an open affine cover, $\lbrace V_j \; | \; j \in J_i \rbrace$ an open affine cover of open subscheme $\nu_Y^{-1}(S_i)$ of $Y$, and let $\lbrace U_k \; | \; k \in K_j \rbrace$ be an open affine cover of the open subscheme $f^{-1}(V_j)$ of $X$. Then we have that $Y$ admits open affine cover $\lbrace V_j \; | \; j \in J_i \rbrace$, $X$ admits open affine cover $\lbrace U_k \; | \; i \in I, j \in J_i, k \in K_j \rbrace$, and the open subscheme $f^{-1}(\nu_Y^{-1}(S_i)) \cong \nu_X^{-1}(S_i)$ of $X$ has affine open cover $\lbrace U_k \; | \; j \in J_i, k \in J_k \rbrace$. Moreover, based on how these covers are assembled, we also have that
\[
\begin{tikzcd}
U_k \ar[dr]{}{\nu_X} \ar[rr] \ar[dd, swap]{}{f|_{U_k}} & & X  \ar[dr]{}{\nu_X} \\
 & S_i \ar[rr] & & S \\
V_j \ar[rr] \ar[ur, swap]{}{\nu_Y} & & Y \ar[ur, swap]{}{\nu_Y}
\ar[from = 1-3, to = 3-3, crossing over, near start]{}{f}
\end{tikzcd}
\]
commutes for every compatible choice of $i \in I$, $j \in J_i$, and $k \in K_j$. Now, to show that $\ell$ is natural it suffices to prove that over the affines $S_i, V_j$, and $U_k$ we have an induced commuting diagram
\[
\begin{tikzcd}
T_{U_k/S_i} \ar[r]{}{Tf|_{U_k}} \ar[d, swap]{}{\ell_{U_k}} & T_{V_j/S_i} \ar[d]{}{\ell_{V_j}} \\
T^2_{U_k/S_i} \ar[r, swap]{}{T^2f|_{U_k}} & T^2_{V_j/S_i}
\end{tikzcd}
\]
as taking the filtered colimit\footnote{Which exists in $\Sch_{/S}$ because all maps $S_i \to S$, $V_j \to Y, U_k \to X$, are affine and open immersions while $U_k \to V_j$, $V_j \to S_i$ affine for all $i, j ,k$.} gives the desired naturality diagram for $\ell$. However, by construction the square above commutes if and only if the corresponding square
\[
\begin{tikzcd}
\frac{\Ocal_Y(V_j)[\mathrm{d}y, \delta y, (\mathrm{d}\delta)y:y \in \Ocal_Y(V_j)]}{I_{2,Y}} \ar[d, swap]{}{\rotatebox[origin = c]{180}{$\ell$}_{\Ocal_Y(V_j)}} \ar[rrr]{}{(T^2f|_{U_k})^{\sharp}} & & & \frac{\Ocal_X(U_k)[\mathrm{d}x, \delta x, (\mathrm{d}\delta)x:x \in \Ocal_X(U_k)]}{I_{2,X}}\ar[d]{}{\hat{\ell}_{\Ocal_X(U_k)}} \\
\frac{\Ocal_Y(V_j)[\mathrm{d}y:y \in \Ocal_Y(V_j)]}{I_Y} \ar[rrr, swap]{}{(Tf|_{U_k})^{\sharp}} & & & \frac{\Ocal_X(U_k)[\mathrm{d}x:x \in \Ocal_X(U_k)]}{I_{X}}
\end{tikzcd}
\]
commutes in $\CAlg{\Ocal_S(S_i)}$ for all $i \in I$, $j \in J_i$, and $k \in K_j$; note that we write $I_{2,X}$ and $I_{2,Y}$ for the corresponding ideals defining the rings $\Gamma(T_{U_k/S_i}^2)$ and $\Gamma(T_{V_j/S_i}^2)$, respectively, while $I_X$ and $I_Y$ are the ideals defining the rings $\Gamma(T_{U_k/S_i})$ and $\Gamma(T_{V_j/S_i})$. However, the commutativity of the square above follows is precisely from the naturality of $\hat{\ell}$ on $\CAlg{\Ocal_S(S_i)}$ by Example \ref{Example: The tangent category of affine R schemes}.
\end{proof}

\begin{definition}\label{Defn: Section Tangent: Vertical Lift over Schemes}
Let $S$ be a scheme. The vertical lift $\ell:T_{(-)/S} \Rightarrow  T^2_{(-)/S}$ is the natural transformation constructed in Lemma \ref{Lemma: Section Tangent: Vertical lift is natural}
\end{definition}
Immediate from definition and construction is the following lemma which indicates that the vertical lift $\ell$ is an affine morphism.
\begin{corollary}\label{Cor: Section Tangent: Vertical lift is affine}
For any scheme $X$, the vertical lift $\ell_X:T_{X/S} \to T^2_{X/S}$ is affine.
\end{corollary}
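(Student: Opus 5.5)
The plan is to read the claim off the construction of $\ell_X$ as a relative spectrum over $X$, combined with the stability of affine morphisms under composition and the fact that a morphism between schemes affine over a common base is itself affine.

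First, recall that $\ell_X = \RelSpec{X}(\rotatebox[origin = c]{180}{$\ell$}_X)$ is a morphism in $\mathbf{Aff}_{/X}$. Its domain $T_{X/S}$ has structure map $p_X$, which is affine by Proposition \ref{Prop: Section Tangent: Projection and zero are affine}. Its codomain $T^2_{X/S}$ has structure map $p_X \circ (p \ast T)_X$, which is affine by Proposition \ref{Prop: Section Tangent: Projection and zero are affine} applied to both $X$ and $T_{X/S}$, together with Lemma \ref{Lemma: Background Scheme: Affine morphisms stable under composition}.

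Next, to see that $\ell_X$ itself is affine, I would check Definition \ref{Defn: Scheme Background: Affine Morphism} directly using an affine open cover of the codomain. Fix an affine open cover $\lbrace U_i \rbrace$ of $X$. By Proposition \ref{Prop: Section Background Scheme: Affine iff affine for all covers}, the preimages $W_i := (p_X \circ (p\ast T)_X)^{-1}(U_i)$ form an affine open cover of $T^2_{X/S}$, and the preimages $p_X^{-1}(U_i)$ are affine. Since $\ell_X$ is a morphism over $X$, we have
\[
\ell_X^{-1}(W_i) = p_X^{-1}(U_i).
\]
This set is affine, which establishes the affineness condition on the cover $\lbrace W_i \rbrace$. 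Concretely, over $U_i = \Spec A_i$ the morphism $\ell_X$ restricts to the spectrum of the ring map $\hat{\ell}_{A_i}$ from Example \ref{Example: The tangent category of affine R schemes}, which is a morphism between affine schemes.

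The only point needing care is that $\ell_X$ genuinely commutes with the structure maps to $X$, so that the preimage identity above holds. This is immediate because $\rotatebox[origin = c]{180}{$\ell$}_X$ is a morphism of quasi-coherent $\Ocal_X$-algebras, and $\RelSpec{X}$ lands in $\mathbf{Aff}_{/X}$ (Definition \ref{Defn: Section Scheme Background: Relspec functor}). With that in hand, the argument is essentially identical to Proposition \ref{Prop: Section Tangent: Projection and zero are affine}, so I expect no serious obstacle.
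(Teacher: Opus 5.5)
Your proposal is correct and takes the same approach as the paper. The paper's proof just observes that $\ell_X$ is by definition $\RelSpec{X}$ of a map of quasi-coherent $\Ocal_X$-algebras, so via Corollary \ref{Cor: Section Background Scheme: Equiv of Cats for Relspec} it is a morphism in $\mathbf{Aff}_{/X}$; your computation $\ell_X^{-1}(W_i) = p_X^{-1}(U_i)$ spells out the step the paper leaves implicit, namely that a morphism over $X$ between schemes affine over $X$ is itself affine.
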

\begin{proof}
This is immediate from Corollary \ref{Cor: Section Background Scheme: Equiv of Cats for Relspec} and that by definition $\ell_X = \RelSpec{X}(\rotatebox[origin = c]{180}{$\ell$}_X)$.
\end{proof}

We now construct the canonical flip $c:T^{2}_{(-)/S} \Rightarrow T^{2}_{(-)/S}$ of the tangent structure on $\Sch_{/S}$. Begin by recalling from Example \ref{Example: The tangent category of affine R schemes} that in the category $\CAlg{R}^{\op} \simeq \mathbf{AffSch}_{/R}$ and for any commutative $R$-algebra $A$, the canonical flip $c_A:T^2_{A/R} \to T^2_{A/R}$ is the opposite of the map
\[
\hat{c}_A:\frac{A[\mathrm{d}a, \delta a, (\mathrm{d}\delta)a:a \in A]}{I_{2,A}} \longrightarrow \frac{A[\mathrm{d}a, \delta a, (\mathrm{d}\delta)a]}{I_{2,A}}
\]
defined by $a \mapsto a, (\mathrm{d}\delta)a \mapsto (\mathrm{d}\delta)a, \mathrm{d}a \mapsto \delta a$, and $\delta a \mapsto \mathrm{d}a$ for all $a \in A$. Our goal now is to schemify these maps in the same way as we schemified the map $\hat{\ell}$: by first, for a base $S$-scheme $X$, extending the $\hat{c}$ to a map $\reflectbox{$c$}:\Tscr^2 \to \Tscr^2$ of quasi-coherent sheaves of $\Ocal_X$-algebras and then applying the relative spectrum functor.

Let us make the last sentence, namely about the way in which we extend the algebra maps $\hat{c}$ into the scheme maps $c$, more explicit. Recall from our discussion prior to the statement of Lemma \ref{Lemma: Section Tangent: Vertical lift is natural} that the structure sheaf $(p_X \circ (p \circ T)_X)_{\ast}\Ocal_{T^2_{X/S}}$ is a quasi-coherent sheaf of $\Ocal_X$-algebras whose corresponding scheme affine over $X$ satifies
\[
\RelSpec{X}\left((p_X \circ (p \circ T)_X)_{\ast}\Ocal_{T^2_{X/S}}\right) \cong T^{2}_{X/S}.
\]
Additionally, the structure sheaf $(p_X \circ (p \circ T)_X)_{\ast}\Ocal_{T^2_{X/S}}$ also ha the property that for all open subschemes $U$ of $X$ there is a natural isomorphism
\[
\left((p_X \circ (p \circ T)_X)_{\ast}\Ocal_{T^2_{X/S}}\right)(U) \cong \frac{\Ocal_X(U)[\mathrm{d}u, \delta u, (\mathrm{d}\delta)u:u \in \Ocal_X(U)]}{I_{2,\Ocal_X(U)}}.
\]
Let $\gamma_U$ denote the isomorphisms
\[
\gamma_U:\left((p_X \circ (p \circ T)_X)_{\ast}\Ocal_{T^2_{X/S}}\right)(U) \xrightarrow{\cong} \frac{\Ocal_X(U)[\mathrm{d}u, \delta u, (\mathrm{d}\delta)u:u \in \Ocal_X(U)]}{I_{2,\Ocal_X(U)}}
\] 
We use these natural isomorphisms to make the assignment
\begin{prooftree}
	\AxiomC{$\reflectbox{$c$}_X:(p_X \circ (p \circ T)_X)_{\ast}\Ocal_{T^2_{X/S}} \Rightarrow (p_X \circ (p \circ T)_X)_{\ast}\Ocal_{T^2_{X/S}}$}\doubleLine
	\UnaryInfC{For $U$ open.\, $(\reflectbox{$c$}_X)_U := \gamma_U^{-1} \circ \hat{c}_{\Ocal_X(U)} \circ \gamma_U$}
\end{prooftree}
and now prove that this indeed gives a morphism of quasi-coherent sheaves over $X$.
\begin{lemma}\label{Lemma: Section Tangent: Canonical flip locally is qcoh morphism}
Let $S$ be a scheme and let $X$ be an $S$-scheme. Then the map $\reflectbox{$c$}_X$ is a morphism of qusi-coherent sheaves on $X$.
\end{lemma}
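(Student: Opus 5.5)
We have to show that the family of maps $(\reflectbox{$c$}_X)_U = \gamma_U^{-1} \circ \hat{c}_{\Ocal_X(U)} \circ \gamma_U$ assembles into a morphism of quasi-coherent sheaves of $\Ocal_X$-algebras on $\Tscr^2 := (p_X \circ (p \ast T)_X)_{\ast}\Ocal_{T^2_{X/S}}$. The argument follows the pattern used for $\rotatebox[origin = c]{180}{$\ell$}$ before Lemma \ref{Lemma: Section Tangent: Vertical lift is natural}, with three parts: each component is an algebra map, the components commute with restriction, and the sheaf is quasi-coherent.

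\textbf{Componentwise.} Each $\hat{c}_A$ is a well-defined $A$-algebra endomorphism of $A[\mathrm{d}a,\delta a,(\mathrm{d}\delta)a]/I_{2,A}$. To see this, note that the generating relations in Lines (\ref{Eqn: First of the T2 rels})--(\ref{Eqn: Fifth of the T2 rels}) are carried to relations in $I_{2,A}$ under the swap $\mathrm{d} \leftrightarrow \delta$. The $\mathrm{d}$- and $\delta$-relations are simply exchanged. The mixed Leibniz relation (\ref{Eqn: Fifth of the T2 rels}) is sent to itself, because its right-hand side is symmetric under the swap:
\[
\mathrm{d}(a)\delta(b) + \mathrm{d}(b)\delta(a) \mapsto \delta(a)\mathrm{d}(b) + \delta(b)\mathrm{d}(a).
\]
(This is in any case Example \ref{Example: The tangent category of affine R schemes}.) Consequently each $(\reflectbox{$c$}_X)_U$ is an $\Ocal_X(U)$-algebra map, since the $\gamma_U$ are isomorphisms.

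\textbf{Naturality in $U$.} For opens $V \subseteq U$ we must check that the square formed by the restriction of $\Tscr^2$ and the two components $(\reflectbox{$c$}_X)_U$, $(\reflectbox{$c$}_X)_V$ commutes. Using the naturality of the isomorphisms $\gamma$, the restriction $\Tscr^2(U) \to \Tscr^2(V)$ corresponds to the ring map
\[
\Ocal_X(U)[\mathrm{d}u,\delta u,(\mathrm{d}\delta)u]/I_{2} \to \Ocal_X(V)[\ldots]/I_2
\]
induced by $\rho_{UV}$. This map is $T^2$ applied to $\rho_{UV}$ in the tangent category $\CAlg{R}$ dual to Example \ref{Example: The tangent category of affine R schemes}, and it sends $u \mapsto \rho_{UV}u$, $\mathrm{d}u \mapsto \mathrm{d}\rho_{UV}u$, and so on on generators. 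Commutativity is then exactly the naturality of $\hat{c}$ with respect to ring maps, which can be checked on the three kinds of generators $\mathrm{d}u$, $\delta u$, $(\mathrm{d}\delta)u$.

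\textbf{Quasi-coherence.} Once we have a morphism of sheaves of $\Ocal_X$-algebras whose source and target are the same quasi-coherent $\Ocal_X$-algebra, it is a morphism in $\QCoh(X,\CAlg{\Ocal_X})$, because that category is a full subcategory (Definition \ref{Defn: Section Background Scheme: QCoh Algebras}). Quasi-coherence of $\Tscr^2$ itself comes from the affineness of $p_X \circ (p \ast T)_X$ (Proposition \ref{Prop: Section Tangent: Projection and zero are affine} together with Lemma \ref{Lemma: Background Scheme: Affine morphisms stable under composition}) and EGA II, Proposition 1.2.6.

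\textbf{Main obstacle.} The hard step is the naturality of the $\gamma_U$ in $U$. The explicit presentation of $\Tscr^2(U)$ is trustworthy only on affine $U$, where it is $\Gamma$ of $T^2_{U/S_i}$. My plan is therefore to verify the square for affine opens $V \subseteq U$ contained in affine opens of $S$. There the $\gamma$'s come from the affine identification $T^2_{\Spec A/\Spec R} = \Spec T^2(A)$, which is functorial in $A$. I would then conclude for general opens by the sheaf property and the Gluing Lemma, using Theorem \ref{Thm: Section QCoh: Qcoh is pseudolim over poset of affine opens} to extend a morphism defined compatibly on the poset of affine opens.
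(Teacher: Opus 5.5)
Your proposal is correct and is essentially the paper's argument. The paper likewise uses the naturality of the $\gamma_U$ to reduce compatibility with restriction to the square built from $T^2\rho_{UV}$ and $\hat{c}$, which commutes by the naturality of $\hat{c}$ from Example \ref{Example: The tangent category of affine R schemes}. Your extra care is a genuine improvement in rigor: you check that $\hat{c}_A$ respects the defining relations, and you verify the square only on affine opens lying over affine opens of $S$ before extending from this basis by the sheaf property. This matters because the paper asserts the explicit presentation of $\Tscr^2(U)$ for every open $U$, and that fails off affines. For example, if $X$ is a smooth projective curve of positive genus over a field $k$, the presentation over $\Ocal_X(X)=k$ collapses to $k$, while $\Tscr^2(X)$ contains a copy of $\Gamma(X,\Kah{X}{k})\neq 0$.
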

\begin{proof}
Because the natural transformations $\gamma$ are natural in $X$, to verify the commutativity of
\[
\begin{tikzcd}
((p_X \circ (p \circ T)_X)_{\ast}\Ocal_{T^2_{X/S}})(U) \ar[d, swap]{}{((p_X \circ (p \circ T)_X)_{\ast}\Ocal_{T^2_{X/S}})(V \subseteq U)} \ar[rrr]{}{(\reflectbox{$c$}_X)_U} & & & ((p_X \circ (p \circ T)_X)_{\ast}\Ocal_{T^2_{X/S}})(U) \ar[d]{}{((p_X \circ (p \circ T)_X)_{\ast}\Ocal_{T^2_{X/S}})(V \subseteq U)} \\
((p_X \circ (p \circ T)_X)_{\ast}\Ocal_{T^2_{X/S}})(V) \ar[rrr, swap]{}{(\reflectbox{$c$}_X)_V} & & & ((p_X \circ (p \circ T)_X)_{\ast}\Ocal_{T^2_{X/S}})(V)
\end{tikzcd}
\] 
it suffices to prove that for all opens $V \subseteq U$ of $X$ there is a corresponding commuting square
\[
\begin{tikzcd}
\frac{\Ocal_X(U)[\mathrm{d}u, \delta u, (\mathrm{d}\delta)u:u \in \Ocal_X(U)]}{I_{2,\Ocal_X(U)}} \ar[d,swap]{}{T^2\rho_{UV}} \ar[rrr]{}{\hat{c}_{\Ocal_X(U)}} & & & \frac{\Ocal_X(U)[\mathrm{d}u, \delta u, (\mathrm{d}\delta)u:u \in \Ocal_X(U)]}{I_{2,\Ocal_X(U)}} \ar[d]{}{T^2\rho_{UV}} \\
\frac{\Ocal_X(V)[\mathrm{d}v, \delta v, (\mathrm{d}\delta)v:v \in \Ocal_X(V)]}{I_{2,\Ocal_X(V)}} \ar[rrr, swap]{}{\hat{c}_{\Ocal_X(V)}} & & & \frac{\Ocal_X(V)[\mathrm{d}v, \delta v, (\mathrm{d}\delta)v:v \in \Ocal_X(V)]}{I_{2,\Ocal_X(V)}}
\end{tikzcd}
\]
where the maps $T^2\rho_{UV}$ are those which are given on generates by $u \mapsto \rho_{UV}(u)$, $\mathrm{d}u \mapsto \mathrm{d}(\rho_{UV}u)$, $\delta u \mapsto \delta(\rho_{UV}u)$, and $(\mathrm{d}\delta)u \mapsto (\mathrm{d}\delta)(\rho_{UV}u)$. However, the naturality of the $\hat{c}$ as given in Example \ref{Example: The tangent category of affine R schemes} imply that the diagram does commute and hence that $\reflectbox{$c$}_X$ is a morphism of quasi-coherent sheaves of $\Ocal_X$-algebras. 
\end{proof}

We now are in position to define $c$ as a natural transformation on $\Sch_{/S}$. As such, we first prove that the assignment $X \mapsto \RelSpec{X}(\reflectbox{$c$}_X)$ is indeed a natural transformation on $\Sch_{/S}$.

\begin{lemma}\label{Lemma: Section Tangent: Canonical flip is natural transf on schemes}
Let $S$ be a scheme. Then the assignment
\[
X \mapsto \RelSpec{X}(\reflectbox{$c$}_X)
\]
on $S$-schemes $X$ determines a natural transformation:
\[
\begin{tikzcd}
\Sch_{/S}\ar[rr, bend left = 30, ""{name = U}]{}{T^{2}_{(-)/S}} \ar[rr, swap, bend right = 30, ""{name =D}]{}{T^{2}_{(-)/S}} & & \Sch_{/S}
\ar[from = U, to = D, Rightarrow, shorten <= 4pt, shorten >= 4pt]{}{c}
\end{tikzcd}
\]
\end{lemma}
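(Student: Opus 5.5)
The plan mirrors the proof of Lemma \ref{Lemma: Section Tangent: Vertical lift is natural} almost verbatim, with $\hat{\ell}$ replaced by $\hat{c}$. Fix a morphism $f:X \to Y$ of $S$-schemes with structure maps $\nu_X,\nu_Y$. We must show $T^2f \circ c_X = c_Y \circ T^2f$ as maps $T^2_{X/S} \to T^2_{Y/S}$.

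\emph{Step 1: choose compatible covers.} As in the proof of Lemma \ref{Lemma: Section Tangent: Vertical lift is natural}, take:
\begin{itemize}
\item an affine open cover $\lbrace S_i\rbrace$ of $S$;
\item affine open covers $\lbrace V_j \; | \; j \in J_i\rbrace$ of $\nu_Y^{-1}(S_i)$;
\item affine open covers $\lbrace U_k \; | \; k \in K_j\rbrace$ of $f^{-1}(V_j)$.
\end{itemize}
Then $f(U_k) \subseteq V_j$ and $\nu_Y(V_j) \subseteq S_i$. Because $p_X$ and $(p \ast T)_X$ are affine (Proposition \ref{Prop: Section Tangent: Projection and zero are affine} and Lemma \ref{Lemma: Background Scheme: Affine morphisms stable under composition}), Proposition \ref{Prop: Section Background Scheme: Affine iff affine for all covers} shows that the preimages of the $U_k$ form an affine open cover of $T^2_{X/S}$. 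These preimages are identified with $T^2_{U_k/S_i}$, and similarly for $Y$.

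Two morphisms of schemes agree once they agree on an open cover of the source, compatibly with the target cover. It therefore suffices to check commutativity of the squares
\[
c_{V_j} \circ T^2(f|_{U_k}) = T^2(f|_{U_k}) \circ c_{U_k} : T^2_{U_k/S_i} \to T^2_{V_j/S_i}.
\]
Here we need that $T^2$ and $c$ restrict correctly to open subschemes. For $c$ this follows from the definition $(\reflectbox{$c$}_X)_U = \gamma_U^{-1}\circ\hat{c}_{\Ocal_X(U)}\circ\gamma_U$ together with Lemma \ref{Lemma: Section Tangent: Canonical flip locally is qcoh morphism}. For $T^2$ it follows from Lemma \ref{Lemma: Section Kahlers: Relative Kahlers are qcoh over Affine Schemes} and the localization compatibility of K\"ahler differentials.

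\emph{Step 2: pass to rings.} Each scheme in these squares is affine, so Theorem \ref{Thm: Section Background Scheme: QCoh on affine is just modules} and the anti-equivalence $\RelSpec{}$ reduce each square to a square of $\Ocal_S(S_i)$-algebras. Write $A = \Ocal_X(U_k)$ and $B = \Ocal_Y(V_j)$. The square becomes
\[
\hat{c}_A \circ (T^2 f|_{U_k})^{\sharp} = (T^2 f|_{U_k})^{\sharp} \circ \hat{c}_B : \frac{B[\mathrm{d}b,\delta b,(\mathrm{d}\delta)b]}{I_{2,B}} \to \frac{A[\mathrm{d}a,\delta a,(\mathrm{d}\delta)a]}{I_{2,A}}.
\]
By construction of $Tf$ in Diagram (\ref{Eqn: Section Tangent: Tangent Functor Assignment on Morphisms}), the map $(T^2f|_{U_k})^{\sharp}$ is the algebra map sending
\[
b \mapsto \varphi(b),\quad \mathrm{d}b \mapsto \mathrm{d}\varphi(b),\quad \delta b \mapsto \delta\varphi(b),\quad (\mathrm{d}\delta)b \mapsto (\mathrm{d}\delta)\varphi(b),
\]
where $\varphi = f^{\sharp}:B \to A$. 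This uses that $u_f$ is, locally, the adjoint transpose $\mathrm{d}\varphi^{\sharp}$ (Remark \ref{Remark: Section Kahlers: Cotangent sequence}).

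\emph{Step 3: check on generators.} Both composites are algebra maps, so it suffices to compare them on the generators $b,\mathrm{d}b,\delta b,(\mathrm{d}\delta)b$; each side swaps $\mathrm{d}$ and $\delta$ after applying $\varphi$, so they agree. This is exactly the naturality of $\hat{c}$ on $\CAlg{\Ocal_S(S_i)}$ from Example \ref{Example: The tangent category of affine R schemes}.

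The main obstacle is Step 1. The difficulty is not the algebra but justifying that the local identification $\gamma_U$ of $(p_X\circ(p\ast T)_X)_{\ast}\Ocal_{T^2_{X/S}}$ with the explicit presentation is compatible with the morphism $T^2f$. Concretely, one must check that the isomorphisms of Corollaries \ref{Cor: Section Background Scheme: The pullback Sym isos for qcoh} and \ref{Cor: Section Background Scheme: The pullback iso for relspec} used to define $Tf$ become, under $\gamma$, the generator-wise maps displayed above. I would handle this by first doing it for $T$ on affines, where it is a direct computation with $\Sym{}$ and tensor products, and then applying it twice.

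Gluing the local squares then gives the naturality square for $c$, since morphisms into $T^2_{Y/S}$ are determined Zariski-locally (Theorem \ref{Thm: Section Background Scheme: QCoh is a pseudolimit} applied through Corollary \ref{Cor: Section Background Scheme: Equiv of Cats for Relspec}).
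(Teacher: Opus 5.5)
Your proposal is correct and follows the paper's proof essentially step for step. You choose compatible affine covers $\lbrace S_i\rbrace$, $\lbrace V_j\rbrace$, $\lbrace U_k\rbrace$ as in Lemma~\ref{Lemma: Section Tangent: Vertical lift is natural}, reduce to the local squares between $T^2_{U_k/S_i}$ and $T^2_{V_j/S_i}$, and conclude from the naturality of $\hat{c}$ on $\CAlg{\Ocal_S(S_i)}$ given in Example~\ref{Example: The tangent category of affine R schemes}. Two of your steps are in fact more careful than the paper: you glue by agreement of morphisms on an open cover of the source, where the paper appeals to a ``filtered colimit'', and you explicitly flag the compatibility of the identifications $\gamma_U$ with $T^2f$, which the paper treats as immediate.
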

\begin{proof}
First, one needs to check that $c$ types as a scheme morphism $T^2_{X/S} \to T^2_{X/S}$; however, this is immediate by construction of $\reflectbox{$c$}_X$ as an endomorphism defined on $(p_X \circ (p \ast T)_X)\Ocal_{T^2_{X/S}}$. Second one needs to show that for any morphism of $S$-schemes $f:X \to Y$ that the diagram
\[
\begin{tikzcd}
T^{2}_{X/S} \ar[r]{}{c_X} \ar[d, swap]{}{T^2f} & T^{2}_{X/S} \ar[d]{}{T^2f} \\
T^{2}_{Y/S} \ar[r, swap]{}{c_Y} & T^2_{Y/S}
\end{tikzcd}
\]
commutes. 

To prove the commutativity of the diagram above let $\lbrace S_i \to S \; | \; i \in I \rbrace,$ $\lbrace V_j \to Y \; | \; i \in I, j \in J_i\rbrace$, and $\lbrace U_k \to X \; | \; i \in I, j \in J_i, k \in K_j \rbrace$ be open affine covers of $S$, $Y$, and $X$, repsectively, with the properties that given commuting triangle
\[
\begin{tikzcd}
X \ar[rr]{}{f} \ar[dr, swap]{}{\nu_X} & & Y \ar[dl]{}{\nu_Y} \\
 & S
\end{tikzcd}
\]
we have induced commuting triangular prism:
\[
\begin{tikzcd}
	U_k \ar[dr]{}{\nu_X} \ar[rr] \ar[dd, swap]{}{f|_{U_k}} & & X  \ar[dr]{}{\nu_X} \\
	& S_i \ar[rr] & & S \\
	V_j \ar[rr] \ar[ur, swap]{}{\nu_Y} & & Y \ar[ur, swap]{}{\nu_Y}
	\ar[from = 1-3, to = 3-3, crossing over, near start]{}{f}
\end{tikzcd}
\]
Note that such covers may be constructed as in the proof of Lemma \ref{Lemma: Section Tangent: Vertical lift is natural}. Arguing by taking the filtered colimit over all $i, j, k$ as in the proof of Lemma \ref{Lemma: Section Tangent: Vertical lift is natural} we are thus reduced to proving the commutativity of the diagram
\[
\begin{tikzcd}
	T^{2}_{U_k/S_i} \ar[r]{}{c_{U_k}} \ar[d, swap]{}{T^2f|_{U_k}} & T^{2}_{U_k/S_i} \ar[d]{}{T^2f|_{U_k}} \\
	T^{2}_{V_j/S_i} \ar[r, swap]{}{c_{V_j}} & T^{2}_{V_j/S_i}
\end{tikzcd}
\]
of $S_i$-schemes. However, as every map in sight is affine and since the maps $\reflectbox{$c$}_{(-)}$ invoke the transformations $\gamma$, this diagram is equivalent to the corresponding diagram
\[
\begin{tikzcd}
\frac{\Ocal_Y(V_j)[\mathrm{d}v, \delta v, (\mathrm{d}\delta)v:v \in \Ocal_Y(V_j)]}{I_{2,\Ocal_Y(V_j)}} \ar[d, swap]{}{(T^2f|_{U_k})^{\sharp}} \ar[rrr]{}{\hat{c}_{\Ocal_Y(V_j)}} & & & \frac{\Ocal_Y(V_j)[\mathrm{d}v, \delta v, (\mathrm{d}\delta)v:v \in \Ocal_Y(V_j)]}{I_{2,\Ocal_Y(V_j)}} \ar[d]{}{(T^2f|_{U_k})^{\sharp}} \\
\frac{\Ocal_X(U_k)[\mathrm{d}u, \delta u, (\mathrm{d}\delta)u:u \in \Ocal_X(U_k)]}{I_{2,\Ocal_X(U_k)}} \ar[rrr, swap]{}{\hat{c}_{\Ocal_X(U_k)}} & & & \frac{\Ocal_X(U_k)[\mathrm{d}u, \delta u, (\mathrm{d}\delta)u:u \in \Ocal_X(U_k)]}{I_{2,\Ocal_X(U_k)}}
\end{tikzcd}
\]
of $\Ocal_S(S_i)$-algebras. This in turn follows from Example \ref{Example: The tangent category of affine R schemes} (in the case of $R = \Ocal_{S}(S_i)$) and hence allows us to deduce that $c$ is a natural transformation.
\end{proof}

\begin{definition}\label{Defn: Section Tangent: Canonical Flip}
The canonical flip $c$ on $\Sch_{/S}$ is the natural transformation $c$ constructed in Lemma \ref{Lemma: Section Tangent: Canonical flip is natural transf on schemes}.
\end{definition}
We, as in the case of the other structure maps we've introduced, may deduce that the canonical flip $c_X$ is an affine morphism for every $S$-scheme $X$.
\begin{corollary}\label{Cor: Section Tangent: Canonical flip is affine}
Let $S$ be a scheme and let $X$ be an $S$-scheme. Then the morphism $c_X:T^{2}_{X/S} \to T^{2}_{X/S}$ is affine.
\end{corollary}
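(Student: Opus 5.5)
The plan mirrors the proofs of Corollary \ref{Cor: Section Tangent: Vertical lift is affine} and Proposition \ref{Prop: Section Tangent: Projection and zero are affine}: we exhibit $c_X$ as the image of a morphism of quasi-coherent $\Ocal_X$-algebras under the relative spectrum functor over $X$. By Lemma \ref{Lemma: Section Tangent: Canonical flip is natural transf on schemes} and Lemma \ref{Lemma: Section Tangent: Canonical flip locally is qcoh morphism}, we have
\[
c_X = \RelSpec{X}\left(\reflectbox{$c$}_X\right),
\]
where $\reflectbox{$c$}_X$ is an endomorphism in $\QCoh(X,\CAlg{\Ocal_X})$ of the sheaf $\Tscr^2 := (p_X \circ (p \ast T)_X)_{\ast}\Ocal_{T^2_{X/S}}$. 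The structure map $T^2_{X/S} \to X$ is affine, by Proposition \ref{Prop: Section Tangent: Projection and zero are affine} together with Lemma \ref{Lemma: Background Scheme: Affine morphisms stable under composition}. Hence $c_X$ is a morphism in $\mathbf{Aff}_{/X}$ between two copies of $\RelSpec{X}(\Tscr^2)$.

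It remains to pass from ``morphism of schemes affine over $X$'' to ``affine morphism''. Fix an affine open cover $\lbrace U_i \rbrace$ of $X$ and set $W_i := (p_X \circ (p\ast T)_X)^{-1}(U_i)$. By Proposition \ref{Prop: Section Background Scheme: Affine iff affine for all covers}, the $W_i$ form an affine open cover of $T^2_{X/S}$, with $\Ocal(W_i) \cong \Tscr^2(U_i)$.

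Since $c_X$ lies over $\id_X$, we have $c_X^{-1}(W_i) = W_i$. Moreover, $c_X|_{W_i}$ is $\Spec$ of the ring map $(\reflectbox{$c$}_X)_{U_i}$, which is $\gamma_{U_i}^{-1} \circ \hat{c}_{\Ocal_X(U_i)} \circ \gamma_{U_i}$ and in fact an isomorphism. Thus the preimage of each member of an affine open cover of the target is affine, and Definition \ref{Defn: Scheme Background: Affine Morphism} gives that $c_X$ is affine. An even shorter route: $c_X$ is an involution (indeed $\hat{c}^2 = \id$ on generators), hence an isomorphism, and isomorphisms are affine.

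The only step needing care is the identification $c_X^{-1}(W_i) = W_i$. This follows because $c_X$ is a morphism over $X$, i.e.\ $p_X \circ (p\ast T)_X \circ c_X = p_X \circ (p\ast T)_X$, which holds since $\reflectbox{$c$}_X$ is an $\Ocal_X$-algebra map. With that in hand, everything is immediate from Corollary \ref{Cor: Section Background Scheme: Equiv of Cats for Relspec}.
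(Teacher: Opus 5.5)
Your proposal is correct and follows the paper's route. The paper's proof only says the claim is immediate from $c_X=\RelSpec{X}(\reflectbox{$c$}_X)$ together with Corollary~\ref{Cor: Section Background Scheme: Equiv of Cats for Relspec}; your computation $c_X^{-1}(W_i)=W_i$ over an affine cover of $X$ supplies the step the paper leaves implicit, namely that an $X$-morphism between schemes affine over $X$ is itself an affine morphism.

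Your shortcut via $c_X^2=\id$ is also fine, provided you check $\hat{c}^2=\id$ directly on generators, as you do. Citing Theorem~\ref{Thm: Section Tangents: The Tangent Category of Schemes} instead would be circular, since its proof relies on this corollary.
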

\begin{proof}
This is immediate from the fact that $c = \RelSpec{X}(\reflectbox{$c$}_X)$ and from Corollary \ref{Cor: Section Background Scheme: Equiv of Cats for Relspec}.
\end{proof}

We now have all the necessary ingredients to prove that $\Sch_{/S}$ is a tangent category. We will also be able to deduce immediately following this that every structure morphism which appears in the definition of $\Sch_{/S}$ is also an affine morphism of schemes.

\begin{Theorem}\label{Thm: Section Tangents: The Tangent Category of Schemes}
The category $\Sch_{/S}$ with tangent functor $T_{(-)/S}$ given in Definition \ref{Defn: Section Tangent: Tangent functor for schemes}, with bundle projection $p$ and zero section $0$ given in Definition \ref{Defn: Section Tangent: Zero and Projection Transformations}, addition $\operatorname{add}$ as given in Definition \ref{Defn: Section Tangent: The addition on the tangent bundle}, with vertical lift $\ell$ given in Definition \ref{Defn: Section Tangent: Vertical Lift over Schemes}, and with canonical flip given as in Definition \ref{Defn: Section Tangent: Canonical Flip} is a tangent category.
\end{Theorem}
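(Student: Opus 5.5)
The plan is to reduce every tangent-category axiom to its affine counterpart in Example \ref{Example: The tangent category of affine R schemes} and then glue. The structure maps $p$, $0$, $\operatorname{add}$, $\ell$, $c$ are all of the form $\RelSpec{X}(\varphi)$ for morphisms $\varphi$ of quasi-coherent $\Ocal_X$-algebras (Propositions \ref{Prop: Section Tangent: Projection and zero are affine}, \ref{Prop: Section Tangent: Affine addition map}, Corollaries \ref{Cor: Section Tangent: Vertical lift is affine}, \ref{Cor: Section Tangent: Canonical flip is affine}). Every $T^n_{X/S}$ and every fibre power $(T^n_{X/S})_m$ is affine over $X$, by Lemma \ref{Lemma: Background Scheme: Affine morphisms stable under composition} and \cite[Proposition 1.5.1]{EGA2}. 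Consequently every diagram appearing in Definition \ref{Defn: Section Tangent: Tangent category} lives in $\mathbf{Aff}_{/X}$. Using Corollary \ref{Cor: Section Background Scheme: Equiv of Cats for Relspec} and Lemma \ref{Lemma: Section Background Scheme: Forward functoriality of relspec for affine maps}, each such diagram commutes if and only if the corresponding diagram of quasi-coherent $\Ocal_X$-algebras commutes. Equality of sheaf maps can then be checked on the sections over affine opens $U\subseteq X$ lying over affine opens $S_i\subseteq S$. There, via the identifications $\gamma_U$, the algebra maps are exactly the maps $\hat\ell$, $\hat c$, $\nabla$, etc.\ of the affine tangent structure on $\CAlg{\Ocal_S(S_i)}^{\op}$. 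So each equational axiom (naturality aside, which is Lemmas \ref{Lemma: Section Tangent: Vertical lift is natural} and \ref{Lemma: Section Tangent: Canonical flip is natural transf on schemes}) follows from the affine case.

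The steps, in order, are as follows.

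\textbf{Step 1: pullbacks.} By Proposition \ref{Prop: Section Background Scheme: Incl of Aff is cont}, $T_nX=(T_{X/S})_n\cong\RelSpec{X}(\RelSym{\Ocal_X}(\Kah{X}{S})^{\otimes n})$ exists in $\Sch_{/S}$. It is preserved by each $T^m$ because $T_{(-)/S}$ is a right adjoint (Corollary \ref{Cor: Section Tangent: Tangent functor pres limits}).

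\textbf{Step 2: the additive bundle $(p,0,\operatorname{add})$.} This is a commutative monoid (indeed an abelian group) in $\Sch_{/X}$ by Corollary \ref{Cor: Section Background Scheme: Relative Spec of Relative Sym is functor into Ab Sch} applied to $\Fscr=\Kah{X}{S}$. One checks that the unit and projection of that group structure agree with Definition \ref{Defn: Section Tangent: Zero and Projection Transformations}.

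\textbf{Step 3: equational axioms.} The additive-bundle morphism conditions for $(\ell,0)$ and $(c,\id)$, together with $c^2=\id$, $c\circ\ell=\ell$, the Yang--Baxter hexagon, $T\ell\circ\ell=\ell T\circ\ell$, and the $\ell/c$ compatibility, are all verified affine-locally as above. To do this I need a local description of $T^3_{X/S}$ analogous to the $T^2$ one. It should be the quotient of $\Ocal_X(U)$-polynomials in eight families of generators, obtained by iterating the presentation of Example \ref{Example: The tangent category of affine R schemes}. The key point is that $T^2f$ and $T\ast\ell$, $\ell\ast T$ restrict on affines to the affine versions. This follows because $Tf$ restricted to $U_k\to V_j$ is $\Spec$ of the algebra map induced by $\mathrm{d}f$, using Lemma \ref{Lemma: Section Kahlers: Relative Kahlers are qcoh over Affine Schemes} and Corollary \ref{Cor: Section Background Scheme: The pullback Sym isos for qcoh}.

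\textbf{Step 4: universality of the vertical lift.} This is the step I expect to be the main obstacle, because it is a limit condition rather than an equation, and gluing limits needs care. I would again work in $\mathbf{Aff}_{/X}$. All four objects $T_2X$, $T^2X$, $X$, $TX$ are affine over $X$, and limits there agree with limits in $\Sch_{/S}$ by Proposition \ref{Prop: Section Background Scheme: Incl of Aff is cont}. So the required pullback square is equivalent to a pushout square in $\QCoh(X,\CAlg{\Ocal_X})$. Being a pushout (that is, a certain canonical map of quasi-coherent algebras being an isomorphism) can be checked on an affine open cover of $X$, since $\QCoh(X)$ is the pseudolimit of Theorem \ref{Thm: Section Background Scheme: QCoh is a pseudolimit} and restriction to affine opens is exact and jointly conservative. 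Over an affine $U$ over $S_i$, the square is exactly the corresponding square for $\CAlg{\Ocal_S(S_i)}^{\op}$, which is a pullback since that is a tangent category. One subtlety is that tensor products of sections over non-affine opens differ from the sheafified tensor product. I would avoid this by only ever evaluating on affine opens, where $\widetilde{(-)}$ is an equivalence.
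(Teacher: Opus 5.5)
Your proposal is correct and takes the same route as the paper. Every structure map is a relative spectrum and everything in sight is affine over $X$, so each axiom is checked affine-locally against the tangent structure on $\CAlg{R}^{\op}$ from Example~\ref{Example: The tangent category of affine R schemes} and then glued; this includes the universality of the vertical lift, and the abelian group structure comes from the Hopf structure on $\RelSym{\Ocal_X}(\Kah{X}{S})$. Your version is more explicit than the paper's, especially in recasting the universality axiom as a pushout of quasi-coherent $\Ocal_X$-algebras detected on affine opens, but the argument is the same.
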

\begin{proof}
We give an axiom-by-axiom verification of Definition \ref{Defn: Section Tangent: Tangent category} (which defines tangent categories).
\begin{enumerate}
	\item The existence of the tangent functor and the tangent bundle projection are precisely described in Definitions \ref{Defn: Section Tangent: Tangent functor for schemes} and \ref{Defn: Section Tangent: Zero and Projection Transformations}. Furthermore, as $\Sch_{/S}$ is finitely continuous and as $T_{(-)/S}$ is continuous by Corollary \ref{Cor: Section Tangent: Tangent functor pres limits} we also know that every finite wide pullback of $p$ against itself exists and is preserved by all powers of $T$.
	\item The existence of the natural transformations $0$ and $\operatorname{add}$ are given in Definitions \ref{Defn: Section Tangent: Zero and Projection Transformations} and \ref{Defn: Section Tangent: The addition on the tangent bundle}. That this makes each bundle $T_{X/S}$ into an internal Abelian group in $\Sch_{/S}$ follows from Corollary \ref{Cor: Section Background Scheme: Hopf algebra result in opposite land}.
	\item The existence of the vertical lift is Definition \ref{Defn: Section Tangent: Vertical Lift over Schemes}. To establish the necessary coherences it must satisfy we recognize that because $\ell$ is affine, the given identities may be established affine-locally along an affine open cover of $X$. However, this then immediately follows from Example \ref{Example: The tangent category of affine R schemes}.
	\item The existence of the canonical flip is Definition \ref{Defn: Section Tangent: Canonical Flip}. To establish the necessary coherences it must satisfy we recognize that because $c$ is affine, the given identities may be established affine-locally along an affine open cover of $X$. However, this then immediately follows from Example \ref{Example: The tangent category of affine R schemes}.
	\item The identities required of Item (5) follow from the fact that each such identity holds affine-locally over every $S$-scheme $X$.
	\item Once again, because every map in sight is affine it suffices to prove that the diagram is an equalizer affine-locally over $X$ and glue. However, this holds also because of Example \ref{Example: The tangent category of affine R schemes}.
\end{enumerate}
\end{proof}

It is worth noting that these tangent structures on $\Sch_{/S}$ are pseudofunctorial in the schemes $S$ in the sense that in the relevant $2$-category $\fTan_{\operatorname{str}}$ of tangent categories, morphisms $f:X \to Y$ of schemes lift to (strong) morphisms of tangent categories $\Sch_{/Y} \to \Sch_{/X}$. To do so, we need to define morphisms of tangent categories as well as the $2$-category $\fTan$ of tangent categories.

\begin{definition}[{\cite[Definition 2.7]{GeoffRobinDiffStruct}}]\label{Defn: Tangent morphism}
	A \emph{morphism of tangent categories} $(\Cscr,\Tbb) \to (\Dscr,\Sbb)$ is given by a pair $(F,\alpha)$ where $F:\Cscr \to \Dscr$ is a functor and $\alpha$ is a natural transformation
	\[
	\begin{tikzcd}
		\Cscr \ar[r, ""{name = U}]{}{T} \ar[d, swap]{}{F} & \Cscr \ar[d]{}{F} \\
		\Dscr \ar[r, swap, ""{name = D}]{}{S} & \Dscr
		\ar[from = U, to = D, Rightarrow, shorten <= 4pt, shorten >= 4pt]{}{\alpha}
	\end{tikzcd}
	\]
	which makes five diagrams (omitted here) which indicate the ways in which $\alpha$ mediates between the structure transformations $(p,0,\operatorname{add},\ell,c)$ for $(\Cscr,\Tbb)$ and $(\Dscr,\Sbb)$ commute. If $\alpha$ is an isomorphism, then $(F,\alpha)$ is said to be a \emph{strong tangent morphism} while if $\alpha$ is the identity then $(F,\alpha)$ is said to be a \emph{strict tangent morphism}.
\end{definition}
Before discussing tangent morphisms directly, let us define the $2$-categories of tangent categories we consider directly in this paper. While we will not discuss the $2$-cells meaningfully, it is worth noting that they are not simply natural transformations $\rho:F \Rightarrow G$ between the functor components of tangent morphisms $(F,\alpha)$ and $(G,\beta)$. Instead they are natural transformations $\rho:F \Rightarrow G$ \emph{which also} interact well with the distributors via a whiskering condition spelled out in \cite{GarnerEmbeddingTanCat}.
\begin{definition}
Define the $2$-category $\fTan$ of tangent categories as follows:
\begin{itemize}
	\item Objects: Tangent categories $(\Cscr,\Tbb)$.
	\item $1$-cells: Tangent morphisms $(F,\alpha)$.
	\item $2$-cells: Tangent transformations of \cite{GarnerEmbeddingTanCat}.
	\item $1$-cell Composition: $(G,\beta) \circ (F,\alpha) := (G \circ F, (\beta \ast \id_F) \circ (\id_G \ast \alpha))$.
	\item $2$-cell Vertical Composition: As in $\fCat$.
	\item $2$-cell Horizontal Composition: As in $\fCat$.
\end{itemize}
\end{definition}
\begin{definition}
Define the full sub-$2$-category $\fTan_{\operatorname{str}}$ of $\fTan$ to be the one generated by taking $1$-cells to be only strong tangent morphisms, i.e., by taking only tangent morphisms $(F,\alpha)$ for which $\alpha$ is a natural isomorphism.
\end{definition}

It is straightforward to find examples of tangent morphisms in the literature and so we omit a general investigation into such a topic. Instead we will focus our attention to schemes themselves for the sake of this paper and show that for any morphism $f:X \to Y$ of $S$-schemes, the pullback functor $f^{\ast}$ induces a strong tangent morphism between slice categories. While we show this for schemes only, the interested reader should consult \cite[Sections 6, 7]{PronkVooys} for many examples of strong tangent morphisms of tangent categories.

\begin{proposition}[{cf.~ \cite[Proposition 6.15]{PronkVooys}}]\label{Prop: Pullback functor strong tangent for schemes}
For any morphism $f:X \to Y$ of schemes, the functor
\[
f^{\ast}:\Sch_{/Y} \to \Sch_{/X}
\]
induces a strong tangent morphism whose distributor is the natural isomorphism
\[
T_f:T_{(X \times_Y Z)/X} \xrightarrow{\cong} T_{Z/Y} \times_Y X
\]
of \cite[Line 16.5.12.2]{EGA44}.
\end{proposition}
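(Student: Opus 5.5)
We work with a morphism $f:X\to Y$, which we regard as making $X$ a $Y$-scheme. For $Z\to Y$ we write $Z_X:=Z\times_Y X$ for the chosen pullback. The plan has three steps: construct the distributor $T_f$ as a natural isomorphism, then check the five compatibility diagrams of Definition \ref{Defn: Tangent morphism} affine-locally, using that every structure map involved is affine.

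\textbf{Step 1: the distributor.} The cleanest route is the representability of Proposition \ref{Prop: Section Tangent: Tangent fucntor is a right adjoint}, which gives $T_{(-)/Y}\cong[W_Y,-]$ and $T_{(-)/X}\cong[W_X,-]$. Since $W_Y=\RelSpec{Y}(\Ocal_Y[\epsilon])$ and $f^{\ast}\Ocal_Y[\epsilon]\cong\Ocal_X[\epsilon]$, Corollary \ref{Cor: Section Background Scheme: The pullback iso for relspec} yields $W_Y\times_Y X\cong W_X$. For a test $X$-scheme $V$ we then get natural bijections
\[
\Sch_{/X}(V,T_{Z_X/X})\cong\Sch_{/X}(V\times_X W_X,Z_X)\cong\Sch_{/Y}(V\times_Y W_Y,Z)\cong\Sch_{/Y}(V,T_{Z/Y})\cong\Sch_{/X}(V,T_{Z/Y}\times_Y X).
\]
Yoneda then produces $T_f$, natural in $Z$.

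We will also identify $T_f$ with the EGA isomorphism. Base change of K\"ahler differentials, $\Kah{Z_X}{X}\cong\pr^{\ast}\Kah{Z}{Y}$, is Zariski-local by Lemma \ref{Lemma: Section Kahlers: Relative Kahlers are qcoh over Affine Schemes} together with the ring statement $\Kah{B\otimes_A C}{C}\cong\Kah{B}{A}\otimes_A C$. Combining this with Corollaries \ref{Cor: Section Background Scheme: The pullback Sym isos for qcoh} and \ref{Cor: Section Background Scheme: The pullback iso for relspec} gives
\[
T_{Z_X/X}=\RelSpec{Z_X}\bigl(\RelSym{}(\pr^{\ast}\Kah{Z}{Y})\bigr)\cong \pr^{\ast}T_{Z/Y}=T_{Z/Y}\times_Z Z_X\cong T_{Z/Y}\times_Y X .
\]
On affine patches, with $Z=\Spec B$, $Y=\Spec A$ and $X=\Spec C$, this is the ring isomorphism $\Sym{B\otimes_A C}(\Kah{B}{A}\otimes_A C)\cong \Sym{B}(\Kah{B}{A})\otimes_A C$, which sends $b\otimes c\mapsto b\otimes c$ and $\mathrm{d}b\otimes c\mapsto \mathrm{d}b\otimes c$. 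Both descriptions of $T_f$ agree on these patches.

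\textbf{Step 2: the axioms.} The five diagrams say that $T_f$ intertwines $p,0,\operatorname{add},\ell,c$ with their images under $f^{\ast}$. Two facts make them local:
\begin{itemize}
\item every object in sight is affine over $Z_X$, by Proposition \ref{Prop: Section Tangent: Projection and zero are affine}, Corollaries \ref{Cor: Section Tangent: Vertical lift is affine} and \ref{Cor: Section Tangent: Canonical flip is affine}, and Lemma \ref{Lemma: Background Scheme: Affine morphisms stable under composition};
\item $f^{\ast}$ preserves limits, so it commutes with $T_2$ and with $T^2$ up to the distributor.
\end{itemize}
By Proposition \ref{Prop: Section Scheme Background: Affine morphisms in terms of algebra} and Theorem \ref{Thm: Section Background Scheme: QCoh is a pseudolimit}, equality of two maps of affine $Z_X$-schemes can therefore be checked on the quasi-coherent algebras over an affine cover. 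Choose compatible affine covers $X_i\to Y_j$ and $Z_k\to Y_j$, so that $Z_k\times_{Y_j}X_i$ is affine. The diagrams then reduce to identities between $C$-algebra maps of the explicit presentations in Example \ref{Example: The tangent category of affine R schemes}: generators $a,\mathrm{d}a,\delta a,(\mathrm{d}\delta)a$ tensored with $C$. The maps $\hat\ell$ and $\hat c$, the comultiplication and the counit are each defined on generators $a\mapsto\cdots$ in a way visibly stable under $-\otimes_A C$. Hence each square commutes on generators.

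\textbf{Step 3: the main obstacle.} The delicate point is the $\ell$ and $c$ axioms, which involve $T^2$. Here the distributor is the composite $T_f\circ T(T_f)$, and we must verify that the iterated base-change isomorphism for $T^2$ matches the presentation $A[\mathrm{d}a,\delta a,(\mathrm{d}\delta)a]/I$ tensored with $C$ on the nose. We plan to handle this by showing once that $T^2_{B/A}\otimes_A C\cong T^2_{B\otimes_AC/C}$ via the identity on generators, since the relations of Lines (\ref{Eqn: First of the T2 rels})--(\ref{Eqn: Fifth of the T2 rels}) are base-change stable. After that, both squares hold on generators exactly as in Step 2. Finally, gluing across overlaps is consistent because the local isomorphisms are natural in the affine patches, as in the proof of Lemma \ref{Lemma: Section Tangent: Vertical lift is natural}.
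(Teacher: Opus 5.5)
Your proposal is correct and follows essentially the same route as the paper's sketch. In both, one obtains the isomorphism $T_f$ and then checks its compatibility with $p,0,\operatorname{add},\ell,c$ affine-locally, using that every object involved is affine over $X\times_Y Z$. The only difference is that you construct $T_f$ yourself, via the representability $T_{(-)/S}\cong[W_S,-]$ together with $W_Y\times_Y X\cong W_X$, and match it to EGA via base change of K\"ahler differentials; the paper instead cites EGA and \cite[Proposition 4.1.14]{JSMeMapFlavoursInTanCats} for the isomorphism and defers the local checks to \cite[Lemmas 6.10--6.14]{PronkVooys}.
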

\begin{proof}[Sketch]
This is proved in \cite[Proposition 6.15]{PronkVooys}. The main idea is to use the fact that the isomorphisms $T_f$ exist (a conceptual proof of the existence of this isomorphism in terms of only the tangent-categorical data is given in \cite[Proposition 4.1.14]{JSMeMapFlavoursInTanCats}) while the commutativity of $T_f$ with the various structure transformations may be checked affine-locally as in \cite[Lemmas 6.10 -- 6.14]{PronkVooys}.
\end{proof}

It is somewhat tedious to prove, but important, that these tangent morphisms assemble to a pseudofunctor defined on $\Sch_{/S}^{\op}$ which takes values not merely in the $2$-category $\fCat$, but instead in the $2$-category of tangent categories and strong tangent morphisms.

\begin{proposition}[{cf.~ \cite[Proposition 6.16]{PronkVooys}}]\label{Prop: Section Tangent: PSeudofunctor in Tan}
For any base scheme $S$, there is a slice category pseudofunctor
\[
\mathsf{Sl}_{S}:\Sch_{/S}^{\op} \to \fTan_{\operatorname{str}}
\]
given by sending schemes $X$ to their slice category tangent structures $(\Sch_{/X}, T_{(-)/X})$ and given by sending morphisms $f:X \to Y$ to the strong tangent morphism $(f^{\ast},T_f)$ of Proposition \ref{Prop: Pullback functor strong tangent for schemes}.
\end{proposition}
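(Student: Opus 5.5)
The plan is to build $\mathsf{Sl}_S$ by restricting the pullback pseudofunctor $\Sch_{/S}^{\op} \to \fCat$, $X \mapsto \Sch_{/X}$, $f \mapsto f^{\ast}$, and then show that its structure cells are tangent transformations. The pseudofunctor comes from chosen pullbacks: for composable $f:X \to Y$ and $g:Y \to Z$ the compositor
\[
\phi_{f,g}:f^{\ast}g^{\ast} \Rightarrow (g\circ f)^{\ast}
\]
is the canonical pasting isomorphism of pullback squares,
\[
(W \times_Z Y)\times_Y X \cong W \times_Z X,
\]
and the unitor is $\id^{\ast} \cong \id$. The pseudofunctor coherence axioms in $\fCat$ hold by the universal property of pullbacks. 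By Proposition \ref{Prop: Pullback functor strong tangent for schemes}, each $1$-cell lifts to a strong tangent morphism $(f^{\ast},T_f)$. So once we show that $\phi_{f,g}$ and the unitors are $2$-cells in $\fTan_{\operatorname{str}}$, the pseudofunctor axioms in $\fTan_{\operatorname{str}}$ follow for free. This is because $2$-cell composition there is computed as in $\fCat$, and the forgetful $2$-functor $\fTan_{\operatorname{str}} \to \fCat$ is locally faithful.

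The key step is to check that each $\phi_{f,g}$ is a tangent transformation in the sense of \cite{GarnerEmbeddingTanCat}. This means showing that, for every $Z$-scheme $W$, the whiskering square commutes:
\[
\phi_{f,g} \ast T \;\circ\; \bigl((g\circ f)\text{-distributor composite}\bigr) \;=\; T_{g\circ f} \;\circ\; T(\phi_{f,g}).
\]
Here the left-hand side uses the composite distributor $(T_f \ast g^{\ast}) \circ (f^{\ast} \ast T_g)$ from the $1$-cell composition rule. Concretely, this is a square of isomorphisms relating $T_{((W\times_Z Y)\times_Y X)/X}$, $T_{(W\times_Z X)/X}$, $(T_{W/Z}\times_Z Y)\times_Y X$ and $T_{W/Z}\times_Z X$. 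The same check must be made for the unitor, where $T_{\id}$ should be the identity up to the canonical isomorphism.

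To verify the square I will use that every map in it is an isomorphism over $X$ between schemes affine over $W \times_Z X$. These are relative spectra of $\RelSym{}$ of sheaves of K\"ahler differentials, by Definition \ref{Defn: Section Tangent: Tangent functor for schemes} and Proposition \ref{Prop: Section Tangent: Projection and zero are affine}. Commutativity can therefore be checked affine-locally on compatible affine covers of $Z$, $Y$, $X$ and $W$, chosen as in the proof of Lemma \ref{Lemma: Section Tangent: Vertical lift is natural}, and then glued by Theorem \ref{Thm: Section Background Scheme: QCoh is a pseudolimit} together with Corollary \ref{Cor: Section Background Scheme: Equiv of Cats for Relspec}. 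On affines, with $W=\Spec B$ over $Z=\Spec C$, $Y=\Spec D$ and $X=\Spec E$, everything reduces to the base-change isomorphisms
\[
\Kah{B\otimes_C E}{E} \cong \Kah{B}{C}\otimes_C E
\]
of \cite[Proposition 16.4]{Eisenbud}, taken through $\Sym{}$ via Corollary \ref{Cor: Section Background Alg: Sym and Tensors Commute}. The required identity then becomes a cocycle condition: base change along $C\to D$ followed by base change along $D\to E$ agrees with base change along $C\to E$, modulo the associativity isomorphism $(B\otimes_C D)\otimes_D E\cong B\otimes_C E$. This holds because all of these maps are determined on the generators $\mathrm{d}b\otimes 1$, where they send $\mathrm{d}b \mapsto \mathrm{d}(b\otimes 1)$.

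I expect the main obstacle to be bookkeeping rather than mathematics. One has to confirm that the affine-local descriptions of the distributors $T_f$ from \cite[Line 16.5.12.2]{EGA44} and \cite[Proposition 6.15]{PronkVooys} are exactly these generator-level base-change maps, and that they are compatible with the chosen pullback models on overlaps, so that the local verifications actually glue. I would handle this by fixing the models $f^{-1}$ of open subschemes as in Proposition \ref{Prop: Section Kahlers: Relative Kahlers are qcoh on X} once and for all. The final step is to check the $2$-cell data on morphisms of $\Sch_{/S}^{\op}$. Since $\Sch_{/S}^{\op}$ is locally discrete, the only $2$-cell data are the compositors and unitors, so nothing further remains.
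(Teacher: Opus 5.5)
The paper does not actually argue this proposition. Its proof is a one-line pointer to \cite[Proposition 6.16]{PronkVooys}. The only methodological hint nearby is the sketch of Proposition \ref{Prop: Pullback functor strong tangent for schemes}, which says that compatibilities of $T_f$ are ``checked affine-locally.''

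Your outline is a correct way to supply the missing argument, and it does something the citation does not. Since $2$-cells of $\fTan_{\operatorname{str}}$ are natural transformations satisfying a property, and they compose as in $\fCat$, you correctly isolate the only content beyond the ordinary pullback pseudofunctor $X \mapsto \Sch_{/X}$. That content is that each pasting isomorphism $\phi_{f,g}$, and each unitor, satisfies Garner's condition against the distributors, which amounts to a cocycle identity for the $T_f$. Invertibility in $\fTan_{\operatorname{str}}$ is then automatic, because the inverse of an invertible tangent transformation is again a tangent transformation.

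Your reduction of that identity, over compatible affine covers, to the base-change isomorphisms $\Kah{B\otimes_C E}{E} \cong \Kah{B}{C}\otimes_C E$ is sound. So is checking it on the $E$-algebra generators $b\otimes 1$ and $\mathrm{d}(b\otimes 1)$. The bookkeeping you flag, that $T_f$ restricts on affine patches to exactly this base-change map, is the one real dependency on how $T_f$ is constructed. It holds if $T_f$ is the isomorphism induced by the canonical base change of relative differentials, as in \cite[Line 16.5.12.2]{EGA44}. The citation buys brevity; your route buys a self-contained proof that shows exactly which coherence is being asserted.

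One small fix: choose one orientation for the distributors before composing them. With $T_f \colon T_{(-)/X}\circ f^{\ast} \Rightarrow f^{\ast}\circ T_{(-)/Y}$ as in the paper, the composite distributor is $(f^{\ast}\ast T_g)\circ(T_f\ast g^{\ast})$, not $(T_f\ast g^{\ast})\circ(f^{\ast}\ast T_g)$. Your order typechecks only for the opposite orientation used in Definition \ref{Defn: Tangent morphism}. Your displayed tangent-transformation equation is written in the paper's orientation, so it is the composite that needs adjusting.
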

\begin{proof}[Sketch]
This is proved in \cite[Proposition 6.16]{PronkVooys}. 
\end{proof}

\subsection{A Tangent Structure for the Opposite Category of Schemes}
With the fact that the tangent functor $T_{(-)/S} = [S[\epsilon],-]$ is a right adjoint, we can use \cite[Proposition 5.17]{GeoffRobinDiffStruct} to construct the dual tangent structure on $\Sch_{/S}^{\op}$ in formally the same way that Remark \ref{Remark: Dual Tangent Structure} built the tangent structure on $\CAlg{R}^{\op}$. We will see also that this tangent structure on $\Sch_{/S}^{\op}$ looks like a scheme-ified version of the ring of dual numbers tangent structure on $\Sch_{/S}$.

\begin{proposition}\label{Prop: Section Tangents: Dual Numbers for Schemes Tangents}
The functor
\[
\left((-) \times_S W_S\right):\Sch_{/S}^{\op} \to \Sch_{/S}^{\op}
\]
induces a tangent structure on $\Sch_{/S}^{\op}$.
\end{proposition}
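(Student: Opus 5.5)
The plan is to apply the dual tangent structure construction of \cite[Proposition 5.17]{GeoffRobinDiffStruct}, exactly as in Remark \ref{Remark: Dual Tangent Structure}. That result says that if $(\Cscr,\Tbb)$ is a tangent category in which each $T$ and each $T_n$ admits a left adjoint $L \dashv T$ and $L_n \dashv T_n$, then $L^{\op}$ carries a tangent structure on $\Cscr^{\op}$. The structure maps are the mates of $p, 0, \operatorname{add}, \ell, c$. We take $\Cscr = \Sch_{/S}$ with the tangent structure of Theorem \ref{Thm: Section Tangents: The Tangent Category of Schemes}. Proposition \ref{Prop: Section Tangent: Tangent fucntor is a right adjoint} already gives $L = (-) \times_S W_S \dashv T_{(-)/S}$. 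So the work consists of producing the left adjoints $L_n$ and identifying them in terms of $W_S$.

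For the $L_n$, note that $T_n X = T_{X/S} \times_X \cdots \times_X T_{X/S}$. Affine-locally over $S = \Spec C$ and $X = \Spec A$, this is the spectrum of $\Sym{A}(\Kah{A}{C}^{\oplus n})$. By the same derivation chain as in the proof of Proposition \ref{Prop: Section Tangent: Tangent fucntor is a right adjoint}, a map $Y \to T_n X$ corresponds to a map $Y \times_S W^{(n)}_S \to X$, where
\[
W^{(n)}_S := \RelSpec{S}\big(\Ocal_S[\epsilon_1,\dots,\epsilon_n]/(\epsilon_i\epsilon_j : 1 \le i,j \le n)\big).
\]
Indeed, a ring map $B \to A[\epsilon_1,\dots,\epsilon_n]/(\epsilon_i\epsilon_j)$ is the same as an $n$-tuple of $C$-derivations $B \to A$. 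These local bijections are natural and compatible with restriction to smaller affine opens. Gluing them along compatible affine covers, as in Proposition \ref{Prop: Section Tangent: Tangent fucntor is a right adjoint}, gives $L_n = (-) \times_S W^{(n)}_S \dashv T_n$.

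We also record that $W^{(n)}_S$ is the $n$-fold pushout of $W_S$ under $S$ in $\mathbf{Aff}_{/S}$. Equivalently, $\Ocal_S[\epsilon]^{\times_{\Ocal_S} n} \cong \Ocal_S[\epsilon_1,\dots,\epsilon_n]/(\epsilon_i\epsilon_j)$ as a fibre product of quasi-coherent algebras, checked affine-locally. This shows $L_n$ is the pushout $L \sqcup_{\id} \cdots \sqcup_{\id} L$, as \cite[Proposition 5.17]{GeoffRobinDiffStruct} requires. The hypotheses about $T^m$ preserving the $T_n$ then transfer to the statement that $L^m$ preserves these pushouts. This holds because $(-) \times_S Z$ preserves the relevant pushouts of schemes affine over the base: it is computed by $\RelSpec{}$ of tensor products of quasi-coherent algebras, using Proposition \ref{Prop: Section Background Scheme: Incl of Aff is cont} and Corollary \ref{Cor: Section Background Scheme: The pullback iso for relspec}.

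The main obstacle is confirming that the mates of the structure transformations satisfy the equalizer (universality of the vertical lift) axiom and the pushout-preservation conditions in $\Sch_{/S}^{\op}$. In $\Sch_{/S}$ these are colimit conditions, so they are not automatic. My approach is to reduce them to quasi-coherent algebra statements via the relative spectrum equivalence of Corollary \ref{Cor: Section Background Scheme: Equiv of Cats for Relspec}, since every scheme involved is affine over $X$. There they become limit statements about $\Ocal_X$-algebras of the shape $\Ocal_X[\epsilon,\epsilon']/\cdots$, which can be verified affine-locally by the corresponding facts for the tangent structure on $\CAlg{R}$ of Example \ref{Example: Tangent Structure on CAlgR}. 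Having done this, I invoke \cite[Proposition 5.17]{GeoffRobinDiffStruct} to conclude.
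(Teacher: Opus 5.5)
Your proposal is correct and follows the paper's proof. Both exhibit left adjoints $L_n \dashv T_n$ with $L_nX = \RelSpec{X}(\Ocal_X[\epsilon]\times_{\Ocal_X}\cdots\times_{\Ocal_X}\Ocal_X[\epsilon])$ (your $X\times_S W^{(n)}_S$) and then invoke \cite[Proposition 5.17]{GeoffRobinDiffStruct}; your $n$-tuples-of-derivations chain makes explicit the hom-set isomorphism $\Sch_{/S}(W_X,Y)\times_{\Sch_{/S}(X,Y)}\cdots\cong\Sch_{/S}(W_{X,n},Y)$, which the paper only asserts.

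Your ``main obstacle'' is not actually one. Once $L_n\dashv T_n$ holds, Yoneda makes $L_nX$ the required pushout in $\Sch_{/S}$ and transports the universality of the vertical lift, and $L^m$ preserves these colimits because it is a left adjoint. That is exactly what Proposition 5.17 packages, so nothing further needs checking. The proposed detour through the relative spectrum should be dropped, since it would only give colimits in $\mathbf{Aff}_{/X}$ or $\mathbf{Aff}_{/S}$, not in $\Sch_{/S}$.
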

\begin{proof}
Begin by observing that we already know that $T_{(-)/S} = [W_S,-]:\Sch_{/S} \to \Sch_{/S}$ is a right adjoint with
\[
\left((-) \times_S W_S\right):\Sch_{/S} \to \Sch_{/S}
\]
the corresponding left adjoint. A straightforward computation shows that for any $S$-scheme $\nu_X:X \to S$ there is a natural isomorphism
\begin{align*}
X \times_S W_S &\cong X \times_S \underline{\Spec}_S(\Ocal_S[\epsilon]) \cong \underline{\Spec}_X\left(\Ocal_X \otimes_{\nu_X^{-1}\Ocal_S} \nu_X^{-1}\Ocal_S[\epsilon] \right) \\
&\cong \underline{\Spec}_X(\Ocal_X[\epsilon]) =: W_X
\end{align*}
and so $(-) \times_S W_S \cong W_{(-)}$. Now write
\[
W_{X,n} := \underline{\Spec}_X\left(\underbrace{\Ocal_X[\epsilon] \times_{\Ocal_X} \cdots \times_{\Ocal_X} \times_{\Ocal_X} \Ocal_X[\epsilon]}_{n\,\text{copies of}\,\Ocal_X[\epsilon]}\right)
\]
and note that for any $S$-scheme $Y$,
\[
\underbrace{\Sch_{/S}(W_X,Y) \times_{\Sch_{/S}(X,Y)} \cdots \times_{\Sch_{/S}(X,Y)} \Sch_{/S}(W_X,Y)}_{n\,\text{copies of}\,\Sch_{/S}(W_X,Y)} \cong \Sch_{/S}(W_{X,n},Y).
\]
Becasue the $W_{X,n}$ vary functorially in $X$, there is a functor $W_{(-),n}:\Sch_{/S} \to \Sch_{/S}$ which admits natural isomorphisms
\begin{align*}
\Sch_{/S}(X,T_nY) &\cong \Sch_{/S}(X,T_{Y/S}) \times_{\Sch_{/S}(X,Y)} \cdots \times_{\Sch_{/S}(X,Y)} \Sch_{/S}(X,T_{Y/S}) \\
&\cong \Sch_{/S}(X \times_S W_S,Y) \times_{\Sch_{/S}(X,Y)} \cdots \times_{\Sch_{/S}(X,Y)} \Sch_{/S}(X \times_S W_S, Y) \\
&\cong \Sch_{/S}(W_X,Y) \times_{\Sch_{/S}(X,Y)} \cdots \times_{\Sch_{/S}(X,Y)} \Sch_{/S}(W_X,Y) \\
&\cong \Sch_{/S}(W_{X,n},Y)
\end{align*}
so indeed there are adjoints
\[
\begin{tikzcd}
\Sch_{/S} \ar[rr, bend right = 20, swap, ""{name = R}]{}{T_n} & & \Sch_{/S} \ar[ll, bend right = 20, swap, ""{name = L}]{}{W_{(-),n}}
\ar[from = L, to = R, symbol = \dashv]
\end{tikzcd}
\]
for all $n \in \N$. Consequently a routine application of \cite[Proposition 5.17]{GeoffRobinDiffStruct} shows that the functor $W_{(-)}^{\op}:\Sch_{/S}^{\op} \to \Sch_{/S}^{\op}$ induces a tangent structure on $\Sch_{/S}^{\op}$.
\end{proof}
Further examination of this dual tangent structure on $\Sch_{/S}^{\op}$ is left for future work by the author in a different paper. For instance, it should be the case that there is an equivalence
\[
\DBun_{\Sch^{\op}}(X) \overset{!}{\simeq} \QCoh(X),
\]
but to establish this is quite technical\footnote{One major issue which arises is that when one looks for pullbacks in $\Sch_{/S}^{\op}$ one equivalently looks for pushouts in $\Sch_{/S}$. The core problem is that the category $\Sch_{/S}$ fails to admit coequalizers (and hence pushouts) in full generality --- a standard example is that one cannot glue two copies of the affine lines $\Abb_K^1 = \Spec K[t]$ at only the generic point $\eta = \Spec K(t)$. As such, great care must be taken in order to ensure that the diagrams one draws actually exist in $\Sch_{/S}$ (and hence in $\Sch_{/S}^{\op}$).} and worth an in-depth discussion in its own right. Additionally, proving such a result requires in-depth manipulations of nil-square extensions of rings, first order deformations of schemes, and the ways in which they interact with the results of \cite[Section 3]{GeoffJSDiffBunComAlg}.

\section{Reconstructing Quasi-Separated Schemes from Differential Bundles}\label{Section: Appies ofTan on Sch}

In this final section of the paper we will make some observations regarding how one may use the tangent sturucture on $\Sch_{/S}$, or some related tangent-sub-categories, to deduce both new observations provide new perspecitves on existing topics in (differential) algebraic geometry. In the first subsection below, we will show that the pseudofunctor $\DBun(-)$ provides a complete invariant for the quasi-separated schemes in the sense that for said schemes, $\DBun(X) \simeq \DBun(Y)$ if and only if $X \cong Y$; while the statement of this result is new to this article, it is essentially a restatement of the Gabriel-Rosenberg Reconstruction Theorem. As such, we will first recall that result before providing the background necessary to describe the category $\qsSch_{/S}$ of quasi-separated $S$-schemes. Afterwords, we will prove that the category $\qsSch_{/S}$ is a strict tangent subcategory of $\Sch_{/S}$ and that there is an equality $\DBun_{\qsSch}(X) = \DBun_{\Sch}(X)$ whenever $X \to S$ is quasi-separated.

\subsection{Differential Bundles as a Complete Invariant for Quasi-Separated Schemes}\label{Subsection: DBun as invariant}

A result of paramount importance for noncommutative algebraic geometry is the much-celebrated Gabriel-Rosenberg Reconstruction Theorem which says that equivalences between categories of quasi-coherent sheaves give rise to isomorphisms of quasi-separated schemes and vice-versa. In Morita-theortic terms, it says that Morita equivalence for quasi-separated schemes coincides with isomorphisms of quasi-separated schemes\footnote{The Gabriel-Rosenberg Reconstruction Theorem also provides a wonderful mosquito nuke of a proof that, for commutative rings $A$ and $B$, $\Mod{A} \simeq \Mod{B}$ if and only if $A \cong B$: simply apply the Reconstruction Theorem to the categories $\QCoh(\Spec A)$ and $\QCoh(\Spec B)$ and use the equivalence of categories $\mathbf{AffSch} \simeq \Cring^{\op}$.}. However, the importance of the Reconstruction Theorem ultimately comes down to the fact that it provides a jumping-off point to begin noncommutative algebraic geometry: since equivalences $\QCoh(X) \simeq \QCoh(Y)$ imply the existence of isomorphisms $X \cong Y$, one can ``replace'' the geometric object $X$ by its category of quasi-coherent sheaves $\QCoh(X)$. By then treating arbitrary\footnote{For many practical considerations, one may want to restrict their attention to  arbitrary AB5 Abelian categories (Abelian categories with infinite coproducts and with exact filtered colimits) with a generator, as those are precisely the Abelian categories which best resemble the structural behaviour of the categories $\QCoh(X)$.} Abelian categories $\Ascr$ as some representative for a model of ``sheaves on a noncommutative space'' and use the homological algebraic study of said categories as a way to begin examining the tip of the noncommutative geometric iceberg.

The Gabriel-Rosenberg Reconstruction Theorem is proved by way of studying the prime (Gabriel) spectrum of an Abelian category and then by showing that for a quasi-separated scheme $X$, there is an equivalence
\[
\QCoh(X) \simeq \QCoh(\Spec(\QCoh(X)))
\]
if and only if there is an isomorphism of quasi-separated schemes $X \cong \Spec(\QCoh(X))$. For details regarding this argument, the Gabriel spectrum, and more, see \cite{BrandenbergReconstruction}. What is important for the scope of this article is the statement of the theorem.

\begin{Theorem}[{Reconstruction Theorem; cf.~ \cite{RosenbergSpectra}, \cite{BrandenbergReconstruction}}]\label{Thm: Reconstruction Theorem}
For any quasi-separated schemes $X$ and $Y$, $\QCoh(X) \simeq \QCoh(Y)$ if and only if there is an isomorphism of schemes $X \cong Y$.
\end{Theorem}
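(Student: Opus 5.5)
The forward implication is formal and I would dispose of it first: an isomorphism $f:X \to Y$ has inverse $f^{-1}$, so the compositors $\phi_{f,f^{-1}}$ and $\phi_{f^{-1},f}$ of the pseudofunctor $\QCoh(-):\Sch^{\op} \to \fCat$ show that $f^{\ast}:\QCoh(Y) \to \QCoh(X)$ and $(f^{-1})^{\ast}$ are quasi-inverse, giving $\QCoh(X) \simeq \QCoh(Y)$. All the content is in the converse. There the plan is to attach to every Grothendieck abelian category $\Ascr$ (AB5 with a generator, which is the case for $\QCoh(X)$ by Proposition \ref{Prop: Section QCoh: Qcoh Abelian} and Corollary \ref{Cor: Section QCoh: QCoh Locally presentable}) a locally ringed space $\Spec(\Ascr)$ built purely from categorical data. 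Then $\Spec(-)$ is invariant under equivalence, and it remains to show $\Spec(\QCoh(X)) \cong X$ whenever $X$ is quasi-separated.

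For the construction I would follow Rosenberg. The points are equivalence classes of nonzero "spectral" objects $M$, meaning that every nonzero subobject $N \subseteq M$ satisfies $M \preceq N$, where $M \preceq N$ says that $M$ is a subquotient of a coproduct of copies of $N$. The topology is generated by declaring closed the supports of objects, or equivalently by taking as opens the localizing (Serre, coproduct-closed) subcategories $\Kscr$ of $\Ascr$ of finite type. The structure sheaf assigns to an open the center (the endomorphism ring of the identity functor) of the Gabriel quotient $\Ascr/\Kscr$. I would then verify the affine case directly: for $\Ascr = \Mod{A}$, the spectral objects are represented by the modules $A/\pfrak$, the finite-type localizing subcategories are the kernels of $M \mapsto M_f$ for $f \in A$, the quotient is $\Mod{A_f}$, and the center of $\Mod{A_f}$ is $A_f$. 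Together with Theorem \ref{Thm: Section Background Scheme: QCoh on affine is just modules} this gives $\Spec(\QCoh(\Spec A)) \cong \Spec A$ as locally ringed spaces.

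To pass to a general quasi-separated $X$, I would show that each quasi-compact open $j:U \to X$ corresponds to the localizing subcategory $\Kscr_U$ of quasi-coherent sheaves supported on $X \setminus U$, and that $j^{\ast}$ induces an equivalence $\QCoh(X)/\Kscr_U \simeq \QCoh(U)$. The essential input is that $j$ is quasi-compact and quasi-separated. By Proposition \ref{Prop: Section Background Scheme: Qcqs maps have quasicompact pushforward}, $j_{\ast}$ then preserves quasi-coherence and is right adjoint to $j^{\ast}$, and since $j^{\ast}j_{\ast} \cong \id$, the functor $j_{\ast}$ is a fully faithful section functor. This exhibits $\QCoh(U)$ as the Gabriel localization of $\QCoh(X)$. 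Applying this to affine opens and gluing by Theorem \ref{Thm: Section QCoh: Qcoh is pseudolim over poset of affine opens}, the affine identifications from the previous step assemble into a homeomorphism $\lvert \Spec(\QCoh(X)) \rvert \cong \lvert X \rvert$ that matches structure sheaves on a basis of affine opens. That yields $X \cong \Spec(\QCoh(X))$, and hence $X \cong \Spec(\QCoh(X)) \cong \Spec(\QCoh(Y)) \cong Y$.

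I expect the main obstacle to be this localization step: characterizing intrinsically which localizing subcategories come from quasi-compact opens, and proving the equivalence $\QCoh(X)/\Kscr_U \simeq \QCoh(U)$. The quasi-separated hypothesis is exactly what makes every quasi-compact open immersion qcqs, and without it $j_{\ast}$ need not preserve quasi-coherence. My first attempt would be to check the kernel of $j^{\ast}$ affine-locally, reducing to the statement that $\Mod{A} \to \Mod{A_f}$ is a Gabriel localization, and then use quasi-compactness to keep all covers finite so the finite-type condition is preserved. For the full details I would defer to \cite{RosenbergSpectra} and \cite{BrandenbergReconstruction}.
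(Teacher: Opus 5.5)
The paper gives no proof of this theorem. It cites Rosenberg and Brandenburg and only sketches the strategy: attach a spectrum to $\QCoh(X)$ and show $X\cong\Spec(\QCoh(X))$. Your proposal follows the same architecture.

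Two parts of your argument are fine:
\begin{itemize}
\item the easy direction;
\item your localization step: for $X$ quasi-separated and $U$ quasi-compact, $j:U\to X$ is qcqs, so $j_{\ast}$ preserves quasi-coherence, $j^{\ast}j_{\ast}\cong\id$, and $\QCoh(X)/\ker j^{\ast}\simeq\QCoh(U)$.
\end{itemize}

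The genuine problem is the topology, which is where the content of the theorem lies. Your claim that the finite-type localizing subcategories of $\Mod{A}$ are the kernels of $M\mapsto M_f$ is false. The torsion abelian groups form a localizing subcategory $\mathscr{K}\subseteq\Mod{\Z}$ of finite type: its Gabriel filter consists of the principal ideals $n\Z$ with $n\neq 0$, and the section functor $\Mod{\Qbb}\to\Mod{\Z}$ commutes with filtered colimits. Its quotient is $\Mod{\Qbb}$. The set of points $[M]$ with $M\notin\mathscr{K}$ is $\{(0)\}$, which is not open in $\Spec\Z$. So taking all finite-type localizing subcategories as opens gives a topology strictly finer than the Zariski topology. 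Your affine computation $\Spec(\QCoh(\Spec A))\cong\Spec A$ therefore already fails for $A=\Z$.

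The alternative you call equivalent, declaring supports of objects closed, is not equivalent and also fails. With $\operatorname{Supp}M=\{[P] : P\preceq M\}$, the object $\bigoplus_p\Z/p$ has support equal to the set of closed points of $\Spec\Z$. That set is not closed, since $(0)$ lies in its closure.

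A repair needs three things.
\begin{itemize}
\item Take as closed sets only the supports of objects of finite type. In $\Mod{A}$ these are exactly the sets $V(I)$, since $\operatorname{Supp}(A/I)=V(I)$.
\item Single out intrinsically the basic opens on which the Gabriel quotient is the right category of quasi-coherent sheaves. In $\Mod{A}$ these correspond to the localizing subcategories generated by a single finitely presented object $A/I$, with $I$ finitely generated.
\item Prove, for quasi-separated $X$, that the localizing subcategories of $\QCoh(X)$ singled out by this categorical condition are exactly the kernels $\ker j^{\ast}$ of quasi-compact open immersions $j:U\to X$.
\end{itemize}

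Your localization argument gives only one direction of the last point. It shows that each quasi-compact open yields a localizing subcategory with the correct quotient. It does not show that every localizing subcategory meeting the intrinsic condition arises this way, and that converse is what makes $\Spec(\QCoh(X))$ depend only on the abstract category. You flag this characterization as the main obstacle and then defer it. As it stands, the proposal leaves the central step unproved and also gives an incorrect provisional description of it.

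A minor point: the paper's results you cite give that $\QCoh(X)$ is abelian and locally presentable, which does not by itself give AB5. Exactness of filtered colimits does hold, because they are computed in $\Mod{\Ocal_X}$.
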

What is also remarkable about  this theorem is that it remains true even over relative quasi-separated schemes over affine base schemes. This means that if one knows that $X$ and $Y$ are quasi-separated of some affine scheme $\Spec A$ (such as in the case that $X$ and $Y$ are varieties, for instance) then the Reconstruction Theorem holds at the level of $A$-schemes. 
\begin{corollary}\label{Cor: Relative Reconstruction Theorem}
For any commutative ring $A$ and any quasi-separated $A$-schemes $X$ and $Y$, there is an equivalence $\QCoh(X) \simeq \QCoh(Y)$ if and only if there is an isomorphism of schemes $X \cong Y$ in $\Sch_{/A} = \Sch_{/\Spec A}$.
\end{corollary}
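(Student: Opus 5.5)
The plan is to deduce the relative statement directly from the absolute Reconstruction Theorem (Theorem \ref{Thm: Reconstruction Theorem}), together with the fact that $\QCoh(-)$ only depends on the underlying scheme and not on the structure map to $\Spec A$. Both directions have to be handled. Throughout, I interpret ``quasi-separated $A$-scheme'' as an $A$-scheme $X \to \Spec A$ whose structure map is quasi-separated. Since $\Spec A$ is affine and hence separated, composing with the separated map $\Spec A \to \Spec \Z$ shows that $X$ is then quasi-separated as an absolute scheme (quasi-separated morphisms are stable under composition). This first step lets us apply Theorem \ref{Thm: Reconstruction Theorem} to $X$ and $Y$.

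For the backward direction, an isomorphism $\varphi:X \xrightarrow{\cong} Y$ in $\Sch_{/A}$ is in particular an isomorphism of schemes. Its pullback $\varphi^{\ast}:\QCoh(Y) \to \QCoh(X)$ is then an equivalence with quasi-inverse $(\varphi^{-1})^{\ast}$, by the pseudofunctoriality of $\QCoh(-)$ recorded earlier via the compositors $\phi_{f,g}$.

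The forward direction is the main obstacle. Theorem \ref{Thm: Reconstruction Theorem} applied to $\QCoh(X) \simeq \QCoh(Y)$ only produces some isomorphism of schemes $X \cong Y$, which need not commute with the maps to $\Spec A$. To get an isomorphism over $A$, I would require, as is implicit in the relative setting, that the equivalence be $A$-linear: $\QCoh(X)$ is $A$-linear through $A \to \Gamma(X,\Ocal_X)$, and this action is recovered from the equivalence as the action of $A$ on the center (the endomorphisms of the identity functor). In the Gabriel--Rosenberg construction, $X$ is recovered as $\Spec(\QCoh(X))$, with structure sheaf built from centers of localized quotient categories. An $A$-linear equivalence therefore induces an isomorphism of these reconstructed ringed spaces that is compatible with the $A$-algebra structures on sections, and hence an isomorphism $X \cong Y$ over $\Spec A$; this is the approach in \cite{BrandenbergReconstruction}.

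If the equivalence is not assumed $A$-linear, the conclusion has to be read as an abstract isomorphism of schemes. The isomorphism from Theorem \ref{Thm: Reconstruction Theorem} is then only an isomorphism of the underlying schemes, and the word ``in $\Sch_{/A}$'' should be understood accordingly; this is the point I expect to need the most care in stating precisely.
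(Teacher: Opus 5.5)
Your reduction to the absolute case is correct: a quasi-separated $A$-scheme is quasi-separated, because $\Spec A \to \Spec \Z$ is affine and hence separated. Your backward direction is also correct. If the conclusion is read as an abstract isomorphism of schemes, your argument is exactly what the paper has in mind. The paper gives no proof and simply presents the corollary as an immediate consequence of Theorem \ref{Thm: Reconstruction Theorem}.

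The issue you raise is a real defect of the statement, not of your proposal. For an arbitrary equivalence of categories, the clause ``in $\Sch_{/A}$'' is false. Take $A = k[t]$ for a field $k$, and let $X = \Spec k$ and $Y = \Spec k$ with structure maps given by $t \mapsto 0$ and $t \mapsto 1$ respectively.
\begin{itemize}
\item Both structure maps are closed immersions into $\Spec A$, hence affine and quasi-separated.
\item $\QCoh(X) \simeq \Mod{k} \simeq \QCoh(Y)$ by Theorem \ref{Thm: Section Background Scheme: QCoh on affine is just modules}.
\item No isomorphism over $\Spec A$ exists, because the two structure maps have different images, $V(t) \neq V(t-1)$.
\end{itemize}

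Your repair via $A$-linear equivalences is the right one. In the example no $A$-linear equivalence exists, since $t$ acts on the identity functor of $\Mod{k}$ by $0$ on one side and by $1$ on the other. Be aware, though, that the entire forward direction is then imported from \cite{BrandenbergReconstruction}. When you write it up, state precisely what you use from that source. You need the reconstruction isomorphism $X \cong \Spec(\QCoh(X))$ to be compatible with the action of $A$ on $\QCoh(X)$ through $A \to \Gamma(X,\Ocal_X)$ and natural endomorphisms of the identity functor. That compatibility is what lets an $A$-linear equivalence induce an isomorphism of the reconstructed schemes over $\Spec A$.
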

%

Our goal now is to show that the Gabriel-Rosenberg Reconstruction Theorem admits a tangent-categorical rephrasing by showing that it is a complete invariant on the tangent category of quasi-separated schemes. However, this means that we first necessitate introducing quasi-separated morphisms of schemes and then providing an explicit proof of the fact that the category $\qsSch_{/S}$ of quasi-separated $S$-schemes is a tangent subcategory of $\Sch_{/S}$. Additionally, we must also indicate then that the subcategory inclusion $\qsSch_{/S} \to \Sch_{/S}$ is full on bundles in the sense that if $X$ is a quasi-separated $S$-scheme and if $E \to X$ is a differential bundle in $\Sch_{/S}$ then $E \to X$ is also a differential bundle in $\qsSch_{/S}$. That is, we must show that differential bundles of a quasi-separated scheme $X$ as computed in the category of $S$-schemes coincide exactly with the differential bundles as computed in $\qsSch_{/S}$.

Separated morphisms of schemes are a scheme-theoretic recasting of what it means to be Hausdorff in structural terms. Because the underlying topological space $\lvert X \rvert$ of a scheme $X$ is very rarely Hausdorff\footnote{In fact, for a commutative ring $A$ the underlying space of the spectrum $\Spec A$ is Hausdorff if and only if the reduction $A_{\red} = A/\mathsf{nil}(A)$ is a von Neumann regular ring.}, one must take a more relative and structure-based approach towards what it means to be Hausdorff. The approach that Grothendieck defined is based on when the diagonal $\Delta:X \to X\times_S X$ is a closed immersion, mimicking the classical theorem that a space $X$ is Hausdorff if and only if the diagonal $\lbrace (x,x) \; | \; x \in X \rbrace$ is closed in $X \times X$.
\begin{definition}[{cf. \cite[D{\'e}finition 5.4.1]{EGA1}, \cite[Definition II.4.1]{Hartshorne}}]\label{Defn: Separated morphism}
Let $f:X \to S$ be a morphism of schemes. We say that $f$ is a \emph{separated morphism} if the diagonal morphism
\[
\Delta_{X\vert S}:X\to X \times_S X
\]
is a closed immersion.
\end{definition}
A major benefit of working with schemes $X \to \Spec A$ separated over an affine scheme is that for any two affine subschemes $U$ and $V$ of $X$, the intersection $U \times_X V$ is again an affine scheme.

In this light, quasi-separated morphisms of schemes are weakening of this perspective on separation. Again defined by Grothendieck in , quasi-separated morphisms are defined by asking for the diagonal $\Delta_{X|S}:X \to X \times_S X$ to be quasi-compact instead of a closed immersion. This weakening, while slightly inconvenient in many ways, is well-suited to working with quasi-coherent sheaves and provides a particularly natural and nearly minimal finiteness condition which allows for the direct image of a quasi-coherent sheaf to remain quasi-coherent.
\begin{definition}[{cf. \cite[D{\'e}finition 1.2.1]{EGA41}}]\label{Defn: qsMorphism}
A morphism $f:X \to S$ of schemes is \emph{quasi-separated} if the diagonal
\[
\Delta_{X\vert S}:X \to X \times_S X
\]
is a quasi-compact morphism of schemes. We also say that a \emph{scheme $X$ is quasi-separated} if the terminal map $\bang_X:X \to \Spec \Z$ is quasi-separated.
\end{definition}
The analogue of the affine intersection property of separated morphisms is as follows. If $X$ is a scheme with a quasi-separated morphism $X \to \Spec A$ for some ring $A$, then for any two affine open subschemes $U$ and $V$ of $X$, their intersection $U \times_X V$ may be covered by \emph{finitely many} affine open subschemes.

Below we record some of the ways in which separated and quasi-separated morphisms are stable under composition, base change, and more. These properties are all well-documented in introductory references on scheme theory, and so we defer to those resources; instead we will just focus on \emph{how} these morphisms interact and the ways in which they relate to each other and to affine morphisms. For what is below, it is worth noting that in the sketch giving the locations each relevant statement may be found each citation comes from one of Grothendieck and Dieudonn{\'e}'s {\'E}l{\'e}ments de G{\'e}om{\'e}trie Alg{\'e}brique I (\cite{EGA1}), II (\cite{EGA2}), or {\'E}l{\'e}ments de G{\'e}om{\'e}trie IV, Premi{\`e}re Partie (\cite{EGA41}) or a combination thereof.

\begin{proposition}\label{Prop: Section Gabriel: Properties of separation stuffs}
The following hold for morphisms of schemes.
\begin{enumerate}
	\item If $f:X \to S$ is separated then $f$ is quasi-separated.
	\item If $f:X\to S$ and $g:S \to T$ are separated, then so too is $g \circ f$.
	\item If $f:X\to S$ and $g:S \to T$ are quasi-separated, then so too is $g \circ f$.
	\item In a pullback diagram
	\[
	\begin{tikzcd}
		X \times_S Y \ar[r]{}{\pi_1} \ar[d, swap]{}{\pi_0} & Y \ar[d]{}{g} \\
		X \ar[r, swap]{}{f} & S
	\end{tikzcd}
	\]
	if $f$ is separated, then so too is $\pi_1$. Similarly, if $f$ is quasi-separated then so too is $\pi_1$.
	\item If $f:X \to S$ and $g:S \to T$ are morphisms for which $g \circ f$ is separated, then so too is $f$.
	\item If $f:X \to S$ and $g:S \to T$ are morphisms for which $g \circ f$ is quasi-separated, then so too is $f$.
	\item If $f:X \to S$ is an affine morphism, then $f$ is separated. In particular, $f$ is quasi-separated.
\end{enumerate}
\end{proposition}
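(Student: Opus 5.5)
The plan is to reduce all seven items to two structural facts about diagonals and then run a small amount of diagram chasing. The first fact is that closed immersions and quasi-compact morphisms are each stable under composition and base change. These are \cite[Proposition 4.2.5]{EGA1} and \cite[Proposition 6.6.4]{EGA1}, respectively.

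The second fact is that for a composite $X \xrightarrow{f} S \xrightarrow{g} T$ the diagonal factors as
\[
\Delta_{X|T} = j \circ \Delta_{X|S}, \qquad j : X \times_S X \to X \times_T X,
\]
where $j$ is the base change of $\Delta_{S|T}$ along $X \times_T X \to S \times_T S$. For a pullback square, the diagonal of $\pi_1$ is the base change of $\Delta_{X|S}$ along $Y\times_S Y \to X\times_S X$.

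Items (2), (3) and (4) then follow directly. For (2) and (3), the morphism $j$ is a base change of $\Delta_{S|T}$ and therefore has the same property, and composing it with $\Delta_{X|S}$ preserves that property. For (4), the diagonal of $\pi_1$ is a base change of $\Delta_{X|S}$.

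For (1), I would argue that every closed immersion is quasi-compact. The preimage of an affine open under a closed immersion is a closed subscheme of an affine scheme. Such a subscheme is affine, hence quasi-compact, and I would cite \cite[Proposition 6.6.1]{EGA1} together with the local nature of quasi-compactness.

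Items (5) and (6) use the graph trick. Write $f$ as the composite
\[
X \xrightarrow{\Gamma_f} X\times_T S \xrightarrow{\pr_2} S .
\]
Here $\pr_2$ is a base change of $g\circ f$, so it is (quasi-)separated by (4). The graph $\Gamma_f$ is a base change of $\Delta_{S|T}$. Since every diagonal is an immersion, $\Gamma_f$ is an immersion.

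For (5) I would instead argue directly with diagonals. The factorization gives $\Delta_{X|T} = j\circ\Delta_{X|S}$, where $\Delta_{X|T}$ is a closed immersion and $j$ is an immersion, hence separated. If $u\circ v$ is a closed immersion and $u$ is separated, then $v$ is a closed immersion; this is \cite[Proposition 5.5.1]{EGA1} (v). Hence $\Delta_{X|S}$ is a closed immersion.

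For (6) the same argument works with quasi-compactness. If $u\circ v$ is quasi-compact and $u$ is quasi-separated, then $v$ is quasi-compact, by \cite[Proposition 1.2.2]{EGA41}. The morphism $j$ is a monomorphism, so its diagonal is an isomorphism and $j$ is quasi-separated. Therefore $\Delta_{X|S}$ is quasi-compact.

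Item (7) is affine-local. Cover $S$ by affine opens $S_i$. By Proposition \ref{Prop: Section Background Scheme: Affine iff affine for all covers}, each $f^{-1}(S_i)=\Spec B_i$ is affine. Over $S_i=\Spec A_i$, the diagonal restricts to $\Spec(B_i\otimes_{A_i}B_i \to B_i)$, and the multiplication map is surjective. This exhibits the restriction as a closed immersion.

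Being a closed immersion is local on the target. The opens $f^{-1}(S_i)\times_{S_i} f^{-1}(S_i)$ cover $X\times_S X$ and contain the diagonal. Their complement is handled by the standard fact that $\Delta$ is closed as soon as this holds over each such open, which is \cite[Proposition 5.3.1]{EGA1}. This gives separatedness, and then (1) gives quasi-separatedness. The same computation is \cite[Proposition 1.2.4]{EGA2}, which the paper already quoted.

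The main obstacle is the cancellation statements in (5) and (6): correctly identifying $j$ as a base change of $\Delta_{S|T}$, and invoking the right ``two-out-of-three'' lemma for closed immersions and quasi-compact maps. Everything else is routine stability under base change and composition, for which I would simply cite EGA.
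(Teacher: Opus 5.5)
Your proposal is correct, and it does more than the paper does. The paper's sketch is purely bibliographic: it attributes each item to a statement of EGA~I, EGA~II or EGA~IV$_1$ (closed immersions are quasi-compact for (1), EGA~II~1.2.4 for (7), and so on). You instead derive (2)--(6) uniformly from two ingredients: stability of closed immersions and quasi-compact morphisms under composition and base change, and two diagonal identities. Those identities are the factorization $\Delta_{X|T}=j\circ\Delta_{X|S}$, with $j$ a base change of $\Delta_{S|T}$, and the compatibility of diagonals with base change. You prove (1) and (7) by direct affine arguments. The paper's route buys brevity and authoritative references. Yours explains why the separated and quasi-separated cases run in parallel: every item becomes an instance of ``the diagonal of a composite or base change is assembled from diagonals by composition and base change''.

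A few things need tidying.

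In (4), $\Delta_{\pi_1}$ is the base change of $\Delta_{X|S}$ along $(X\times_S Y)\times_Y(X\times_S Y)\cong(X\times_S X)\times_S Y\to X\times_S X$. There is no map $Y\times_S Y\to X\times_S X$ in general.

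In (7), the opens $f^{-1}(S_i)\times_{S_i}f^{-1}(S_i)=(X\times_S X)\times_S S_i$ already cover $X\times_S X$. So there is no complement to handle, and locality of closed immersions on the target finishes the argument.

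In (5) and (6), the references you give for the two cancellation lemmas are exactly the ones the paper gives for items (5) and (6) themselves (EGA~I~5.5.1(v) and EGA~IV$_1$~1.2.2). As cited, the step is therefore circular. The lemmas are true, but you do not need them; either of the following closes the gap.
\begin{enumerate}
\item Complete the graph trick you set up and then dropped. $\Gamma_f$ is a base change of the monomorphism $\Delta_{S|T}$, hence a monomorphism. Its diagonal is therefore an isomorphism, so $\Gamma_f$ is separated and quasi-separated. $\operatorname{pr}_2$ is (quasi-)separated by (4), so $f=\operatorname{pr}_2\circ\Gamma_f$ is (quasi-)separated by (2) or (3).
\item Use that $j$ is a monomorphism. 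Then the square with sides $\Delta_{X|S}$, $\Delta_{X|T}$, $j$ and $\mathrm{id}_X$ is Cartesian. So $\Delta_{X|S}$ is a base change of $\Delta_{X|T}$ and directly inherits being a closed immersion or being quasi-compact.
\end{enumerate}
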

\begin{proof}[Sketch]
Statement $(1)$ is a scheme-theoretic fact which follows from the fact that closed immersions are quasi-compact by \cite[Proposition 1.1.2.i]{EGA41}, so if $\Delta_{X|S}$ is a closed immersion it is necessarily quasi-compact as well; hence, by definition, when $f$ is separated it is necessarily quasi-separated. Statement $(2)$ is \cite[Proposition 5.5.1.ii]{EGA1}. Statement $(3)$ is \cite[Proposition I.2.2.ii]{EGA41}. The separated portion of Statement $(4)$ is \cite[Corollaire 5.5.3]{EGA1} while the quasi-separated portion is \cite[Proposition I.2.2iii]{EGA41}. Statment $(5)$ is \cite[Proposition 5.5.1.v]{EGA1}. Statement $(6)$ is \cite[Proposition I.2.2.v]{EGA41}. Finally, the fact that $f$ is affine then $f$ is separated is exactly \cite[Proposition I.2.4]{EGA2}. Thus the entirety of Statement $(7)$ follows from applying Statement $(1)$ to the observation that affine morphisms $f$ are separated.
\end{proof}

This proposition allows us to deduce that the tangent scheme $T_{X/S}$ of $X$ is itself quasi-separated over any $S$-scheme $S$. We will also be able to deduce that for any $n \in \N$ that both the iterated powers $T^n_{X/S}$ and pullbacks $T_{n}X$ are quasi-separated over $X$. As such, let us introduce the category $\qsSch_{/S}$ of quasi-separated $S$-schemes, before proceeding to prove these lemmas and then proving that $\qsSch_{/S}$ is a strict tangent subcategory of $\Sch_{/S}$.

\begin{definition}\label{Defn: Cat of qs schemes}
Let $S$ be a scheme. We define the category $\qsSch_{/S}$ to be the category:
\begin{itemize}
	\item Objects: $S$-schemes $f:X \to S$ for which $f$ is quasi-separated.
	\item Morphisms: As in $\Sch_{/S}$.
	\item Composition and Identities: As in $\Sch_{/S}$.
\end{itemize}
\end{definition}
\begin{remark}\label{Remark: Section Gabriel: All maps in qsSch are qs}
It may seem strange that at a first glance, the morphisms in $\qsSch_{/S}$ are not required to be quasi-separated. However, if we have quasi-separated $S$-schemes $f:X \to S$ and $h:Y \to S$ together with a morphism of schemes $g:X \to Y$ rendering
\[
\begin{tikzcd}
X \ar[dr, swap]{}{f} \ar[r]{}{g} & Y \ar[d]{}{h} \\
 & S
\end{tikzcd}
\]
then by Statement $(6)$ of Proposition \ref{Prop: Section Gabriel: Properties of separation stuffs} it follows that $g$ is quasi-separated. As such, for any quasi-separated $S$-schemes $X$ and $Y$, we get that $\qsSch_{/S}(X,Y) = \Sch_{/S}(X,Y)$ and hence that $\qsSch_{/S}$ is a full subcategory of $\Sch_{/S}$.
\end{remark}

\begin{lemma}\label{Lemma: Section Gabriel: Tangent scheme is qs over scheme}
Let $f:X \to S$ be a morphism of schemes. Then the bundle projection $p_{X}:T_{X/S} \to X$ is separated and in particular quasi-separated. Finally, if $f:X \to S$ is quasi-separated then so too is $f \circ p_X:T_{X/S} \to S$.
\end{lemma}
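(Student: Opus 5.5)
The plan is to reduce everything to Proposition \ref{Prop: Section Gabriel: Properties of separation stuffs} by first observing that $p_X$ is an affine morphism. By Proposition \ref{Prop: Section Tangent: Projection and zero are affine}, the bundle projection $p_X:T_{X/S} \to X$ is affine. This is because it is defined as $p_X = \RelSpec{X}(\rho_X)$, which is the structure map of the relative spectrum $T_{X/S} = \RelSpec{X}(\RelSym{\Ocal_X}(\Kah{X}{S}))$ over $X$. The affineness of such structure maps is exactly the content of Proposition \ref{Prop: Section Scheme Background: Relspec functor} and Corollary \ref{Cor: Section Background Scheme: Equiv of Cats for Relspec}.

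With affineness in hand, Statement $(7)$ of Proposition \ref{Prop: Section Gabriel: Properties of separation stuffs} says that affine morphisms are separated, so $p_X$ is separated. Statement $(1)$ of the same proposition (separated implies quasi-separated) then gives that $p_X$ is quasi-separated. This disposes of the first two claims.

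For the final claim, assume $f:X \to S$ is quasi-separated. The structure map of $T_{X/S}$ as an $S$-scheme is the composite $f \circ p_X$. Both $p_X$ (just shown) and $f$ (by hypothesis) are quasi-separated, so Statement $(3)$ of Proposition \ref{Prop: Section Gabriel: Properties of separation stuffs} (stability of quasi-separatedness under composition) shows that $f \circ p_X$ is quasi-separated.

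There is no serious obstacle; the only point needing care is the identification of $p_X$ with the structure map of the relative spectrum. One should check that the composite $\Ocal_X \cong \RelSym{\Ocal_X}(0) \to \RelSym{\Ocal_X}(\Kah{X}{S})$ is the unit map of the $\Ocal_X$-algebra. Once that holds, $\RelSpec{X}$ of it is the canonical affine map to $X$, since $\RelSpec{X}(\Ocal_X) \cong X$.
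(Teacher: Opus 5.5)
Your proposal is correct and follows the paper's proof essentially verbatim: affineness of $p_X$ (Proposition \ref{Prop: Section Tangent: Projection and zero are affine}), then Statements $(7)$, $(1)$, and $(3)$ of Proposition \ref{Prop: Section Gabriel: Properties of separation stuffs}. The point you flag as needing care is automatic: $\Ocal_X$ is initial in $\QCoh(X,\CAlg{\Ocal_X})$, so $\rho_X$ is necessarily the unit map and $p_X$ is the structure morphism of $T_{X/S}$ over $X$.
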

\begin{proof}
Because the morphism $p_{X}$ is affine, by Statement $(7)$ of Proposition \ref{Prop: Section Gabriel: Properties of separation stuffs} we know that $p_X$ is separated. Thus by Statement $(1)$ of Proposition \ref{Prop: Section Gabriel: Properties of separation stuffs} the map $p_X$ is quasi-separated. The final statement of the lemma follows from Statement $(3)$ of Proposition \ref{Prop: Section Gabriel: Properties of separation stuffs}.
\end{proof}
\begin{lemma}\label{Lemma: Section Gabriel: Tangent scheme pullbacks is qs over scheme}
Let $f:X \to S$ be a morphism of schemes and let $n \in \N$. Then each iterated pullback $T_{n}X$ is separated over $X$. In particular, if $f$ is quasi-separated then each $T_nX \to S$ is quasi-separated as well.
\end{lemma}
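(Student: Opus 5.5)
We argue that the structure map $T_nX \to X$ is affine, and then invoke Statement $(7)$ of Proposition \ref{Prop: Section Gabriel: Properties of separation stuffs}. The base cases are immediate. For $n = 0$ we have $T_0X = X$, and the structure map is $\id_X$, which is affine. For $n = 1$ the structure map is $p_X$, which is affine by Proposition \ref{Prop: Section Tangent: Projection and zero are affine}.

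For general $n$, we first observe that $T_nX$ is the $n$-fold wide pullback of copies of $p_X$ over $X$. All of these maps lie in $\mathbf{Aff}_{/X}$. By Proposition \ref{Prop: Section Background Scheme: Incl of Aff is cont}, pullbacks in $\mathbf{Aff}_{/X}$ agree with those in $\Sch_{/X}$, and hence with those in $\Sch_{/S}$. Concretely,
\[
T_nX \cong \RelSpec{X}\left(\RelSym{\Ocal_X}(\Kah{X}{S})^{\otimes_{\Ocal_X} n}\right),
\]
so $T_nX \to X$ is affine. Alternatively, one can argue by induction without using $\RelSpec{X}$. Write $T_{n+1}X \cong T_nX \times_X T_{X/S}$. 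The projection $T_{n+1}X \to T_nX$ is a base change of the affine map $p_X$, and hence is affine by \cite[Proposition 1.5.1]{EGA2}. The composite $T_{n+1}X \to T_nX \to X$ is then affine by Lemma \ref{Lemma: Background Scheme: Affine morphisms stable under composition}.

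Once the structure map $T_nX \to X$ is known to be affine, Statement $(7)$ of Proposition \ref{Prop: Section Gabriel: Properties of separation stuffs} shows that it is separated, and therefore quasi-separated by Statement $(1)$. For the final claim, suppose that $f$ is quasi-separated. The composite $T_nX \to X \xrightarrow{f} S$ is then a composite of quasi-separated morphisms, so it is quasi-separated by Statement $(3)$.

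The only step requiring any care is the identification of the wide pullback computed in $\Sch_{/S}$ with the one computed in $\mathbf{Aff}_{/X}$. Pullbacks in the slice category over $X$ are computed exactly as in $\Sch_{/S}$, so the inductive base-change argument avoids this subtlety altogether. I expect no real obstacle.
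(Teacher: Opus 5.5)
Your proposal is correct and follows essentially the same route as the paper: $p_X$ is affine, affine morphisms are stable under base change and composition, affine implies separated implies quasi-separated, and one then composes with $f$. The only difference is that you spell out the induction showing that $T_nX \to X$ is affine, which the paper compresses into the single assertion that the wide-pullback projections $\pi_i$ are affine.
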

\begin{proof}
Because each iterated pullback $T_nX$ is the limit of the diagram
\[
\begin{tikzcd}
 & T_nX \ar[dr]{}{\pi_{n-1}} \ar[d]{}{\pi_i} \ar[dl, swap]{}{\pi_0} \\
T_{X/S} \ar[dr, swap]{}{p_X} & \underbrace{\cdots}_{n-2\,\text{terms}} \ar[d]{}{p_X} & T_{X/S} \ar[dl]{}{p_X} \\
 & X
\end{tikzcd}
\]
and because each morphism $p_X$ is affine, so too are all the projections $\pi_i$ for $0\leq i \leq n-1$. Thus each map $\pi_i$ is separated and hence quasi-separated. From here the fact that the composition $T_{n}X \to S$ is quasi-separated is immediate.
\end{proof}
\begin{lemma}\label{Lemma: Section Gabriel: Powers of tangents are affine}
Let $f:X \to S$ be a morphism of schemes and let $n \in \N$. Then each power of the tangent of $X$, $T^n_{X/S}$, is quasi-separated over $X$. In particular, if $f:X \to S$ is quasi-separated then so too is $T_{X/S}^n \to S$.
\end{lemma}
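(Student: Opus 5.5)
We argue by induction on $n$, showing that the structure map $T^n_{X/S} \to X$ is a composite of affine morphisms. Affine morphisms compose (Lemma \ref{Lemma: Background Scheme: Affine morphisms stable under composition}), so this composite is affine. By Statement $(7)$ of Proposition \ref{Prop: Section Gabriel: Properties of separation stuffs} it is therefore separated, hence quasi-separated.

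The base cases are immediate. For $n = 0$ the map is $\id_X$, which is affine. For $n = 1$ the map is $p_X$, which is affine by Proposition \ref{Prop: Section Tangent: Projection and zero are affine}.

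For the inductive step, set $Y := T^{n}_{X/S}$, regarded as an $S$-scheme via $f \circ q_n$, where $q_n:T^n_{X/S} \to X$ is the composite of projections. Then $T^{n+1}_{X/S} = T_{Y/S}$. Applying Proposition \ref{Prop: Section Tangent: Projection and zero are affine} to the $S$-scheme $Y$ shows that $p_Y:T_{Y/S} \to Y$ is affine. Hence
\[
q_{n+1} := q_n \circ p_{T^n_{X/S}} : T^{n+1}_{X/S} \to X
\]
is affine, since it is a composite of the affine map $p_{T^n_{X/S}}$ with $q_n$, which is affine by the inductive hypothesis. The one point to check is that Proposition \ref{Prop: Section Tangent: Projection and zero are affine} applies to arbitrary $S$-schemes $Y$, not merely to the original $X$. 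It does: it holds for any $S$-scheme, because $p_Y = \RelSpec{Y}(\rho_Y)$ is built via the relative spectrum over $Y$. So no real obstacle arises.

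For the final claim, suppose $f$ is quasi-separated. Then $T^n_{X/S} \to S$ equals $f \circ q_n$, a composite of two quasi-separated morphisms. It is therefore quasi-separated by Statement $(3)$ of Proposition \ref{Prop: Section Gabriel: Properties of separation stuffs}, exactly as in the proofs of Lemmas \ref{Lemma: Section Gabriel: Tangent scheme is qs over scheme} and \ref{Lemma: Section Gabriel: Tangent scheme pullbacks is qs over scheme}.
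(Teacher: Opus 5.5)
Your proof is correct and follows the paper's argument: you induct on $n$ by composing the affine projections $p_{T^{k}_{X/S}}$, conclude that $T^n_{X/S}\to X$ is affine and hence (quasi-)separated, and then compose with $f$. The paper additionally checks that the composite $T^n_{X/S}\to X$ does not depend on which whiskerings of $p$ are composed; your fixed choice $q_n$ makes that check unnecessary, since $q_n$ is exactly the path that also defines the $S$-structure on $T^n_{X/S}$.
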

\begin{proof}
Because each morphism $p_{T^{n-1}X}:T^n_{X/S} \to T^{n-1}_{X/S}$  is affine, a routine induction gives that the ultimate map $T^n_{X/S} \to X$ is affine. Because affine morphisms are separated and hence quasi-separated, the map $T^n_{X/S} \to X$ is separated and hence quasi-separated as well. Finally, that the map $T^n_{X/S} \to X$ is uniquely defined (and in particular does not depend on any of the different whiskerings of $p$ by various powers of $T$), note that as
\[
\begin{tikzcd}
T_{X/S}^2 \ar[rr, shift left = 1]{}{(p \ast T)_X} \ar[rr, shift right = 1, swap]{}{(T \ast p)_X} & & T_{X/S} \ar[r]{}{p_X} & X
\end{tikzcd}
\] 
commutes, a routine induction shows that the induced (truncated) \v{C}ech nerve of the functor $T$ and the various whiskerings of the transformations $p$ displayed below
\[
\begin{tikzcd}
T^n_{X/S} \ar[rrrr, shift left = 10]{}{(p \ast T^{n-1})_X} \ar[rrrr, shift left = 5]{}[description]{(T \ast p \ast T^{n-2})_X} \ar[rrrr]{}[description]{\vdots} \ar[rrrr, shift right = 5]{}[description]{(T^{n-2} \ast p \ast T)_X} \ar[rrrr, shift right = 10, swap]{}{(T^{n-1} \ast p)_X} & & & & T^{n-1}_{X/S} \ar[rr, shift left = 5]{}{(p \ast T^{n-2})_X} \ar[rr, shift right = 5, swap]{}{(T^{n-2} \ast p)_X} \ar[rr]{}[description]{\vdots} & & \cdots \ar[rr, shift left = 1]{}{(p \ast T)_X} \ar[rr, shift right = 1, swap]{}{(T \ast p)_X}  & & T_{X/S} \ar[r]{}{p_X} & X
\end{tikzcd}
\]
has the property that the total composite $T^n_{X/S} \to X$ is independent of the path chosen. Thus, the composite $T_{X/S}^n \to S$ is uniquely defined and has the property that if $X \to S$ is quasi-separated, so too is the composite map $T^n_{X/S} \to S$.
\end{proof}

We now are nearly ready to show that the category $\qsSch_{/S}$ is a strict tangent subcategory of $\Sch_{/S}$; for this it will be helpful to keep Definition \ref{Defn: Tangent morphism} in mind. However, before proceeding we present two examples (and one remark) below which indicate why we are emphasizing the word ``strict'' when we say that $\qsSch_{/S}$ is a tangent subcategory of $\Sch_{/S}$ --- Example \ref{Example: Inclusion of varieties is not strong tangent mor} shows that a non-isomorphism subcategory inclusion need not induce a strong tangent morphism while Example \ref{Example: Identity functor not strong tangent morphism} shows that that even if a tangent morphism takes the form $A = (\id_{\Cscr},\alpha)$ it need not be the case that $A$ is strong.

\begin{example}\label{Example: Inclusion of varieties is not strong tangent mor}
	Recall that if we have a morphism $f:S \to S^{\prime}$ of schemes, there is a subcategory inclusion $f_{\ast}:\Sch_{/S} \to \Sch_{/S^{\prime}}$ of  $S$-schemes to $S^{\prime}$-schemes given by:
	\[
	\begin{tikzcd}
		X \ar[d]{}{\nu} \\
		S
	\end{tikzcd} \mapsto \begin{tikzcd}
		X \ar[d]{}{\nu} \ar[dr] \\
		S \ar[r, swap]{}{f} & S^{\prime}
	\end{tikzcd}
	\]
	We claim that the inclusion $\Sch_{/S} \to \Sch_{/S^{\prime}}$ is not always a strong tangent morphism. To see this we let $p \in \N$ be an integer prime and set $S = \Spec \Fbb_p(t^{1/p})$ and $S^{\prime} = \Spec \Fbb_p(t)$ so that the map $S \to S^{\prime}$ witnesses the alebraic field extension $\Fbb_p(t) \to \Fbb_p(t^{1/p})$. Now consider that for any finite type $\Fbb_p(t^{1/p})$-algebra $A$, if
	\[
	A \cong \frac{\Fbb_p(t^{1/p})[x_0, \cdots, x_n]}{(f_0, \cdots, f_m)}
	\]
	then 
	\[
	\Sym{A}\left(\Kah{A}{\Fbb_p(t^{1/p})}\right) \cong \frac{\Fbb_p(t^{1/p})[x_0, \cdots, x_n, \mathrm{d}x_0, \cdots, \mathrm{d}x_n]}{(f_0, \cdots, f_m, \mathrm{d}f_0, \cdots, \mathrm{d}f_m)}.
	\]
	On the other hand, such an $\Fbb_p(t^{1/p})$-algebra has the description
	\[
	A \cong \frac{\Fbb_p(t^{1/p})[x_0, \cdots, x_n]}{(f_0, \cdots, f_m)} \cong \frac{\big(\Fbb_p(t)[y]/(y^p-t)\big)[x_0, \cdots, x_n]}{(f_0, \cdots, f_m)} \cong \frac{\Fbb_p(t)[y,x_0, \cdots, x_n]}{(y^p-t, f_0, \cdots, f_m)}
	\]
	as a $\Fbb_p(t)$-algebra\footnote{With the convention that under this isomorphism, if any of the $f_j$ had coefficients in $\Fbb_p(t^{1/p})$ which are not in $\Fbb_p(t)$, we replace them with their representative coset in $\Fbb_p[y]/(y^p-t)$.} In this case, as $\mathrm{d}(y^p-t) = py^{p-1}\mathrm{d}y = 0$, we get
	\begin{align*}
		\Sym{A}\left(\Kah{A}{\Fbb_p(t)}\right) &\cong \frac{\Fbb_p(t)[y, \mathrm{d}y, x_i, \mathrm{d}x_i : 0 \leq i \leq n]}{(y^p-t,\mathrm{d}(y^p-t),f_i, \mathrm{d}f_i : 0 \leq i \leq m)} \cong \frac{\Fbb_p(t)[y,\mathrm{d}y, x_i,\mathrm{d}x_i:0 \leq i\leq n]}{(y^p-t,f_j,\mathrm{d}f_j:0 \leq j \leq m)} \\
		&\cong \frac{\Fbb_p(t^{1/p})[\mathrm{d}y,x_i,\mathrm{d}x_i:0 \leq i \leq n]}{(f_j,\mathrm{d}f_j:0 \leq j \leq m)}.
	\end{align*}
	By comparing presentations as $\Fbb_p(t^{1/p})$-algebras, we see that there is a chain of isomorphisms
	\begin{align*}
		\Sym{A}\left(\Kah{A}{\Fbb_p(t^{1/p})}\right) &\cong \frac{\Fbb_p(t^{1/p})[x_0, \cdots, x_n, \mathrm{d}x_0, \cdots, \mathrm{d}x_n]}{(f_0, \cdots, f_m, \mathrm{d}f_0, \cdots, \mathrm{d}f_m)} \\
		&\cong \left(\frac{\Fbb_p(t^{1/p})[x_0, \cdots, x_n, \mathrm{d}x_0, \cdots, \mathrm{d}x_n]}{(f_0, \cdots, f_m, \mathrm{d}f_0, \cdots, \mathrm{d}f_m)}\right) \otimes_{\Fbb_{p}(t^{1/p})} \frac{\Fbb_p(t^{1/p})[\mathrm{d}y]}{(\mathrm{d}y)} \ \\
		&\cong \frac{\Fbb_p(t^{1/p})[\mathrm{d}y, x_0, \cdots, x_n, \mathrm{d}x_0, \cdots, \mathrm{d}x_n]}{(\mathrm{d}y, f_0, \cdots, f_m, \mathrm{d}f_0, \cdots, \mathrm{d}f_m)} \\
		&\cong \left(\frac{\Fbb_p(t^{1/p})[\mathrm{d}y, x_0, \cdots, x_n, \mathrm{d}x_0, \cdots, \mathrm{d}x_n]}{(f_0, \cdots, f_m, \mathrm{d}f_0, \cdots, \mathrm{d}f_m)}\right)/(\mathrm{d}y)
	\end{align*}
	and so $\Sym{A}(\Kah{A}{\Fbb_p(t^{1/p})})$ is a quotient algebra of $\Sym{A}(\Kah{A}{\Fbb_p(t)})$. This means that in $\Sch^{\operatorname{f.t.}}_{/\Fbb_{p}(t)}$, the induced maps of affine schemes
	\[
	T_{A/\Fbb_p(t^{1/p})} = \Spec\left(\Sym{A}\Kah{A}{\Fbb_p(t^{1/p})}\right) \longrightarrow \Spec\left(\Sym{A}\Kah{A}{\Fbb_p(t)}\right) = T_{A/\Fbb_p(t)}
	\]
	is a closed immersion which is not an isomorphism. These observations glue and so show that for any finite-type $\Fbb_p(t^{1/p})$-scheme $X$, there is an induced morphism
	\[
	\alpha_X:T_{X/\Fbb_p(t^{1/p})} \to T_{X/\Fbb_p(t)}
	\]
	which is a closed immersion of a codimension one subscheme. A straightforward check shows that the morphisms $\alpha_X$ are natural in $X$ and give rise to a tangent morphism:
	\[
	\begin{tikzcd}
		\Sch_{/\Fbb_p(t^{1/p})} \ar[rrr, ""{name = U}]{}{T_{(-)/\Fbb_p(t^{1/p})}} \ar[d, swap]{}{\operatorname{incl}} &  & &\Sch_{/\Fbb_p(t^{1/p})} \ar[d]{}{\operatorname{incl}} \\
		\Sch_{/\Fbb_p(t)} \ar[rrr, swap, ""{name = D}]{}{T_{(-)/\Fbb_p(t)}} & & & \Sch_{/\Fbb_p(t)}
		\ar[from = U, to = D, Rightarrow, shorten <= 4pt, shorten >= 4pt]{}{\alpha}
	\end{tikzcd}
	\]
	By construction the map $\alpha$ is not an isomorphism, as it fails to be an isomorphism on finite type schemes. This implies that $(\operatorname{incl},\alpha)$ is a non-strong tangent morphism.
	
\end{example}
\begin{remark}\label{Remark: Vertical tan structure}
	The example above is a special case of the following phenomenon. Assume that $\Cscr$ is a tangent category with the property that for all $f:X \to Y$ the pullbacks
	\[
	\begin{tikzcd}
		V_f(X) \ar[r]{}{\iota^f} \ar[d, swap]{}{\iota_f} & Y \ar[d]{}{0_Y} \\
		TX \ar[r, swap]{}{Tf} & TY
	\end{tikzcd}
	\]
	exist and are preserved by all powers $T^m$ of the tangent functor\footnote{The pullbacks $V_{f}(X)$, for morphisms $f:X \to Y$, are differential bundles over $X$ and are often called the \emph{relative tangent bundle} (in \cite{GeoffJSDiffBunComAlg}, \cite{JSMeMapFlavoursInTanCats}) or \emph{vertical tangent bundle} (in \cite{PronkVooys}) of $TX$. When the pullback of the cospan $X \xrightarrow{f} Y \xleftarrow{p_Y} TY$, $X \times_Y TY$, the map $\iota_f:V_f(X) \to TX$ is the kernel in $\DBun(X)$ of the pairing map $\theta_f = \langle p_X,Tf\rangle:TX \to X \times_Y TY$. For more details about this construction and some of its properties see \cite{JSMeMapFlavoursInTanCats}.}. Then each slice category $\Cscr_{/S}$ has a tangent structure with tangent functor $V_{(-)}:\Cscr_{/S} \to \Cscr_{/S}$ given on objects $f:X \to S$ by $f \mapsto V_{f}(X)$. Now let $f:X \to S$ be a morphism in $\Cscr$ and note once again that there is a subcategory inclusion $f_{\ast}:\Cscr_{/X} \to \Cscr_{\S}$. 
	
	Now let $\varphi:Y \to X$ be an object in $\Cscr_{/X}$. On one hand we have that the tangent functor in $\Cscr_{/X}$ sends $\varphi:Y \to X$ to the vertical tangent bundle $V_{\varphi}(Y)$ while the tangent functor in $\Cscr_{/S}$ sends $f \circ \varphi:Y \to S$ to the vertical tangent bundle $V_{f \circ \varphi}(Y)$. Because of the pullbacks which define each bundle, we find that the outer cell of the diagram below commutes and hence induces a unique map $\alpha_{\varphi}$ rendering the overall diagram
	\[
	\begin{tikzcd}
		V_{\varphi}(Y) \ar[drr, bend left = 20]{}{f \circ \iota^{f}} \ar[ddr, swap, bend right = 20]{}{\iota_{\varphi}} \ar[dr, dashed]{}{\exists!\alpha_{\varphi}} \\
		& V_{f \circ \varphi}(Y) \ar[r]{}{\iota^{f \circ \varphi}} \ar[d, swap]{}{\iota_{f \circ \varphi}} & S \ar[d]{}{0_S} \\
		& TY \ar[r, swap]{}{T(f \circ \varphi)} & TS
	\end{tikzcd}
	\]
	commutative in $\Cscr$. The maps $\alpha_{\varphi}$ assemble to a natural transformation
	\[
	\begin{tikzcd}
		\Cscr_{/X} \ar[r, ""{name = U}]{}{V_{(-)}} \ar[d, swap]{}{f_{\ast}} & \Cscr_{/X} \ar[d]{}{f_{\ast}} \\
		\Cscr_{/S} \ar[r, swap, ""{name = D}]{}{V_{(-)}} & \Cscr_{/S}
		\ar[from = U, to = D, Rightarrow, shorten <= 4pt, shorten >= 4pt]{}{\alpha}
	\end{tikzcd}
	\]
	which makes $(f_{\ast},\alpha)$ into a tangent morphism. However, as we see in Example \ref{Example: Inclusion of varieties is not strong tangent mor} it need not be the case that $\alpha$ is an isomorphism.
\end{remark}
We now provide an example of a non-strong tangent morphism whose underlying functor is the identity functor. This example gives a ``worst-case'' situation indicating that the strength of a tangent morphism fundamentally encodes how one is moving between tangent structures and not just merely moving between categories.
\begin{example}\label{Example: Identity functor not strong tangent morphism}
Let $M$ be a smooth manifold and consider the category $\SMan_{/M}$. We now equip $\SMan_{/M}$ with two tangent structures. First define the tangent structure $\Tbb_M$ on $\SMan_{/M}$ by defining, for any manifold $f:X \to M$,
\[
T(f:X \to M) := TX \to M
\]
and adapting all structure transformations appropriately; details regarding this tangent structure may be found in \cite[Proposition 2.5]{GeoffRobinDiffStruct}. Second define the tangent structure $\Vbb_M$ on $\Sch_{/M}$ by taking the tangent functor to be the relative tangent bundle of Remark \ref{Remark: Vertical tan structure}, i.e.,
\[
V_f(X) := \left\lbrace (x,\mathbf{v}) \in TX \; | \; T_xf(\mathbf{v}) = \mathbf{0} \right\rbrace \cong TX \times_{TM} M
\]
and induce the structure transformations by pulling back appropriately. Details regarding this tangent structure may be found in \cite[Proposition 7.10]{PronkVooys}. Then the first projection $\iota_f$ of the pullback
\[
\begin{tikzcd}
V_f(X) \ar[r]{}{\iota^f} \ar[d, swap]{}{\iota_f} & M \ar[d]{}{0_M} \\
TX \ar[r, swap]{}{Tf} & TM
\end{tikzcd}
\]
is natural in objects $f:X \to M$ of the slice category $\SMan_{/M}$ and induces a tangent morphism $(\id,\iota_{(-)}):(\SMan_{/M},\Vbb_{/M}) \to (\SMan_{/M},\Tbb_{M}).$ Provided that $M$ is sufficiently structured so that there is a morphism of manifolds $f:X \to M$ for which $TX \not\cong V_f(X)$ (for instance, one may take take $M = \R$) then gives that $\iota_f$ need not be a natural isomorphism and hence that $(\id_{\SMan_{/M}},\iota_{(-)})$ is not a strong tangent morphism.
\end{example}

We now show that the category $\qsSch_{/S}$ of quasi-separated $S$-schemes is a strict tangent subcategory of $\Sch_{/S}$.

\begin{proposition}\label{Prop: Section Gabriel: qsSch is tangent sbucat}
Let $S$ be a scheme. The category $\qsSch_{/S}$ of quasi-separated $S$-schemes is a strict tangent subcategory of $\Sch_{/S}$.
\end{proposition}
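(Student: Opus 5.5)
The plan is to show that the tangent structure $(T_{(-)/S},p,0,\operatorname{add},\ell,c)$ of Theorem \ref{Thm: Section Tangents: The Tangent Category of Schemes} restricts to the full subcategory $\qsSch_{/S}$. Once this is done, the inclusion $\operatorname{incl}:\qsSch_{/S}\to\Sch_{/S}$ together with the identity distributor $\id:\operatorname{incl}\circ T\Rightarrow T\circ\operatorname{incl}$ is a strict tangent morphism. Since $\qsSch_{/S}$ is full by Remark \ref{Remark: Section Gabriel: All maps in qsSch are qs}, every component of $p,0,\operatorname{add},\ell,c$ between quasi-separated objects is automatically a morphism of $\qsSch_{/S}$. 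Naturality and all the equational axioms (2)--(5) of Definition \ref{Defn: Section Tangent: Tangent category} are equations between morphisms, so they are inherited verbatim from $\Sch_{/S}$. This leaves two things to check:
\begin{enumerate}
\item the objects $T_{X/S}$, $T^n_{X/S}$, $T_nX$ and their iterates are quasi-separated over $S$;
\item the limits demanded by the axioms (the wide pullbacks $T_nX$, their preservation by $T^m$, and the universality-of-the-vertical-lift equalizer/pullback) exist in $\qsSch_{/S}$ and agree with those computed in $\Sch_{/S}$.
\end{enumerate}

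For the objects, I would invoke Lemma \ref{Lemma: Section Gabriel: Tangent scheme is qs over scheme} for $T_{X/S}$, Lemma \ref{Lemma: Section Gabriel: Powers of tangents are affine} for $T^n_{X/S}$, and Lemma \ref{Lemma: Section Gabriel: Tangent scheme pullbacks is qs over scheme} for $T_nX$. Mixed objects such as $T^m(T_nX)$ or $(T^2)_2X$ are handled by applying these lemmas with $X$ replaced by the quasi-separated scheme already obtained. For instance, $T^m(T_nX)=T^m_{T_nX/S}$ is quasi-separated because $T_nX$ is.

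For the limits, the key step is that a finite limit in $\Sch_{/S}$ of a diagram of quasi-separated $S$-schemes whose legs are affine is again quasi-separated over $S$. In the wide pullback $T_nX$, the projections are base changes of the affine $p_X$. Hence they are affine (\cite[Proposition 1.5.1]{EGA2}), hence separated by Statement (7) of Proposition \ref{Prop: Section Gabriel: Properties of separation stuffs}, and the composite to $S$ is quasi-separated by Statement (3). More generally, for a pullback $A\times_C B$ of quasi-separated $S$-schemes, the projection $A\times_CB\to A$ is a base change of $B\to C$. That map is quasi-separated by Statement (6), so the projection is quasi-separated by Statement (4), and composing with $A\to S$ gives quasi-separatedness over $S$ by Statement (3). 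This handles every pullback and equalizer appearing in axiom (6), since an equalizer is a pullback along a diagonal. Because $\qsSch_{/S}$ is full and the limit object lies in it, the cone remains a limit there. Preservation by $T^m$ is then inherited from Corollary \ref{Cor: Section Tangent: Tangent functor pres limits}, since the limits and the functor coincide with those in $\Sch_{/S}$.

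I expect the main subtlety to be exactly this limit-closure step, together with making sure that \emph{every} limit used in Definition \ref{Defn: Section Tangent: Tangent category} is a finite limit of quasi-separated objects. Everything else is a direct transfer from Theorem \ref{Thm: Section Tangents: The Tangent Category of Schemes}, with strictness holding because $T$ restricts on the nose.
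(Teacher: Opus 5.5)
Your proposal is correct and follows essentially the same route as the paper. Both arguments use the lemmas on $T_{X/S}$, $T^n_{X/S}$ and $T_nX$ to restrict $T$ and its structure transformations to the full subcategory, show that $\qsSch_{/S}$ is closed under finite limits in $\Sch_{/S}$ (so the wide pullbacks, their preservation by $T^m$, and the universality of the vertical lift all transfer), and conclude that $(\operatorname{incl},\id)$ is strict. Your justification of pullback closure via Statements (6), (4) and (3) of Proposition \ref{Prop: Section Gabriel: Properties of separation stuffs} is more explicit than the paper's appeal to Statement (4) alone.
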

\begin{proof}
Begin by noting that by Lemmas \ref{Lemma: Section Gabriel: Tangent scheme is qs over scheme}, \ref{Lemma: Section Gabriel: Tangent scheme pullbacks is qs over scheme}, and \ref{Lemma: Section Gabriel: Powers of tangents are affine} that all the induced structure morphisms $T^m_{X/S} \to S$ and $T_nX \to S$ are quasi-separated. Furthermore, by Statement $(4)$ we know that quasi-separated morphisms are stable under pullback; since the pullbacks of the form $X \times_S Y$ are the products in $\Sch_S$ and the pullbacks $X \times_Z Y$ of quasi-separated $S$-schemes remain quasi-separated, the standard categorical lemma implies that $\qsSch_{/S}$ is finitely complete and that the inclusion functor $\qsSch_{/S} \to \Sch_{/S}$ preserves and reflects finite limits. Combining all these observations imply that there is a factorization
\[
\begin{tikzcd}
\qsSch_{/S} \ar[r]{}{T_{(-)/S}} \ar[d, swap]{}{\operatorname{incl}} & \qsSch_{/S} \ar[d]{}{\operatorname{incl}} \\
\Sch_{/S} \ar[r, swap]{}{T_{(-)/S}} & \Sch_{/S}
\end{tikzcd}
\]
and that each natural transformation $p, 0, \operatorname{add}, \ell,$ and $c$ restrict to natural transformations:
\[
\begin{tikzcd}
\qsSch_{/S} \ar[rr, bend left = 20, ""{name = U}]{}{T} \ar[rr, bend right = 20, swap, ""{name = D}, equals]{}{} & & \qsSch_{/S}
\ar[from = U, to = D, Rightarrow, shorten <= 4pt, shorten >= 4pt]{}{p}
\end{tikzcd}\quad
\begin{tikzcd}
	\qsSch_{/S} \ar[rr, bend left = 20, ""{name = U}, equals]{}{} \ar[rr, bend right = 20, swap, ""{name = D}]{}{T} & & \qsSch_{/S}
	\ar[from = U, to = D, Rightarrow, shorten <= 4pt, shorten >= 4pt]{}{0}
\end{tikzcd}\quad
\begin{tikzcd}
	\qsSch_{/S} \ar[rr, bend left = 20, ""{name = U}]{}{T_2} \ar[rr, bend right = 20, swap, ""{name = D}]{}{T} & & \qsSch_{/S}
	\ar[from = U, to = D, Rightarrow, shorten <= 4pt, shorten >= 4pt]{}{\operatorname{add}}
\end{tikzcd}
\]
\[
\begin{tikzcd}
	\qsSch_{/S} \ar[rr, bend left = 20, ""{name = U}]{}{T} \ar[rr, bend right = 20, swap, ""{name = D}]{}{T^2} & & \qsSch_{/S}
	\ar[from = U, to = D, Rightarrow, shorten <= 4pt, shorten >= 4pt]{}{\ell}
\end{tikzcd}\quad
\begin{tikzcd}
	\qsSch_{/S} \ar[rr, bend left = 20, ""{name = U}]{}{T^2} \ar[rr, bend right = 20, swap, ""{name = D}]{}{T^2} & & \qsSch_{/S}
	\ar[from = U, to = D, Rightarrow, shorten <= 4pt, shorten >= 4pt]{}{c}
\end{tikzcd}
\]
Additionally, the fact that the inclusion $\operatorname{incl}:\qsSch_{/S} \to \Sch_{/S}$ reflects limits and the fact that for any quasi-separated $S$-scheme the diagram
\[
\begin{tikzcd}
T_2X \ar[r]{}{p_X \circ \pi_0} \ar[d, swap]{}{v} & X \ar[d]{}{0_X} \\
T^2X \ar[r, swap]{}{(T \ast p)_X} & TX
\end{tikzcd}
\]
is a pullback in $\Sch_{/S}$ implies that the same is true in $\qsSch_{/S}$. Thus the universality of the vertical lift remains true in $\qsSch_{/S}$ and so $\qsSch_{/S}$ is a tangent functor. Finally the fact that $\qsSch_{/S}$ is a strict tangent subcategory of $\Sch_{/S}$ is immediate from the fact that the diagram
\[
\begin{tikzcd}
	\qsSch_{/S} \ar[r]{}{T_{(-)/S}} \ar[d, swap]{}{\operatorname{incl}} & \qsSch_{/S} \ar[d]{}{\operatorname{incl}} \\
	\Sch_{/S} \ar[r, swap]{}{T_{(-)/S}} & \Sch_{/S}
\end{tikzcd}
\]
commutes strictly, so $(\operatorname{incl},\id)$ is a tangent morphism.
\end{proof}

Now that we have proved that $\qsSch_{/S}$ is a tangent category, we must prove that $\DBun_{\qsSch}(X) = \DBun_{\Sch}(X)$ for any quasi-separated $S$-scheme. In particular, this will allow us to deduce that there is an equivalence $\DBun_{\qsSch_{/S}}(X) \simeq \QCoh(X)^{\op}$, as in the non-quasi-separated case.

\begin{proposition}\label{Prop: Section Gabriel: DBun in qs is Dbun in sch}
Let $S$ be a scheme. Then for any quasi-separated scheme $X$, there is an equality of categories
\[
\DBun_{\qsSch}(X) = \DBun_{\Sch}(X).
\]
That is, $\mathsf{q}$ is a differential bundle in $\qsSch_{/S}$ if and only if $\mathsf{q}$ is a differential bundle in $\Sch_{/S}$.
\end{proposition}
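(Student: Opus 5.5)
The plan is to compare the two definitions of differential bundle directly, using that $\qsSch_{/S}$ is a full subcategory of $\Sch_{/S}$ (Remark \ref{Remark: Section Gabriel: All maps in qsSch are qs}), closed under finite limits, and that the inclusion is a strict tangent morphism which preserves and reflects finite limits (Proposition \ref{Prop: Section Gabriel: qsSch is tangent sbucat}). Every axiom in Definition \ref{Defn: Section Tangent: Differential Bundle} is either an equation between composites of maps, or the existence and preservation of certain finite limits. Equations are insensitive to which category we compute in, because composition, $T$, $p$, $0$, $\operatorname{add}$, $\ell$ and $c$ agree on the nose. So the only real content lies in comparing objects and limits.

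The direction $\DBun_{\qsSch}(X) \subseteq \DBun_{\Sch}(X)$ should go as follows. If $\mathsf{q}$ is a differential bundle in $\qsSch_{/S}$, then the wide pullbacks $E_n$, their images under $T^m$, and the universality pullback are limits in $\qsSch_{/S}$. Since the inclusion preserves finite limits, they remain limits in $\Sch_{/S}$, and the preservation by $T^m$ transfers because $T$ commutes strictly with the inclusion.

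The substantive direction is the converse. Suppose $\mathsf{q} = (q,\zeta,\sigma,\lambda)$ is a differential bundle over $X$ in $\Sch_{/S}$, with $X \to S$ quasi-separated. The key step is to show that $E \to S$ is quasi-separated. I would first use the zero section: $q \circ \zeta = \id_X$, so $\zeta$ is a section of $q$. That alone does not force $q$ to be quasi-separated, however, so I would instead use the universal pullback axiom. Taking $m=0$, it expresses $E_2$ as $X \times_{TX} TE$ along $0_X$ and $Tq$. The cleaner route, though, is the Cruttwell--Lemay equivalence \cite[Theorem 4.28]{GeoffJSDiffBunComAlg}. It says every differential bundle over $X$ in $\Sch_{/S}$ is isomorphic to $\RelSpec{X}(\RelSym{\Ocal_X}(\Fscr))$ for some quasi-coherent $\Fscr$, so $q$ is affine. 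By Proposition \ref{Prop: Section Gabriel: Properties of separation stuffs}(7), $q$ is then separated and hence quasi-separated, and by (3) so is $E \to X \to S$. Thus $E$ is an object of $\qsSch_{/S}$, and likewise $TE$, each $E_n$, and each $T^mE_n$ are objects by Lemmas \ref{Lemma: Section Gabriel: Tangent scheme is qs over scheme}--\ref{Lemma: Section Gabriel: Powers of tangents are affine} together with stability of quasi-separatedness under pullback.

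Once all the relevant objects lie in $\qsSch_{/S}$, the limits in $\Sch_{/S}$ defining $E_n$, $T^mE_n$ and the universality square reflect to limits in $\qsSch_{/S}$, and the remaining equational axioms hold verbatim. So $\mathsf{q}$ is a differential bundle in $\qsSch_{/S}$. For morphisms, fullness of the inclusion and the strict equality of $T$ give that linear bundle maps coincide, which yields equality of categories. The main obstacle is the step showing that $E$ is quasi-separated, that is, that bundles in $\Sch_{/S}$ have affine projection. I would cite \cite[Theorem 4.28]{GeoffJSDiffBunComAlg} for it rather than derive it from the axioms.
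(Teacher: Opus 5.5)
Your proposal is correct and follows essentially the same route as the paper. The forward inclusion comes from $\qsSch_{/S}$ being a full, finite-limit-closed, strict tangent subcategory. The converse reduces to showing that $q:E \to X$ is affine, then applying Statements (7) and (3) of Proposition \ref{Prop: Section Gabriel: Properties of separation stuffs}, with fullness handling morphisms. The only difference is a citation: the paper takes affineness directly from \cite[Proposition 4.25]{GeoffJSDiffBunComAlg}, while you deduce it from the equivalence of \cite[Theorem 4.28]{GeoffJSDiffBunComAlg}; your explicit treatment of reflecting $E_n$, $T^mE_n$ and the universality square is more detailed than the paper's ``arguing similarly.''
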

\begin{proof}
The fact that if $\mathsf{q}$ is a differential bundle in $\qsSch_{/S}$ it remains the case that $\mathsf{q}$ is a differential bundle in $\Sch_{/S}$ is immediate from the fact that $\qsSch_{/S}$ is a strict tangent subcategory of $\Sch_{/S}$; as such, we need only prove that every differential bundle $\mathsf{q}$ in $\Sch_{/S}$ over a quasi-separated $S$-scheme is in turn a differential bundle in $\qsSch_{/S}$. Let $X$ be a quasi-separated $S$-scheme and assume that
\[
\mathsf{q} = \left(\begin{tikzcd}
	E \ar[d]{}{q} \\ X
\end{tikzcd}, \begin{tikzcd}
X \ar[d]{}{\zeta} \\
E
\end{tikzcd},
\begin{tikzcd}
E_2 \ar[d]{}{\sigma} \\
E
\end{tikzcd},
\begin{tikzcd}
E \ar[d]{}{\lambda} \\
TE
\end{tikzcd}\right)
\]
is a differential bundle over $X$ in $\Sch_{/S}$. By \cite[Proposition 4.25]{GeoffJSDiffBunComAlg} the projection $q:E \to X$ is affine and hence quasi-separated by Statement $(7)$ of Proposition \ref{Prop: Section Gabriel: Properties of separation stuffs}. But then the map $E \to S$ is quasi-separated, as
\[
\begin{tikzcd}
E \ar[r]{}{q} \ar[dr] & X \ar[d]{}{} \\
 & S
\end{tikzcd}
\]
commutes and the composition of quasi-separated maps is again quasi-separated by Statement $(3)$ of Proposition \ref{Prop: Section Gabriel: Properties of separation stuffs}. Arguing similarly as in the proof of Proposition \ref{Prop: Section Gabriel: qsSch is tangent sbucat} shows that $E_2, TE, \zeta, \sigma,$ and $\lambda$ are all morphisms in $\qsSch_{/S}$ and hence that $\mathsf{q}$ is a differential bundle in $\qsSch_{/S}$. While this only shows an object-wise equality
\[
\DBun_{\qsSch}(X)_0 = \DBun_{\Sch}(X)_0
\]
the fact that $\qsSch_{/S}$ is a full subcategory of $\Sch_{/S}$ allows us to complete the proof\footnote{If $\varphi:\mathsf{q} \to \mathsf{r}$ is a morphism of differential bundles in $\Sch_{/S}$ over a quasi-separated $S$-scheme, then in particular $\varphi:E \to F$ is a morphism in $\Sch_{/S}$. But as the projections $q:E \to X$ and $r:F \to X$ are quasi-separated, so too is $\varphi$.}.
\end{proof}
\begin{corollary}
For any scheme $S$ and for any quasi-separated $S$-scheme $X$, there is an equivalence of categories
\[
\DBun_{\qsSch}(X) \simeq \QCoh(X)^{\op}.
\]
\end{corollary}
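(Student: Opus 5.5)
The plan is to chain together two equalities/equivalences already available. By Proposition \ref{Prop: Section Gabriel: DBun in qs is Dbun in sch}, for any quasi-separated $S$-scheme $X$ we have an equality of categories $\DBun_{\qsSch}(X) = \DBun_{\Sch}(X)$. It then suffices to produce an equivalence $\DBun_{\Sch}(X) \simeq \QCoh(X)^{\op}$, which is exactly the content of \cite[Theorem 4.28]{GeoffJSDiffBunComAlg} recalled in the introduction: for any $S$-scheme $X$, the category of differential bundles over $X$ in $\Sch_{/S}$ is opposite-equivalent to $\QCoh(X)$. Composing the equality with this equivalence gives the claim.

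To make the equivalence concrete, and to check that it lands correctly on the quasi-separated side, I will recall the functors. In one direction, a quasi-coherent sheaf $\Fscr$ is sent to the bundle $\mathbb{V}(\Fscr) = \RelSpec{X}(\RelSym{\Ocal_X}(\Fscr)) \to X$. Its monoid structure comes from Corollary \ref{Cor: Section Background Scheme: Relative Spec of Relative Sym is functor into Ab Sch}, and its lift $\lambda$ is obtained affine-locally from the affine differential bundle structure on $\Sym{A}(M)$. In the other direction, a differential bundle $q:E \to X$ is affine by \cite[Proposition 4.25]{GeoffJSDiffBunComAlg}. Under Corollary \ref{Cor: Section Background Scheme: Equiv of Cats for Relspec}, it therefore corresponds to a quasi-coherent algebra $q_\ast\Ocal_E$, and one extracts from it the sheaf of ``linear'' sections using $\lambda$.

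The morphisms are linear morphisms of differential bundles. By the affine example (linear maps correspond to maps of the form $\Sym{A}(g)$), together with gluing via Theorem \ref{Thm: Section Background Scheme: QCoh is a pseudolimit}, these correspond contravariantly to maps of quasi-coherent sheaves. This contravariance is the source of the $(-)^{\op}$.

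The only point needing care is that the cited equivalence is stated in $\Sch_{/S}$. Each $\mathbb{V}(\Fscr)$ is affine over $X$, hence quasi-separated over $S$ by Proposition \ref{Prop: Section Gabriel: Properties of separation stuffs}(3),(7). So the essential image already lies in $\qsSch_{/S}$, and since $\qsSch_{/S}$ is full (Remark \ref{Remark: Section Gabriel: All maps in qsSch are qs}), nothing is lost when restricting. I expect this to be the main, if mild, obstacle. It is in any case handled by Proposition \ref{Prop: Section Gabriel: DBun in qs is Dbun in sch}, so the corollary follows by direct composition.
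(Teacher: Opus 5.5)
Your proposal is correct and matches the paper's proof. The paper also composes the equality $\DBun_{\qsSch}(X) = \DBun_{\Sch}(X)$ of Proposition~\ref{Prop: Section Gabriel: DBun in qs is Dbun in sch} with the Cruttwell--Lemay equivalence $\DBun_{\Sch}(X) \simeq \QCoh(X)^{\op}$; your recollection of the functors and your check that $\mathbb{V}(\Fscr)$ lies in $\qsSch_{/S}$ are harmless but unnecessary, since that proposition already handles the restriction.
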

\begin{proof}
This is immediate from the equality $\DBun_{\Sch}(X) = \DBun_{\qsSch}(X)$ and the corresponding equivalence $\DBun_{\Sch}(X) \simeq \QCoh(X)^{\op}$ of \cite{GeoffJSDiffBunComAlg}.
\end{proof}

With us finally having proved that $\qsSch_{/S}$ is a strict tangent subcategory of $\Sch_{/S}$ and that there is an equality of differential bundle categories $\DBun_{\qsSch}(X) = \DBun_{\Sch}(X)$ for quasi-separated schemes $X$, we can prove the tangent categorical framing of the Reconstruction Theorem. That is, we can prove that for any two quasi-separated schemes $X$ and $Y$, $X \cong Y$ if and only if $\DBun(X) \simeq \DBun(Y)$.

\begin{Theorem}[Tangent Categorical Reconstruction Theorem]\label{Thm: Tan Cat Recon Thm}
Let $X$ and $Y$ be quasi-separated schemes. Then there is an equivalence of categories of differential bundles
\[
\DBun(X) \simeq \DBun(Y)
\]
if and only if there is an isomorphism $X \cong Y$ of schemes.
\end{Theorem}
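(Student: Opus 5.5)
The plan is to reduce the statement to the Gabriel--Rosenberg Reconstruction Theorem (Theorem \ref{Thm: Reconstruction Theorem}) by passing through the equivalence between differential bundles and quasi-coherent sheaves. First fix the ambient tangent category. Regard $X$ and $Y$ as $\Spec \Z$-schemes, that is, work in $\Sch = \Sch_{/\Spec\Z}$ or in its strict tangent subcategory $\qsSch_{/\Spec\Z}$. By Proposition \ref{Prop: Section Gabriel: DBun in qs is Dbun in sch}, the two choices yield literally the same category $\DBun(X)$ for quasi-separated $X$, so the statement does not depend on which is meant. The Corollary following that proposition then gives equivalences
\[
\DBun(X) \simeq \QCoh(X)^{\op}, \qquad \DBun(Y) \simeq \QCoh(Y)^{\op},
\]
coming from \cite[Theorem 4.28]{GeoffJSDiffBunComAlg}.

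For the direction ($\Leftarrow$), suppose $\varphi: X \to Y$ is an isomorphism. It is an isomorphism in $\Sch_{/\Spec\Z}$, since every scheme map lies over $\Spec\Z$. Postcomposition with $\varphi$ and $\varphi^{-1}$ sends differential bundles over $X$ to differential bundles over $Y$ and back: the tangent structure is preserved by isomorphisms, and a morphism $(g, \id_X)$ of bundles over $X$ becomes $(g, \id_Y)$ over $Y$. This gives an isomorphism of categories $\DBun(X) \cong \DBun(Y)$. Alternatively, one can use $\varphi^{\ast}: \QCoh(Y) \simeq \QCoh(X)$ together with the equivalences displayed above.

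For the direction ($\Rightarrow$), suppose $F: \DBun(X) \simeq \DBun(Y)$. Composing with the displayed equivalences gives
\[
\QCoh(X)^{\op} \simeq \DBun(X) \simeq \DBun(Y) \simeq \QCoh(Y)^{\op}.
\]
Taking opposites, an equivalence $\Ascr^{\op} \simeq \Bscr^{\op}$ is the same data as an equivalence $\Ascr \simeq \Bscr$, so we obtain $\QCoh(X) \simeq \QCoh(Y)$. Theorem \ref{Thm: Reconstruction Theorem} then yields $X \cong Y$.

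The main obstacle is not the formal chain but making sure the input equivalence $\DBun(X) \simeq \QCoh(X)^{\op}$ is genuinely available in the setting of the statement, namely absolute quasi-separated schemes with linear bundle morphisms. Here I would check two things explicitly. First, the cited result of Cruttwell--Lemay is stated for $S$-schemes with an arbitrary base, so it applies with $S = \Spec\Z$. Second, $\DBun$ means $\DBun_{\operatorname{lin}}$, which is exactly the category they prove to be opposite-equivalent to $\QCoh(X)$. A secondary subtlety is that Gabriel--Rosenberg needs only an abstract equivalence of categories, not a $\otimes$-compatible or $\Ocal$-linear one; the equivalence we construct has no extra structure, so it is important to invoke the version in \cite{RosenbergSpectra}, \cite{BrandenbergReconstruction} that reconstructs $X$ from the bare category $\QCoh(X)$ via its Gabriel spectrum.
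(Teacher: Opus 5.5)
Your proposal is correct and follows essentially the same route as the paper, whose proof is exactly the chain $X \cong Y \iff \QCoh(X) \simeq \QCoh(Y) \iff \QCoh(X)^{\op} \simeq \QCoh(Y)^{\op} \iff \DBun(X) \simeq \DBun(Y)$, using the Gabriel--Rosenberg theorem and the Cruttwell--Lemay equivalence $\DBun(X) \simeq \QCoh(X)^{\op}$. Your detour through $\qsSch_{/\Spec\Z}$ and your direct transport-of-structure argument for ($\Leftarrow$) are harmless extras, not a different method.
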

\begin{proof}
Begin by observing that by the Reconstruction Theorem, $X \cong Y$ if and only if there is an equivalence of categories $\QCoh(X) \simeq \QCoh(Y)$. However, as $\QCoh(X) \simeq \QCoh(Y)$ if and only if there is an equivalence $\QCoh(X)^{\op} \simeq \QCoh(Y)^{\op}$ of opposite categories and since $\QCoh(X)^{\op} \simeq \DBun(X)$ by \cite[Theorem 4.27]{GeoffJSDiffBunComAlg} it follows that $X \cong Y$ if and only if there is an equivalence of categories $\DBun(X) \simeq \DBun(Y)$.
\end{proof}
Applying the same chain of reasoning to Corollary \ref{Cor: Relative Reconstruction Theorem} allows us to derive the corresponding tangent-categorical analogue.
\begin{corollary}\label{Cor: Relative Tan Cat Recon Thm}
If $A$ is a commutative ring and if $X$ and $Y$ are two quasi-separated $A$-schemes then $\QCoh(X) \simeq \QCoh(Y)$ if and only if there is an isomorphism $X \cong Y$.
\end{corollary}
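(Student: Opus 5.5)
The plan is to run the argument of Theorem \ref{Thm: Tan Cat Recon Thm} a second time, now relative to the affine base $\Spec A$. Corollary \ref{Cor: Relative Tan Cat Recon Thm}, as stated, only speaks about $\QCoh$; so it is really Corollary \ref{Cor: Relative Reconstruction Theorem} read inside the tangent category $\qsSch_{/A}$. The tangent-categorical content is the equivalence of that condition with $\DBun_{\qsSch_{/A}}(X) \simeq \DBun_{\qsSch_{/A}}(Y)$, and I would prove both forms together.

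First I fix the setting. Put $S = \Spec A$ and regard $X$ and $Y$ as objects of $\qsSch_{/S}$. By Proposition \ref{Prop: Section Gabriel: qsSch is tangent sbucat}, $\qsSch_{/S}$ is a strict tangent subcategory of $\Sch_{/S}$. By Proposition \ref{Prop: Section Gabriel: DBun in qs is Dbun in sch}, $\DBun_{\qsSch}(X) = \DBun_{\Sch}(X)$, and the corollary following it gives $\DBun_{\qsSch}(X) \simeq \QCoh(X)^{\op}$; the same holds for $Y$. Since a category equivalence $\QCoh(X) \simeq \QCoh(Y)$ exists if and only if one exists between their opposites, the $\DBun$-form and the $\QCoh$-form of the statement are equivalent.

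Next I apply Corollary \ref{Cor: Relative Reconstruction Theorem} with these $A$ and $X, Y$. It says $\QCoh(X) \simeq \QCoh(Y)$ if and only if $X \cong Y$ in $\Sch_{/A}$. Composing this with the previous step gives the claim. It also shows the isomorphism lives in $\qsSch_{/A}$, since that category is full in $\Sch_{/A}$ by Remark \ref{Remark: Section Gabriel: All maps in qsSch are qs}.

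The one point needing care is a direction mismatch. Corollary \ref{Cor: Relative Reconstruction Theorem} produces an isomorphism over $A$, but an abstract equivalence $\QCoh(X)\simeq\QCoh(Y)$ need not be $A$-linear, while the statement only asks for $X \cong Y$. I would sidestep this by noting that an $A$-isomorphism is in particular an isomorphism of schemes. For the converse direction, an isomorphism $X\cong Y$ of schemes induces $\QCoh(X)\simeq\QCoh(Y)$ via pullback, since $\QCoh(-)$ is a pseudofunctor. If instead the reader intends $\cong$ over $A$, I would expect to need an $A$-linearity hypothesis on the equivalence, which is what the relative Gabriel–Rosenberg theorem supplies. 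I would flag this rather than resolve it.
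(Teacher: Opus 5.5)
Your route is the paper's own one-line argument: push the chain of Theorem~\ref{Thm: Tan Cat Recon Thm} through Corollary~\ref{Cor: Relative Reconstruction Theorem}. The gap is at the point you flagged and then said you would sidestep. Your concern is not about how strong a conclusion you need. It is about whether the step you cite is true, and read literally it is not. Corollary~\ref{Cor: Relative Reconstruction Theorem} asserts that an arbitrary equivalence $\QCoh(X)\simeq\QCoh(Y)$ yields an isomorphism in $\Sch_{/A}$. Here is a counterexample:
\begin{itemize}
\item take $A=k[t]$ for a field $k$;
\item let $X=\Spec k$ over $A$ via $t\mapsto 0$, and $Y=\Spec k$ over $A$ via $t\mapsto 1$.
\end{itemize}
Both structure maps are morphisms of affine schemes, hence affine, hence quasi-separated, and $\QCoh(X)\simeq\Mod{k}\simeq\QCoh(Y)$. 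Yet there is no morphism $X\to Y$ in $\Sch_{/A}$ at all, since it would come from a ring endomorphism $\psi$ of $k$ with $\psi(1)=0$. Extracting an $A$-isomorphism from a false lemma and then forgetting the $A$-structure does not make the derivation valid. So your forward direction rests on an unsound step, and so does the paper's, which uses the same corollary.

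The repair bypasses the relative corollary entirely.
\begin{itemize}
\item The map $\Spec A\to\Spec\Z$ is affine, so it is quasi-separated by item (7) of Proposition~\ref{Prop: Section Gabriel: Properties of separation stuffs}.
\item By item (3), the composites $X\to\Spec A\to\Spec\Z$ and $Y\to\Spec A\to\Spec\Z$ are quasi-separated, so $X$ and $Y$ are quasi-separated schemes.
\item The absolute Theorem~\ref{Thm: Reconstruction Theorem} then gives the statement exactly, with $X\cong Y$ as schemes, and your pullback argument handles the converse.
\item The $\DBun$ form follows by composing with $\DBun(X)\simeq\QCoh(X)^{\op}$ as in Theorem~\ref{Thm: Tan Cat Recon Thm}.
\end{itemize}
The same example confirms your other suspicion. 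If $X\cong Y$ is read in $\Sch_{/A}$, the statement is false for arbitrary equivalences and needs an $A$-linearity hypothesis, so that reading cannot be proved as stated.
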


\bibliographystyle{amsalpha}
\bibliography{DeepDiveBib.bib}


\end{document}